\newcommand{\RNum}[1]{\uppercase\expandafter{\romannumeral #1\relax}}
\def\T{{ \mathrm{\scriptscriptstyle T} }}
\newcommand{\MBF}{} 
\newcommand{\MBB}{} 
\newcommand{\BSM}{} 
\newcommand{\BIEN}{\mathrm{I}_p}
\newcommand{\MYTRAN}{\T}
\newcommand{\MYPR}{\mathrm{pr}} 
\begin{document}

\jname{}
\jyear{}
\jvol{}
\jnum{}



\markboth{He et~al.}{On the Phase Transition of Wilk's Phenomenon}

\title{On the Phase Transition of Wilk's Phenomenon}

\author{Yinqiu He}
\affil{Department of Statistics, University of Michigan, MI 48109, U.S.A. 
 \email{yqhe@umich.edu}}

\author{Bo Meng,  Zhenghao Zeng}
\affil{University of Science and Technology of China, Anhui, 230026, P.R. China
\email{mb0529@mail.ustc.edu.cn, zzh98052@mail.ustc.edu.cn}}


\author{\and Gongjun Xu}
\affil{Department of Statistics, University of Michigan, MI 48109, U.S.A. 
\email{gongjun@umich.edu}}


\maketitle


%
%
%
%
%
%
%
\begin{abstract}
 Wilk's theorem, which offers universal chi-squared  approximations for likelihood ratio tests, is widely used in many scientific hypothesis testing problems.  For modern datasets with increasing dimension, researchers have found that the conventional Wilk's phenomenon of the likelihood ratio test statistic often fails. Although new approximations have been proposed in high dimensional settings, there still lacks a clear statistical guideline regarding how to choose between the conventional and newly proposed  approximations, especially for moderate-dimensional data. To address this issue, we  develop the necessary and sufficient phase transition conditions for Wilk's phenomenon under popular tests on multivariate mean and covariance structures. Moreover, we provide an in-depth analysis of the accuracy of chi-squared approximations by deriving their asymptotic biases. These results may provide helpful insights into the use of  chi-squared approximations in scientific practices. 
\end{abstract}

\begin{keywords}
Wilk's phenomenon, phase transition
\end{keywords}



\section{Introduction} \label{Intro}

The likelihood ratio test is a standard testing method for many hypothesis testing problems due to its nice statistical properties  \citep{anderson2003introduction,Muirhead2009}.   
Under the low-dimensional setting with a fixed number of parameters $p$ and large sample size $n$,
classic theorems offer general asymptotic results for various likelihood ratio test  statistics. One of the most celebrated and fundamental results is Wilks' theorem, which  states that, 
under the null hypothesis, twice the negative log-likelihood ratio asymptotically approaches a  $\chi^2_f$ distribution, where $f$ is the difference of the degrees of freedom between the null and alternative hypotheses. 
 The popularly used Bartlett correction provides a general rescaling strategy that further improves the finite sample accuracy of the chi-squared  approximations  \citep{cordeiro2014introduction,barndorff1988level}. 
Similar Wilk's phenomenon and Bartlett correction were also studied for empirical likelihood  \citep{owen1990empirical,diciccio1991empirical,chen2006bartlett}.  







Despite the extensive literature on the Wilk's-type phenomenon of likelihood ratio tests under finite dimensions,     
it is of emerging interest to study  the large $n$, diverging $p$ asymptotic regions in a wide variety of modern applications. 
To understand how large the dimension $p$ can be to ensure the validity of the 
classical 
Wilk's phenomenon,
various works establish sufficient conditions on the growth rate  of $p$ as $n$ increases.
For instance, \citet{portnoy1988asymptotic} showed that the chi-squared approximation of the likelihood ratio test statistic for a simple hypothesis in canonical exponential families  holds if $p/n^{2/3}\to 0$.
Moreover, \citet{hjort2009extending}, \citet{chen2009effects}, and \cite{tang2010penalized} studied the empirical likelihood ratio statistic when $p \to \infty$.
 Particularly,  \citet{chen2009effects} argued that $p/n^{1/2}\to 0$ is likely to be the  best rate for the chi-squared  approximation of general empirical likelihood ratio test,  and showed that for the least-squares empirical likelihood, a simplified version of the empirical likelihood, the chi-squared  approximation holds if $p/n^{2/3}\to 0.$ 
  The effect of data dimension was also studied in other inference problems; see, for example,  \cite{portnoy1985asymptotic}, \cite{he2000parameters},  and \cite{wang2011gee}. 

When the dimension $p$ further increases, researchers have found that the chi-squared approximations based on Wilk's theorem 
often become inaccurate, resulting in the failure of the corresponding likelihood ratio tests. 
To address this issue, various corrections and alternative approximations for the likelihood ratio tests have been proposed. 
For example, when $p$ is asymptotically proportional to $n$, namely, $p/n \rightarrow y \in (0,1)$ as $n \rightarrow \infty$,  \citet{Bai2009},  \citet{Jiang13}, and \citet{Jiang15} proposed normal approximations for the corrected likelihood ratio tests on testing mean vectors and covariance matrices. 
 \cite{zheng2012central}, \citet{bai2013testing}, and \citet{He2018} proposed  normal approximations for corrected likelihood ratio tests  in multivariate linear regression models. 
Furthermore,   \citet{sur2019modern}, \citet{sur2019likelihood}, and \citet{candes2018phase} studied the phase transition of the maximum likelihood estimator for the logistic regression, and proposed a rescaled chi-squared approximation for the likelihood ratio test.

Despite the proposed distributional theory of the likelihood ratio tests   for low- or high-dimensional data,  there still lacks a quantitative guideline on which approximation should be chosen to use in practice,
especially for  moderate-dimensional data. For instance, when analyzing a dataset with the number of parameters $p\leq 5$ and sample size $n=100$, 
the chi-squared approximation  may be considered as reliable. 
However, when studying a data set with moderate dimension, e.g., $p$ is between $6$ to $20$ and sample size $n=100$, it may be unclear to  practitioners whether they can still apply the classical chi-squared approximations   or they should turn to other high-dimensional asymptotic results. 
To address this practical issue, 
it is of interest to investigate the phase transition boundary where  the chi-squared approximation starts to fail as $p$ increases,
and also characterize   the approximation accuracy.
 Theoretically, this needs a deep understanding of the limiting behavior of the likelihood ratio test statistics from low to high dimensions.

In this work, we focus on several standard likelihood ratio tests on  multivariate mean and covariance structures that are  widely used  in biomedical and social sciences \citep{pituch2015applied,cleffapplied}.
For each considered likelihood ratio test, we derive its phase transition boundary of Wilk's phenomenon and also provide an in-depth analysis of the accuracy of the chi-squared approximation.
First, 
in terms of the phase transition boundary, 
we establish  
the  \textit{necessary} and \textit{sufficient} condition 
for Wilk's theorem to hold when $p$ increases with $n$. 
Specifically, 
we show that the chi-squared approximations hold if and only if  $p/n^d\to 0$, where the value of $d$ depends on the testing problem and whether the Bartlett correction is used.
Interestingly, the proposed phase transition boundaries resonate with the abovementioned literature   \citep[e.g.,][]{portnoy1988asymptotic,chen2009effects},  which mostly  focused on sufficient conditions without the Bartlett correction.
Second, we provide a detailed characterization of the asymptotic bias of each  chi-squared approximation. 
Specifically, we consider two 
local asymptotic regimes, depending on whether  Wilk's theorem holds or not.
Under the asymptotic regime when Wilk's theorem holds,
the derived asymptotic bias 
sharply characterizes
the convergence rate of the distribution of the likelihood ratio test statistic to the limiting chi-squared distribution, and thus provides a useful measure on the accuracy of the chi-squared approximation. 
When Wilk's theorem fails,
  the derived asymptotic bias describes the unignorable  discrepancy between the chi-squared approximation and the true distribution of the likelihood ratio test statistic.  
As illustrated in the simulation studies, our theoretical results of the phase transition boundaries and the asymptotic biases may provide a helpful guideline on the use of the chi-squared approximations in practice. 
\section{Results of One-Sample Tests} \label{sec:main}



In this section, we present the  theoretical results under three one-sample testing problems. 
We also obtain similar results for other multiple-sample  testing problems, which are introduced in $\S$\,\ref{sec:exten}, and please see their details in the Supplementary Material. 
Under one-sample problems, 
suppose $\MBF{x}_1,\dots,\MBF{x}_n\in \MBB{R}^p$ are independent and identically distributed random vectors with distribution $\mathcal{N}_p(\BSM{\mu},\BSM{\Sigma}),$ which denotes a $p$-variate multivariate normal distribution with mean vector $\BSM{\mu}$ and covariance matrix $\BSM{\Sigma}$.  We define $\overline{\MBF{x}} = n^{-1}\sum_{i=1}^{n}\MBF{x}_i$ and $ \MBF{A} = \sum_{i=1}^{n}(\MBF{x}_i-\overline{\MBF{x}})(\MBF{x}_i-\overline{\MBF{x}})^{\MYTRAN}$, and denote the determinant and the trace of  $\MBF{A}$ by $|\MBF{A}|$ and  $\mathrm{tr}(\MBF{A})$,   respectively. We next introduce the considered testing problems and the corresponding likelihood ratio tests  \citep{anderson2003introduction,Muirhead2009}.

\smallskip

(I) \textit{Testing Specified Value for the Mean Vector.}\ This test  examines whether the population mean vector $\BSM{\mu}$ is equal to a specified vector $\BSM{\mu}_0\in \MBB{R}^p$, that is,  $H_0: \BSM{\mu}=\BSM{\mu}_{0}$ against $H_a: H_0$ is not true. 
Through the transformation $\MBF{x}_i-\BSM{\mu}_0$, 
we consider, without loss of generality, $\BSM{\mu}_0=(0,\ldots,0)^{\MYTRAN}$.
Then,   the likelihood ratio test statistic is  $\Lambda_n=|\MBF{A}|^{{n}/{2}}(\MBF{A} + n \bar{\MBF{x}} \bar{\MBF{x}}^{\MYTRAN} )^{-{n}/{2}}.$
 When $p$ is fixed and $n\to \infty$,  under the null hypothesis,
 the classical chi-squared approximation without correction is $-2\log \Lambda_n \xrightarrow{d} \chi^2_{f}$, where $\xrightarrow{d}$ represents the convergence in distribution and $f=p$, and the chi-squared approximation with the Bartlett correction is   $-2\rho \log \Lambda_n \xrightarrow{d} \chi^2_{f}$, where  $\rho = 1-(1+p/2)/n$. 
 
\smallskip

(II) \textit{Testing the Sphericity of the Covariance Matrix.}\   This test examines whether the covariance matrix $\BSM{\Sigma}$ is proportional to an identity matrix; that is, $H_0: \BSM{\Sigma} = \lambda \BIEN$ against $H_a: H_{0}$ is not true, 
where $\lambda>0$ is an unspecified constant and $\BIEN$ denotes the $p\times p$ identity matrix. 
The likelihood ratio test statistic is 
$\Lambda_n = |\MBF{A}|^{{(n-1)}/{2}} \left\{{\mathrm{tr}(\MBF{A})}/{p}\right\}^{-p(n-1)/2}.$
When $p$ is fixed and $n\to \infty$, under the null hypothesis,  the  chi-squared approximation is $ -2\log \Lambda_n \xrightarrow{d} \chi_{f}^2 $, where $f=(p-1)(p+2)/2$, and the chi-squared approximation with the Bartlett correction is $-2\rho\log \Lambda_n \xrightarrow{d} \chi_{f}^2$, where $\rho = 1- \{6(n-1)p\}^{-1}(2p^2+p+2)$. 
 
 \smallskip

(III)\ \textit{Joint Testing Specified Values for the Mean Vector and Covariance Matrix.}\ Consider a specified vector $\BSM{\mu}_0\in \MBB{R}^p$ and a specified positive-definite matrix $\BSM{\Sigma}_0 \in \MBB{R}^{p\times p}$. We study the test $H_{0}:  \BSM{\mu}=\BSM{\mu}_0$ and $\BSM{\Sigma}=\BSM{\Sigma}_0$ against   $H_{a}: H_{0}$ is not true.
By applying the transformation $\BSM{\Sigma}_0^{-1/2}(\MBF{x}_i -\BSM{\mu}_0)$, we assume, without loss of generality, that $\BSM{\mu}_0=\MBF{0}$ and $\BSM{\Sigma}_0=\BIEN$.
Then, the likelihood ratio test statistic is 
$\Lambda_{n}=\left({e}/{n}\right)^{n p / 2}|\MBF{A}|^{n / 2}\exp\{ -\operatorname{tr}(\MBF{A}) / 2 -n \overline{\MBF{x}}^{\MYTRAN} \overline{\MBF{x}} / 2 \}.$
When $p$ is fixed and $n\to \infty$, under the null hypothesis, 
the chi-squared approximation is $-2  \log \Lambda_{n} \xrightarrow{d} \chi_{f}^{2},$ where $f=  p(p+3)/2$, and the chi-squared approximation with the  Bartlett correction is $-2\rho \log \Lambda_n \xrightarrow{d} \chi_{f}^{2}$, where $\rho=1-\{6 n(p+3) \}^{-1}(2 p^{2}+9 p+11).$

 \smallskip
For the likelihood ratio tests of the above three testing problems, Theorem  \ref{thm:onesam} gives  
the phase transition boundaries of the chi-squared approximations without and with the Bartlett correction.

\begin{theorem}\label{thm:onesam}
Assume $n>p+1$ for all $n\geq 3$ and $n-p\to \infty$ as $n\to \infty$. 
Under $H_0,$ for the chi-squared approximations without and with the Bartlett correction of each likelihood ratio test in (I)--(III), we have the following necessary and sufficient conditions: 
\smallskip

(i) $\sup_{\alpha\in (0,1)}|\MYPR\{-2 \log \Lambda_{n}>\chi^2_{f}(\alpha)\}-\alpha|\to 0$ if and only if $p/n^{d_1} \to 0;$

(ii)   $\sup_{\alpha\in (0,1)}|\MYPR\{-2\rho  \log \Lambda_{n}>\chi^2_{f}(\alpha)\}- \alpha| \to 0$ if and only if $p/n^{d_2} \to 0,$

\smallskip

 \noindent where the values of $d_1$ and $d_2$ under the three testing problems are listed in the table below. 
\begin{center}
	\setlength{\tabcolsep}{18pt}
\begin{tabular}{lccc} 
      & (I)  Mean & (II) Covariance & (III) Joint   \\  
(i) without correction $d_1$:  &    $2/3$ &   $1/2$    &  $1/2$  \\ 
(ii) with correction $d_2$:  &    $4/5$ &   $2/3$    &  $2/3$  \\ 
\end{tabular}
\end{center} 
\end{theorem}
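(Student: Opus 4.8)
The plan is to recognise the quantity as a Kolmogorov distance and reduce its vanishing to a comparison of cumulants. Write $T_n$ for $-2\log\Lambda_n$ (part (i)) or $-2\rho\log\Lambda_n$ (part (ii)), and let $Q_f\sim\chi^2_f$. Since $\chi^2_f(\alpha)$ is the upper-$\alpha$ quantile of $Q_f$, we have $\MYPR\{T_n>\chi^2_f(\alpha)\}-\alpha=F_{Q_f}(t)-F_{T_n}(t)$ at $t=\chi^2_f(\alpha)$, and as $\alpha$ runs over $(0,1)$ the threshold $t$ runs over $(0,\infty)$; hence
\[ \sup_{\alpha\in(0,1)}|\MYPR\{T_n>\chi^2_f(\alpha)\}-\alpha|=\sup_{t>0}|F_{T_n}(t)-F_{Q_f}(t)|=d_K(T_n,Q_f), \]
so both parts amount to deciding when $d_K(T_n,Q_f)\to0$.

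Next I would use the exact null distributions. For (I), the matrix determinant lemma gives $T_n=-n\log V$ with $V=\{1+n\bar x^\T A^{-1}\bar x\}^{-1}\sim\operatorname{Beta}\{(n-p)/2,p/2\}$; for (II) and (III), Bartlett's decomposition of $|A|$ together with $n\bar x^\T\bar x\sim\chi^2_p$ writes $T_n$ as a linear combination of logarithms of independent gamma and Beta factors. In each case the cumulants of $T_n$ are closed-form sums of digamma and polygamma values, which I would expand by Stirling's formula. For (I) this yields $E(T_n)=f+p(p+2)/(2n)+\cdots$ and $\mathrm{Var}(T_n)=2f+2p^2/n+\cdots$ against the chi-squared values $f$ and $2f$; the Bartlett factor $\rho$ is precisely the constant cancelling the $O(1/n)$ mean term and the leading variance discrepancy, pushing the residual bias to the next order. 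Tabulating the leading standardized mean gap $\{E(T_n)-f\}/\sqrt{2f}$ gives, for (I), order $p^{3/2}/n$ uncorrected and $p^{5/2}/n^2$ corrected, and for (II)--(III), where $f\sim p^2/2$, order $p^2/n$ uncorrected and $p^3/n^2$ corrected; requiring each to vanish reproduces exactly the exponents $d_1$ and $d_2$ in the table.

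To pass from cumulants to $d_K$ I would argue along subsequences. If $p$ stays bounded, classical Wilks' theorem gives $d_K(T_n,\chi^2_p)\to0$ while $p/n^d\to0$ holds automatically. If $p\to\infty$, both $T_n$ and $Q_f$ are asymptotically Gaussian after standardization with Berry--Esseen or Edgeworth errors of order $f^{-1/2}\to0$; writing $T_n\approx N(\mu_n,\sigma_n^2)$ and $Q_f\approx N(f,2f)$, the triangle inequality reduces $d_K(T_n,Q_f)$ to the Gaussian distance $d_K\{N(\mu_n,\sigma_n^2),N(f,2f)\}$, governed by the standardized mean gap $|\mu_n-f|/\sqrt{2f}$ and the relative variance gap $|\sigma_n^2-2f|/(2f)$. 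For sufficiency, when $p/n^d\to0$ the mean gap vanishes by the table, the variance gap is of strictly smaller order and also vanishes, and the Gaussian errors vanish, so $d_K(T_n,Q_f)\to0$. For necessity, when $p/n^d\not\to0$ I pass to a subsequence with $p/n^d\to c\in(0,\infty]$; then $p\to\infty$, the standardized mean gap is bounded below, and because a location shift between two Gaussians with matched variances cannot be cancelled, $d_K(T_n,Q_f)$ stays bounded away from $0$.

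The hard part is the asymptotic bookkeeping that certifies the \emph{mean} discrepancy as the single binding constraint: I must expand the gamma-function cumulants uniformly in the regime $p\to\infty$, $n-p\to\infty$, confirm that the variance and all higher standardized cumulant gaps are of strictly smaller order than the mean gap at each boundary---so that they neither trigger nor cancel the transition---and supply a quantitative Edgeworth bound valid as $p$ and $n$ diverge together. The necessity half is the more delicate, since it requires a genuine lower bound on $d_K$ rather than an upper bound; the reduction to an uncancellable location shift between two nearly-Gaussian laws with matched variances is therefore the crux of the argument.
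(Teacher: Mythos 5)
Your overall route is the same as the paper's: the paper also converts the supremum over $\alpha$ into a distributional comparison, invokes the CLT for the standardized statistic $(-2\log\Lambda_n+2\mu_n)/(2n\sigma_n)$ (cited from the moment-generating-function expansions of Jiang--Yang rather than re-derived from Beta/gamma decompositions, but the underlying machinery is identical), applies a Berry--Esseen bound to the $\chi^2_f$ quantiles, and reduces everything to two conditions --- the variance-ratio condition $\sqrt{2f}/(2n\sigma_n)\to 1$ and the standardized mean-gap condition $(f+2\mu_n)/(2n\sigma_n)\to 0$ --- followed by Taylor expansions and a Bolzano--Weierstrass subsequence argument. Your computed orders of the mean gaps ($p^{3/2}/n$ and $p^{5/2}/n^2$ for test (I); $p^2/n$ and $p^3/n^2$ for (II)--(III)) match the paper's and correctly reproduce the table.

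The genuine gap is in your necessity argument. You pass to a subsequence with $p/n^{d}\to c\in(0,\infty]$ and assert that ``the standardized mean gap is bounded below'' and that the two approximating Gaussians have ``matched variances.'' Both assertions fail in the regime $p\asymp n$ (i.e.\ $p/n\to C\in(0,1]$), which is included in the hypothesis $p/n^{d}\not\to 0$ and allowed by the theorem's assumptions ($n>p+1$, $n-p\to\infty$). There the variance ratio does \emph{not} tend to $1$ --- for test (III) it tends to $C[-2\{C+\log(1-C)\}]^{-1/2}<1$, and to $0$ when $C=1$ --- so your claim that the variance gap is of strictly smaller order than the mean gap (``so that they neither trigger nor cancel the transition'') is false there: the variance gap is order one, and it is precisely the variance mismatch, not the mean shift, that forces failure. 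Your Taylor expansions of the mean gap (e.g.\ $p^2/(6n)$ for test (III)) are also only valid when $p/n\to 0$, so they cannot certify a lower bound on the mean gap in this regime. The conclusion $d_K\not\to 0$ still holds, but for a different reason, and proving it requires a separate case: one must show the variance ratio stays bounded away from $1$ when $p/n\to C$, which the paper does via elementary monotonicity/convexity arguments (for the one-sample tests this amounts to inequalities such as $-\log(1-C)>C+C^2/2$ for $C\in(0,1)$; for the multi-sample analogues the paper needs genuinely nontrivial convexity analysis of functions like its $g_1,g_2$). The clean fix to your sketch: note that for two Gaussians, Kolmogorov distance vanishes iff the standardized mean gap vanishes \emph{and} the variance ratio tends to $1$ (the two cannot cancel), and then split the necessity proof into the cases $p/n\to 0$ (where your mean-gap expansion applies) and $p/n\to C\in(0,1]$ (where the variance ratio is bounded away from $1$). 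The same case split is needed for the Bartlett-corrected statement (ii).
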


\medskip


In Theorem \ref{thm:onesam}, $n>p+1$ is assumed to ensure the existence of the likelihood ratio tests. 
We next discuss the obtained phase transition boundaries of the classical chi-squared approximations without correction.  
When only testing mean parameters,  Theorem \ref{thm:onesam} suggests that the chi-squared approximation holds if and only if $p/n^{2/3}\to 0$. This asymptotic regime is similarly assumed 
in \cite{portnoy1988asymptotic}, which considered testing $p$ natural parameters in exponential families.
 However, \citet{portnoy1988asymptotic} only showed the sufficiency of  $p/n^{2/3}\to 0$  for the chi-squared  approximation to be applied,  and did not   establish the necessary and sufficient result, which is essential for  understanding the phase transition behaviors. 
  In addition, when the likelihood ratio tests involve covariance matrices as in (II) and (III), Theorem \ref{thm:onesam} shows that the chi-squared approximation holds if and only if $p/n^{1/2}\to 0$, which is consistent with the discussion in \cite{chen2009effects}. Particularly, under certain regularity conditions, \citet{chen2009effects} established that the chi-squared approximation of the empirical likelihood ratio test holds if $p/n^{1/2}\to 0$. The authors further argued that $p/n^{1/2}\to 0$ is likely to be the best rate for $p$, because it is the necessary and sufficient condition for the convergence of the sample covariance matrix to the true covariance matrix $\BSM{\Sigma}$ under the trace norm  when the eigenvalues of $\BSM{\Sigma}$ are bounded. The analysis provides an intuitive explanation for the phase transition boundaries obtained above, and  our  necessary and sufficient result would serve as another support for their conjecture, despite the different problem settings  in \citet{chen2009effects} and here.  

Additionally, for the chi-squared approximations with the Bartlett correction, Theorem \ref{thm:onesam} also explicitly characterizes their phase transition boundaries, which generally achieve a larger asymptotic region than those without correction.
When $p$ is fixed, the Bartlett correction serves as a rescaling strategy that can improve the convergence rate of the likelihood ratio statistic from $O(n^{-1})$ to $O(n^{-2})$; however, 
when $p$  grows with sample size $n$, the classical result cannot apply directly. Alternatively, the results in Theorem \ref{thm:onesam} 
provide a   precise illustration of how the Bartlett correction improves the chi-squared approximations in terms of the phase transition boundaries.

The phase transition boundaries in Theorem \ref{thm:onesam} give the necessary and sufficient conditions on the asymptotic regimes of $(n,p)$ in Wilk's phenomenon.  
When applying the likelihood ratio test in practice, it is desired to have 
a better understanding of 
the accuracy of the chi-squared approximation,  especially near its phase transition boundary. 
The following Theorem \ref{thm:onesamchisq} characterizes the accuracy of each chi-squared approximation for tests (I)--(III) 
when Wilk's phenomenon holds. 
Specifically, we consider the asymptotic regime where  $(n,p)$ satisfies the corresponding necessary and sufficient condition  in Theorem \ref{thm:onesam},
i.e.,
 $p/n^{d_1}\to 0$ and $p/n^{d_2}\to 0$ for the chi-squared approximations without and with the Bartlett correction, respectively.  

\begin{theorem}\label{thm:onesamchisq}
For each likelihood ratio test (I)--(III), let $d_i$, $i=1,2$ take the corresponding values in Theorem \ref{thm:onesam}. Let $z_{\alpha}$ denote the upper $\alpha$-level quantile of the standard normal distribution. Consider $p\to \infty$ as $n\to \infty$. 
 Then under $H_0$, given $\alpha\in (0,1)$, 
\begin{itemize}
\item[(i)] when $p/n^{d_1}\to 0$, the chi-squared approximation satisfies 
\begin{align}
\MYPR\{-2 \log \Lambda_{n}>\chi^2_{f}(\alpha)\}-\alpha=\frac{\vartheta_1(n,p)}{\sqrt{\pi}} \exp\biggr(-\frac{z_{\alpha}^2}{2}\biggr) + o\biggr( \frac{p^{1/d_1}}{n}\biggr); \label{eq:chisqapprox}
\end{align}
\item[(ii)] when $p/n^{d_2}\to 0$, the chi-squared approximation with the Bartlett correction satisfies 
\begin{align}
\MYPR\{-2\rho \log \Lambda_{n}>\chi^2_{f}(\alpha)\}-\alpha	= \frac{\vartheta_2(n,p)}{\sqrt{\pi}}\exp\biggr(-\frac{z_{\alpha}^2}{2}\biggr) + o\biggr( \frac{p^{2/d_2}}{n^2}\biggr).   \label{eq:chisqapproxbartcorr} 
\end{align}
\end{itemize}
The values of $\vartheta_1(n,p)$ and $\vartheta_2(n,p)$ under three testing problems (I)--(III) are listed below. 
\begingroup
\allowdisplaybreaks
\small
\begin{eqnarray}
 &\hspace{-8.6em} \text{(I)}\hspace{1.15em} \displaystyle \vartheta_1(n,p)=\displaystyle \frac{p^2+2p}{4n\sqrt{f}},\hspace{0.75em}   & \vartheta_2(n,p)=\frac{p(p^2-4)}{24(\rho n)^2\sqrt{f}}; \notag  \\[5pt]
 &\hspace{-2.1em} \text{(II)} \hspace{0.95em} \displaystyle \vartheta_1(n,p)=\displaystyle \frac{p(2p^2+3p-1)-4/p}{24(n-1)\sqrt{f}},\hspace{1.2em}  &\vartheta_2(n,p)=\frac{(p-2)(p-1)(p+2)(2p^3+6p^2+3p+2)}{144p^2\rho^2(n-1)^2\sqrt{f}}; \notag \\[5pt]
 &\hspace{-4.1em} \text{(III)} \hspace{0.85em} \displaystyle \vartheta_1(n,p)=\frac{p\left(2 p^{2}+9 p+11\right)}{24n\sqrt{f}}, \hspace{1.05em} &\vartheta_2(n,p)=\frac{p(2p^4+18p^3+49p^2+36p-13)}{144(p+3)(\rho n)^2\sqrt{f}}.   \notag 
\end{eqnarray}
\normalsize
\endgroup
\end{theorem}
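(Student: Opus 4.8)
The plan is to work from the exact distribution of each likelihood ratio statistic, for which the moments $E(\Lambda_n^{h})$ are available in closed form as ratios of products of gamma functions by classical multivariate analysis theory. First I would write the cumulant generating function of $-2\log\Lambda_n$, namely $K_n(t) = \log E\{\Lambda_n^{-2t}\}$, substitute the gamma-product form, and take logarithms to express $K_n(t)$ as a finite sum of terms $\log\Gamma(\cdot)$; for the Bartlett-corrected statistic I would replace $t$ by $\rho t$ throughout. Because the arguments of the gamma functions grow linearly in both $n$ and $p$, this step reduces the whole problem to a careful asymptotic analysis of a sum of log-gamma functions.

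Second, I would apply Stirling's expansion $\log\Gamma(z) = (z-\tfrac12)\log z - z + \tfrac12\log(2\pi) + (12z)^{-1} + O(z^{-3})$ to each gamma term, organizing the result as $K_n(t) = -\tfrac{f}{2}\log(1-2t) + n^{-1} g_1(t;p) + n^{-2} g_2(t;p) + \cdots$, where $g_1,g_2$ are explicit polynomials in $p$ and in $(1-2t)^{-1}$. The leading term identifies the $\chi^2_f$ limit, and the Bartlett factor $\rho$ is precisely the rescaling that annihilates the $n^{-1}$ contribution $g_1$, which is why the corrected bias first appears at order $n^{-2}$. The essential bookkeeping is to retain the single dominant term---of order $p^{2}/n$ for (i) and $p^{3}/n^{2}$ for (ii)---and to verify that the Stirling remainders together with the discarded $g_k$ are $o(p^{1/d_1}/n)$ and $o(p^{2/d_2}/n^2)$ respectively, which holds exactly in the regimes $p/n^{d_1}\to0$ and $p/n^{d_2}\to0$.

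Third, I would pass from the cumulant expansion to the tail probability. A term of the form $c\,(1-2t)^{-k}$ in $K_n(t)$ perturbs the distribution in the direction of a $\chi^2_{f+2k}$ law, giving $\MYPR\{-2\log\Lambda_n > x\} = \MYPR(\chi^2_f > x) + \sum_k c_k\{\MYPR(\chi^2_{f+2k}>x) - \MYPR(\chi^2_f > x)\} + \text{remainder}$, where I would use the exact identity $\MYPR(\chi^2_f > x) - \MYPR(\chi^2_{f+2}>x) = 2 f_{\chi^2_{f+2}}(x)$ for the chi-squared density $f_{\chi^2_{f+2}}$. Evaluating at $x = \chi^2_f(\alpha)$ and invoking the normal approximation to $\chi^2_f$ with diverging $f$---under which $\chi^2_f(\alpha) = f + z_\alpha\sqrt{2f}\{1+o(1)\}$ and the chi-squared density at its $\alpha$-quantile tends to $(4\pi f)^{-1/2}\exp(-z_\alpha^2/2)$---converts each difference into $\pi^{-1/2}f^{-1/2}\exp(-z_\alpha^2/2)$ times a polynomial in $p$. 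Collecting the dominant contribution yields exactly $\vartheta_i(n,p)\,\pi^{-1/2}\exp(-z_\alpha^2/2)$, and matching the polynomial coefficients against the three gamma-product forms produces the tabulated $\vartheta_1,\vartheta_2$; note the $\sqrt{f}$ in every denominator of $\vartheta_i$ comes precisely from this density factor.

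The main obstacle I anticipate is the simultaneous control of two distinct expansions---the $n^{-1}$ finite-sample expansion from Stirling's formula and the $f^{-1/2}\sim p^{-1/2}$ high-dimensional expansion from the normal approximation of $\chi^2_f$---with errors matched to the stated orders uniformly as $p\to\infty$. In particular, the Cornish--Fisher-type correction to $\chi^2_f(\alpha)$ enters at order $f^{-1/2}$, and one must verify that its interaction with $g_1$ does not generate a contribution competing with $\vartheta_i$; isolating the genuinely leading term and showing that all remaining cross terms are of strictly smaller order, rather than merely the same order, is the delicate part and is what forces the precise exponents $d_1,d_2$ in the remainder bounds. A secondary, more mechanical difficulty is that tracking the exact polynomial-in-$p$ coefficients through the Stirling expansion must be carried out separately for each of the three statistics, since their gamma-product structures differ.
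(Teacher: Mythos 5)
Your route is the paper's route: start from the exact gamma-product form of $\mathrm{E}(\Lambda_n^{h})$, expand the log-gamma terms via a Stirling-type series (the paper organizes this through Bernoulli polynomials, Lemma \ref{lm:logzaexpan}), write the characteristic function as $(1-2it)^{-f/2}\exp[\sum_l \varsigma_l\{(1-2it)^{-l}-1\}]$ with $\rho$ chosen exactly so that $\varsigma_1=0$, invert into a mixture of $\chi^2$ distribution functions with shifted degrees of freedom, and evaluate the leading difference at $x=\chi^2_f(\alpha)$ using the exact recurrence for $\Pr(\chi^2_{f+2h}\le x)-\Pr(\chi^2_f\le x)$ together with $\chi^2_f(\alpha)=f+z_\alpha\sqrt{2f}\{1+o(1)\}$; your observation that the $\sqrt{f}$ in every $\vartheta_i$ comes from the density factor matches the paper's Lemma \ref{lm:order2diff} exactly.

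There is, however, a genuine gap in your third step. Because $p\to\infty$, the perturbation coefficients are not bounded: for the covariance tests $\varsigma_1=\Theta(p^3/n)$, which \emph{diverges} throughout the regime $p/n^{1/2}\to 0$ (take $p=n^{0.45}$). Your first-order truncation $\Pr\{\cdot\}=\Pr(\chi^2_f>x)+\sum_k c_k\{\cdots\}+\text{remainder}$ therefore leaves a remainder that is an infinite series whose $v$-th term carries $\varsigma_1^{v}/v!$, and the naive bound (probabilities bounded by one) gives $e^{2|\varsigma_1|}\to\infty$. The argument is saved only by cancellation: each $v$-th term is an alternating sum, i.e.\ a $v$-th order finite difference in $f$ of $F_x(f)=\Pr(\chi^2_f\le x)$, and one must prove it is $O(v!\,C^v f^{-v/2})$ \emph{uniformly in} $v$, so that the series becomes geometric in $C\varsigma_1/\sqrt{f}=O(p^2/n)\to 0$. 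These uniform factorial bounds (the paper's Propositions \ref{prop:infinitesumm} and \ref{prop:infinitesumm2}, proved by a lengthy finite-difference calculus) are the real technical core of the theorem; your plan never identifies them, and the difficulty you do flag (the Cornish--Fisher correction to $\chi^2_f(\alpha)$ interacting with $g_1$) is comparatively routine. Two smaller slips: your difference identity should read $\Pr(\chi^2_{f+2}>x)-\Pr(\chi^2_f>x)=2f_{\chi^2_{f+2}}(x)$ (the sign is reversed as written), and for test (I) the dominant bias is of order $p^{3/2}/n$ rather than $p^{2}/n$, because there $f=p$ rather than $f=\Theta(p^2)$.
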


In Theorem \ref{thm:onesamchisq}, the forms of $\vartheta_1(n,p)$ and $\vartheta_2(n,p)$   are derived from a nontrivial calculation of  certain complicated infinite series (see Eq. \eqref{eq:probexpaniii1} and \eqref{eq:probexpaniii1bart} in the Supplementary Material).  
 We can see that for each test,  $\vartheta_1(n,p)$ and $\vartheta_2(n,p)$  are of orders 
of $p^{1/d_1}n^{-1}$ and  $p^{2/d_2}n^{-2}$, respectively. 
It follows that  $\vartheta_1(n,p)\exp(-z_{\alpha}^2/2)/\sqrt{\pi}$ in \eqref{eq:chisqapprox}
is the leading term for the  chi-squared  approximation bias $\MYPR\{-2 \log \Lambda_{n}>\chi^2_{f}(\alpha)\}-\alpha$, and  therefore can be used to measure the accuracy of the chi-squared approximation.
Similar conclusion also holds for $\vartheta_2(n,p)\exp(-z_{\alpha}^2/2)/\sqrt{\pi}$ in \eqref{eq:chisqapproxbartcorr} when using the  chi-squared approximation with the Bartlett correction. 
We demonstrate the usefulness of   \eqref{eq:chisqapprox} and \eqref{eq:chisqapproxbartcorr} in practice by simulation studies in $\S$\,\ref{sec:simulations}.

In the above discussion, we  focus   on the local asymptotic regime 
when Wilk's phenomenon holds, 
and the derived bias 
describes the accuracy of the chi-squared approximation. 
When $p$ further increases beyond this local asymptotic regime, the chi-squared approximation starts to fail, and the approximation bias becomes asymptotically unignorable. 
The following  Theorem \ref{thm:onesamnormal}  characterizes such unignorable  biases of the chi-squared approximations.
Particularly, we consider 
the local asymptotic regime $p/n \to 0$, which includes the case 
when Wilk's theorem fails, 
 that is,  $p/n^{d_1}\not \to 0$ for the chi-squared approximation, and $p/n^{d_2}\not \to 0$ for the chi-squared approximation with the Bartlett correction.

\begin{theorem}\label{thm:onesamnormal}
Assume $p\to \infty$ and $p/n \to 0$ as $n \to \infty$. 
For each likelihood ratio test  (I)--(III), under $H_0$, there  exists a small constant $\delta\in (0,1)$  such that for any $\alpha \in (0,1)$,  

\vspace{0.3em}

\begin{enumerate}
	\item[(i)] the chi-squared approximation satisfies 
\begin{align}
\hspace{-1.5em}\MYPR\big\{-2 \log \Lambda_{n}>\chi^2_{f}(\alpha)\big\}-\alpha =  \bar{\Phi}\biggr\{\frac{\chi^2_f(\alpha)+2\mu_n}{2n\sigma_n} \biggr\} - \alpha + O\left\{\left(\frac{p}{n}\right)^{\frac{1-\delta}{2}}+f^{-\frac{1-\delta}{6}} \right\},   \label{eq:normalbias1}
\end{align}
 where $\bar{\Phi}(\cdot)=1-\Phi(\cdot)$, and $\Phi(\cdot)$ denotes the cumulative distribution function of the standard normal distribution; 

\vspace{0.4em}
	\item[(ii)] the chi-squared approximation with the Bartlett correction satisfies
\begin{align}
\hspace{-0.1em}\MYPR\big\{-2\rho \log \Lambda_{n}>\chi^2_{f}(\alpha)\big\}-\alpha =   \bar{\Phi}\Biggr\{\frac{\chi^2_f(\alpha)+2\rho \mu_n}{2\rho n\sigma_n} \Biggr\} - \alpha + O\left\{\left(\frac{p}{n}\right)^{\frac{1-\delta}{2}}+f^{-\frac{1-\delta}{6}} \right\}.  \label{eq:normalbias2}
\end{align}
\end{enumerate}
The values of $\mu_n$ and $\sigma_n$ under each   problem are listed below, where $L_{x,p}=\log(1-p/x)$ for $x>p$. 
\begingroup
\allowdisplaybreaks
\small
\begin{eqnarray}
 &\hspace{0.2em} \text{(I)}\hspace{1em} \displaystyle \mu_n=\frac{n}{2}\Big\{\Big(n-p-\frac{3}{2}\Big)(L_{n,p}-L_{n-1,p}) +L_{n,p}+pL_{n,1}\Big\},\hspace{0.8em}   &  \sigma_n^2=\frac{1}{2}(L_{n,p}-L_{n-1,p}); \notag  \\[5pt]
 & \hspace{-6.2em}  \text{(II)} \hspace{1em} \displaystyle 	\mu_n =  -\frac{n-1}{2}\big\{(n-p-3/2)L_{n-1,p}+p\big\},\hspace{0.8em}  & \sigma_n^2=-\frac{1}{2}\left(\frac{p}{n-1} + L_{n-1,p} \right)\frac{(n-1)^2}{n^2}; \notag \\[5pt]
 &\hspace{-5.6em} \text{(III)} \hspace{0.85em} \displaystyle   \mu_{n}=-\frac{n}{2}\big\{( n- p-3/2)L_{n-1,p}+p\big\}-\frac{p}{2}, \hspace{1.05em} &\sigma_{n}^{2}=-\frac{1}{2}\left(\frac{p}{n-1} + L_{n-1,p} \right).   \notag 
\end{eqnarray}
\normalsize
\endgroup
\end{theorem}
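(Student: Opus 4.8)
The plan is to exploit the fact that, under $H_0$, each statistic $-2\log\Lambda_n$ admits an exact distributional representation as a linear combination of logarithms of independent gamma (equivalently, chi-squared and beta) variables, so that its moment generating function is available in closed form as a ratio of gamma functions. For test (I), the matrix determinant lemma gives $-2\log\Lambda_n=-n\log B$ with $B\sim\mathrm{Beta}\{(n-p)/2,\,p/2\}$; for tests (II) and (III) the Bartlett decomposition yields $\log|\MBF{A}|=\sum_{i=1}^{p}\log u_i$ with independent $u_i\sim\chi^2_{n-i}$, while $\mathrm{tr}(\MBF{A})\sim\chi^2_{p(n-1)}$ and, in (III), $n\overline{\MBF{x}}^{\MYTRAN}\overline{\MBF{x}}\sim\chi^2_p$. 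I would work throughout with the cumulant generating function $K_n(t)=\log E(\Lambda_n^{-2t})$, which for every test is a finite sum of $\log\Gamma$ terms; this handles the dependence between $\log|\MBF{A}|$ and $\mathrm{tr}(\MBF{A})$ in (III) automatically, since the joint transform is known.

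The first substantive step is to read off the mean and variance from $K_n'(0)$ and $K_n''(0)$, which are sums of digamma $\psi$ and trigamma $\psi'$ values. Applying the expansions $\psi(x)=\log x-(2x)^{-1}+O(x^{-2})$ and $\psi'(x)=x^{-1}+O(x^{-2})$ and summing, with the logarithmic part telescoping into differences of $\log\Gamma$, I expect to obtain $E(-2\log\Lambda_n)=-2\mu_n$ and $\mathrm{var}(-2\log\Lambda_n)=(2n\sigma_n)^2$ up to lower-order terms, with the stated forms involving $L_{x,p}=\log(1-p/x)$. For instance in (I) one checks that to leading order $\mu_n\sim\tfrac12 nL_{n,p}$, in agreement with a direct expansion of $\tfrac{n}{2}\{\psi((n-p)/2)-\psi(n/2)\}$, since $(n-p-3/2)(L_{n,p}-L_{n-1,p})+pL_{n,1}$ is lower order; the remaining terms in $\mu_n$ and $\sigma_n^2$ are the subleading corrections produced by the $(2x)^{-1}$ contributions and by evaluating the polygamma sums at both $n$ and $n-1$.

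Next I would standardize $Z_n=\{-2\log\Lambda_n-E(-2\log\Lambda_n)\}/\{\mathrm{var}(-2\log\Lambda_n)\}^{1/2}$ and establish a Kolmogorov-type normal approximation $\sup_x|\MYPR(Z_n\le x)-\Phi(x)|=O\{(p/n)^{(1-\delta)/2}+f^{-(1-\delta)/6}\}$. Because the full $K_n(t)$ is available, I can bound all higher cumulants $K_n^{(k)}(0)$, again polygamma sums, simultaneously and control the standardized skewness and kurtosis, feeding these into an Edgeworth/Berry--Esseen argument rather than relying on third moments alone. Writing $\MYPR\{-2\log\Lambda_n>\chi^2_f(\alpha)\}=\MYPR[Z_n>\{\chi^2_f(\alpha)-E(-2\log\Lambda_n)\}/\{\mathrm{var}(-2\log\Lambda_n)\}^{1/2}]$ and substituting Step 2 turns the threshold into $\{\chi^2_f(\alpha)+2\mu_n\}/(2n\sigma_n)$, producing \eqref{eq:normalbias1}; the Bartlett-corrected case \eqref{eq:normalbias2} follows identically after replacing $-2\log\Lambda_n$ by $-2\rho\log\Lambda_n$, which rescales both $\mu_n$ and the denominator by $\rho$.

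The hard part will be making the remainder uniform in $\alpha\in(0,1)$ with the two claimed rates. As $\alpha\to0$ or $1$ the quantile $\chi^2_f(\alpha)=f+z_{\alpha}\sqrt{2f}+O(1)$ pushes the centered and scaled threshold toward the tail of $\bar{\Phi}$, where a raw Berry--Esseen bound is not sharp; the slack parameter $\delta\in(0,1)$ is exactly what absorbs the interplay between the exponential tail of $\bar{\Phi}$ and the Edgeworth remainder, and a single global bound on $Z_n$ suffices only after one verifies that the threshold stays in a bounded range following centering. Tracking the joint $(n,p)$ dependence through every polygamma expansion, and confirming that the higher-cumulant bounds genuinely attain the orders $(p/n)^{1/2}$ and $f^{-1/6}$ for all three tests, is where most of the effort lies; test (III) is the most delicate, because $\log|\MBF{A}|$ and $\mathrm{tr}(\MBF{A})$ are dependent and the cumulants must be read off from the joint transform.
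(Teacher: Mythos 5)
Your route is, at its core, the same as the paper's: both start from the exact closed form of $\log \mathrm{E}(\Lambda_n^{-2t})$ as a finite sum of $\log\Gamma$ terms, expand it by Stirling/polygamma asymptotics, convert closeness of transforms into a uniform Kolmogorov-type bound for the standardized statistic, and then evaluate at the threshold $\chi^2_f(\alpha)$. In the paper this is packaged as Lemma \ref{lm:chardiffgoal} — the log-characteristic function of $(-2\log\Lambda_n+2\mu_n)/(2n\sigma_n)$ differs from $-s^2/2$ by $O(p/n)s+(1/p+p/n)O(s^2)+O(s^3/\sqrt{f})$ for $s=o(\min\{(n/p)^{1/2},f^{1/6}\})$ — followed by Ushakov's smoothing inequality (Lemma \ref{lm:chardiff}) with truncation $T=\min\{(n/p)^{(1-\delta)/2},f^{(1-\delta)/6}\}$. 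Note that $\delta$ enters as the truncation slack in this smoothing step, not, as you suggest, from any interplay with the tail of $\bar{\Phi}$. Also, your ``Edgeworth/Berry--Esseen argument fed by cumulant bounds'' must in any case be of this characteristic-function-plus-smoothing type, since for test (I) the statistic $-n\log B$ with $B\sim\mathrm{Beta}\{(n-p)/2,\,p/2\}$ is not a normalized sum of independent terms; once made precise, your Step 3 reproduces the paper's argument.

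There is, however, one step that fails as you state it: the transfer from the exact moments to $(\mu_n,2n\sigma_n)$. Write $q_1=\{\chi^2_f(\alpha)-m_n\}/s_n$ for your exactly-standardized threshold and $q_2=\{\chi^2_f(\alpha)+2\mu_n\}/(2n\sigma_n)$ for the one in \eqref{eq:normalbias1}. The moment mismatches are $|m_n+2\mu_n|/(2n\sigma_n)=O(p/n)$ and $s_n/(2n\sigma_n)=1+O(1/p+p/n)$, so $|q_1-q_2|=|q_2|\,O(1/p+p/n)+O(p/n)$. In the regime this theorem is designed for — $p/n\to0$ but $p/n^{d_1}\not\to0$, where the bias is unignorable — the threshold $q_2$ is \emph{not} bounded: by Remark \ref{rm:order}, $|q_2-z_\alpha|\asymp p^{1/d_1}/n\to\infty$ (for test (III), of order $p^2/n$). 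Then $|q_1-q_2|$ can itself diverge (for example $p=n^{4/5}$ in test (III) gives $|q_2|\,p/n\asymp n^{2/5}$), so Lipschitz continuity of $\bar{\Phi}$ does not close the argument, and your proposed check that ``the threshold stays in a bounded range following centering'' is exactly the wrong verification: it is false precisely where the theorem has content. The repair is either (a) a two-regime argument — use the Lipschitz bound when $q_2=O(1)$, and when $|q_2|\to\infty$ use that $q_1/q_2\to1$, so both arguments diverge together and $|\bar{\Phi}(q_1)-\bar{\Phi}(q_2)|\le \phi\big(\min(|q_1|,|q_2|)\big)\,|q_1-q_2|$ is exponentially small — or, cleaner, (b) do what the paper does: standardize by $\mu_n$ and $2n\sigma_n$ from the outset, so the moment mismatches are absorbed into the $O(p/n)s$ and $(1/p+p/n)O(s^2)$ terms of the characteristic-function expansion and no transfer step is ever needed.
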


 Theorem \ref{thm:onesamnormal} is derived by quantifying the difference between the  characteristic functions of $\log \Lambda_{n}$ and a normal distribution (see Lemma \ref{lm:chardiffgoal} in the Supplementary Material).
The local asymptotic regime $p/n\to 0$ is assumed mainly for the technical simplicity of evaluating the asymptotic expansions of the characteristic functions. 
Under the conditions of Theorem \ref{thm:onesamnormal}, 
$ \bar{\Phi}[ \{ \chi^2_f(\alpha)+2\mu_n \}/(2n\sigma_n) ] - \alpha $ in \eqref{eq:normalbias1} can be approximated by  
$\bar{\Phi}\{ z_{\alpha} + (f+2\mu_n)/(2n\sigma_n)\} - \bar{\Phi}(z_{\alpha})$,
where 
$(f+2\mu_n)/(2n\sigma_n)$ is of the order of $pn^{-d_1}$ (see Remark \ref{rm:order} in the Supplementary Material). 
Consequently, when 
the chi-squared approximation fails, i.e., 
$pn^{-d_1}\not \to 0$, 
we know that 
$ \bar{\Phi}[ \{ \chi^2_f(\alpha)+2\mu_n \}/(2n\sigma_n) ] - \alpha $  in 
\eqref{eq:normalbias1} 
characterizes the corresponding unignorable bias of the chi-squared approximation. 
Similarly,  
we can show that  
$ \bar{\Phi}[ \{ \chi^2_f(\alpha)+2\rho \mu_n \}/(2\rho n\sigma_n) ] - \alpha $
can be approximated by 
$\bar{\Phi}\{ z_{\alpha} + (f+2\rho \mu_n)/(2\rho n\sigma_n)\} - \bar{\Phi}(z_{\alpha})$,
where $(f+2\rho \mu_n)/(2\rho n\sigma_n)$ is of the order of $p^{2/d_2}n^{-2}$. 
Therefore, when the chi-squared approximation with the Bartlett correction fails, i.e., $pn^{-d_2}\not \to 0$,
we know that \eqref{eq:normalbias2} characterizes the corresponding unignorable approximation bias.

\begin{remark}\label{rm:orderelation}
Although the above discussions consider  $p/n^{d_1}\not \to 0$ and $p/n^{d_2}\not \to 0$,  \eqref{eq:normalbias1} and \eqref{eq:normalbias2} in Theorem \ref{thm:onesamnormal} also hold under  the asymptotic regimes $p/n^{d_1}\to 0$ and $p/n^{d_2}\to 0$ examined in Theorem \ref{thm:onesamchisq}.  
However,  since Theorems \ref{thm:onesamchisq} and  \ref{thm:onesamnormal} focus on different asymptotic regimes 
and are proved using different techniques, 
we can show that when $p/n^{d_1}\to 0$ and $p/n^{d_2}\to 0$, \eqref{eq:normalbias1} and \eqref{eq:normalbias2} have an additional remainder term 
$O\{({p}/{n})^{(1-\delta)/2}+f^{-(1-\delta)/6} \}$ compared to \eqref{eq:chisqapprox} and \eqref{eq:chisqapproxbartcorr}, respectively;  see  Remark \ref{rm:order} in the Supplementary Material. 
Therefore, 
under the asymptotic regimes of Theorem \ref{thm:onesamchisq},  
 \eqref{eq:chisqapprox} and \eqref{eq:chisqapproxbartcorr}
 provide a sharper characterization of the accuracy of  the chi-squared approximations than \eqref{eq:normalbias1} and \eqref{eq:normalbias2}, respectively. 
 \end{remark}

\section{Simulations} \label{sec:simulations}



We conduct simulation studies to evaluate the finite-sample performance of the theoretical results.  
Particularly, under the null hypothesis of the one-sample tests, we generate data with $\BSM{\mu} = (0,\ldots,0)^{\MYTRAN}$ and $\BSM{\Sigma} = \BIEN$ and use $\alpha=0.05$.  
We next consider problem (III), jointly testing mean and covariance, as an illustration example, and present the results of the chi-squared approximation without the Bartlett correction. 
For test (III) with the Bartlett correction and problems (I)--(II),
testing mean and covariance separately, the simulation results are similar and thus presented in $\S$\,\ref{sec:suppsim} of the Supplementary Material. 


First, to examine the phase transition boundary in Theorem \ref{thm:onesam},  
 we take  $p = \lfloor n^\epsilon \rfloor$, where $n\in\{100,500,1000,5000\}$, $\epsilon \in\{6/24,\ldots, 23/24 \}$, and $\lfloor \cdot \rfloor$ denotes the floor function. 
We plot the empirical type-\RNum{1} error 
 versus $\epsilon$ in Part (a) of Fig. \ref{fig:simoneexam}, which is based on 1,000 simulation replications. 
We can see that for all considered sample sizes, the empirical type-\RNum{1} errors start to inflate around $\epsilon=1/2$, matching the  phase transition boundary $d_1=1/2$ of test (III) in Theorem \ref{thm:onesam}. Similar results are obtained for  other tests as shown in the Supplementary Material.

\begin{figure}[!htbp]
\captionsetup[subfigure]{labelformat=empty}
\centering
\begin{subfigure}[t]{0.32\textwidth}
\centering
\includegraphics[width=\textwidth]{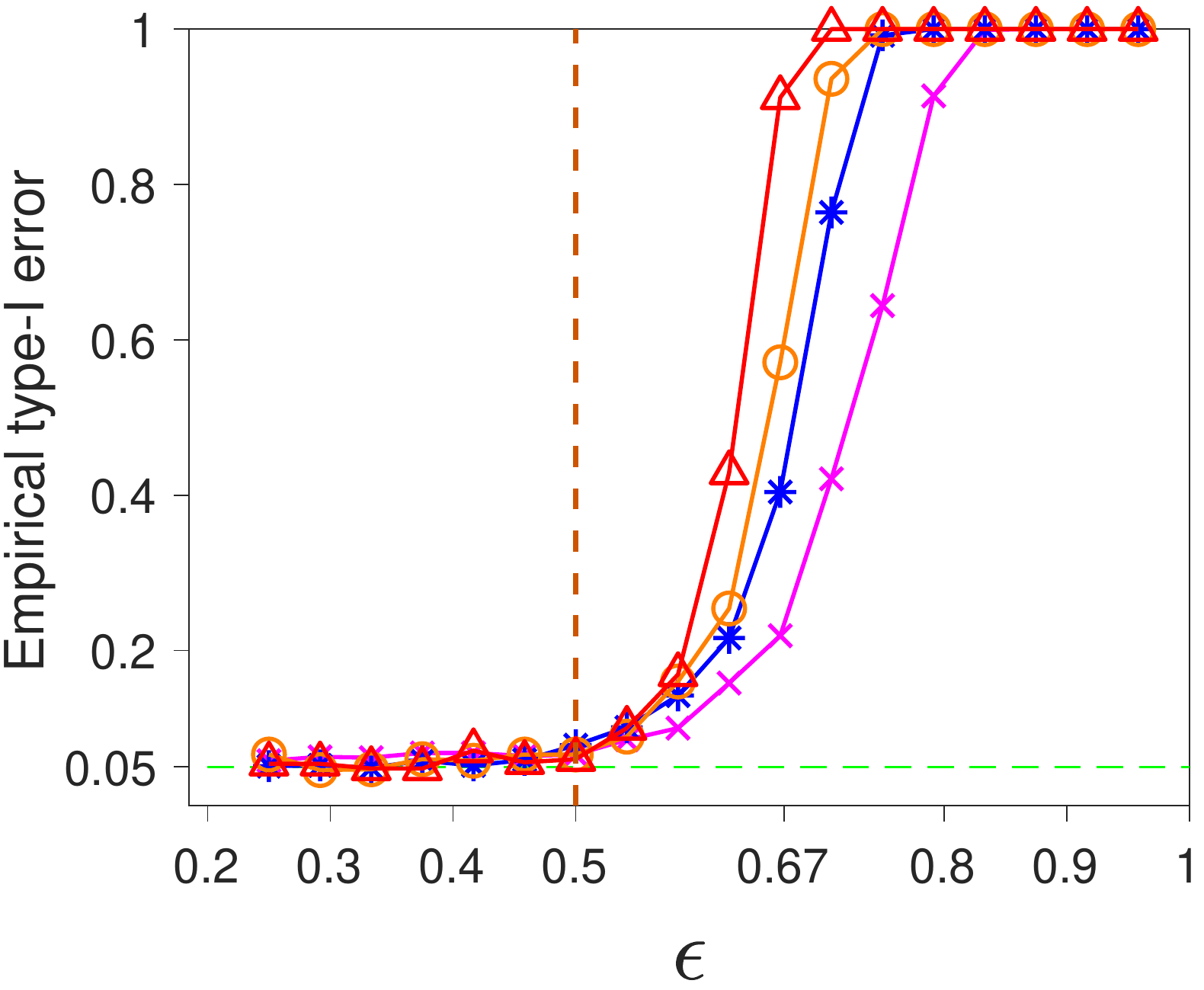}
\centerline{(a)}	
\end{subfigure}%
\hfill
\begin{subfigure}[t]{0.32\textwidth}
\centering
\includegraphics[width=\textwidth]{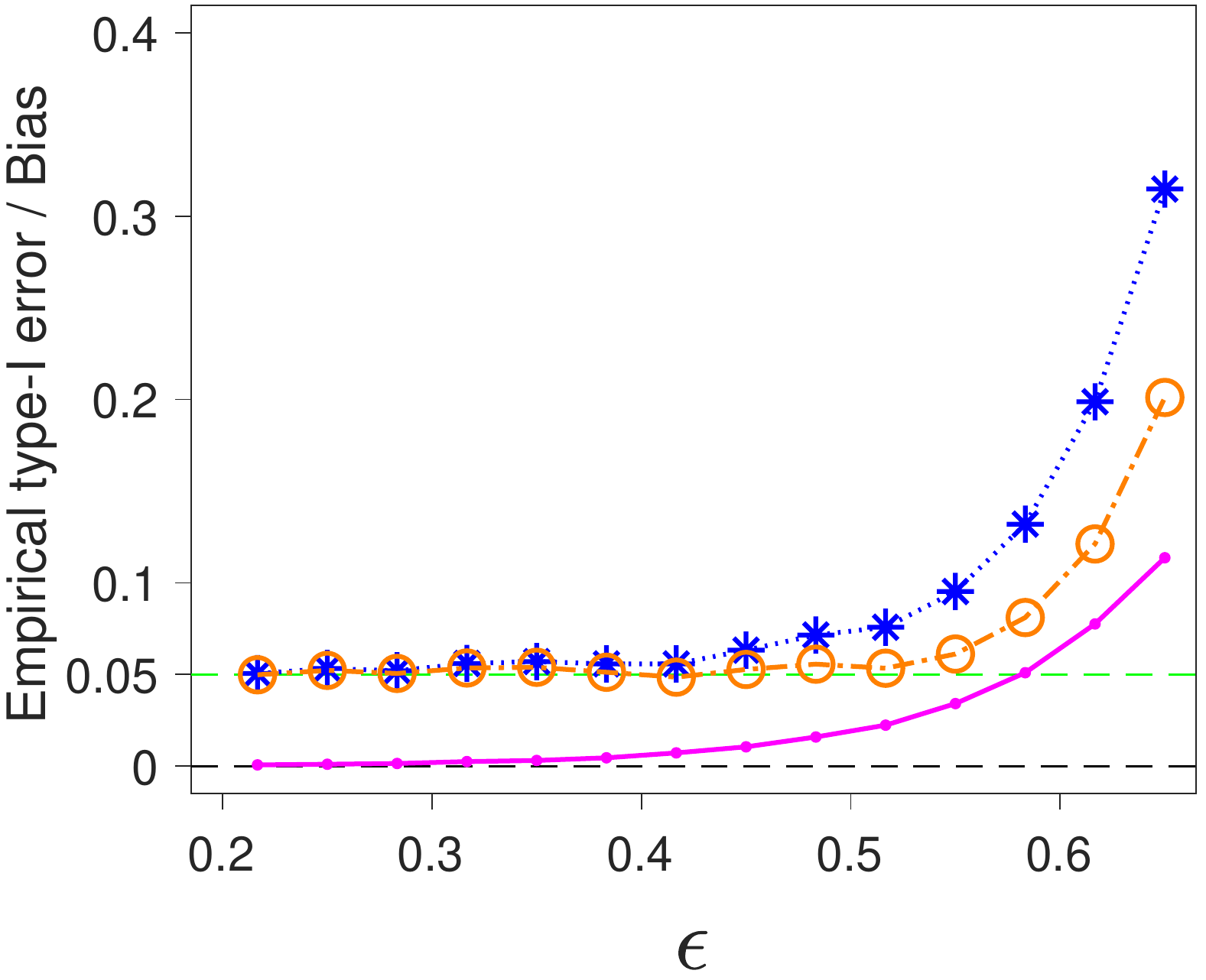}
\centerline{(b)}	
\end{subfigure}%
\hfill
\begin{subfigure}[t]{0.32\textwidth}
\centering
\includegraphics[width=\textwidth]{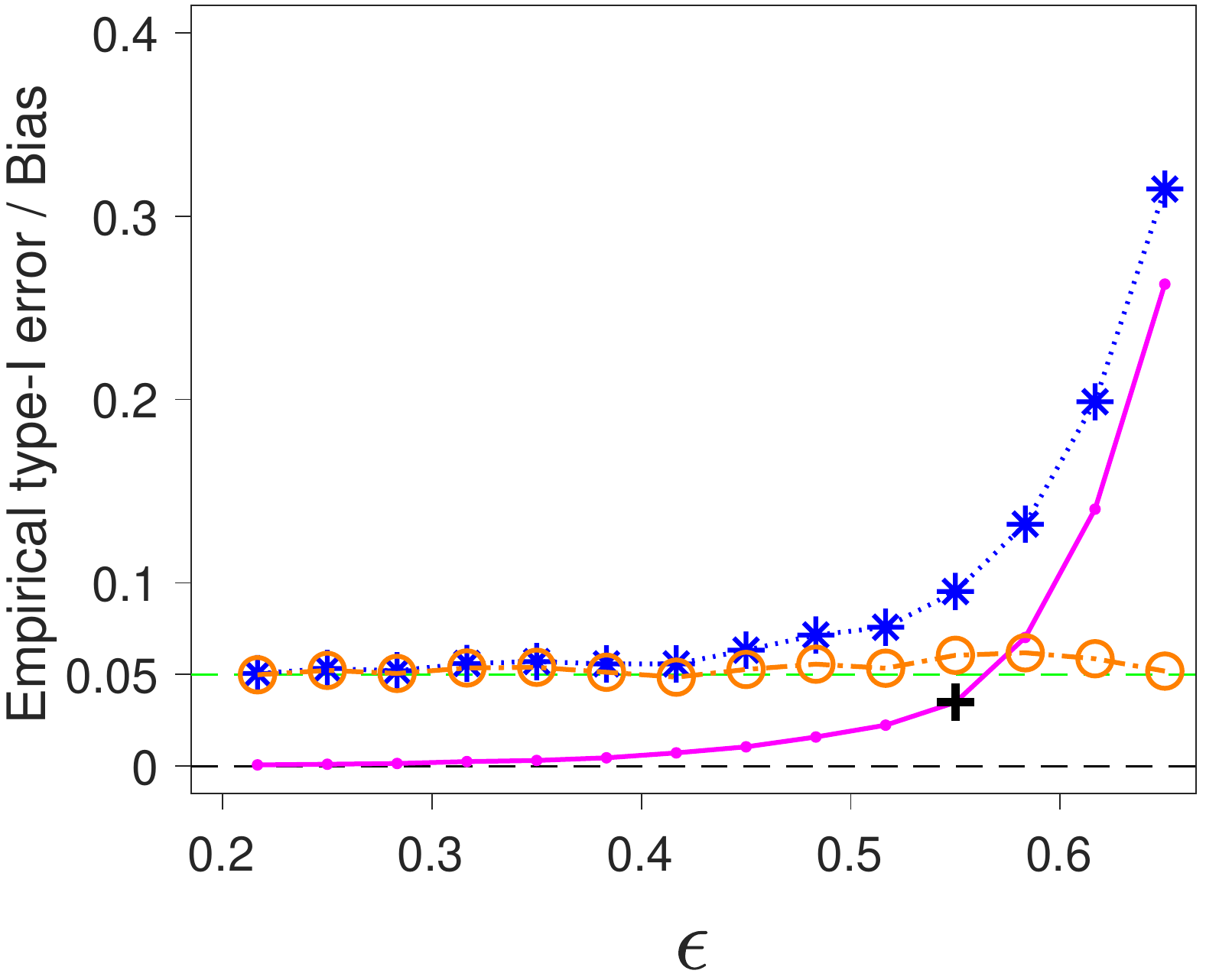}
\centerline{(c)}		
\end{subfigure}
\caption{Chi-squared approximation without the Bartlett correction for test (III):\ \ (a) \ Empirical type-\RNum{1} error  for $n=100$ (cross), $500$ (asterisk), $1000$ (square), and $5000$ (triangle);  the theoretical phase transition boundary $\epsilon=1/2$ (vertical dashed line). \ \ (b) \  Empirical type-\RNum{1} error  for $n=500$ (asterisk); asymptotic bias $\vartheta_1(n,p)\exp(-z_{\alpha}^2/2)/\sqrt{\pi}$ in \eqref{eq:chisqapprox}  (dot); the difference between the empirical type-\RNum{1} error  and  the asymptotic bias in  \eqref{eq:chisqapprox}  (circle). \ \ (c) \  Empirical type-\RNum{1} error   for $n=500$ (asterisk); the maximum bias over the bias in \eqref{eq:chisqapprox} and the bias $ \bar{\Phi}[ \{ \chi^2_f(\alpha)+2\mu_n \}/(2n\sigma_n) ] - \alpha $ in \eqref{eq:normalbias1} (dot);  the location where the bias in \eqref{eq:normalbias1} starts to dominate  the bias in \eqref{eq:chisqapprox} (plus sign); the difference between the empirical type-\RNum{1} error and the maximum bias (circle).
}\label{fig:simoneexam} 

\end{figure}

Second, we numerically evaluate the asymptotic biases in  Theorems \ref{thm:onesamchisq} and \ref{thm:onesamnormal}  with   
$p=\lfloor n^{\epsilon} \rfloor$, where  $n\in \{100, 500\}$ and $\epsilon\in (0,1)$. 
Parts (b) and (c) in Fig. \ref{fig:simoneexam} present the results with $n=500$,
 while the results with $n=100$ are similar and thus reported in the Supplementary Material. 
Part (b) shows that the asymptotic bias in \eqref{eq:chisqapprox} 
can be  an informative indicator of  the failure of Wilk's theorem. 
Particularly,  as $\epsilon$ increases, 
  the asymptotic bias in \eqref{eq:chisqapprox}  increases accordingly. 
At the $\epsilon$ values where the empirical type-\RNum{1} error begins to inflate (e.g. $\epsilon\in[0.4,0.5]$),  
the difference between the empirical type-\RNum{1} error and the asymptotic bias
is still close to 0.05 as shown in the circle line,
suggesting that  \eqref{eq:chisqapprox} can approximate the bias well. 
 When $\epsilon$ further increases beyond the phase transition boundary (e.g. $\epsilon>0.5$), 
 the asymptotic bias keeps increasing, and its large value indicates the failure of the chi-squared approximation, even though it now underestimates  the approximation bias in this regime.  
 To better characterize the approximation bias when $\epsilon$ is beyond the phase transition boundary, we can combine the results in Theorem \ref{thm:onesamnormal} together with those in  Theorem \ref{thm:onesamchisq}.
Specifically, Part (c) shows that taking the maximum over the two asymptotic biases in \eqref{eq:chisqapprox} and \eqref{eq:normalbias1}   gives a good evaluation    of the   approximation bias for a full range of  $\epsilon$, below or above the phase transition boundary. 
We also find that using \eqref{eq:normalbias1} itself does not evaluate the approximation bias well for small $\epsilon$ (results are not presented). 
Based on our theoretical and numerical results, when applying Wilk's theorem, we would recommend practitioners  to compare the asymptotic bias, either \eqref{eq:chisqapprox} or the maximum over \eqref{eq:chisqapprox} and \eqref{eq:normalbias1},  with a small threshold value that they may specify beforehand, e.g.,  0.01-0.02.
If the asymptotic bias is larger than the threshold, the chi-squared approximation should not be directly used, and other methods would be needed.

\section{Results of Other Tests} \label{sec:exten}
In addition to  three one-sample tests in $\S$\,\ref{sec:main}, we  also obtain similar theoretical and numerical  results for   other four  popular testing problems in the Supplementary Material.
Particularly, we consider three multiple sample tests: 
(IV) Testing the equality of several mean vectors;  (V) Testing the equality of several covariance matrices; (VI) Jointly testing the equality of several  mean vectors and covariance matrices.
We also study  (VII) Testing independence between multiple vectors.
Similarly to the results in $\S$\,\ref{sec:main}, for each likelihood ratio test, we establish not only the phase transition boundary of Wilk's theorem,  but also the approximation biases under the two asymptotic regimes, where Wilk's theorem holds or not, respectively. 
Please see the details in $\S$\,\ref{sec:extensions} of the Supplementary Material.

\section{Discussion} \label{sec:suppdisc}

This study derives the phase transition boundary and  characterizes the approximation  bias of Wilk's theorem in seven standard 
likelihood ratio tests. 
It is interesting to see that the phase transition boundary 
generally depends on the problem setting and whether the Bartlett correction is used or not, which emphasizes the  necessity of statistically-principled guidelines.  
The approximation bias of Wilk's theorem 
was also recently studied by \cite{anastasiou2018bounds}, which derived an explicit bound of the chi-squared approximation bias for a general family of regular likelihood ratio test statistics. 
However, 
as noted in that paper, 
their bounds are generally not optimized. It is thus of interest to further study the 
necessary and sufficient conditions for  
Wilk's phenomenon  and the approximation accuracy in such a general setting. 
Beyond the regular parametric inference problems,  Wilk's-type phenomenon has also been studied in 
geometrically irregular parametric models 
\citep{drton2011quantifying,chen2018conditional}, and extended to nonparametric models and statistical learning theory \citep[e.g.,][]{fan2000geometric,fan2001generalized,fan2004generalised,boucheron2011high}.  
 Understanding the phase transition behavior of  Wilk's phenomenon for the  likelihood ratio tests would shed light on   
studying the general Wilk's phenomenon under these complicated statistical models. 
Besides the likelihood ratio tests, 
similar phase transition  phenomena can also occur for other 
 popular test statistics. 
For instance,  \cite{xu2019pearson} recently studied the  approximation theory for Pearson's chi-squared statistics when the number of cells is large, and  demonstrated  
a similar phase transition phenomenon that  
 the asymptotic distribution of the test statistic can be either a chi-squared or a normal distribution.
It is interesting to further investigate the phase transition boundaries of these tests.


\section*{Acknowledgement}
The authors are grateful to the editor, Professor Paul Fearnhead, an associate editor and three referees for their valuable comments and suggestions.  
This research was partially supported by the U.S. National Science Foundation. 


\section*{Supplementary material}
\label{SM}
The supplementary material available at \textit{Biometrika}  online includes   theoretical results for the other four  testing problems in  Section \ref{sec:exten}, additional simulation studies, and the proofs of the theorems.
\bibliographystyle{chicago}
\bibliography{testrefer}


\vspace{6em}

\numberwithin{equation}{section}
\begin{center}
\textbf{\large{Supplementary Material for\\[2pt] ``On the Phase Transition of Wilk's Phenomenon''}}	
\end{center}
       
\bigskip
\medskip

\appendix

In this supplementary material, we present additional results  in $\S$\,\ref{sec:extensions}. 
Particularly, the theoretical results for tests (IV)--(VI) and test  (VII) are given in $\S$\,\ref{sec:extmulti} and $\S$\,\ref{sec:extindp}, respectively.   
All the simulations for tests (I)--(VII) are provided in $\S$\,\ref{sec:suppsim}. 
We next present the proofs for  the testing problem (III) as an illustration example in $\S$\,\ref{sec:pfthemexam}, where the corresponding results in Theorems \ref{thm:onesam}--\ref{thm:onesamnormal} are proved in $\S\S$\,\ref{sec:pfonesam3}--\ref{sec:pfonsamnomral3}, respectively. 
The proofs for other tests are similar and given in $\S$\,\ref{sec:pfthms}. 
The technical  lemmas are proved in $\S$\,\ref{sec:assitedlemmas}. 


\bigskip

\contentsline {chapter}{\numberline {\ref{sec:extensions}}Additional Results}{\pageref{sec:extensions}}
\contentsline {section}{\numberline {\ref{sec:extmulti}}Multiple-Sample Tests}{\pageref{sec:extmulti}}
\contentsline {section}{\numberline {\ref{sec:extindp}}Testing Independence between Multiple Vectors}{\pageref{sec:extindp}}
\contentsline {section}{\numberline {\ref{sec:suppsim}}Additional Simulations}{\pageref{sec:suppsim}}


\contentsline {chapter}{\numberline {\ref{sec:pfthemexam}}Proof Illustration with Problem (III)}{\pageref{sec:pfthemexam}}
\contentsline {section}{\numberline {\ref{sec:pfonesam3}}Proof of Theorem \ref{thm:onesam} (III)}{\pageref{sec:pfonesam3}}
\contentsline {section}{\numberline {\ref{sec:pfthmchisqiii}}Proof of Theorems \ref{thm:onesamchisq} (III)}{\pageref{sec:pfthmchisqiii}}
\contentsline {section}{\numberline {\ref{sec:pfonsamnomral3}}Proof of Theorems \ref{thm:onesamnormal} (III)}{\pageref{sec:pfonsamnomral3}}

\contentsline {chapter}{\numberline {\ref{sec:pfthms}}Proofs of Other Problems}{\pageref{sec:pfthms}}
\contentsline {section}{\numberline {\ref{sec:pfthm1and4}}Proofs of Theorems \ref{thm:onesam},  \ref{thm:allmult} \& \ref{Thm2.1}}{\pageref{sec:pfthm1and4}}
\contentsline {section}{\numberline {\ref{sec:proofprop}}Proofs of Propositions \ref{prop:1} \& \ref{prop:2}}{\pageref{sec:proofprop}}
\contentsline {section}{\numberline {\ref{sec:pfthm2}}Proofs of Theorems \ref{thm:onesamchisq}, \ref{thm:multisamchisq} \& \ref{thm:chisqindp}}{\pageref{sec:pfthm2}}
\contentsline {section}{\numberline {\ref{sec:pfthmnormal36}}Proofs of Theorems \ref{thm:onesamnormal},  \ref{thm:multsamnormal}, \& \ref{thm:indepnormal}}{\pageref{sec:pfthmnormal36}}

\contentsline {chapter}{\numberline {\ref{sec:assitedlemmas}} Proofs of Assisted Lemmas}{\pageref{sec:assitedlemmas}}
\contentsline {section}{\numberline {\ref{sec:gammafunc}}Results on Asymptotic Expansions of the Gamma Function}{\pageref{sec:gammafunc}}
\contentsline {section}{\numberline {\ref{sec:chisqlemma}}Lemmas for Theorems \ref{thm:onesamchisq}, \ref{thm:multisamchisq} \& \ref{thm:chisqindp}}{\pageref{sec:chisqlemma}}
\contentsline {section}{\numberline {\ref{sec:normallemmas}}Lemmas for Theorems \ref{thm:onesamnormal},  \ref{thm:multsamnormal}, \& \ref{thm:indepnormal}}{\pageref{sec:normallemmas}}

%



\appendix



%
%
%
%
%

\newpage

\section{Additional Results} \label{sec:extensions}



\subsection{Multiple-Sample Tests} \label{sec:extmulti}

This subsection presents the theoretical results of three multiple-sample tests (IV)--(VI).  
Under the multiple-sample problems, 
let $k$ denote the number of samples, which is assumed to be fixed compared to the sample size.  
 In each sample $i=1,\ldots,k,$ the observations $\MBF{x}_{i 1}, \cdots, \MBF{x}_{i n_{i}}$ are  independent and identically distributed   $\mathcal{N}_{p}\left(\BSM{\mu}_i, \BSM{\Sigma}_i\right)$  random vectors. In this subsection, we define $\overline{\MBF{x}}_{i}=n_i^{-1} \sum_{j=1}^{n_{i}} \MBF{x}_{i j}$ and $\MBF{A}_{i}=\sum_{j=1}^{n_{i}}(\MBF{x}_{i j}-\overline{\MBF{x}}_{i})(\MBF{x}_{i j}-\overline{\MBF{x}}_{i})^{\MYTRAN}$ for $i=1,\ldots, k$, and let $\MBF{A}=\MBF{A}_{1}+\ldots+\MBF{A}_{k}$ and $n=n_{1}+\ldots+n_{k}$.  
 We next briefly review the likelihood ratio tests for the problems (IV)--(VI). 
 
 \smallskip
 
(IV)\ \textit{Testing the Equality of Several Mean Vectors.}\  
Consider 
$H_0: \BSM{\mu}_1=\ldots= \BSM{\mu}_k$ agains  $H_a: H_0$ is not true, 
where the covariances of the $k$ samples are assumed to be the same.  
Define $\MBF{B}=\sum_{i=1}^{k} n_i (\overline{\MBF{x}}_i-\overline{\MBF{x}})(\overline{\MBF{x}}_i-\overline{\MBF{x}})^{\MYTRAN}$ and $ \overline{\MBF{x}} =n^{-1}\sum_{i=1}^{k}n_i\overline{\MBF{x}}_i$. 
Then, the likelihood ratio test statistic is $\Lambda_n = |\MBF{A}|^{{n}/{2}} |\MBF{A}+\MBF{B}|^{-{n}/{2}}$. 
When $p$ is fixed and $n\to \infty$, the chi-squared approximation is $-2\log \Lambda_n \xrightarrow{d} \chi^2_f$, where $f=(k-1)p$, and the chi-squared approximation with the Bartlett correction is  $-2\rho \log \Lambda_n \xrightarrow{d} \chi^2_f$, where $\rho = 1 - \{1 + (k+p)/2\}/n$. 

 \smallskip
 
(V)\ \textit{Testing the Equality of Several Covariance Matrices.}\  
Consider $H_{0}: \BSM{\Sigma}_1=\ldots=\BSM{\Sigma}_k$ against  $H_{a}: H_{0} $  is not true. 
For this test, $\Lambda_n=|\MBF{A}|^{-(n-k)/2}(n-k)^{(n-k)p/2}\times\prod_{i=1}^{k}(n_i-1)^{-(n_i-1)p/2}|\MBF{A}_i|^{(n_i-1)/2}$ is the  modified likelihood ratio test statistic with the unbiasedness property.  
When $p$ is fixed and $\min _{1 \leq i \leq k} n_{i} \rightarrow \infty$,   
the  chi-squared approximation is 
$-2  \log \Lambda_{n}\xrightarrow{d} \chi_{f}^{2}$, where $f=p(p+1)(k-1)/2,$ and the chi-squared approximation with the Bartlett correction is  $-2 \rho \log \Lambda_{n}\xrightarrow{d} \chi_{f}^{2}$,  where $\rho=1-\{6(p+1)(k-1)\}^{-1}(2p^2+3p-1)\{\sum_{i=1}^k(n_i-1)^{-1}-(n-k)^{-1}\}.$

\smallskip

(VI) \textit{Joint Testing the Equality of Mean Vectors and Covariance Matrices.} \ 
Consider 
$H_0:\BSM{\mu}_1=\ldots =\BSM{\mu}_k, ~ \BSM{\Sigma}_1=\ldots =\BSM{\Sigma}_k$ against $H_a: H_0$ is not true. 
The likelihood ratio test statistic is 
$\Lambda_n=n^{pn/2} |\MBF{A}+\MBF{B}|^{-n/2}\times \prod_{i=1}^{k}n_i^{-pn_i/2}|\MBF{A}_i|^{n_i/2} $.  
When $p$ is fixed and $\min_{1\leq i \leq k}n_i \rightarrow \infty$, the chi-squared approximation  is  $-2\log \Lambda_n\xrightarrow{d}\chi_{f}^2$, where $f=p(k-1)(p+3)/2$, and the chi-squared approximation with the Bartlett correction is $-2\rho \log \Lambda_n\xrightarrow{d}\chi_{f}^2$, where $\rho=1-\{6(k-1)(p+3)\}^{-1}(2p^2+9p+11)(\sum_{i=1}^kn_i^{-1}-n^{-1}).$

\smallskip

For the likelihood ratio tests (IV)--(VI), Theorem \ref{thm:allmult} gives the phase transition boundaries of the chi-squared approximations without and with the Bartlett correction.

\begin{theorem}\label{thm:allmult}
Assume $n_i>p+1$ for $i=1,\dots,k$, and there exists a constant $\delta \in (0,1)$ such that $\delta < n_i/n_j<\delta^{-1}$ for any  $1\leq i,j\leq k.$
Under $H_0,$ for the chi-squared approximations without and with the Bartlett correction, we have the following necessary and sufficient conditions:
 
\smallskip
 
(i) $\sup_{\alpha\in (0,1)}|\MYPR\{-2 \log \Lambda_{n}>\chi^2_{f}(\alpha)\}-\alpha|\to 0$ if and only if $p/n^{d_1} \to 0;$

(ii)  when $p=o(n),$  $\sup_{\alpha\in (0,1)}|\MYPR\{-2\rho  \log \Lambda_{n}>\chi^2_{f}(\alpha)\}- \alpha| \to 0$ if and only if $p/n^{d_2} \to 0$,


\smallskip

 \noindent where the values of $d_1$ and $d_2$ under the three testing problems are listed in the table below. 
\begin{center}
	\setlength{\tabcolsep}{18pt}
\begin{tabular}{lccc} 
      & (IV)  Mean & (V) Covariance & (VI) Joint   \\  
(i) without correction $d_1$:  &    $2/3$ &   $1/2$    &  $1/2$  \\ 
(ii) with correction $d_2$:  &    $4/5$ &   $2/3$    &  $2/3$  \\ 
\end{tabular}
\end{center} 
\end{theorem}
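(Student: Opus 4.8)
The plan is to follow the same route as for the one-sample tests in Theorem~\ref{thm:onesam}, reducing each statement to a uniform asymptotic analysis of the characteristic function of $-2\log\Lambda_n$ under $H_0$. For each of the multiple-sample tests (IV)--(VI), the likelihood ratio statistic factorizes over the $k$ samples and over the $p$ coordinates of the underlying Wishart matrices, so that the exact moment $E(\Lambda_n^{h})$ is a product of ratios of Gamma functions, and hence the cumulant generating function $\log E\{\exp(-2h\log\Lambda_n)\}$ is a finite sum of differences of $\log\Gamma$ values. First I would write down these exact moment formulas (the standard Wishart and Wilks' $\Lambda$ moments) and set $h=-it/2$ to obtain the characteristic function of $-2\log\Lambda_n$.

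Next I would apply the Stirling-type asymptotic expansion of $\log\Gamma$ recorded in $\S$\,\ref{sec:gammafunc} to each term and collect powers, comparing the result with $-\tfrac{f}{2}\log(1-2it)$, the log-characteristic function of $\chi^2_f$. The hypothesis $\delta<n_i/n_j<\delta^{-1}$ guarantees $n_i\asymp n$ for every $i$, so after summing over the $k$ fixed samples and over $j=1,\ldots,p$ the leading correction term is of the same order in $(n,p)$ as in the one-sample case, namely $p^{1/d_1}n^{-1}$ for the uncorrected statistic; here the degrees of freedom $f$ are enlarged by a fixed factor of order $k-1$, which does not affect the order in $n$ and $p$, and $p^{1/d_1}n^{-1}\to 0$ is equivalent to $p/n^{d_1}\to 0$. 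For the Bartlett-corrected statistic in part (ii), the factor $\rho$, whose definition now carries the sum $\sum_i n_i^{-1}$, is chosen precisely to annihilate this leading term, pushing the leading correction to order $p^{2/d_2}n^{-2}$; this is where the extra hypothesis $p=o(n)$ enters, making the expansion of $\rho^{-1}$ and of the shifted Gamma arguments valid.

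For sufficiency I would bound the supremum over $\alpha$ of the tail discrepancy by a smoothing (Esseen-type) inequality, controlling $\int |\phi_n(t)-\phi_{\chi^2_f}(t)|\,|t|^{-1}\,dt$ through the expansion above; when $p/n^{d_1}\to 0$ (respectively $p/n^{d_2}\to 0$) the leading correction vanishes and the integral tends to $0$, giving the uniform convergence. The necessity direction is the main obstacle: when $p/n^{d_1}\not\to 0$ I must show that the non-vanishing leading term in $\phi_n$ produces a genuinely non-vanishing bias in the tail probability rather than one that cancels under Fourier inversion. The plan is to pass to a subsequence along which $p/n^{d_1}$ is bounded away from $0$, so that $p^{1/d_1}n^{-1}\gtrsim 1$, and then lower-bound $|\MYPR\{-2\log\Lambda_n>\chi^2_f(\alpha)\}-\alpha|$ at a well-chosen $\alpha$ by inverting the explicit leading term, matching it to the corresponding normal-approximation bias, analogous to Theorem~\ref{thm:onesamnormal}, which also covers the regime where $p/n^{d_1}$ diverges. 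Verifying that this leading term does not degenerate, and carrying the same argument through the $\rho$-rescaling for the Bartlett-corrected case, is where the bulk of the technical effort lies.
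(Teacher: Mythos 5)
Your sufficiency half is workable in outline and is close in spirit to what the paper actually does for the accuracy results (Theorem \ref{thm:multisamchisq}): exact Gamma-ratio moments, the expansion of Lemma \ref{lm:generalcharexpan}, and inversion against the $\chi^2_f$ characteristic function. One caution, though: your claim that ``when $p/n^{d_1}\to 0$ the leading correction vanishes'' is false as stated. For test (V) the coefficient $\varsigma_1=\Theta(p^3/n)$ diverges along $p=n^{0.45}$ even though $p/n^{1/2}\to 0$; what is small is only the normalized quantity $\varsigma_1/\sqrt{f}$, and converting that into uniform closeness of distribution functions is exactly why the paper needs the finite-difference estimates of Propositions \ref{prop:infinitesumm}--\ref{prop:infinitesumm2} (or, in your Esseen route, a careful exploitation of the sign structure of $\mathrm{Re}[\varsigma_l\{(1-2it)^{-l}-1\}]$); it does not follow from ``the integral tends to $0$'' without such an argument.

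The genuine gap is in necessity. When $p/n^{d_1}\not\to 0$, the chi-squared-centered expansion you propose to ``invert'' is no longer a valid representation: the term-by-term inversion behind \eqref{eq:probexpaniii1} requires $\varsigma_1/\sqrt{f}\to 0$, which is precisely the condition being negated, and for $p/n^{d_1}\to\infty$ the inverted series does not even converge, so there is no ``explicit leading term'' to lower-bound the bias with. The paper instead makes the normal approximation the backbone of the whole theorem: it invokes the CLTs for $(-2\log\Lambda_n+2\mu_n)/(2n\sigma_n)$ with explicit $\mu_n,\sigma_n$ (Jiang--Yang, Jiang--Qi, He et al.), valid in all regimes up to $p/n\to C\in(0,1]$, reduces the uniform convergence via the P\'olya--Cantelli lemma and the quantile expansion \eqref{eq:xchisqorder} to the two matching conditions $\sqrt{2f}/(2n\sigma_n)\to 1$ and $\{O(1)+f+2\mu_n\}/(2n\sigma_n)\to 0$, and gets necessity from Taylor expansions of $\mu_n,\sigma_n$. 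Two pieces of that argument are unavoidable and absent from your plan: (a) part (i) carries no assumption $p=o(n)$, so you must rule out the regime $p/n\to C\in(0,1]$, which the paper does through monotonicity/convexity analysis of the limit functions (the $g_1,g_2,g_3$ computations showing the variance-matching condition fails); and (b) for Bartlett necessity in tests (V)--(VI) you must show the second-order term does not degenerate for multiple samples, i.e., that quantities such as $2\tilde{D}_{n,1}^2-3(k-1)\tilde{D}_{n,2}$ stay bounded away from zero uniformly in the $n_i$ --- an algebraic inequality the paper proves separately and that your proposal acknowledges only as ``technical effort'' without supplying the idea.
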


In Theorem \ref{thm:allmult}, the boundedness of $n_{i}/n_{j}$ suggests that the sizes of all the samples are comparable. The additional regularity  condition $p=o(n)$ in (ii) specifies a local asymptotic region, which is of practical interest, and simulation studies suggest that the conclusion can hold more generally without this condition.   
With a fixed $k$, the phase transition boundaries in Theorem \ref{thm:allmult} are parallel to those in Theorem \ref{thm:onesam}, and the analyses after Theorem \ref{thm:onesam}   apply to   Theorem \ref{thm:allmult} similarly. 
Particularly, examining covariances or not will yield different phase transition boundaries in the three problems. When $k$   also increases with $n$,
the phase transition boundaries would involve  $k, p$, and $n$, as illustrated in the following proposition.
\begin{proposition}\label{prop:1}
Consider $n>p+k$, $n-k \to \infty$, and $n-p\to \infty$. 
For $\Lambda_n$ in problem (IV), under $H_0$, as $n\to \infty$,  
\smallskip
 
(i) $\sup_{\alpha\in (0,1)}|\MYPR\{-2 \log \Lambda_{n}>\chi^2_{f}(\alpha)\}-\alpha|\to 0$ if and only if $\sqrt{pk}(p+k)/n \to 0;$

(ii) $\sup_{\alpha\in (0,1)}|\MYPR\{-2\rho  \log \Lambda_{n}>\chi^2_{f}(\alpha)\}- \alpha| \to 0$ if and only if $\sqrt{pk}(p^2+k^2)/n^2 \to 0$. 
\end{proposition}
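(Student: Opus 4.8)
The plan is to reduce the statement to a sharp two-term analysis of the null distribution of $\log\Lambda_n$, exploiting the exact Wilks' Lambda structure of test (IV). Under $H_0$ the statistic is pivotal, and $-2\log\Lambda_n=-n\log W$ with $W=|\MBF{A}|/|\MBF{A}+\MBF{B}|$ a Wilks' Lambda statistic admitting the product representation $W\stackrel{d}{=}\prod_{i=1}^{p}B_i$, where the $B_i\sim\mathrm{Beta}\{(n-k-i+1)/2,(k-1)/2\}$ are independent. Hence $\log\Lambda_n$ is a sum of $p$ independent log-Beta variables whose mean $\mu_n$ and variance $n^2\sigma_n^2$ are \emph{exactly} given by digamma and trigamma sums,
\[
\mu_n=\frac{n}{2}\sum_{i=1}^{p}\{\psi(a_i)-\psi(c_i)\},\qquad
\sigma_n^2=\frac{1}{4}\sum_{i=1}^{p}\{\psi'(a_i)-\psi'(c_i)\},
\]
with $a_i=(n-k-i+1)/2$ and $c_i=(n-i)/2$.

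First I would invoke the characteristic-function comparison already established for Theorems \ref{thm:allmult} and \ref{thm:onesamnormal} (Lemma \ref{lm:chardiffgoal} in the Supplementary Material), which applies here unchanged because the characteristic function of $\log\Lambda_n$ is an explicit product of Gamma-function ratios and the comparison never uses boundedness of $k$. This shows that, up to a remainder that is uniform in $\alpha$, $\MYPR\{-2\log\Lambda_n>\chi^2_f(\alpha)\}-\alpha$ is governed by $\bar{\Phi}\{(\chi^2_f(\alpha)+2\mu_n)/(2n\sigma_n)\}-\alpha$, hence by the standardized mean gap $\Delta_n=(f+2\mu_n)/(2n\sigma_n)$ together with the variance ratio $4n^2\sigma_n^2/(2f)$. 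The whole problem thus reduces to proving that the approximation holds if and only if $\Delta_n\to0$ and $4n^2\sigma_n^2/(2f)\to1$.

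Next I would expand the digamma and trigamma sums via $\psi(x)=\log x-1/(2x)+O(x^{-2})$ and $\psi'(x)=1/x+O(x^{-2})$, now tracking the diverging $k$ carefully. The decisive feature is that the leading term of $2\mu_n$ equals $-f=-(k-1)p$, so the $O(1)$ contributions cancel and the residual is genuinely second order; bookkeeping the next terms gives $f+2\mu_n\asymp-kp(k+p)/n$ and $4n^2\sigma_n^2=2f\{1+O((p+k)/n)\}$, whence $\Delta_n\asymp\sqrt{pk}\,(p+k)/n$. Since $\sqrt{pk}\ge1$, the variance-ratio deviation $O\{(p+k)/n\}$ is dominated by $|\Delta_n|$, so $\Delta_n\to0$ forces $4n^2\sigma_n^2/(2f)\to1$ as well, while $\Delta_n\not\to0$ makes the bias bounded away from zero along a subsequence. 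This establishes the necessary and sufficient condition $\sqrt{pk}\,(p+k)/n\to0$ in part (i).

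For part (ii) the same reduction holds with $\mu_n,\sigma_n$ replaced by $\rho\mu_n,\rho\sigma_n$, so the controlling quantity becomes $(f+2\rho\mu_n)/(2\rho n\sigma_n)$. The constant $\rho=1-\{1+(k+p)/2\}/n$ is designed exactly to annihilate the order-$kp(k+p)/n$ term in $f+2\mu_n$; I would therefore carry the Gamma-function expansions one further order in $n^{-1}$ and retain the $O(n^{-2})$ part of $1-\rho$, after which the leading surviving discrepancy is of order $kp(p^2+k^2)/n^2$, giving $(f+2\rho\mu_n)/(2\rho n\sigma_n)\asymp\sqrt{pk}\,(p^2+k^2)/n^2$ and the stated boundary. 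I expect the main obstacle to be twofold: obtaining the two-sided (necessity) bound uniformly over all $\alpha\in(0,1)$, including the extreme quantiles where $z_\alpha$ diverges and the uniform control of the characteristic-function remainder is most delicate; and executing the higher-order cancellation in part (ii) with enough precision to isolate the exact polynomial $p^2+k^2$ and to confirm that no lower-order power of $k$ or $p$ survives the Bartlett rescaling.
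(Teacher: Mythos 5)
Your reduction-plus-expansion strategy has the right general shape, but it contains two genuine gaps, both stemming from the fact that Proposition \ref{prop:1} must cover \emph{all} regimes with $n>p+k$, $n-k\to\infty$, $n-p\to\infty$, not only $(p+k)/n\to 0$. First, the reduction device you invoke, Lemma \ref{lm:chardiffgoal}, is not available here ``unchanged'': it is proved for problem (III) under $p/n\to 0$, and its analog for test (IV) (in the proof of Theorem \ref{thm:multsamnormal}) is established with $k$ fixed and comparable sample sizes --- the remainder there is a sum of $k-1$ gamma-ratio errors, each of size $O(t+t^2)$, so boundedness of $k$ \emph{is} used, contrary to your claim. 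The paper's proof of Proposition \ref{prop:1} instead reduces the supremum over $\alpha$ of the bias to the two conditions \eqref{eq:chisqconvgvar}--\eqref{eq:chisqconvgmean} (and \eqref{thm13}--\eqref{thm14} for part (ii)) via the moment generating function convergence \eqref{eq:mmtnormal1}, taken from Theorem 3 of \cite{He2018}, which is valid whenever $n-p\to\infty$ and $n-k\to\infty$ with no restriction on $p/n$ or $k/n$, combined with the P\'olya--Cantelli lemma; that weaker, CLT-level tool is exactly what makes the sup-over-$\alpha$ reduction work in every regime (your worry about extreme quantiles is resolved by P\'olya--Cantelli, not by pointwise characteristic-function bounds). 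Your Beta-product representation could in principle deliver such a CLT through Lyapunov's condition, but this is asserted rather than carried out.

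Second, your asymptotics $f+2\mu_n\asymp -kp(k+p)/n$ and $4n^2\sigma_n^2=2f\{1+O((p+k)/n)\}$, hence $\Delta_n\asymp\sqrt{pk}\,(p+k)/n$, are Taylor expansions whose validity requires essentially $(p+k)/n\to 0$ (more precisely $(k-1)/(n-p)\to 0$ so that the digamma arguments and log terms expand). When $p/n$ or $k/n$ stays bounded away from zero, $\mu_n$ and $\sigma_n^2$ retain genuinely logarithmic terms such as $n\log(1-p/n)$, your claimed orders are wrong, and the implication ``$\Delta_n\to 0$ forces the variance ratio to converge to $1$'' becomes circular, since it is the (now invalid) expansion that identifies $\Delta_n$ with $\sqrt{pk}\,(p+k)/n$. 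The paper closes precisely this hole with a separate case analysis, cases (i.2)--(i.4) and (ii.2)--(ii.4) in $\S$\,\ref{sec:pfprop1}: there the slope condition \eqref{eq:chisqconvgvar}, respectively \eqref{thm13}, is shown to fail outright, e.g.\ by proving that the explicit functions $g_4(C_1,C_2)$ and $g_5(C_1,C_2)$ are strictly negative via monotonicity arguments, together with the $p\leftrightarrow k$ symmetry of Wilks' Lambda for the mirrored cases. Your proposal omits this entirely, so the necessity direction is unproved outside the regime $(p+k)/n\to 0$. Within that regime, your digamma/trigamma computation is sound, and in effect re-derives Theorems 1--2 of \cite{He2018}, which is what the paper cites there.
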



Proposition \ref{prop:1} suggests that the total number of samples $k$ and the dimension of each observation $p$ play symmetric roles in the phase transition boundary of problem (IV).  
When $k$ is fixed, Proposition \ref{prop:1} is consistent with Theorem \ref{thm:allmult}.
To further  illustrate the cases with increasing $k$, 
 we consider $p=\lfloor n^{\epsilon} \rfloor$ and $k=\lfloor n^{\eta} \rfloor$, where $0<\epsilon,\eta <1$ and $\lfloor \cdot \rfloor$ denotes the floor of a number. Then the two phase transition boundaries in Proposition \ref{prop:1} become (i) $\max\{\epsilon, \eta\}+(\epsilon+\eta)/2 < 1$ and (ii) $\max\{\epsilon, \eta\}+(\epsilon+\eta)/4 < 1$, respectively. 
Specifically, for (i), when $\epsilon$ is close to 0, the largest value of $\eta$ is around $2/3,$ and vice versa;
when $\epsilon=\eta$, suggesting $p$ and $k$ are of the same order, the largest value of $\epsilon$ is $1/2$. For (ii), when  $\epsilon$ is close to 0, the largest value of $\eta$ is around $4/5,$ and vice versa; when $\epsilon=\eta$, the largest value of $\epsilon$ becomes $2/3$.

In addition to the phase transition boundaries above,  
the following Theorem \ref{thm:multisamchisq}, similarly to Theorem \ref{thm:onesamchisq}, 
further characterizes  
the accuracy of each chi-squared approximation for tests (IV)--(VI) when Wilk's theorem holds. 
Specifically, we consider  $p/n^{d_1}\to 0$ and $p/n^{d_2}\to 0$ for the chi-squared approximations without and with the Bartlett correction, respectively.



\begin{theorem}\label{thm:multisamchisq}
Assume that there exists a constant $\delta \in (0,1)$ such that $\delta < n_i/n_j<\delta^{-1}$ for any  $1\leq i,j\leq k,$ and $p\to \infty$ as $n\to \infty$.  
For each likelihood ratio test (IV)--(VI), let $d_i$, $i=1,2$ take the corresponding values in Theorem \ref{thm:allmult}. Then under $H_0$, for any $\alpha\in (0,1)$, 
\begin{itemize}
	\item[(i)] when $p/n^{d_1} \to 0$, 
\eqref{eq:chisqapprox} in Theorem \ref{thm:onesamchisq} holds with the value of $\vartheta_1(n,p)$ listed below;
\item[(ii)] when $p/n^{d_2}\to 0$,  \eqref{eq:chisqapproxbartcorr} in Theorem \ref{thm:onesamchisq} holds with the 
 values of $\vartheta_2(n,p)$ listed below.
\end{itemize}
Let $D_{n,r}=\sum_{i=1}^k n_i^{-r} - n^{-r}$ and $\tilde{D}_{n,r}=\sum_{i=1}^k (n_i-1)^{-r} - (n-k)^{-r}$. 
\begingroup
\allowdisplaybreaks
\begin{eqnarray}
&\hspace{-6.6em} \text{ (IV)~ Mean: } &\vartheta_1(n,p)=	\frac{p(k-1)(p+2+k)}{4n\sqrt{f}}, \notag \\[3pt]
&&  \vartheta_2(n,p)= \frac{(k - 1) p (p^2+k^2-2k - 4)}{24n^2\rho^2\sqrt{f}}; \notag \\[5.5pt] 
&\hspace{-3.6em}  \text{  (V)~  Covariance: }\quad \   & \vartheta_1(n,p)=\frac{\tilde{D}_{n,1}p(2p^2+3p-1)}{24\sqrt{f}}, \notag \\[3pt]
&&\vartheta_2(n,p)=\frac{p(p+1)}{24\rho^2\sqrt{f}}\Big\{ (p-1)(p+2)\tilde{D}_{n,2} - 6(k-1)(1-\rho)^2 \Big\};  \notag \\[5.5pt]
&\hspace{-7.2em} \text{ (VI)~ Joint: }&\vartheta_1(n,p)=\frac{D_{n,1}p\left(2 p^{2}+9 p+11\right)}{24\sqrt{f}}, \notag \\[3pt]
&& \vartheta_2(n,p)=\frac{p(p+3)}{24\rho^2\sqrt{f}}\Big\{(p+1)(p+2)D_{n,2}-6(k-1)(1-\rho)^2\Big\}. \notag
\end{eqnarray}
\endgroup 
\end{theorem}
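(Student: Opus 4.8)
The plan is to prove Theorem~\ref{thm:multisamchisq} by the same characteristic-function route that underlies Theorem~\ref{thm:onesamchisq}, exploiting the fact that under $H_0$ each of the statistics (IV)--(VI) has exact moments expressible as products of Gamma functions. First I would record the exact null distribution of each $-2\log\Lambda_n$ through its characteristic function: for these Wilks-type and Box-type statistics the $h$-th moment $E(\Lambda_n^{h})$ is a ratio of products of Gamma functions, so substituting the appropriate imaginary argument yields the log-characteristic function $K(t)=\log E\{\exp(-2it\log\Lambda_n)\}$ in closed form as a finite sum of $\log\Gamma$ terms. The Bartlett-corrected statistic $-2\rho\log\Lambda_n$ is handled by the rescaling $t\mapsto\rho t$, so (ii) follows from (i) with the extra factors of $\rho$ and a shift of one asymptotic order. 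This reduces everything to a uniform asymptotic analysis of $K(t)$ as $n\to\infty$ with $p\to\infty$.

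The multiple-sample feature enters at the level of how the Gamma-function products factor across the $k$ samples. For the covariance-involving statistics (V) and (VI), the $\log\Gamma$ sum splits into $k$ one-sample-type blocks, each a copy of the corresponding one-sample covariance or joint contribution with $n$ replaced by $n_i-1$ (for (V)) or $n_i$ (for (VI)), together with a pooled block in $n-k$ or $n$. Expanding each block and combining them is precisely what produces the quantities $\tilde D_{n,r}=\sum_i(n_i-1)^{-r}-(n-k)^{-r}$ and $D_{n,r}=\sum_i n_i^{-r}-n^{-r}$ appearing in the stated $\vartheta_1,\vartheta_2$. For the mean test (IV) no individual $n_i$ survives: the null law depends only on $(p,\,n-k,\,k-1)$ through a Wilks' $\Lambda$ structure, which is why its $\vartheta_1,\vartheta_2$ involve only $n$, $p$, and $k$. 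The boundedness condition $\delta<n_i/n_j<\delta^{-1}$ guarantees that these per-sample sums are of order $k/n^{r}$ and that all expansions are uniform across the samples.

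The core of the argument is then the expansion and inversion. Applying the uniform asymptotic expansions of $\log\Gamma$ from $\S$\,\ref{sec:gammafunc} to each block, the leading term of $K(t)$ reproduces the $\chi^2_f$ log-characteristic function $-(f/2)\log(1-2it)$, while the next-order corrections carry factors of $n^{-1}$ without correction and $n^{-2}$ with the Bartlett correction; these are the source of the bias. Exponentiating gives a characteristic function of the form $(1-2it)^{-f/2}\{1+\sum_j c_j(1-2it)^{-j}+\cdots\}$, and each term $(1-2it)^{-(f/2+j)}$ inverts to a $\chi^2_{f+2j}$ law, so that $\MYPR\{-2\log\Lambda_n>x\}$ equals $\MYPR\{\chi^2_f>x\}$ plus a linear combination of tail-probability differences $\MYPR\{\chi^2_{f+2j}>x\}-\MYPR\{\chi^2_f>x\}$. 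Evaluating at $x=\chi^2_f(\alpha)$ and using the normal approximation to the chi-squared quantile, $\chi^2_f(\alpha)\approx f+z_\alpha\sqrt{2f}$, each such difference contributes a factor $\exp(-z_\alpha^2/2)/\sqrt{\pi}$ from the chi-squared density at the quantile together with a factor $f^{-1/2}$; collecting the single leading contribution yields exactly $\vartheta_1(n,p)\exp(-z_\alpha^2/2)/\sqrt{\pi}$ and $\vartheta_2(n,p)\exp(-z_\alpha^2/2)/\sqrt{\pi}$, with the claimed orders $p^{1/d_1}n^{-1}$ and $p^{2/d_2}n^{-2}$ persisting because $k$ is fixed. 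The relevant inversion and quantile lemmas are those already developed in $\S$\,\ref{sec:chisqlemma} for Theorem~\ref{thm:onesamchisq}, so the multiple-sample case is genuinely parallel once the block decomposition above is in place.

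The main obstacle is the bookkeeping in the Gamma expansion. Because the number of $\log\Gamma$ factors grows linearly with $p$, the summations over $j=1,\ldots,p$ (and over the $k$ samples) must be carried to the correct order while the remainder is controlled uniformly over the entire regime $p/n^{d}\to 0$, not merely for fixed $p$; this is where the sharp error bounds $o(p^{1/d_1}/n)$ and $o(p^{2/d_2}/n^2)$ must be earned. Extracting the exact polynomial coefficients in $\vartheta_1,\vartheta_2$, for instance the factor $(p-1)(p+2)\tilde D_{n,2}-6(k-1)(1-\rho)^2$ in problem (V), requires tracking several orders of the $\log\Gamma$ expansion simultaneously and cancelling lower-order terms against the definitions of $f$ and $\rho$. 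The conceptual difficulty, as opposed to the purely computational one, is ensuring that the term identified as leading truly dominates uniformly in $p$, so that $\vartheta_1,\vartheta_2$ indeed capture the bias rather than being artifacts of a fixed-$p$ truncation.
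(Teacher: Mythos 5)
Your proposal follows essentially the same route as the paper's proof: the exact Gamma-product form of the null characteristic function, the block-wise $\log\Gamma$ expansion yielding $-(f/2)\log(1-2it)$ plus correction coefficients whose combination across the $k$ samples produces $D_{n,r}$ and $\tilde{D}_{n,r}$, term-by-term inversion into $\chi^2_{f+2j}$ probabilities, and evaluation at $\chi^2_f(\alpha)$ via the quantile/difference machinery of $\S$\,\ref{sec:chisqlemma} --- precisely the paper's Lemma \ref{lm:generalcharexpan}, Propositions \ref{prop:infinitesumm}--\ref{prop:infinitesumm2}, and Lemma \ref{lm:order2diff}. One point to sharpen: part (ii) is not a rescaling corollary of part (i) --- the improvement to order $p^{2/d_2}n^{-2}$ occurs because the Bartlett factor $\rho$ is chosen exactly so that the first-order coefficient $\varsigma_1$ vanishes, leaving $\varsigma_2$ as the leading term (so $\vartheta_2 = 2\varsigma_2/\sqrt{f}$); your closing remark about ``cancelling lower-order terms against the definitions of $f$ and $\rho$'' gestures at this, but it is the central mechanism rather than a bookkeeping detail.
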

\smallskip


 Theorem \ref{thm:multisamchisq} shows that for multiple-sample tests (IV)--(VI),
  \eqref{eq:chisqapprox} and \eqref{eq:chisqapproxbartcorr} in Theorem \ref{thm:onesamchisq} still hold.
However, the values of $\vartheta_1(n,p)$ and $\vartheta_2(n,p)$  depend on the testing problems, and are different from those in Theorem \ref{thm:onesamchisq}. 
Similarly to Theorem \ref{thm:onesamchisq}, in each test (IV)--(VI), 
we also know that $\vartheta_1(n,p)$ and $\vartheta_2(n,p)$ are of the orders of  
$p^{1/d_1}n^{-1}$ and $p^{2/d_2}n^{-2}$, respectively. 
Then $\vartheta_1(n,p)\exp(-z_{\alpha}^2/2)/\sqrt{\pi}$ in \eqref{eq:chisqapprox} and  $\vartheta_2(n,p)\exp(-z_{\alpha}^2/2)/\sqrt{\pi}$ in \eqref{eq:chisqapproxbartcorr} are the leading terms of the biases of the chi-squared approximations without and with the Bartlett correction, respectively. 
We can similarly use the derived asymptotic biases to measure the  approximation accuracy, and 
please see the simulation studies for multiple-sample tests (IV)--(VI) in $\S$\,\ref{sec:suppsim}.

Theorem \ref{thm:multisamchisq} focuses on the local asymptotic regime of $(n,p)$ when Wilk's theorem holds. 
When $p$ further increases such that Wilk's theorem fails, 
the biases of the chi-squared approximations  become  unignorable.  
The following Theorem \ref{thm:multsamnormal} characterizes such unignorable biases of the chi-squared approximations in testing problems (IV)--(VI). 
Similarly to Theorem \ref{thm:onesamnormal}, 
we consider a general local asymptotic regime $p/n\to 0$, which includes the case when Wilk's theorem fails, i.e.,  $p/n^{d_1}\not \to 0$ and $p/n^{d_2}\not \to 0$ for the chi-squared approximations without and with the Bartlett correction, respectively. 


\begin{theorem}\label{thm:multsamnormal}
Assume that there exists a constant $\delta \in (0,1)$ such that $\delta < n_i/n_j<\delta^{-1}$ for any  $1\leq i,j\leq k.$
Moreover, assume $p\to \infty$ and $p/n_i \to 0$ as $n_i \to \infty$. 
For each likelihood ratio test  (I)--(III), under $H_0$, for any $\alpha\in (0,1)$,  \eqref{eq:normalbias1} and \eqref{eq:normalbias2} in Theorem \ref{thm:onesamnormal} hold under three testing problems (IV)--(VI)  with $\mu_n$ and $\sigma_n$ listed below. 
\begingroup
\allowdisplaybreaks
\begin{eqnarray}
&\hspace{-3.6em} \text{(IV) Mean: }  &\hspace{-0.2em}  \mu_n=\frac{n}{2}\big\{(n-p-k-1/2) (L_{n-1,p}-L_{n-k,p} )+(k-1)L_{n-1,p}+pL_{n-1,k-1}  \big\}, \notag \\ 
&&\hspace{0.1em}	\sigma_n^2=\frac{1}{2}\big( L_{n-1,p} - L_{n-k,p} \big);\notag \\
& \hspace{-1.6em}  \text{(V) Covariance: } & \hspace{-0.2em} \mu_n=\frac{1}{2} \sum_{i=1}^{k}(n_i-1)\Big\{ ( n- p-k-1/2)L_{n-k,p}-\left( n_{i}- p-3/2\right)  L_{n_i-1, p} \Big\}, \notag \\	 
& &\hspace{0.1em} \sigma_{n}^{2}= \frac{(n-k)^2}{2n^2} \Biggr\{L_{n-k,p}-\sum_{i=1}^{k}\left(\frac{n_{i}-1}{n-k}\right)^{2}L_{n_i-1,p} \Biggr\}; \notag \\
&\hspace{-3.7em} \text{(VI) Joint: } &\hspace{-0.2em} \mu_n =\frac{1}{2} \left[-kp +n\Big(n-p-\frac{3}{2}\Big)L_{n,p}-\sum_{i=1}^{k}\biggr\{\frac{p}{2n_i}+n_i\Big(n_i-p-\frac{3}{2}\Big)L_{n_i-1,p}\biggr\}\right], \hspace{0.4em} \notag \\ 
&&\hspace{0.1em} \sigma_n^2 = \frac{1}{2}\left(L_{n,p}- \sum_{i=1}^{k}\frac{n_i^2}{n^2} \times L_{n_i-1,p} \right). \notag
\end{eqnarray}
\endgroup
\end{theorem}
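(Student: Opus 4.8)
The plan is to follow the same route as the proof of Theorem \ref{thm:onesamnormal}, reducing everything to a sharp comparison between the characteristic function of $\log \Lambda_n$ and that of a Gaussian and then invoking the master smoothing estimate of Lemma \ref{lm:chardiffgoal}. The starting point is that, under $H_0$, each statistic in (IV)--(VI) is a power of a product of independent Wishart determinants: after the usual reduction to $\Sigma_i = I_p$, the pooled matrix $A = \sum_i A_i \sim W_p(n-k, I_p)$, the between-group matrix $B \sim W_p(k-1, I_p)$, and the within-group matrices $A_i \sim W_p(n_i-1, I_p)$ are jointly Wishart, so the classical moment formulae \citep{anderson2003introduction,Muirhead2009} give $E(\Lambda_n^{\,h})$ in closed form as a finite product of ratios of Gamma functions indexed by $j = 1, \ldots, p$ and by the samples $i = 1, \ldots, k$. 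Setting $h = \mathrm{i} t$ yields the exact characteristic function $\varphi_n(t) = E\{\exp(\mathrm{i} t \log \Lambda_n)\}$, again as a product of Gamma ratios.

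The next step is to take logarithms and apply the asymptotic expansions of $\log \Gamma$ collected in $\S$\,\ref{sec:gammafunc}. Differentiating the leading Stirling terms, the first two cumulants of $\log\Lambda_n$ become sums of digamma values $\psi(\cdot)$ and trigamma values $\psi'(\cdot)$ over $j$ and $i$; substituting $\psi(x) = \log x - (2x)^{-1} + O(x^{-2})$ and $\psi'(x) = x^{-1} + O(x^{-2})$ and summing telescopically over $j$ produces the logarithmic blocks $L_{x,p} = \log(1 - p/x)$. Matching these to $\mu_n = E\{\log\Lambda_n\}$ and $n^2\sigma_n^2 = \mathrm{Var}\{\log\Lambda_n\}$ recovers the three displayed pairs, while the cubic and higher Stirling terms bound the deviation of $\log\varphi_n(t)$ from the Gaussian exponent $\mathrm{i} t\,\mu_n - t^2 n^2\sigma_n^2/2$ uniformly over $|t|\le T_n$ for a suitable growing cutoff $T_n$. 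Feeding this bound into Lemma \ref{lm:chardiffgoal} turns the characteristic-function estimate into the distributional statements \eqref{eq:normalbias1}--\eqref{eq:normalbias2}, with the Kolmogorov error $O\{(p/n)^{(1-\delta)/2} + f^{-(1-\delta)/6}\}$ arising, respectively, from the remainder of the Gaussian approximation on $|t|\le T_n$ and from the smoothing (Esseen) truncation at $T_n$, and with $f$ the degrees of freedom recorded in Theorem \ref{thm:allmult}.

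The main obstacle, relative to the one-sample proof, is the bookkeeping across the $k$ samples. The Gamma-ratio product now runs over both $j$ and $i$, and for (V)--(VI) the determinants $|A|$ and $|A_i|$ are dependent, so the cumulant sums mix contributions from all samples; assembling them into the compact forms $\sum_i (n_i/n)^2 L_{n_i-1,p}$ (variance) and $\sum_i n_i(n_i - p - 3/2)\,L_{n_i-1,p}$ (mean) requires keeping the digamma and trigamma remainders uniform in $i$. This uniformity is precisely what the comparability hypothesis $\delta < n_i/n_j < \delta^{-1}$ buys: it forces every $n_i$ to be of order $n$, so that $p/n_i \to 0$ holds simultaneously for all $i$ and a single error exponent governs all the remainders. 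Once uniformity is secured, the surviving steps are identical Gamma-function and smoothing estimates to those already used for (I)--(III), and the proof reduces to verifying the three closed-form pairs $(\mu_n, \sigma_n^2)$ by direct computation.
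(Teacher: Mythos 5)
Your proposal follows essentially the same route as the paper's proof: the paper likewise starts from the exact characteristic function given by the Gamma-ratio moment formulas, expands the log-Gamma terms (via Lemmas \ref{lm:ratiogammapprox} and \ref{lm:gammapaproxexpan}) to match the Gaussian exponent with the displayed $(\mu_n,\sigma_n^2)$ up to the remainder bound \eqref{eq:chardiffgoalgeneral}, and then converts this into \eqref{eq:normalbias1}--\eqref{eq:normalbias2} by the Esseen-type smoothing argument with cutoff $T=\min\{(n/p)^{(1-\delta)/2}, f^{(1-\delta)/6}\}$, using the comparability condition $\delta<n_i/n_j<\delta^{-1}$ exactly as you describe to keep all remainders uniform in $i$. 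Two small label/precision corrections: the smoothing inequality is Lemma \ref{lm:chardiff} (Ushakov), not Lemma \ref{lm:chardiffgoal} --- the latter is the one-sample characteristic-function comparison whose multi-sample analogue is precisely what you are re-proving --- and the displayed $(\mu_n,\sigma_n^2)$ are asymptotic rather than exact cumulants of $\log\Lambda_n$, so the matching holds only up to errors that must be (and in the paper are) absorbed into the remainder term.
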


Theorem \ref{thm:multsamnormal} shows that \eqref{eq:normalbias1} and  \eqref{eq:normalbias2} still hold for multiple-sample tests (IV)--(VI),
where the values of $\mu_n$ and $\sigma_n^2$ depend on the specific testing problem.  
Similarly to Theorem \ref{thm:onesamnormal}, 
the analysis in Remark \ref{rm:order} also applies here, and   we know that when $pn^{-d_1}\not \to 0$, \eqref{eq:normalbias1}  characterizes the unignorable biases for
 the chi-squared approximation, and when $pn^{-d_2}\not \to 0$,
  \eqref{eq:normalbias2} characterizes the  unignorable biases for the chi-squared approximation with the Bartlett correction.  
Moreover, the analysis in Remark \ref{rm:orderelation} also applies similarly to the multiple-sample tests (IV)--(VI), and thus is not repeated here.

\subsection{Testing Independence between Multiple Vectors} \label{sec:extindp} 
This subsection studies testing the independence between $k$ sets of multivariate normal variables. 
Suppose $\MBF{x}_1,\dots,\MBF{x}_n\in \MBB{R}^p$ are independent and identically distributed   $\mathcal{N}_p(\BSM{\mu},\BSM{\Sigma})$ random vectors, and we partition $x_i$ and $\BSM{\Sigma}$ as $\MBF{x}_i = (\BSM{\xi}_{i1}^{\MYTRAN},\dots,\BSM{\xi}_{ik}^{\MYTRAN})^{\MYTRAN}$ and $\BSM{\Sigma} = (\Sigma_{jl})_{1\leq j,l \leq k}$, respectively, where $\BSM{\xi}_{i,j}$ is of size $p_j\times 1$, $\Sigma_{jl}$ is a  $p_j \times p_l$ sub-matrix of $\BSM{\Sigma}$, and $\sum_{j=1}^k p_j=p$. In this subsection, we define $\overline{\MBF{x}} = n^{-1}\sum_{i=1}^{n}\MBF{x}_i$, $\bar{\BSM{\xi}}_j=n^{-1}\sum_{i=1}^n \xi_{ij}$, $ \MBF{A} = \sum_{i=1}^{n}(\MBF{x}_i-\overline{\MBF{x}})(\MBF{x}_i-\overline{\MBF{x}})^{\MYTRAN}$, and $A_{jj}=\sum_{i=1}^n(\BSM{\xi}_{ij}-\bar{\BSM{\xi}}_{j})(\BSM{\xi}_{ij}-\bar{\BSM{\xi}}_{j})^{\MYTRAN}.$  

\smallskip

(VII)\ \textit{Testing Independence of Subvectors of Multivariate Normal Distribution.}
For the multivariate normal distribution, 
 testing the independence between $k$ sets of vectors $\BSM{\xi}_{i,1},\dots,\BSM{\xi}_{i,k}$ is equivalent to testing $H_0:\Sigma_{jl} = 0$, for  
  $1\leq j<l \leq k$, 
 against $H_a: H_0$ is not true.  
The likelihood ratio statistic is $\Lambda_n = |\MBF{A}|^{n/2}\prod_{j=1}^k |{A}_{jj}|^{-n/2}.$
When $p_1,\dots,p_k$ are fixed, the chi-squared approximation is $-2\log \Lambda_n\xrightarrow{d} \chi_{f}^2$, where $f =(p^2-\sum_{i=1}^{k}p_i^2)/2;$ the chi-squared approximation with the Bartlett correction is  $-2\rho \log \Lambda_n\xrightarrow{d} \chi_{f}^2$, where $\rho=1- (3/2n)^{-1}-\{3n(p^2-\sum_{i=1}^{k}p_i^2)\}^{-1}(p^3-\sum_{i=1}^{k}p_i^3).$

\smallskip

Theorem \ref{Thm2.1} below gives the phase transition boundaries of the chi-squared approximations without and with the Bartlett correction for test (VII). 

\begin{theorem}\label{Thm2.1}
Assume $n>p+1$ and there exists $\delta \in (0,1)$ such that $\delta < p_i/p_j<\delta^{-1}$ for 
 	$1\leq i,j \leq k$. 
	For $\Lambda_n$ in problem (VII), under $H_0$, as $n\to \infty$, 
\smallskip
 
(i) $\sup_{\alpha\in (0,1)}|\MYPR\{-2 \log \Lambda_{n}>\chi^2_{f}(\alpha)\}-\alpha|\to 0$ if and only if $p/n^{1/2} \to 0;$

(ii)   when $p=o(n),$  $\sup_{\alpha\in (0,1)}|\MYPR\{-2\rho  \log \Lambda_{n}>\chi^2_{f}(\alpha)\}- \alpha| \to 0$ if and only if $p/n^{2/3} \to 0$.
\end{theorem}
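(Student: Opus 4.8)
The plan is to reduce the theorem to an asymptotic analysis of the cumulant generating function of $Y_n=-2\log\Lambda_n$, which under $H_0$ has an exact closed form. For test (VII) the ratio $W=\Lambda_n^{2/n}=|\MBF{A}|\prod_{j=1}^k|A_{jj}|^{-1}$ is, under independence, distributed so that its $h$-th moment is a product of multivariate Gamma function ratios,
\[
E(W^{h})=\frac{\Gamma_p\{(n-1)/2\}}{\Gamma_p\{(n-1)/2+h\}}\prod_{j=1}^{k}\frac{\Gamma_{p_j}\{(n-1)/2+h\}}{\Gamma_{p_j}\{(n-1)/2\}}.
\]
Since $Y_n=-n\log W$, setting $h=-sn$ gives $K_n(s)=\log E\{e^{sY_n}\}$ as a signed sum of $\log\Gamma$ differences. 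Each $\Gamma_p(a)$ factorizes into $p$ ordinary Gamma functions and $\sum_j p_j=p$, so the $\log\pi$ prefactors cancel and $K_n(s)=\sum_{i=1}^{p}\{N(i)-1\}D_i$, where $m=(n-1)/2$, $D_i=\log\Gamma\{m-(i-1)/2-sn\}-\log\Gamma\{m-(i-1)/2\}$, and $N(i)=\#\{j:p_j\ge i\}$; the coefficients sum to zero, which is the algebraic expression of the matching of degrees of freedom.

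Next I would substitute the asymptotic expansion of $\log\Gamma$ from $\S$\,\ref{sec:gammafunc} into each $D_i$, expanding uniformly in $i$ around the argument $m-(i-1)/2$, which is of order $n$. Collecting the order-$n$ and order-$1$ contributions reproduces $-(f/2)\log(1-2s)$, the cumulant generating function of $\chi^2_f$, with $f=(p^2-\sum_j p_j^2)/2$; this is Wilks' matching. The first non-cancelling correction governs the phase transition: using the balance condition $\delta<p_i/p_j<\delta^{-1}$ one checks that this residual is of order $p^2/n$, whence the candidate boundary $p/n^{1/2}\to 0$ for the uncorrected statistic. Replacing $Y_n$ by $\rho Y_n$ with the stated $\rho$ annihilates the $p^2/n$ term, pushing the leading residual to order $p^3/n^2$ and yielding $p/n^{2/3}\to 0$; this is exactly the mechanism already visible in $\vartheta_1,\vartheta_2$ of Theorem \ref{thm:onesamchisq}.

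To convert these cumulant estimates into the uniform-in-$\alpha$ statements I would argue the two directions through two complementary local regimes, paralleling Theorems \ref{thm:onesamchisq} and \ref{thm:onesamnormal}. For sufficiency, when $p/n^{d}\to 0$ the refined expansion yields a bias of the form $\vartheta\exp(-z_\alpha^2/2)/\sqrt{\pi}+o(\cdot)$, the test-(VII) analogue of \eqref{eq:chisqapprox}--\eqref{eq:chisqapproxbartcorr}; since the relevant $\vartheta$ is of order $p^2/n$ respectively $p^3/n^2$ and hence vanishes, and since $\exp(-z_\alpha^2/2)/\sqrt{\pi}$ is bounded, the supremum over $\alpha$ in a compact interior interval tends to $0$, while the ranges $\alpha\to 0,1$ are controlled by the common exponential tail decay of $\chi^2_f$ and of $Y_n$. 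For necessity, along a subsequence with $p/n^{d}\ge c>0$ I would invoke the normal-approximation expansion, the analogue of Theorem \ref{thm:onesamnormal}: at a fixed $\alpha$, say $\alpha=1/2$, the bias equals $\bar{\Phi}[\{\chi^2_f(\alpha)+2\mu_n\}/(2n\sigma_n)]-\alpha+o(1)$, and the mean-shift $(f+2\mu_n)/(2n\sigma_n)$ is of order $p/n^{d}$, hence bounded away from $0$, so the supremum cannot converge to $0$.

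The main obstacle is the uniform remainder control in the Gamma expansion across the nested block sums: each of the order-$p^2$ weighted summands carries its own expansion error, and these must be shown to accumulate to something strictly smaller than the leading $p^2/n$ (respectively $p^3/n^2$) correction. This requires exploiting the Bernoulli-polynomial structure of the expansion together with the balance condition $\delta<p_i/p_j<\delta^{-1}$ and, for part (ii), the regime $p=o(n)$, so that the arguments $m-(i-1)/2$ remain of order $n$ uniformly in $i$. A secondary difficulty is reconciling the chi-squared-regime and normal-regime descriptions precisely at the boundary and securing uniformity of the bias in $\alpha$ near the endpoints, where the quantile $z_\alpha$ diverges and the naive expansions lose validity.
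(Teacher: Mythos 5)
Your plan assembles the right ingredients (the Gamma-ratio form of the cumulant generating function, the Wilks term plus a leading correction of order $p^2/n$, respectively $p^3/n^2$ after the Bartlett rescaling kills $\varsigma_1$, and a two-regime conversion to the distributional statement), and your necessity mechanism at $\alpha=1/2$ is essentially the paper's in the regime $p/n\to 0$. However, there is a genuine gap in the necessity direction of part (i): unlike part (ii), part (i) carries no assumption $p=o(n)$, so you must also rule out subsequences along which $p/n\to C\in(0,1]$. On such subsequences every tool you invoke breaks down: the normal-regime expansion you appeal to (the test-(VII) analogue of Theorem \ref{thm:onesamnormal}, i.e.\ Theorem \ref{thm:indepnormal}) is established only under $p/n\to 0$; the Taylor expansions giving the mean-shift $(f+2\mu_n)/(2n\sigma_n)\asymp p^2/n$ are likewise invalid, since $\mu_n$ and $\sigma_n$ involve $\log\{1-p/(n-1)\}$, which admits no polynomial expansion when $p\asymp n$; and checking the single quantile $\alpha=1/2$ is not sufficient in principle, because at $z_{1/2}=0$ the bias is blind to the variance mismatch, and one cannot exclude a priori that the mean-matching condition \eqref{eq:chisqconvgmean} holds while the variance-matching condition \eqref{eq:chisqconvgvar} fails. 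The paper closes exactly this case by a separate structural argument (Case (VI.i.2) in $\S$\,\ref{sec:pfindp}): the convergence \eqref{eq:mmtnormal1} of Jiang--Yang/Jiang--Qi remains valid when $p\asymp n$, and if both \eqref{eq:chisqconvgvar} and \eqref{eq:chisqconvgmean} held, then \eqref{eq:gc3sumidn} would force $-g_3(C)+\sum_i g_3(C_i)=0$ with $g_3(C)=(2-C)\log(1-C)$ and $\sum_i C_i=C$, contradicting the strict concavity of $g_3$. Some argument of this kind is indispensable and is absent from your proposal.

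A second, more repairable issue concerns uniformity in $\alpha$ in your sufficiency step, which you route through the pointwise bias expansion (the analogue of \eqref{eq:chisqapprox}, i.e.\ Theorem \ref{thm:chisqindp}) and then patch near $\alpha\to 0,1$ by appealing to ``exponential tail decay.'' As written this is a hand-wave; the clean fix is monotonicity of $\alpha\mapsto \mathrm{pr}\{-2\log\Lambda_n>\chi^2_f(\alpha)\}$ together with pointwise convergence to the continuous identity function (a P\'{o}lya-type argument). The paper instead sidesteps the issue entirely by running \emph{both} directions through the normal approximation: by the P\'{o}lya--Cantelli lemma, $\sup_\alpha|\mathrm{pr}(T_n>q_{n,\alpha})-\bar{\Phi}(q_{n,\alpha})|\to 0$, so the uniform statement is equivalent to $\sup_\alpha|q_{n,\alpha}-z_\alpha|\to 0$, which reduces to the two deterministic conditions \eqref{eq:chisqconvgvar}--\eqref{eq:chisqconvgmean}; sufficiency then needs only Taylor expansions of the explicit $\mu_n,\sigma_n$ in \eqref{eq:indpmu}--\eqref{eq:indpsigma}, and none of the refined chi-squared-regime machinery. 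If you adopt that reduction, your cumulant-generating-function expansion is needed only for the bias theorems, not for the phase transition itself.
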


The phase transition boundaries in Theorem \ref{Thm2.1} are 
 consistent with those in Theorems \ref{thm:onesam} and \ref{thm:allmult} for 
 testing problems (II), (III), (V), and (VI).
 This is reasonable because testing independence between multivariate normal vectors  examines the structures of covariance matrices. 
In Theorem \ref{Thm2.1}, the boundedness of $p_i/p_j$ suggests that the dimensions of the multiple vectors are comparable. The following Proposition \ref{prop:2}   relaxes this constraint for $k=2$, a case closely related to the canonical correlation analysis. 
\begin{proposition}\label{prop:2}
Consider $n>p_1+p_2$ and $n-\max\{p_1,p_2\}\to \infty$. For $\Lambda_n$ in problem (VII), under $H_0$, as $n\to \infty$,
\smallskip
 
(i) $\sup_{\alpha\in (0,1)}|\MYPR\{-2 \log \Lambda_{n}>\chi^2_{f}(\alpha)\}-\alpha|\to 0$ if and only if $\sqrt{p_1p_2}(p_1+p_2)/n \to 0;$

(ii) $\sup_{\alpha\in (0,1)}|\MYPR\{-2\rho  \log \Lambda_{n}>\chi^2_{f}(\alpha)\}- \alpha| \to 0$ if and only if $\sqrt{p_1p_2}(p_1^2+p_2^2)/n^2 \to 0$. 
\end{proposition}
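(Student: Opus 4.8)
The plan is to reduce Proposition \ref{prop:2} to the structure already developed for Theorem \ref{Thm2.1}, which handles test (VII) under the balanced-dimension condition $\delta < p_i/p_j < \delta^{-1}$. For $k=2$, the likelihood ratio statistic becomes $\Lambda_n = |\MBF{A}|^{n/2}|A_{11}|^{-n/2}|A_{22}|^{-n/2}$, and under $H_0$ the quantity $-2\log\Lambda_n$ has a known exact moment structure: its distribution is that of a product of independent beta random variables, so the moment generating function (equivalently the characteristic function) of $\log\Lambda_n$ factors into ratios of Gamma functions. The first step is therefore to write down the exact representation of $E(\Lambda_n^t)$ as a product of Gamma-function ratios indexed by $p_1$ and $p_2$ separately (rather than collapsing to a single $p$), so that the asymmetry between $p_1$ and $p_2$ is retained throughout.

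Next I would apply the asymptotic expansions of the Gamma function (the results collected in $\S$\,\ref{sec:gammafunc} of the Supplementary Material) to expand the logarithm of this moment generating function. The key is that the leading correction terms, which determine both the mean shift and the variance of $\log\Lambda_n$, will be symmetric functions of $p_1$ and $p_2$. Matching the calculation behind Theorem \ref{Thm2.1}, I expect the critical quantity controlling the phase transition (the analogue of $(f+2\mu_n)/(2n\sigma_n)$ in Theorem \ref{thm:onesamnormal}) to be of the order $\sqrt{p_1 p_2}(p_1+p_2)/n$ without correction, and $\sqrt{p_1 p_2}(p_1^2+p_2^2)/n^2$ with the Bartlett correction. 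The factor $\sqrt{p_1 p_2}$ arises because the number of degrees of freedom is $f=p_1 p_2$ for $k=2$, so $\sqrt{f}=\sqrt{p_1 p_2}$ enters the normalization, while the symmetric polynomials $p_1+p_2$ and $p_1^2+p_2^2$ come from the cubic and higher Gamma-expansion terms. Establishing that these are the exact leading orders — and that they govern both sufficiency and necessity — is the crux of the argument.

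Once the order of the critical quantity is pinned down, the necessary-and-sufficient conclusion follows by the same two-sided argument used for Theorem \ref{thm:onesam} and Theorem \ref{Thm2.1}: the chi-squared approximation error, uniformly over $\alpha\in(0,1)$, is controlled above and below by this quantity, so it vanishes if and only if the quantity tends to zero. For sufficiency I would bound the remainder in the Gamma expansion and show the difference between the true distribution of $-2\log\Lambda_n$ and $\chi_f^2$ is $o(1)$ uniformly in $\alpha$; for necessity I would exhibit that when $\sqrt{p_1 p_2}(p_1+p_2)/n\not\to 0$ the mean shift alone produces a nonvanishing discrepancy at some $\alpha$, using the local normal approximation in the spirit of Theorem \ref{thm:onesamnormal}. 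The condition $n-\max\{p_1,p_2\}\to\infty$ guarantees the arguments of all Gamma functions stay bounded away from their poles, which is exactly what is needed for the expansions to be valid.

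The main obstacle I anticipate is tracking the two independent dimensions $p_1$ and $p_2$ through the Gamma-function expansions without prematurely symmetrizing or assuming they are comparable, since the whole point of Proposition \ref{prop:2} is to drop the boundedness constraint $\delta<p_1/p_2<\delta^{-1}$. In the balanced case of Theorem \ref{Thm2.1} one can freely replace $p_1,p_2$ by quantities of order $p$, but here the possibility that, say, $p_1$ is much larger than $p_2$ means I must verify that the symmetric-polynomial orders $\sqrt{p_1 p_2}(p_1+p_2)$ and $\sqrt{p_1 p_2}(p_1^2+p_2^2)$ genuinely dominate every lower-order remainder term in this unbalanced regime. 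Checking that the error terms in the Gamma expansion remain negligible relative to these leading quantities across the full range of admissible $(p_1,p_2)$ is where the careful bookkeeping will be required.
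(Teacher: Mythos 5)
Your overall skeleton is the same one the paper uses: represent the moment generating function of $\log\Lambda_n$ exactly as Gamma-function ratios with $p_1,p_2$ kept separate, expand, and reduce the uniform-in-$\alpha$ statement to conditions on the standardized mean and variance of $-2\log\Lambda_n$. Your identifications $f=p_1p_2$ and of the critical orders $\sqrt{p_1p_2}(p_1+p_2)/n$ and $\sqrt{p_1p_2}(p_1^2+p_2^2)/n^2$ are correct \emph{in the regime} $p_1/n\to 0$, $p_2/n\to 0$; there the paper simply cites Theorems 1 and 2 of \cite{He2018} (the $k=2$ independence test is equivalent to a multivariate regression test), and redoing that expansion directly, as you propose, is a legitimate substitute.

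The genuine gap is in your necessity argument. The reduction behind Theorem \ref{thm:onesam} does not yield a single quantity that controls the approximation error from above and below; it yields \emph{two} conditions — the variance-matching condition \eqref{eq:chisqconvgvar}, $\sqrt{2f}/(2n\sigma_n)\to 1$, and the mean-shift condition \eqref{eq:chisqconvgmean} (resp.\ \eqref{thm13}--\eqref{thm14} with the Bartlett correction) — and the chi-squared approximation holds if and only if both hold. Your plan to show that failure of $\sqrt{p_1p_2}(p_1+p_2)/n\to 0$ always manifests as a non-vanishing mean shift cannot work outside the regime where both $p_i/n$ vanish, because the Taylor expansions identifying $(f+2\mu_n)/(2n\sigma_n)$ with that order are themselves valid only when $p_1/n,p_2/n\to 0$. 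The paper closes this by a Bolzano--Weierstrass subsequence argument plus explicit case analysis: when $p_1/n\to C\in(0,1]$ and $p_2/n\to 0$, one shows $\sqrt{2f}/(2n\sigma_n)\to\sqrt{1-C}<1$ (and $\sqrt{2f}/(2n\rho\sigma_n)\to(1-C/2)^{-1}\sqrt{1-C}<1$ for part (ii)), so it is the \emph{variance} condition that fails, not the mean shift; when both ratios tend to positive constants, one shows the relevant limit functions $g_4(C_1,C_2)$ and $g_5(C_1,C_2)$ are strictly negative, again contradicting \eqref{eq:chisqconvgvar} or \eqref{thm13}. Without these regimes your "two-sided control" assertion is unsupported; once you add the subsequence reduction and route necessity through whichever of the two conditions fails in each regime, your proof essentially coincides with the paper's.
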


Proposition \ref{prop:2} shows that the effects of $p_1$ and $p_2$  on the phase transition boundaries are symmetric.  
To further illustrate, consider $p_1=\lfloor n^{\epsilon} \rfloor$ and $p_2=\lfloor n^{\eta} \rfloor$, where $0<\epsilon,\eta <1$. Then the two phase transition boundaries in Proposition \ref{prop:2} become (i) $\max\{\epsilon, \eta\}+(\epsilon+\eta)/2 < 1$ and (ii) $\max\{\epsilon, \eta\}+(\epsilon+\eta)/4 < 1$, respectively. When $\epsilon=\eta$, i.e., $p_1$ and $p_2$ are of the same order, the largest value of $\epsilon$ and $\eta$ achievable is (i) $1/2$ and (ii) $2/3$ respectively, which are consistent with Theorem \ref{Thm2.1}. When $\eta$ is close to 0, the largest value of $\epsilon$ is (i) $2/3$ and (ii) $4/5$ respectively. 
Therefore when one set of the vectors is of finite  dimension, the chi-squared approximations without and with the Bartlett correction can be applied when $p/n^{2/3}\to 0$ and $p/n^{4/5}\to 0$, respectively. This demonstrates an interesting phenomenon that for the phase transition boundary, the growth rate of $p$ changes as the ratio of $p_1$ and $p_2$ varies. 

 Similarly to Theorems \ref{thm:onesamchisq} and \ref{thm:multisamchisq}, 
the following Theorem \ref{thm:chisqindp}  further 
characterizes  
the accuracy of the chi-squared approximation under the asymptotic regime where $p$ satisfies the corresponding necessary and sufficient conditions in Theorem \ref{Thm2.1}.


\begin{theorem}\label{thm:chisqindp}
Assume that there exists $\delta \in (0,1)$ such that $\delta < p_i/p_j<\delta^{-1}$ for $1\leq i,j\leq k$,
and $p\to \infty$ as $n\to \infty$. 
Let $d_1=1/2$ and $d_2=2/3$ as in Theorem \ref{Thm2.1}. 
For $\Lambda_n$ in problem (VII), under $H_0$, for any $\alpha\in (0,1)$, 
\begin{itemize}
	\item[(i)] when $p/n^{d_1} \to 0$, 
\eqref{eq:chisqapprox} in Theorem \ref{thm:onesamchisq} holds with the value of $\vartheta_1(n,p)$  below;
\item[(ii)] when $p/n^{d_2}\to 0$,  \eqref{eq:chisqapproxbartcorr} in Theorem \ref{thm:onesamchisq} holds with the 
 value of $\vartheta_2(n,p)$ below.
\end{itemize}
Let $D_{p,r}=p^r-\sum_{j=1}^k p_j^r$. Then
\begin{align*}
\vartheta_1(n,p)=\frac{2D_{p,3}+9D_{p,2}}{24n\sqrt{f}},\quad \quad   		 \vartheta_2(n,p)= \frac{1}{(\rho n)^2\sqrt{f}}\biggr(\frac{1}{24}D_{p,4}-\frac{5D_{p,2}}{48} -\frac{D_{p,3}^2}{36D_{p,2}} \biggr).
\end{align*}
\end{theorem}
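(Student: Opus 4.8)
The plan is to route the proof through the exact moments of $\Lambda_n$ and feed them into the shared characteristic-function machinery already used to prove Theorem \ref{thm:onesamchisq}. Writing $W=|\MBF{A}|/\prod_{j=1}^k|A_{jj}|$, so that $-2\log\Lambda_n=-n\log W$, the independence test admits the closed-form moments under $H_0$,
\begin{equation*}
E\big(W^{h}\big)=\frac{\Gamma_p\{(n-1)/2\}}{\Gamma_p\{(n-1)/2+h\}}\prod_{j=1}^k\frac{\Gamma_{p_j}\{(n-1)/2+h\}}{\Gamma_{p_j}\{(n-1)/2\}},\qquad \Gamma_q(a)=\pi^{q(q-1)/4}\prod_{i=0}^{q-1}\Gamma\{a-i/2\}.
\end{equation*}
Setting $h=-\mathrm{i}tn$ with $\mathrm{i}^2=-1$ yields the characteristic function of $-2\log\Lambda_n$ as a finite product of ratios of ordinary gamma functions, whose logarithm is a signed sum of increments $\log\Gamma\{(n-1)/2-i/2-\mathrm{i}tn\}-\log\Gamma\{(n-1)/2-i/2\}$, in which the full range $i=0,\dots,p-1$ enters with a negative sign and each block range $i=0,\dots,p_j-1$ enters with a positive sign.

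First I would expand each increment by Stirling's asymptotic series for $\log\Gamma$, organizing the result as a cumulant series in powers of $n^{-1}$ whose coefficients are polynomials in $p$ and the $p_j$. The leading term reproduces the log-characteristic function of $\chi^2_f$ with $f=(p^2-\sum_j p_j^2)/2$, recovering the Wilk limit of Theorem \ref{Thm2.1}, and the key observation is that the signed block sums collapse these polynomial coefficients into the compact quantities $D_{p,r}=p^r-\sum_j p_j^r$. I would then invoke the shared reduction lemma that turns such a characteristic-function expansion into the tail-probability statement \eqref{eq:chisqapprox}: the order-$n^{-1}$ cumulant correction is transferred to the quantile $\chi^2_f(\alpha)$, where the normalized $\chi^2_f$ density near its upper tail, together with $\chi^2_f(\alpha)\approx f+z_\alpha\sqrt{2f}$, contributes the universal factor $\exp(-z_\alpha^2/2)/\sqrt{\pi}$ and the $f^{-1/2}$ normalization. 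The leading coefficient itself emerges as the closed-form value of an infinite series produced by the expansion, as in the calculations for problems (I)--(III); evaluating it gives $\vartheta_1(n,p)=(2D_{p,3}+9D_{p,2})/(24n\sqrt{f})$ with remainder $o(p^{1/d_1}/n)$.

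For part (ii) the same expansion is applied to $-2\rho\log\Lambda_n$, where the Bartlett factor $\rho$ is engineered so that the rescaling annihilates the order-$n^{-1}$ term and promotes the leading bias to order $n^{-2}$; carrying the gamma expansion one order further and collecting the $n^{-2}$ coefficient produces \eqref{eq:chisqapproxbartcorr} with $\vartheta_2(n,p)$, the cross term $-D_{p,3}^2/(36 D_{p,2})$ arising from the interaction between the squared leading correction and the $\rho$-induced shift, and with remainder $o(p^{2/d_2}/n^2)$. The main obstacle is uniform control of the gamma-function expansion when both the number of factors and their arguments grow with $n$: unlike the classical fixed-dimensional expansion, the Stirling remainders must be summed over an index set of growing size, so I must show that the aggregate error is genuinely $o(p^{1/d_1}/n)$ for (i) and $o(p^{2/d_2}/n^2)$ for (ii) throughout the regimes $p/n^{1/2}\to 0$ and $p/n^{2/3}\to 0$. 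The delicate bookkeeping that assembles the polynomial-in-$p_j$ contributions into the $D_{p,r}$ while keeping this remainder uniform is the technically hardest part.
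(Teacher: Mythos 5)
Your strategy coincides with the paper's own proof: the paper likewise starts from the gamma-ratio representation of $\mathrm{E}\{\exp(-2it\eta\log\Lambda_n)\}$, expands each $\log\Gamma$ increment by a Stirling-type series with Bernoulli-polynomial coefficients, uniformly over the growing number of factors (Lemmas \ref{lm:logzaexpan} and \ref{lm:generalcharexpan}), identifies the first cumulant-correction coefficient $\varsigma_1=(2D_{p,3}+9D_{p,2})/(24n)$ (and, after the Bartlett factor annihilates $\varsigma_1$, the second coefficient $\varsigma_2$ containing the $D_{p,3}^2/D_{p,2}$ cross term), and then inverts the characteristic function, using Propositions \ref{prop:infinitesumm}--\ref{prop:infinitesumm2} and Lemma \ref{lm:order2diff} to control the resulting infinite series of chi-squared distribution-function differences uniformly and to extract the factor $\exp(-z_{\alpha}^2/2)/(\sqrt{\pi f})$ at $x=\chi^2_f(\alpha)$, with $\vartheta_1=\varsigma_1/\sqrt{f}$ and $\vartheta_2=2\varsigma_2/\sqrt{f}$. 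Your identification of the hard point (uniform remainder control when both the number of gamma factors and their arguments grow) is exactly what those lemmas supply.

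There is, however, a concrete error at your starting point, and it propagates. Under $H_0$ the correct moment formula (Muirhead, Theorem 11.2.3) is
\begin{align*}
\mathrm{E}(W^h)=\frac{\Gamma_p\{(n-1)/2+h\}}{\Gamma_p\{(n-1)/2\}}\prod_{j=1}^k\frac{\Gamma_{p_j}\{(n-1)/2\}}{\Gamma_{p_j}\{(n-1)/2+h\}},
\end{align*}
i.e., the full $p$-block carries $+h$ upstairs and each sub-block carries $+h$ downstairs --- the reciprocal of what you wrote. Consequently your claimed sign pattern (full range $i=0,\dots,p-1$ entering negatively, block ranges positively) is exactly backwards; in the paper's parametrization the full-$p$ terms sit in the first group and the block terms in the second. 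This is not cosmetic here, because the entire content of the theorem is carried by the signed combinations $D_{p,r}=p^r-\sum_j p_j^r$: with your orientation the leading term of the expansion is $+\frac{f}{2}\log(1-2it)$, i.e., $(1-2it)^{+f/2}$ rather than the $\chi^2_f$ characteristic function $(1-2it)^{-f/2}$, and both $\vartheta_1$ and $\vartheta_2$ would emerge with the wrong sign --- contradicting your own assertion that the leading term recovers the Wilks limit. A quick check pinning the correct orientation: for $k=2$, $p_1=p_2=1$, one has $W=1-r^2\sim\mathrm{Beta}\{(n-2)/2,\,1/2\}$, whose moments match the formula displayed above and not its reciprocal. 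Once the ratio is flipped, the remainder of your outline goes through as in the paper.
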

 
 \smallskip

 Similar to Theorems  \ref{thm:onesamchisq} and  \ref{thm:multisamchisq}, Theorem \ref{thm:chisqindp}  focuses on the local asymptotic regime when  Wilk's theorem holds, 
 and we  know from a similar analysis that \eqref{eq:chisqapprox} and  \eqref{eq:chisqapproxbartcorr} 
  provide useful information on the accuracy of the chi-squared approximations.
  Please see the simulations for test (VII) in $\S$\,\ref{sec:suppsim}. 
 When $p$ further increases such that Wilk's theorem fails,  
 the following Theorem \ref{thm:indepnormal} characterizes the unignorable chi-squared approximation biases for test (VII) similarly as in Theorems \ref{thm:onesamnormal} and \ref{thm:multsamnormal}.


\begin{theorem}\label{thm:indepnormal}
Assume that
there exists $\delta \in (0,1)$ such that $\delta < p_i/p_j<\delta^{-1}$ for $1\leq i,j\leq k$, and
$p\to \infty$ and $p/n \to 0$ as $n \to \infty$. 
For $\Lambda_n$ in problem (VII), under $H_0$, as $n\to \infty$, 
for any $\alpha\in (0,1)$,  \eqref{eq:normalbias1} and \eqref{eq:normalbias2} in  Theorem \ref{thm:onesamnormal}  hold with $\mu_n$ and $\sigma_n$ listed below.
\begin{align*}
\mu_n=&~\frac{n}{2}\biggr[-\left(n-p-\frac{3}{2}\right) L_{n-1, p}+\sum_{j=1}^{k}\left\{\left(n-p_{j}-\frac{3}{2}\right) L_{n-1, p_{j}}\right\}\biggr], \notag \\
\sigma_n^2=&~\frac{1}{2}\biggr(-L_{n-1, p}+\sum_{j=1}^{k}L_{n-1, p_{j}}\biggr). 	
\end{align*}
\end{theorem}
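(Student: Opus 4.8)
The plan is to follow the characteristic-function route already used for Theorem \ref{thm:onesamnormal} (via Lemma \ref{lm:chardiffgoal}), adapting only the moment computation to the independence-testing statistic. Writing $V=\Lambda_n^{2/n}=|A|\prod_{j=1}^k|A_{jj}|^{-1}$, we have $-2\log\Lambda_n=-n\log V$, so the characteristic function of the test statistic is $E[\exp\{-\mathrm{i}tn\log V\}]=E[V^{-\mathrm{i}tn}]$, obtained by setting the moment order $s=-\mathrm{i}tn$ in the formula for $E[V^s]$. First I would record this moment formula: under $H_0$ the blocks are independent and $A\sim W_p(n-1,\mathrm{I}_p)$, so classical multivariate theory yields an explicit expression for $E[V^s]$ as a ratio of multivariate gamma functions $\Gamma_p$ over $\prod_j\Gamma_{p_j}$, which factors into a product of ordinary gamma-function ratios once the recursion $\Gamma_p(\cdot)=\pi^{p(p-1)/4}\prod_i\Gamma(\cdot)$ is expanded.

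Second, I would identify the centering and scaling. Taking the logarithm of $E[V^s]$ produces a sum of $\log\Gamma$ differences; differentiating at $s=0$ gives $E[\log\Lambda_n]=\mu_n$, and the second cumulant gives $\mathrm{Var}(\log\Lambda_n)=n^2\sigma_n^2$. Feeding the Stirling and digamma asymptotic expansions collected in $\S$\,\ref{sec:gammafunc} into these sums, and collecting terms via $L_{x,p}=\log(1-p/x)$, I would recover the stated closed forms, in which the $-L_{n-1,p}$ contribution comes from the full matrix $A$ and the $\sum_{j}L_{n-1,p_j}$ contributions come from the diagonal blocks $A_{jj}$; the comparability constraint $\delta<p_i/p_j<\delta^{-1}$ ensures the $k$ block terms are all of the same order and can be summed with a controlled remainder.

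Third, I would invoke Lemma \ref{lm:chardiffgoal} to bound the difference between the characteristic function of the standardized statistic $(-2\log\Lambda_n+2\mu_n)/(2n\sigma_n)$ and that of the standard normal over the relevant frequency band, using the third- and higher-order Stirling terms to control the cumulants beyond the second. Converting this bound into a bound on the distribution function by a smoothing (Esseen-type) inequality, and then approximating the chi-squared tail $\MYPR\{-2\log\Lambda_n>\chi^2_f(\alpha)\}$ by $\bar\Phi\{(\chi^2_f(\alpha)+2\mu_n)/(2n\sigma_n)\}$, gives \eqref{eq:normalbias1} with remainder $O\{(p/n)^{(1-\delta)/2}+f^{-(1-\delta)/6}\}$. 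The Bartlett-corrected statement \eqref{eq:normalbias2} then follows verbatim after replacing $n$ by $\rho n$ and $\mu_n$ by $\rho\mu_n$ in the standardization, since multiplying $\log\Lambda_n$ by $\rho$ only rescales its mean and variance. Throughout, the orders of $(f+2\mu_n)/(2n\sigma_n)$ and $(f+2\rho\mu_n)/(2\rho n\sigma_n)$ are the ones recorded in Remark \ref{rm:order}, matching the phase-transition exponents $d_1=1/2$ and $d_2=2/3$ of Theorem \ref{Thm2.1}.

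The hard part will be controlling the accumulated error in the characteristic-function expansion uniformly as $p\to\infty$. With $p$ gamma factors (minus the $\sum_j p_j$ block factors), the per-factor Stirling expansions must be carried to an order high enough that the aggregate third-cumulant contribution is provably negligible against $n^2\sigma_n^2$, while the smoothing inequality simultaneously imposes an accuracy demand that is exactly what generates the $f^{-(1-\delta)/6}$ term; keeping these two competing error sources balanced, and verifying that the block structure behaves like a single comparable block under $\delta<p_i/p_j<\delta^{-1}$, is the delicate step, precisely as in the proof of Theorem \ref{thm:onesamnormal}.
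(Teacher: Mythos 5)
Your proposal is correct and takes essentially the same route as the paper: start from the moment formula for $\Lambda_n$ as a ratio of multivariate gamma functions (Muirhead, Theorem 11.2.3), expand the log characteristic function of the standardized statistic with the Stirling-type lemmas of $\S$\,\ref{sec:gammafunc} (Lemma \ref{lm:gammapaproxexpan}) to obtain the bound \eqref{eq:chardiffgoalgeneral}, and then convert this to \eqref{eq:normalbias1}--\eqref{eq:normalbias2} through the Esseen smoothing inequality (Lemma \ref{lm:chardiff}) evaluated at the chi-squared quantile, with the Bartlett case handled by the rescaled evaluation point $\{\chi^2_f(\alpha)+2\rho\mu_n\}/(2\rho n\sigma_n)$, exactly as in $\S$\,\ref{sec:pfindpnormal}. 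The only cosmetic difference is that you phrase the identification of $\mu_n$ and $\sigma_n$ as cumulants at $s=0$, whereas the paper reads them off as coefficients in the log-characteristic-function expansion and exploits the cancellation $p-\sum_{j=1}^k p_j=0$ to kill the leading non-Gaussian terms; the computations coincide.
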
 

Note that Theorem \ref{thm:indepnormal} is analogous to Theorems \ref{thm:onesamnormal} and \ref{thm:multsamnormal}, and therefore similar analyses and conclusions as in Remarks \ref{rm:orderelation} and \ref{rm:order} also hold for test (VII), which are  not repeated here.

 \smallskip

\subsection{Additional Simulations} \label{sec:suppsim}

We next introduce the simulation settings of each test and afterwards analyze the numerical results. 



\subsubsection{One-Sample Tests (I)--(III).}
Similarly to Section \ref{sec:simulations}, 
under the null hypothesis of each one-sample test (I)--(III), we set $\BSM{\mu} = (0,\ldots,0)^{\MYTRAN}$ and $\BSM{\Sigma} = \BIEN$. 
\smallskip

\noindent \textit{(1) On the phase transition boundaries.}\  We take $p = \lfloor n^\epsilon \rfloor$, where $n\in\{100,500,1000,5000\}$ and $\epsilon \in\{6/24,\ldots, 23/24 \}$. 
We next plot the  empirical type-\RNum{1} error rates (over 1000 replications) versus $\epsilon$ for each chi-squared approximation in Fig. \ref{fig:simone}. 
We still include the results in $\S$\,\ref{sec:simulations} for easy  presentation of the figure.   

\smallskip

\noindent \textit{(2) On the asymptotic biases.}\  To evaluate the asymptotic biases in  Theorems \ref{thm:onesamchisq} and \ref{thm:onesamnormal}, we take $p=\lfloor n^{\epsilon} \rfloor$, where  $n\in \{100, 500\}$ and $\epsilon\in (0,1)$. 
 The results of $n=100$ and $500$ (over 3000 replications) are given in Fig. \ref{fig:bias13n100} and Fig. \ref{fig:bias13n500}, respectively. 
In each setting, the range of $\epsilon$ is chosen such that the largest empirical type-\RNum{1} error is below 0.5.

To facilitate the presentation of figures and the discussions below, we define 
\begin{align*}
&\varpi_1=\vartheta_1(n,p)\exp(-z_{\alpha}^2/2)/\sqrt{\pi}, \quad 	\varpi_3=\bar{\Phi}\big[ \big\{ \chi^2_f(\alpha)+2\mu_n \big\}/(2n\sigma_n) \big] - \alpha, \notag \\
&\varpi_2=\vartheta_2(n,p)\exp(-z_{\alpha}^2/2)/\sqrt{\pi}, \quad \varpi_4=\bar{\Phi}\big[ \big\{ \chi^2_f(\alpha)+2\rho \mu_n \big\}/(2\rho n\sigma_n) \big] - \alpha.
\end{align*}
Then $\varpi_1, \varpi_2, \varpi_3,$ and $\varpi_4$ 
 denote the asymptotic biases in \eqref{eq:chisqapprox}--\eqref{eq:normalbias2}, respectively. 
For each test in Fig. \ref{fig:bias13n100} and Fig. \ref{fig:bias13n500}, 
we plot $\varpi_1$ and $\varpi_2$ in the subfigures in the columns (a) and (c), respectively. 
Similarly to $\S$\,\ref{sec:simulations}, 
to better characterize each approximation bias when $\epsilon$ is beyond the corresponding phase transition boundary, 
we combine the results in Theorem \ref{thm:onesamchisq} and those in Theorem \ref{thm:onesamnormal}.  
Specifically, 
in the column (b) of Fig. \ref{fig:bias13n100} and Fig. \ref{fig:bias13n500}, 
we plot $M_c(\varpi_1, \varpi_3)\equiv \varpi_1 1\{\varpi_1 < c \}+ \max\{ \varpi_1 , \varpi_3 \} 1\{\varpi_1 \geq c \}$, where  $1\{\cdot\}$ denotes an indicator function, and $c$ denotes a small positive threshold, and we choose $c=0.002$ in the simulations.  
This definition of $M_c(\varpi_1, \varpi_3)$ 
suggests that $\varpi_1$ is used when the approximation bias is smaller than $c$, and $ \max\{ \varpi_1 , \varpi_3 \} $ is used when the approximation bias becomes larger.  
Similarly, we define 
$M_c(\varpi_2, \varpi_4)\equiv \varpi_2 1\{\varpi_2 < c \}+ \max\{ \varpi_2 , \varpi_4 \} 1\{\varpi_2 \geq c \}$, 
and plot it in the column (d) of  Fig. \ref{fig:bias13n100} and Fig. \ref{fig:bias13n500}. 

\begin{remark}
For each chi-squared approximation, $\max\{ \varpi_1 , \varpi_3 \}$  already characterizes the bias well most of the time.  
We use $M_c(\varpi_1, \varpi_3)$ instead of $\max\{ \varpi_1 , \varpi_3 \} $  because $\varpi_3$ can mistakenly indicate a large bias under small $\epsilon$, especially when $n$ is small.  
Compared to  $\max\{ \varpi_1 , \varpi_3 \}$, 
$M_c(\varpi_1, \varpi_3)$ does not use $\varpi_3$ when $\varpi_1$ indicates that the bias is still small. 
As long as $c$ is sufficiently small but not too close to zero, 
$M_c(\varpi_1, \varpi_3)$ will not take the wrong value given by $\varpi_3$, 
and thus gives a good evaluation of the approximation bias under a wide range of $\epsilon$ values. 
Despite the difference between  $M_c(\varpi_1, \varpi_3)$ and  $\max\{ \varpi_1 , \varpi_3 \}$, we note that 
 $M_c(\varpi_1, \varpi_3)$ is equal to $\max\{ \varpi_1 , \varpi_3 \} $ under most cases.  
 For instance, in all our simulations with $n=500$ and $c=0.002$, $M_c(\varpi_1, \varpi_3)=\max\{ \varpi_1 , \varpi_3 \}$. Thus in  $\S$\,\ref{sec:simulations}, we did not highlight this difference. 
 When the Bartlett correction is used, we know that similar analysis applies to $\max\{ \varpi_2 , \varpi_4 \}$ and $M_c(\varpi_2, \varpi_4)$. 
\end{remark}

\subsubsection{Multiple-Sample Tests (IV)--(VI).}
Consider $k=3$, $n_1=n_2=n_3$, and $n=n_1+n_2+n_3$. 
Under the null hypothesis of each multiple-sample test (IV)--(VI), 
we set  $\BSM{\mu}_i = (0,\ldots,0)^{\MYTRAN}$, and $\BSM{\Sigma}_i = \BIEN$ for $i=1,2,3$. 

\smallskip

\noindent \textit{(1) On the phase transition boundaries.}\ Let $p = \lfloor n^\epsilon \rfloor$, where $n=n_1+n_2+n_3$ and $n_i\in \{100,500,1000,5000\}$ for $i=1,2,3$. 
We then plot the empirical type-\RNum{1} error rates (over 1000 replications) versus $\epsilon$ for each chi-squared approximation in Fig. \ref{fig:simphasemult}.

\smallskip

\noindent \textit{(2) On the asymptotic biases.} \  To evaluate the asymptotic biases in  Theorems \ref{thm:multisamchisq} and \ref{thm:multsamnormal},
we take  $p=\lfloor n^{\epsilon} \rfloor$, where $n=n_1+n_2+n_3$, $n_i\in \{100, 500\}$ for $i=1,2,3$, and $\epsilon \in (0,1)$. 
 The results of $n_i=100$ and $500$ (over 3000 replications) are given in Fig. \ref{fig:biasmultin100} and Fig. \ref{fig:biasmultin500}, respectively. 
Similarly to  Fig. \ref{fig:bias13n100} and Fig. \ref{fig:bias13n500}, in each row of Fig. \ref{fig:biasmultin100} and Fig. \ref{fig:biasmultin500}, 
the lines with dot markers in the four columns (a)--(d) give $\varpi_1$, $M_c(\varpi_1, \varpi_3)$, $\varpi_2$, and $M_c(\varpi_2, \varpi_4)$, respectively. 




\subsubsection{Testing Independence between Multiple Tests (VII).} 
Consider $k=3$. 
Under the null hypothesis of test (VII), we set  $\BSM{\mu}= (0,\ldots,0)^{\MYTRAN}$ and $\BSM{\Sigma}=\BIEN$. 

\smallskip

\noindent \textit{(1) On the phase transition boundaries.}\  
Let  $p = \lfloor n^\epsilon \rfloor$, where $\epsilon \in\{ 6/24, 7/24, \ldots, 23/24 \}$ and $n\in \{100,500,1000,5000\}$. 
Under each $(n,p)$, we set $p_1=p_2=\lfloor p/3 \rfloor$ and $p_3=p-p_1-p_2$, 
 and then plot the empirical type I error (over 1000 replications) versus $\epsilon$ in Fig. \ref{fig:simphasemult}. 

\smallskip

\noindent \textit{(2) On the asymptotic biases.}\ To evaluate the asymptotic biases in  Theorems \ref{thm:chisqindp} and \ref{thm:indepnormal}, 
we  set $p = \lfloor n^\epsilon \rfloor$, where  $n\in \{100,500\}$ and $\epsilon \in (0,1)$. 
Under each $(n,p)$, we take $p_1=p_2=\lfloor p/3 \rfloor$ and $p_3=p-p_1-p_2$. 
 The results of $n=100$ and $500$ (over 3000 replications) are given in Fig.  \ref{fig:bias7n100} and Fig. \ref{fig:bias7n500}, respectively. 
Similarly to  Figures \ref{fig:bias13n100}--\ref{fig:biasmultin500}, in Fig. \ref{fig:bias7n100} and Fig. \ref{fig:bias7n500}, 
the lines  with dot markers in the four columns (a)--(d) give $\varpi_1$, $M_c(\varpi_1, \varpi_3)$, $\varpi_2$, and $M_c(\varpi_2, \varpi_4)$, respectively.

\medskip

We next analyze the simulation results. First, as shown in Figures \ref{fig:simone} and \ref{fig:simphasemult}, 
 the theoretical phase transition boundary, denoted by a vertical line, is observed to be consistent with where each chi-squared approximation starts to fail. 
 For instance, the two plots in the first row of Fig. \ref{fig:simone} show that for test (I), the type-\RNum{1} error rates of the chi-squared approximations without and with the Bartlett correction begin to inflate when $\epsilon$ is around $2/3$ and $4/5$, respectively. 
 These are consistent with $d_1=2/3$ and $d_2=4/5$ for test (I) in Theorem \ref{thm:onesam}. 
 Similarly for other tests, we can see that the numerical results are also consistent with the corresponding conclusions in Theorems \ref{thm:onesam}, \ref{thm:allmult}, and \ref{Thm2.1}.

Second, similarly to $\S$\,\ref{sec:simulations}, the results in Figures \ref{fig:bias13n100}--\ref{fig:bias7n500}  show that the derived theoretical asymptotic biases provide good evaluations of the corresponding chi-squared approximation  biases.  
From the subfigures in the column (a) of Figures \ref{fig:bias13n100}--\ref{fig:bias7n500}, 
we can see that 
as $\epsilon$ increases, the empirical type-\RNum{1} error inflates, 
and $\varpi_1$
also increases accordingly. 
At the $\epsilon$ values where the  type-\RNum{1} error begins to inflate,  
the difference between the empirical type-\RNum{1} error and  $\varpi_1$ is close to 0.05, as shown by the circle line, 
which suggests that $\varpi_1$
approximates the chi-squared approximation bias $\MYPR\{-2 \log \Lambda_{n}>\chi^2_{f}(\alpha)\}-\alpha$ well in this regime.   
 When $\epsilon$ further increases beyond the corresponding phase transition boundary, 
 the asymptotic bias $\varpi_1$ keeps increasing, and its large value indicates the failure of the chi-squared approximation, even though now  $\varpi_1$ underestimates  the approximation bias in this regime.   
To better characterize the approximation bias when $\epsilon$ is beyond the phase transition boundary, we combine $\varpi_1$ and $\varpi_3$  
by plotting $M_c(\varpi_1, \varpi_3)$ in the column (b) of Figures \ref{fig:bias13n100}--\ref{fig:bias7n500}. 
The results suggest that 
utilizing the two asymptotic biases in \eqref{eq:chisqapprox} and in \eqref{eq:normalbias1} together can 
give a good evaluation of the approximation bias under a wide range of  $\epsilon$ values, either below or above the phase transition boundary. 
Moreover, in each subfigure in the column (b),
we also highlight the location with $x$-axis $\epsilon^*$  where  $M_c(\varpi_1, \varpi_3)$ starts to be larger than $\varpi_1$ (the plus sign).
When $\epsilon < \epsilon^*$, $M_c(\varpi_1, \varpi_3)=\varpi_1$, 
indicating that $\varpi_1$ approximates the bias better than $\varpi_3$ does in this regime, 
while  $\varpi_3$ performs better than $\varpi_1$ when $\epsilon \geq \epsilon^*$. 
Similarly,  for the chi-squared approximation with the Bartlett correction, 
similar conclusions can be obtained  by the results in the columns (c) and (d) of Figures \ref{fig:bias13n100}--\ref{fig:bias7n500}.


The simulations under the finite sample suggest that the derived asymptotic biases 
can be used as practical guidelines for the considered likelihood ratio tests. 
Specifically, when using the chi-squared approximation in each test, 
similarly to our recommendation in $\S$\,\ref{sec:simulations}, 
the  practitioners can compare the asymptotic bias, either $\varpi_1$ or $M_c(\varpi_1, \varpi_3)$, with a small threshold value that they may specify in advance, e.g., 0.01--0.02.    
If the asymptotic bias is larger than the threshold, the chi-squared approximation should not be directly used, and other methods would be needed.  
In addition, when using the chi-squared approximation with the Bartlett correction  in each test, 
we can compare the asymptotic bias, either $\varpi_2$ or $M_c(\varpi_2, \varpi_4)$ with the pre-specified threshold value.
Similarly, if the asymptotic bias is larger than the threshold, the chi-squared approximation with the Bartlett correction should not be directly applied, and other methods would be needed.

\newpage

\begin{figure}[!htbp]
\captionsetup[subfigure]{labelformat=empty}
\centering

\begin{turn}{90}
\begin{minipage}{0.26\textwidth}
 \hspace{6em} Test (I)\vspace{0.12em} 
\end{minipage}
\end{turn}%
~ 
\begin{subfigure}[t]{0.38\textwidth}
\centering
\includegraphics[width=\textwidth]{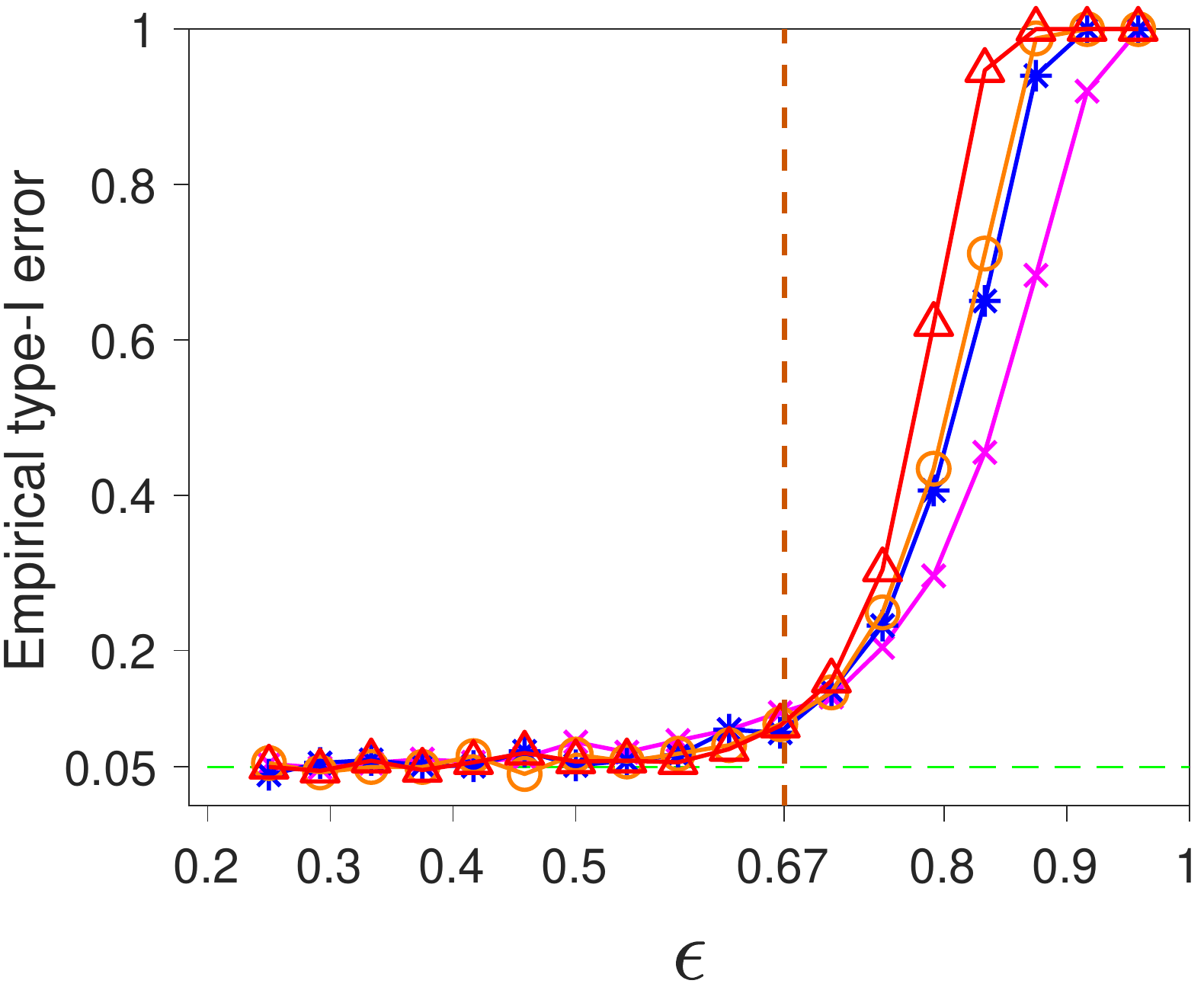}
\end{subfigure}%
~ \ 
\begin{subfigure}[t]{0.38\textwidth}
\centering
\includegraphics[width=\textwidth]{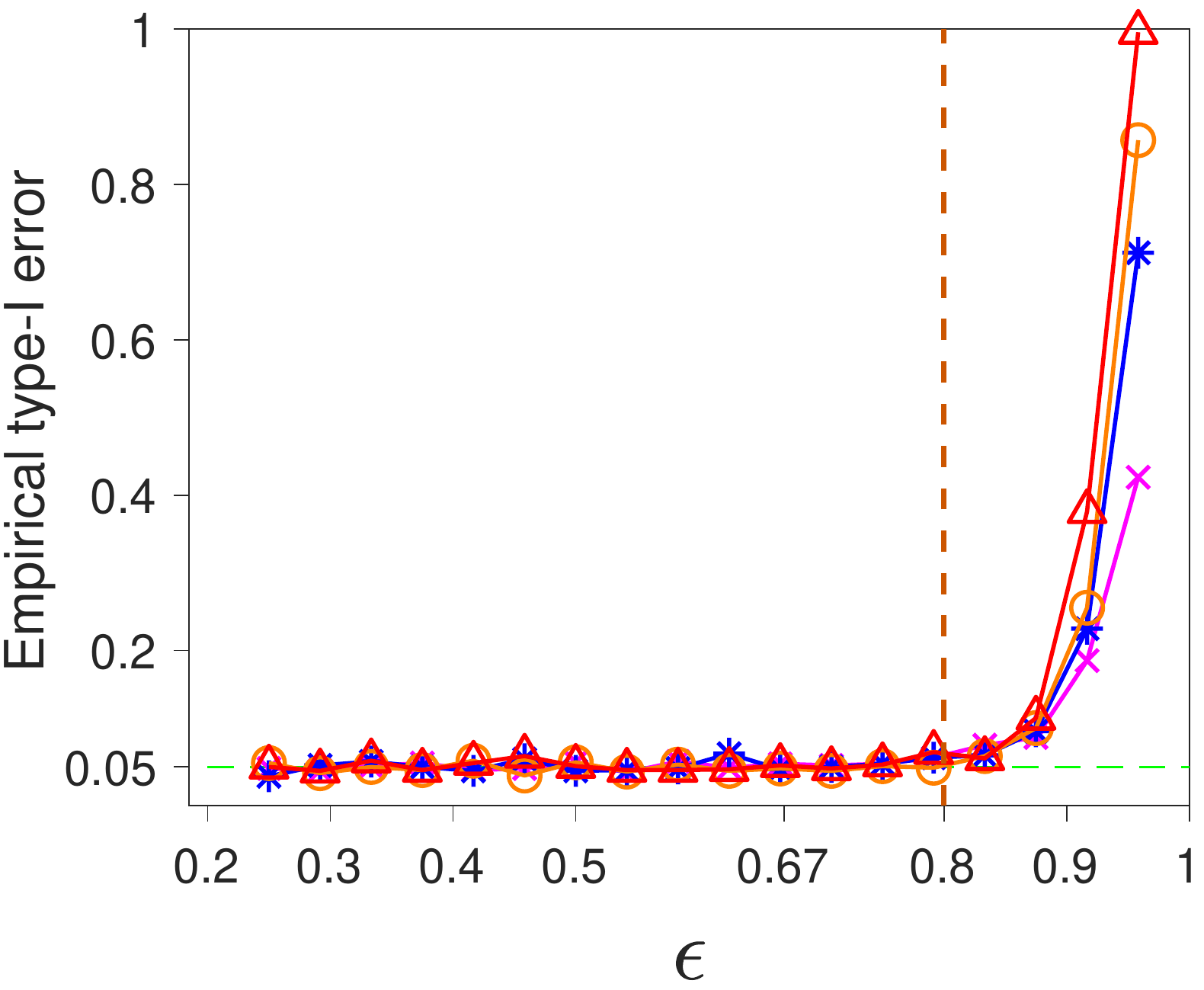}
\end{subfigure}

\quad \\

\centering
\begin{turn}{90}
\begin{minipage}{0.26\textwidth}
 \hspace{6em} Test (II)\vspace{0.12em} 
\end{minipage}
\end{turn}%
~ \
\begin{subfigure}[t]{0.38\textwidth}
\centering
\includegraphics[width=\textwidth]{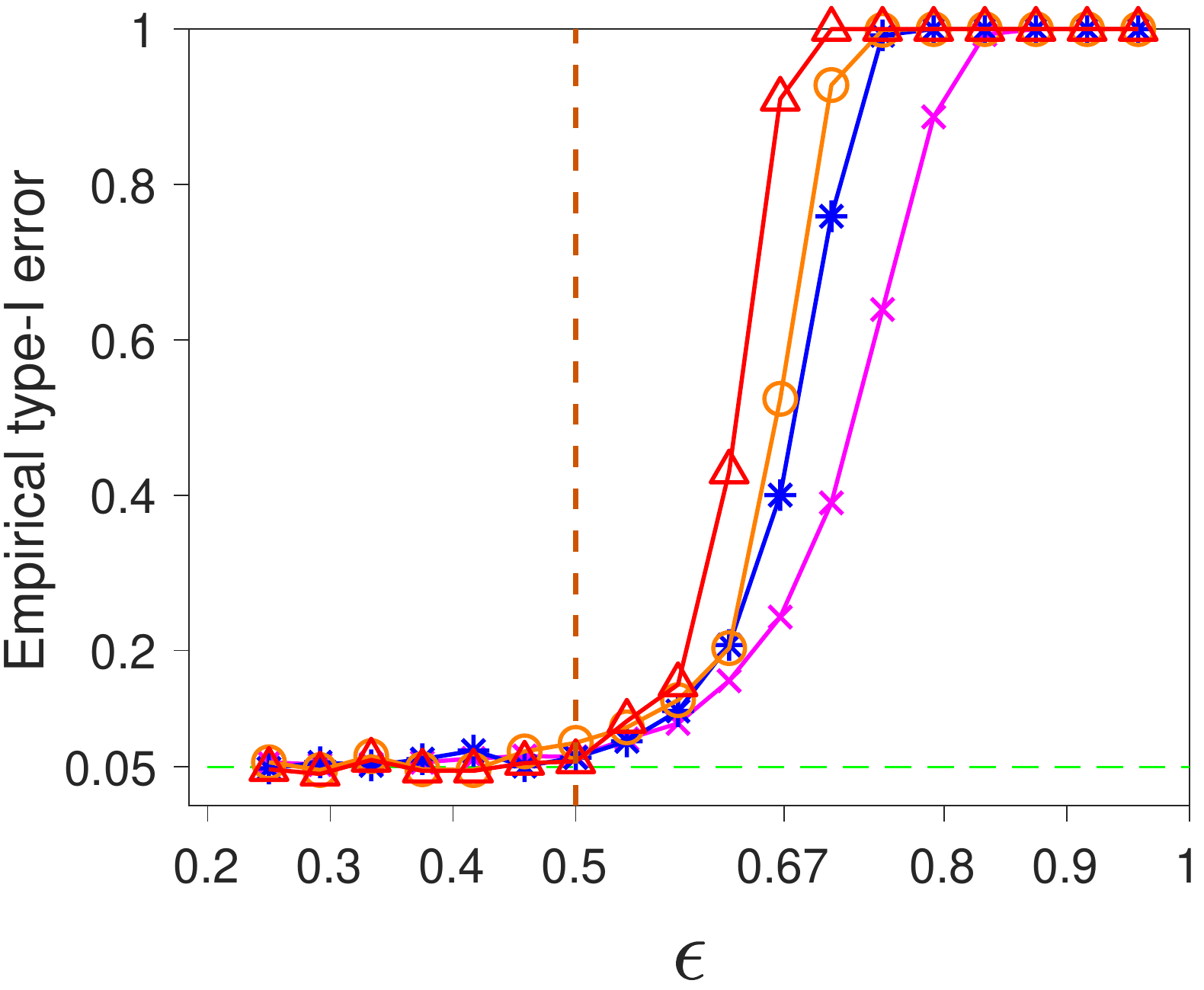}
\end{subfigure}%
~\ 
\begin{subfigure}[t]{0.38\textwidth}
\centering
\includegraphics[width=\textwidth]{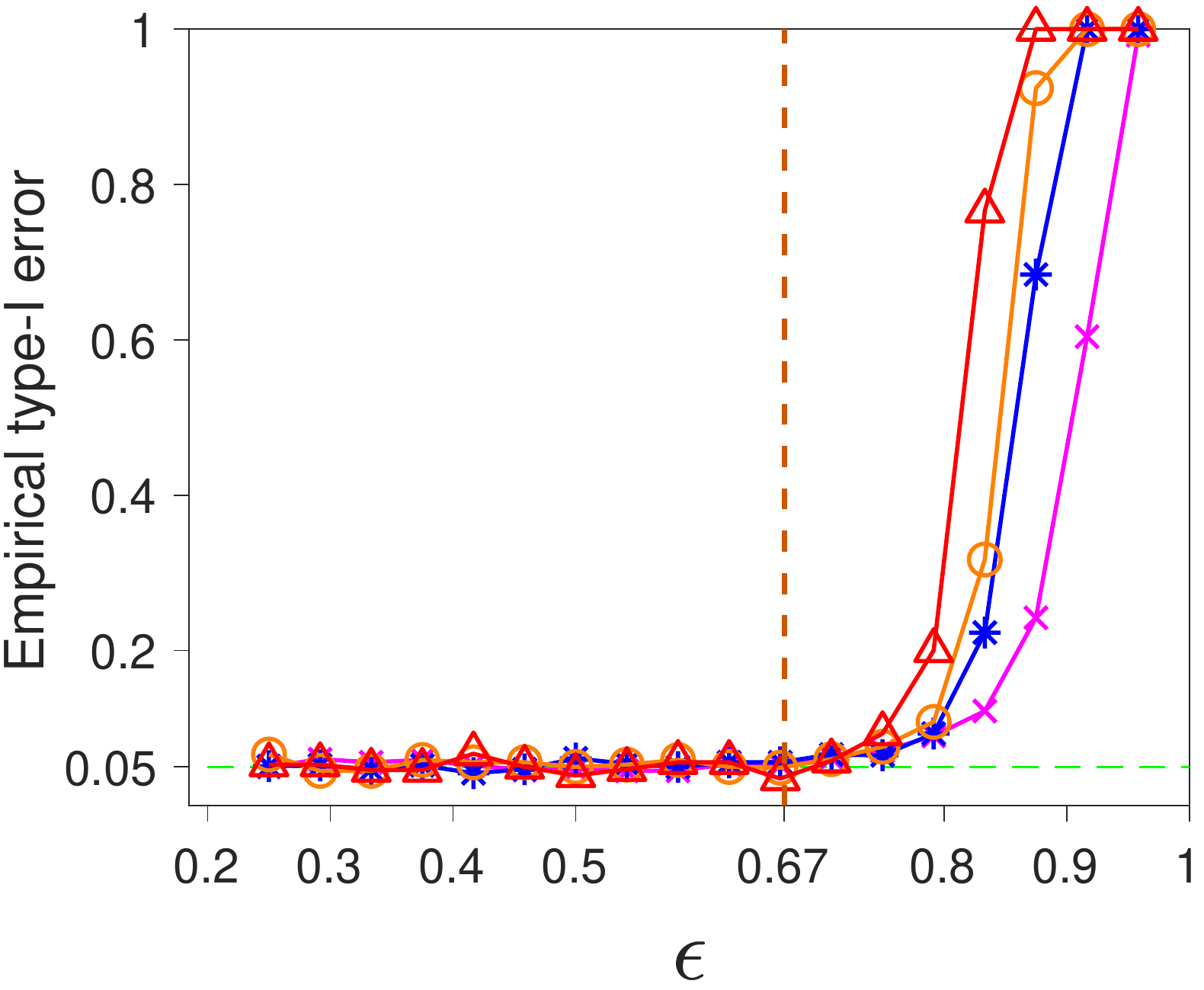}
\end{subfigure}

\quad \\

\centering
\begin{turn}{90}
\begin{minipage}{0.26\textwidth}
 \hspace{6em} Test (III)\vspace{0.12em} 
\end{minipage}
\end{turn}%
~ \
\begin{subfigure}[t]{0.38\textwidth}
\centering
\includegraphics[width=\textwidth]{IIIJoint1.pdf}
\caption{\quad (i) Without the Bartlett correction}		
\end{subfigure}%
~ \ 
\begin{subfigure}[t]{0.38\textwidth}
\centering
\includegraphics[width=\textwidth]{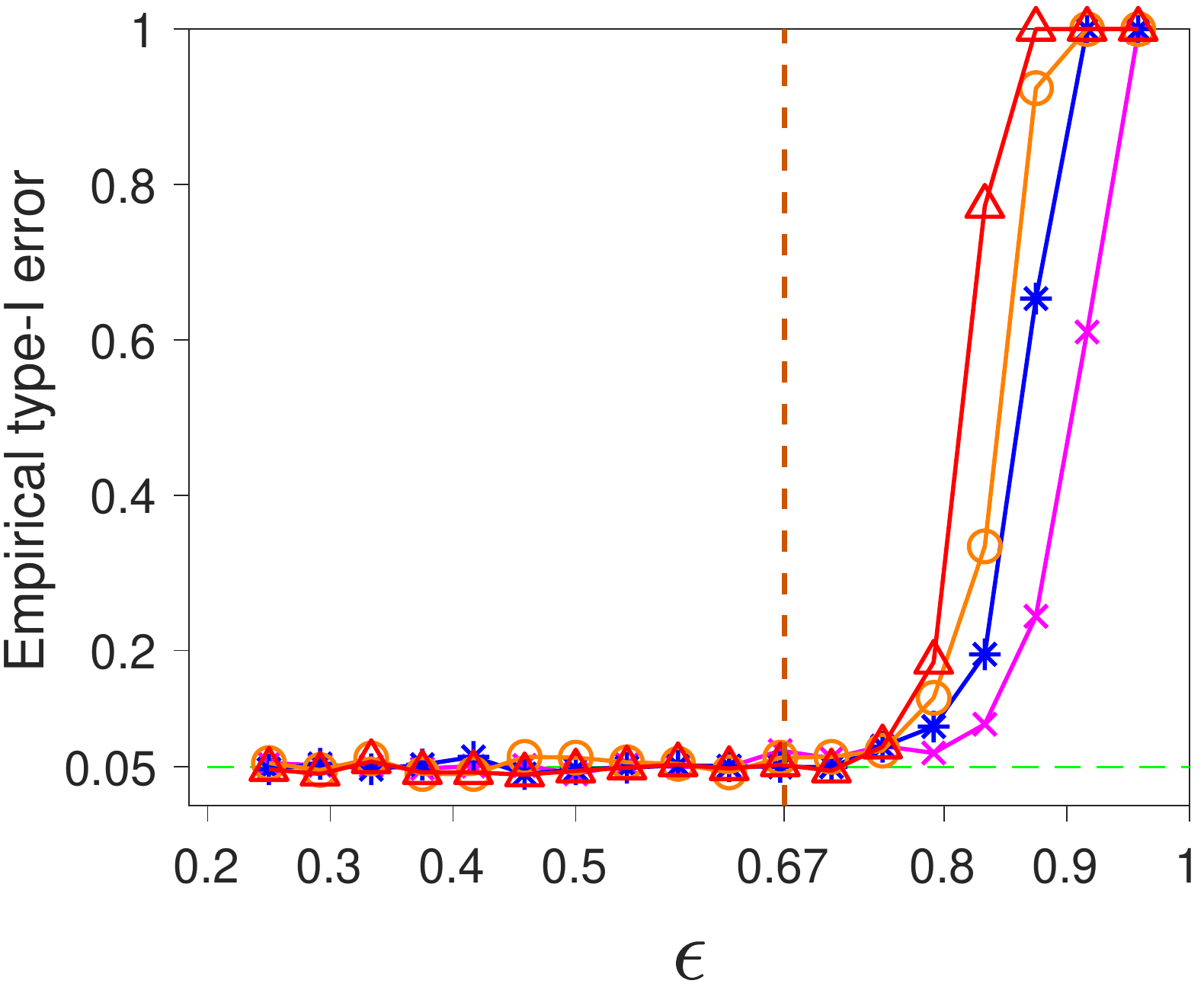}
\caption{\quad (ii) With the Bartlett correction correction}	
\end{subfigure}

\caption{One-sample tests (I)--(III). Rows 1-3 give the results for tests (I)--(III), respectively.  Columns (i) and (ii) correspond to the chi-squared approximations without and with the Bartlett correction, respectively.  Within each subfigure: empirical type-\RNum{1} error versus $\epsilon$ with $n=100$ (cross), $500$ (asterisk), $1000$ (square), and $5000$ (triangle);   theoretical phase transition boundary (vertical dashed line).}\label{fig:simone}
\end{figure}
	

%
%
%
\begin{figure}[!htbp]
\captionsetup[subfigure]{labelformat=empty}
\centering

\begin{turn}{90}
\begin{minipage}{0.26\textwidth}
 \hspace{5em} Test (IV)\vspace{0.12em} 
\end{minipage}
\end{turn}%
~ 
\begin{subfigure}[t]{0.38\textwidth}
\centering
\includegraphics[width=\textwidth]{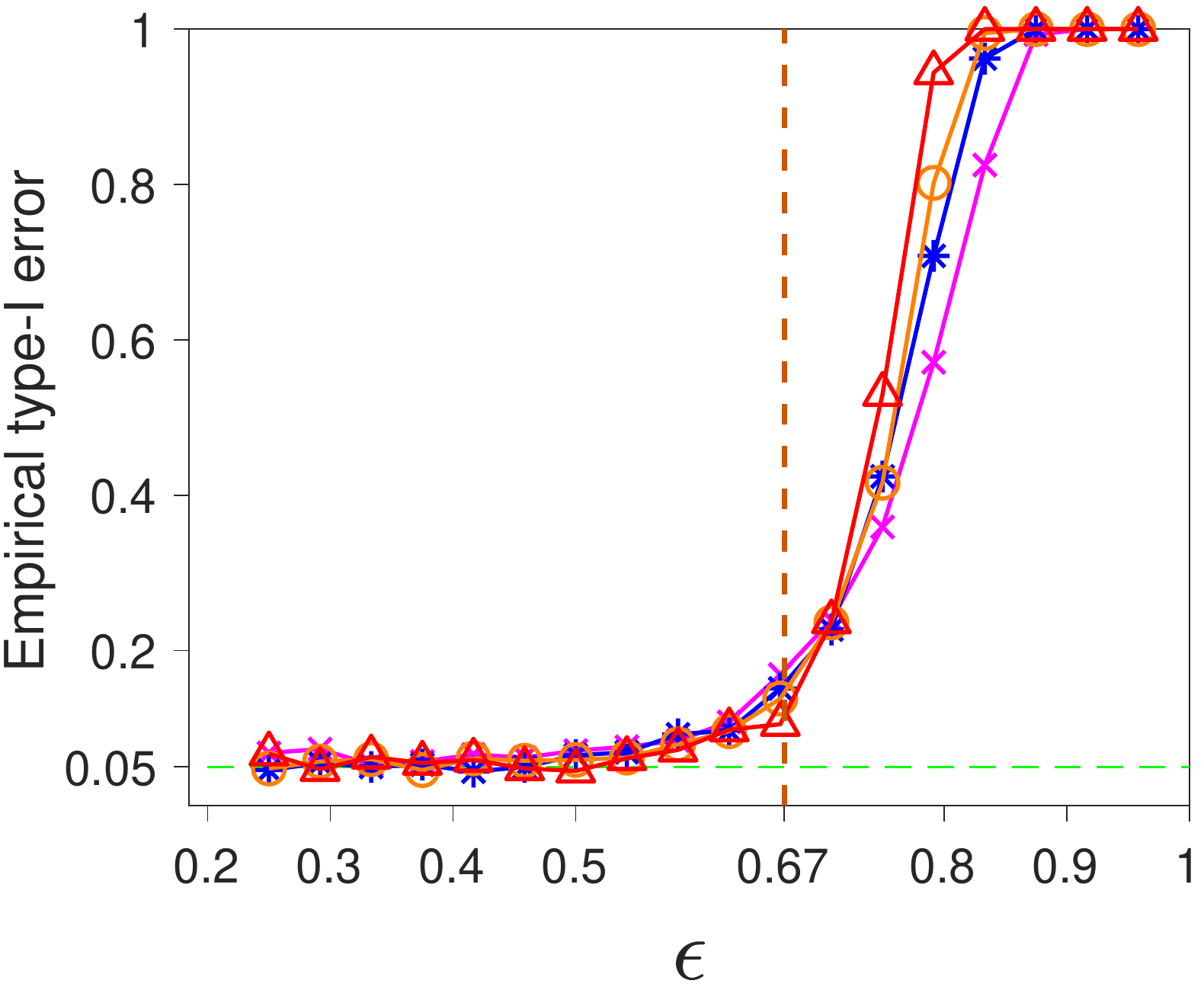}
\end{subfigure}%
~ \ 
\begin{subfigure}[t]{0.38\textwidth}
\centering
\includegraphics[width=\textwidth]{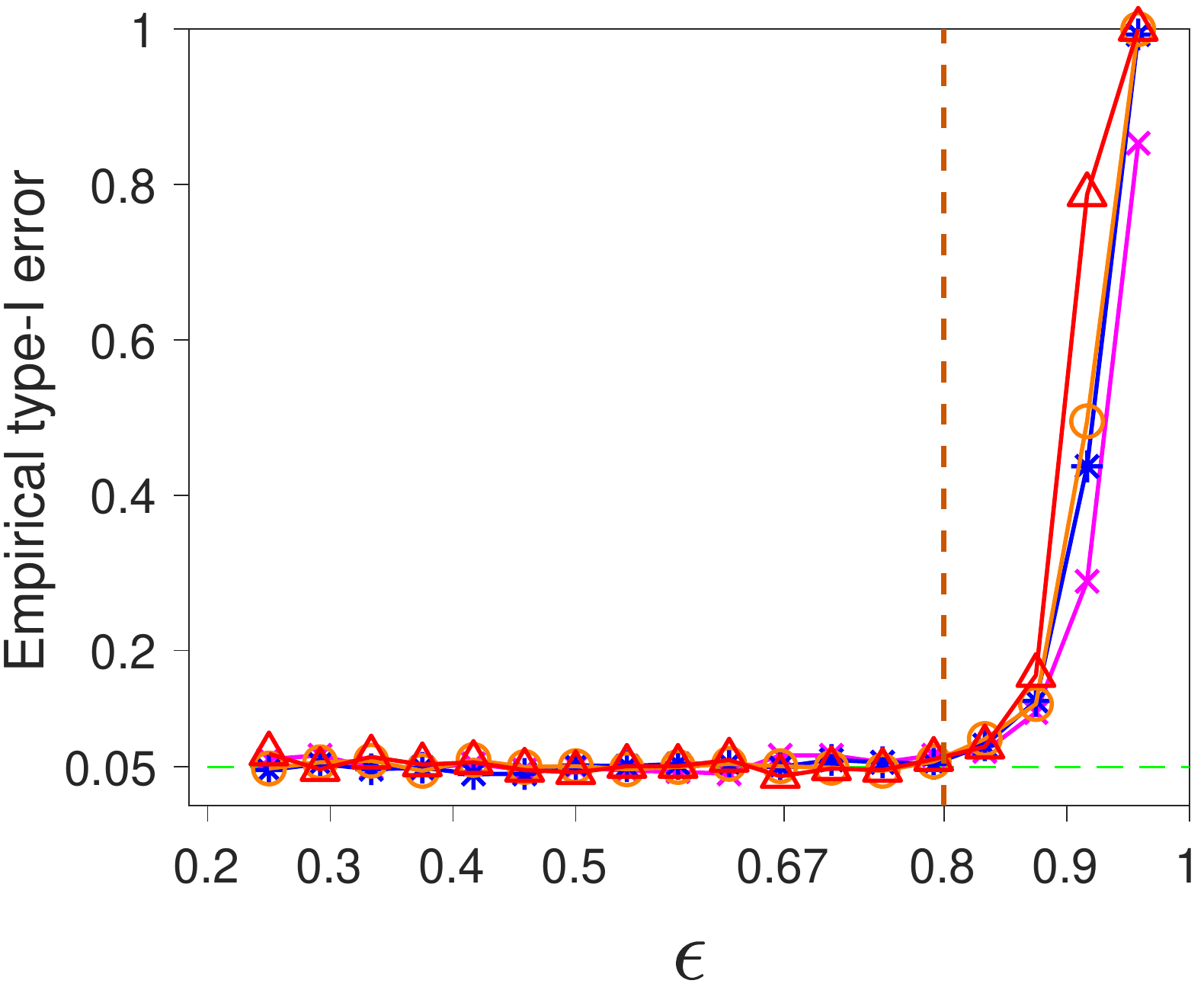}
\end{subfigure}

\quad \\

\centering
\begin{turn}{90}
\begin{minipage}{0.26\textwidth}
 \hspace{6em} Test (V)\vspace{0.12em} 
\end{minipage}
\end{turn}%
~ \
\begin{subfigure}[t]{0.38\textwidth}
\centering
\includegraphics[width=\textwidth]{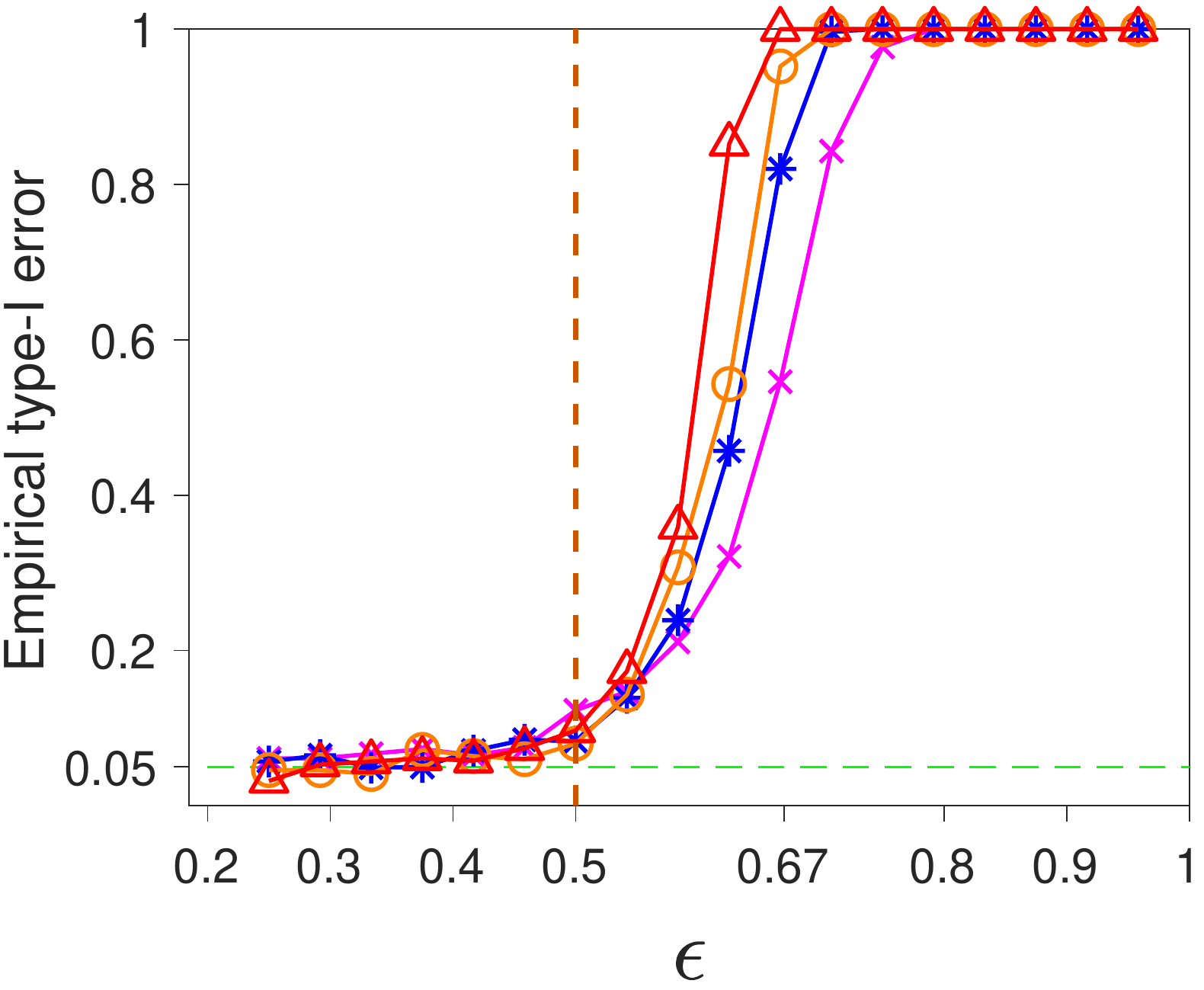}
\end{subfigure}%
~\ 
\begin{subfigure}[t]{0.38\textwidth}
\centering
\includegraphics[width=\textwidth]{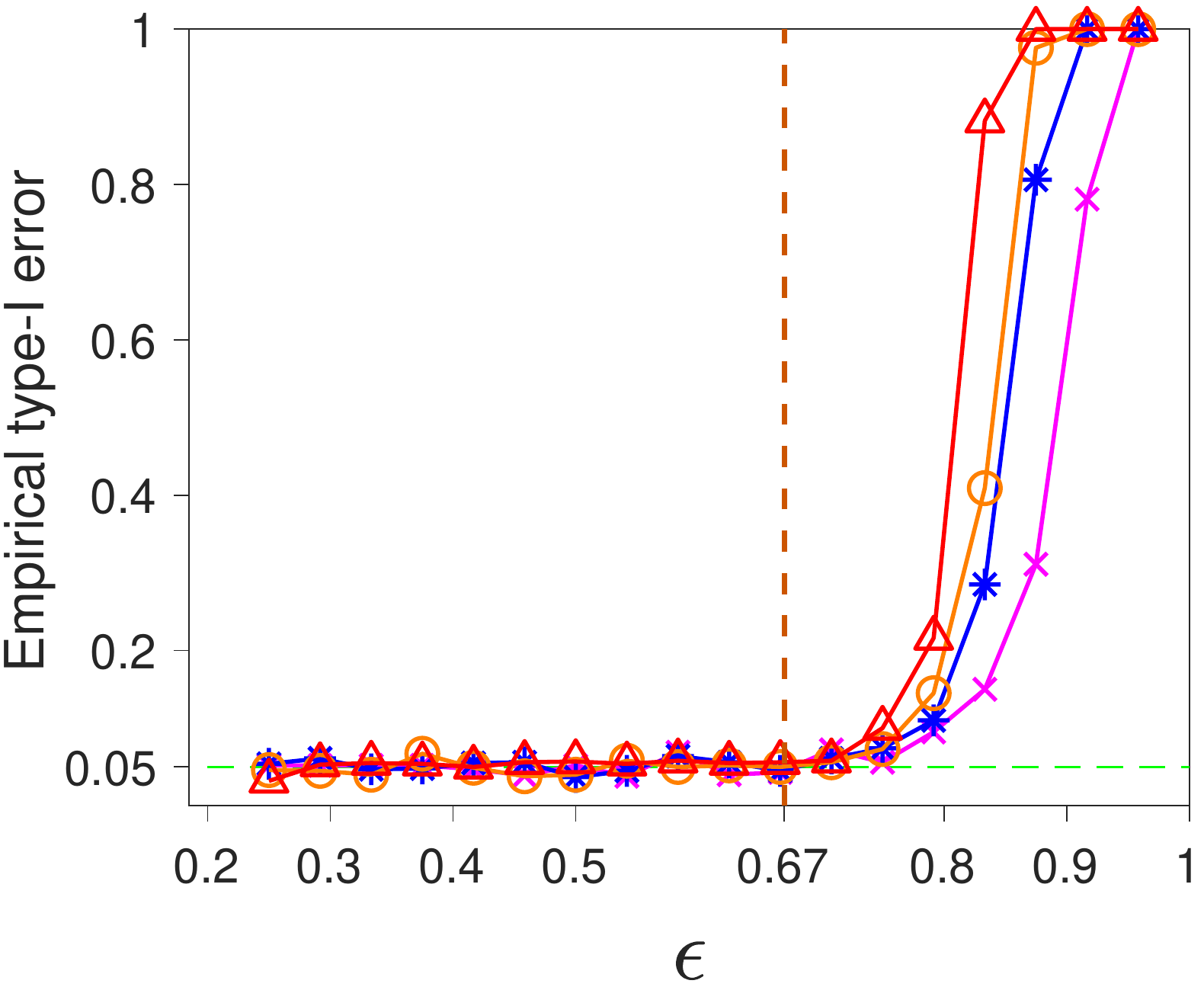}
\end{subfigure}

\quad \\

\centering
\begin{turn}{90}
\begin{minipage}{0.26\textwidth}
 \hspace{6em} Test (VI)\vspace{0.12em} 
\end{minipage}
\end{turn}%
~ \
\begin{subfigure}[t]{0.38\textwidth}
\centering
\includegraphics[width=\textwidth]{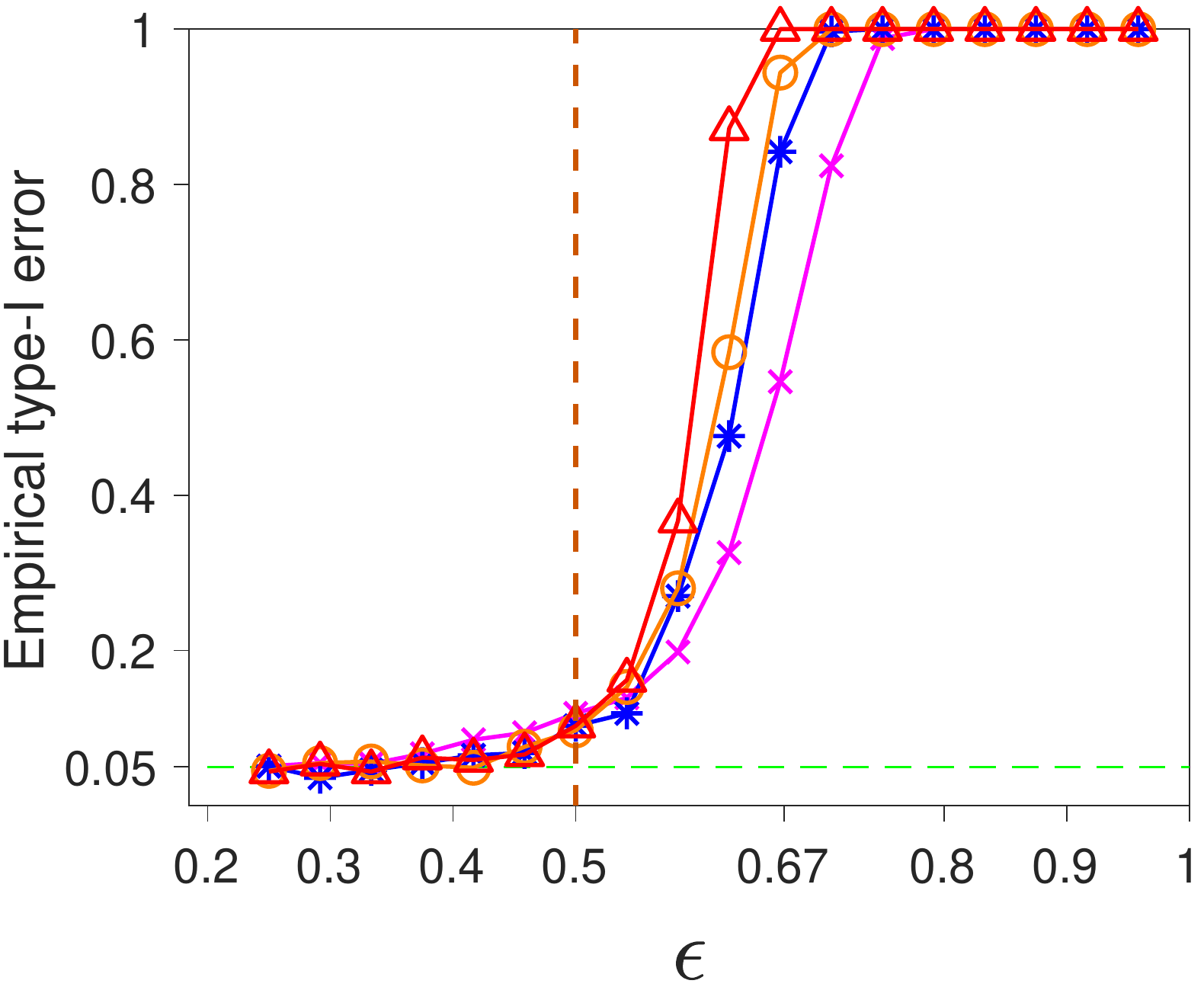}
\end{subfigure}%
~ \ 
\begin{subfigure}[t]{0.38\textwidth}
\centering
\includegraphics[width=\textwidth]{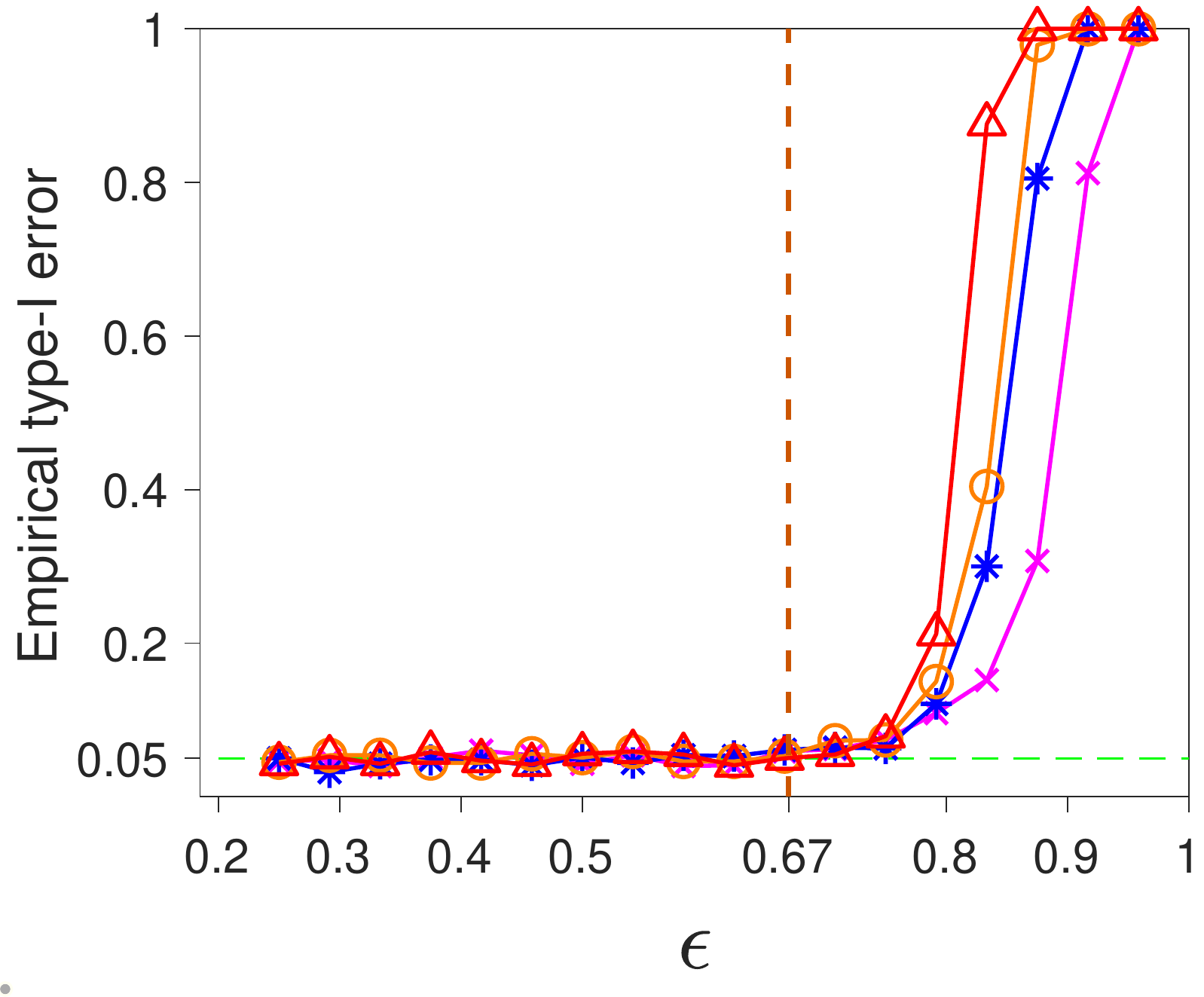}
\end{subfigure}

\quad \\

\centering
\begin{turn}{90}
\begin{minipage}{0.26\textwidth}
 \hspace{6em} Test (VII)\vspace{0.12em} 
\end{minipage}
\end{turn}%
~ \
\begin{subfigure}[t]{0.38\textwidth}
\centering
\includegraphics[width=\textwidth]{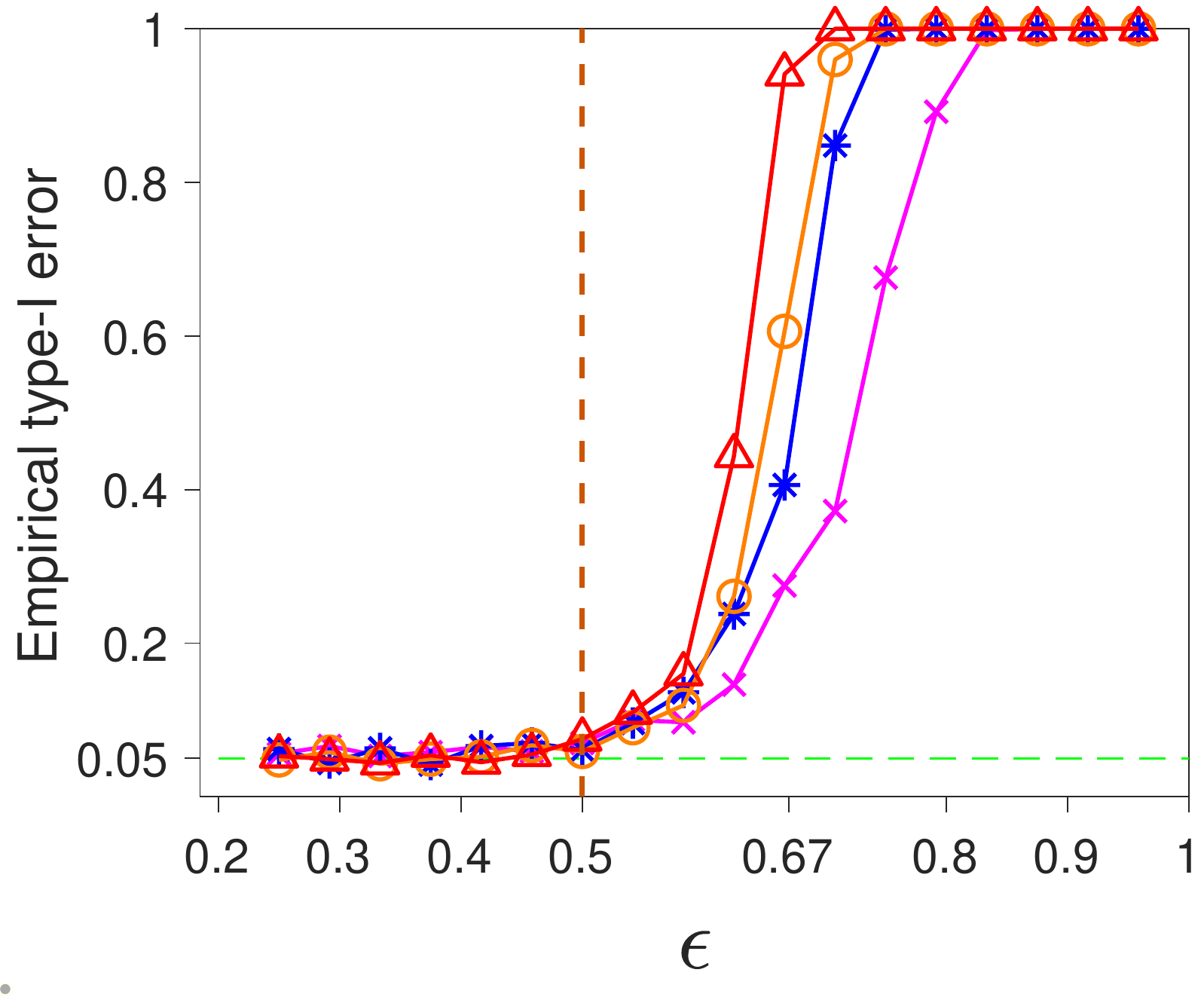}
\centerline{\quad \quad  (i) Without the Bartlett correction}	
\end{subfigure}%
~
\begin{subfigure}[t]{0.38\textwidth}
\centering
\includegraphics[width=\textwidth]{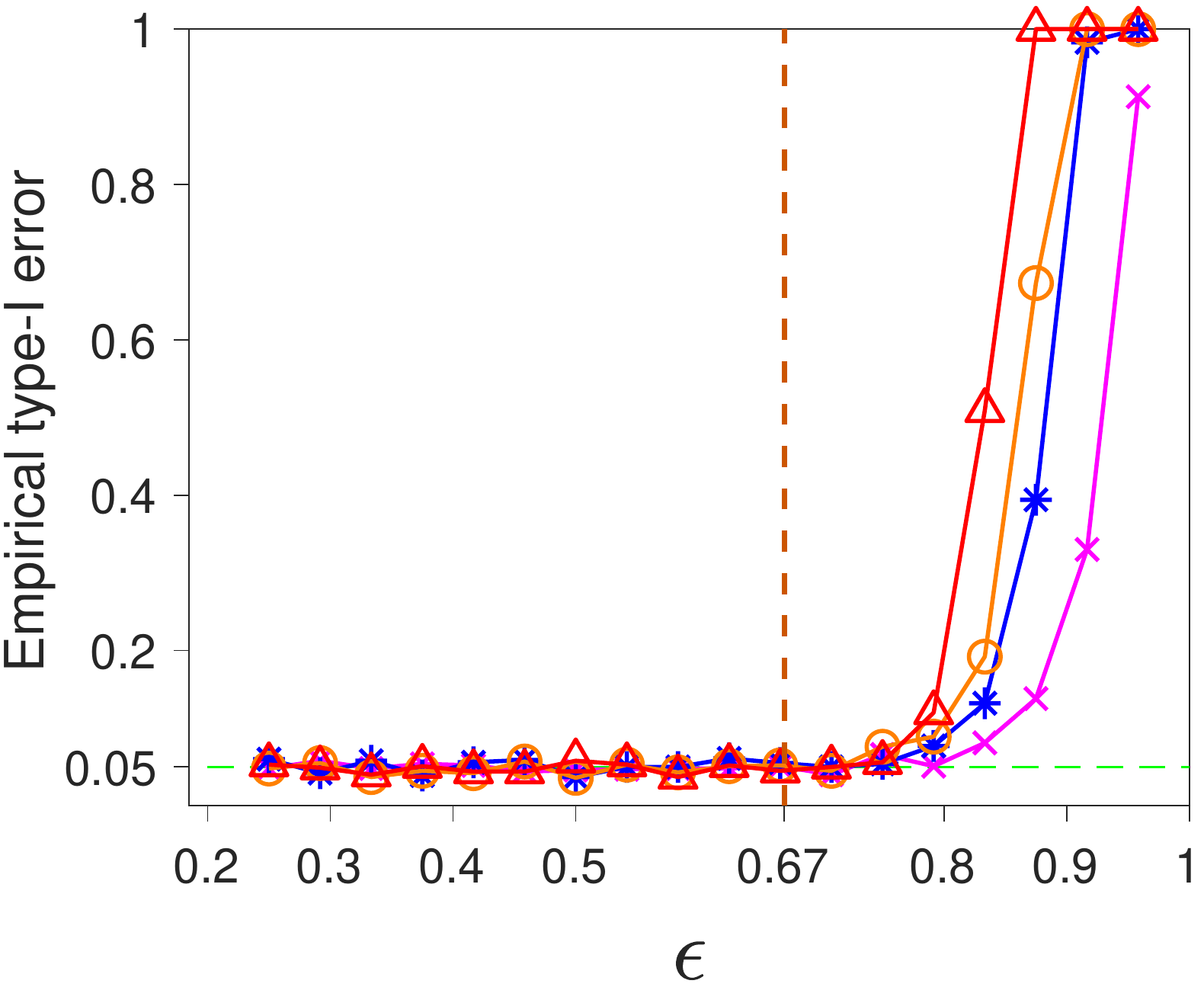}
\centerline{\quad \quad (ii) With the Bartlett correction}	
\end{subfigure}

\caption{Multiple-sample tests (IV)--(VI) and the independence test (VII). Rows 1-4 give results for tests (IV)--(VII), respectively. Columns (i) and (ii) are for the chi-squared approximations without and with the Bartlett correction, respectively. Within each subfigure, please see the caption description in Fig. \ref{fig:simone}. } \label{fig:simphasemult}
\end{figure}

\begin{landscape}
\begin{figure}[!htbp]
\captionsetup[subfigure]{labelformat=empty}
\centering
\begin{turn}{90}
\begin{minipage}{0.26\textwidth}
 \hspace{4.5em} Test (I)\vspace{0.12em} 
\end{minipage}
\end{turn}%
~ \
\begin{subfigure}[t]{0.34\textwidth}
\centering
\includegraphics[width=\textwidth]{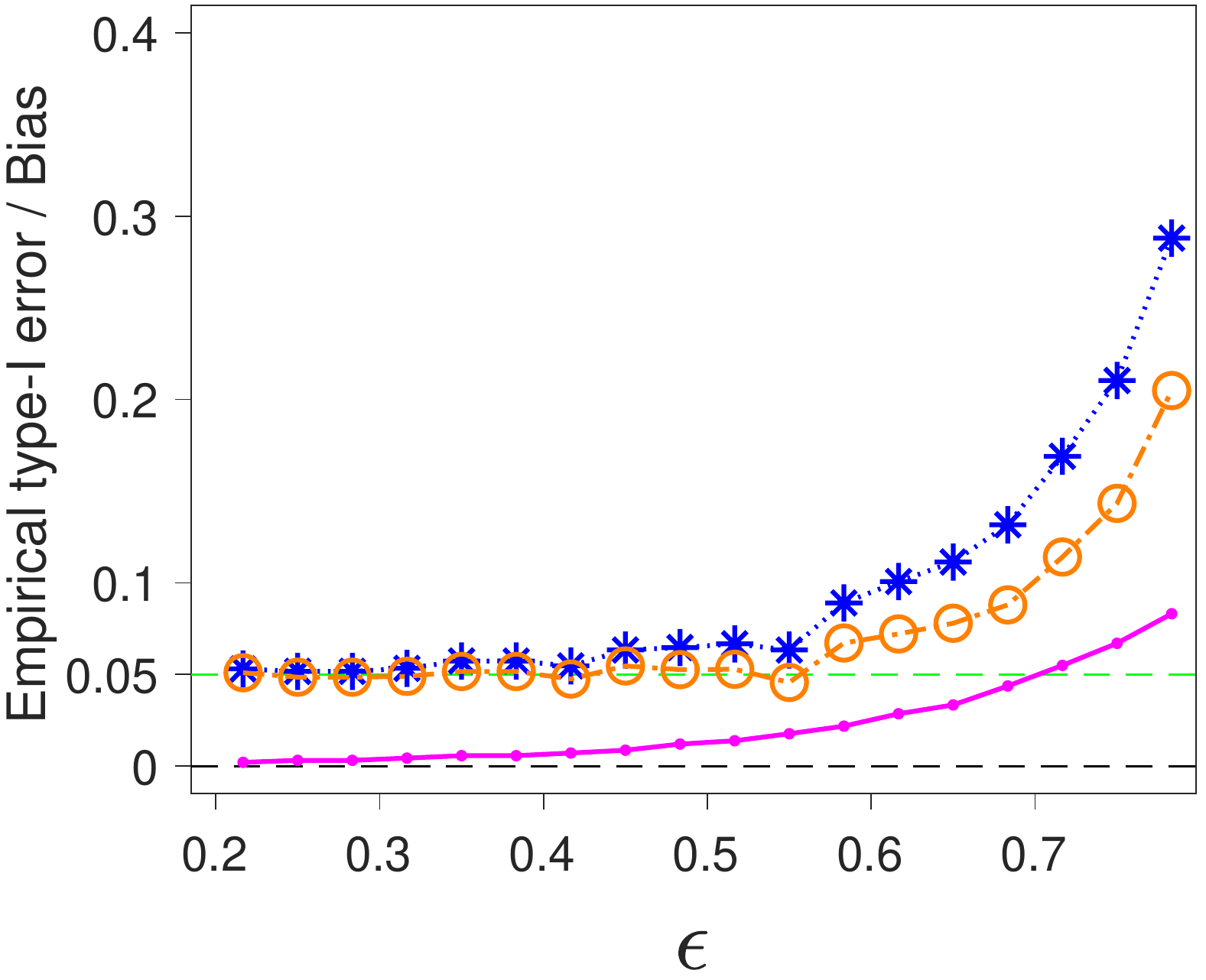}
\end{subfigure}%
~ \ 
\begin{subfigure}[t]{0.34\textwidth}
\centering
\includegraphics[width=\textwidth]{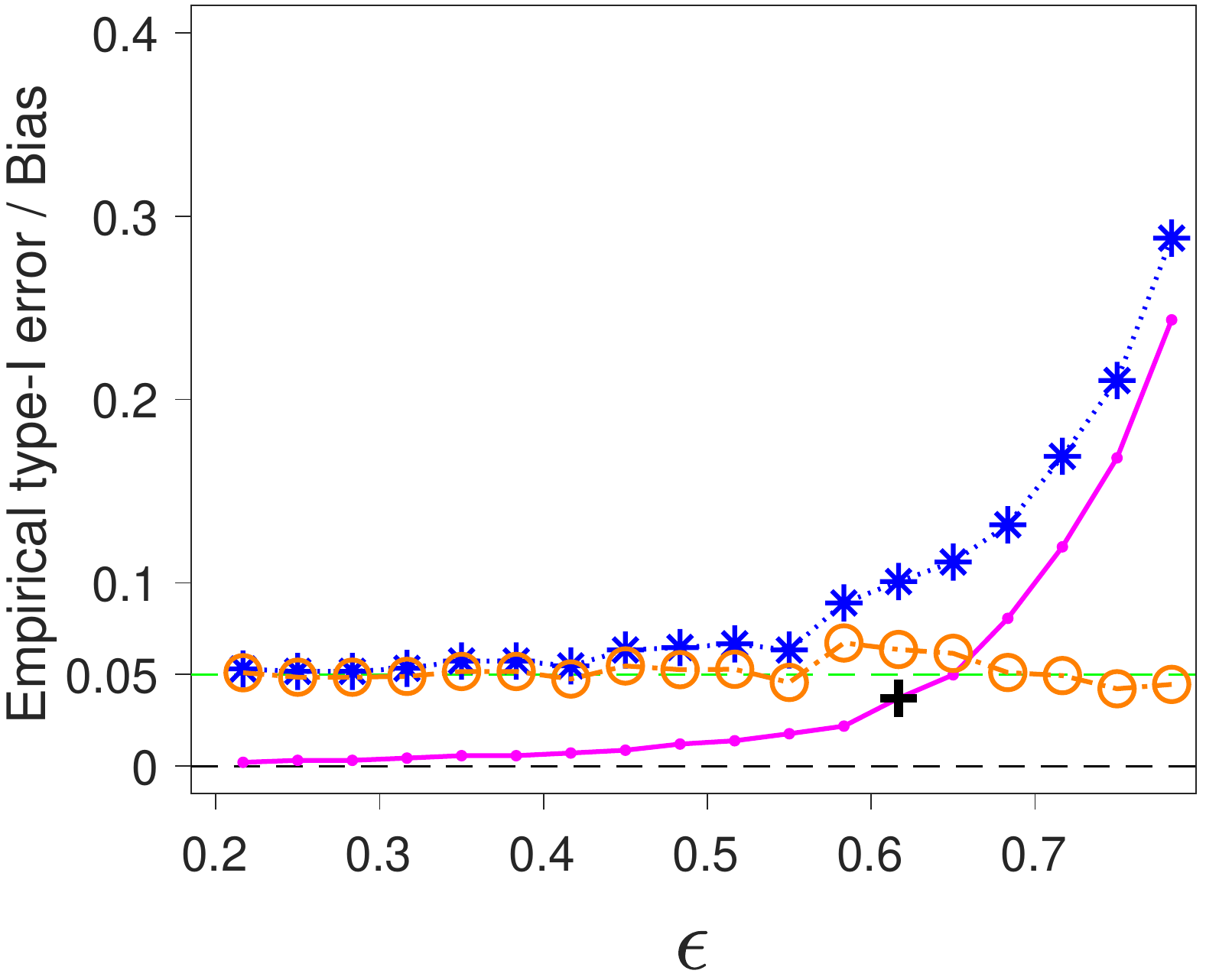}
\end{subfigure}
~ \ 
\begin{subfigure}[t]{0.34\textwidth}
\centering
\includegraphics[width=\textwidth]{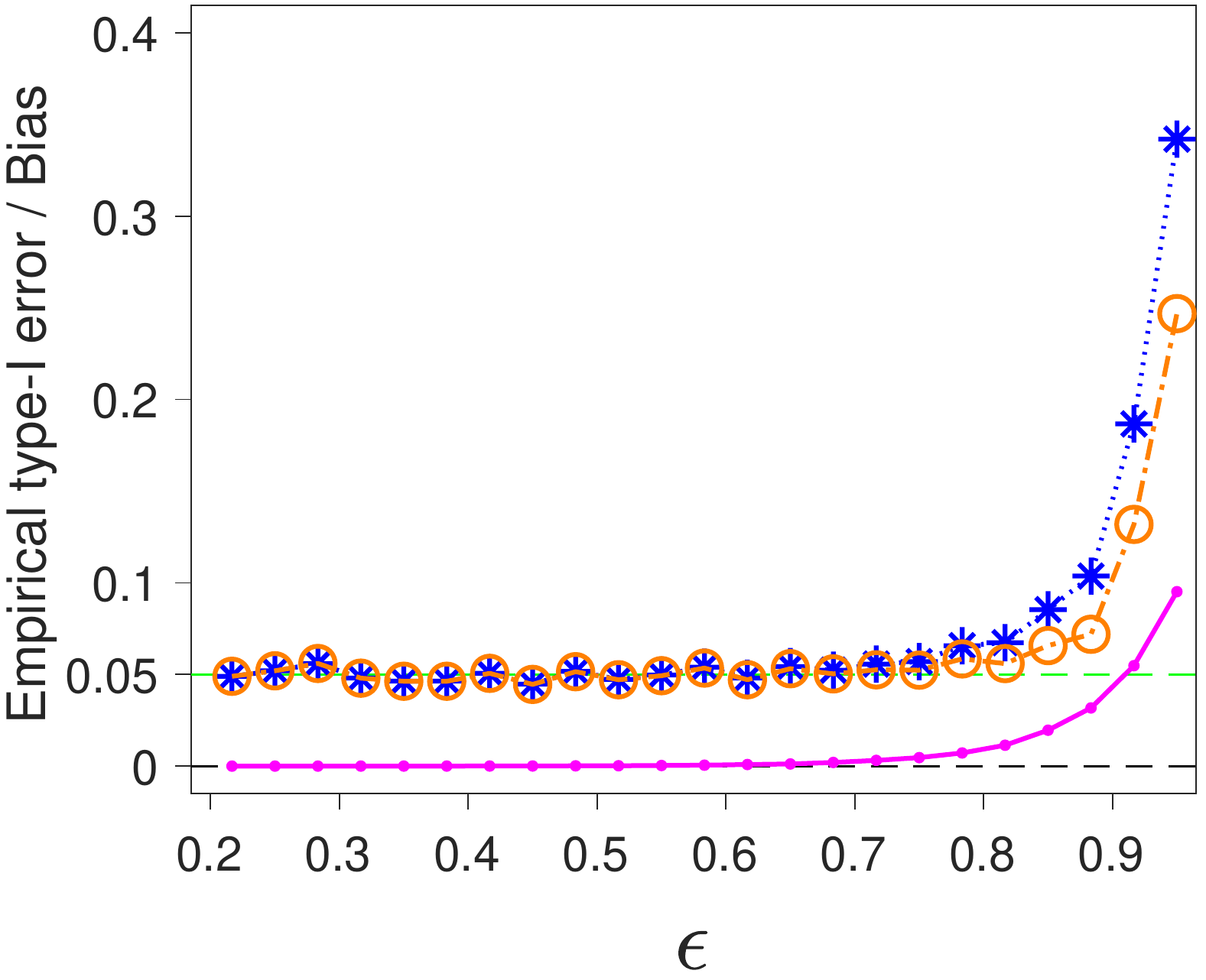}
\end{subfigure}%
~ \ 
\begin{subfigure}[t]{0.34\textwidth}
\centering
\includegraphics[width=\textwidth]{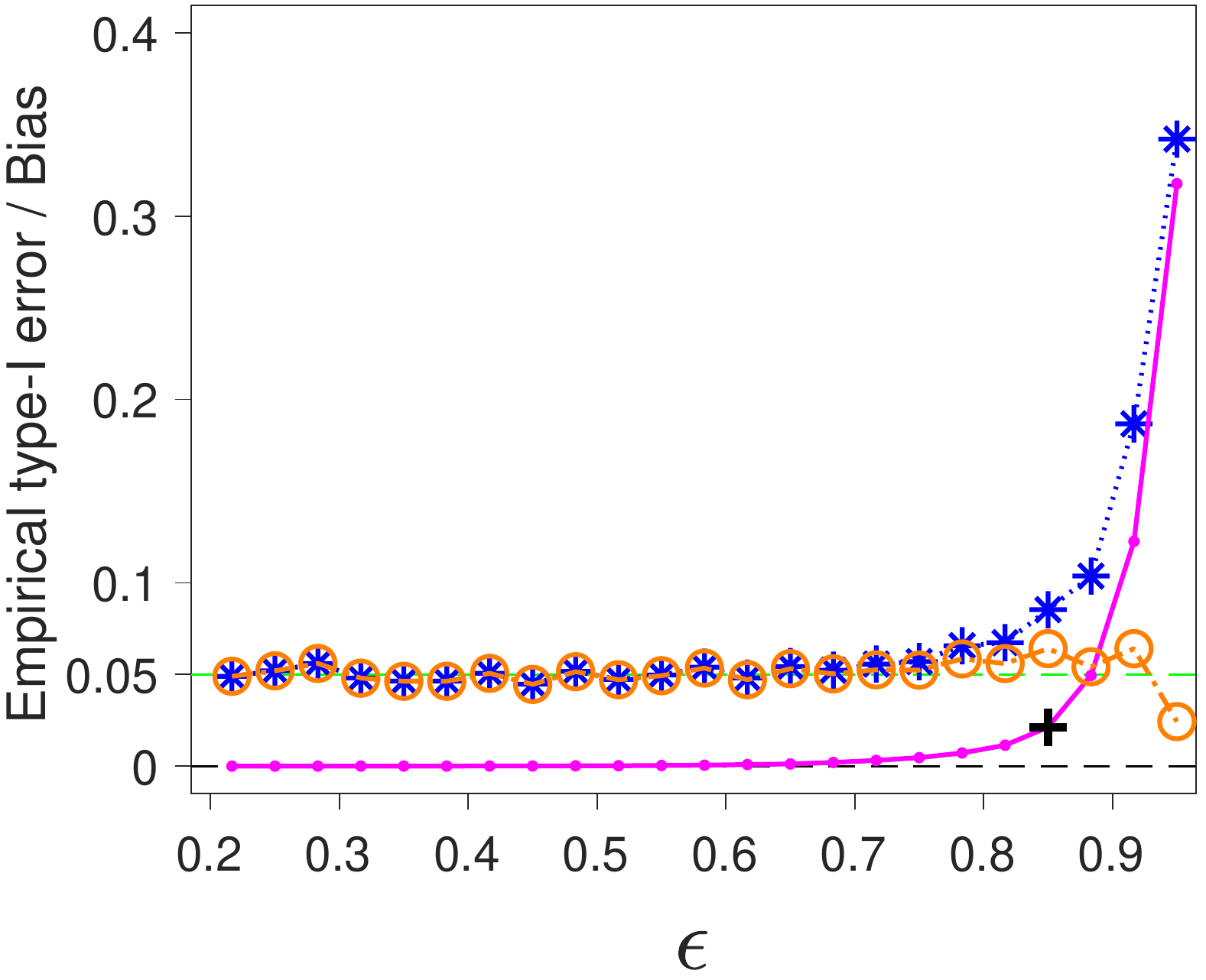}
\end{subfigure}
\ \vspace{0.2em}

\begin{turn}{90}
\begin{minipage}{0.26\textwidth}
 \hspace{4.5em} Test (II)\vspace{0.12em} 
\end{minipage}
\end{turn}%
~ \
\begin{subfigure}[t]{0.34\textwidth}
\centering
\includegraphics[width=\textwidth]{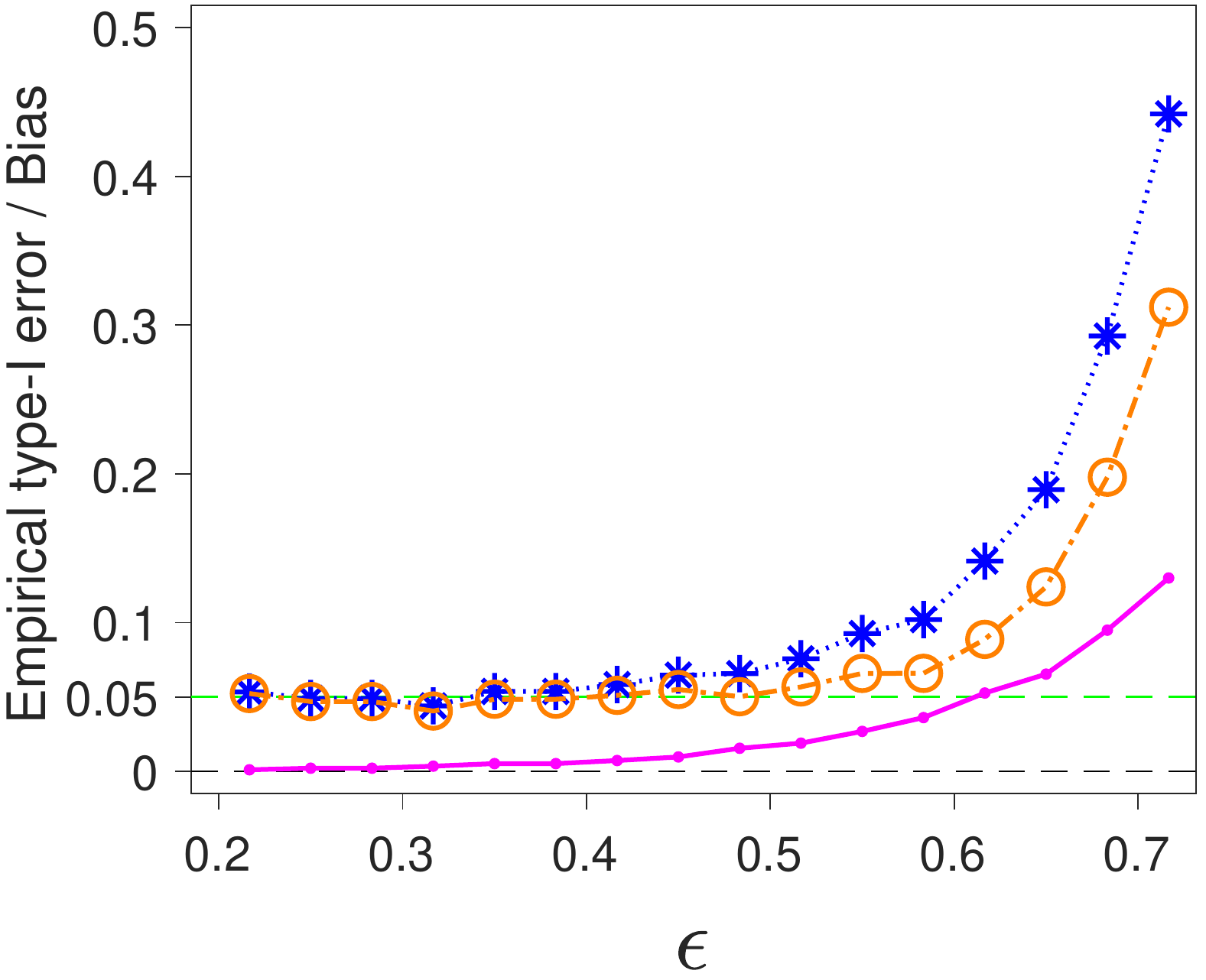}
\end{subfigure}%
~ \ 
\begin{subfigure}[t]{0.34\textwidth}
\centering
\includegraphics[width=\textwidth]{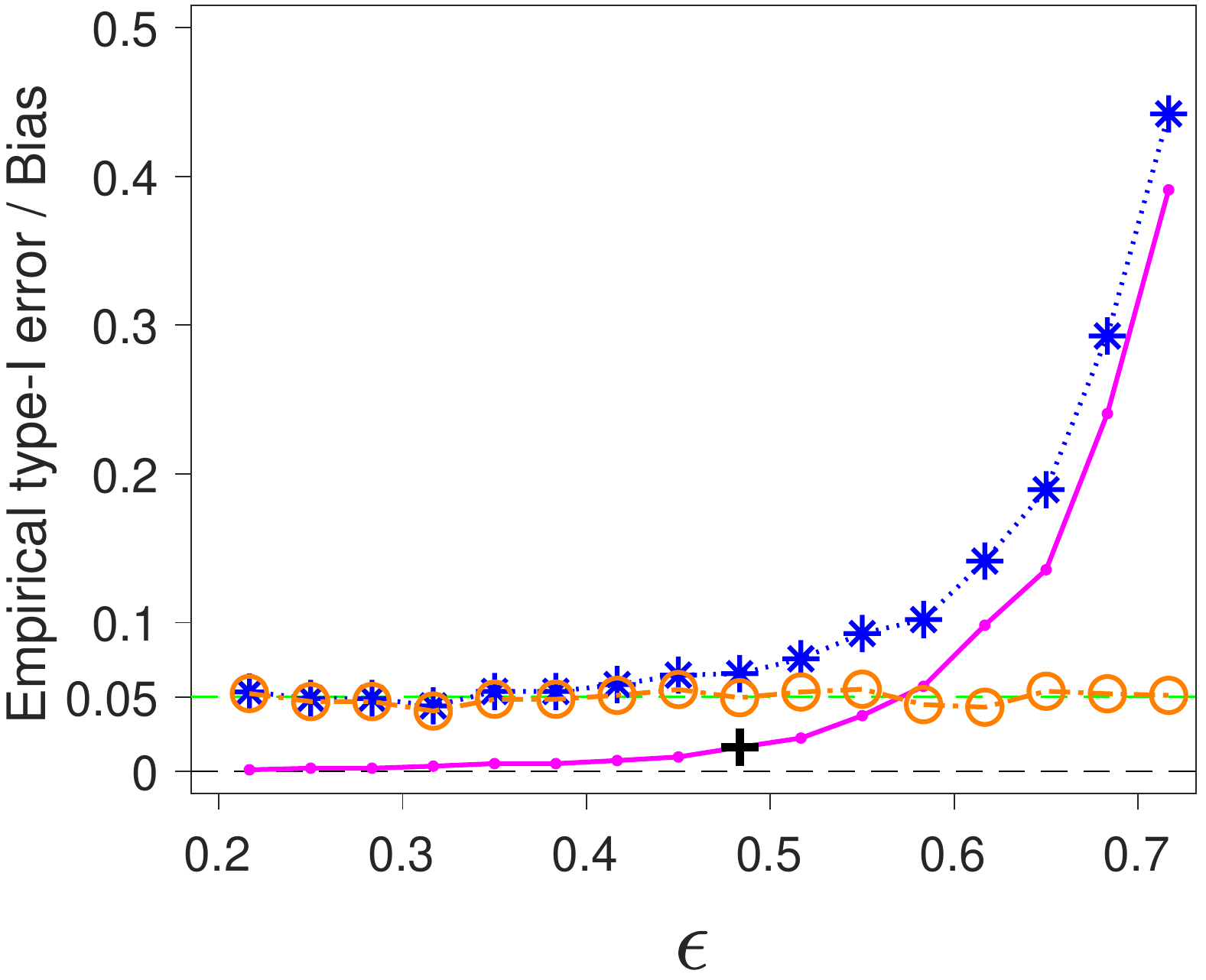}
\end{subfigure}
~ \ 
\begin{subfigure}[t]{0.34\textwidth}
\centering
\includegraphics[width=\textwidth]{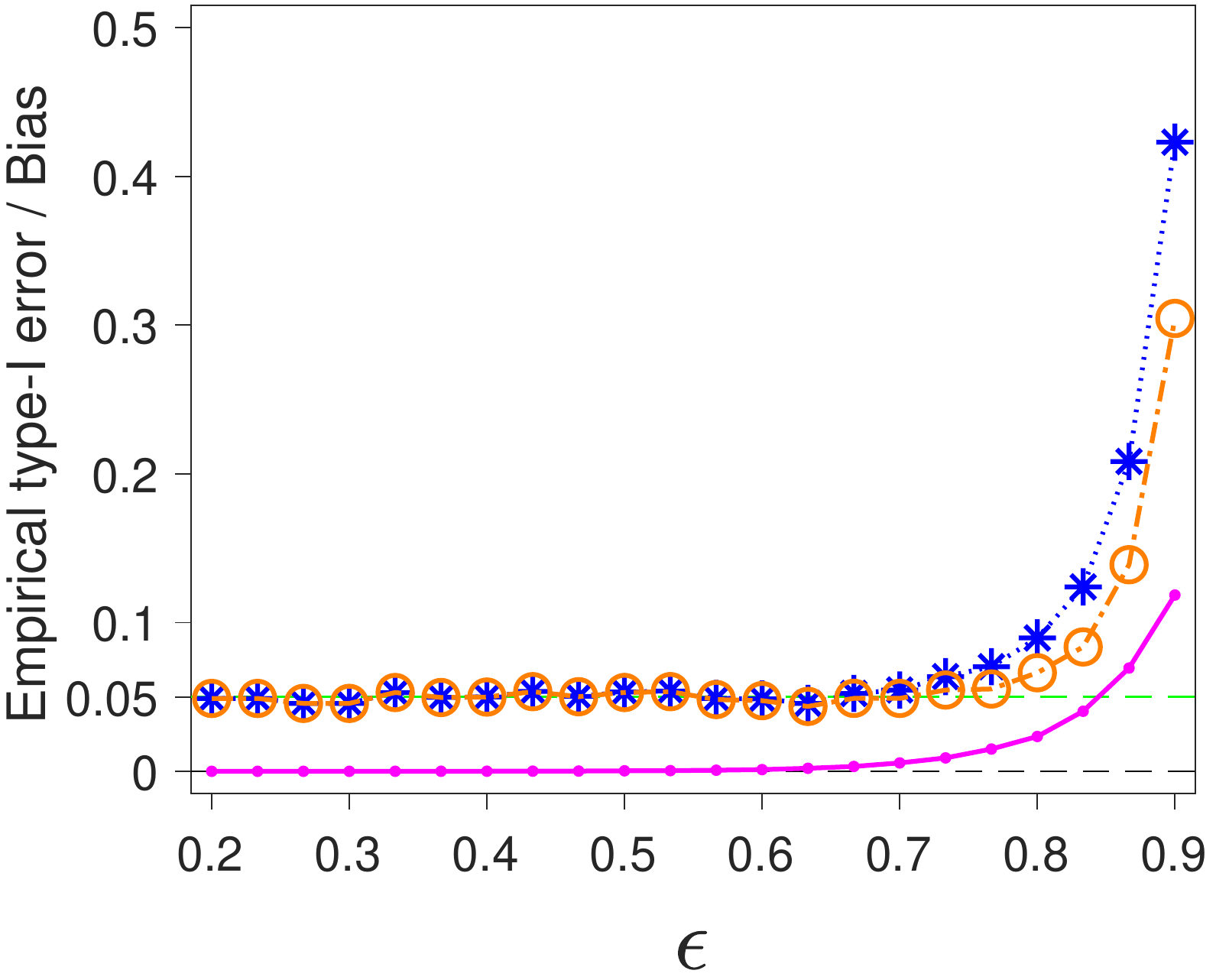}
\end{subfigure}%
~ \ 
\begin{subfigure}[t]{0.34\textwidth}
\centering
\includegraphics[width=\textwidth]{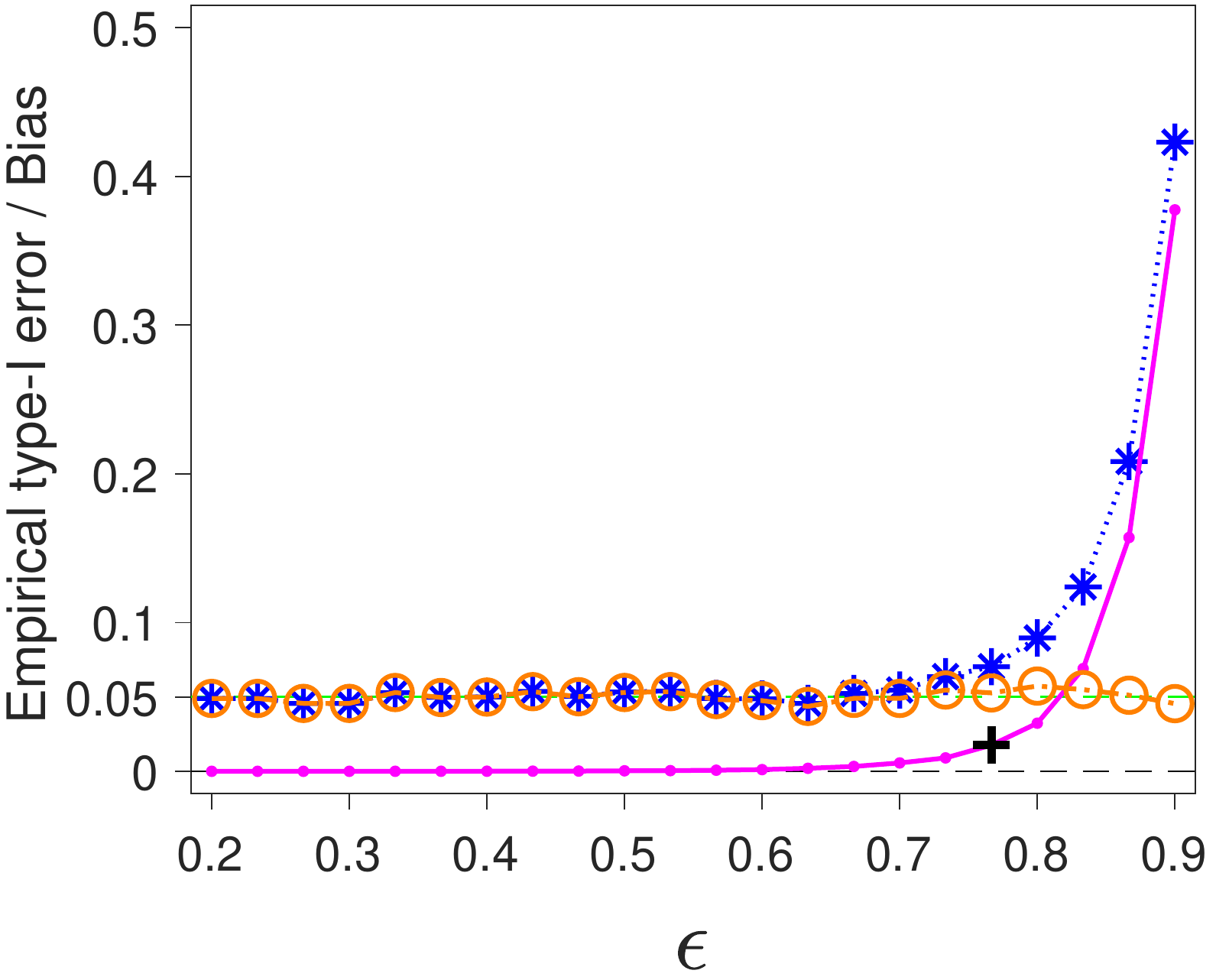}
\end{subfigure}
\ \vspace{0.2em}

\begin{turn}{90}
\begin{minipage}{0.26\textwidth}
 \hspace{4.5em} Test (III)\vspace{0.12em} 
\end{minipage}
\end{turn}%
~ \
\begin{subfigure}[t]{0.34\textwidth}
\centering
\includegraphics[width=\textwidth]{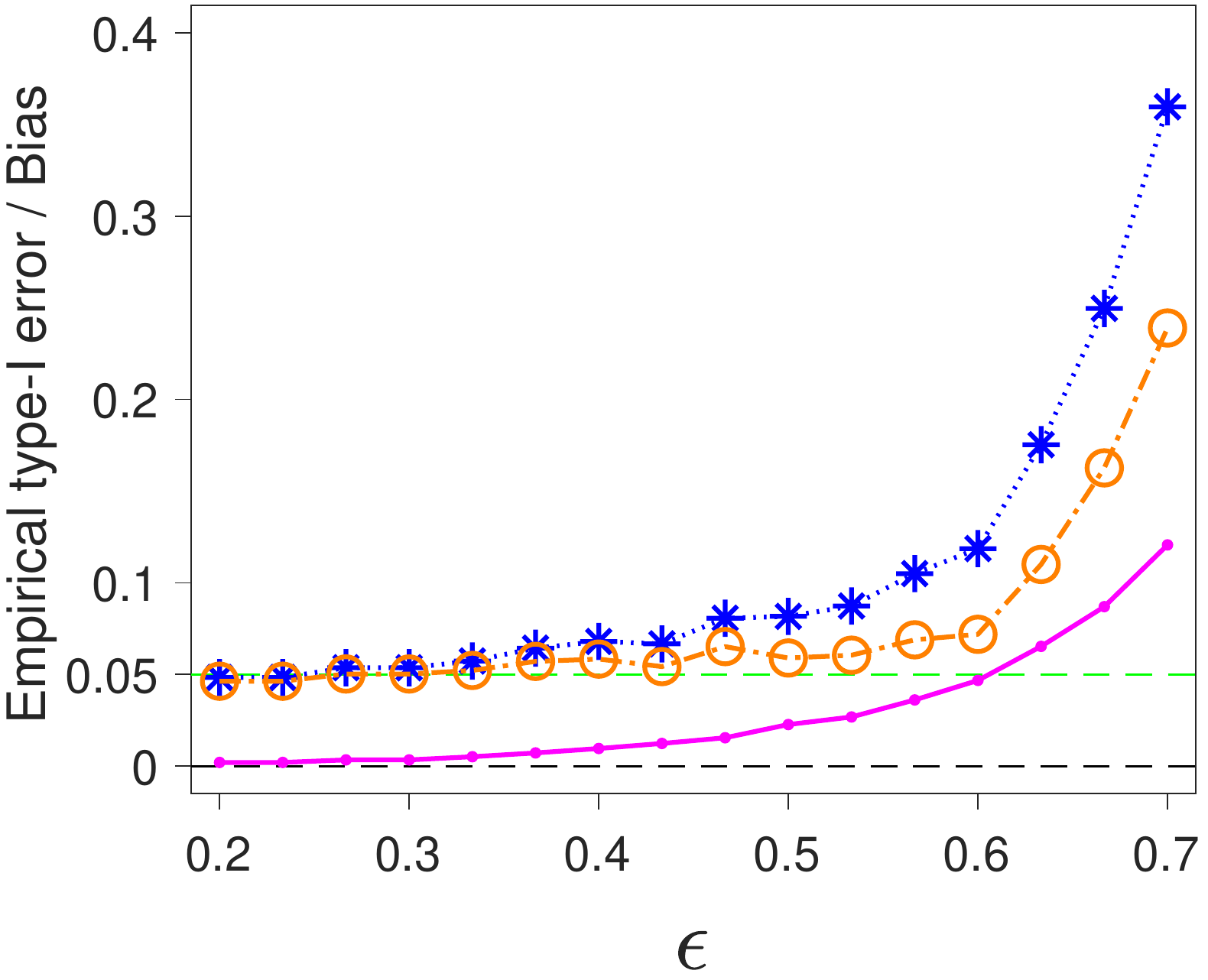}
\caption{\quad (a)\ Without the Bartlett correction}
\end{subfigure}%
~ \ 
\begin{subfigure}[t]{0.34\textwidth}
\centering
\includegraphics[width=\textwidth]{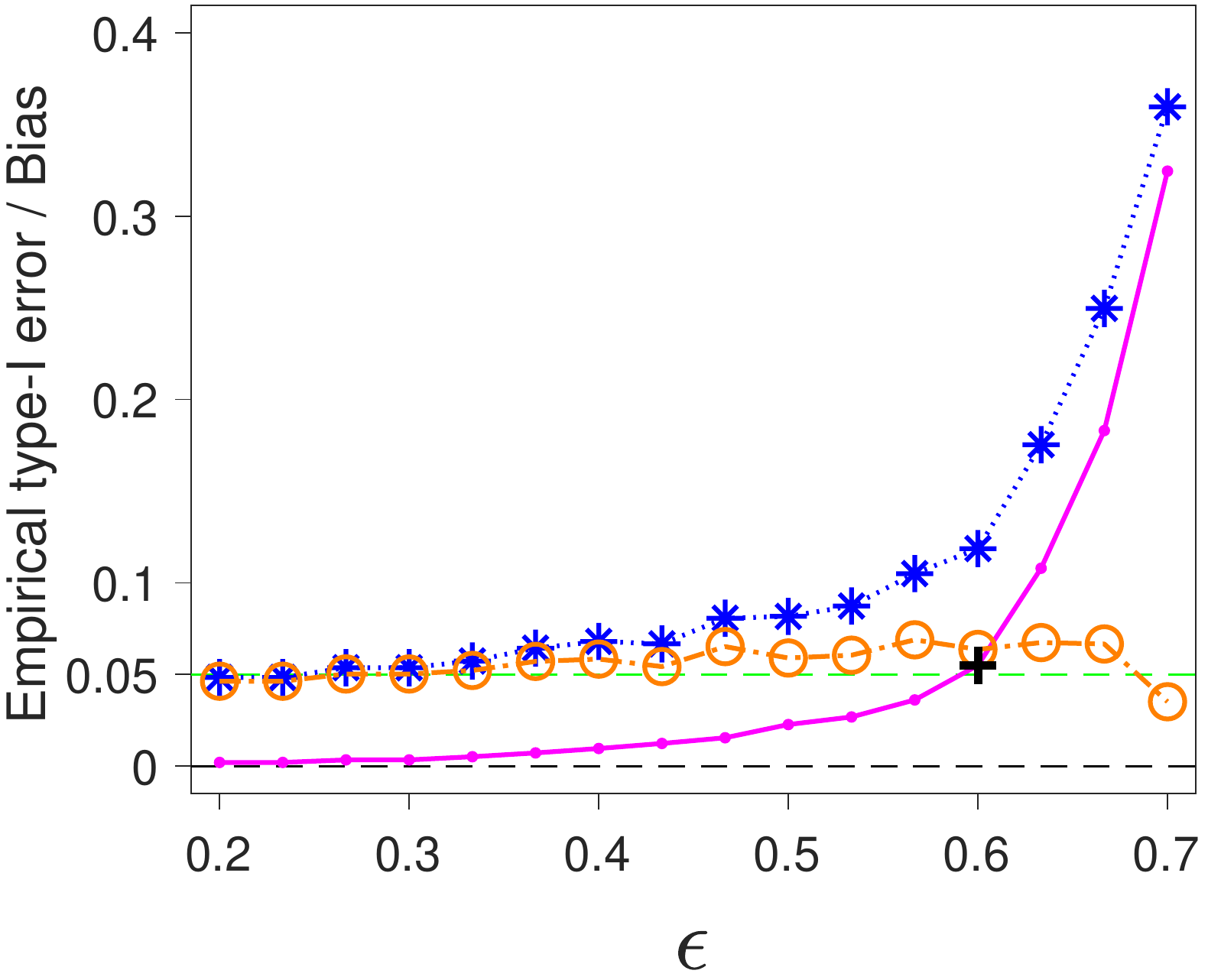}
\caption{\quad (b)\ Without the Bartlett correction}
\end{subfigure}
~ \ 
\begin{subfigure}[t]{0.34\textwidth}
\centering
\includegraphics[width=\textwidth]{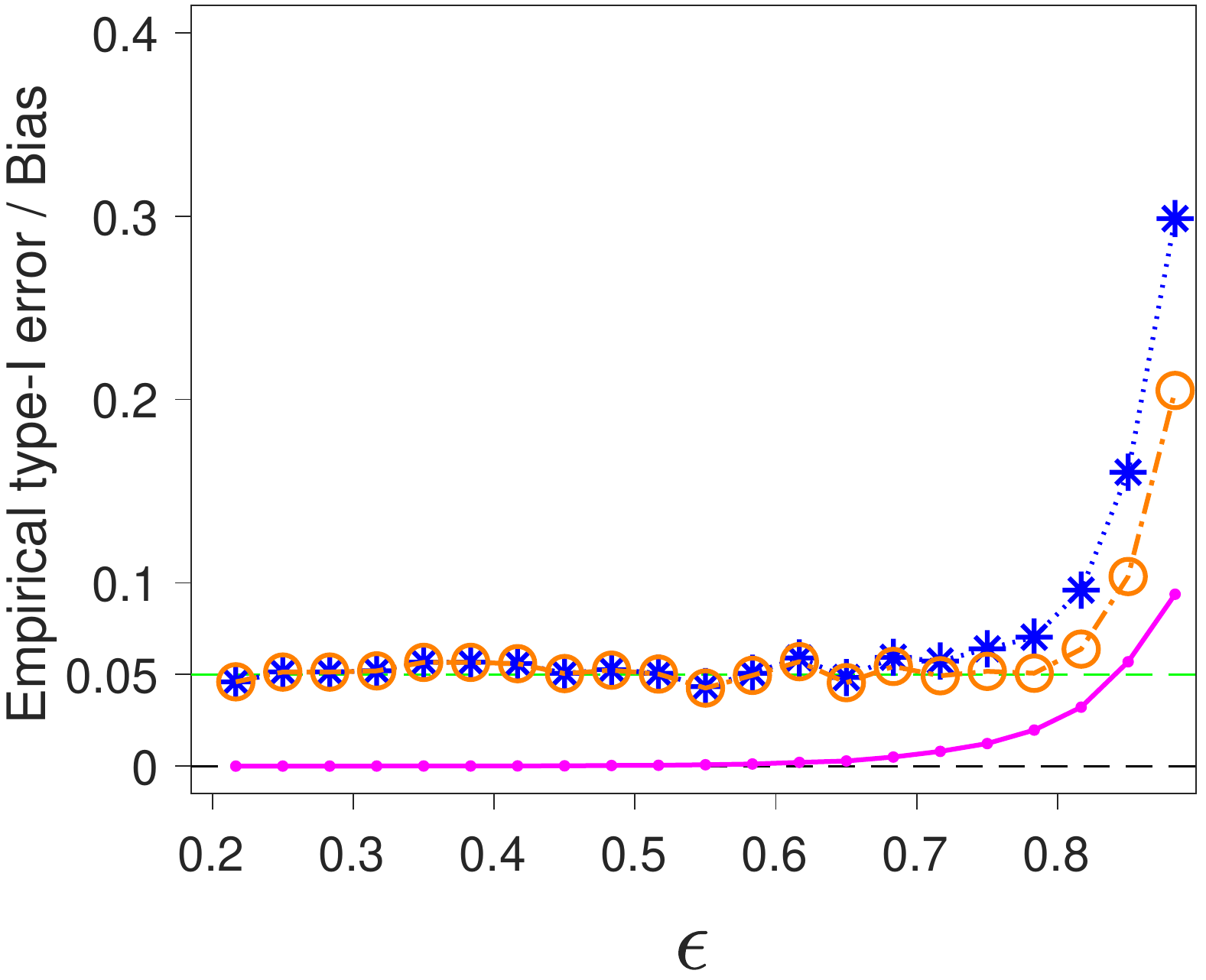}
\caption{\quad (c)\ With the Bartlett correction}
\end{subfigure}%
~ \ 
\begin{subfigure}[t]{0.34\textwidth}
\centering
\includegraphics[width=\textwidth]{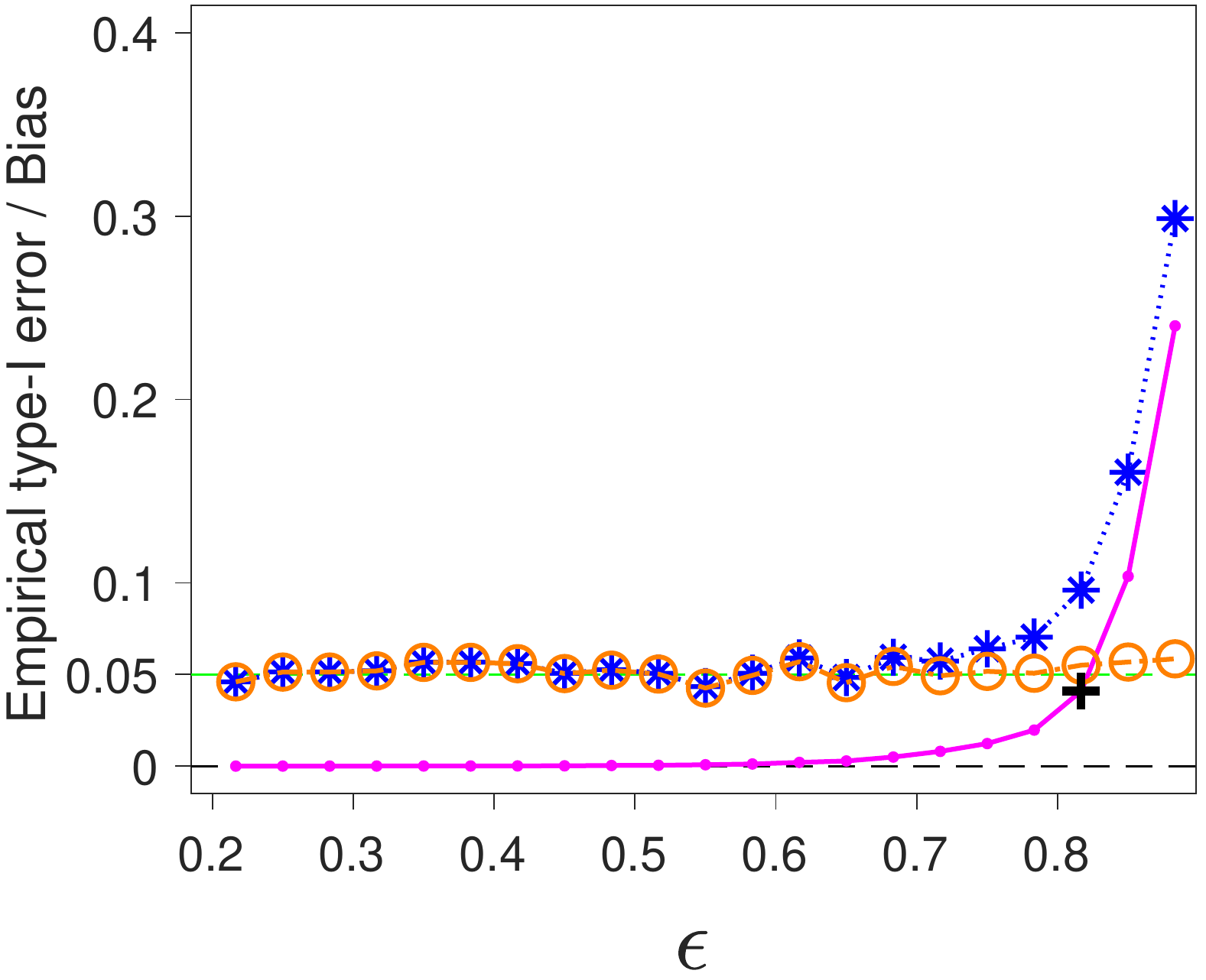}
\caption{\quad (d)\ With the Bartlett correction}
\end{subfigure}

\caption{One-sample tests (I)--(III) when $n=100$. 
Rows 1--3 present the results for tests (I)--(III), respectively. 
For four columns in each row: (a) Without the Bartlett correction: empirical type-\RNum{1} error versus $\epsilon$ (asterisk); $\varpi_1$, i.e., the asymptotic bias in \eqref{eq:chisqapprox} (dot); 
the difference between the empirical type-\RNum{1} error and $\varpi_1$ (circle). 
(b) Without the Bartlett correction: empirical type-\RNum{1} error versus $\epsilon$ (asterisk); 
$M_c(\varpi_1, \varpi_3)$ with $c=0.002$ 
(dot); 
the location with $x$-axis $\epsilon^*$ satisfying $M_c(\varpi_1, \varpi_3)=\varpi_1$ when $\epsilon<\epsilon^*$ and  $M_c(\varpi_1, \varpi_3)>\varpi_1$ when $\epsilon \geq \epsilon^*$   
(plus sign);  
the difference between the empirical type-\RNum{1} error and $M_c(\varpi_1, \varpi_3)$ (circle).  
(c) With the Bartlett correction: empirical type-\RNum{1} error versus $\epsilon$ (asterisk); $\varpi_2$, i.e., the asymptotic bias  in \eqref{eq:chisqapproxbartcorr} (dot); 
the difference between the empirical type-\RNum{1} error and $\varpi_2$ (circle). 
(d) With the Bartlett correction:  empirical type-\RNum{1} error versus $\epsilon$ (asterisk); 
$M_c(\varpi_2, \varpi_4)$ with $c=0.002$
 (dot);
 the location with $x$-axis $\epsilon^*$ satisfying $M_c(\varpi_2, \varpi_4)=\varpi_2$ when $\epsilon<\epsilon^*$ and  $M_c(\varpi_2, \varpi_4)>\varpi_2$ when $\epsilon \geq \epsilon^*$   
(plus sign);  
the difference between the empirical type-\RNum{1} error and $M_c(\varpi_2, \varpi_4)$ (circle). 
}  \label{fig:bias13n100}

\end{figure}

\end{landscape}


\begin{landscape}
\begin{figure}[!htbp]
\captionsetup[subfigure]{labelformat=empty}
\centering
\begin{turn}{90}
\begin{minipage}{0.26\textwidth}
 \hspace{4.5em} Test (I)\vspace{0.12em} 
\end{minipage}
\end{turn}%
~ \
\begin{subfigure}[t]{0.34\textwidth}
\centering
\includegraphics[width=\textwidth]{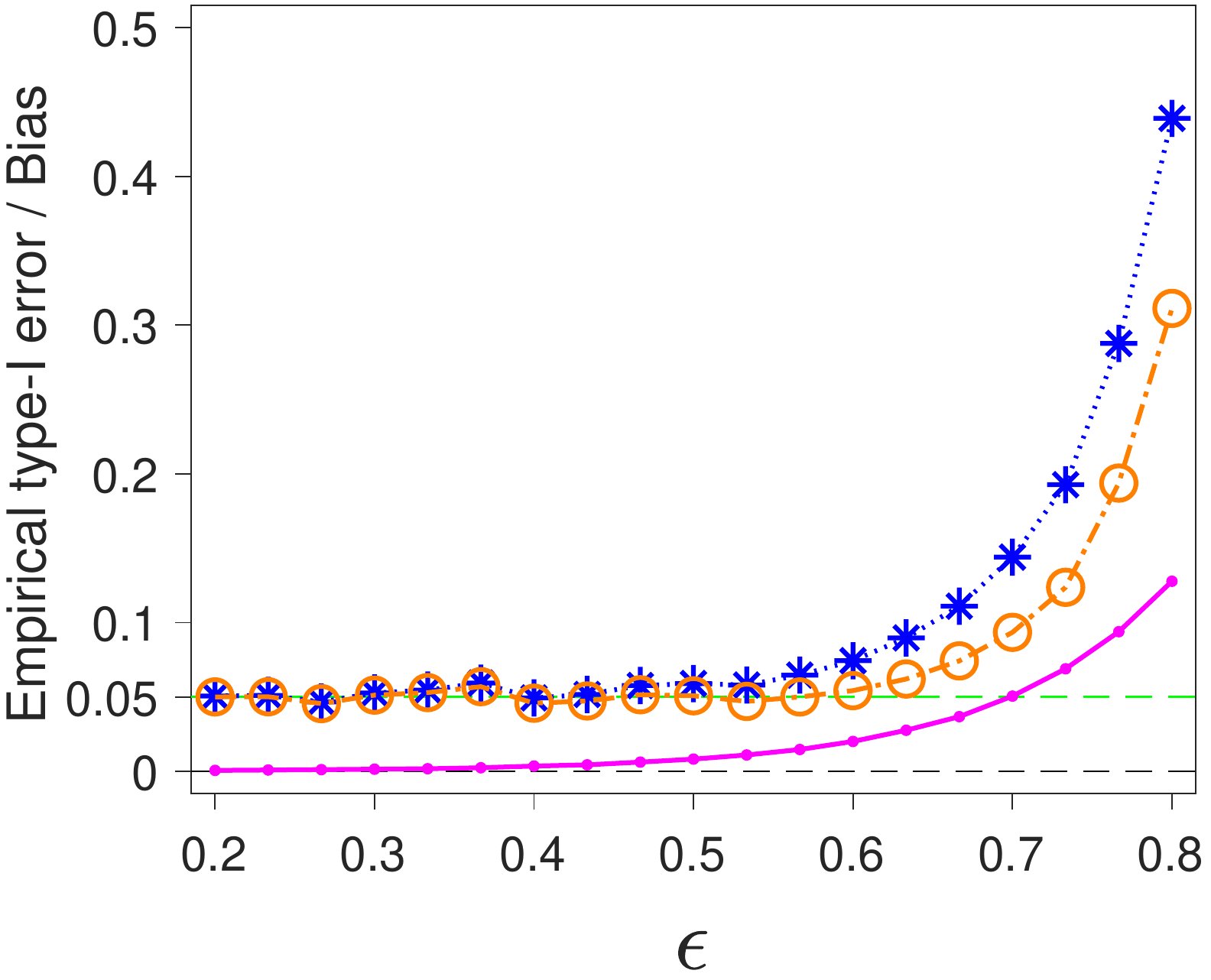}
\end{subfigure}%
~ \ 
\begin{subfigure}[t]{0.34\textwidth}
\centering
\includegraphics[width=\textwidth]{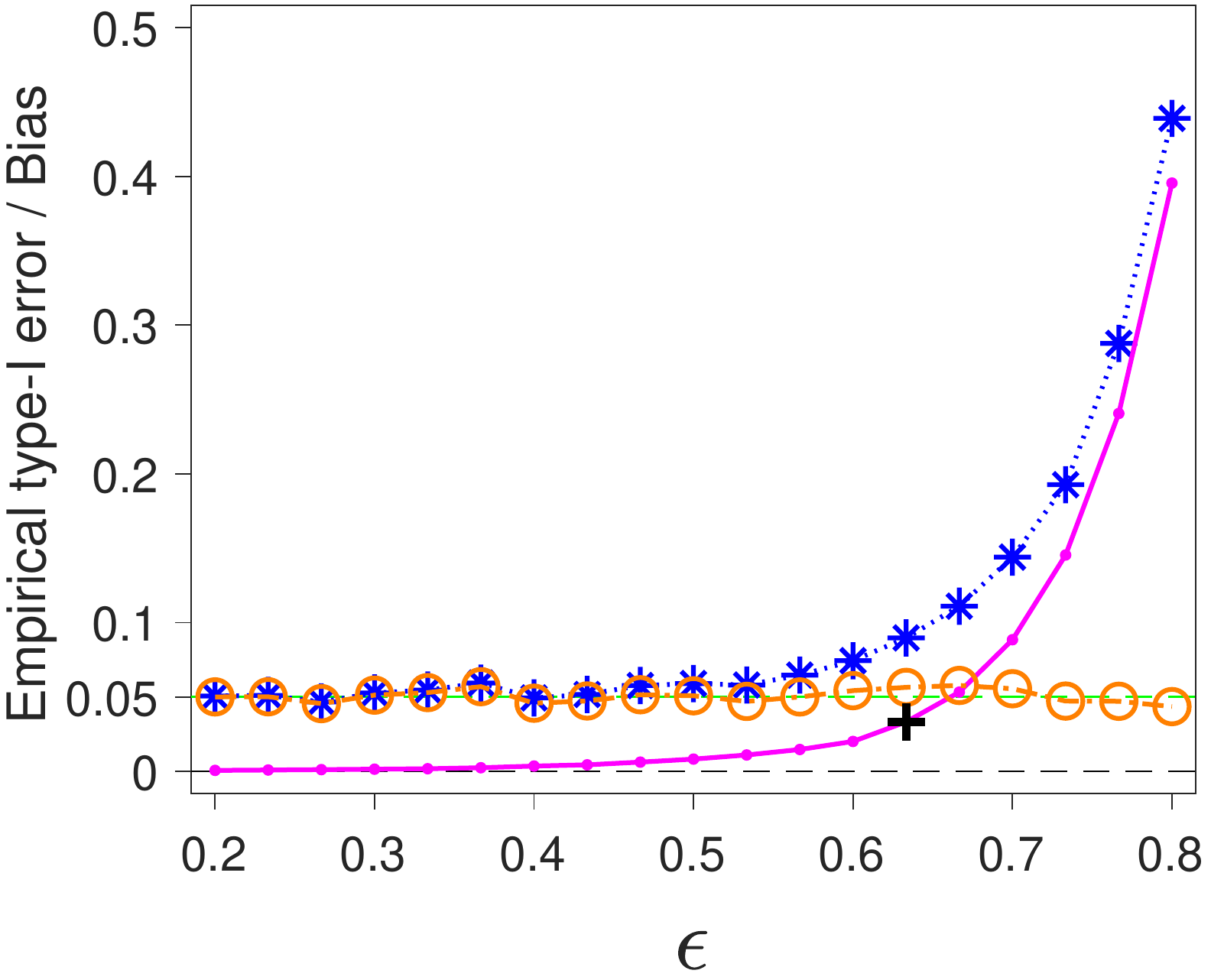}
\end{subfigure}
~ \ 
\begin{subfigure}[t]{0.34\textwidth}
\centering
\includegraphics[width=\textwidth]{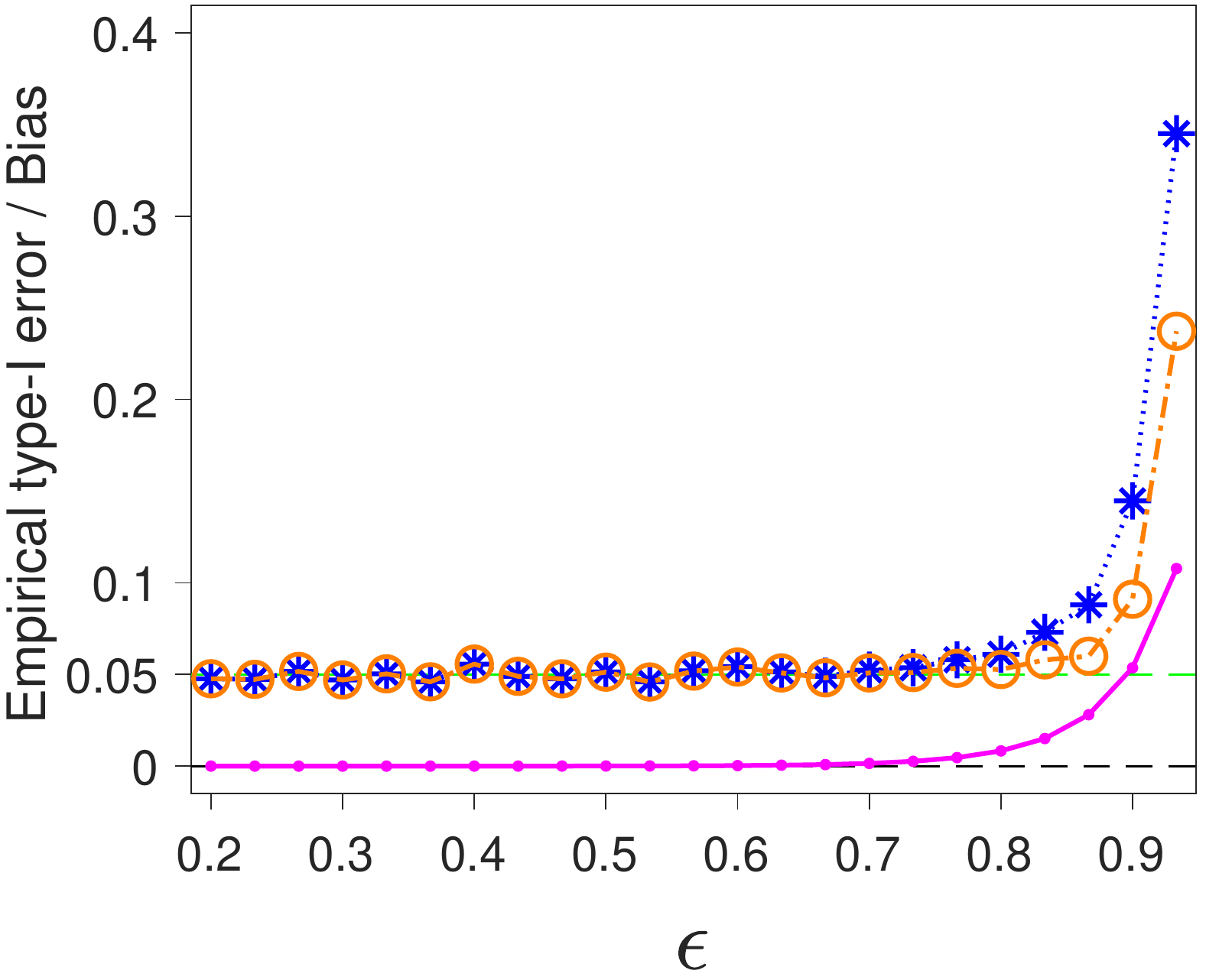}
\end{subfigure}%
~ \ 
\begin{subfigure}[t]{0.34\textwidth}
\centering
\includegraphics[width=\textwidth]{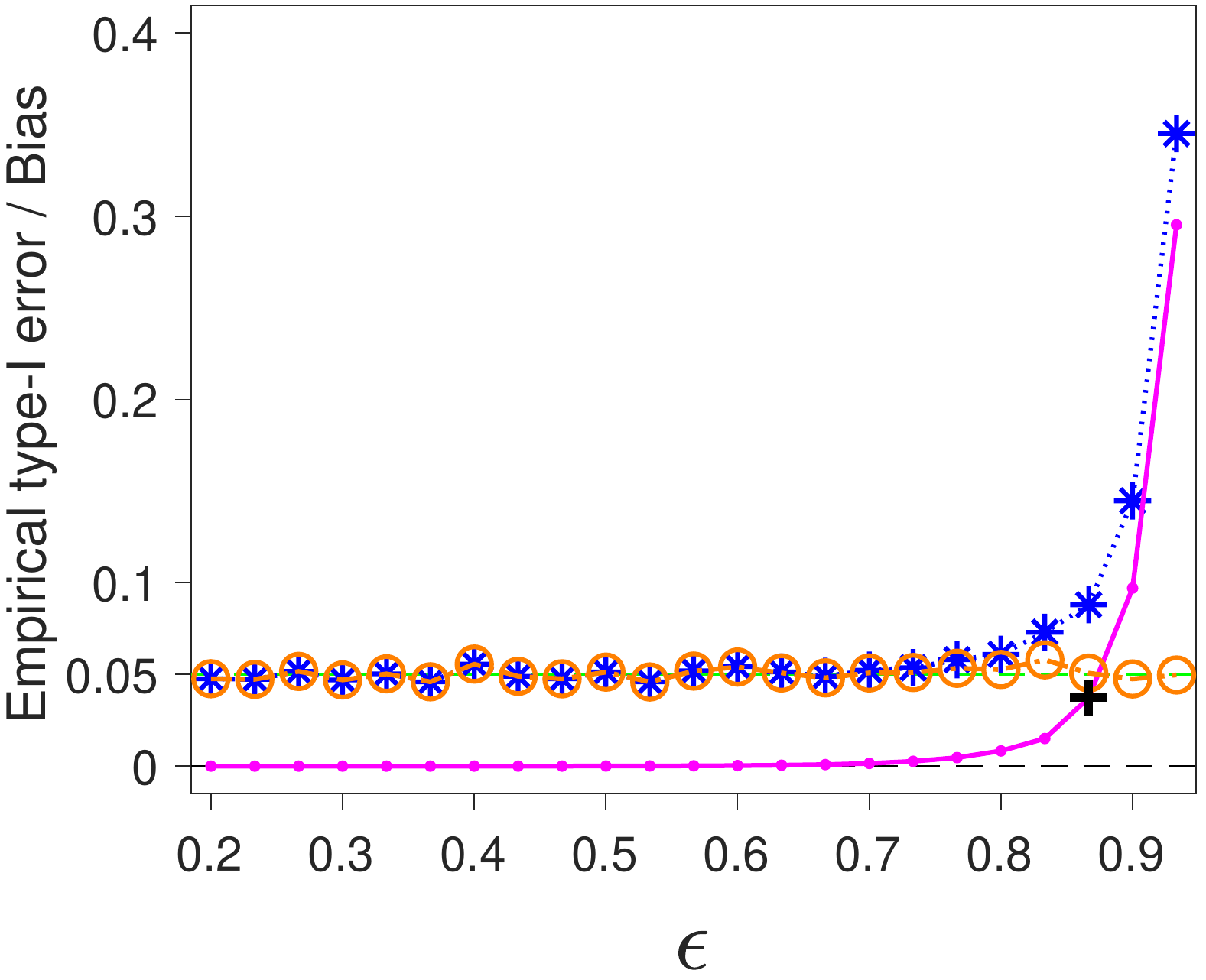}
\end{subfigure}
\ \vspace{0.2em}

\begin{turn}{90}
\begin{minipage}{0.26\textwidth}
 \hspace{4.5em} Test (II)\vspace{0.12em} 
\end{minipage}
\end{turn}%
~ \
\begin{subfigure}[t]{0.34\textwidth}
\centering
\includegraphics[width=\textwidth]{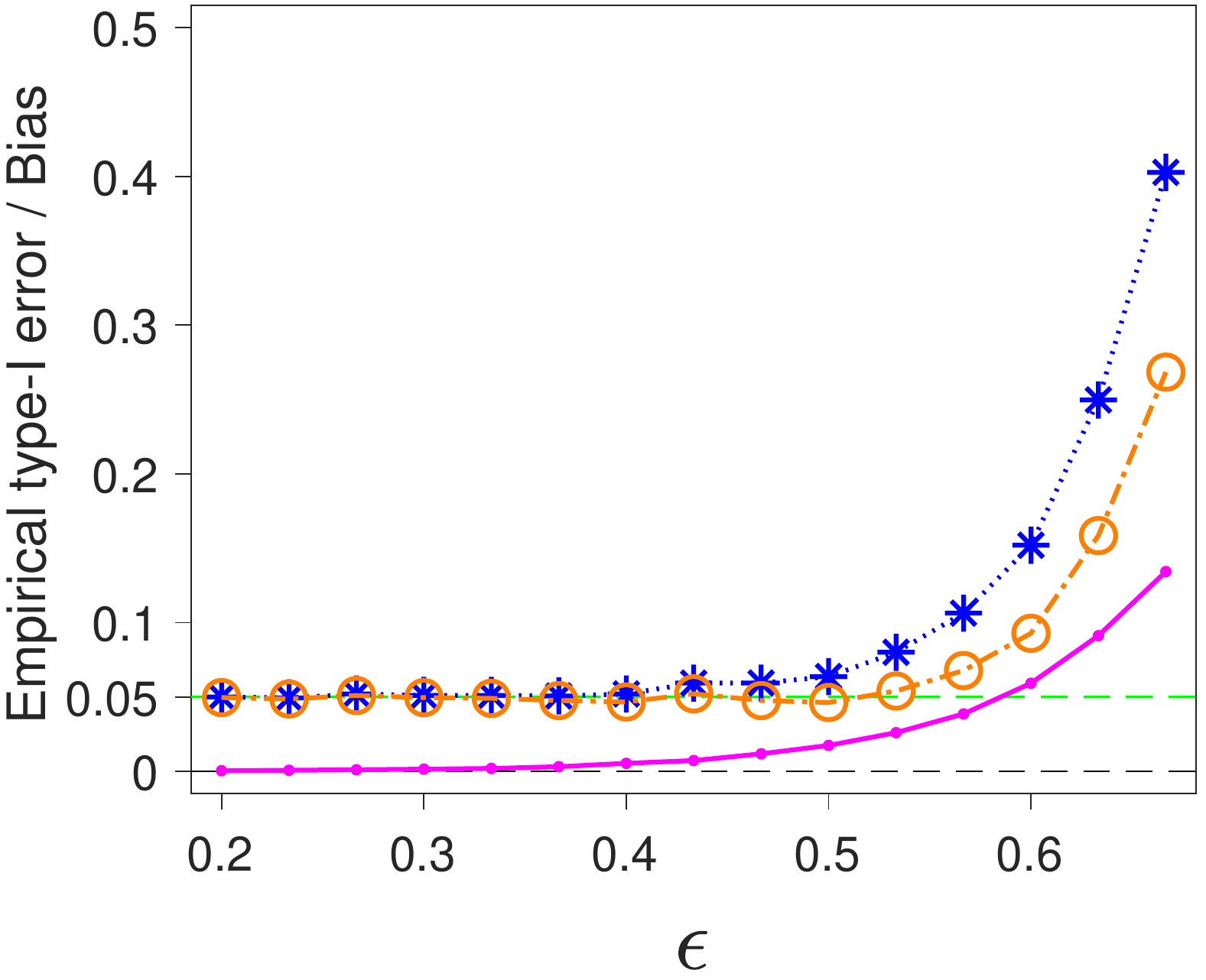}
\end{subfigure}%
~ \ 
\begin{subfigure}[t]{0.34\textwidth}
\centering
\includegraphics[width=\textwidth]{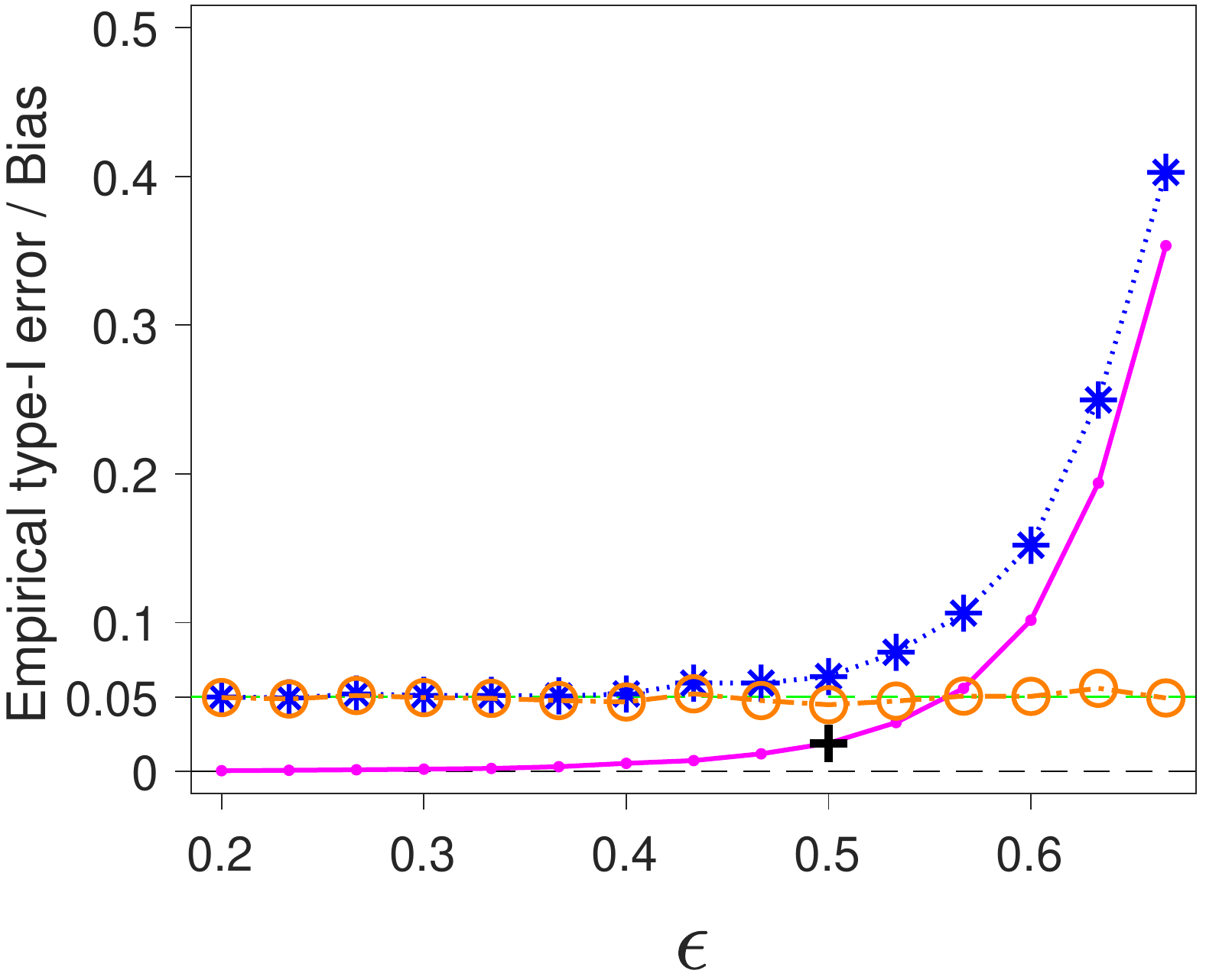}
\end{subfigure}
~ \ 
\begin{subfigure}[t]{0.34\textwidth}
\centering
\includegraphics[width=\textwidth]{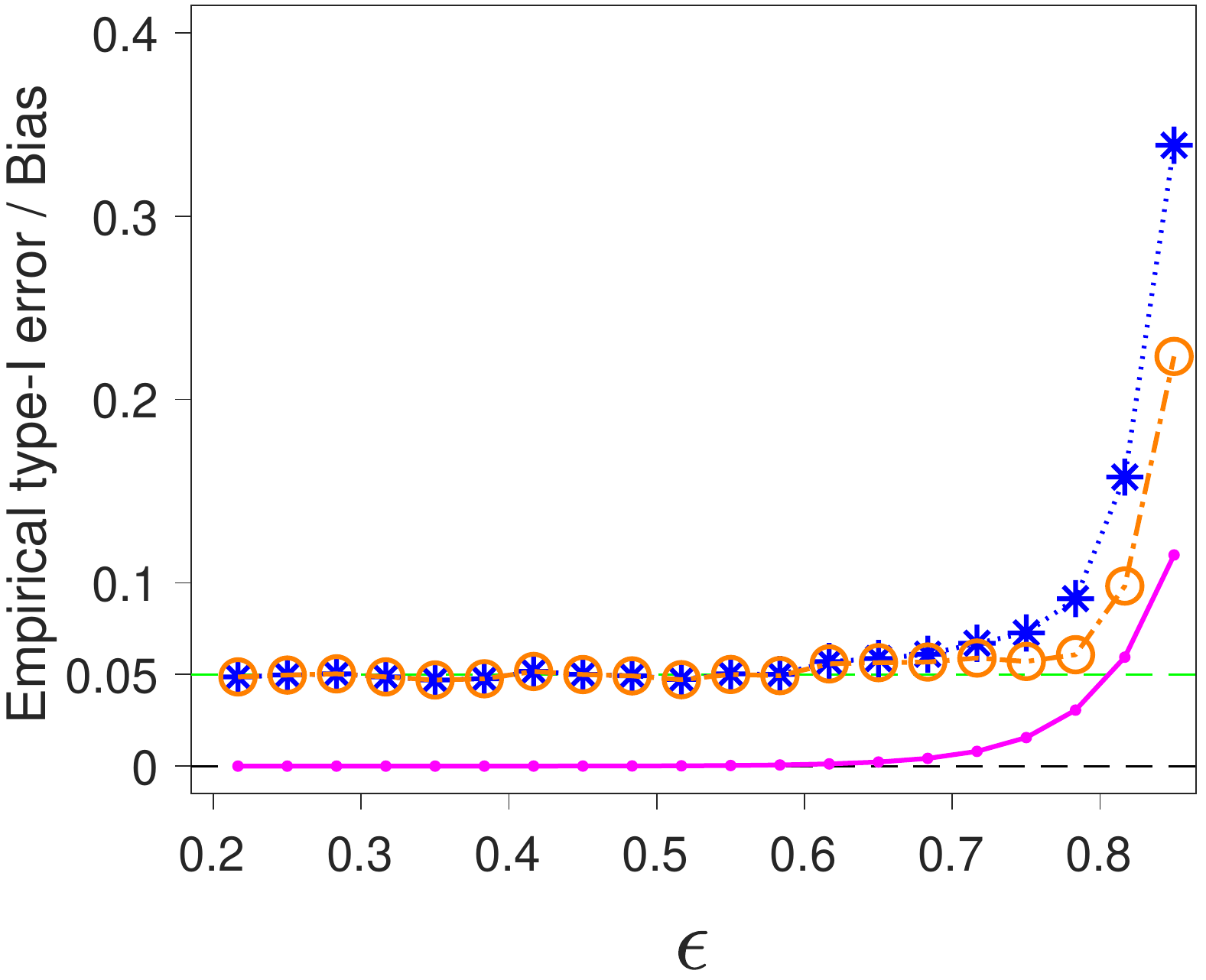}
\end{subfigure}%
~ \ 
\begin{subfigure}[t]{0.34\textwidth}
\centering
\includegraphics[width=\textwidth]{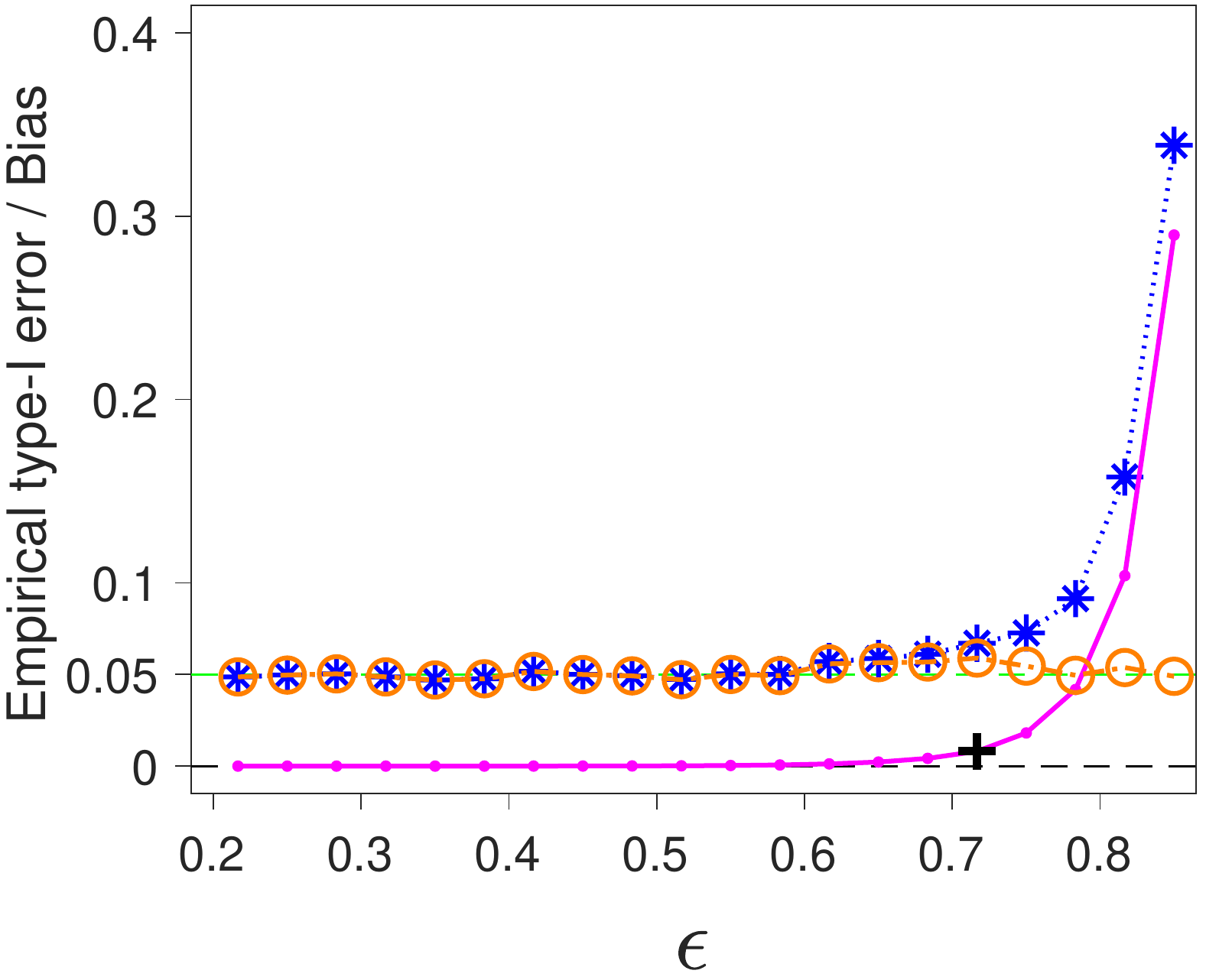}
\end{subfigure}
\ \vspace{0.2em}


\begin{turn}{90}
\begin{minipage}{0.26\textwidth}
 \hspace{4.5em} Test (III)\vspace{0.12em}
\end{minipage}
\end{turn}%
~ \ 
\begin{subfigure}[t]{0.34\textwidth}
\centering	
\includegraphics[width=\textwidth]{IIIIdentical_Bias1_Chi_square}
\caption{\quad (a)\ Without the Bartlett correction}
\end{subfigure}%
~ \ 
\begin{subfigure}[t]{0.34\textwidth}
\centering
\includegraphics[width=\textwidth]{IIIIdentical_Bias1_Max}
\caption{\quad (b)\ Without the Bartlett correction}
\end{subfigure}%
~ \ 
\begin{subfigure}[t]{0.34\textwidth}
\centering	
\includegraphics[width=\textwidth]{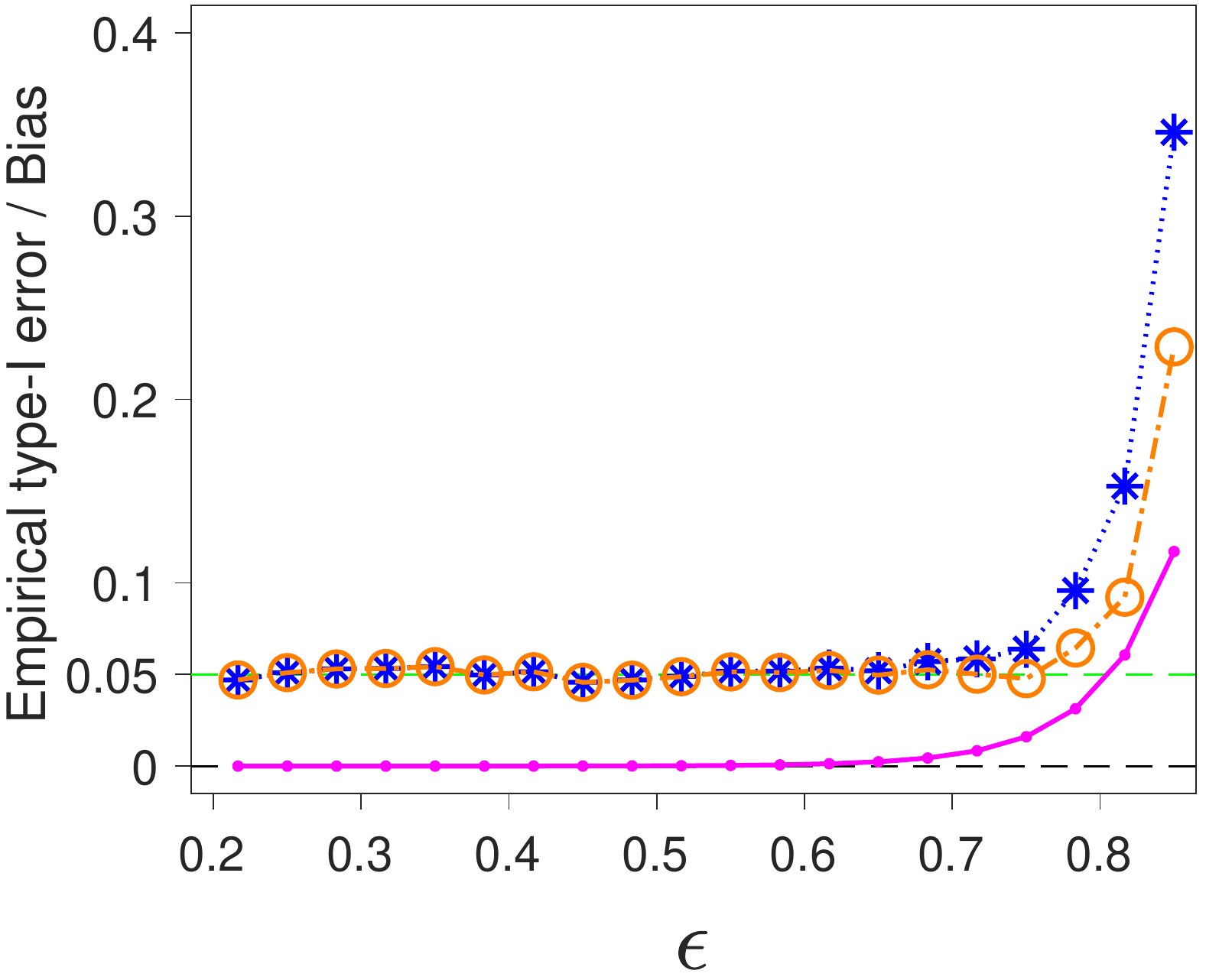}
\caption{\quad (c) With the Bartlett correction}
\end{subfigure}%
~ \ 
\begin{subfigure}[t]{0.34\textwidth}
\centering
\includegraphics[width=\textwidth]{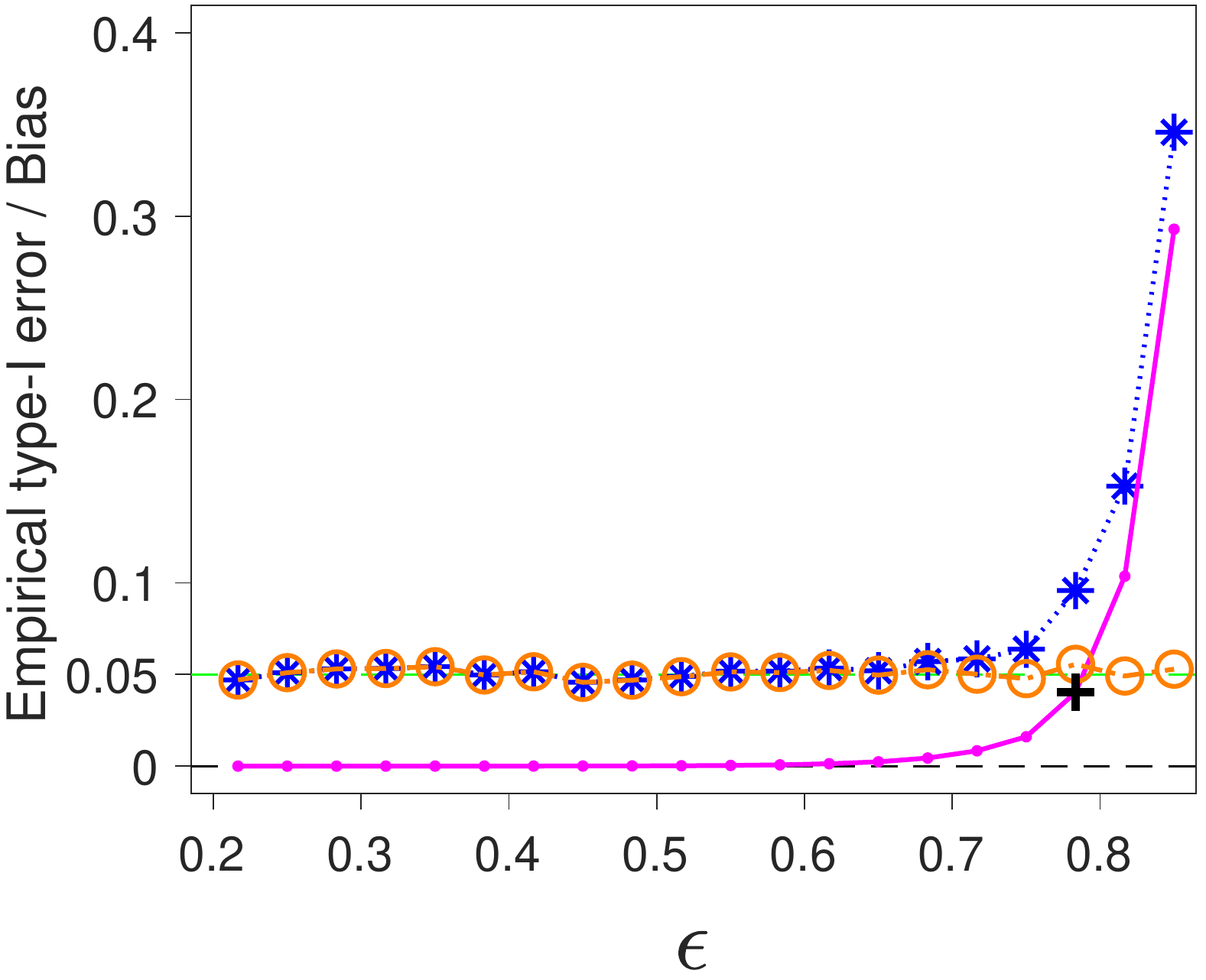}
\caption{\quad (d) With the Bartlett correction}
\end{subfigure}
\caption{One-sample tests (I)--(III) when $n=500$. Rows 1--3 present the results for tests (I)--(III), respectively. 
For four columns in each row, please see the caption description in Fig. \ref{fig:bias13n100}.} \label{fig:bias13n500}

\end{figure}
\end{landscape}

%

\begin{landscape}
\begin{figure}[!htbp]
\captionsetup[subfigure]{labelformat=empty}
\centering
\begin{turn}{90}
\begin{minipage}{0.26\textwidth}
 \hspace{4.5em} Test (IV)\vspace{0.12em} 
\end{minipage}
\end{turn}%
~ \
\begin{subfigure}[t]{0.34\textwidth}
\centering
\includegraphics[width=\textwidth]{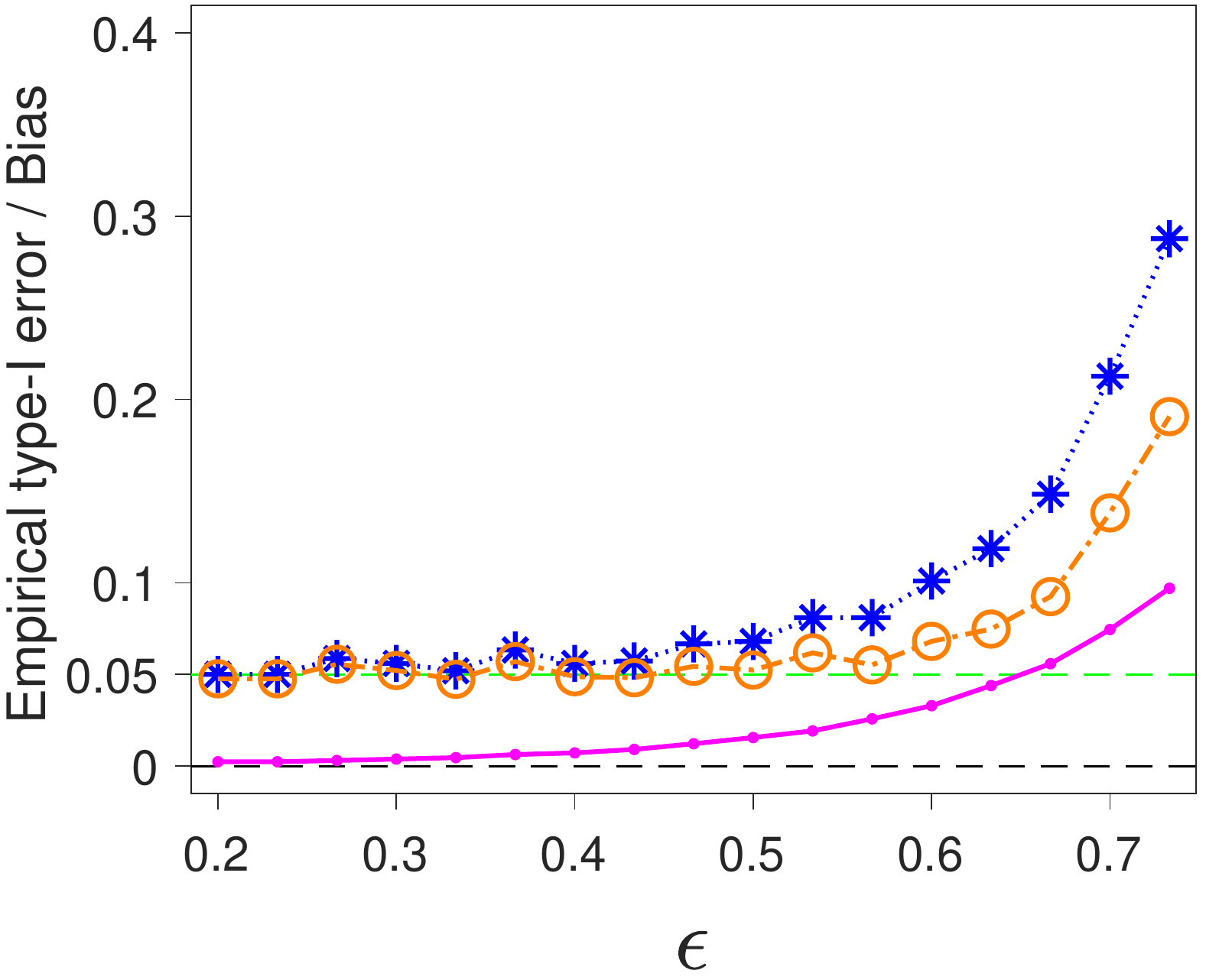}
\end{subfigure}%
~ \ 
\begin{subfigure}[t]{0.34\textwidth}
\centering
\includegraphics[width=\textwidth]{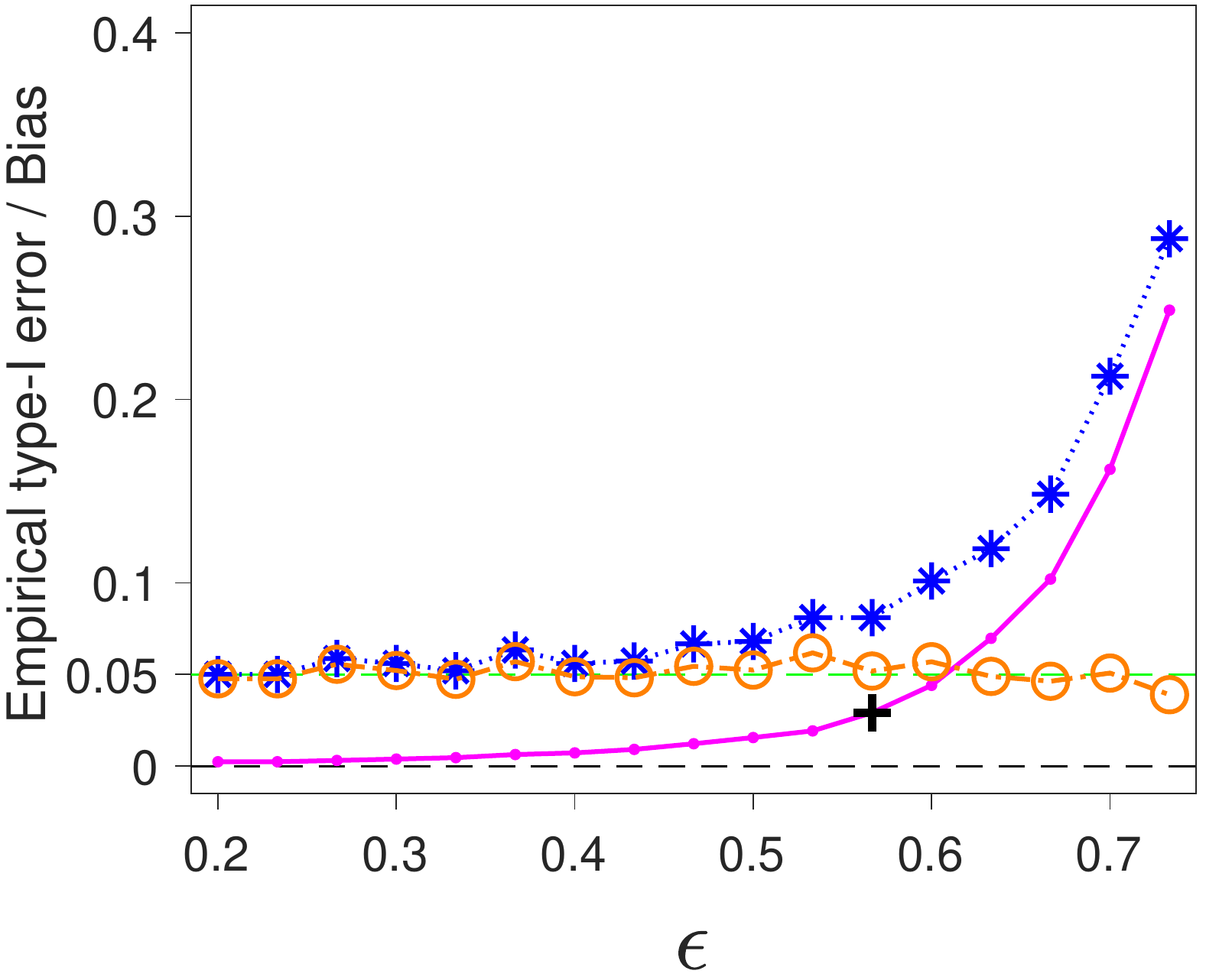}
\end{subfigure}
~ \ 
\begin{subfigure}[t]{0.34\textwidth}
\centering
\includegraphics[width=\textwidth]{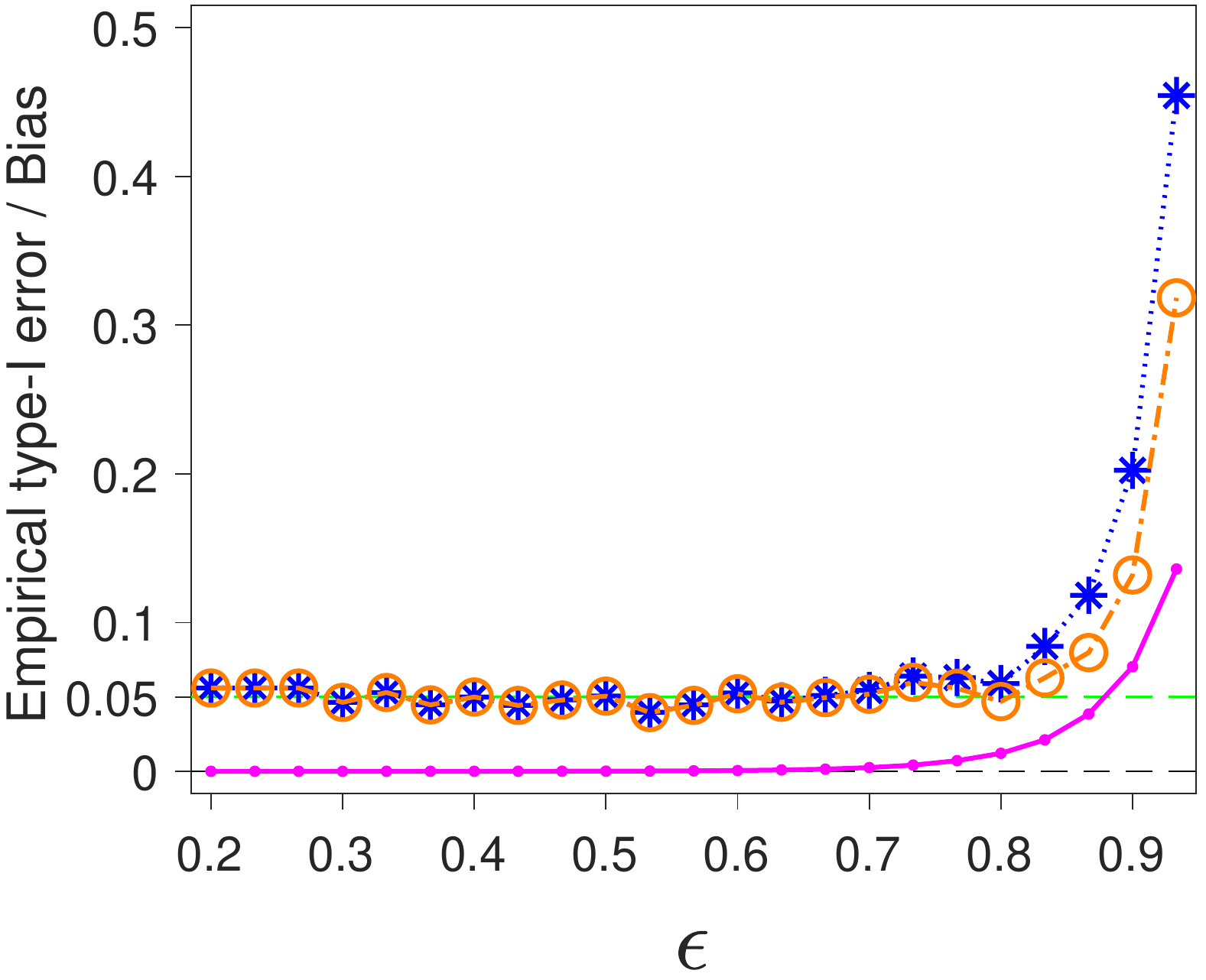}
\end{subfigure}%
~ \ 
\begin{subfigure}[t]{0.34\textwidth}
\centering
\includegraphics[width=\textwidth]{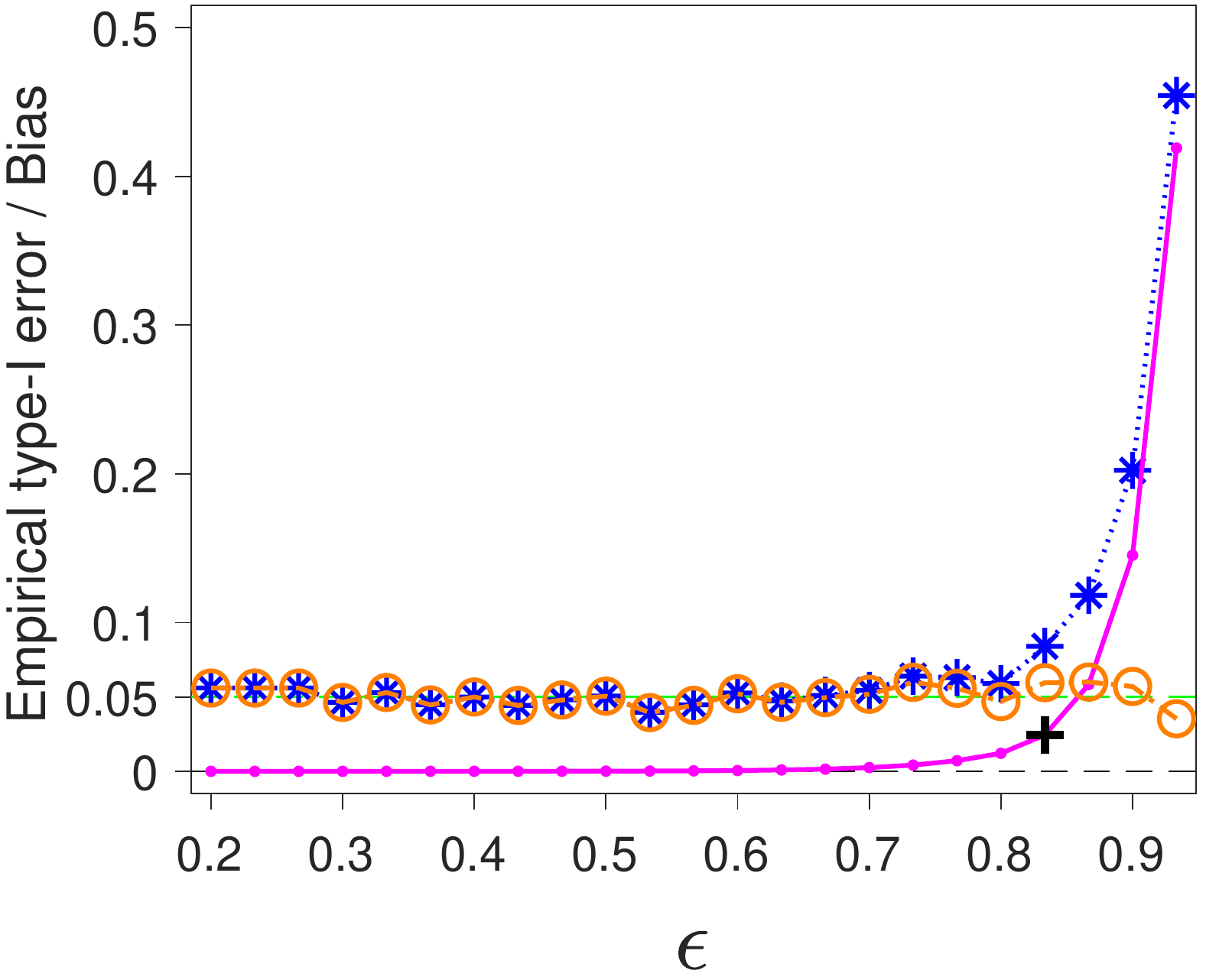}
\end{subfigure}
\ \vspace{0.2em}

\begin{turn}{90}
\begin{minipage}{0.26\textwidth}
 \hspace{4.5em} Test (V)\vspace{0.12em} 
\end{minipage}
\end{turn}%
~ \
\begin{subfigure}[t]{0.34\textwidth}
\centering
\includegraphics[width=\textwidth]{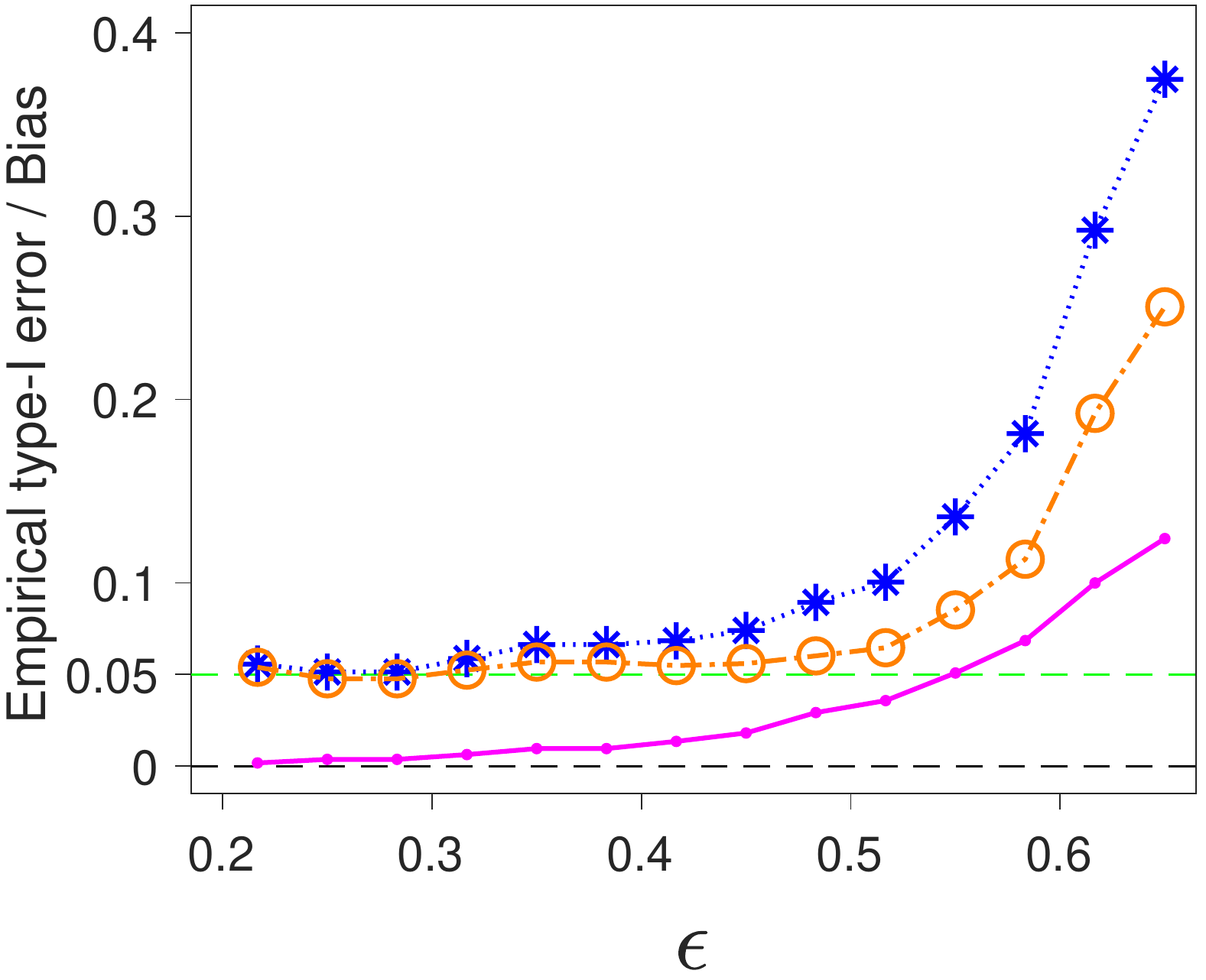}
\end{subfigure}%
~ \ 
\begin{subfigure}[t]{0.34\textwidth}
\centering
\includegraphics[width=\textwidth]{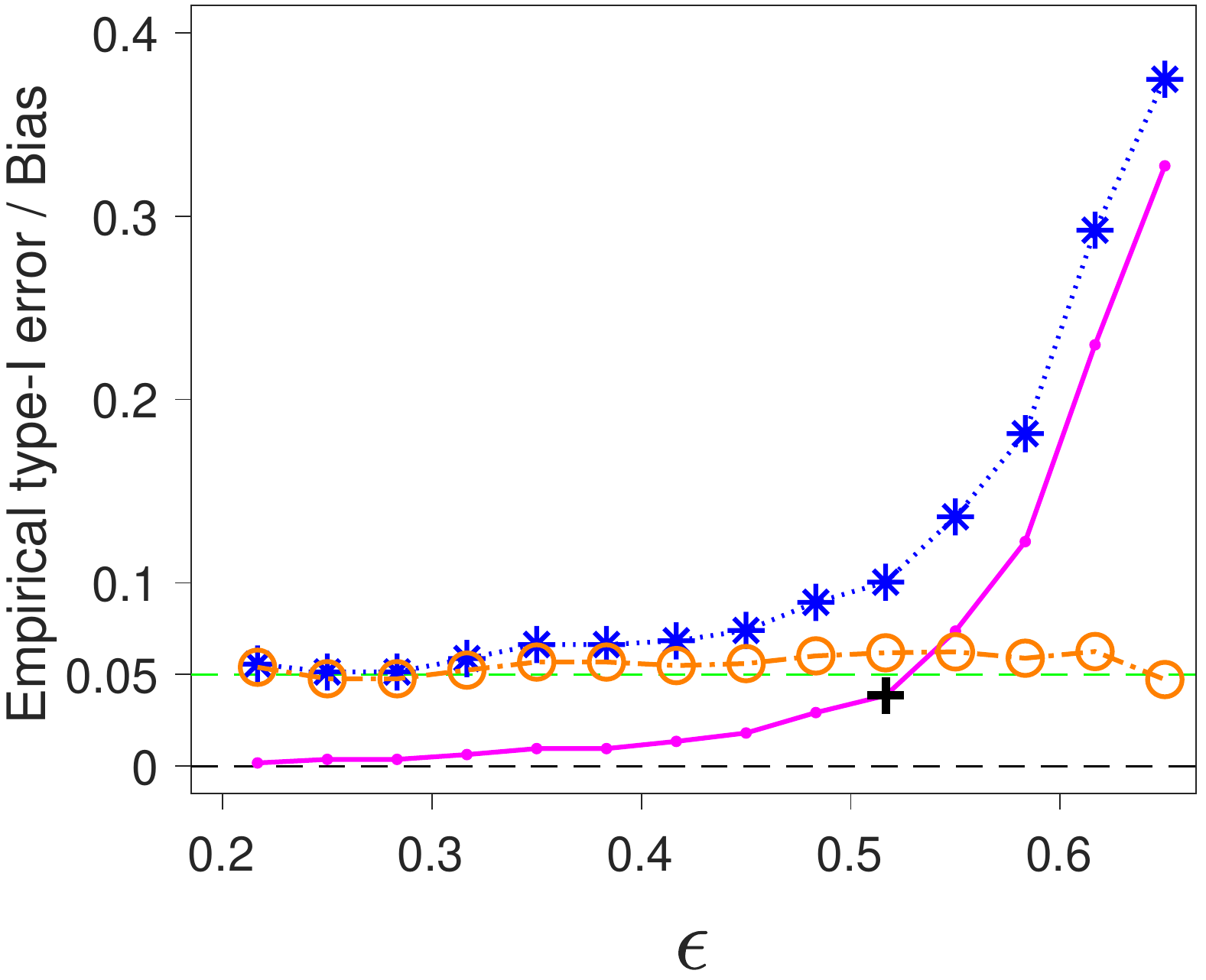}
\end{subfigure}
~ \ 
\begin{subfigure}[t]{0.34\textwidth}
\centering
\includegraphics[width=\textwidth]{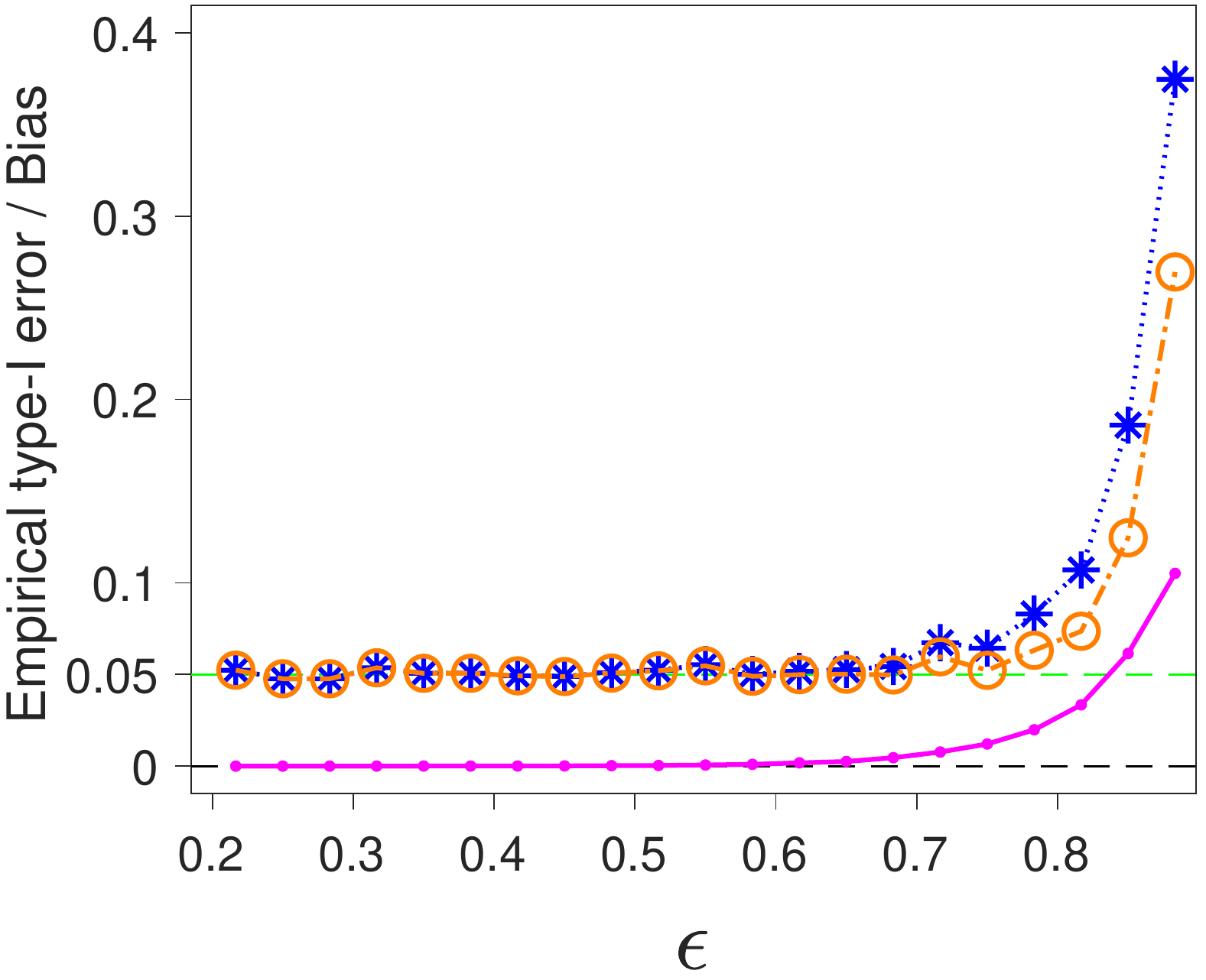}
\end{subfigure}%
~ \ 
\begin{subfigure}[t]{0.34\textwidth}
\centering
\includegraphics[width=\textwidth]{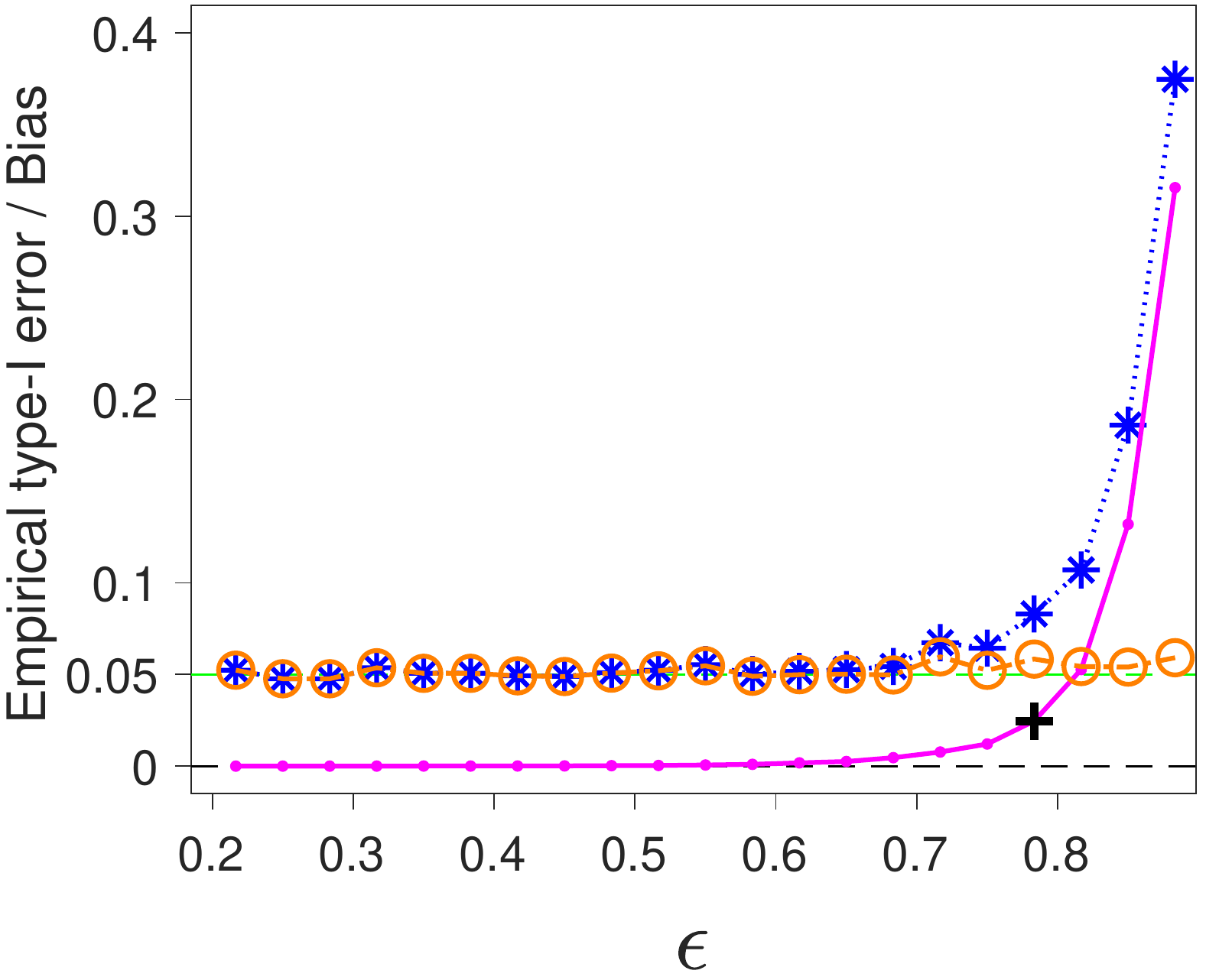}
\end{subfigure}
\ \vspace{0.2em}


\begin{turn}{90}
\begin{minipage}{0.26\textwidth}
 \hspace{4.5em} Test (VI)\vspace{0.12em}
\end{minipage}
\end{turn}%
~ \ 
\begin{subfigure}[t]{0.34\textwidth}
\centering	
\includegraphics[width=\textwidth]{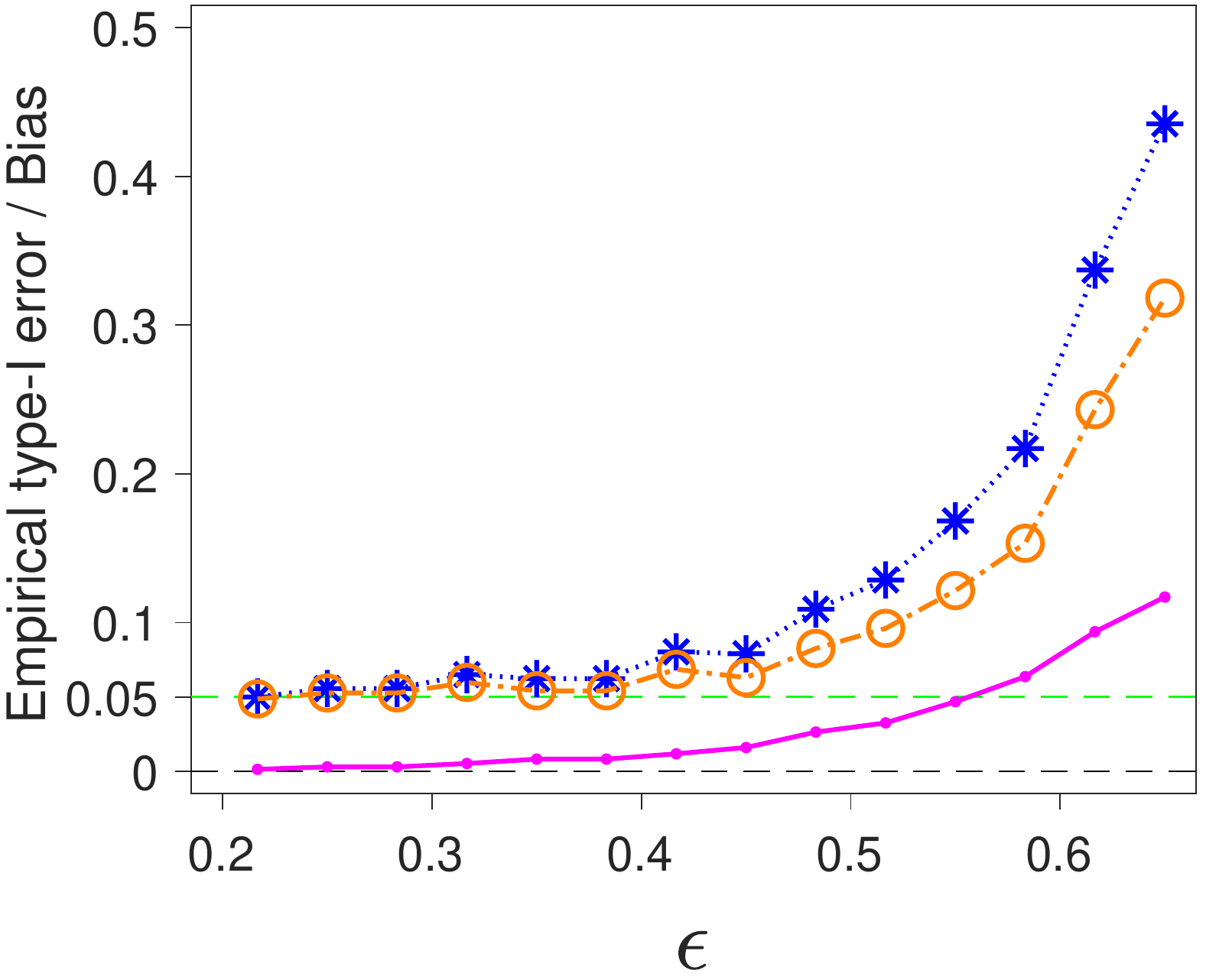}
\caption{\quad (a)\ Without the Bartlett correction}
\end{subfigure}%
~ \ 
\begin{subfigure}[t]{0.34\textwidth}
\centering
\includegraphics[width=\textwidth]{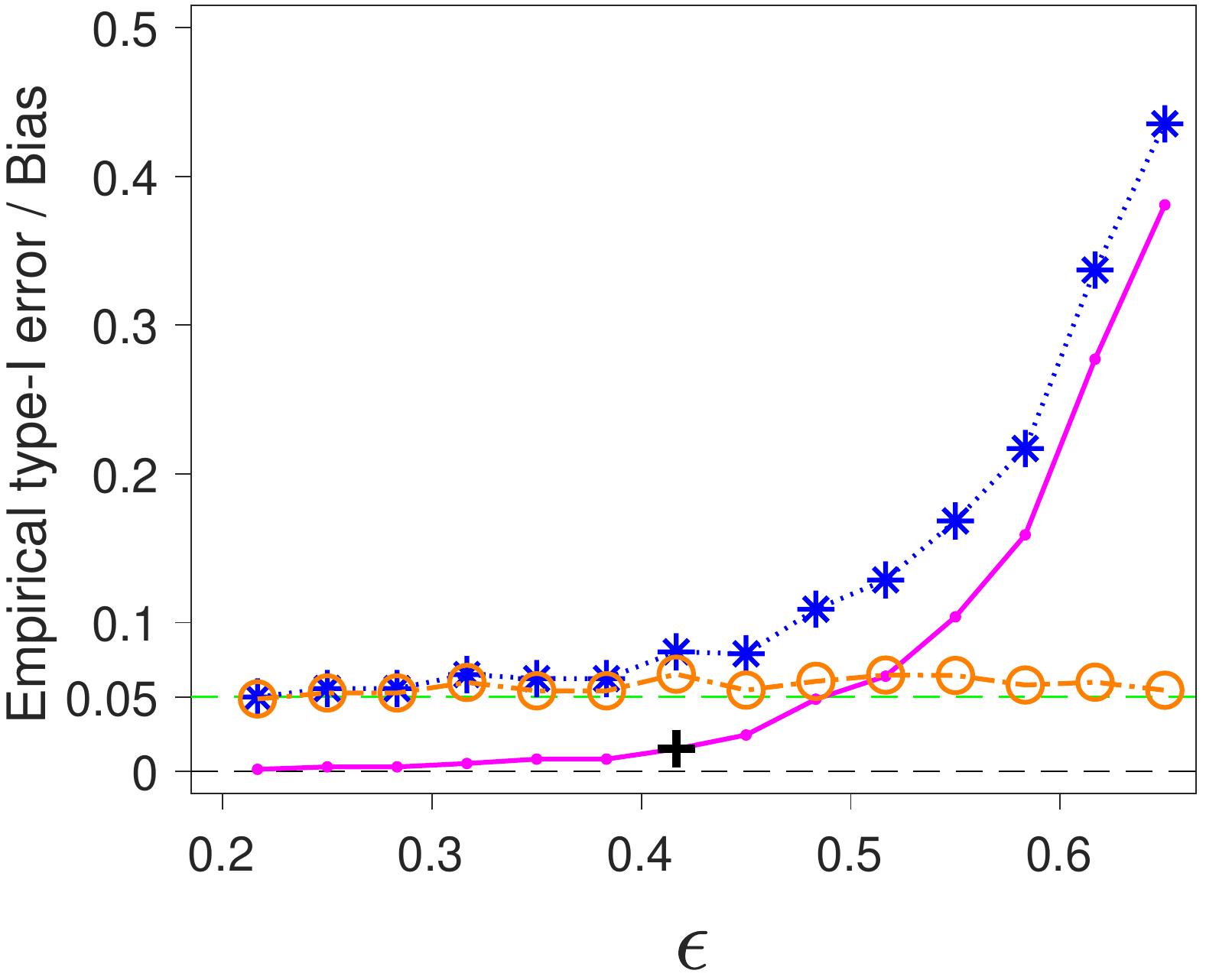}
\caption{\quad (b)\ Without the Bartlett correction}
\end{subfigure}%
~ \ 
\begin{subfigure}[t]{0.34\textwidth}
\centering	
\includegraphics[width=\textwidth]{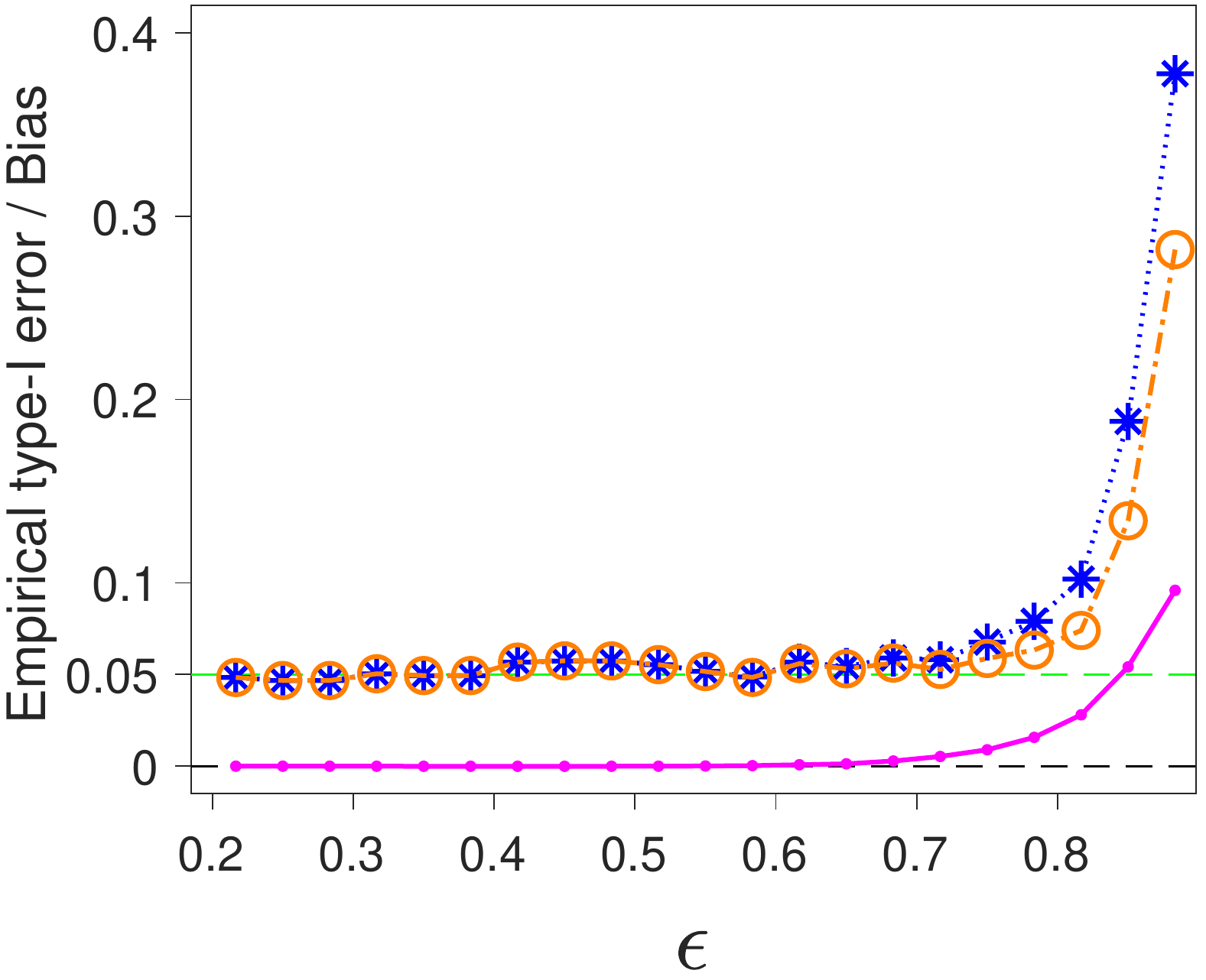}
\caption{\quad (c) With the Bartlett correction}
\end{subfigure}%
~ \ 
\begin{subfigure}[t]{0.34\textwidth}
\centering
\includegraphics[width=\textwidth]{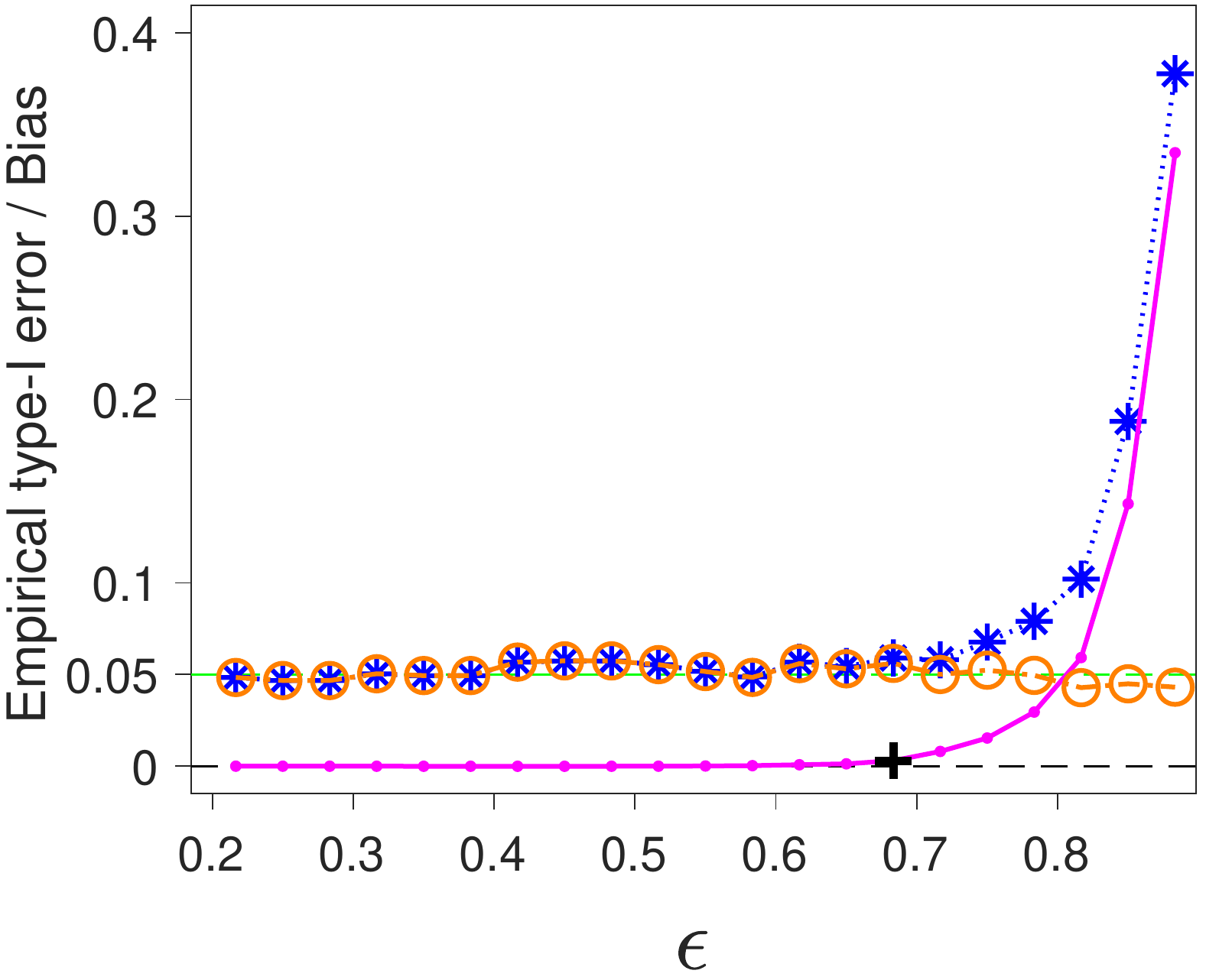}
\caption{\quad (d) With the Bartlett correction}
\end{subfigure}

\caption{Multiple-sample tests (IV)--(VI) when $n=100$. Rows 1--3 present the results for tests (IV)--(VI), respectively. 
For four columns in each row, please see the caption description in Fig. \ref{fig:bias13n100}.} \label{fig:biasmultin100}
\end{figure}
\end{landscape}


\begin{landscape}
\begin{figure}[!htbp]
\captionsetup[subfigure]{labelformat=empty}
\centering
\begin{turn}{90}
\begin{minipage}{0.26\textwidth}
 \hspace{4.5em} Test (IV)\vspace{0.12em} 
\end{minipage}
\end{turn}%
~ \
\begin{subfigure}[t]{0.34\textwidth}
\centering
\includegraphics[width=\textwidth]{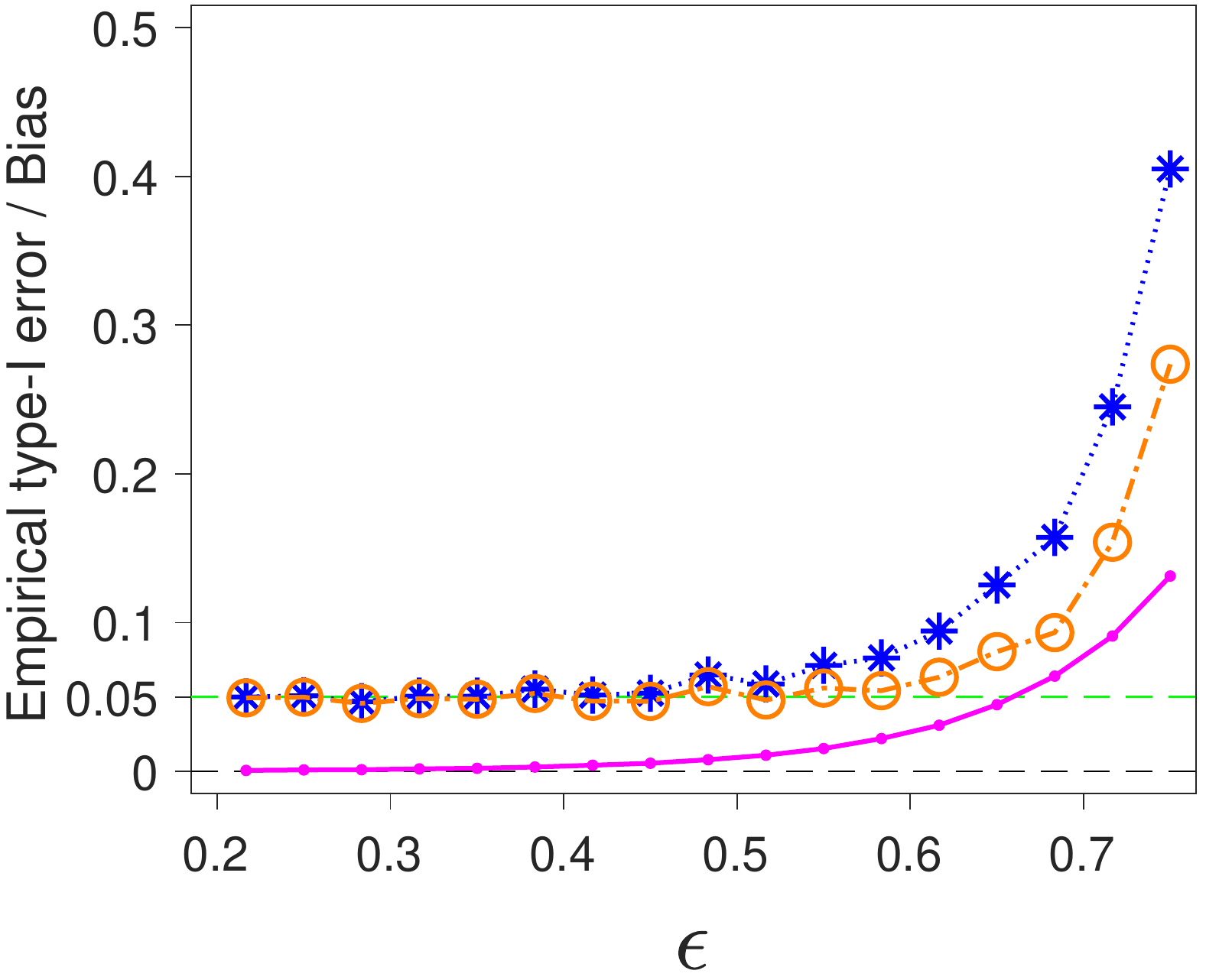}
\end{subfigure}%
~ \ 
\begin{subfigure}[t]{0.34\textwidth}
\centering
\includegraphics[width=\textwidth]{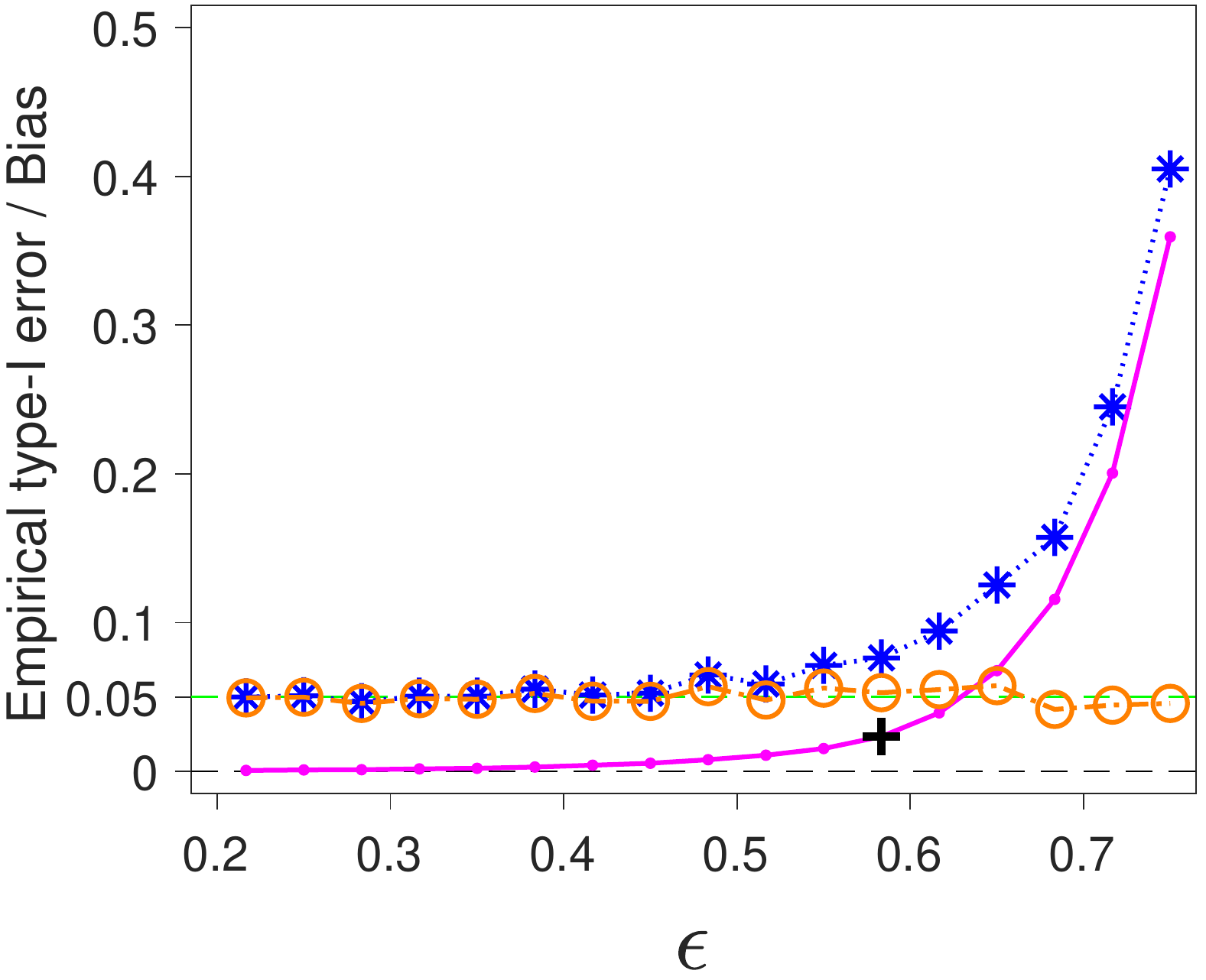}
\end{subfigure}
~ \ 
\begin{subfigure}[t]{0.34\textwidth}
\centering
\includegraphics[width=\textwidth]{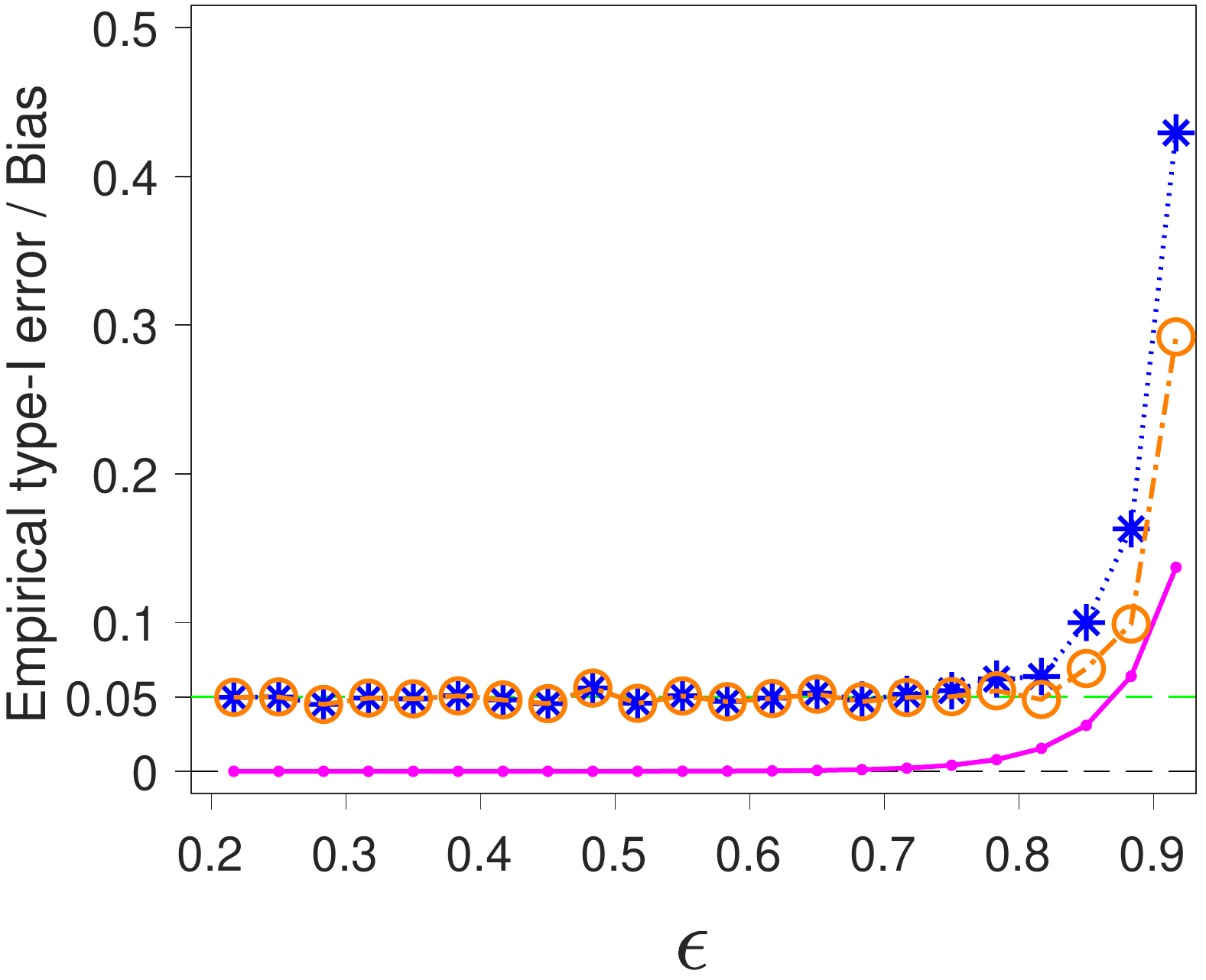}
\end{subfigure}%
~ \ 
\begin{subfigure}[t]{0.34\textwidth}
\centering
\includegraphics[width=\textwidth]{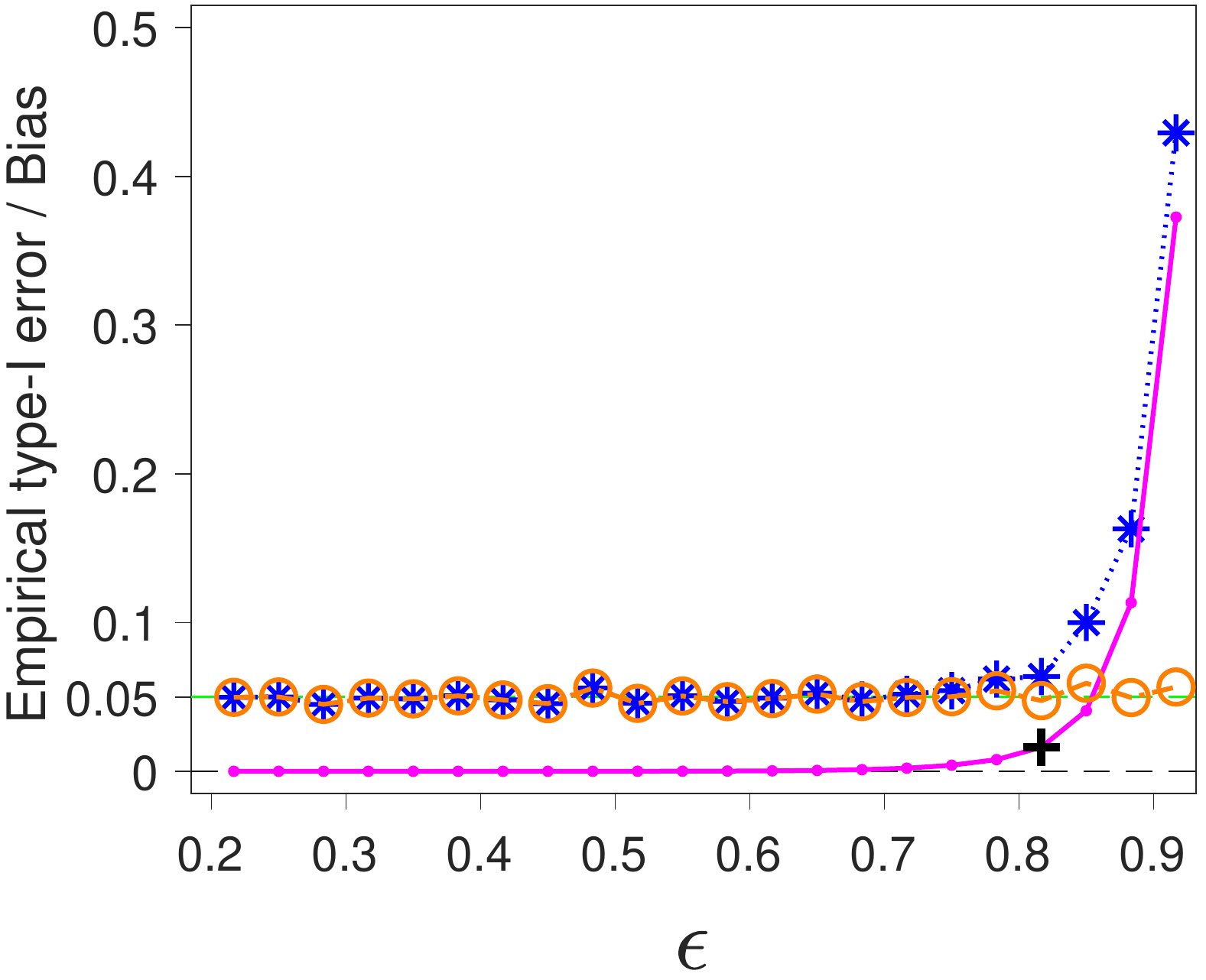}
\end{subfigure}
\ \vspace{0.2em}

\begin{turn}{90}
\begin{minipage}{0.26\textwidth}
 \hspace{4.5em} Test (V)\vspace{0.12em} 
\end{minipage}
\end{turn}%
~ \
\begin{subfigure}[t]{0.34\textwidth}
\centering
\includegraphics[width=\textwidth]{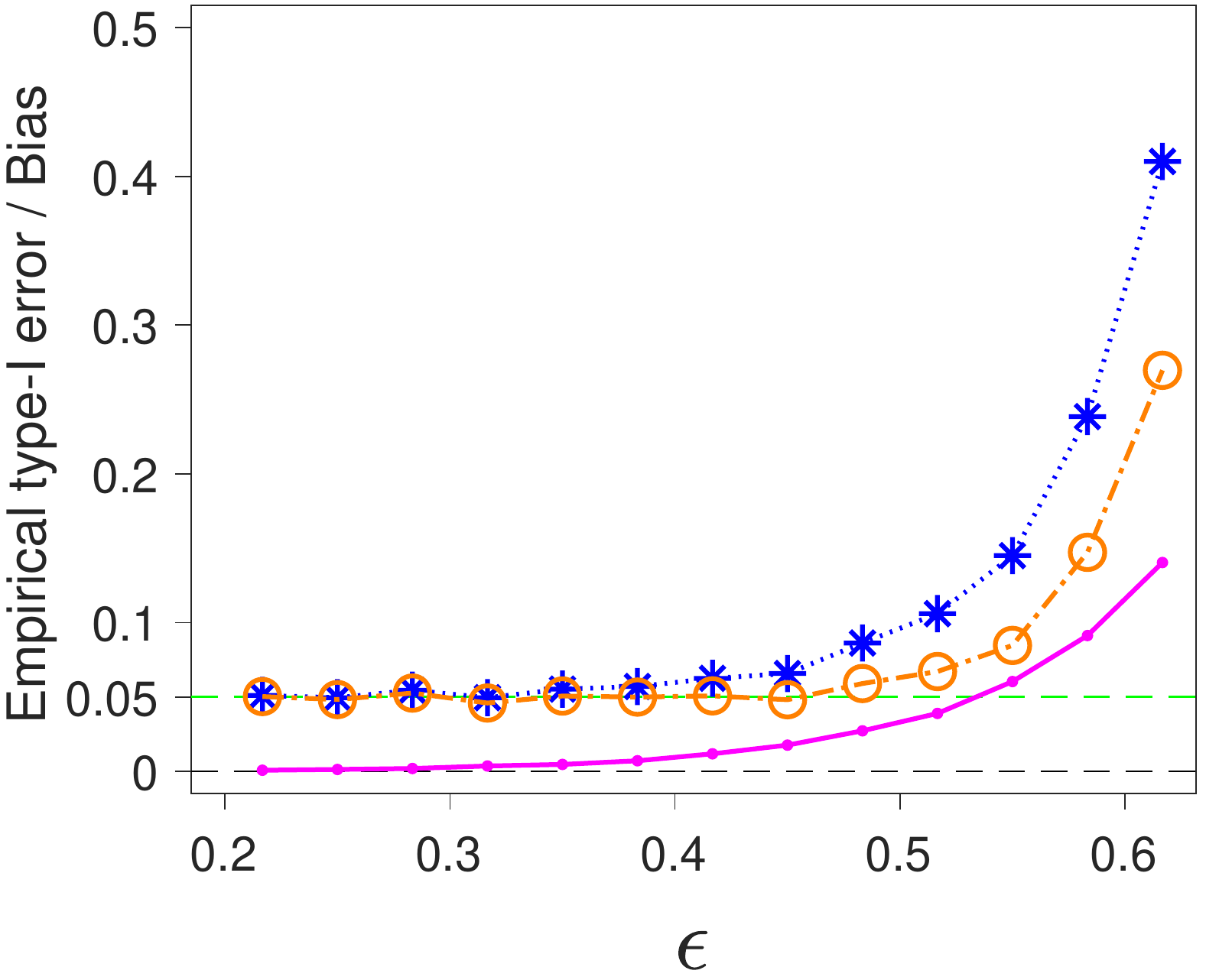}
\end{subfigure}%
~ \ 
\begin{subfigure}[t]{0.34\textwidth}
\centering
\includegraphics[width=\textwidth]{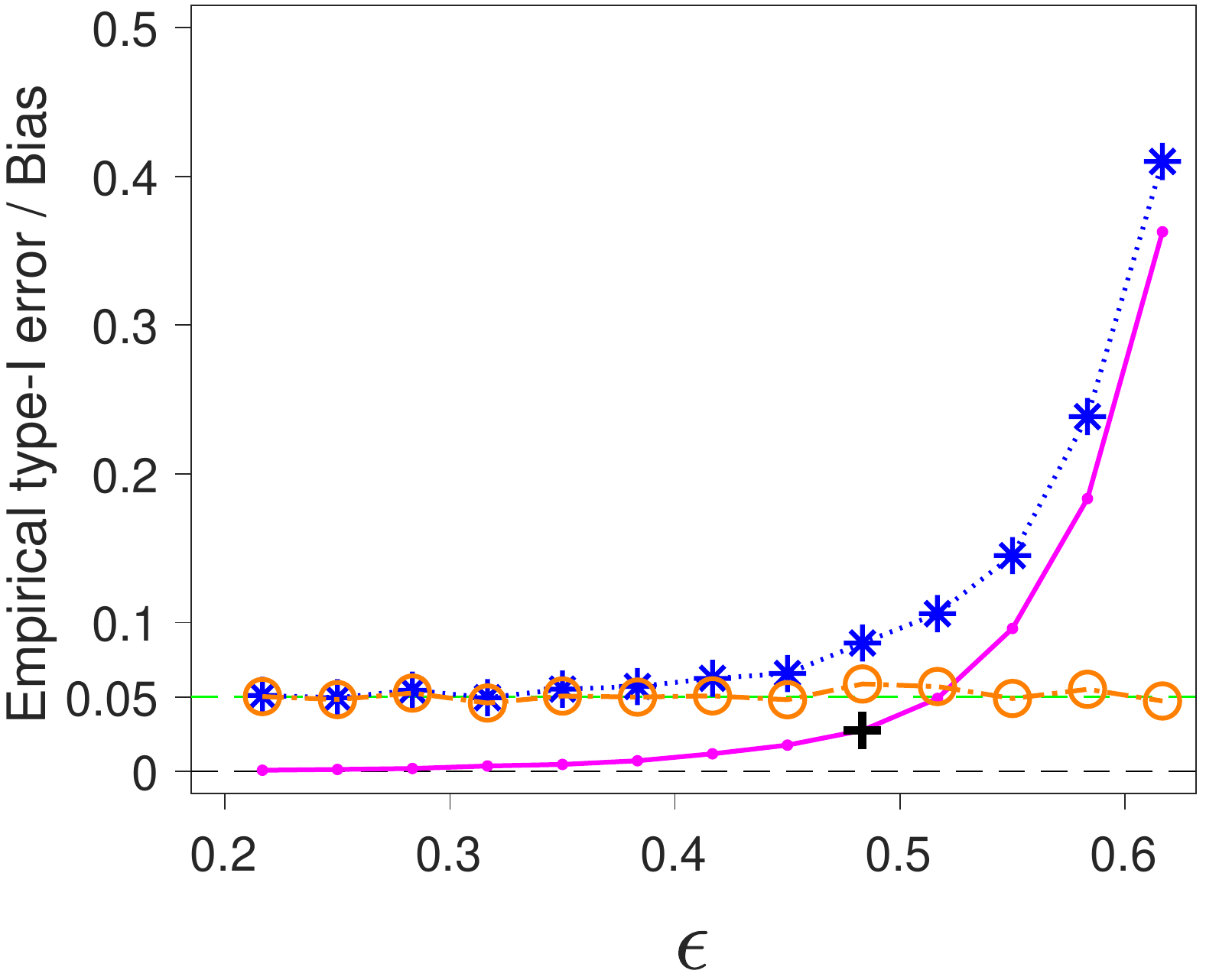}
\end{subfigure}
~ \ 
\begin{subfigure}[t]{0.34\textwidth}
\centering
\includegraphics[width=\textwidth]{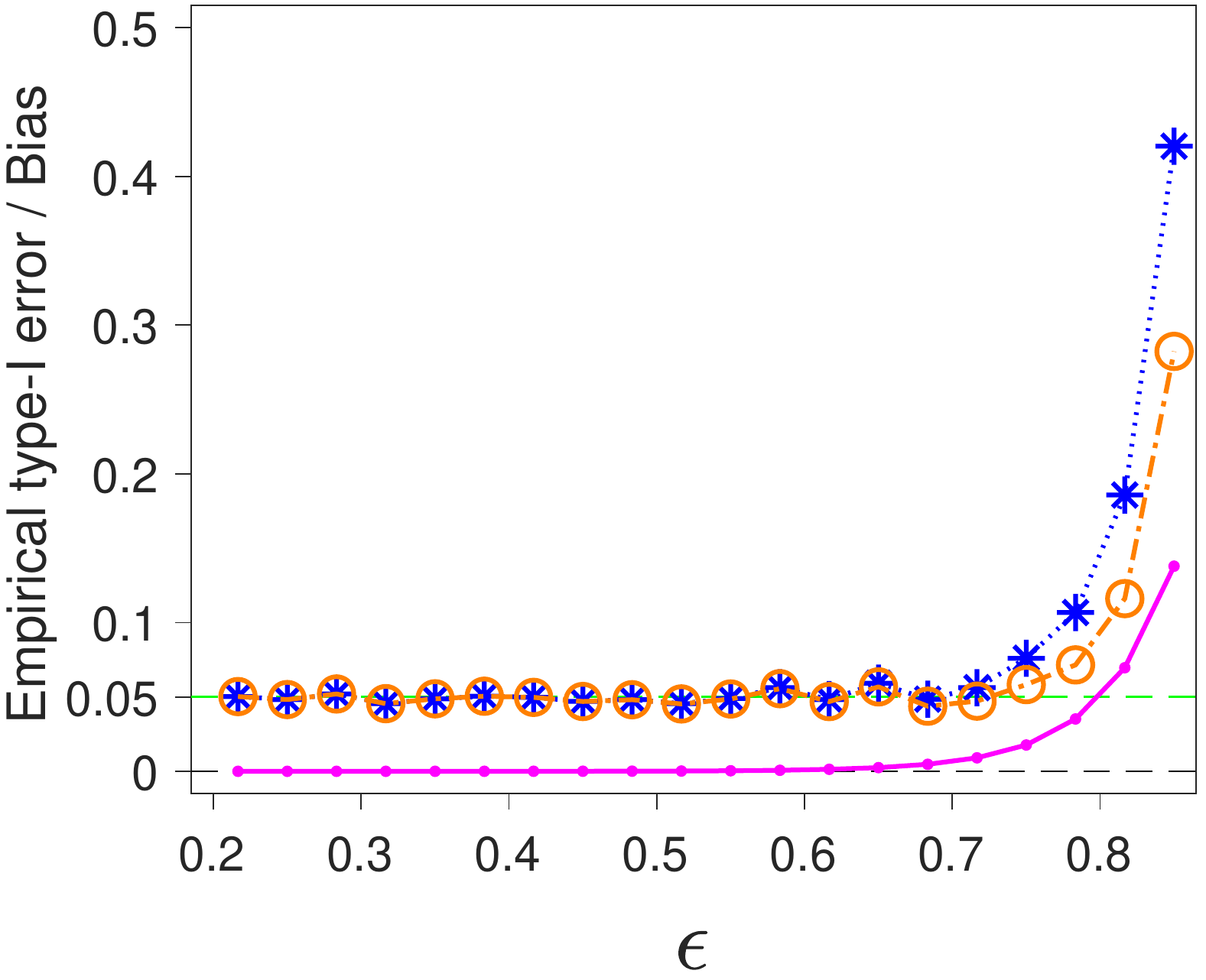}
\end{subfigure}%
~ \ 
\begin{subfigure}[t]{0.34\textwidth}
\centering
\includegraphics[width=\textwidth]{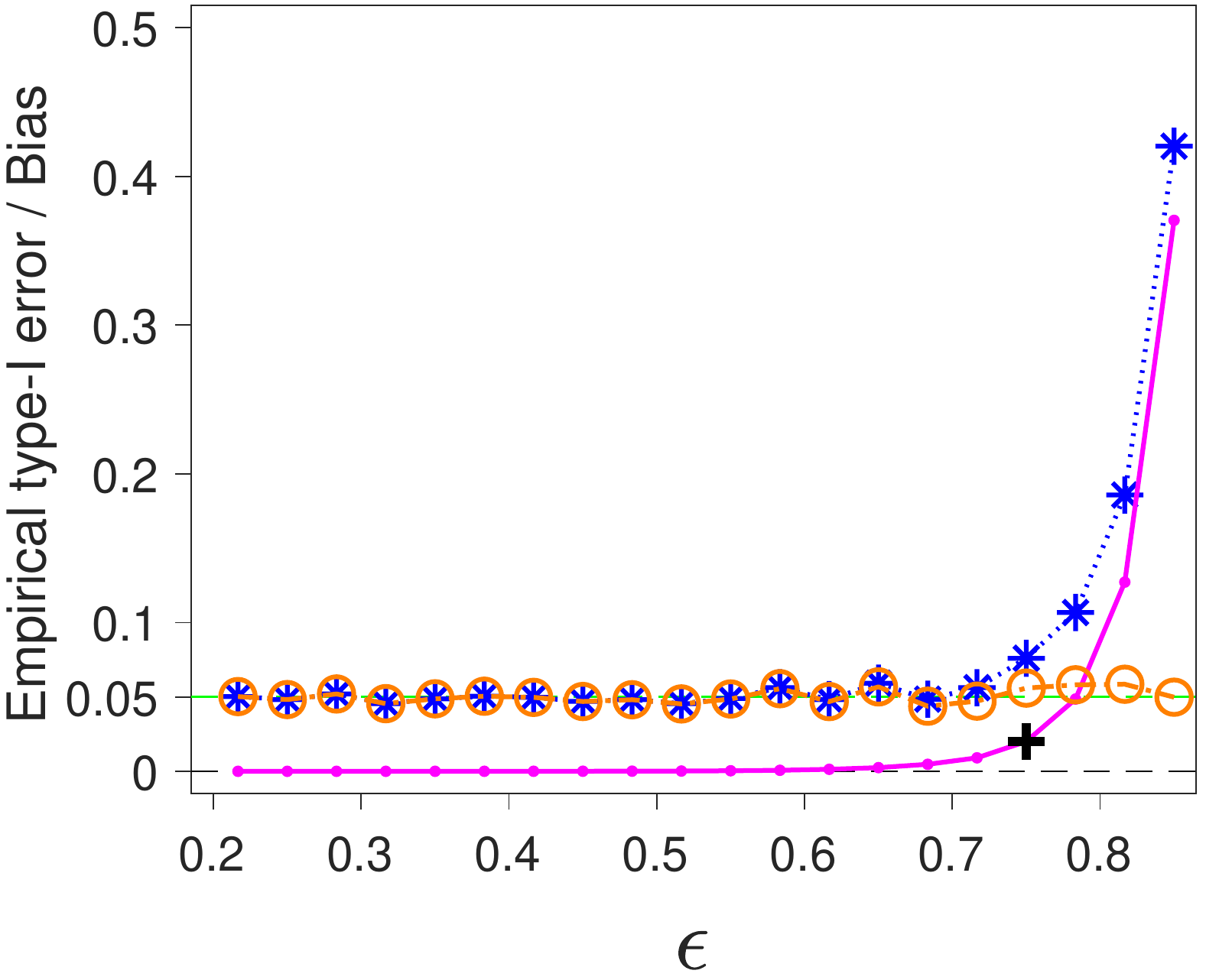}
\end{subfigure}
\ \vspace{0.2em}


\begin{turn}{90}
\begin{minipage}{0.26\textwidth}
 \hspace{4.5em} Test (VI)\vspace{0.12em}
\end{minipage}
\end{turn}%
~ \ 
\begin{subfigure}[t]{0.34\textwidth}
\centering	
\includegraphics[width=\textwidth]{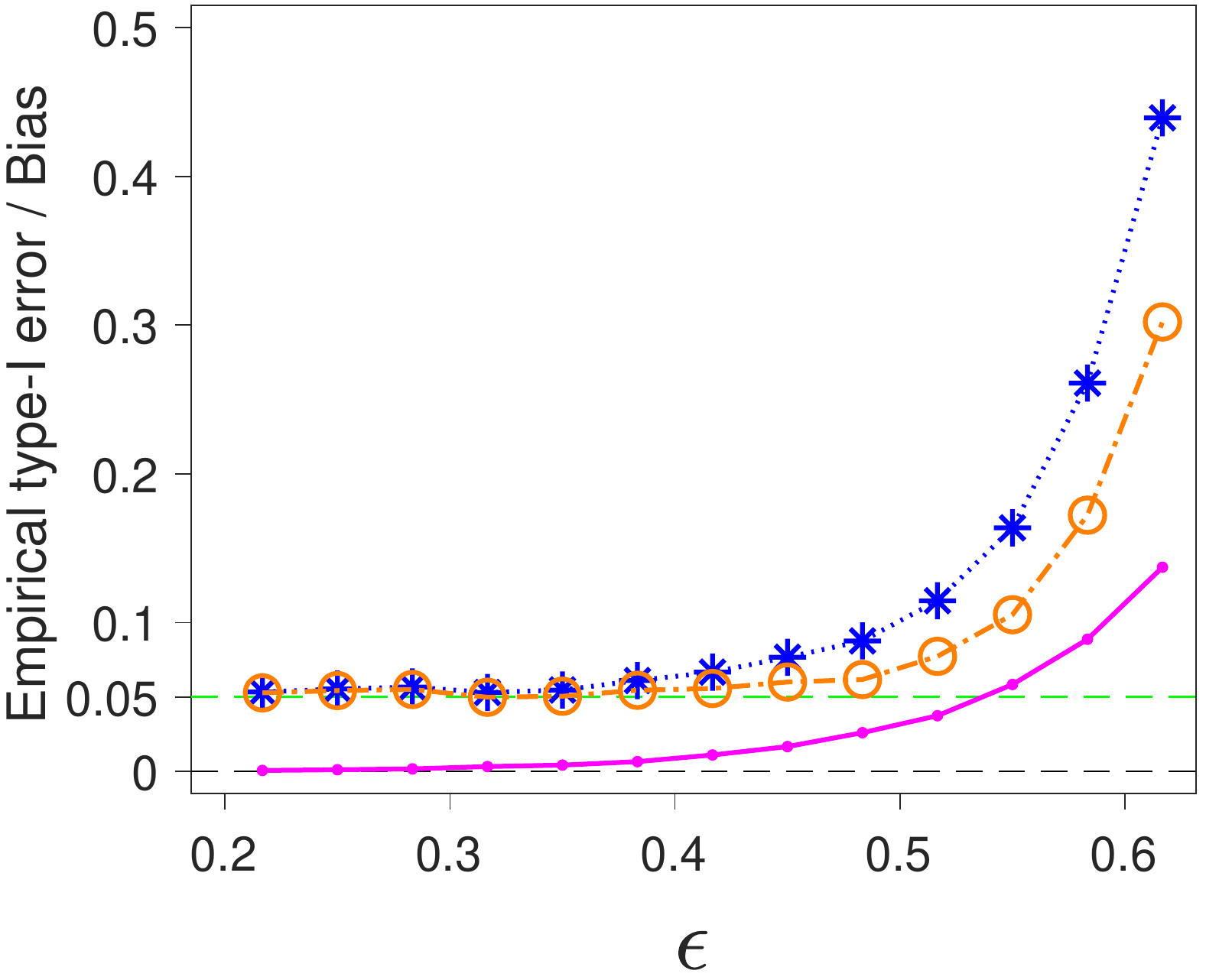}
\caption{\quad (a)\ Without the Bartlett correction}
\end{subfigure}%
~ \ 
\begin{subfigure}[t]{0.34\textwidth}
\centering
\includegraphics[width=\textwidth]{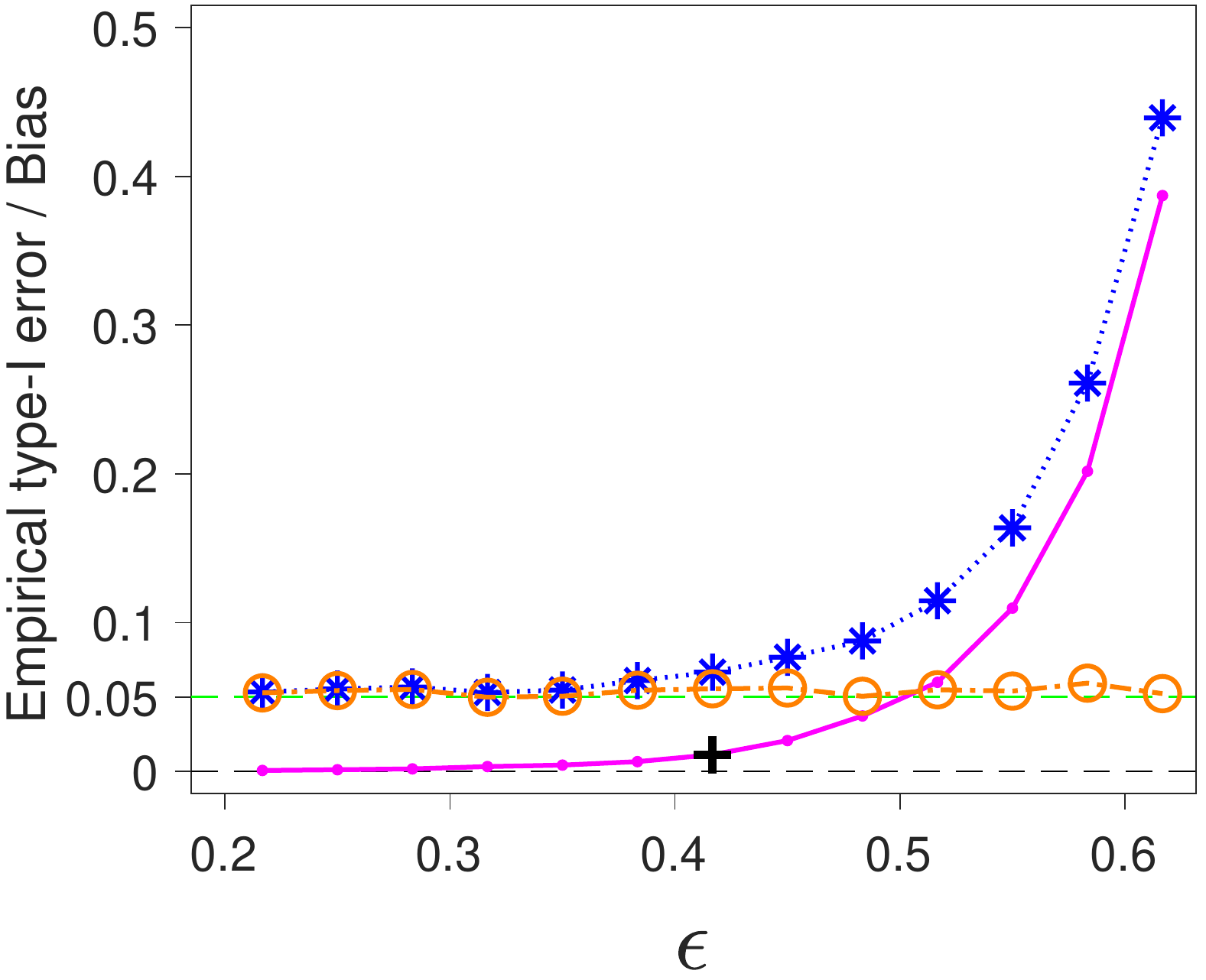}
\caption{\quad (b)\ Without the Bartlett correction}
\end{subfigure}%
~ \ 
\begin{subfigure}[t]{0.34\textwidth}
\centering	
\includegraphics[width=\textwidth]{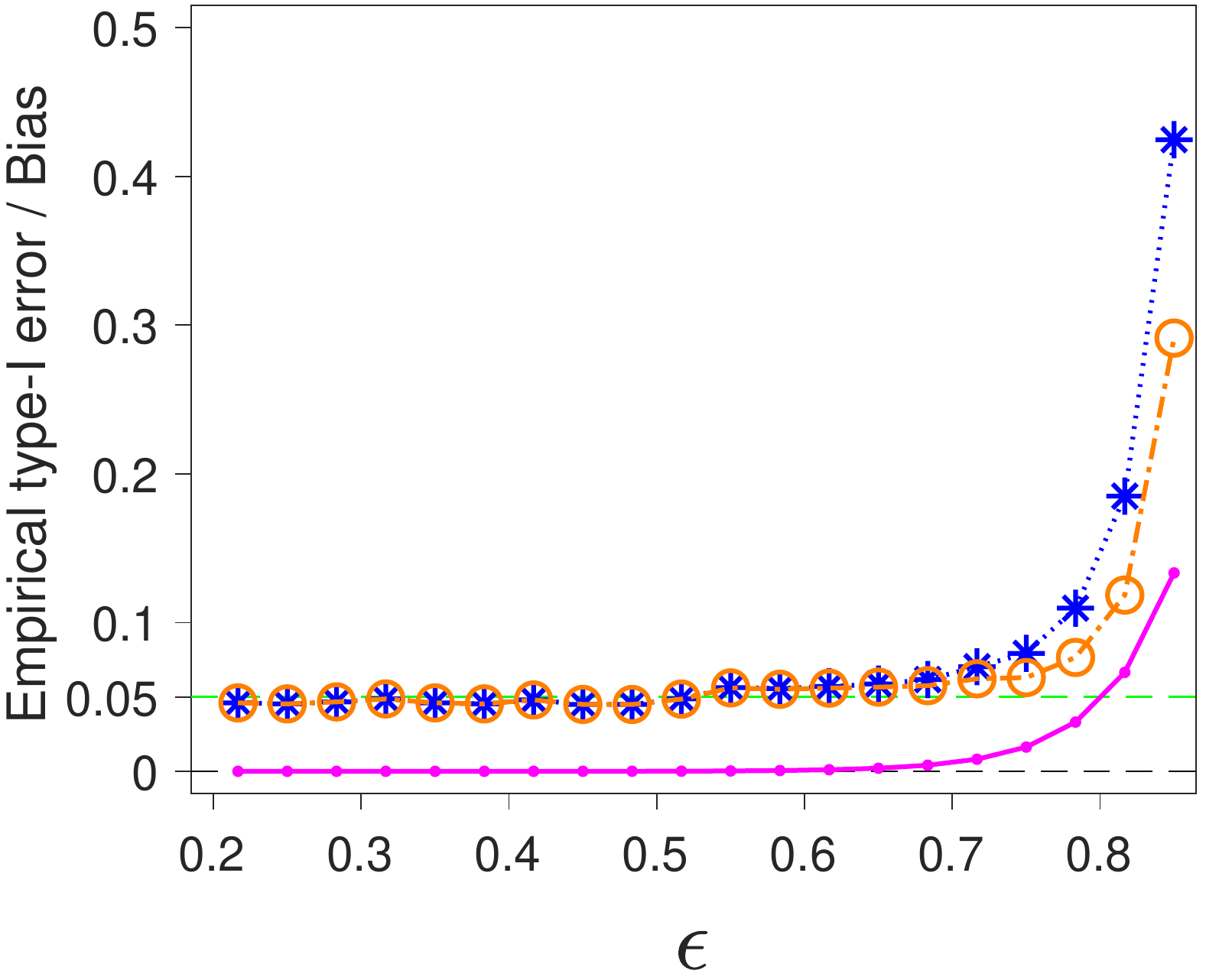}
\caption{\quad (c) With the Bartlett correction}
\end{subfigure}%
~ \ 
\begin{subfigure}[t]{0.34\textwidth}
\centering
\includegraphics[width=\textwidth]{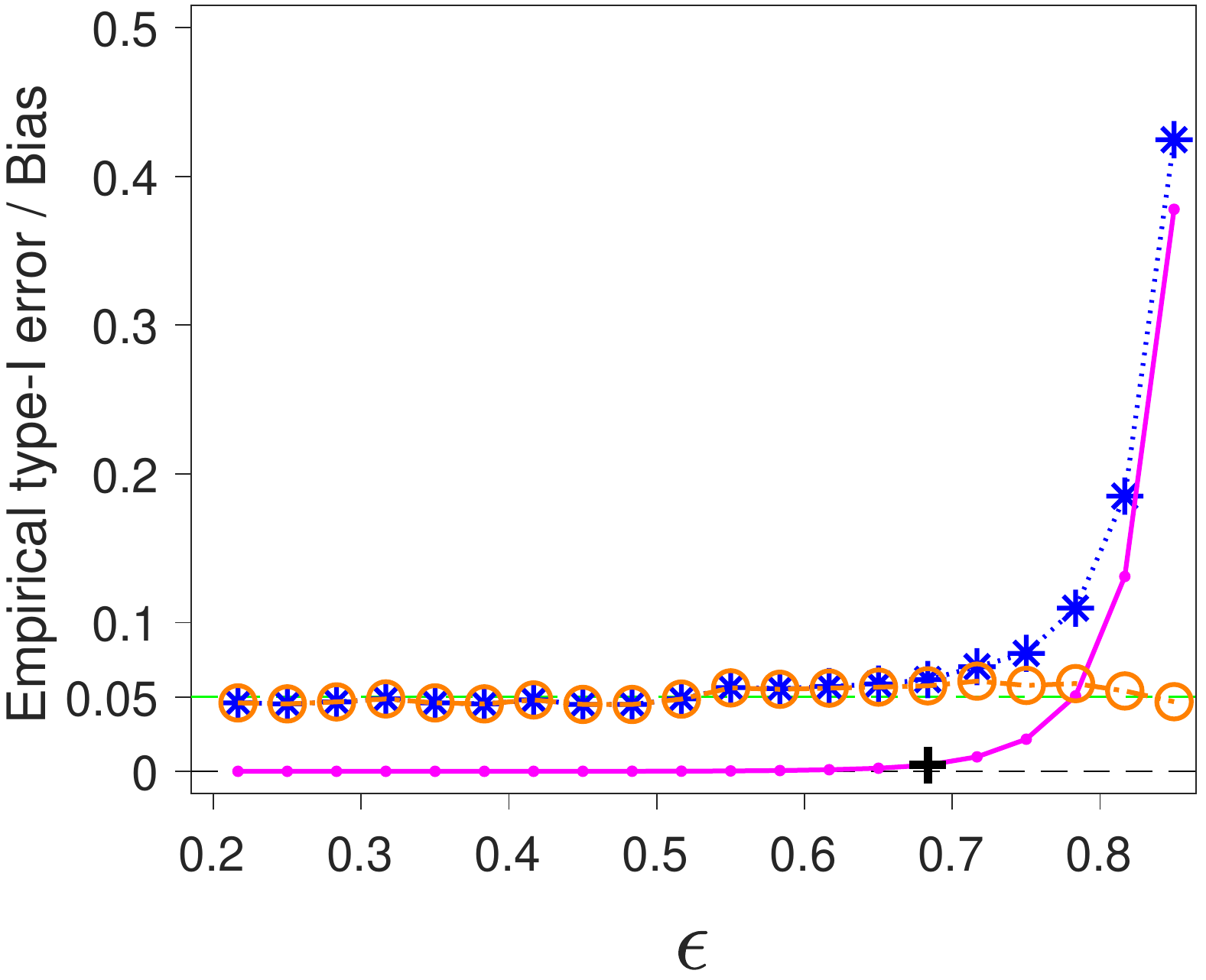}
\caption{\quad (d) With the Bartlett correction}
\end{subfigure}

\caption{Multiple-sample tests (IV)--(VI) when $n=500$. Rows 1--3 present the results for tests (IV)--(VI), respectively. 
For four columns in each row, please see the caption description in Fig. \ref{fig:bias13n100}.}\label{fig:biasmultin500}
\end{figure}
\end{landscape}

\begin{landscape}

\begin{figure}[!htbp]
\captionsetup[subfigure]{labelformat=empty}
\centering

\begin{subfigure}[t]{0.33\textwidth}
\centering
\includegraphics[width=\textwidth]{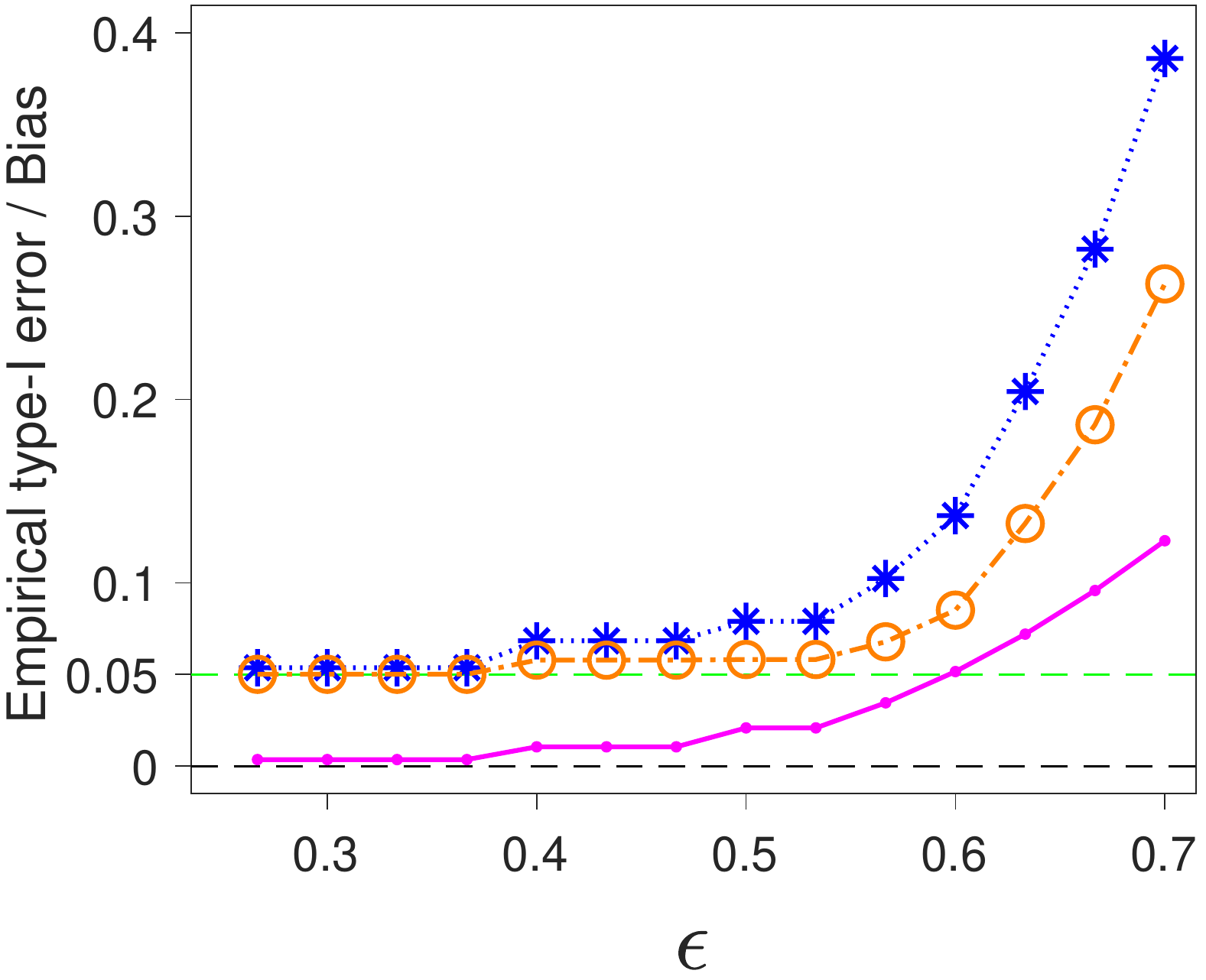}	
\caption{\quad (a) Without the Bartlett correction}
\end{subfigure}%
~\ 
\begin{subfigure}[t]{0.33\textwidth}
\centering
\includegraphics[width=\textwidth]{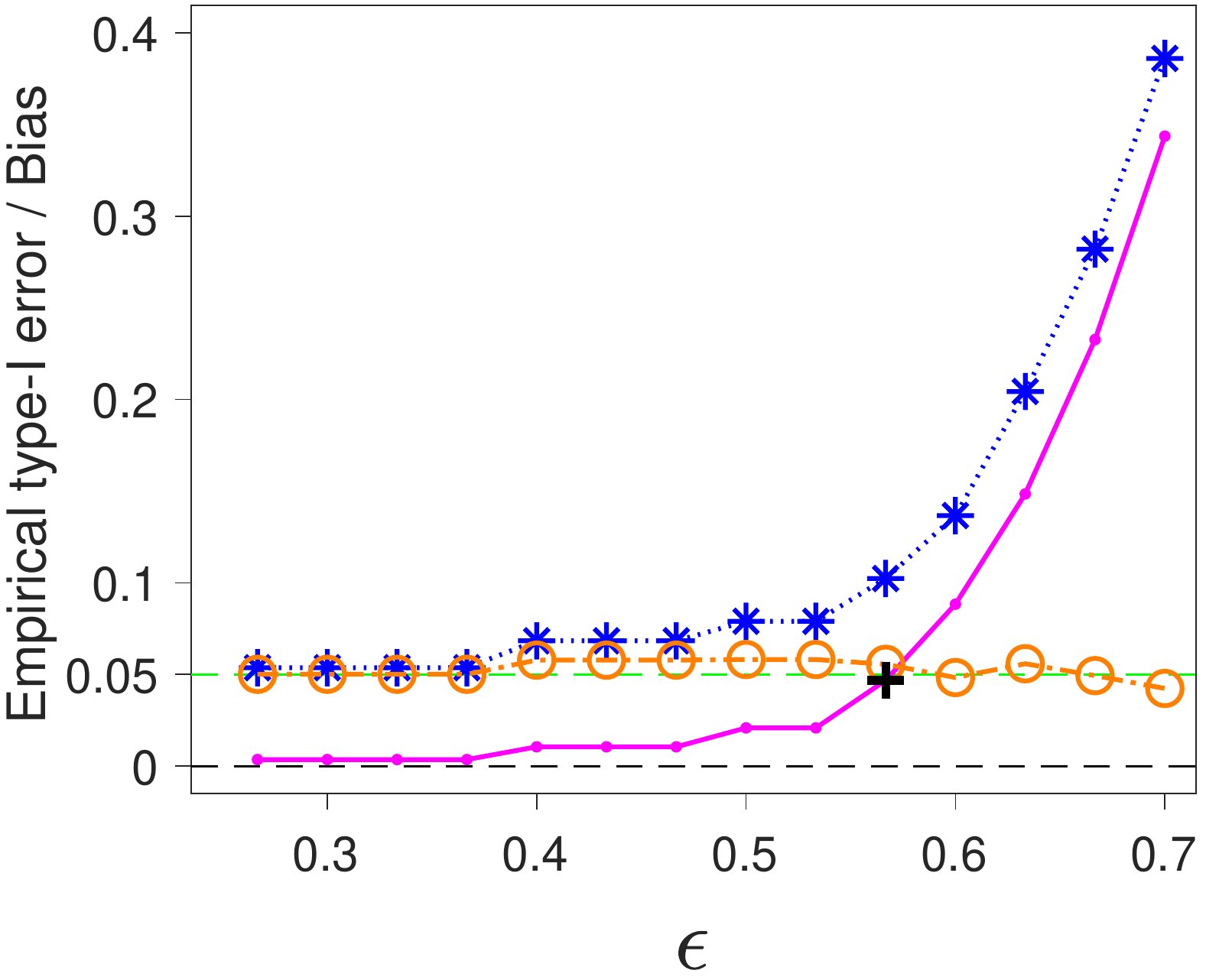}		
\caption{\quad (b) Without the Bartlett correction}
\end{subfigure}%
~\ 
\begin{subfigure}[t]{0.33\textwidth}
\centering
\includegraphics[width=\textwidth]{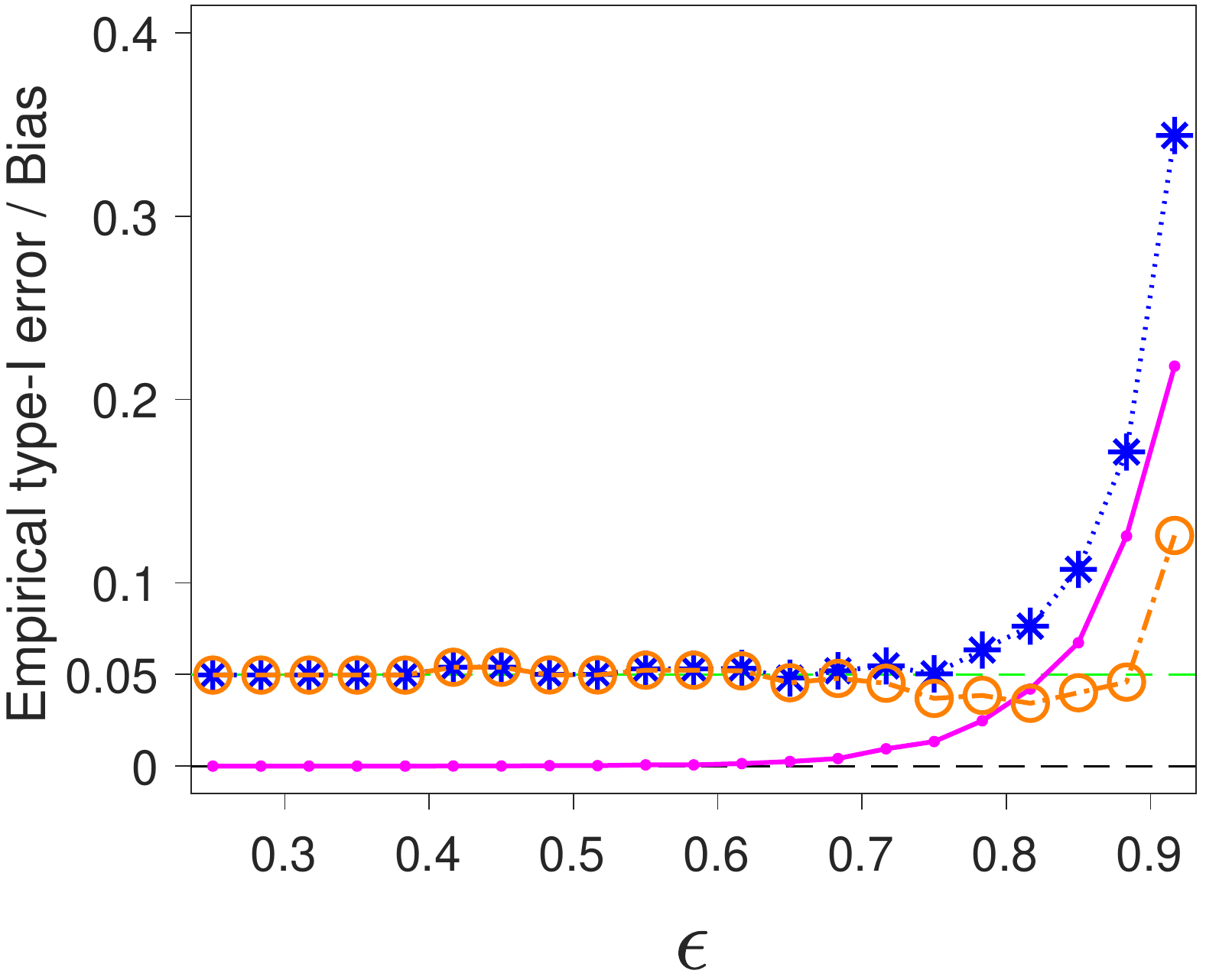}
\caption{\quad \ (c) With the Bartlett correction}
\end{subfigure}%
~\ 
\begin{subfigure}[t]{0.33\textwidth}
\centering
\includegraphics[width=\textwidth]{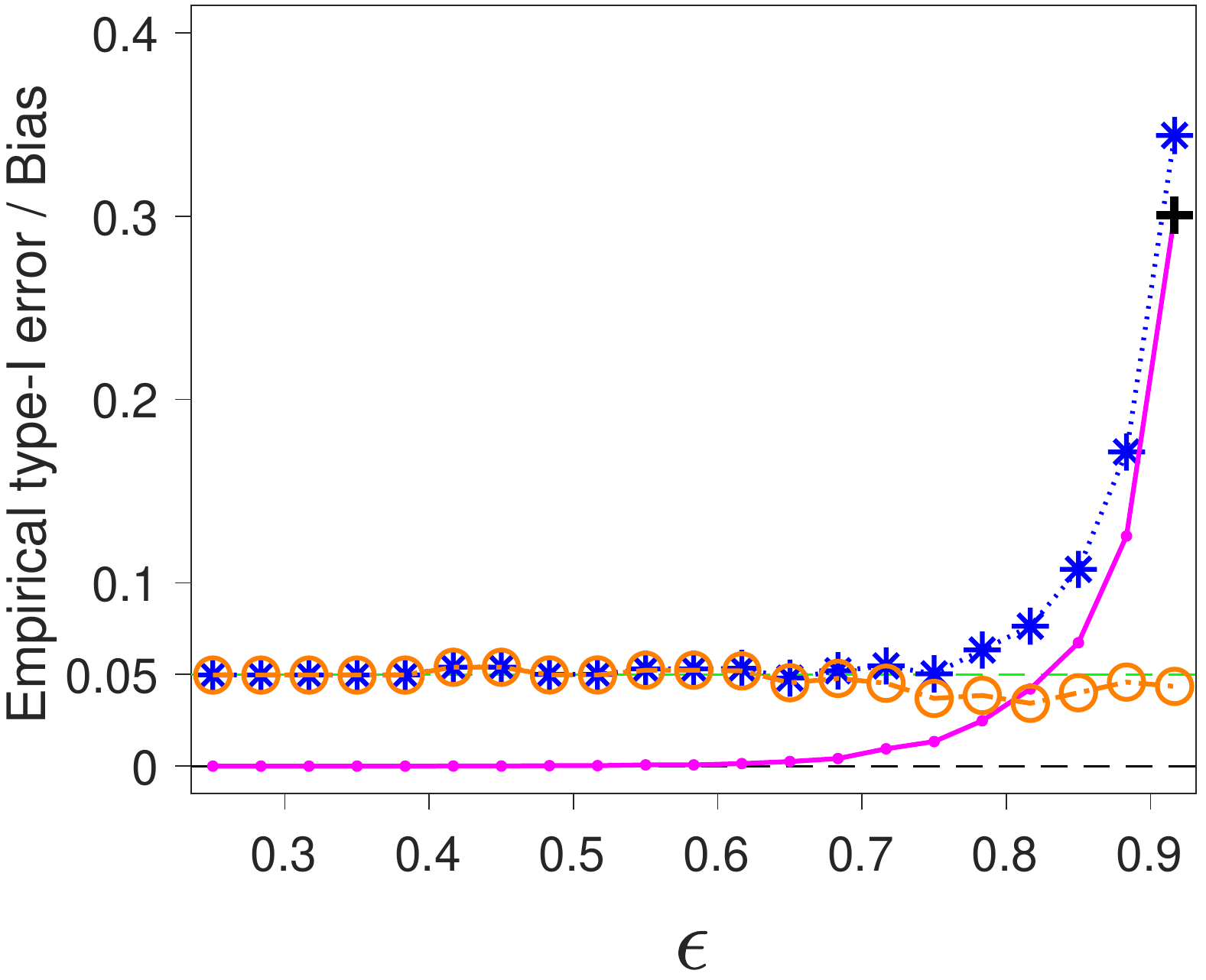}
\caption{\quad (d) With the Bartlett correction}
\end{subfigure}
\caption{Independence test (VII) when $n=100$: for columns (a)--(d),  please see the caption description in Fig. \ref{fig:bias13n100}.}\label{fig:bias7n100}
\end{figure}
	
\quad \\	
	
\begin{figure}[!htbp]
\captionsetup[subfigure]{labelformat=empty}
\centering

\begin{subfigure}[t]{0.33\textwidth}
\centering
\includegraphics[width=\textwidth]{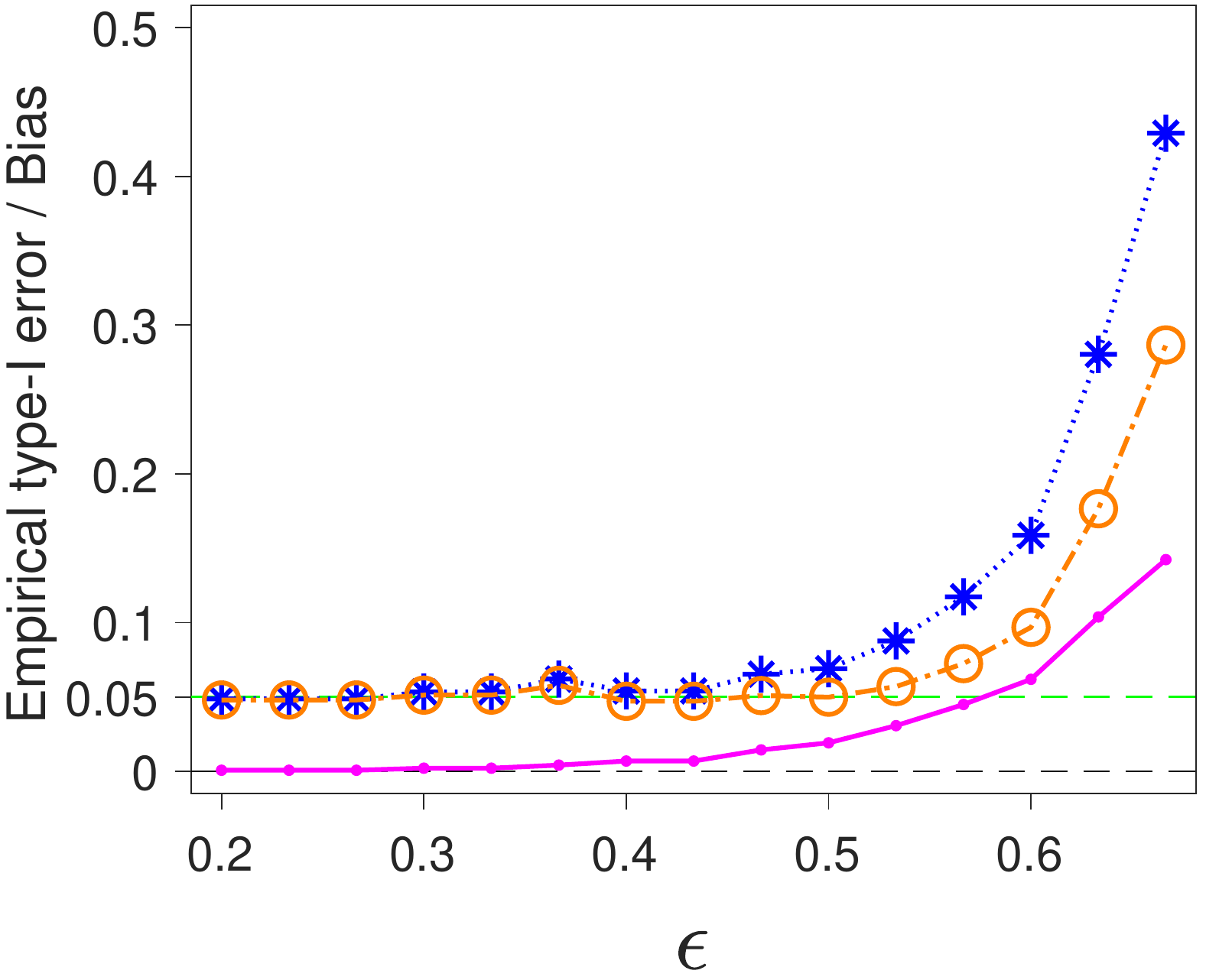}
\caption{\quad (a) Without the Bartlett correction}
\end{subfigure}%
~\ 
\begin{subfigure}[t]{0.33\textwidth}
\centering
\includegraphics[width=\textwidth]{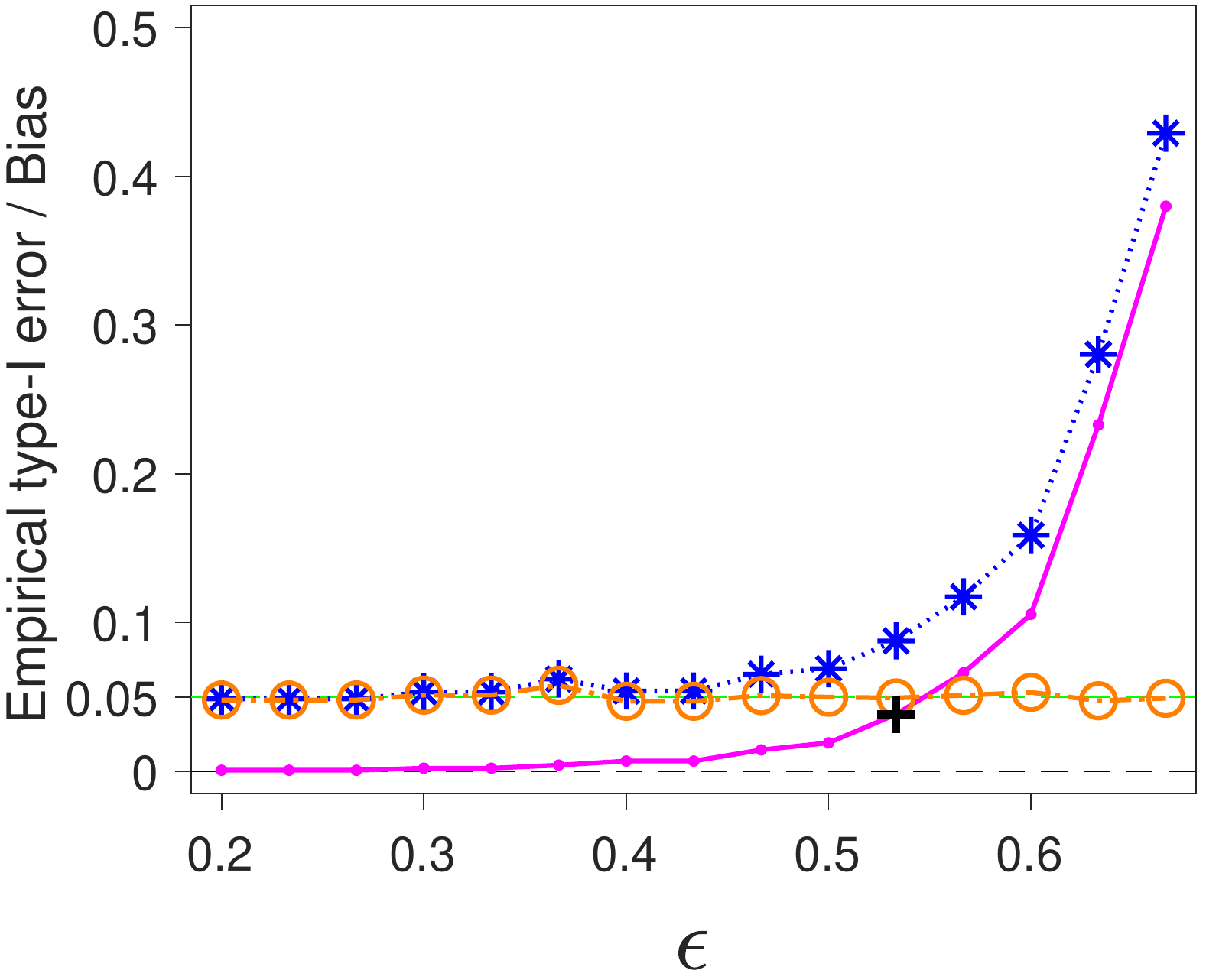}
\caption{\quad (b) Without the Bartlett correction}
\end{subfigure}%
~\ 
\begin{subfigure}[t]{0.33\textwidth}
\centering
\includegraphics[width=\textwidth]{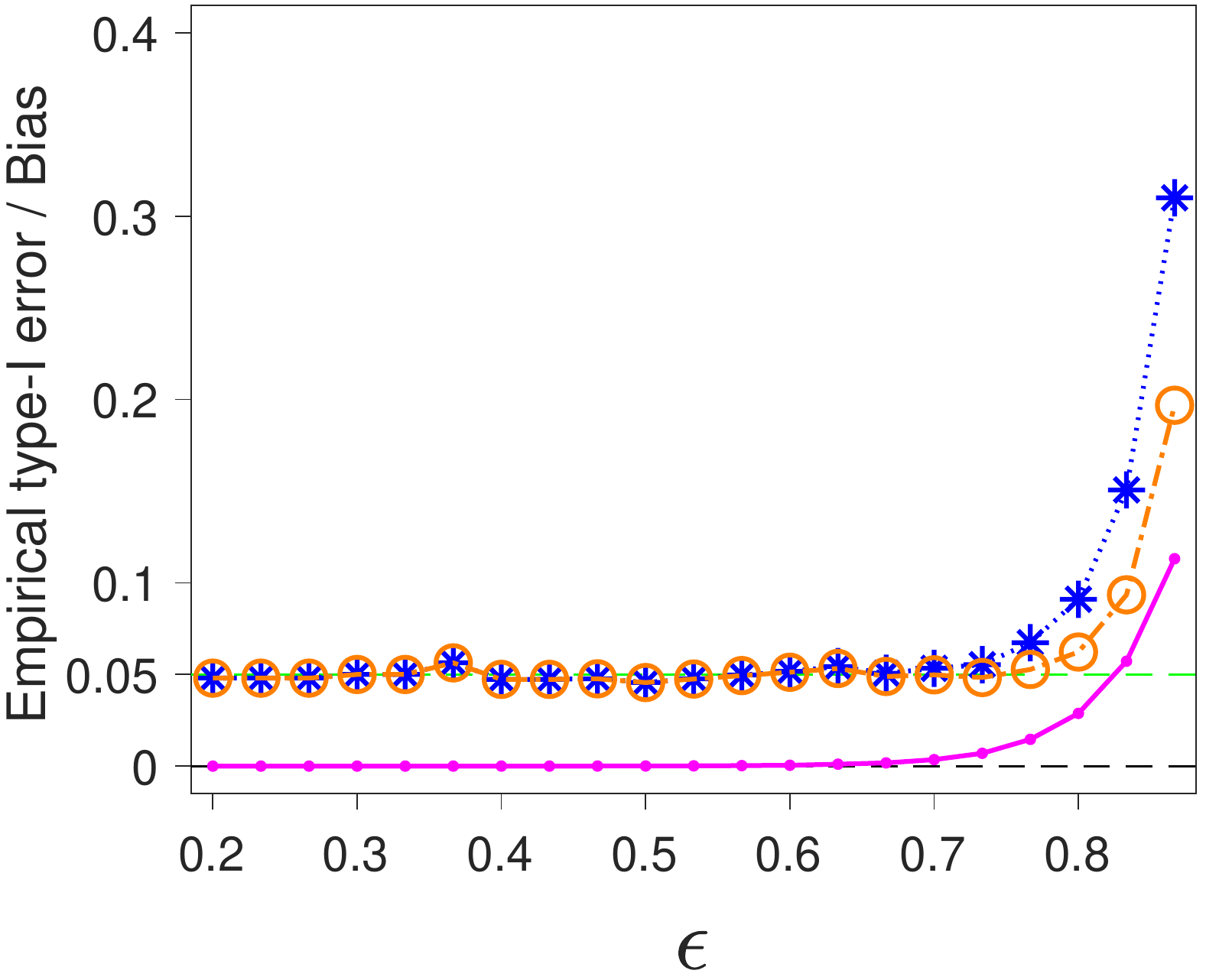}
\caption{\quad \ (c) With the Bartlett correction}
\end{subfigure}%
~\ 
\begin{subfigure}[t]{0.33\textwidth}
\centering
\includegraphics[width=\textwidth]{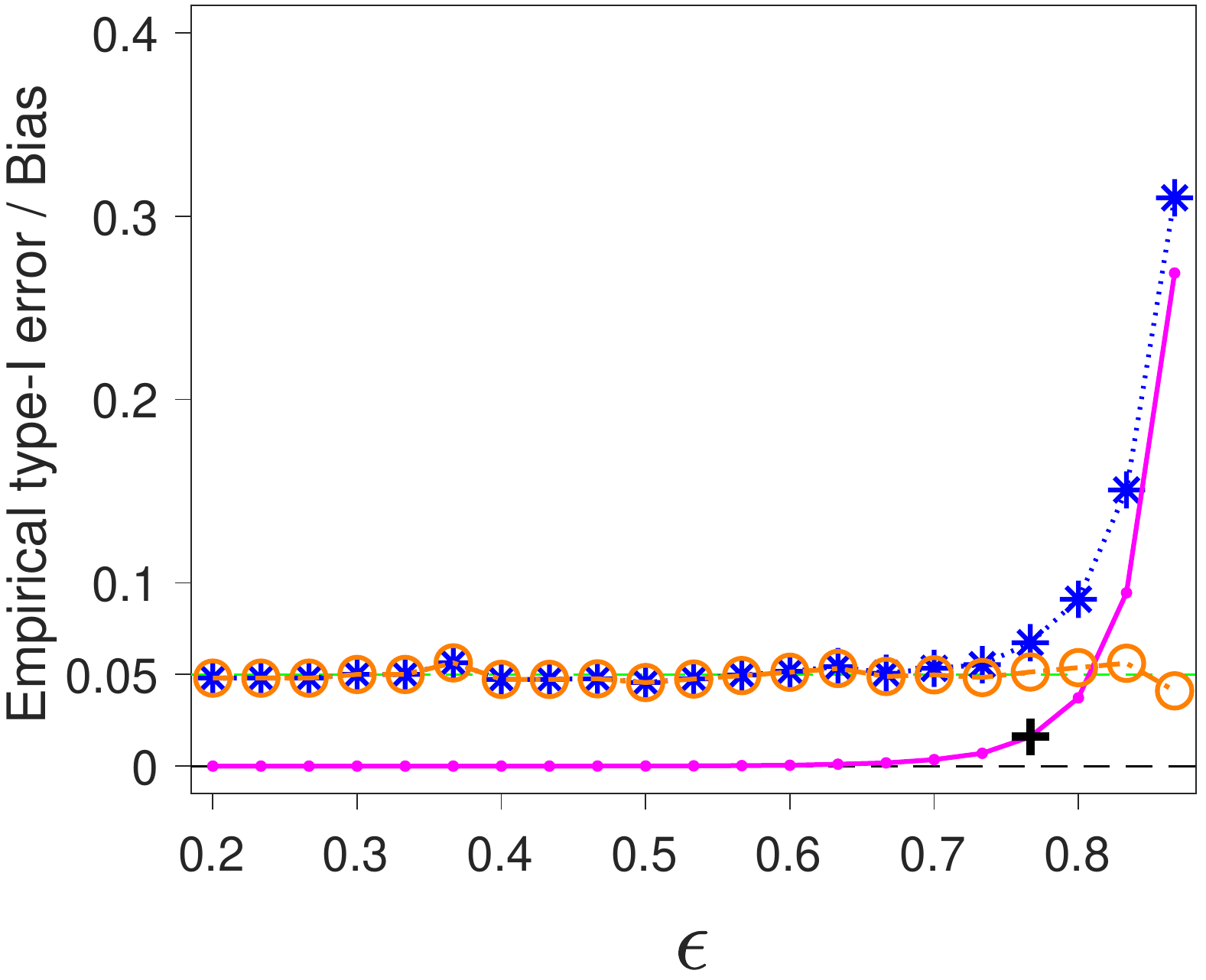}
\caption{\quad (d) With the Bartlett correction}
\end{subfigure}
\caption{Independence test (VII) when $n=500$: for columns (a)--(d), please see the caption description in Fig. \ref{fig:bias13n100}.}\label{fig:bias7n500}
\end{figure}

\end{landscape}

\newpage

\section{Proof Illustration with Problem (III)}\label{sec:pfthemexam}

In this section, we illustrate the proofs of Theorems \ref{thm:onesam}--\ref{thm:onesamnormal} by focusing on  the testing problem (III), which jointly tests the the one-sample mean vector and covariance matrix. 
Other testing problems (I)--(II) and (IV)--(VII) can be proved following a similar analysis, and are discussed in Section \ref{sec:pfthms}. 
We define some notation to facilitate the proofs. 
For two sequences of numbers $\{a_n;n\geq1 \}$ and $\{ b_n;n\geq1\}$, $a_n=O(b_n)$ denotes $\limsup_{n\to \infty} |a_n/b_n| < \infty$;  $a_n=o(b_n)$ denotes $\lim_{n\to \infty} a_n/b_n=0$;  $a_n=\Theta(b_n)$ represents that $a_n=O(b_n)$ and $b_n=O(a_n)$ hold simultaneously; $a_n\sim b_n$ denotes $\lim_{n\to \infty} |a_n/b_n| = 1$.

\subsection{Proof of Theorem \ref{thm:onesam} (III)} \label{sec:pfonesam3}


When $p$ is fixed, the chi-squared approximations hold by the classical multivariate analysis \citep{anderson2003introduction,Muirhead2009}. Therefore, without loss of generality, the proofs below focus on $p\to \infty$.

Deriving the necessary and sufficient conditions for the chi-squared approximations requires the correct understanding of the limiting behavior of $\log \Lambda_n$ under both low and high dimensions. 
Particularly, we examine the limiting distribution of the log likelihood ratio test statistic $\log \Lambda_n$ based on the moment generating function of $\log \Lambda_n$, that is, $\mathrm{E}\{\exp(t\log \Lambda_n)\}$.
For $\Lambda_n$ in question (III),  by Theorem 8.5.3  and  Corollary 8.5.4 in \citet{Muirhead2009}, we have that under $H_0$, 
\begin{align}
	\mathrm{E}\{\exp(t\log \Lambda_n)\}=\mathrm{E}(\Lambda_n^t)=\left(\frac{2 e}{n}\right)^{n p t/2}(1+t)^{-n p(1+t)/2}\times \frac{\Gamma_{p}[\{n(1+t)-1\} / 2]}{\Gamma_{p}\{(n-1) / 2\}}, \label{eq:mmtjoint}
\end{align}where $\Gamma_p(\cdot)$ is the multivariate Gamma function; see  Definition 2.1.10 in  \citet{Muirhead2009}.

When $p$ is fixed, the moment generating function of $-2\log \Lambda_n$ approximates that of a chi-squared  variable $\chi^2_f$, where $f=  p(p+3)/2$; 
see, Sections 8.2.4 and 8.5 in \cite{Muirhead2009}.
When $p\to \infty$, \citet{Jiang13} and \cite{Jiang15} derived an approximate expansion of the multivariate Gamma function, and their Theorem 5 utilized  \eqref{eq:mmtjoint} to show that  under the conditions of Theorem \ref{thm:onesam},
\begin{align}
&\mathrm{E}[\exp\{s(-2\log \Lambda_n +2\mu_n)/(2n\sigma_n)\}] \to \exp(s^2/2), \label{eq:mmtnormal1}
\end{align}where $\exp(s^2/2)$ is the moment generating function of $\mathcal{N}(0,1)$, and
\begin{align}
&\mu_{n}=-\frac{1}{4}\left\{n(2 n-2 p-3) \log \left(1-\frac{p}{n-1}\right)+2(n+1) p\right\},\label{eq:munthm21} \\
&\sigma_{n}^{2}=-\frac{1}{2}\left\{\frac{p}{n-1}+\log \left(1-\frac{p}{n-1}\right)\right\}. \label{eq:sigmanthm21} 
\end{align} 

We next prove (i) in Theorem \ref{thm:onesam} when $p\to \infty$ based on \eqref{eq:mmtnormal1}. Particularly, we write
\begin{align}
\hspace{-0.5em}\sup_{\alpha\in (0,1)}\big| \MYPR\{-2\log \Lambda_n > \chi^2_f(\alpha)\} - \alpha \big|=	\sup_{\alpha\in (0,1)}\Big|\MYPR( T_{n}>q_{n,\alpha})-\bar{\Phi}(q_{n,\alpha})+\bar{\Phi}(q_{n,\alpha}) - \bar{\Phi}(z_{\alpha})\Big|, \label{eq:supprobdiff} 
\end{align}
where 
$T_{n}=(-2\log \Lambda_n+2\mu_n)/(2n\sigma_n)$, 
$q_{n,\alpha}=\{\chi_f^2(\alpha)+2\mu_n\}/(2n\sigma_n)$, and  
$\bar{\Phi}(\cdot)=1-\Phi(\cdot)$ with $\Phi(\cdot)$ being the cumulative distribution function of  $\mathcal{N}(0,1)$. 
Since \eqref{eq:mmtnormal1}  suggests that $T_{n}$ converges to $\mathcal{N}(0,1)$ in distribution, 
and the cumulative distribution function of $\mathcal{N}(0,1)$ is continuous, 
by P\'{o}lya-Cantelli Lemma (see, e.g., Lemma 2.11 in  \cite{van2000asymptotic}),   
we have $\sup_{\alpha \in (0,1)}|\MYPR(T_{n}> q_{n,\alpha})-\bar{\Phi}(q_{n,\alpha})|\to 0$.    
Consequently, $\eqref{eq:supprobdiff} \to 0$ if and only if $\sup_{\alpha\in (0,1)}|\bar{\Phi}(q_{n,\alpha}) - \bar{\Phi}(z_{\alpha}) |\to 0,$ which is equivalent to $\sup_{\alpha\in (0,1)}|q_{n,\alpha} - z_{\alpha}| \to 0$, as $\bar{\Phi}(\cdot)$ is a continuous and strictly decreasing function with bounded derivative.
Since $\chi_f^2$ can be viewed as a summation over $f$ independent $\chi_1^2$ variables, and $f\to \infty$ as $p\to \infty$,  
we can apply Berry–Esseen  theorem to $\chi_f^2$ variable, and obtain 
\begin{align}
	\sup_{\alpha\in (0,1)}\big| \{\chi^2_f(\alpha) - f\}/\sqrt{2f} - z_{\alpha}\big| = O(f^{-1/2}). \label{eq:xchisqorder}
\end{align}
Therefore, $\sup_{\alpha \in (0,1)}|q_{n,\alpha} - z_{\alpha}|\to 0$  is equivalent to 
\begin{align}
	\sqrt{2f}\times (2n\sigma_n)^{-1} \to 1, \label{eq:chisqconvgvar}\\
( O(1) + f+2\mu_n)\times (2n\sigma_n)^{-1} \to 0. \label{eq:chisqconvgmean}
\end{align} 
Following similar analysis, we know that under the conditions of Theorem \ref{thm:onesam}  and $p\to \infty$, for the  chi-squared approximation with the Bartlett correction, $\sup_{\alpha\in (0,1)}| \MYPR\{-2\rho \log \Lambda_n > \chi^2_f(\alpha)\} - \alpha|$  holds if and only if 
\begin{align}
{\sqrt{2f}}\times (2n\rho \sigma_n)^{-1} \to   1, \label{thm13} \\  
(O(1)+f+2\rho \mu_n)\times (2n \rho\sigma_n)^{-1} \to  0. \label{thm14}
\end{align}
We next examine \eqref{eq:chisqconvgvar}--\eqref{eq:chisqconvgmean} and \eqref{thm13}--\eqref{thm14} for the chi-squared  approximation without and with the Bartlett correction, respectively. 


\smallskip
\noindent \textit{(III.i) The chi-squared  approximation.} \quad
We next discuss two cases $\lim_{n\to \infty} p/n = 0$ and  $\lim_{n\to \infty} p/n = C \in (0,1]$, respectively. 

\smallskip
\noindent \textit{Case (III.i.1) $\lim_{n\to \infty} p/n = 0$.}\ Under this case,  we prove that \eqref{eq:chisqconvgvar} holds. As $\sqrt{2f}\sim p$, it is equivalent to show that $p/(2n\sigma_n)\to 1$. By Taylor's expansion of $\sigma_n^2$ in \eqref{eq:sigmanthm21}, we have
\begin{equation*}
2{\sigma_n^2}= - \frac{p}{n-1} - \log \left(1-\frac{p}{n-1} \right) = \frac{p^2}{2(n-1)^2} +o\left(\frac{p^2}{n^2} \right), 
\end{equation*}
  and therefore $\sqrt{2f}\times (2n\sigma_n)^{-1} \to 1.$ We next show that \eqref{eq:chisqconvgmean} holds if and only if $p^2/n\to 0$. Given  \eqref{eq:chisqconvgvar}  and $\sqrt{2f}\sim p$, \eqref{eq:chisqconvgmean} is equivalent to $(f+2\mu_n)/p\to 0$.
 By $p/n=o(1)$ and Taylor's expansion of $\log(1-x)$, for $\mu_n$ in \eqref{eq:munthm21}, we have
 \begin{align}
 	 4\mu_n/p=&~ -2(n+1)+{n(2n-2p-3)}\left\{\frac{1}{n-1} + \frac{p}{2(n-1)^2} + \frac{p^2}{3(n-1)^3} + O\left(\frac{p^3}{n^4} \right) \right\}	\label{eq:munoverpthm1} \\
 	=&~ - 2(n+1) + (2n-2p-3)\left\{1+\frac{p}{2(n-1)}+\frac{p^2}{3(n-1)^2}\right\}+2+o(1)+O\left(\frac{p^3}{n^2}\right)  \notag \\
 	=&~ -2p-3+ \frac{(2n-2p-3)p}{2(n-1)} + \frac{(2n-2p-3)p^2}{3(n-1)^2} + o(1)+O\left(\frac{p^3}{n^2}\right). \notag 
 \end{align}
 As $2f/p=p+3$, we obtain
\begin{align}
2\times  (f+2\mu_n)/p
=&~ -p+\frac{\{2(n-1)-2p-1\}p}{2(n-1)} + \frac{2p^2}{3(n-1)}+ o(1) +O\left(\frac{p^3}{n^2} \right) \label{eq:expa1thm1}  \\
=&~ -\frac{p^2}{3(n-1)} + o(1) + O\left(\frac{p^3}{n^2}\right). \notag
\end{align} 
Therefore when $p/n\to 0$, \eqref{eq:chisqconvgmean} holds if and only if $p^2/n\to 0$. 

\smallskip
\noindent \textit{Case (III.i.2)  $\lim_{n\to \infty} p/n = C \in (0,1]$.}\ 
Under this case, we have 
\begin{align}
	{\sqrt{2f}}\times (2n\sigma_n)^{-1}\sim {p}(2n\sigma_n)^{-1}\sim C{(2\sigma_n)}^{-1}.   \label{eq:sigmatermcase2}
\end{align} 
If $C=1$, $\sigma_n^2 \to \infty$ and thus $\eqref{eq:sigmatermcase2} \to 0$. If $C\in (0,1)$, we have $C(2\sigma_n)^{-1} \sim C[-2\{C+ \log(1-C)\}]^{-1/2} <1$ when $0 < C < 1.$  In summary, \eqref{eq:chisqconvgvar} does not hold, which suggests that the chi-squared approximation fails.

\smallskip
Finally, we consider a general sequence $p/n = p_n/n \in [0,1]$, where we write $p$ as $p_n$ to emphasize that $p$ changes with $n$. Similarly, we also write $f$ as $f_n$.  
Note that a sequence converges if and only if every subsequence converges. 
For the sequence $\{p_n/n\}$, by the Bolzano–Weierstrass theorem, we can further take a subsequence $\{n_t\}$ such that  $p_{n_{t}}/n_{t} \to C\in[0,1]$. 
If $C\in (0,1]$, the above analysis still applies, which shows that the chi-squared approximation fails.
Alternatively, if all the subsequences of $\{p/n\}$ converge to $0$, we know $p/n \to 0$. In summary, the above analysis shows that \eqref{eq:chisqconvgvar} and \eqref{eq:chisqconvgmean} hold if and only if $p^2/n\to 0.$ 

\medskip



\smallskip
\noindent \textit{(III.ii) The chi-squared approximation with the Bartlett correction.}
 Similarly to the analysis above, we discuss two cases $\lim_{n\to \infty} p/n = 0$ and  $\lim_{n\to \infty} p/n = C \in (0,1]$, respectively. 

\smallskip

\noindent \textit{Case (III.ii.1) $\lim_{n\to \infty} p/n = 0$.}\  
Under this case, 
we know \eqref{thm13} holds since $\rho = 1 + O(p/n) \rightarrow 1$ and $p/(2 n \sigma_n) \to 1$ as  shown in Case (III.i.1) above. Given  \eqref{thm13}, deriving the condition for  \eqref{thm14} is equivalent to examine when $p^{-1} ( f+2\rho  \mu_n  ) \to 0$. 
Following the analysis of \eqref{eq:expa1thm1}, we further obtain
\begin{align}
 2\times (f+2\mu_n)/p
= &~ (p+3)-2(n+1)+ n(2n-2p-3)\sum_{j=1}^4 \frac{p^{j-1}}{j(n-1)^j} +O\left(\frac{p^4}{n^3} \right) \label{eq:expstep2thm1} \\
=&~ -\frac{p^2}{3(n-1)}-\frac{p^3}{6(n-1)^2} + O\left(\frac{p^4}{n^3}\right) + o(1). \notag   
\end{align}
We write $\rho=1-\Delta_n$ where $\Delta_n = \{6 n(p+3) \}^{-1}(2 p^{2}+9 p+11)$, which is $O(p/n)$.
By \eqref{eq:expa1thm1}, we have $4\mu_n/p=-p-3-p^2/\{3(n-1)\}+o(1)+O(p^3n^{-2})$.
 Together with \eqref{eq:expstep2thm1}, we have
\begin{align}
&~2 \times ( f+2\rho  \mu_n  )/p = 2 \times (f+2 \mu_n)/p	- 4\Delta_n\times  \mu_n/p\label{eq:expa1thm2} \\
=&~ -\frac{p^2}{3(n-1)} - \frac{p^3}{6(n-1)^2} - \Delta_n \left\{-p-3- \frac{p^2}{3(n-1)}\right\}+O\left(\frac{p^4}{n^3}\right) + o(1) \notag \\
=&~ -\frac{p^2}{3(n-1)} - \frac{p^3}{6(n-1)^2}+\frac{2p^2(p+3)}{6n(p+3)}+\frac{2p^2\times p^2}{6n(p+3)\times 3(n-1)} +O\left(\frac{p^4}{n^3}\right) + o(1)  \notag \\
=&~ -\frac{p^3}{18n^2}+O\left(\frac{p^4}{n^3}\right) + o(1).\notag  
\end{align}Therefore under this case \eqref{thm14} holds if and only if $p^3/n^2\rightarrow 0$. 

\smallskip

\noindent \textit{Case (III.ii.2):} \ 
When $\lim_{n\to \infty} p/n = C \in (0,1]$, we have $\rho \to 1- C/3$ and 
\begin{align*}
	\sqrt{2f} \times (  2n\rho \sigma_n )^{-1} \sim C \times (1-C/3)^{-1} (2\sigma_n)^{-1}.
\end{align*} 
Similarly to the Case (III.i.2) above, if $C=1$, $\eqref{thm13}\to 0$; if $C\in (0,1)$, we have $C(1-C/3)^{-1} (2\sigma_n)^{-1} \sim C(1-C/3)^{-1}[-2\{C+\log(1-C)\}]^{-1/2} < 1$ when $0<C<1$. In summary,  \eqref{thm13} does not hold, which suggests the failure of the chi-squared approximation with the Bartlett correction. 


For a general sequence $p/n = p_n/n \in [0,1]$,  the analysis of taking subsequences above can be applied similarly. In summary, we know that for the likelihood ratio test in problem (III), the chi-squared  approximation with the Bartlett correction holds if and only if $p^3/n^2 \to 0.$

\subsection{Proof of Theorem \ref{thm:onesamchisq} (III)} \label{sec:pfthmchisqiii}


Similarly to $\S$\,\ref{sec:pfonesam3}, in this subsection, 
we prove Theorem  \ref{thm:onesamchisq} for problem (III) as an illustration example, 
while the proofs of other problems are similar and the details are provided in  $\S$\,\ref{sec:pfthm2}.
Particularly, we prove Theorem \ref{thm:onesamchisq} for problem (III) by examining the  characteristic function of 
$-2\eta \log \Lambda_n$, where $\eta=1$ or $\eta=\rho$, and $\rho$ is the corresponding Bartlett correction factor, given in $\S$\,\ref{sec:main}.  
The following Lemma \ref{lm:lrtiiicharacter} gives an asymptotic expansion for the characteristic function $\mathrm{E}\{\exp( -2 it \eta \log \Lambda_n  ) \}$, 
where the notation $i$ is reserved to denote the solution of the equation $x^2=-1$, i.e.,  
the imaginary unit.  
\begin{lemma}\label{lm:lrtiiicharacter}
Under $H_0$ of the testing problem (III), 
when $\eta=1$ or $\eta=\rho$ with the Bartlett correction factor $\rho$ in $\S$\,\ref{sec:main}, 
the characteristic function of $-2\eta \log \Lambda_n$ satisfies that for a given integer $L$, when $p^{L+2}/n^L\to 0$, 
\begin{align*}
\mathrm{E}\{\exp( -2 it \eta \log \Lambda_n  ) \}	= (1-2it)^{-{f}/{2}}\exp\Biggr[\sum_{l=1}^{L-1}\varsigma_l\big\{(1-2it)^{-l} - 1\big\}+O\biggr(\frac{p^{L+2}}{n^{L}}\biggr)\Biggr], 
\end{align*}		
where $f=p(p+3)/2$ is the corresponding degrees of freedom, and 
\begin{align}
\varsigma_l=\frac{(-1)^{l+1}}{l(l+1)}\sum_{j=1}^p\Biggr\{B_{l+1}\left(\frac{(1-\eta)n}{2}-\frac{j}{2}\right) -\biggr( \frac{(1-\eta)n}{2} \biggr)^{l+1} \Biggr\}\left(\frac{\eta n}{2}\right)^{-l}.	\label{eq:varsigmaiii}
\end{align} 
For any integer $l\geq 1$, $B_{l}(\cdot)$ represents the Bernoulli polynomial of degree $l$;  see, e.g., Eq. (25) in Section 8.2.4 of \cite{Muirhead2009}.   
\end{lemma}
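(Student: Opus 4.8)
The plan is to start from the exact moment generating function \eqref{eq:mmtjoint} and turn it into the characteristic function by substituting $t=-2iu\eta$, where I write $u$ for the characteristic-function argument, so that the target is $\mathrm{E}(\Lambda_n^{-2iu\eta})$. Taking logarithms and unfolding the multivariate gamma function through its definition $\Gamma_p(a)=\pi^{p(p-1)/4}\prod_{j=1}^p\Gamma\{a-(j-1)/2\}$, the ratio $\Gamma_p[\{n(1+t)-1\}/2]/\Gamma_p\{(n-1)/2\}$ becomes a sum $\sum_{j=1}^p\{\log\Gamma(z+a_j)-\log\Gamma(z_0+a_j)\}$. The key algebraic observation is that numerator and denominator arguments share a common large parameter and a common shift: setting $z=(\eta n/2)(1-2iu)$, $z_0=\eta n/2$, and $a_j=(1-\eta)n/2-j/2$, one checks directly that $z+a_j$ equals the $j$-th numerator argument $n(1+t)/2-j/2$ while $z_0+a_j$ equals the $j$-th denominator argument $n/2-j/2$. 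Crucially $z=z_0(1-2iu)$, so every negative power factors as $z^{-l}=z_0^{-l}(1-2iu)^{-l}=(\eta n/2)^{-l}(1-2iu)^{-l}$; this is the mechanism that generates the $(1-2iu)^{-l}$ dependence in the exponent.

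Next I would apply the Barnes--Stirling expansion of the log-gamma function in Bernoulli polynomials, $\log\Gamma(z+a)=(z+a-1/2)\log z-z+\tfrac12\log(2\pi)+\sum_{l=1}^{L-1}\{(-1)^{l+1}B_{l+1}(a)/(l(l+1))\}z^{-l}+O(z^{-L})$, taken from the gamma-function results in $\S$\,\ref{sec:gammafunc}. Subtracting the same expansion at $z_0$ and summing over $j$, the series part contributes $\sum_{l=1}^{L-1}\{(-1)^{l+1}/(l(l+1))\}\sum_{j=1}^p B_{l+1}(a_j)(z^{-l}-z_0^{-l})$, and since $z^{-l}-z_0^{-l}=(\eta n/2)^{-l}\{(1-2iu)^{-l}-1\}$ this already has the shape $\sum_l[\cdots]\{(1-2iu)^{-l}-1\}$ appearing in the statement, with the $-1$ guaranteeing the whole exponent vanishes at $u=0$ so that the characteristic function is $1$ there. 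The remaining work is to reconcile the non-series pieces: the leading terms $(z+a_j-1/2)\log z-z$ must be combined with the explicit prefactors $(2e/n)^{npt/2}$ and $(1+t)^{-np(1+t)/2}$ of \eqref{eq:mmtjoint}.

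The main obstacle is exactly this bookkeeping. Writing $\log z=\log z_0+\log(1-2iu)$, the coefficient of $\log(1-2iu)$ from $\sum_j(z+a_j-1/2)$ splits into a genuinely non-polynomial piece $z_0(1-2iu)\log(1-2iu)$ summed over $j$, which must cancel against $-\{np(1+t)/2\}\log(1+t)$ from the prefactor, and a polynomial piece $\sum_j(a_j-1/2)=-f/2+p(1-\eta)n/2$. When $\eta=1$ the two bases agree, the cancellation is exact, and the polynomial piece leaves precisely the chi-squared factor $(1-2iu)^{-f/2}$ with $f=p(p+3)/2$; the linear-in-$u$ and $t$-independent constants produced by $-z$, $\log z_0$, and $(2e/n)^{npt/2}$ must then all cancel, which I would verify. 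For the Bartlett case $\eta=\rho\neq1$ the prefactor base $1-2iu\eta$ differs from $1-2iu$, so the cancellation is inexact, and the residual $p(1-\eta)n/2$ together with the analogous mismatch at each order is what produces the centering subtraction $((1-\eta)n/2)^{l+1}$ inside $\varsigma_l$, effectively replacing each $B_{l+1}(a_j)$ by $B_{l+1}(a_j)-((1-\eta)n/2)^{l+1}$. Tracking these cancellations is delicate; as a consistency check I would confirm that the $l=1$ coefficient reproduces $\vartheta_1(n,p)=p(2p^2+9p+11)/(24n\sqrt f)$ for test (III), which follows from $B_2(-j/2)=j^2/4+j/2+1/6$.

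Finally I would bound the remainder. Since $\rho=1-O(p/n)$, the shift satisfies $a_j=(1-\eta)n/2-j/2=O(p)$ uniformly in $j$, so $B_{l+1}(a_j)=O(p^{l+1})$; summing the $p$ terms and multiplying by $(\eta n/2)^{-l}=\Theta(n^{-l})$ gives $\varsigma_l=O(p^{l+2}/n^l)$. The truncation error at order $L-1$ is controlled by the uniform Barnes remainder $O((1+|a_j|)^{L+1}/|z|^{L})=O(p^{L+1}/n^L)$ per term, hence $O(p^{L+2}/n^L)$ after summing over the $p$ terms. Because $z=(\eta n/2)(1-2iu)$ has $|\arg z|<\pi/2$ and $|z|\ge\eta n/2$, this remainder bound holds uniformly in $u$, so under the hypothesis $p^{L+2}/n^L\to0$ the per-term errors add up to the stated $O(p^{L+2}/n^L)$, completing the argument.
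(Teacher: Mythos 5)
Your proposal is correct and follows essentially the same route as the paper's own proof: both unfold $\Gamma_p$ into univariate gamma functions, recentre each argument as $(\eta n/2)(1-2it)+\{(1-\eta)n/2-j/2\}$, apply the Bernoulli-polynomial Stirling expansion (the paper's Lemma \ref{lm:logzaexpan}) to numerator and denominator, and recover the centering subtraction $\{(1-\eta)n/2\}^{l+1}$ from the mismatch between the prefactor's $\log(1-2i\eta t)$ and $\log(1-2it)$, with the same per-term $O(p^{L+1}/n^L)$ remainder summing to $O(p^{L+2}/n^L)$. The cancellations you defer (the linear-in-$t$ and constant terms against $G_1$, and the exact cancellation of $\frac{pn}{2}(1-2it)\log(1-2it)$ when $\eta=1$) do hold exactly as claimed, matching the paper's computation of $G_1+G_2+G_3$.
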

\begin{proof}
Section \ref{sec:lrtiiicharacter} on Page \pageref{sec:lrtiiicharacter}.
\end{proof}

\medskip

\noindent With Lemma \ref{lm:lrtiiicharacter}, we next prove \eqref{eq:chisqapprox} and \eqref{eq:chisqapproxbartcorr} in Theorem \ref{thm:onesamchisq} for the  chi-squared approximations without and with the Bartlett correction, respectively. 

\medskip

\noindent \textit{(i) The chi-squared approximation.}\quad 
When $\rho =1$, 
as $B_{l+1}(\cdot)$ is a polynomial of order $l+1$,
we have $\varsigma_l=O(p^{l+2}n^{-l})$ for $l\geq 2$,
and we can check that $\varsigma_1=\Theta(p^3n^{-1})$; see \eqref{eq:varsig1}.  
Thus when $p^2/n\to 0$, $\varsigma_l\to 0$ for $l\geq 2$.
Let $\Psi(t)=\mathrm{E}\{\exp( -2 it \log \Lambda_n ) \}$. 
Then by Lemma  \ref{lm:lrtiiicharacter}, 
\begin{align}
\Psi(t)= (1-2it)^{-{f}/{2}}\Biggr\{\exp\Biggr[\sum_{l=1}^{2}\varsigma_l\big\{(1-2it)^{-l} - 1\big\}+ O\big(p^{5}n^{-3}\big) \Biggr]\Biggr\}.\label{eq:charfun1} 
\end{align}
By Taylor' expansion, we can write $\exp[\varsigma_l\{(1-2it)^{-l} - 1\}]=1+V_l(t)$, 
where 
\begin{align}
	V_l(t)=\sum_{v=1}^{\infty}\frac{\varsigma_l^v }{v!} \sum_{w=0}^v \binom{v}{w} (1-2it)^{-lw} (-1)^{v-w}. \label{eq:vltdef}
\end{align}
Then by \eqref{eq:charfun1} and $p^2/n\to 0$, we have $\Psi(t)=\tilde{\Psi}(t)\{1+O(p^5/n^3)\}$, where
\begin{align}
\tilde{\Psi}(t)=&~	(1-2it)^{-{f}/{2}}\big\{1+V_1(t) \big\}\big\{1+V_2(t) \big\}\notag \\
=&~(1-2it)^{-{f}/{2}}+\sum_{v=1}^{\infty}\frac{\varsigma_1^v }{v!} \sum_{w=0}^v \binom{v}{w} (1-2it)^{-f/2-w} (-1)^{v-w}  \label{eq:charfun2}\\
&~\ +\sum_{v=1}^{\infty}\frac{\varsigma_2^v }{v!} \sum_{w=0}^v \binom{v}{w} (1-2it)^{-f/2-2w} (-1)^{v-w}  \notag \\
&~\ +\sum_{\substack{v_1\geq 1; ~ 0\leq w_1\leq v_1\\ v_2\geq 1; ~ 0\leq w_2\leq v_2} }
\frac{\varsigma_1^{v_1}\varsigma_2^{v_2}}{v_1!v_2!}   
 \binom{v_1}{w_1} \binom{v_2}{w_2} (1-2it)^{-f-w_1-2w_2} (-1)^{v_1-w_1+v_2-w_2}.\notag 
\end{align}
Note that $(1-2it)^{-{f}/{2}}$ is the characteristic function of $\chi^2_f$ distribution. 
Following similar analysis to  Section 8.5 in  \cite{anderson2003introduction}, 
we use the inversion property of the characteristic function, and then by  
\eqref{eq:charfun2}, we obtain that
\begin{align}
&~\Pr ( -2\log \Lambda_n \leq x )\label{eq:probexpaniii1} \\
=&~\Biggr\{\Pr (\chi_f^2 \leq x ) + \sum_{v=1}^{\infty} \frac{\varsigma_1^v}{v!} \sum_{w=0}^v \binom{v}{w}\Pr( \chi^2_{f+2w}\leq x ) (-1)^{v-w} \notag \\
	&~ + \sum_{v=1}^{\infty} \frac{\varsigma_2^v}{v!} \sum_{w=0}^v \binom{v}{w}\Pr( \chi^2_{f+4w}\leq x )(-1)^{v-w} \notag \\
	&~ + \sum_{\substack{v_1\geq 1; ~ 0\leq w_1\leq v_1\\ v_2\geq 1; ~ 0\leq w_2\leq v_2} }   \frac{\varsigma_1^{v_1}\varsigma_2^{v_2}}{v_1!v_2!}   \binom{v_1}{w_1} \binom{v_2}{w_2}\Pr(\chi^2_{2f+2w_1+4w_2}\leq x)(-1)^{v_1-w_1+v_2-w_2} \Biggr\}\biggr\{1+O\biggr(\frac{p^5}{n^{3}}\biggr)\biggr\}. \notag 
\end{align}
(From \eqref{eq:charfun2} to \eqref{eq:probexpaniii1}, Fubini's theorem is implicitly used to exchange the order of the infinite sum and the integration of characteristic functions.) 
We next utilize the following Propositions  \ref{prop:infinitesumm} and \ref{prop:infinitesumm2} to evaluate  \eqref{eq:probexpaniii1}.

\begin{proposition}\label{prop:infinitesumm}
Given an integer $h\in \{ 1, 2, 3, 4\}$,  
 when $x=\chi^2_f(\alpha)$,  there exists a constant $C$ such that   as  $f\to \infty$, 
\begin{align}
\sum_{w=0}^v \binom{v}{w}\Pr( \chi^2_{f+2hw}\leq x )(-1)^{v-w}=&~	O(v!C^vf^{-v/2})  \label{eq:probdifforder1h} 
\end{align}
uniformly over $v\geq 1$. 
\end{proposition}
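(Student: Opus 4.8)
The plan is to recognize the alternating binomial sum as an iterated finite difference and then estimate it through a contour representation in which the alternating signs produce an explicit, controllable amount of cancellation.

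First I would write the sum as the $v$-th forward difference $\Delta^v g(0)=\sum_{w=0}^v\binom{v}{w}(-1)^{v-w}g(w)$ of the sequence $g(w)=\Pr(\chi^2_{f+2hw}\le x)$. Using the elementary chi-squared recursion $\Pr(\chi^2_{k+2}\le x)=\Pr(\chi^2_k\le x)-c(k)$ with $c(k)=(x/2)^{k/2}e^{-x/2}/\Gamma(k/2+1)$, the first difference telescopes into density terms, $g(w+1)-g(w)=-\sum_{r=0}^{h-1}c(f+2hw+2r)$. Hence it suffices to bound, uniformly in $v$, the $(v-1)$-th finite difference (in steps of $h$) of the Poisson-type density $c$ evaluated near argument $f$; picking up one extra power of $f^{-1/2}$ from this density and $(v-1)!$-type combinatorics from the remaining differences is exactly what yields the target $O(v!C^vf^{-v/2})$.

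Second I would represent the density via the Hankel-contour formula for $1/\Gamma$, which after scaling gives $c(k)=(2\pi i)^{-1}\int_{\mathcal H}e^{s(z-1)}z^{-k/2-1}dz$ with $s=x/2$. Summing against $\binom{v-1}{w}(-1)^{v-1-w}$ collapses the $w$-dependence into the single factor $(z^{-h}-1)^{v-1}$, so that the difference becomes $(2\pi i)^{-1}\int_{\mathcal H}e^{s(z-1)}z^{-f/2-r-1}(z^{-h}-1)^{v-1}\,dz$. The exponent $s(z-1)-(f/2+r+1)\ln z$ has a saddle at $z_*=(f/2+r+1)/s$, and from \eqref{eq:xchisqorder} we have $s=f/2+O(\sqrt f)$, so $z_*=1+O(f^{-1/2})$ sits right next to the zero $z=1$ of $z^{-h}-1$, where $z^{-h}-1\approx-h(z-1)$. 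Deforming to the steepest-descent path $z=z_*+i\tau$ and rescaling $\tau$ by the saddle width $\sim f^{-1/2}$, the vanishing factor contributes $(z_*-1\mp i\cdot O(f^{-1/2}))^{v-1}=O(f^{-(v-1)/2})$ while the Gaussian integral supplies the normalization $O(f^{-1/2})$ together with bounded-order moments; this produces the claimed bound with room to spare (in fact $O(C^v\sqrt{v!}\,f^{-v/2})$).

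The hard part will be making the contour estimate rigorous and, above all, uniform in $v$: one must justify the deformation of $\mathcal H$ and control the off-saddle portion of the path, where $|z^{-h}-1|$ can exceed one and the factor $|z^{-h}-1|^{v-1}$ threatens to overwhelm the exponential suppression $e^{s\,\mathrm{Re}(z-1)}|z|^{-f/2-r-1}$. The contour must be chosen (and, if necessary, truncated) so that this factor stays bounded while the remaining integrand is integrable and small, with all constants independent of $v$; since $h\in\{1,2,3,4\}$ only introduces the harmless factor $h^{v-1}\le 4^{v-1}$ near $z=1$ and the spurious zeros of $z^{-h}-1$ lie away from the saddle, the four cases can be handled simultaneously. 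As an alternative organizing principle one may note the generating-function identity $\sum_{v\ge0}\frac{t^v}{v!}\Delta^v g(0)=e^{-t}\sum_{w\ge0}\frac{t^w}{w!}g(w)$, which for $h=1$ equals the noncentral distribution function $\Pr(\chi^{\prime 2}_f(2t)\le x)$, so that $\Delta^v g(0)$ is its $v$-th $t$-derivative at $0$ and Cauchy's estimate on a circle of radius $\asymp\sqrt f$ delivers the same bound, though bounding that function off the real axis requires the same saddle analysis.
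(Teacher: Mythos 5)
Your proposal is correct in substance, and it takes a genuinely different route from the paper's proof. The two arguments share only the first step: like you, the paper begins by reducing the alternating sum to differences of the gamma-density-type terms $(x/2)^{k/2}e^{-x/2}/\Gamma(k/2+1)$ via the chi-squared recursion (this is Lemma \ref{lm:order2diff}, Eq. \eqref{eq:probdiff1exact}). From there the paper stays entirely inside finite-difference calculus: it writes the second difference as a product of a rational factor and a density term (e.g.\ $\Delta_2^2(F_x,f)=A_1(f)Q_1(f)$ with $A_1(f)=x/(f+2)-1$), then runs an induction on $v$ using the Leibniz rule for differences (Lemma \ref{lm:Leibnizfinite}), supported by lemmas that compute the iterated differences of the rational factors exactly (Lemmas \ref{lm:a1order}, \ref{lm:a2orderf}, \ref{lm:a3orderf}, \ref{lm:a4orderf}) so that all constants stay uniform in $v$; each $h\in\{1,2,3,4\}$ is treated separately. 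You instead collapse the whole binomial sum into one object: the Hankel (inverse-Laplace) representation of $1/\Gamma$ turns the alternating sum into the single factor $(z^{-h}-1)^{v-1}$ under a contour integral, and the steepest-descent estimate at $z_*=1+O(f^{-1/2})$, which sits next to the zero of $z^{-h}-1$ at $z=1$, produces the $f^{-(v-1)/2}$ decay, with the Gaussian width supplying the last $f^{-1/2}$. The uniformity-in-$v$ issue you flag as the hard part is real but resolvable exactly along the lines you indicate: on the vertical line $\mathrm{Re}(z)=z_*$ one has $|z^{-h}-1|\le 1+z_*^{-h}\le C$, the off-saddle factor $\{z_*^2/(z_*^2+\tau^2)\}^{(f/2+r+1)/2}$ decays exponentially in $f$, and $C^v e^{-cf}=O(v!\,C^v f^{-v/2})$ uniformly over $v\ge 1$, while near the saddle $|z^{-h}-1|\le Ch(|z_*-1|+|\tau|)$ together with the Gaussian moments $\int|\tau|^j e^{-cf\tau^2}\,d\tau=O\{\Gamma((j+1)/2)\,f^{-(j+1)/2}\}$ yields the factorial bound after a binomial expansion (your remark that the other roots of unity are harmless is right, since zeros of $z^{-h}-1$ can only help). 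In short, the paper's route buys elementarity and self-containedness at the cost of four parallel inductions with heavy combinatorial bookkeeping; your route buys a unified treatment of all $h$, a shorter core computation, and in fact a quantitatively stronger bound of order $\sqrt{v!}\,C^v f^{-v/2}$, at the cost of justifying the contour deformation and the uniform off-saddle estimates.
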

\begin{proof}
Please see Section \ref{sec:pfpropiftydiff1} on Page  \pageref{sec:pfpropiftydiff1}.		
\end{proof}

\begin{proposition}\label{prop:infinitesumm2}
For $(h_1,h_2)=(1,2)$ or $(h_1,h_2)=(2,3)$,  when $x=\chi^2_f(\alpha)$,  there exists a constant $C$ such that  as $f\to \infty$,  
\begin{align}
 &~\sum_{w_1=0}^{v_1} \sum_{w_2=0}^{v_2}\binom{v_1}{w_1} \binom{v_2}{w_2}\Pr(\chi^2_{2f+2h_1w_1+2h_2w_2}\leq x)(-1)^{v_1-w_1+v_2-w_2}\notag\\
 =&~O\{v_1!v_2!C^{v_1+v_2}f^{-(v_1+v_2)/2}\}  \notag 
\end{align}	
uniformly over $v_1, v_2\geq 1$. 
\end{proposition}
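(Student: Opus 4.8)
The plan is to read the target as a mixed finite difference of the chi-squared distribution function in its degrees-of-freedom argument, and then to exploit that the cutoff $x=\chi^2_f(\alpha)$ lies far below the means of the chi-squared laws appearing here. Writing $\psi(\nu)=\Pr(\chi^2_\nu\le x)$ and using that $(1-2it)^{-\nu/2}$ is the characteristic function of $\chi^2_\nu$, the binomial theorem gives, at the level of characteristic functions,
\begin{equation*}
\sum_{w_1=0}^{v_1}\sum_{w_2=0}^{v_2}\binom{v_1}{w_1}\binom{v_2}{w_2}(-1)^{v_1-w_1+v_2-w_2}(1-2it)^{-f-h_1w_1-h_2w_2}
=(1-2it)^{-f}\prod_{j=1}^{2}\bigl\{(1-2it)^{-h_j}-1\bigr\}^{v_j}.
\end{equation*}
Inverting term by term, exactly as in the passage from \eqref{eq:charfun2} to \eqref{eq:probexpaniii1}, the quantity to be bounded, call it $S_{v_1,v_2}$, is the mixed forward difference $\Delta_{h_1}^{v_1}\Delta_{h_2}^{v_2}\psi$ taken at base degrees of freedom $2f$, where $\Delta_h$ raises the degrees of freedom by $2h$.

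One route, robust and parallel to Proposition \ref{prop:infinitesumm}, is to peel off the inner sum. For each fixed $w_1$ the sum over $w_2$ is a single-index alternating sum with base degrees of freedom $2f+2h_1w_1$ and step $h_2$, precisely of the type handled in Proposition \ref{prop:infinitesumm}. The recurrence $\psi(\nu+2)-\psi(\nu)=-e^{-x/2}(x/2)^{\nu/2}/\Gamma(\nu/2+1)$ converts each difference into a chi-squared density factor, and iterating expresses the difference as a density multiplied by discrete (Charlier-type) polynomials; near $x=\chi^2_f(\alpha)=f+O(\sqrt{f})$ (cf. \eqref{eq:xchisqorder}) each difference contributes a factor of order $f^{-1/2}$, while the Leibniz coefficients supply the factorial growth $v_j!\,C^{v_j}$. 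Carrying this out first in the $w_2$ index and then in the $w_1$ index produces the claimed $O\{v_1!\,v_2!\,C^{v_1+v_2}f^{-(v_1+v_2)/2}\}$.

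In the regime at hand there is, however, a much shorter argument, which I would adopt. The base degrees of freedom are $2f$ whereas $x=\chi^2_f(\alpha)=f+O(\sqrt f)$, so every chi-squared law here has mean about $2f$ and is evaluated deep in its lower tail. Since $\psi(\nu)$ is nonincreasing in $\nu$, each summand is at most $\psi(2f)=\Pr(\chi^2_{2f}\le x)$, whence $|S_{v_1,v_2}|\le 2^{v_1+v_2}\Pr(\chi^2_{2f}\le x)$. A Chernoff bound for the lower tail yields $\Pr(\chi^2_{2f}\le x)\le e^{-c_0 f}$ with a fixed $c_0>0$, since $a:=x/(2f)\to 1/2$ keeps the rate $a-1-\ln a$ bounded away from $0$. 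Comparing with the target and using $a^{v}/v!\le e^{a}$, so that $(2f^{1/2}/C)^{v_j}/v_j!\le e^{2f^{1/2}/C}$ for each $j$, gives
\begin{equation*}
\frac{2^{v_1+v_2}f^{(v_1+v_2)/2}}{v_1!\,v_2!\,C^{v_1+v_2}}\,e^{-c_0 f}\le e^{4f^{1/2}/C-c_0 f}\longrightarrow 0,
\end{equation*}
uniformly in $v_1,v_2\ge 1$ for any fixed $C>0$ and all large $f$; this is exactly the asserted bound.

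The main obstacle is uniformity in both indices. In the reduction route one must avoid differencing an inner sum that has already been replaced by an upper bound, since that would destroy the cancellation responsible for the $v_2$-decay; keeping both difference operators live requires tracking the joint Charlier structure, which is delicate. The tail route sidesteps this entirely, trading the combinatorics for a single large-deviation estimate, and its only sensitive point is checking that the exponential gain $e^{-c_0 f}$ dominates the polynomial factor $f^{(v_1+v_2)/2}$ for all $v_1,v_2$ — which is precisely what $a^{v}/v!\le e^{a}$ together with $\sqrt f\ll f$ secures. I would therefore present the tail argument as the main proof and verify the Chernoff rate $c_0$ explicitly.
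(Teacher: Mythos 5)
Your tail argument is valid for the proposition \emph{exactly as printed}, and the verification is airtight: with all degrees of freedom at least $2f$ while $x=\chi^2_f(\alpha)=f+O(\sqrt{f})$ by \eqref{eq:xchisqorder}, monotonicity of $\Pr(\chi^2_\nu\le x)$ in $\nu$ plus the lower-tail Chernoff bound (rate $a-1-\ln a$ with $a=x/(2f)\to 1/2$) give $|S_{v_1,v_2}|\le 2^{v_1+v_2}e^{-c_0f}$, and $a^{v}/v!\le e^{a}$ absorbs this into $v_1!v_2!C^{v_1+v_2}f^{-(v_1+v_2)/2}$ uniformly in $(v_1,v_2)$. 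The problem is that the ``$2f$'' your proof hinges on is a misprint, not substance, and the paper's own proof does not prove the statement you proved. The proposition exists to control the cross term in \eqref{eq:probexpaniii1}, which arises from inverting $(1-2it)^{-f/2}V_1(t)V_2(t)$ in \eqref{eq:charfun2}; the generic exponent there is $-f/2-w_1-2w_2$, so the probabilities that actually occur are $\Pr(\chi^2_{f+2w_1+4w_2}\le x)$, base $f$. Consistently, the paper's proof in Section \ref{sec:pfpropiftydiff2} bounds the mixed finite difference $\Delta^{v_2}_4\Delta^{v_1}_2(F_x,f)$ at base $f$, where $F_x(\nu)=\Pr(\chi^2_\nu\le x)$. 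Against that statement --- the one the application needs --- your main argument collapses at the first step: $F_x(f)=1-\alpha$ is of order one, so monotonicity plus the triangle inequality yield only $O(2^{v_1+v_2})$, and the entire factor $f^{-(v_1+v_2)/2}$ must come from cancellation across the alternating sum, which no pointwise tail estimate can see.

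Your ``Route 1'' is, in substance, the paper's proof: treat the double sum as a mixed difference, peel one difference at a time using $F_x(\nu+2)-F_x(\nu)=-e^{-x/2}(x/2)^{\nu/2}/\Gamma(\nu/2+1)$ (Lemma \ref{lm:order2diff}), and close a two-index induction with the Leibniz rule (Lemma \ref{lm:Leibnizfinite}). But you defer exactly the part you call ``delicate,'' and that is where all the work lies: the paper needs uniform-in-$(v_1,v_2)$ factorial bounds on mixed differences of the ratio factors (Lemmas \ref{lm:doublediffa1}, \ref{lm:doublediffa2}, \ref{lm:d24order}, \ref{lm:d26order}), resummation of Leibniz coefficients via hockey-stick and Chu--Vandermonde identities, and estimates like Lemma \ref{lm:evalwqw2} showing $\frac{(w_1+w_2)!}{w_1!w_2!}\prod_{k=1}^{w_1+w_2}(f+2k)^{-1}\,f^{(w_1+w_2+1)/2}=O\{2^{-(w_1+w_2-1)/2}\}$, so that the induction survives with a single constant $C$. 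In short: as a proof of the printed statement your argument is correct and far shorter than the paper's, but it works only because of the typo; measured against the base-$f$ version that the paper proves and uses, there is a genuine gap, and your sketched fallback would still require essentially all of the paper's finite-difference machinery to complete.
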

\begin{proof}
Please see  Section \ref{sec:pfpropiftydiff2} on Page  \pageref{sec:pfpropiftydiff2}.		
\end{proof}

\begin{remark}
In Propositions \ref{prop:infinitesumm}	and \ref{prop:infinitesumm2}, $C$ denotes a universal constant and its value can change.
This is similarly used in the following proofs.  
In addition, for a series $\{b_{v,f}\}$ that depends on positive integers $v$ and $f$,  
 we say $b_{v,f}=O(v!C^vf^{-v/2})$ as $f\to \infty$ and uniformly over $v\geq 1$, if there exists a constant $C$ such that
 $\sup_{v\geq 1}\limsup_{f\to \infty} |b_{v,f}/(v!C^vf^{-v/2})|<\infty$.  
\end{remark}

\medskip

\noindent 
When $x=\chi_f^2(\alpha)$ and $f\to \infty$, 
we apply Proposition \ref{prop:infinitesumm} with $h=1$ and $h=2$, and Proposition \ref{prop:infinitesumm2} with $(h_1,h_2)=(1,2)$ to \eqref{eq:probexpaniii1}. 
Then as $\varsigma_1=\Theta(p^3n^{-1})$, $\varsigma_2=O(p^4n^{-2})$, and $f=\Theta(p^2)$,  
when $p\to \infty$ and  $p^2/n\to 0$, we obtain
\begin{align}
\Pr ( -2\log \Lambda_n \leq x )=\Pr (\chi_f^2 \leq x )+\varsigma_1\left\{\Pr(\chi_{f+2}^2\leq x) - \Pr(\chi_{f}^2\leq x) \right\} + o(p^2/n). \label{eq:chisqgoal1}	
\end{align}
We next compute $\varsigma_1$. 
Particularly, for the chi-squared approximation, $\rho =1$, and then by \eqref{eq:varsigmaiii}, 
\begin{align}
	\varsigma_1=\frac{1}{2}\sum_{j=1}^p B_2\biggr(-\frac{j}{2}\biggr)\Big(\frac{n}{2}\Big)^{-1}=\frac{1}{24n} p\left(2 p^{2}+9 p+11\right),\label{eq:varsig1}
\end{align}
where we use $B_2(z)=z^2-z+1/6$; see, e.g., Eq. (26) in Section 8.2.4 of \cite{Muirhead2009}. 
To finish the proof of \eqref{eq:chisqapprox}, we use the following lemma. 
\begin{lemma}\label{lm:order2diff}
When $x=\chi_f^2(\alpha)$ and $f\to \infty$,  for $h\in \{1,2,3,4\}$,	
\begin{align}
\Pr(\chi_{f+2h}^2\leq x) - \Pr(\chi_{f}^2\leq x)	= &~  - \sum_{k=1}^h \left\{\Gamma\left(\frac{f}{2}+h-k+1\right) \right\}^{-1}\left(\frac{x}{2}\right)^{\frac{f}{2}+h-k}e^{-x/2}\label{eq:probdiff1exact} \\
=&~-  \frac{h}{\sqrt{f\pi }}\exp\left(-\frac{z_{\alpha}^2}{2}\right)\Big\{1+ O(f^{-1/2})\Big\}. \label{eq:probdiff1approx}
\end{align}
\end{lemma}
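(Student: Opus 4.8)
The plan is to prove the exact identity \eqref{eq:probdiff1exact} by a telescoping argument and then extract the leading asymptotics \eqref{eq:probdiff1approx} by a Stirling-type expansion anchored at the quantile location provided by \eqref{eq:xchisqorder}. For the exact identity I would start from the elementary recurrence relating consecutive chi-squared distribution functions. Writing the density of $\chi^2_f$ as $g_f(t)=\{2\Gamma(f/2)\}^{-1}(t/2)^{f/2-1}e^{-t/2}$ and integrating by parts (equivalently, invoking the standard recurrence for the regularized incomplete gamma function), one gets $\Pr(\chi^2_{f+2}\leq x)=\Pr(\chi^2_f\leq x)-\{\Gamma(f/2+1)\}^{-1}(x/2)^{f/2}e^{-x/2}$. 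Applying this $h$ times, the intermediate terms $\Pr(\chi^2_{f+2j}\leq x)$ for $j=1,\dots,h-1$ telescope, leaving $\Pr(\chi^2_{f+2h}\leq x)-\Pr(\chi^2_f\leq x)=-\sum_{j=0}^{h-1}\{\Gamma(f/2+j+1)\}^{-1}(x/2)^{f/2+j}e^{-x/2}$, and reindexing by $k=h-j$ gives \eqref{eq:probdiff1exact} directly.

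For the asymptotic form I would analyze each summand separately. The useful observation is that the $k$-th summand equals $2g_{f+2(h-k+1)}(x)$, i.e.\ twice a chi-squared density whose degrees of freedom exceed $f$ by the bounded amount $2(h-k+1)\le 8$; thus it suffices to show that for each fixed integer shift $s\ge 0$ with $s=O(1)$ the term $T_s=\{\Gamma(f/2+s+1)\}^{-1}(x/2)^{f/2+s}e^{-x/2}$ satisfies $T_s=(\pi f)^{-1/2}\exp(-z_\alpha^2/2)\{1+O(f^{-1/2})\}$ uniformly, after which summing the $h$ nearly identical terms and restoring the sign yields \eqref{eq:probdiff1approx}. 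To prove this single-term estimate I would take logarithms, insert Stirling's expansion $\log\Gamma(a+s+1)=(a+s)\log(a+s)-(a+s)+\tfrac12\log(2\pi(a+s))+O(1/a)$ with $a=f/2$, and substitute $x=f+z_\alpha\sqrt{2f}+O(1)$ from \eqref{eq:xchisqorder}. Setting $\delta=x-f=O(\sqrt f)$ and using the exact identity $a+s=(f+2s)/2$, the term $(a+s)\log\{x/(f+2s)\}$ expands as $(\delta-2s)/2-(\delta-2s)^2/\{4(f+2s)\}+O(f^{-1/2})$; the decisive point is that the first-order pieces $(\delta-2s)/2$, $-\delta/2$ and $+s$ cancel exactly, so that $\log T_s=-(\delta-2s)^2/\{4(f+2s)\}-\tfrac12\log(2\pi(a+s))+O(f^{-1/2})$, which simplifies to $-z_\alpha^2/2-\tfrac12\log(\pi f)+O(f^{-1/2})$.

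I expect the principal difficulty to lie in this second step: confirming that the a priori large contributions of orders $\sqrt f$ and $1$ in the exponent cancel precisely, and that the $O(1)$ uncertainty in the quantile location inherited from \eqref{eq:xchisqorder} propagates into a relative error of exactly $O(f^{-1/2})$ rather than $O(1)$. Uniformity over $h\in\{1,2,3,4\}$ and over $\alpha$ should be comparatively routine, since $h$ is bounded and the quantile bound in \eqref{eq:xchisqorder} is already uniform in $\alpha$; the delicate bookkeeping is entirely in controlling the Taylor and Stirling remainders.
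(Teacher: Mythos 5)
Your proposal is correct and follows essentially the same route as the paper: the exact identity \eqref{eq:probdiff1exact} comes from the incomplete-gamma recurrence (the paper iterates the recurrence $\gamma(m+1,x)=m\gamma(m,x)-x^m e^{-x}$ in one combined formula, you telescope it one step at a time --- the same computation), and the asymptotic form \eqref{eq:probdiff1approx} comes from Stirling's formula combined with the quantile expansion \eqref{eq:xchisqorder}, with the same cancellation of the $O(\sqrt{f})$ and $O(1)$ terms in the exponent that you flag as the delicate point. The only organizational difference is in the $h\geq 2$ cases: the paper proves the $h=1$ estimate by Stirling and then bootstraps $h=2,3,4$ by writing each additional summand as the previous one times a ratio of the form $(x/2)^k/\prod_{j}(f/2+j)=1+O(f^{-1/2})$, whereas you apply a uniform shifted Stirling estimate to every summand directly; both are valid and yield the same error rate.
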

\begin{proof}
Please see Section \ref{sec:order2diff}	 on Page \pageref{sec:order2diff}. 
\end{proof}
\medskip

\noindent As $p\to \infty$, $f\to \infty$. 
Then by \eqref{eq:chisqgoal1} and  \eqref{eq:varsig1}, and applying	 Lemma \ref{lm:order2diff} with $h=1$,  
\eqref{eq:chisqapprox} is proved, where $\vartheta_1(n,p)=\varsigma_1/\sqrt{f}$.

\medskip

\noindent \textit{(ii) The chi-squared approximation with the Bartlett correction.}\quad 
Similarly to the proof in Part (i) above, 
we prove \eqref{eq:chisqapproxbartcorr} by examining the expansion of the characteristic function in Lemma \ref{lm:lrtiiicharacter}. 
In particular, for the chi-squared approximation with the Bartlett correction, 
we note that the Bartlett correction factor $\rho$ is chosen such that $\varsigma_1=0$ (see Section 8.5.3 in \cite{Muirhead2009}). 
This can be checked  by plugging $\rho=1-\{6 n(p+3) \}^{-1}(2 p^{2}+9 p+11)$ into   \eqref{eq:varsigmaiii} to calculate $\varsigma_1$. 
In addition, 
by $B_3(z)=z^3-3z^2/2+z/2$ (see, e.g., Eq. (26) in Section 8.2.4 of \cite{Muirhead2009}),  
we  calculate  
\begin{align}
	\varsigma_2=\frac{p(2 p^4 + 18 p^3 + 49 p^2 + 36 p - 13)}{288(p+3)(\rho n)^2}, \label{eq:varsigma2}
\end{align}
and therefore $\varsigma_2=\Theta(p^4n^{-2})$.
We  redefine $\Psi(t)=\mathrm{E}\{\exp(-2it\rho \log \Lambda_n) \}$.
Then when $p^3/n^2\to 0$, by Lemma \ref{lm:lrtiiicharacter}, we have
\begin{align}
\Psi(t)= (1-2it)^{-{f}/{2}}\Biggr\{\exp\Biggr[\sum_{l=2}^{3}\varsigma_l\big\{(1-2it)^{-l} - 1\big\}+ O(p^{6}n^{-4}) \Biggr]\Biggr\}, \label{eq:charfun32} 
\end{align} 
where we use $\varsigma_1=0.$ 
Similarly to \eqref{eq:charfun2},
we have $\Psi(t)=(1-2it)^{-f/2}\{1+V_2(t)\}\{1+V_3(t)\} \{1+O(p^6n^{-4})\}$. 
Moreover, similarly to \eqref{eq:probexpaniii1}, we obtain
\begin{align}
&~\MYPR ( -2\rho \log \Lambda_n \leq x )\label{eq:probexpaniii1bart} \\
=&~\Biggr\{\Pr (\chi_f^2 \leq x ) + \sum_{v=1}^{\infty} \frac{\varsigma_2^v}{v!} \sum_{w=0}^v \binom{v}{w}\MYPR( \chi^2_{f+4w}\leq x ) (-1)^{v-w} \notag \\
&~\, + \sum_{v=1}^{\infty} \frac{\varsigma_3^v}{v!} \sum_{w=0}^v \binom{v}{w}\Pr( \chi^2_{f+6w}\leq x )(-1)^{v-w} \notag \\
&~\, + \sum_{\substack{v_2\geq 1; ~ 0\leq w_2\leq v_2\\ v_3\geq 1; ~ 0\leq w_3\leq v_3} }   \frac{\varsigma_2^{v_2}\varsigma_3^{v_3}}{v_2!v_3!}   \binom{v_2}{w_2} \binom{v_3}{w_3}\Pr(\chi^2_{2f+4w_2+6w_3}\leq x)(-1)^{v_2-w_2+v_3-w_3} \Biggr\}\biggr\{1+O\Big(\frac{p^6}{n^{4}}\Big)\biggr\}. \notag 	
\end{align}
When $x=\chi_f^2(\alpha)$ and $f\to \infty$, 
we apply Proposition \ref{prop:infinitesumm} with $h=2$ and $h=3$, and Proposition \ref{prop:infinitesumm2} with $(h_1,h_2)=(2,3)$ to \eqref{eq:probexpaniii1bart}. 
Then as $\varsigma_2=\Theta(p^4/n^2)$, $\varsigma_3=O(p^5/n^3)$, and $f=\Theta(p^2)$,  
we know that when $p\to \infty$ and $p^3/n^2\to 0$, 
\begin{align}
\Pr ( -2\rho\log \Lambda_n \leq x )=\Pr (\chi_f^2 \leq x )+\varsigma_2\left\{\Pr(\chi_{f+4}^2\leq x) - \Pr(\chi_{f}^2\leq x) \right\} + o(p^3/n^2). 	\label{eq:chisqbartgoal1}
\end{align}
By \eqref{eq:varsigma2} and  \eqref{eq:chisqbartgoal1}, and applying  Lemma \ref{lm:order2diff} with $h=2$, we prove \eqref{eq:chisqapproxbartcorr}, where $\vartheta_2(n,p)=2\varsigma_2/\sqrt{f}$.

\subsection{Proof of Theorem \ref{thm:onesamnormal} (III)}\label{sec:pfonsamnomral3}

In this section, we prove Theorem \ref{thm:onesamnormal}  also by examining  the characteristic function of the likelihood ratio test statistic.  
In particular, motivated by the limit in \eqref{eq:mmtnormal1}, 
we study the standardized test statistic $(-2\log\Lambda_n+2\mu_n)(2n\sigma_n)^{-1}$, 
where the values of $\mu_n$ and $\sigma_n$ are given in Theorem \ref{thm:onesamnormal}.
Under $H_0$ of the testing problem (III), 
by \eqref{eq:mmtjoint}, 
the characteristic function of 
$(-2\log\Lambda_n+2\mu_n)/(2n\sigma_n)$ is
\begin{align}
&~\mathrm{E}\Biggr\{\exp\left(is\times \frac{ -2\log \Lambda_n + 2\mu_n}{2n\sigma_n} \right) \Biggr\} \label{eq:fsform} \\
=&~\left(\frac{2 e}{n}\right)^{-n p t i  / 2}(1-ti)^{-n p(1-t i) / 2} \frac{\Gamma_{p}[\{n(1-ti)-1\} / 2]}{\Gamma_{p}\{(n-1) / 2\}}\exp \left(\frac{\mu_ns i}{n\sigma_n}\right),\notag  
\end{align} 
where $i$ denotes the imaginary unit and  $t=s/(n\sigma_n)$. 
Then the proof of Theorem \ref{thm:onesamnormal} utilizes  the following inequality result of the characteristic function.  

\begin{lemma}[Theorem 1.4.9 \citep{ushakov2011selected}] \label{lm:chardiff}
Let $G_1(x)$ and $G_0(x)$ be two distribution functions with characteristic functions $\psi_1(s)$ and $\psi_0(s)$, respectively. If $G_0(x)$ has a derivative and $\sup_x G'_0(x)\leq a <\infty$, then for any positive $T$ and any $b\geq 1/(2\pi)$, 
\begin{align*}
	\sup_x\big|G_1(x)-G_0(x) \big|\leq b\int_{-T}^{T}\biggr|\frac{\psi_1(s)-\psi_0(s)}{s} \biggr|ds + \frac{c}{T},
\end{align*} where $c$ is a constant that depends on $a$ and $b$. 
\end{lemma}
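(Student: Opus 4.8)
The plan is to recognize Lemma \ref{lm:chardiff} as the classical Esseen smoothing inequality and to prove it by the standard Fourier-smoothing argument. First I would set $\Delta(x)=G_1(x)-G_0(x)$ and $\eta=\sup_x|\Delta(x)|$, noting that $\Delta$ is bounded, of bounded variation, and tends to $0$ as $x\to\pm\infty$, while after one integration by parts its Fourier transform equals $\{\psi_1(s)-\psi_0(s)\}/(is)$ up to sign. The difficulty is that $\Delta$ itself is not smooth, so its Fourier inversion need not converge absolutely; the remedy is to convolve with a kernel whose transform has compact support in $[-T,T]$.

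Concretely, I would introduce the Fej\'er kernel $V_T(x)=\{1-\cos(Tx)\}/(\pi T x^2)$, a probability density whose characteristic function $\widehat v_T(s)=(1-|s|/T)_{+}$ vanishes outside $[-T,T]$. Writing $\Delta_T=\Delta * V_T$ for the smoothed difference and applying the inversion formula together with Parseval's identity, one obtains
\begin{align*}
\Delta_T(x)=\frac{1}{2\pi}\int_{-T}^{T}\frac{\psi_1(s)-\psi_0(s)}{is}\,\widehat v_T(s)\,e^{-isx}\,ds,
\end{align*}
so that $\sup_x|\Delta_T(x)|\le (2\pi)^{-1}\int_{-T}^{T}|\psi_1(s)-\psi_0(s)|/|s|\,ds$, which is exactly the first term of the asserted bound; the hypothesis $b\ge 1/(2\pi)$ only loosens this estimate.

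The heart of the argument is the reverse comparison, bounding $\eta$ by $\sup_x|\Delta_T(x)|$ up to an error of order $a/T$. Here I would use that $G_1$ is non-decreasing while $G_0$ has density bounded by $a$, so $\Delta$ enjoys a one-sided Lipschitz property: moving from a near-extremal point $x_0$ (where $|\Delta(x_0)|$ is within $\varepsilon$ of $\eta$) in the favourable direction, $\Delta$ decays no faster than $-a|s|$. Choosing the evaluation point of $\Delta_T$ shifted from $x_0$ by an amount of order $1/T$ and splitting the convolution integral into its central part and the heavy $1/x^2$ tail of $V_T$, one shows $\sup_x|\Delta_T(x)|\ge \eta-C a/T$ for an absolute constant $C$. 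Combining the two bounds yields the claim with $c$ depending on $a$ and $b$.

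The main obstacle, as usual in Esseen-type smoothing, is this last step: the Fej\'er kernel is not compactly supported in $x$, so one cannot simply assert that $\Delta_T(x_0)\approx\Delta(x_0)$; the slowly decaying tail must be controlled quantitatively, and the cancellation it could introduce must be dominated by exploiting the monotonicity of $G_1$ rather than any smoothness of $\Delta$. Once the tail is handled and the favourable shift direction is chosen according to the sign of $\Delta(x_0)$, the remaining estimates are routine.
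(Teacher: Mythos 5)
First, a point of comparison: the paper does not prove this lemma at all. It is quoted verbatim, with citation, as Theorem 1.4.9 of Ushakov (2011), and is used as a black box in the proof of Theorem \ref{thm:onesamnormal}. So your attempt can only be judged on its own merits, not against a proof in the paper.

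Your overall strategy (Esseen smoothing with a band-limited kernel) is the standard one, and the forward half is fine: $\sup_x|\Delta_T(x)|\le (2\pi)^{-1}\int_{-T}^{T}|\psi_1(s)-\psi_0(s)|/|s|\,ds$, modulo routine care about integrability of $\Delta$. The genuine gap is in the reverse comparison. The inequality you claim, $\sup_x|\Delta_T(x)|\ge \eta-Ca/T$ with an \emph{absolute} constant $C$, is false, and no argument can close this because it fails as a statement about $\Delta_T$ itself. The obstruction is exactly the heavy tail you mention but do not quantify: the Fejér mass outside $[-K/T,K/T]$ is of order $1/K$, so shifting the evaluation point by $h=K/T$ gives only $\sup_x|\Delta_T(x)|\ge (1-c_0/K)\,\eta-2aK/T$ with $c_0$ absolute. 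The deficit is the sum of a multiplicative term of order $\eta/K$ and an additive term of order $aK/T$, and optimizing over $K$ yields a deficit of order $\sqrt{a\eta/T}$, not $a/T$. This is sharp: for $G_1$ a point mass at $0$ and $G_0$ uniform with density $a$, a direct computation with the Fejér kernel gives $\sup_x|\Delta_T(x)|=\eta-\Theta(\sqrt{a/T})$, which is far below $\eta-Ca/T$ when $T/a$ is large.

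The repair — and the reason the theorem is stated the way it is — is to keep the loss multiplicative instead of forcing it to be additive. From $\sup_x|\Delta_T(x)|\ge (1-c_0/K)\,\eta-2aK/T$ and the forward bound one gets $\eta\le (1-c_0/K)^{-1}\{(2\pi)^{-1}\int_{-T}^{T}|(\psi_1-\psi_0)/s|\,ds+2aK/T\}$, i.e.\ the lemma with $b=(2\pi)^{-1}(1-c_0/K)^{-1}$ and $c=2aK(1-c_0/K)^{-1}$: given any $b$ exceeding $1/(2\pi)$, choose $K=K(b)$ large enough, and $c$ then depends on both $a$ and $b$, blowing up as $b$ decreases to $1/(2\pi)$. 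This shows that your remark that the hypothesis $b\ge 1/(2\pi)$ ``only loosens this estimate'' misses the point: the slack in $b$ above $1/(2\pi)$ is precisely what absorbs the unavoidable multiplicative loss in the reverse comparison, and it is why $c$ must depend on $b$. Had both of your claimed steps been true, you would have proved the inequality with $b=1/(2\pi)$ exactly and $c$ depending on $a$ alone — stronger than the cited theorem; conversely, insisting on a purely additive loss, the Fejér-kernel argument only delivers the classical constant $b=1/\pi$ (Feller's form of the smoothing lemma), which does not cover the range $1/(2\pi)\le b<1/\pi$ asserted here.
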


\noindent We next prove \eqref{eq:normalbias1} and  \eqref{eq:normalbias2} in Theorem \ref{thm:onesamnormal} for the chi-squared approximations without and with the Bartlett correction, respectively. 

\medskip

\noindent \textit{(i) Chi-squared approximation.} \quad 
We prove \eqref{eq:normalbias1} 
 by using Lemma \ref{lm:chardiff} to derive an upper bound of the difference $G_1(x)-G_0(x)$, where we consider
\begin{align*}
G_1(x)=\Pr \left(\frac{-2\log \Lambda_n+2\mu_n}{2n\sigma_n} \leq x  \right), \quad \quad \quad G_0(x)=\Phi(x);  
\end{align*}
here $\Phi(x)$ denotes the cumulative distribution function of the standard normal distribution. 
Then the characteristic function of $G_1(x)$  is $\psi_1(s)= \eqref{eq:fsform}$, and 
the characteristic function of $G_0(x)$ is $\psi_0(s)=\exp(-s^2/2)$. 
To quantify $\psi_1(s)-\psi_0(s)$, we use the following Lemma \ref{lm:chardiffgoal}. 
\begin{lemma}\label{lm:chardiffgoal}
When $s=o(\min \{ (n/p)^{1/2}, f^{1/6}\})$,
\begin{align}
\log \psi_1(s)-\log \psi_0(s)=O\left(\frac{p}{n}\right)s +\left(\frac{1}{p}+\frac{p}{n} \right)O\left( s^2\right)+ O\left( \frac{s^3}{\sqrt{f}}\right). \label{eq:chardiffgoal}
\end{align}	
\end{lemma}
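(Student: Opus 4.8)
The plan is to take logarithms in \eqref{eq:fsform}, decompose the multivariate Gamma function into univariate factors, and compare the resulting expansion against $\log\psi_0(s)=-s^2/2$ term by term in powers of $s$. Writing $t=s/(n\sigma_n)$ and using $\log\Gamma_p(a)=\frac{p(p-1)}4\log\pi+\sum_{j=1}^p\log\Gamma(a-\frac{j-1}2)$ (Definition 2.1.10 of \citet{Muirhead2009}), I obtain
\[
\log\psi_1(s)=-\frac{npti}2\log\frac{2e}n-\frac{np(1-ti)}2\log(1-ti)+\sum_{j=1}^p\Big\{\log\Gamma(z_j)-\log\Gamma(\tilde z_j)\Big\}+\frac{\mu_n si}{n\sigma_n},
\]
where $z_j=\{n(1-ti)-j\}/2$ and $\tilde z_j=(n-j)/2$. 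The key structural point is that $n\sigma_n=\Theta(p)$ and $\sqrt f=\Theta(p)$, so $t=\Theta(s/p)$ is small, whereas the Gamma arguments $z_j,\tilde z_j$ have real part $\Theta(n)$. A naive Taylor expansion of the cumulant generating function in $t$ is therefore inappropriate, since the shift $nti/2$ is large; instead I expand each $\log\Gamma(z_j)$ by the large-argument Stirling series established in $\S$\,\ref{sec:gammafunc}, and only afterwards expand in the genuinely small relative shift $nti/(n-j)$.

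Carrying this out, Stirling gives $\log\Gamma(z_j)-\log\Gamma(\tilde z_j)=(z_j-\tfrac12)\log z_j-(\tilde z_j-\tfrac12)\log\tilde z_j-(z_j-\tilde z_j)+\{R(z_j)-R(\tilde z_j)\}$, with $z_j-\tilde z_j=-nti/2$ and $R$ the Bernoulli tail, each of whose terms is $O(1/n)$. Writing $\log z_j=\log\tilde z_j+\log\{1-nti/(n-j)\}$ and expanding both this logarithm and the elementary factor $\log(1-ti)$ to cubic order in $t$, I collect $\log\psi_1(s)-\log\psi_0(s)$ into a constant (which vanishes, as both characteristic functions equal $1$ at $s=0$), a linear term, a quadratic term, a cubic term, and a higher-order remainder. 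After summation over $j$, the arising sums $\sum_{j=1}^p\log(n-j)$, $\sum_{j=1}^p (n-j)^{-1}$ and $\sum_{j=1}^p (n-j)^{-2}$ are converted, via Euler–Maclaurin, to the closed forms built from $L_{n-1,p}=\log(1-p/(n-1))$ that appear in $\mu_n$ and $\sigma_n^2$ of \eqref{eq:munthm21}--\eqref{eq:sigmanthm21}.

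The three error orders then emerge from the design of $\mu_n$ and $\sigma_n$. The term $\mu_n si/(n\sigma_n)$ is chosen precisely to cancel the leading linear-in-$s$ contribution, so the residual linear coefficient equals $(\mu_n-\mathrm{E}\log\Lambda_n)/(n\sigma_n)$; since $\mu_n$ reproduces $\mathrm{E}\log\Lambda_n$, including its digamma and leading Euler–Maclaurin corrections, up to a residual of size $O(p^2/n)$, this coefficient is $O(p/n)$, giving the first term. For the quadratic coefficient $\frac12\{1-\mathrm{Var}(\log\Lambda_n)/(n\sigma_n)^2\}$, the leading $L_{n-1,p}$ parts of $\sigma_n^2$ and of the variance match; the mismatch comes from the second-order trigamma correction $\sum_{j}(n-j)^{-2}=\Theta(p/n^2)$, which contributes $\Theta(p)$ to the variance and hence a relative error $\Theta(1/p)$, together with the next Euler–Maclaurin correction of relative size $O(p/n)$, yielding the factor $(1/p+p/n)$. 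The cubic coefficient reduces to the third cumulant of $\log\Lambda_n$ divided by $6(n\sigma_n)^3$, which is $O(1/\sqrt f)$, producing the term $O(s^3/\sqrt f)$. The two hypotheses on $s$ enter exactly here: $s=o\{(n/p)^{1/2}\}$ makes the quadratic error $(p/n)s^2=o(1)$, and $s=o(f^{1/6})$ makes $s^3/\sqrt f=o(1)$; the same bounds dominate all higher-order terms of the expansion.

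The main obstacle will be the uniform bookkeeping of the remainders. First, the Stirling tail $R(z_j)$ must be controlled uniformly over the complex arguments $z_j$, which needs $\mathrm{Re}\,z_j=\Theta(n)$ to stay away from the poles of $\Gamma$; this is guaranteed by $n-p\to\infty$. Second, and more delicate, is verifying that the closed forms $\mu_n$ and $\sigma_n^2$ absorb the discrete sums to exactly the claimed orders: this requires keeping the Euler–Maclaurin corrections to $\sum_{j}\log(n-j)$, $\sum_j(n-j)^{-1}$ and $\sum_j(n-j)^{-2}$ to one order beyond the leading logarithmic terms and checking that the leftover is genuinely $O(p/n)$ in the linear coefficient and $O(1/p+p/n)$ in the quadratic coefficient, rather than a larger constant. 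Once these cancellations are pinned down and the tail of the joint expansion in $1/n$ and in $t$ is shown to be dominated by the listed error terms, \eqref{eq:chardiffgoal} follows.
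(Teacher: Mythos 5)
Your overall route---decompose $\Gamma_p$ into univariate gamma ratios, apply a Stirling-type expansion, and let $\mu_n$ and $\sigma_n$ absorb the linear and quadratic coefficients---is the same as the paper's (Lemmas \ref{lm:ratiogammapprox}, \ref{lm:gammapaproxexpan}, \ref{lm:gjsumapprox}, \ref{lm:betan3order}). The genuine gap is in your remainder bookkeeping, at the step where you ``expand both this logarithm and the elementary factor $\log(1-ti)$ to cubic order in $t$'' and then assert that the hypotheses on $s$ dominate all higher-order terms. The coefficients multiplying $\log\{1-nti/(n-j)\}$ and $\log(1-ti)$ are of size $\Theta(n)$ per index $j$ and $\Theta(np)$ respectively, so each truncation leaves a quartic remainder of order $npt^4=\Theta(ns^4/p^3)$ (using $t=s/(n\sigma_n)$ and $n\sigma_n=\Theta(p)$), and this is \emph{not} dominated by the claimed error terms: take $p\asymp n^{1/2}$ and $s\asymp n^{1/8}$, which is admissible since $s=o(\min\{(n/p)^{1/2},f^{1/6}\})=o(n^{1/6})$; then $npt^4\asymp 1$, while $(p/n)s\asymp n^{-3/8}$, $(1/p+p/n)s^2\asymp n^{-1/4}$, and $s^3/\sqrt f\asymp n^{-1/8}$ are all $o(1)$. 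So bounding the two truncation errors separately breaks the lemma on part of the admissible range of $(n,p,s)$.

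The quartic terms do in fact cancel between the gamma-side sum and the elementary factor, up to $O(p^2t^4)=O(s^4/p^2)=o(s^3/\sqrt f)$, but that cancellation must be exploited rather than assumed. This is exactly how the paper is organized: in Lemma \ref{lm:gammapaproxexpan} the $j$-independent ``reference'' part of the sum is kept \emph{exact} as $\beta_{n,3}(\cdot)$, and Lemma \ref{lm:betan3order} shows that $\beta_{n,3}(-nti/2)$ contains the term $\frac{pn(1-ti)}{2}\log(1-ti)$ exactly, which cancels the elementary prefactor in \eqref{eq:fsform} identically---that large term is never expanded in $t$ at all. Only the difference functions $g_j(z)$ of Lemma \ref{lm:gjsumapprox} are Taylor-expanded, and their $l$-th derivatives carry an extra factor $O(p/n)$ relative to the individual terms (being differences of $(\frac{m-j}{2}+z)^{-(l-1)}$ and $(\frac{m-1}{2}+z)^{-(l-1)}$), which is precisely what makes the cubic remainder $O(s^3/\sqrt f)$ and all higher ones smaller. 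To repair your argument you must either adopt this split (keep the reference term exact and cancel it against the prefactor), or expand both large terms one order further and verify explicitly that their quartic coefficients agree to within $O(p^2t^4)$; your current claim that ``the same bounds dominate all higher-order terms of the expansion'' is false as stated.
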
 
\begin{proof}
Please see Section \ref{sec:chardiffgoal} on Page \pageref{sec:chardiffgoal}. 
\end{proof}

\medskip
\noindent By Lemmas \ref{lm:chardiff} and \ref{lm:chardiffgoal}, we take $T=\min\{ (n/p)^{(1-\delta)/2}, f^{(1-\delta)/6}\}$, where $\delta \in (0,1)$ is a small constant, and then
 \begin{align}
\sup_x \big|G_1(x)-G_0(x)\big|
\leq b \int_{-T}^{T} \psi_0(s) \biggr\{ O\left(\frac{p}{n}\right) +\left(\frac{1}{p}+\frac{p}{n} \right)O\left( s\right)+ O\left( \frac{s^2}{\sqrt{f}}\right) \biggr\} ds+\frac{c}{T}. \label{eq:supfgdiff1}
\end{align} 
Since $\int_{-T}^{T}\psi_0(s) < \infty$, $\int_{-T}^{T}\psi_0(s)s < \infty$, and $\int_{-T}^{T}\psi_0(s)s^2 < \infty$, by $f=\Theta(p^2)$ and \eqref{eq:supfgdiff1}, 
\begin{align*}
\sup_x \big|G_1(x)-G_0(x)\big|= O\left\{ \left( \frac{p}{n} \right)^{(1-\delta)/2} +  f^{-(1-\delta)/6} \right\}.	
\end{align*}
Consider $x=\{\chi^2_f(\alpha)+2\mu_n\}(2n\sigma_n)^{-1}$, and then $G_1(x)-G_0(x)$ gives
\begin{align}
\Pr \left\{{-2\log \Lambda_n}\leq {\chi^2_f(\alpha)}\right\}-\Phi\Biggr\{ \frac{\chi^2_f(\alpha)+2\mu_n}{2n\sigma_n} \Biggr\}= O\left\{ \left( \frac{p}{n} \right)^{(1-\delta)/2} +  f^{-(1-\delta)/6} \right\}.  \label{eq:diffnormal1}
\end{align} 
Then  \eqref{eq:normalbias1} is proved by $\bar{\Phi}(\cdot)=1-\Phi(\cdot)$ and $\Pr \{{-2\log \Lambda_n}> {\chi^2_f(\alpha)}\} = 1-\Pr \{{-2\log \Lambda_n}\leq {\chi^2_f(\alpha)}\}$. 


\medskip

\noindent \textit{(ii) Chi-squared approximation with the Bartlett correction.} \quad 
To prove \eqref{eq:normalbias2}, we still use \eqref{eq:supfgdiff1}.
Now consider   
$x=\{\chi^2_f(\alpha)+2\rho\mu_n\}(2\rho n\sigma_n)^{-1}$, and then $G_1(x)-G_0(x)$ gives
\begin{align}
\Pr \left\{{-2\rho \log \Lambda_n}\leq {\chi^2_f(\alpha)}\right\}-\Phi\Biggr\{ \frac{\chi^2_f(\alpha)+2\rho\mu_n}{2\rho n\sigma_n} \Biggr\}= O\left\{ \left( \frac{p}{n} \right)^{(1-\delta)/2} +  f^{-(1-\delta)/6} \right\}. \notag 
\end{align} 

\begin{remark}\label{rm:comparison}
Although Theorem \ref{thm:onesamnormal} is inspired by the limit in \eqref{eq:mmtnormal1}, which was first established in \cite{Jiang13}, 
 Theorem \ref{thm:onesamnormal} differs from the existing results  by further characterizing the convergence rate of  \eqref{eq:mmtnormal1} by Lemma \ref{lm:chardiffgoal}. 
Particularly, \cite{Jiang13} proved \eqref{eq:mmtnormal1} when $s$ is considered fixed and the convergence rate is not examined. 
On the other hand, Lemma \ref{lm:chardiffgoal} allows $s$ changes with $n$ and $p$, and the difference between the two characteristic functions is characterized by  \eqref{eq:chardiffgoal}. 
Technically, establishing \eqref{eq:chardiffgoal} requires a careful investigation of the asymptotic expansion of the gamma functions, where the technical details are given in Sections \ref{sec:gammafunc} and \ref{sec:normallemmas}. 
\end{remark}



\begin{remark}\label{rm:order}
Since $\chi^2_f$ can be viewed as a summation over $f$ independent $\chi_1^2$ variables,  
by applying 
the central limit theorem, 
we have $\chi^2_f(\alpha)=	\sqrt{2f}z_{\alpha}+f + O(1),$
where $z_{\alpha}$ denote the upper $\alpha$-level quantile of the standard normal distribution. 
For the problem (III),
note that $\mu_n$ and $\sigma_n$ in Theorem \ref{thm:onesamnormal} are the same as  \eqref{eq:munthm21}	and  \eqref{eq:sigmanthm21}, respectively.  
Then by the proof of \eqref{eq:chisqconvgvar} in Section \ref{sec:pfonesam3}, we have $2n\sigma_n/\sqrt{2f}=1+O(p/n)$. 
Consequently, 
when $f\to \infty$ and $p/n\to 0$,  
\begin{align*}
\Phi\biggr\{ \frac{\chi^2_f(\alpha)+2\mu_n}{2n\sigma_n} \biggr\}=	\Phi\left(z_{\alpha} + \frac{f+2\mu_n}{2n\sigma_n} \right) + O\left( \frac{1}{\sqrt{f}} \right)+O\left( \frac{p}{n} \right).
\end{align*}
Moreover, by \eqref{eq:expa1thm1},  $(f+2\mu_n)/(2n\sigma_n) \sim -p^2/(6n)$ when $p/n\to 0$. 
Thus $-(f+2\mu_n)/(2n\sigma_n) =\sqrt{2} \vartheta_1(n,p)+o(p^{1/d_1}n^{-1})$, 
which is of the order of $p^{1/d_1}n^{-1}$ with $d_1=1/2$. 
When $p/n^{d_1}\to 0$, by $\alpha = \bar{\Phi}(z_{\alpha})$ and Taylor's series of $\bar{\Phi}(\cdot)$ at $z_{\alpha}$,  
\begin{align}
	\bar{\Phi} \biggr(z_{\alpha}+\frac{f+2\mu_n}{2n\sigma_n} \biggr)- \alpha =\frac{\vartheta_1(n,p)}{\sqrt{\pi}} \exp\biggr(-\frac{z_{\alpha}^2}{2}\biggr) + o\biggr( \frac{p^{1/d_1}}{n}\biggr), 
	\notag 
\end{align} 
which suggests that the first two terms in the right hand side of \eqref{eq:normalbias1} are consistent with \eqref{eq:chisqapprox}. 
Similarly, for the chi-squared approximation with the Bartlett correction,  
when $f\to \infty$ and $p/n\to 0$, 
\begin{align*}
\Phi\biggr\{ \frac{\chi^2_f(\alpha)+2\rho \mu_n}{2\rho n\sigma_n} \biggr\}=	\Phi\left(z_{\alpha} + \frac{f+2\rho \mu_n}{2\rho n\sigma_n} \right) + O\left( \frac{1}{\sqrt{f}} \right)+O\left( \frac{p}{n} \right).
\end{align*}
By \eqref{eq:expa1thm2}, 
we have $-(f+2\rho \mu_n)/(2\rho n\sigma_n)=\sqrt{2} \vartheta_2(n,p)+o(p^{2/d_2}n^{-2}),$
which is of the order of $p^{2/d_2}n^{-2}$ with $d_2=2/3$.   
Thus when $p^{2/d_2}n^{-2}\to 0$, 
we also know that the first two terms in the right hand side of \eqref{eq:normalbias2} are consistent with \eqref{eq:chisqapproxbartcorr}. 
For other likelihood ratio tests (II)--(VI), similar conclusions also hold by the proofs in Section \ref{sec:pfthm1and4}.  
\end{remark}


\section{Proofs of Other Problems}\label{sec:pfthms}

In this section, we provide the proofs of other testing problems following similar arguments to that in Section \ref{sec:pfthemexam}. 
Particularly, for tests (I)--(II) and (IV)--(VII),  Theorems \ref{thm:onesam},  \ref{thm:allmult} and \ref{Thm2.1} are proved in Section \ref{sec:pfthm1and4};  
 Theorems \ref{thm:onesamchisq}, \ref{thm:multisamchisq} and \ref{thm:chisqindp} are proved in Section \ref{sec:pfthm2}, 
 Theorems \ref{thm:onesamnormal},  \ref{thm:multsamnormal}, and \ref{thm:indepnormal} are proved in Section \ref{sec:pfthmnormal36}. 
 Propositions \ref{prop:1} and \ref{prop:2} are proved in Section \ref{sec:proofprop}.

\subsection{Proof of Theorems \ref{thm:onesam},  \ref{thm:allmult} \& \ref{Thm2.1}} \label{sec:pfthm1and4}

When $p$ is fixed, the chi-squared approximations hold by the classical multivariate analysis \citep{anderson2003introduction,Muirhead2009}. Therefore, without loss of generality, the proofs below focus on $p\to \infty$.  
In addition, we note that the analysis of taking subsequences in Section \ref{sec:pfonesam3} can be used similarly in the following proofs, and thus we consider without loss of generality that the sequence $p/n$ has a limit below. 
We next study six likelihood ratio tests in the following subsections separately.

\subsubsection{Proof of Theorem \ref{thm:onesam} (I): Testing One-Sample Mean Vector}\label{sec:pfonesam1}
Similarly to the proof above, we derive the necessary and sufficient conditions for the chi-squared  approximations by examining the moment generating functions.
Note that testing one-sample mean vector can be viewed as testing coefficient vector $\BSM{\mu}$ of the multivariate linear regression $\MBF{x}_i=1\times \BSM{\mu}+\BSM{\epsilon}_i$, where $\BSM{\epsilon}_i \sim \mathcal{N}(\MBF{0},\BSM{\Sigma}).$ 
Motivated by the approximate expansion of multivariate Gamma function in \cite{Jiang13}, \cite{He2018} studied the moment generating function of the likelihood ratio test in high-dimensional multivariate linear regression. Particularly, by Theorem 3 in \cite{He2018}, we know that when $n,p\to \infty$ and $n-p\to \infty$, \eqref{eq:mmtnormal1} holds with 
\begin{align}
\mu_n=&~\frac{n}{2}\left\{(n-p-3/2)\log\frac{(n-p)(n-1)}{n(n-1-p)}+\log\left(1-\frac{p}{n}\right)+{p}\log\left(1-\frac{1}{n}\right)\right\}, \label{eq:onesammeanmu} \\
	\sigma_n^2=&~\frac{1}{2}\left\{ \log\left(1-\frac{p}{n} \right) - \log \left(1-\frac{p}{n-1} \right)\right\}. \label{eq:onesammeansigma}
\end{align}
 
Following the analysis in Section \ref{sec:pfonesam3}, 
we know that to derive the necessary and sufficient conditions for the chi-squared approximations  without and with the Bartlett correction,  it is equivalent to examine  \eqref{eq:chisqconvgvar}--\eqref{eq:chisqconvgmean} and \eqref{thm13}--\eqref{thm14}, respectively, with $\mu_n$ in \eqref{eq:onesammeanmu} and $\sigma_n$ in \eqref{eq:onesammeansigma}. 


\smallskip
\noindent \textit{(I.i) The chi-squared approximation.}\ 
When $p/n \to 0$, we apply Theorem 1 in \cite{He2018}, and know that  \eqref{eq:chisqconvgvar}--\eqref{eq:chisqconvgmean} hold if and only if $p^3/n^2\to 0.$ When $p/n\to C\in (0,1]$,
we have 
\begin{align*}
    2\sigma_n^2=\log\left\{ 1+ \Big(1-\frac{p}{n-1}\Big)^{-1}\frac{p}{n(n-1)} \right\}\sim \frac{C}{n(1-C)},
\end{align*}
and then  
$\sqrt{2f}/(2n\sigma_n)=\sqrt{2p}/(2n\sigma_n)\to \sqrt{1-C}< 1$. Therefore \eqref{eq:chisqconvgvar} fails, which suggests that the classical chi-squared  approximation fails. 

 


\smallskip
\noindent \textit{(I.ii) The chi-squared approximation with the Bartlett correction.}
When $p/n \to 0$, we apply Theorem 2 in \cite{He2018}, and know that \eqref{thm13}--\eqref{thm14} hold if and only if $p^5/n^4\to 0.$ When $p/n\to C\in (0,1]$ and $n-p \to \infty,$ we have $\rho \sim 1-C/2$, and then $\sqrt{2f}/(2n\rho\sigma_n)=(1-C/2)^{-1}\sqrt{2p}/(2n\sigma_n)\to (1-C/2)^{-1}\sqrt{1-C}< 1$. Therefore \eqref{thm13} fails, which suggests that the classical chi-squared approximation with the Bartlett correction fails.


\subsubsection{Proof of Theorem \ref{thm:onesam} (II): Testing One-Sample Covariance Matrix}
Similarly to the proof in Section \ref{sec:pfonesam3},  by Theorem 1 in \cite{Jiang13} and \cite{Jiang15}, we know that under the conditions of our Theorem \ref{thm:onesam} and $p\to \infty,$ \eqref{eq:mmtnormal1} holds with
\begin{align}
	\mu_n = &~ -\frac{(n-1)p}{2}-\frac{n-1}{2}(n-p-{3}/{2})\log \left(1-\frac{p}{n-1} \right), \label{eq:musigma211} \\
\sigma_n^2= &~ -\frac{1}{2}\left\{ \frac{p}{n-1} + \log \left(1-\frac{p}{n-1} \right)\right\}\times \frac{(n-1)^2}{n^2}.\label{eq:musigma21}
\end{align}
Following the analysis above, 
we know that to derive the necessary and sufficient conditions for the chi-squared approximations  without and with the Bartlett correction,  it is equivalent to examine  \eqref{eq:chisqconvgvar}--\eqref{eq:chisqconvgmean} and \eqref{thm13}--\eqref{thm14}, respectively, with $\mu_n$ in \eqref{eq:musigma211} and $\sigma_n$ in \eqref{eq:musigma21}.  
As analyzed in Section \ref{sec:pfonesam3}, it suffices to discuss two cases $\lim_{n\to \infty} p/n = 0$ and  $\lim_{n\to \infty} p/n = C \in (0,1]$ below. 


\smallskip
\noindent \textit{(II.i) The chi-squared approximation.} \

\smallskip
\noindent \textit{Case (II.i.1) $\lim_{n\to \infty} p/n = 0$.}\  
As $\sqrt{2f}\sim p$, and \eqref{eq:musigma21} and \eqref{eq:sigmanthm21}  are asymptotically the same, by the proof in Section \ref{sec:pfonesam3}, we know that \eqref{eq:chisqconvgvar} holds under this case. We next show that \eqref{eq:chisqconvgmean} holds if and only if $p^2/n\to 0.$ 
By \eqref{eq:chisqconvgvar} and $\sqrt{2f}\sim p$,   \eqref{eq:chisqconvgmean} is equivalent to $	p^{-1}(f+2\mu_n) \to 0.$ 
By Taylor's expansion of $\mu_n$ in \eqref{eq:musigma211}, we obtain
\begin{align*}
	\mu_n = &~ -\frac{(n-1)p}{2}+\frac{(n-1)}{2}(n-p-{3}/{2})\left\{\frac{p}{n-1}+\frac{p^2}{2(n-1)^2}+\frac{p^3}{3(n-1)^3}+O\left(\frac{p^4}{n^4}\right)\right\}. 
\end{align*}
Through calculations, we obtain
\begin{align*}
	p^{-1}(f+2\mu_n) = &~ p^{-1}\times \left\{ -\frac{p^2}{2}+\frac{p^2(n-p)}{2(n-1)}+\frac{p^3n}{3(n-1)^2} + o(p)+O\left(\frac{p^4}{n^2} \right) \right\} \notag \\
	=&~p^{-1}\left\{-\frac{p^3}{6n}+ o(p)+O\left(\frac{p^4}{n^2}\right)\right\} = -\frac{p^2}{6n}\{1+o(1)\}+o(1),\notag 
\end{align*}
which goes to 0 if and only if $p^2/n \to 0$. 

\smallskip

\noindent \textit{Case (II.i.2) $\lim_{n\to \infty} p/n = C \in (0,1]$.}\ 
Similarly, as  \eqref{eq:musigma21} and \eqref{eq:sigmanthm21}  are asymptotically equal, we can apply the analysis same as Section \ref{sec:pfonesam3}, and know that the chi-squared approximation fails under this case.


\smallskip
\noindent \textit{(II.ii) The chi-squared approximation with the Bartlett correction.} \

\noindent \textit{Case (II.ii.1)  $\lim_{n\to \infty} p/n = 0$.}\  Under this case, we know \eqref{thm13} holds since $\rho = 1 + O(p/n) \rightarrow 1$ and $p/(2 n \sigma_n) \to 1$ as  shown above. Given  \eqref{thm13}, to prove \eqref{thm14}, it is equivalent to prove $p^{-1} ( f+2\rho  \mu_n  ) \to 0$. By Taylor's expansion of $\mu_n$ in \eqref{eq:musigma21}, we have
\begin{align*}
\mu_n = & -\frac{p(n-1)}{2}+\frac{(n-p-{3}/{2})(n-1)}{2}\left\{\frac{p}{n-1}+\frac{p^2}{2(n-1)^2}+\frac{p^3}{3(n-1)^3}+\frac{p^4}{4(n-1)^4}+O\left(\frac{p^5}{n^5}\right)\right\}.	
\end{align*}
After calculations, we obtain
\begin{align*}
2\rho\mu_n = &~ -p\left(p+\frac{1}{2}\right)+\frac{p^3}{3(n-1)} + \frac{p^2(n-p)}{2(n-1)} - \frac{p^3(n-p)}{6(n-1)^2}\\
&~ +\frac{p^3(n-p)}{3(n-1)^2}-\frac{p^4n}{9(n-1)^3} + \frac{p^4n}{4(n-1)^3} + o(p)+O\left(\frac{p^5}{n^3} \right).
\end{align*}
It follows that 
\begin{align*}
	&~ f +2\rho\mu_n \notag \\
=&~ -\frac{p^2}{2}+\frac{p^2(n-p)}{2(n-1)}+\frac{p^3}{3(n-1)}-\frac{p^3(n-p)}{6(n-1)^2}+\frac{p^3(n-p)}{3(n-1)^2} +\frac{5p^4n}{36(n-1)^3}+ o(p)+O\left(\frac{p^5}{n^3}\right)\\
=&~-\frac{p^4}{36n^2}+ o(p)+O\left(\frac{p^5}{n^3}\right). 
\end{align*}
Therefore $p^{-1}\{f+ \mu_n\rho(n-1)\} \to 0$ if and only if $p^3/n^2 \rightarrow 0$.

\smallskip

\noindent \textit{Case (II.ii.2)  $\lim_{n\to \infty} p/n = C \in (0,1]$.}\ Under this case, we have $\rho \to 1- C/3$. Similarly, as  \eqref{eq:musigma21} \eqref{eq:sigmanthm21}  are asymptotically equal, we can apply the proof same as in Section \ref{sec:pfonesam3}, and know that the chi-squared approximation with the Bartlett correction also fails under this case.

\subsubsection{Proof of Theorem \ref{thm:allmult} (IV): Testing the Equality of Several Mean Vectors}\label{sec:pfmultimean}
Note that testing the equality of several mean vectors can be viewed as testing the coefficient matrix in multivariate linear regression; see, Section 10.7 in \citet{Muirhead2009}. 
Similarly to Section \ref{sec:pfonesam1},  by Theorem 3 in \cite{He2018}, we know that when $n,p\to \infty$ and $n-p\to \infty$, \eqref{eq:mmtnormal1} holds with 
\begin{align}
\mu_n=&~\frac{n}{2}\Biggr\{(n-p-k-{1}/{2})\log\frac{(n-1-p)(n-k)}{(n-p-k)(n-1)}\label{eq:multisammeanmu} \\
&~\quad \quad +(k-1)\log\frac{(n-1-p)}{(n-1)}+{p}\log\frac{(n-k)}{(n-1)}\Biggr\}, \notag \\
\sigma_n^2=&~\frac{1}{2}\left\{ \log\left(1-\frac{p}{n-1} \right) - \log\left( 1-\frac{p}{n-k}\right)\right\}. \label{eq:multisammeansigma}
\end{align}
Following the analysis in Section \ref{sec:pfonesam3}, 
we know to derive the necessary and sufficient conditions for the chi-squared approximations  without and with the Bartlett correction,  it is equivalent to examine  \eqref{eq:chisqconvgvar}--\eqref{eq:chisqconvgmean} and \eqref{thm13}--\eqref{thm14}, respectively,  with $\mu_n$ in \eqref{eq:multisammeanmu} and $\sigma_n$ in \eqref{eq:multisammeansigma}.


\smallskip
\noindent \textit{(IV.i) The chi-squared approximation.}
When $p/n \to 0$, we apply Theorem 1 in \cite{He2018}, and know that  \eqref{eq:chisqconvgvar}--\eqref{eq:chisqconvgmean} hold if and only if $p^3/n^2\to 0.$ When $p/n\to C\in (0,1]$ and $n-p \to \infty,$ we have $\sigma_n^2\sim C(k-1)/\{2n(1-C)\}$, and then  
$\sqrt{2f}/(2n\sigma_n)=\sqrt{2(k-1)p}/(2n\sigma_n)\to \sqrt{1-C}< 1$. Therefore \eqref{eq:chisqconvgvar} fails, which suggests that the classical chi-squared  approximation fails.



\smallskip
\noindent \textit{(IV.ii) The chi-squared approximation with the Bartlett correction.}\ 
When $p/n \to 0$, we apply Theorem 2 in \cite{He2018}, and know that \eqref{thm13}--\eqref{thm14} hold if and only if $p^5/n^4\to 0.$ When $p/n\to C\in (0,1]$ and $n-p \to \infty,$ we have $\rho \sim 1-C/2$, and then $\sqrt{2f}/(2n\rho\sigma_n)=(1-C/2)^{-1}\sqrt{2p}/(2n\sigma_n)\to (1-C/2)^{-1}\sqrt{1-C}< 1$. Therefore \eqref{thm13} fails, which suggests that the classical chi-squared approximation with the Bartlett correction fails.


\subsubsection{Proof of Theorem \ref{thm:allmult} (V): Testing the Equality of Several Covariance Matrices}

Similarly to the proof in Section \ref{sec:pfonesam3}, by Theorem 4 in \cite{Jiang13} and \cite{Jiang15}, we know that under the conditions of Theorem \ref{thm:allmult} and $p\to \infty$, \eqref{eq:mmtnormal1} holds with 
\begin{align}
\mu_n=&~\frac{1}{4}\left\{{(n-k)(2 n-2 p-2 k-1) \log \left(1-\frac{p}{n-k}\right)}\right. \label{eq:multicovmun} \\
 &~\left.{\quad\quad -\sum_{i=1}^{k}\left(n_{i}-1\right)\left(2 n_{i}-2 p-3\right) \log \left(1-\frac{p}{n_{i}-1}\right)}\right\}, \notag \\
 \sigma_{n}^{2}=&~\frac{(n-k)^2}{2n^2}\left\{\log \left(1-\frac{p}{n-k}\right)-\sum_{i=1}^{k}\left(\frac{n_{i}-1}{n-k}\right)^{2} \log \left(1-\frac{p}{n_{i}-1}\right)\right\}. \label{eq:multicovsigman}
\end{align}
Following the analysis in Section \ref{sec:pfonesam3}, 
we next derive the equivalent conditions for  \eqref{eq:chisqconvgvar}--\eqref{eq:chisqconvgmean} and \eqref{thm13}--\eqref{thm14}, respectively,  with $\mu_n$ in \eqref{eq:multicovmun} and $\sigma_n$ in \eqref{eq:multicovsigman}.



\smallskip
\noindent \textit{(V.i) The chi-squared approximation.}

\noindent \textit{Case (V.i.1) $\lim_{n\to \infty} p/n = 0$.}\  Under this case, we show that \eqref{eq:chisqconvgvar} holds. By Taylor's expansion, 
\begin{align*}
\sigma_n^2&=\frac{(n-k)^2}{2n^2}\left[ -\frac{p}{n-k}-\frac{p^2}{2(n-k)^2}+\sum_{i=1}^k\left(\frac{n_i-1}{n-k}\right)^2\left\{\frac{p}{n_i-1}+\frac{p^2}{2(n_i-1)^2}\right\}+O\left(\frac{p^3}{n^3}\right)\right]\\
&=\frac{(n-k)^2}{2n^2}\left\{-\frac{p}{n-k}+\sum_{i=1}^k{\frac{p(n_i-1)}{(n-k)^2}}-\frac{p^2}{2(n-k)^2}+\frac{kp^2}{2(n-k)^2}+O\left(\frac{p^3}{n^3}\right)\right\}\\
&=\frac{(k-1)p^2}{4n^2}\left\{1+o(1)\right\}, 
\end{align*}where we use $n_i=\Theta(n)$.
As $\sqrt{2f}\sim p\sqrt{k-1}$, we have \eqref{eq:chisqconvgvar} holds. Given \eqref{eq:chisqconvgvar}, we know that \eqref{eq:chisqconvgmean} is equivalent to $(2f+4\mu_n)/(2p\sqrt{k-1})\to 0.$ Through Taylor's expansion, we obtain 
\begin{align*}
	4\mu_n=&~ -p(2n-2p-2k-1)-\frac{(n-p)p^2}{n-k}-\frac{2(n-p)p^3}{3(n-k)^2}+o\left(\frac{p^3}{n}\right)+o(p)\\
&~+\sum_{i=1}^{k}p(2 n_{i}-2 p-3)+\sum_{i=1}^{k}\frac{\left(n_{i}-p\right)p^2}{\left(n_{i}-1\right)}+\sum_{i=1}^{k}\frac{2\left(n_{i}-p\right)p^3}{3\left(n_{i}-1\right)^2}+o\left(\frac{p^3}{n}\right)\\
=&~p(p-kp-k+1)+\frac{p^3}{3(n-k)}-\sum_{i=1}^k\frac{p^3}{3(n_i-1)}+o\left(\frac{p^3}{n}\right)+o(p).
\end{align*}
By $f=p(p+1)(k-1)/2$, we have 
\begin{align}
	2f+4\mu_n = \frac{p^3}{3}\left( \frac{1}{n-k}-\sum_{i=1}^k\frac{1}{n_i-1}\right) + o\left(\frac{p^3}{n}\right)+o(p)=\Theta(p^3/n) + o(p), \label{eq:multicovmean1zero}
\end{align}
where we use the fact that $(n-k)^{-1}-\sum_{i=1}^k(n_i-1)^{-1}>0$.
It follows that $(2f+4\mu_n)/(2p\sqrt{k-1})=\Theta(p^2/n),$ which converges to 0 if and only if $p^2/n\to 0.$

\smallskip

\noindent \textit{Case (V.i.2) $\lim_{n\to \infty} p/n = C \in (0,1]$.}
Under this case, we show that  \eqref{eq:chisqconvgvar} and \eqref{eq:chisqconvgmean}  do not hold at the same time. Particularly, \eqref{eq:chisqconvgvar} and \eqref{eq:chisqconvgmean} together induce $4(\mu_n+n^2\sigma_n^2)/(2f)\to 0$, which indicates  $2(\mu_n+n^2\sigma_n^2)n^{-2}\to 0$, and thus  $g_1(C) = 0$, where we define
\begin{align*}
	g_1(C)=(2-C)\log(1-C)-\sum_{i=1}^k\delta_i(2\delta_i-C) \log(1-C\delta_i^{-1}),
\end{align*} and we assume $n_i/n\to \delta_i\in (0,1)$ for $ i=1,\ldots, k.$  
As $p/n=(p/n_i)\times (n_i/n) < n_i/n$, we have $0<C\leq \delta_i<1$ for $i=1,\ldots, k$. 
We next show that $g_1(C)>0$ for $C\in (0,\min_{i=1,\ldots, k}\delta_i]$ by taking derivative of $ g_1(C)$. Specifically, by $\sum_{i=1}^k \delta_i=1$ and calculations, we have
\begin{align*}
	g_1'(C)=&~\sum_{i=1}^k\delta_i\left\{-\log(1-C)-{(1-C)^{-1}}+\log(1-C\delta_i^{-1})+{\delta_i}{(\delta_i-C)^{-1}} \right\}, \notag \\
	g_1''(C)=&~ \sum_{i=1}^k\delta_i\times C\left\{ -(1-C)^{-2} + (\delta_i-C)^{-2} \right\}.
\end{align*}When $0<C\leq \delta_i<1$ for $i=1,\ldots, k$, we have $g_1''(C)>0$ and thus $g_1'(C)$ is a monotonically increasing function of $C$. As $g_1'(0)=0$, $g_1'(C)>0$ when $0<C<1$ and then $g_1(C)$ is also monotonically increasing. By $g_1(0)=0$, we further obtain $g_1(C)>0$ when $0<C<1$, which contradicts with  $g_1(C) = 0$. As a result, we know \eqref{eq:chisqconvgvar} and \eqref{eq:chisqconvgmean}  do not hold  simultaneously, which suggests that the chi-squared approximation fails.

\smallskip
\noindent \textit{(V.ii) The chi-squared approximation with the Bartlett correction.}
When $\lim_{n\to \infty} p/n = 0$, since $\rho = 1 + O(p/n) \rightarrow 1$ and \eqref{eq:chisqconvgvar} is proved above, we know \eqref{thm13} holds.  
Given  \eqref{thm13}, as $f\sim p^2(k-1)/2$, to prove \eqref{thm14}, it is equivalent to show $( 2f+4\rho  \mu_n  )/p\to 0$, which is also equivalent to $(2f+4\mu_n-4\Delta_n\mu_n)/p\to 0,$
where we redefine in this subsection that
\begin{align*}
	\Delta_n=\frac{2p^2+3p-1}{6(p+1)(k-1)}\times \tilde{D}_{n,1}, \quad \quad  \tilde{D}_{n,1}=\sum_{i=1}^k\frac{1}{n_i-1}-\frac{1}{n-k}.
\end{align*}
Similarly to the analysis of \eqref{eq:multicovmean1zero}, through Taylor's expansion of  $\mu_n$ in \eqref{eq:multicovmun}, we  obtain 
\begin{align}
	2f+4\mu_n=-\frac{p^3}{3}\times \tilde{D}_{n,1}-\frac{p^4}{6}\times \tilde{D}_{n,2}+ o\left(\frac{p^4}{n^2}\right)+o(p),\label{eq:multicovrho1}
\end{align}
where $\tilde{D}_{n,2}=\sum_{i=1}^k (n_i-1)^{-2}-(n-k)^{-2}.$ Moreover, by \eqref{eq:multicovmean1zero} and  $\Delta_n=O(p/n)=o(1)$,  we have
\begin{align}
	4\Delta_n \mu_n=\Delta_n\left(-\frac{p^3}{3}\times \tilde{D}_{n,1}-2f\right)+o\left(\frac{p^4}{n^2}\right)+o(p),\label{eq:multicovrho2}
\end{align}
Combining \eqref{eq:multicovrho1} and \eqref{eq:multicovrho2}, we have
\begin{align}
2f+4\mu_n-4\Delta_n\mu_n =&~-\frac{p^3}{3}\times \tilde{D}_{n,1}-\frac{p^4}{6}\times \tilde{D}_{n,2}+\Delta_n\left(\frac{p^3}{3}\times \tilde{D}_{n,1}+2f\right)+ o\left(\frac{p^4}{n^2}\right)+o(p), \notag\\
=&~\frac{p^4}{18(k-1)}\left\{2\tilde{D}_{n,1}^2-3(k-1)\tilde{D}_{n,2} \right\}+o\left(\frac{p^4}{n^2}\right)+o(p), \label{eq:multicovrho3}
\end{align} where we use $\tilde{D}_{n,1}=\Theta(n^{-1})$, $\tilde{D}_{n,2}=\Theta(n^{-2})$, $\Delta_n=p\tilde{D}_{n,1}/\{3(k-1)\}+o(p/n)$, and $2\Delta_n f=p^3\tilde{D}_{n,1}/3+o(p).$ 

We next show that  $\eqref{eq:multicovrho3}=\Theta(p^4n^{-2})$. 
In particular, in this subsection, we redefine  $\delta_i=(n_i-1)/(n-k)$, which satisfies $\sum_{i=1}^k \delta_i=1$. Then by the definitions of $\tilde{D}_{n,1}$ and $\tilde{D}_{n,2}$, we calculate that
\begin{align}
&~(n-k)^2\times \{2\tilde{D}_{n,1}^2-3(k-1)\tilde{D}_{n,2}\}\notag \\
=&~(5-3k)\sum_{i=1}^k\delta_i^{-2}+2\sum_{1\leq i\neq j\leq k}\delta_i^{-1}\delta_j^{-1}-4\sum_{i=1}^k\delta_i^{-1}+3k-1. \label{eq:multicovrho4}
\end{align}
As $2\delta_i^{-1}\delta_j^{-1}\leq \delta_i^{-2}+\delta_j^{-2}$, we have
\begin{align}
 \eqref{eq:multicovrho4}\leq &~  (3-k)\sum_{i=1}^k\delta_i^{-2} -4\sum_{i=1}^k\delta_i^{-1}+3k-1 \notag \\
 \leq &~(3-k)\sum_{i=1}^k\delta_i^{-2} -4k^2+3k-1,\label{eq:multicovrho5}
\end{align}where in the last inequality, we use $\sum_{i=1}^k\delta_i^{-1}\geq k^2 (\sum_{i=1}^k\delta_i )^{-1}=k^2.$  Therefore $\eqref{eq:multicovrho5}<0$ when $k\geq 3.$ When $k=2$, as $\delta_1+\delta_2=1$, we have  $\delta_1^{-1}+\delta_2^{-1}=\delta_1^{-1}\delta_2^{-1}$ and $\eqref{eq:multicovrho4}=-\sum_{i=1}^2\delta_i^{-2}-2\sum_{i=1}^2\delta_i^{-1}+5$. As $\sum_{i=1}^2\delta_i^{-1}\geq 2^2,$  $\eqref{eq:multicovrho4}< - 2\times 2^2+5<0.$ In summary, we know $\eqref{eq:multicovrho4}<0$ for $k\geq 2$, and thus $\eqref{eq:multicovrho3}=\Theta(p^4n^{-2})$. If follows that  $( 2f+4\rho  \mu_n  )/p\to 0$ if and only if $p^3/n^2\to 0.$ In summary, we know for testing problem (V),  the chi-squared approximation with the Bartlett correction works if and only if $p^3/n^2\to 0.$

\subsubsection{Proof of Theorem \ref{thm:allmult} (VI): Joint Testing the Equality of Several Mean Vectors and  Covariance Matrices}

Similarly to the proof in Section \ref{sec:pfonesam3}, by Theorem 3 in \cite{Jiang13} and \cite{Jiang15}, we know that under the conditions of Theorem \ref{thm:allmult} and $p\to \infty$, \eqref{eq:mmtnormal1} holds with 
\begin{align}
\mu_n =&~ \frac{1}{4} \left\{-2kp-\sum_{i=1}^k \frac{p}{n_i}-nL_{n,p}(2p-2n+3)+\sum_{i=1}^{k}n_iL_{n_i-1,p}(2p-2n_i+3)\right\}, \label{eq:multijointmu} \\
\sigma_n^2 =&~ \frac{1}{2}\left( L_{n,p}-\sum_{i=1}^{k}\frac{n_i^2}{n^2} \times L_{n_i-1,p} \right), \label{eq:multijointsigma}
\end{align} where $L_{n,p}=\log (1-p/n)$. 
Following 
Section \ref{sec:pfonesam3}, 
we next derive the equivalent conditions for  \eqref{eq:chisqconvgvar}--\eqref{eq:chisqconvgmean} and \eqref{thm13}--\eqref{thm14}, respectively,  with $\mu_n$ in \eqref{eq:multijointmu} and $\sigma_n$ in \eqref{eq:multijointsigma}.


\smallskip
\noindent \textit{(VI.i) The chi-squared approximation.}

\noindent \textit{Case (VI.i.1) $\lim_{n\to \infty} p/n = 0$.}  
Under this case, we show that \eqref{eq:chisqconvgvar} holds. 
As $-\log(1-x)=x+x^2/2+O(x^3)$ and $n_i=\Theta(n)$, we obtain
\begin{align*}
	2\sigma_n^2=&~=\sum_{i=1}^{k}\frac{n_i^2}{n^2}\left\{\frac{p}{n_i-1}+\frac{p^2}{2(n_i-1)^2}\right\} - \frac{p}{n} - \frac{p^2}{2n^2} +O\left(\frac{p^3}{n^3}\right) \notag \\
	=&~\sum_{i=1}^{k}\frac{n_i^2}{n^2}\left( \frac{p}{n_i}+\frac{p}{n_i^2}+\frac{p^2}{2n_i^2}\right) - \frac{p}{n} - \frac{p^2}{2n^2} +O\left(\frac{p^3}{n^3}\right) \notag \\
	=&~ \frac{kp}{n^2}+\frac{(k-1)p^2}{2n^2}+O\left(\frac{p^3}{n^3}\right),
\end{align*} where in the second equation, we use $(n_i-1)^{-1}=n_i^{-1}+n_{i}^{-2}+O(n_i^{-3})$ and $(n_i-1)^{-2}=n_i^{-2}+O(n_i^{-3})$. It follows that $2n\sigma_n \sim p\sqrt{k-1}$. By $\sqrt{2f}\sim p\sqrt{k-1}$, we have \eqref{eq:chisqconvgvar}.   
Given \eqref{eq:chisqconvgvar}, we know that \eqref{eq:chisqconvgmean} is equivalent to $(2f+4\mu_n)/p\to 0.$ As $p/n=o(1)$, through Taylor's expansion, we obtain 
\begin{align}
-n(2p-2n+3)L_{n,p}=&~n(2p-2n+3)\left\{\frac{p}{n}+\frac{p^2}{2n^2}+\frac{p^3}{3n^3}+O\left(\frac{p^4}{n^4}\right) \right\} \label{eq:rnexpmulti} \\
	=&~p\left\{ 2p+\frac{p^2}{n} -2n -p-\frac{2p^2}{3n}+3 + O\left(\frac{p^3}{n^2} \right) + o(1) \right\}\notag \\
	=&~p\left\{ p+\frac{p^2}{3n} -2n + 3 + O\left(\frac{p^3}{n^2} \right) + o(1) \right\}. \notag
\end{align}
Similarly, by Taylor's expansion and $n_i=\Theta(n)$, we have
\begin{align}
&~ -n_i(2p-2n_i+3)L_{n_i-1,p}\label{eq:rnexpmulti2} \\
=&~ n_i(2p-2n_i+3) \left\{\frac{p}{n_i-1}+\frac{p^2}{2(n_i-1)^2}+\frac{p^3}{3(n_i-1)^3}+O\left(\frac{p^4}{n^4}\right) \right\}\notag \\
	=&~ n_i(2p-2n_i+3) \left\{\frac{p}{n_i}+\frac{p}{n_i^2}+\frac{p^2}{2n_i^2}+\frac{p^3}{3n_i^3}+O\left(\frac{p^4}{n^4}\right)+O\left(\frac{p^2}{n^3}\right) \right\}\notag \\
	=&~p\left\{ p+\frac{p^2}{3n_i} -2n_i +3-2  + O\left(\frac{p^3}{n_i^2} \right) + o(1) \right\}, \notag 
\end{align} where in the second equation, we use $(n_i-1)^{-1}=n_i^{-1}+n_i^{-2}+O(n_i^{-3})$ and $(n_i-1)^{-a}=n_i^{-a}+O(n_i^{-3})$ for integers $a\geq 2$. 
Combining \eqref{eq:rnexpmulti} and \eqref{eq:rnexpmulti2}, we obtain
\begin{align}
	2f+4\mu_n=&~2f -2kp+p\left\{(1-k)p+\frac{p^2}{3}\Big(\frac{1}{n}-\sum_{i=1}^k\frac{1}{n_i} \Big)+3-k\right\}+O\Big(\frac{p^4}{n^2} \Big)+o(p)\label{eq:multiidentmunexp} \\
	=&~\frac{p^3}{3}\Big(\frac{1}{n}-\sum_{i=1}^k\frac{1}{n_i} \Big)+O\Big(\frac{p^4}{n^2} \Big)+o(p). \notag
\end{align} As $n^{-1}-\sum_{i=1}^kn_i^{-1}=\Theta(n^{-1})$, we have  $2f+4\mu_n=\Theta(p^3n^{-1})$. Therefore we know $(2f+4\mu_n)/p\to 0$ if and only if $p^2/n\to 0.$ 

%

\smallskip

\noindent \textit{Case (VI.i.2) $\lim_{n\to \infty} p/n = C \in (0,1]$.}\  
In this subsection, we redefine $\delta_i=n_i/n \in (0,1).$
Then
\begin{align*}
	\frac{4n^2\sigma_n^2}{2f}\to \frac{2}{C^2(k-1)}\times \Big\{\log(1-C)-\sum_{i=1}^k \delta_i^2\log(1-C\delta_i^{-1})\Big\},
\end{align*} where $0<C\leq \delta_i<1.$ 
Therefore \eqref{eq:chisqconvgvar}  induces $g_2(C) = 0$, where we define
\begin{align*}
	g_2(C)=\log(1-C)-\sum_{i=1}^k \delta_i^2\log(1-C\delta_i^{-1})-{(k-1)C^2}/{2}.
\end{align*} 
By taking derivative of $g_2(C)$, we obtain $g_2'(0)=0$, $g_2''(0)=0$, and 
\begin{align*}
	g'''_2(C)=&~ \frac{2}{(C-1)^2}-\sum_{i=1}^k\frac{2\delta_i^2}{(C-\delta_i)^3} = \sum_{i=1}^k\frac{2\delta_i(1-\delta_i)(C^3-3\delta_i C+\delta_i^2+\delta_i)}{(1-C)^3(\delta_i-C)^3}. 
\end{align*}
As $C^3-3\delta_i C+\delta_i^2+\delta_i$  is a monotonically decreasing function of $C$ when $0<C\leq \delta_i<1$, and it equals $\delta_i(\delta_i-1)^2>0$ when $C=\delta_i$, we have $g'''_2(C)>0$ for $0<C\leq \delta_i$. 
It follows that $g_2(C)$ is a monotonically increasing function when $0<C\leq \delta_i<1$. 
As $g_2(0)=0$, we have 
 $g_2(C)>0$, which contradicts with $g_2(C) = 0$. Therefore, we know that \eqref{eq:chisqconvgvar} does not hold under this case, which implies that the chi-squared approximation fails. 



\smallskip
\noindent \textit{(VI.ii) The chi-squared approximation with the Bartlett correction.} \ 
When $\lim_{n\to \infty} p/n = 0$, since $\rho = 1 + O(p/n) \rightarrow 1$ and \eqref{eq:chisqconvgvar} is proved above, we know \eqref{thm13} holds.  
Given  \eqref{thm13}, as $f\sim p^2(k-1)/2$, to prove \eqref{thm14}, it is equivalent to show $( 2f+4\rho  \mu_n  )/p\to 0$, which is  equivalent to $(2f+4\mu_n-4\Delta_n\mu_n)/p\to 0,$
where in this subsection, we redefine 
\begin{align*}
	\Delta_n=\frac{2p^2+9p+11}{6(p+3)(k-1)}\times D_{n,1}, \quad \quad  D_{n,1}=\sum_{i=1}^k\frac{1}{n_i}-\frac{1}{n}.
\end{align*}
Similarly to \eqref{eq:rnexpmulti}, through Taylor's expansion, we further have
\begin{align*}
	n(2p-2n+3)r_n^2=p\left\{ p -2n + 3+\frac{p^2}{3n}+\frac{p^3}{6n^2} + O\Big(\frac{p^4}{n^3} \Big) + o(1) \right\}. \notag
\end{align*} In addition, similarly to \eqref{eq:rnexpmulti2}, we have
\begin{align}
	n_i(2p-2n_i+3)r_{n_i'}^2
	= p\left\{ p -2n_i +3-2 +\frac{p^2}{3n_i} +\frac{p^3}{6n_i^2} + O\Big(\frac{p^4}{n_i^3} \Big) + o(1) \right\}. \label{eq:multicovtrho4} 
\end{align} 
It follows that 
\begin{align}
	2f+4\mu_n=-\frac{p^3}{3}D_{n,1} - \frac{p^4}{6} D_{n,2} + O\Big(\frac{p^5}{n^3} \Big)+o(p), \label{eq:multiidentrho1}
\end{align}where $D_{n,2}=\sum_{i=1}^k n_i^{-2}-n^{-2}.$ 
Moreover, by \eqref{eq:multiidentmunexp} and $\Delta_n=O(p/n)=o(1)$, 
\begin{align}
	4\Delta_n\mu_n=\Delta_n \Big( -\frac{p^3}{3}D_{n,1} -2f \Big) + O\Big(\frac{p^5}{n^3} \Big) + o(p). \label{eq:multiidentrho2}
\end{align} 
Combining \eqref{eq:multiidentrho1} and \eqref{eq:multiidentrho2}, we obtain
\begin{align}
	2f+4\mu_n-4\Delta_n\mu_n= \frac{p^4}{18(k-1)}\{2D_{n,1}^2 - 3(k-1)D_{n,2}\}+O\Big(\frac{p^5}{n^3} \Big)+o(p), \label{eq:multiidentrho3}
\end{align} where we use $D_{n,1}=\Theta(n^{-1})$, $D_{n,2}=\Theta(n^{-2})$, $\Delta_n=pD_{n,1}/\{3(k-1)\}+o(p/n)$, and $2\Delta_n f=p^3D_{n,1}/3+o(p)$. 
Following the analysis of \eqref{eq:multicovrho4}, we know $\eqref{eq:multiidentrho3}=\Theta(p^4n^{-2}).$ Therefore, $(2f+\rho\mu_n)/p\to 0$ if and only if $p^3/n^2\to 0,$ which suggests that the chi-squared  approximation with the Bartlett correction holds if and only if  $p^3/n^2\to 0.$

\subsubsection{Proof of Theorem \ref{Thm2.1} (VII): Testing  Independence between Multiple Vectors} \label{sec:pfindp}

Similarly to the proof in  Section \ref{sec:pfonesam3}, by Theorem 2 in \cite{Jiang13} and \cite{Jiang15}, we know that under the conditions of Theorem \ref{Thm2.1} and $p\to \infty$, \eqref{eq:mmtnormal1} holds with 
\begin{align}
\mu_{n} =&~\frac{n}{2}\left[-\left(n-p-\frac{3}{2}\right) L_{n-1, p}+\sum_{j=1}^{k}\left\{\left(n-p_{j}-\frac{3}{2}\right) L_{n-1, p_{j}}\right\}\right]\label{eq:indpmu} \\
\sigma_{n}^{2} =&~\frac{1}{2}\biggr(-L_{n-1, p}+\sum_{j=1}^{k} L_{n-1, p_{j}}\biggr).	\label{eq:indpsigma}
\end{align}
Following the analysis in Section \ref{sec:pfonesam3}, 
we next derive the equivalent conditions for  \eqref{eq:chisqconvgvar}--\eqref{eq:chisqconvgmean} and \eqref{thm13}--\eqref{thm14}, respectively,  with $\mu_n$ in \eqref{eq:indpmu} and $\sigma_n$ in \eqref{eq:indpsigma}.


\smallskip
\noindent \textit{(VII.i) The chi-squared approximation.}

\noindent \textit{Case (VI.i.1) $\lim_{n\to \infty} p/n = 0$.}  Under this case, we show that \eqref{eq:chisqconvgvar} holds. Through Taylor's expansion, 
\begin{align*}
	2\sigma_n^2=&~\frac{p}{n-1}+\frac{p^2}{2(n-1)^2}-2\sum_{i=1}^{k}\left\{ \frac{p_i}{n-1} + \frac{p_i^2}{2(n-1)^2}\right\}+O\left(\frac{p^3}{n^3}\right) \notag \\
	=&~\frac{p^2 -\sum_{i=1}^{k}p_i^2}{2(n-1)^2} + O\left(\frac{p^3}{n^3}\right).
\end{align*}Recall that $2f=p^2-\sum_{i=1}^k p_i^2,$ and thus  \eqref{eq:chisqconvgvar}  holds. As $f=\Theta(p^2)$ undert the conditions of Theorem \ref{Thm2.1}, given \eqref{eq:chisqconvgvar}, we know \eqref{eq:chisqconvgmean} is equivalent to $(2f+4\mu_n)/p \to 0.$ 
Similarly to the analysis of \eqref{eq:rnexpmulti2}, through Taylor's expansion, we have
\begin{align*}
n(2n-2p-3)L_{n-1,p} =&~ p \Big\{ p+\frac{p^2}{3n} -2n + 1 + O\Big(\frac{p^3}{n^2} \Big) + o(1)\Big\}, \notag \\
n(2n-2p_i-3)L_{n-1,p_i} = &~ p_i \Big\{ p_i+\frac{p_i^2}{3n} -2n + 1 + O\Big(\frac{p_i^3}{n^2} \Big) + o(1)\Big\}. \notag
\end{align*}
It follows that
\begin{align}
&~ 2f+4\mu_n \notag \\
=&~ p^2-\sum_{i=1}^k p_i^2- p \Big( p+\frac{p^2}{3n} -2n + 1\Big)+\sum_{i=1}^k p_i \Big( p_i+\frac{p_i^2}{3n} -2n + 1 \Big)+ O\Big(\frac{p^4}{n^2} \Big) + o(p)\notag \\
=&~\frac{1}{3n} \biggr(\sum_{i=1}^k p_i^3 - p^3 \biggr) + O\Big(\frac{p^4}{n^2} \Big) + o(p). \notag
\end{align} Under the conditions of Theorem \ref{Thm2.1}, we have $\sum_{i=1}^k p_i^3 - p^3 = \Theta(p^3)$. Thus $(2f+4\mu_n)/p \to 0$ if and only if $p^2/n \to 0,$ which suggests that the chi-squared  approximation holds if and only if $p^2/n \to 0.$

\medskip

\noindent \textit{Case (VI.i.2) $\lim_{n\to \infty} p/n = C \in (0,1]$.} 
Under this case, we show that  \eqref{eq:chisqconvgvar} and \eqref{eq:chisqconvgmean}  do not hold at the same time. Particularly, as $f=\Theta(p^2)$ and $p/n\to C \in (0,1]$, \eqref{eq:chisqconvgvar} induces $(2n^2\sigma_n^2-f)/n^2\to 0$, and \eqref{eq:chisqconvgmean} induces $(f+2\mu_n)/n^2 \to 0$. Therefore,  \eqref{eq:chisqconvgvar} and \eqref{eq:chisqconvgmean}  together give $(2n^2\sigma_n^2+2\mu_n)/n^2 \to 0.$ 
Suppose $\lim_{n\to \infty} p_i/n=C_i \in (0,1)$. It follows that $\sum_{i=1}^k C_i=C$, and 
\begin{align}
	(2n^2\sigma_n^2+2\mu_n)/n^2 \to -g_3(C) + \sum_{i=1}^k g_3(C_i),\label{eq:gc3sumidn}
\end{align}
where $g_3(C)=(2-C)\log(1-C).$
Note that $g_3(C)$ is a strictly concave function of $C \in (0,1]$ and $g(0)=0$. By the property  of strictly concave function, we have $$ \sum_{i=1}^k g_3(C_i)=\sum_{i=1}^k g_3(C\times C_i/C) >\sum_{i=1}^k g_3(C)\times C_i/C=g_3(C),$$  where we use $\sum_{i=1}^k C_i=C.$  Therefore when $C\in (0,1]$, the right hand side of $\eqref{eq:gc3sumidn} >0$, which  contradicts with $(2n^2\sigma_n^2+2\mu_n)/n^2 \to 0.$ We thus know that  \eqref{eq:chisqconvgvar} and \eqref{eq:chisqconvgmean}  do not hold simultaneously, which suggests that the chi-squared approximation fails.



\smallskip
\noindent \textit{(VII.ii) The chi-squared approximation with the Bartlett correction.} \ 
When $\lim_{n\to \infty} p/n = 0,$ since $\rho = 1 + O(p/n) \rightarrow 1$ and \eqref{eq:chisqconvgvar} is proved above, we know \eqref{thm13} holds.  
Given  \eqref{thm13}, as $f= \Theta(p^2)$, to prove \eqref{thm14}, it is equivalent to show $( 2f+4\rho  \mu_n  )/p\to 0$, which is  equivalent to $(2f+4\mu_n-4\Delta_n\mu_n)/p\to 0,$ 
where in this subsection, we redefine
\begin{align*}
	\Delta_n=\frac{2\times D_{p,3}+9\times D_{p,2}}{6n\times D_{p,2}}, \quad D_{p,3}=p^3-\sum_{i=1}^{k}p_i^3, \quad D_{p,2}=p^2-\sum_{i=1}^{k}p_i^2.
\end{align*}
Similarly to \eqref{eq:multicovtrho4}, through Taylor's expansion, we further obtain 
\begin{align*}
	n(2n-2p-3)L_{n-1,p} =&~ p \Big\{ p-2n + 1+\frac{p^2}{3n} +\frac{p^3}{6n^2}  + O\Big(\frac{p^4}{n^3} \Big) + o(1)\Big\}, \notag \\
	n(2n-2p_i-3)L_{n-1,p_i}= &~ p_i \Big\{ p_i -2n + 1+\frac{p_i^2}{3n}+\frac{p_i^3}{6n^2} + O\Big(\frac{p_i^4}{n^3} \Big) + o(1)\Big\}. \notag
\end{align*}
It follows that
\begin{align}
&~ 2f+4\mu_n = -\frac{1}{3n}D_{p,3}-\frac{1}{6n^2}D_{p,4}+ O\Big(\frac{p^5}{n^3} \Big) + o(p), \label{eq:munmultind}
\end{align} 
where $D_{p,4}=p^4-\sum_{i=1}^kp_i^4.$ Moreover, as $\Delta_n=\Theta(p/n)$, by \eqref{eq:munmultind} and $2f=D_{p,2}$, we have
\begin{align*}
	4\Delta_n\mu_n=\Delta_n \Big(-\frac{1}{3n}D_{p,3}-D_{p,2}\Big) + O\Big(\frac{p^5}{n^3}\Big) + o(p). 
\end{align*}
As $\Delta_n=D_{p,3}/(3nD_{p,2})+O(n^{-1})$, we calculate that
\begin{align}
&~ 2f+4\mu_n-4\Delta_n\mu_n\label{eq:mutiequarho1} \\
=&~ -\frac{1}{3n}D_{p,3}-\frac{1}{6n^2}D_{p,4}+ \frac{D_{p,3}}{3nD_{p,2}} \Big(\frac{1}{3n}D_{p,3}+D_{p,2}\Big) + O\Big(\frac{p^5}{n^3}\Big) + o(p)\notag \\
=&~ -\frac{1}{18n^2D_{p,2}}( 3D_{p,4}D_{p,2}-2D_{p,3}^2)+ O\Big(\frac{p^5}{n^3}\Big) + o(p). \notag 
\end{align}

We next prove $\eqref{eq:mutiequarho1}=\Theta(p^4n^{-2})$ by showing $ 3D_{p,4}D_{p,2}-2D_{p,3}^2=\Theta(p^6).$  Specifically, by the definitions of $D_{p,2}, D_{p,3}$, and $D_{p,4}$, we write 
\begin{align}
&~ 3D_{p,4}D_{p,2}-2D_{p,3}^2 \label{eq:psumpower} \\
=&~p^4\Big( p^2 - \sum_{i=1}^k p_i^2 \Big)+ 2p^3 \Big(- p\sum_{i=1}^k p_i^2+ \sum_{i=1}^k p_i^3 \Big)+ 2p^2\Big( p \sum_{i=1}^k p_i^3 - \sum_{i=1}^k p_i^4\Big) \notag \\
&~ + \Big(-p^2+ \sum_{i=1}^k p_i^2 \Big)\sum_{i=1}^k p_i^4 + 2\left\{\sum_{i=1}^k p_i^2 \sum_{i=1}^k p_i^4- \Big(\sum_{i=1}^k p_i^3\Big)^2\right\}. \notag
\end{align}
Using $p=\sum_{i=1}^k p_i$, we obtain
\begin{align}
p\sum_{i=1}^kp_i^{\alpha}-\sum_{i=1}^kp_i^{\alpha+1}=\sum_{i\neq j}p_i p_j^{\alpha},\quad \quad 	p\sum_{i\neq j} p_ip_j-2\sum_{i\neq j}p_i^2p_j=\sum_{i\neq j\neq l}p_ip_jp_l, \label{eq:decopterm3}
\end{align}
where integer  $1\leq \alpha \leq 5$, and  we use $\sum_{i\neq j}$ and $\sum_{i\neq j\neq l}$ to denote the summation  $\sum_{1\leq i\neq j \leq k}$ and $\sum_{1\leq i\neq j\neq l \leq k}$ for simplicity. By \eqref{eq:decopterm3}, we calculate that
\begin{align*}
\eqref{eq:psumpower}=&~p^3\sum_{i\neq j\neq l}p_ip_jp_l+2p^2\sum_{i\neq j}p_i^3p_j-\sum_{i\neq j}p_i p_j\sum_{l=1}^k p_l^4 - 2\sum_{i\neq j} p_i^3p_j^3  +2\sum_{i\neq j}p_i^2p_j^4 \notag \\
>&~ 2p^2\sum_{i\neq j}p_i^3p_j-\sum_{i\neq j}p_i p_j\sum_{l=1}^k p_l^4 - 2\sum_{i\neq j} p_i^3p_j^3\notag \\
=&~2\Big(\sum_{i=1}^k p_i^2 + \sum_{i\neq j} p_i p_j \Big)\sum_{i\neq j}p_i^3p_j- 2\sum_{i\neq j}p_ip_j^5-\sum_{i\neq j\neq l} p_ip_jp_l^4- 2 \sum_{i\neq j} p_i^3p_j^3> 0.
\end{align*}
Therefore $\eqref{eq:psumpower}=\Theta(p^6)$ and then $\eqref{eq:mutiequarho1}=\Theta(p^4n^{-2})$. 
Thus  $(2f+4\rho\mu_n)/p \to 0$ if and only if $p^3/n^2\to 0$, which suggests that the chi-squared  approximation with the Bartlett correction holds if and only if $p^3/n^2\to 0$.

\subsection{Proofs of Propositions \ref{prop:1} \& \ref{prop:2}}\label{sec:proofprop}

This section proves Propositions \ref{prop:1} and \ref{prop:2} following similar arguments to that in Sections \ref{sec:pfonesam3} and \ref{sec:pfthm1and4}. 
In particular, we consider without loss of generality that $p\to \infty$ and $p/n$ has a limit. 

\subsubsection{Proof of Proposition \ref{prop:1}}\label{sec:pfprop1}
Following the analysis in Section \ref{sec:pfmultimean}, 
we know that when $n,p\to \infty$, $n-k\to \infty$,  and $n-p\to \infty$, \eqref{eq:mmtnormal1} holds with $\mu_n$ in \eqref{eq:multisammeanmu} and $\sigma_n^2$ \eqref{eq:multisammeansigma}.
Moreover, to derive the necessary and sufficient conditions for the chi-squared approximations without and with the Bartlett correction,  it is equivalent to examine  \eqref{eq:chisqconvgvar}--\eqref{eq:chisqconvgmean} and \eqref{thm13}--\eqref{thm14}, respectively, with $\mu_n$ in \eqref{eq:multisammeanmu} and $\sigma_n$ in \eqref{eq:multisammeansigma}.

\smallskip

\noindent \textit{(i) The chi-squared approximation.\ }\  
 (i.1) When $p/n\to 0$ and $k/n\to 0,$ we apply Theorem 1 in \cite{He2018}, and know that  \eqref{eq:chisqconvgvar}--\eqref{eq:chisqconvgmean} hold if and only if  $\sqrt{pk}(p+k)/n \to 0.$ 
 (i.2)  When $p/n\to C\in (0,1]$ and $k/n\to 0$, 
we have $f\sim C(k-1)n$ and $2\sigma_n^2\sim C(k-1)/\{n(1-C)\}$. It follows that $\sqrt{2f}/(2n\sigma_n)\sim \sqrt{1-C} <1$. Thus \eqref{eq:chisqconvgvar} fails, which suggests that the chi-squared approximation fails. 
(i.3) When $p/n\to 0$ and $k/n\to C\in (0,1]$,  by applying the symmetric substitution technique in Section 10.4 of \cite{Muirhead2009}, we can switch $k$ and $p$ and analyze similarly as in the case  (i.2) above. Therefore we know the  chi-squared approximation also fails here.    
(i.4) When $p/n\to C_1 \in (0,1]$ and $k/n\to C_2\in (0,1]$, we know $0<C_1+C_2\leq 1$ as $p+k<n$. By the constraint, it then suffices to consider $C_1,C_2\in (0,1).$ Note that $2\sigma_n^2 \sim \log\{(1-C_1)(1-C_2)\}-\log(1-C_1-C_2)$ and $2f/n^2\sim 2C_1C_2$.
Thus \eqref{eq:chisqconvgvar} 
induces $g_4(C_1,C_2)=0$ where $g_4(C_1,C_2)=C_1C_2-\log\{(1-C_1)(1-C_2)\}+\log(1-C_1-C_2)$. If $C_1+C_2=1$, $g_4(C_1,C_2)\to -\infty$. We next consider $0<C_1+C_2<1$. By calculations, we have $g_4(0,C_2)=0$,  and 
\begin{align*}
	\frac{\mathrm{d}}{\mathrm{d} C_1} g_4(C_1,C_2)= \frac{C_2\{(C_1-1)(C_1+C_2)-C_1\}}{(1-C_1)(1-C_1-C_2)}<0,
\end{align*} 
where we use $C_1,C_2\in (0,1)$ and $0<C_1+C_2<1.$
Similarly to the previous  analyses, we know that $g_4(C_1,C_2)$ is monotonically decreasing for $C_1\in (0,1)$ and thus $g_4(C_1,C_2)<0$, as $C_1\in (0,1)$ and $g_4(0,C_2)=0.$ 
Therefore \eqref{eq:chisqconvgvar} fails, which suggests that the classical chi-squared approximation fails. 

\smallskip
\noindent \textit{(ii) The chi-squared approximation with the Bartlett correction.\ }\   
(ii.1) When $p/n\to 0$ and $k/n\to 0,$ we apply Theorem 2 in \cite{He2018}, and know that \eqref{thm13}--\eqref{thm14} hold if and only if  $\sqrt{pk}(p^2+k^2)/n^2 \to 0$. 
(ii.2) When $p/n\to C\in (0,1]$ and $k/n\to 0$, we have $\rho \sim 1-C/2$, and the proof of part (IV.ii) in Section  \ref{sec:pfmultimean} can be applied here similarly.
Thus the chi-squared approximation fails. 
(ii.3) When $p/n\to 0$ and $k/n\to C\in (0,1]$, we know the  chi-squared approximation also fails by switching $k$ and $p$ symmetrically as in the case (i.3) above. 
(ii.4) When $p/n\to C_1 \in (0,1]$ and $k/n\to C_2\in (0,1]$, we know $0<C_1+C_2\leq 1$ as $p+k<n$. Similarly to the case (i.4) above, 
we consider $C_1,C_2\in (0,1)$ and $C_1+C_2<1$. Here $\rho \sim 1-(C_1+C_2)/2$ and  then \eqref{thm13}  
induces $g_5(C_1,C_2)=0$,  where $g_5(C_1,C_2)=2C_1C_2-(2-C_1-C_2)[\log\{(1-C_1)(1-C_2)\}-\log(1-C_1-C_2)]$. By calculations, we have $g_5(0,C_2)=0$,  and 
\begin{align*}
&\frac{\mathrm{d}}{\mathrm{d} C_1} g_5(C_1,C_2)|_{C_1=0}= - C_2/(1-C_2)<0, \notag \\
&\frac{\mathrm{d}^2}{\mathrm{d}^2 C_1} g_5(C_1,C_2)=-\frac{C_2\{ (C_1+C_2)(C_2-2)+2\}}{(1-C_1)^2(1-C_1-C_2)^2} < 0,
\end{align*} where we use $(C_1+C_2)(C_2-2)+2>0$ as $0<C_1+C_2<1$ and $-2<C_2-2<-1.$ 
Similarly to the analysis above, we know that $g_5(C_1,C_2)<0$ and thus \eqref{thm13} fails, which suggests that the chi-squared approximation with the Bartlett correction fails.


\subsubsection{Proof of Proposition \ref{prop:2}}\label{sec:pfprop2}

\noindent \textit{(i) The chi-squared approximation.\ } 
(i.1) When $p_1/n\to 0$ and $p_2/n\to 0,$ we apply Theorem 1 in \cite{He2018}, and know that  \eqref{eq:chisqconvgvar}--\eqref{eq:chisqconvgmean} hold if and only if  $\sqrt{p_1p_2}(p_1+p_2)/n \to 0.$ 
(i.2) When $p_1/n\to C \in (0,1]$ and $p_2/n\to 0$, we have $2f \sim Cnp_2$ and $2\sigma_n^2\sim Cp_2/\{2n(1-C)\}$. Then   $\sqrt{2f}/(2n\sigma_n)\sim \sqrt{1-C}<1$ suggesting the failure of \eqref{eq:chisqconvgvar} and thus the chi-squared  approximation fails.  
(i.3) When $p_1/n\to 0$ and $p_2/n\to C \in (0,1]$, the chi-squared  approximation  also fails by  the symmetric substitution technique in Section \ref{sec:pfprop1}.
(i.4) When $p_1/n\to C_1 \in (0,1]$ and $p_2/n\to C_2\in (0,1]$,  
we have $2\sigma_n^2 \sim \log\{(1-C_1)(1-C_2)\}-\log(1-C_1-C_2)$ and $2f/n^2\sim C_1C_2$. It follows that the analysis in case (i.4) of Section \ref{sec:pfprop1} can be applied similarly, and we obtain the same  conclusion, that is, \eqref{eq:chisqconvgvar} fails and then the chi-squared  approximation fails. 


\smallskip
\noindent \textit{(ii) The chi-squared approximation with the Bartlett correction.\ }  
(ii.1) When $p_1/n\to 0$ and $p_2/n\to 0,$ we apply Theorem 2 in \cite{He2018}, and know that \eqref{thm13}--\eqref{thm14} hold if and only if  $\sqrt{p_1p_2}(p_1^2+p_2^2)/n^2 \to 0$. 
(ii.2) When $p_1/n\to C \in (0,1]$ and $p_2/n\to 0$, we have $\rho \sim 1-C/2$,   and then $\sqrt{2f}/(2n\rho\sigma_n)=(1-C/2)^{-1}\sqrt{2p}/(2n\sigma_n)\to (1-C/2)^{-1}\sqrt{1-C}< 1$. Therefore \eqref{thm13} fails, which suggests that the classical chi-squared approximation with the Bartlett correction fails.
(ii.3) When $p_1/n\to 0$ and $p_2/n\to C \in (0,1]$, similar conclusion holds by  the symmetric substitution technique as above. 
(ii.4) When $p_1/n\to C_1 \in (0,1]$ and $p_2/n\to C_2\in (0,1]$, we have $\rho \sim 1-(C_1+C_2)/2$. It follows that the analysis in case (ii.4) of Section \ref{sec:pfprop1} can be applied similarly. Then we obtain the same  conclusion, that is, \eqref{thm13} fails and the chi-squared approximation with the Bartlett correction fails.


\subsection{Proofs of Theorems \ref{thm:onesamchisq}, \ref{thm:multisamchisq} \& \ref{thm:chisqindp}} \label{sec:pfthm2}

In this section, we prove the results for other testing problems in Theorems \ref{thm:onesamchisq}, \ref{thm:multisamchisq} \& \ref{thm:chisqindp} following similar analysis to that in Section \ref{sec:pfthmchisqiii}. 
Particularly, 
for each test, 
we consider the  characteristic function of $-2\eta \log \Lambda_n$ 
when $\eta=1$ and $\rho$; here $\rho$ denotes the corresponding Bartlett correction factor of each test.

By Eq. (20)--(23) in Section 8.2.4 of \cite{Muirhead2009}, 
we know that for the testing problems (I)--(II) and (IV)--(VII), 
the characteristic functions of the likelihood ratio test statistics take the following general form:
\begin{align}\label{eq:logexpgeneralform}
\log \mathrm{E}\{\exp( -2it \eta \log \Lambda_n )\}=\varphi(t)-\varphi(0), 
\end{align}
where
\begin{align*}
\varphi(t)=	&~2it\eta \left( \sum_{k=1}^{K_1}\xi_{1,k}\log \xi_{1,k} - \sum_{j=1}^{K_2} \xi_{2,j} \log \xi_{2,j}\right)\notag \\
&~ + \sum_{k=1}^{K_1}\log \Gamma\big\{ \eta \xi_{1,k}(1-2it) +\tau_{1,k}+ \upsilon_{1,k}  \big\} - \sum_{j=1}^{K_2}\log \Gamma\big\{ \eta \xi_{2,j}(1-2it) + \tau_{2,j}+\upsilon_{2,j}\big\}, 
\end{align*}
 $i$ denotes the imaginary unit,  $\tau_{1,k}=(1-\eta)\xi_{1,k}$, and $\tau_{2,j}=(1-\eta)\xi_{2,j}$.
We next consider $\eta=1$ and $\rho$ for the chi-squared approximation without and with the Bartlett correction, respectively.  
The values of $\rho$, $K_1$, $K_2$, $\xi_{1,k}$, $\xi_{2,j}$, $\upsilon_{1,k}$, and $\upsilon_{2,j}$ depend on the testing problem,
and thus take different values in the following subsections.  
Moreover, by \cite{Muirhead2009}, 
 in each problem, we have
$\sum_{k=1}^{K_1}\xi_{1,k}=\sum_{j=1}^{K_2}\xi_{2,k}$, 
the degrees of freedom $f$ is
\begin{align}
	f=-2\left\{ \sum_{k=1}^{K_1}\upsilon_{1,k} - \sum_{j=1}^{K_2} \upsilon_{2,j} - \frac{1}{2}(K_1-K_2) \right\}, \label{eq:generalfform}
\end{align}
and the Bartlett correction $\rho$ takes the value 
\begin{align} \label{eq:generalrhoform}
	\rho=1-\frac{1}{f}\left\{ \sum_{k=1}^{K_1} \frac{\upsilon_{1,k}^2-\upsilon_{1,k}+\frac{1}{6} }{\xi_{1,k}} - \sum_{j=1}^{K_2} \frac{\upsilon_{2,j}^2-\upsilon_{2,j} + \frac{1}{6} }{ \xi_{2,j}} \right\}.
\end{align} 
\smallskip
In the following proofs, we use Lemma \ref{lm:generalcharexpan} below to obtain an asymptotic expansion of each characteristic function.  
\begin{lemma}\label{lm:generalcharexpan}
For a finite integer $L$, when $\eta=1$ or $\rho$, 
 $p/n\to 0$, and $R_{n,L}$ (in \eqref{eq:rnlorder} below) converges to $0$, 
\begin{align*}
\log  \mathrm{E}\{\exp( -2it \eta \log \Lambda_n )\}=&~ -\frac{f}{2}\log (1-2it) +\sum_{l=1}^{L-1}\varsigma_l \big\{(1-2it)^{-l}-1\big\}+R_{n,L},	
\end{align*}
\end{lemma}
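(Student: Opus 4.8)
The plan is to start from the general representation \eqref{eq:logexpgeneralform} of the log-characteristic function and push each of the $K_1+K_2$ log-gamma factors through a uniform Stirling-type expansion whose correction terms are organized by Bernoulli polynomials; the claimed form then emerges after collecting terms by their dependence on $t$ and invoking the structural identities recorded just before the lemma. Concretely, I would apply to each summand the asymptotic expansion established in \S\ref{sec:gammafunc},
\begin{align*}
\log\Gamma(z+a)=\Bigl(z+a-\tfrac12\Bigr)\log z - z + \tfrac12\log(2\pi)+\sum_{l=1}^{L-1}\frac{(-1)^{l+1}B_{l+1}(a)}{l(l+1)}z^{-l}+r_L(z,a),
\end{align*}
taking the large parameter to be $z=\eta\xi_{1,k}(1-2it)$ (respectively $z=\eta\xi_{2,j}(1-2it)$) and the shift $a=\tau_{1,k}+\upsilon_{1,k}$ (respectively $a=\tau_{2,j}+\upsilon_{2,j}$). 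Since $p/n\to0$ forces $\xi_{1,k},\xi_{2,j}\to\infty$, this is legitimate, and the decisive feature is that $z^{-l}=(\eta\xi_{1,k})^{-l}(1-2it)^{-l}$ splits off precisely the factor $(1-2it)^{-l}$ appearing in the target.

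Next I would collect terms. Writing $\log z=\log(\eta\xi)+\log(1-2it)$ and using $\tau_{\cdot}=(1-\eta)\xi_{\cdot}$, the pieces $\eta\xi(1-2it)\log(1-2it)$, $-\eta\xi(1-2it)$, and the $\log\eta$ contribution all cancel between the two sums because $\sum_k\xi_{1,k}=\sum_j\xi_{2,j}$. The same identity combines the surviving $\eta\xi(1-2it)\log\xi$ terms with the explicit linear term $2it\eta\bigl(\sum_k\xi_{1,k}\log\xi_{1,k}-\sum_j\xi_{2,j}\log\xi_{2,j}\bigr)$ in \eqref{eq:logexpgeneralform} into a quantity independent of $t$, which therefore drops out of $\varphi(t)-\varphi(0)$. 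The remaining $t$-dependent leading contribution is $\log(1-2it)$ multiplied by $\sum_k(\tau_{1,k}+\upsilon_{1,k}-\tfrac12)-\sum_j(\tau_{2,j}+\upsilon_{2,j}-\tfrac12)$; the $\tau$ parts cancel once more by the $\xi$-identity, and by the definition of $f$ in \eqref{eq:generalfform} what is left equals $-\tfrac{f}{2}$, yielding the leading term $-\tfrac{f}{2}\log(1-2it)$. The Bernoulli terms assemble into $\sum_{l=1}^{L-1}\varsigma_l(1-2it)^{-l}$ with $\varsigma_l$ as claimed (for problem (III) this specializes to \eqref{eq:varsigmaiii}), and subtracting $\varphi(0)$ replaces each $(1-2it)^{-l}$ by $(1-2it)^{-l}-1$; gathering all the $r_L$ errors defines $R_{n,L}$.

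The main obstacle is the uniform control of the remainder, which is nonstandard for two reasons. First, the number of gamma factors is $K_1+K_2=\Theta(p)$ and grows with $n$. Second, when $\eta=\rho$ the shifts $a=(1-\eta)\xi_{\cdot}+\upsilon_{\cdot}$ are not bounded but grow like $p$, so the classical Stirling remainder (valid for a fixed shift) does not apply directly. I would therefore rely on the refined gamma expansion of \S\ref{sec:gammafunc}, whose error is controlled through $B_{L+1}(a)/z^{L}$ uniformly over complex $z=\eta\xi_{\cdot}(1-2it)$ with $\xi_{\cdot}=\Theta(n)$ and over the admissible range of $t$; summing the resulting $O(p^{L+1}/n^{L})$ bound over the $\Theta(p)$ factors produces the stated order \eqref{eq:rnlorder} and its vanishing under $p/n\to0$. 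By contrast, all the cancellations in the leading terms are purely algebraic consequences of $\sum_k\xi_{1,k}=\sum_j\xi_{2,j}$ and \eqref{eq:generalfform}, and require no asymptotics.
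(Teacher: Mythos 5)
Your proposal is correct and follows essentially the same route as the paper's own proof: both expand each log-gamma factor via the shifted Stirling expansion of \S\,\ref{sec:gammafunc} (Lemma \ref{lm:logzaexpan}) at $z=\eta\xi_{\cdot}(1-2it)$, $a=\tau_{\cdot}+\upsilon_{\cdot}$, then collect terms using $\sum_{k}\xi_{1,k}=\sum_{j}\xi_{2,j}$, $\tau_{\cdot}=(1-\eta)\xi_{\cdot}$, and \eqref{eq:generalfform} to identify the coefficient $-f/2$ of $\log(1-2it)$, with $R_{n,L}$ arising as the sum of per-factor remainders exactly as in \eqref{eq:rnlorder}. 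Your added remark on why the growing shifts ($a=O(p)$ when $\eta=\rho$) force the refined expansion rather than classical Stirling matches the role Lemma \ref{lm:logzaexpan} plays in the paper's argument.
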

where
\begin{align*}
\varsigma_l=\frac{(-1)^{l+1}}{l(l+1)}\left\{ \sum_{k=1}^{K_1}\frac{B_{l+1}(\tau_{1,k} + \upsilon_{1,k} )}{(\eta \times \xi_{1,k})^l} - \sum_{j=1}^{K_2}   \frac{ B_{l+1}(\tau_{2,j}+ \upsilon_{2,j})}{(\eta \times \xi_{2,j} )^l }\right\},
\end{align*}
 $B_{l+1}(\cdot)$ denotes the $(l+1)$-th Bernoulli polynomial;  see, e.g., Eq. (25) in Section 8.2.4 of \cite{Muirhead2009}, 
 and $R_{n,L}$ denotes the remainder which is of the order of
 \begin{align}\label{eq:rnlorder}
 R_{n,L}=O\Biggr( \sum_{k=1}^{K_1}\frac{|\tau_{1,k}+\upsilon_{1,k}|^{L+1}}{|\eta \xi_{1,k}|^{L}}+ \sum_{j=1}^{K_2} \frac{|\tau_{2,j}+\upsilon_{2,j}|^{L+1}}{|\eta \xi_{2,j}|^{L}}\Biggr).	
 \end{align}

\medskip
\begin{proof}
Please see Section \ref{sec:generalcharexpan} on Page \pageref{sec:generalcharexpan}. 
\end{proof}

\smallskip

\noindent We next examine each testing problem based on  Lemma \ref{lm:generalcharexpan}.



\subsubsection{Proof of Theorem \ref{thm:onesamchisq} (I): Testing One-Sample Mean Vector}\label{sec:pfonesamchisq1}
Recall that in Section \ref{sec:pfonesam1}, we mention that testing one-sample mean vector can be viewed as testing coefficient vector of a multivariate linear regression model. 
By Section 10.5 in \cite{Muirhead2009}, we know that in this problem, 
$K_1=1$, $K_2=1$, $\xi_{1,1}=n/2$, $\xi_{2,1}=n/2$, 
$\upsilon_{1,1}=-p/2$, $\upsilon_{2,1}=0$,
$f=p$ and $\rho =1 - (p/2+1)/n$. 
We next discuss the chi-squared approximation without and with the Bartlett correction, respectively. 


\medskip

\noindent \textit{(i) Chi-squared approximation.} \quad 
Consider $\rho=1$ and $p^3/n^2\to 0$. 
Then $\tau_{1,1}=\tau_{2,1}=0$,  
\begin{align}
	\varsigma_l=\frac{(-1)^{l+1}}{l(l+1)}\times \frac{1}{(n/2)^l}\left\{ B_{l+1}\left(-\frac{p}{2}\right)-B_{l+1}(0)\right\},\label{eq:varsigonesammean}
\end{align}
and for any finite integer $L$, $R_{n,L}=O( p^{L+1}n^{-L})$. 
Since $B_{l+1}(\cdot)$ is a polynomial of order $l+1$, then $\varsigma_l=O(p^{l+1}/n^l)$. 
By Lemma \ref{lm:generalcharexpan}, when $p^3/n^2\to 0$, $R_{n,3}=O(p^4n^{-3})\to 0$, and 
\begin{align*}
\mathrm{E}\{\exp( -2it \log \Lambda_n )\}=&~(1-2it)^{-\frac{f}{2}}\prod_{l=1}^2\exp\Big[\varsigma_l \big\{ (1-2it)^{-l}-1\big\} \Big] \big\{1+O(p^4n^{-3})\big\}\notag \\
=&~ (1-2it)^{-\frac{f}{2}}\big\{1+V_1(t)+V_2(t)+V_1(t)V_2(t)\big\}\big\{1+O(p^4n^{-3})\big\}, 
\end{align*} 
where $V_l(t)$ is defined as in \eqref{eq:vltdef} on Page \pageref{eq:vltdef}. 
Then similarly to the proof in Section \ref{sec:pfthmchisqiii}, 
by the inversion property of the characteristic function, 
we obtain 
\begin{align}
&~\Pr ( -2\log \Lambda_n \leq x )\label{eq:probexpani1} \\
=&~\Biggr\{\Pr (\chi_f^2 \leq x ) + \sum_{v=1}^{\infty} \frac{\varsigma_1^v}{v!} \sum_{w=0}^v \binom{v}{w}\Pr( \chi^2_{f+2w}\leq x ) (-1)^{v-w} \notag \\
	&~ + \sum_{v=1}^{\infty} \frac{\varsigma_2^v}{v!} \sum_{w=0}^v \binom{v}{w}\Pr( \chi^2_{f+4w}\leq x )(-1)^{v-w} \notag \\
	&~ + \sum_{\substack{v_1\geq 1; ~ 0\leq w_1\leq v_1\\ v_2\geq 1; ~ 0\leq w_2\leq v_2} }   \frac{\varsigma_1^{v_1}\varsigma_2^{v_2}}{v_1!v_2!}   \binom{v_1}{w_1} \binom{v_2}{w_2}\Pr(\chi^2_{2f+2w_1+4w_2}\leq x)(-1)^{v_1-w_1+v_2-w_2} \Biggr\}\biggr\{1+O\Big(\frac{p^4}{n^{3}}\Big)\biggr\}. \notag 
\end{align}
When $x=\chi_f^2(\alpha)$, 
by Propositions \ref{prop:infinitesumm} and \ref{prop:infinitesumm2}, and $\varsigma_l=O(p^{l+1}/n^l)$, 
we have
\begin{align*}
\Pr ( -2\log \Lambda_n \leq x )=\Pr(\chi^2_f\leq x) +\varsigma_1\big\{ \Pr(\chi^2_{f+2}\leq x) -\Pr(\chi^2_f\leq x) \big\} + o(p^{3/2}/n). 	
\end{align*}
Particularly, by Lemma \ref{lm:order2diff}, 
\begin{align*}
\Pr(\chi_{f+2}^2\leq x) - \Pr(\chi_{f}^2\leq x)	=-  \frac{1}{\sqrt{f\pi }}\exp\left(-\frac{z_{\alpha}^2}{2}\right)\Big\{1+ O(f^{-1/2})\Big\},	
\end{align*}
and we compute $\varsigma_1=(p^2+2p)/(4n)$. 
In Theorem \ref{thm:onesamchisq}, we have $\vartheta_1(n,p)=\varsigma_1/\sqrt{f}$. 


\smallskip


\noindent \textit{(ii) Chi-squared approximation with the Bartlett correction.} \quad 
By choosing the Bartlett correction factor $\rho$ as in \eqref{eq:generalrhoform}, we have $\varsigma_1=0$; see, e.g., Section 8.2.4 in \cite{Muirhead2009}. 
Specifically, in this problem, $\rho =1-(p+2)/(2n)$, $\rho \xi_{1,1}=\rho \xi_{2,1}=n/2-(p+2)/4$,  $\tau_{1,1}=\tau_{2,1}=(p+2)/4$, $\upsilon_{1,1}=-p/2$, $\upsilon_{2,1}=0$, and then
\begin{align*}
	\varsigma_l=\frac{(-1)^{l+1}}{l(l+1)(\rho \times \xi_{1,1} )^l}\left\{ {B}_{l+1}\left(-\frac{p-2}{4}\right) - {B}_{l+1}\left( \frac{p+2}{4} \right)  \right\}.
\end{align*}
We calculate 
$	\varsigma_2={p(p^2-4)}\{48(\rho n)^2\}^{-1}$, $\varsigma_3=0, $
and $\varsigma_l=O(p^{l+1}n^{-l})$ for $l\geq 4$. 
Similarly to the proof in Section \ref{sec:pfthmchisqiii}, when $p^5/n^4\to 0$,  we have
\begin{align*}
\mathrm{E}\{\exp( -2it\rho \log \Lambda_n )\}=(1-2it)^{-\frac{f}{2}}\big\{1+V_2(t)+V_4(t)+V_2(t)V_4(t) \big\} \big\{1+ O(p^6/n^5)\big\}, 
\end{align*}
and thus
\begin{align}
&~\Pr ( -2\rho \log \Lambda_n \leq x )\label{eq:rhologmean2}  \\
=&~\Biggr\{\Pr (\chi_f^2 \leq x ) + \sum_{v=1}^{\infty} \frac{\varsigma_2^v}{v!} \sum_{w=0}^v \binom{v}{w}\Pr( \chi^2_{f+4w}\leq x ) (-1)^{v-w} \notag \\
&~ + \sum_{v=1}^{\infty} \frac{\varsigma_4^v}{v!} \sum_{w=0}^v \binom{v}{w}\Pr( \chi^2_{f+8w}\leq x )(-1)^{v-w} \notag \\
&~ + \sum_{\substack{v_2\geq 1; ~ 0\leq w_2\leq v_2\\ v_4\geq 1; ~ 0\leq w_4\leq v_4} }   \frac{\varsigma_2^{v_2}\varsigma_4^{v_4}}{v_2!v_4!}   \binom{v_2}{w_2} \binom{v_4}{w_4}\Pr(\chi^2_{2f+4w_2+8w_4}\leq x)(-1)^{v_2-w_2+v_4-w_4} \Biggr\}\biggr\{1+O\Big(\frac{p^6}{n^{5}}\Big)\biggr\}. \notag 	
\end{align}
Note that $\varsigma_2=\Theta(p^3n^{-2})$ and $\varsigma_4=\Theta(p^5n^{-4})$. 
By applying proposition \ref{prop:infinitesumm} with $h=2$, 
\begin{align*}
\sum_{v=1}^{\infty} \frac{\varsigma_2^v}{v!} \sum_{w=0}^v \binom{v}{w}\Pr( \chi^2_{f+4w}\leq x ) (-1)^{v-w} = \sum_{v=1}^{\infty}  \Big\{O \big(\varsigma_2 p^{-1/2}\big)\Big\}^v	= \Theta( p^{5/2}n^{-2}).
\end{align*}
By applying proposition \ref{prop:infinitesumm} with $h=4$, we have
\begin{align*}
\sum_{v=1}^{\infty} \frac{\varsigma_4^v}{v!} \sum_{w=0}^v \binom{v}{w}\Pr( \chi^2_{f+8w}\leq x )(-1)^{v-w} = 	\sum_{v=1}^{\infty}  \Big\{O \big(\varsigma_4 p^{-1/2}\big)\Big\}^v =O\big(p^{9/2}n^{-4}\big)=o(p^{5/2}n^{-2}),
\end{align*}
and 
\begin{align*}
&~ \sum_{\substack{v_2\geq 1; ~ 0\leq w_2\leq v_2\\ v_4\geq 1; ~ 0\leq w_4\leq v_4} }   \frac{\varsigma_2^{v_2}\varsigma_4^{v_4}}{v_2!v_4!}   \binom{v_2}{w_2} \binom{v_4}{w_4}\Pr(\chi^2_{2f+4w_2+8w_4}\leq x)(-1)^{v_2-w_2+v_4-w_4} \notag \\
=&~ \sum_{\substack{v_2\geq 1} }  \big\{O\big(\varsigma_2 p^{-1/2} \big)\big\}^{v_2} \sum_{\substack{ v_4\geq 1} } \frac{\{O(\varsigma_4)\}^{v_4}}{v_4!} =o\big(p^{5/2} n^{-2}\big).  
\end{align*} 
In summary, by \eqref{eq:rhologmean2}, 
\begin{align*}
\Pr ( -2\rho \log \Lambda_n \leq x )
=\Pr (\chi_f^2 \leq x ) + \varsigma_2\Big\{ \Pr(\chi^2_{f+4}\leq x) -\Pr(\chi^2_f\leq x)\Big\}+o\big(p^{5/2} n^{-2}\big).
\end{align*}
Particularly, by Lemma  \ref{lm:order2diff}, 
\begin{align*}
\Pr(\chi_{f+4}^2\leq x) - \Pr(\chi_{f}^2\leq x)	=~-  \frac{2}{\sqrt{f\pi }}\exp\left(-\frac{z_{\alpha}^2}{2}\right)\Big\{1+ O(f^{-1/2})\Big\}.	
\end{align*}
In Theorem \ref{thm:onesamchisq} (I), $\vartheta_2(n,p)=2\varsigma_2/\sqrt{f}$.


\subsubsection{Proof of Theorem \ref{thm:onesamchisq} (II): Testing One-Sample Covariance Matrix}

In this problem, by Section 8.3.3 in \cite{Muirhead2009}, we know $f=(p+2)(p-1)/2$, and 
\begin{itemize}
	\item $K_1=p$, ~$K_2=1$;
	\item $\xi_{1,k}=(n-1)/2$, ~~ $\upsilon_{1,k}=-(k-1)/2$ for $k=1,\ldots, K_1$; 
	\item  $\xi_{2,1}=p(n-1)/2$, \, $\upsilon_{2,1}=0$. 
\end{itemize}

\medskip

\noindent \textit{(i) Chi-squared approximation.} \quad
Consider $\rho=1$ and $p^2/n\to 0$. 
Then $\tau_{1,k}=0$ for $k=1,\ldots, K_1$, $\tau_{2,1}=0$, and
\begin{align*}
	\varsigma_l=\frac{(-1)^{l+1}}{l(l+1)}\left\{ \sum_{k=1}^p \left(\frac{2}{n-1} \right)^l {B}_{l+1}\left( -\frac{k-1}{2} \right) - \frac{2}{p(n-1)} {B}_{l+1}(0) \right\}, 
\end{align*} 
which satisfies $\varsigma_l=O(p^{l+2}/n^l)$.  
By Lemma \ref{lm:generalcharexpan},
\begin{align*}
\mathrm{E}\{\exp( -2it\log \Lambda_n )\}=(1-2it)^{-\frac{f}{2}}\big\{1+V_1(t)+V_2(t)+V_1(t)V_2(t) \big\}\big\{1+ O(p^5/n^3)\big\},
\end{align*} 
where $V_l(t)$ is defined as in \eqref{eq:vltdef}.
Similarly to Section \ref{sec:pfthmchisqiii}, by the inversion property of the characteristic functions, and Propositions \ref{prop:infinitesumm} and \ref{prop:infinitesumm2}, 
we obtain \eqref{eq:chisqgoal1}.
We calculate 
\begin{align*}
	\varsigma_1=&~\frac{1}{2}\left[ \sum_{k=1}^p \frac{2}{n-1} \left\{\left(-\frac{k-1}{2} \right)^2-\left(-\frac{k-1}{2} \right) + \frac{1}{6}  \right\}-\frac{2}{p(n-1)}\times \frac{1}{6} \right]\notag \\
	=&~\frac{2p^3+3p^2-p-4/p}{24(n-1)}.
\end{align*}	
The conclusion then follows by Lemma \ref{lm:order2diff} and $\vartheta_1(n,p)=\varsigma_1/\sqrt{f}$. 

\medskip

\noindent \textit{(ii) Chi-squared approximation with the Bartlett correction.} \quad
In this problem, consider 
\begin{align*}
	\rho = 1-\frac{2p^2+p+2}{6p(n-1)},
\end{align*} and $p^3/n^2\to 0$. 
Then $\tau_{1,k}=(2p^2+p+2)/(12p)$ for $k=1,\ldots, p$, and $\tau_{2,1}=(2p^2+p+2)/12$. 
It follows that
\begin{align*}
	\varsigma_l=\frac{(-1)^{l+1}}{l(l+1)}\left\{\frac{\rho (n-1)}{2}\right\}^{-l}\Biggr\{\sum_{k=1}^p{B}_{l+1}\left( \frac{2p^2+p+2}{12p} -\frac{k-1}{2} \right) - p^{-l}{B}_{l+1}\left( \frac{2p^2+p+2}{12} \right)\Biggr\}. 
\end{align*} 
In particular, 
we calculate
\begin{align*}
	\varsigma_2=\frac{(p-2)(p-1)(p+2)}{288p^2\rho^2(n-1)^2}(2p^3+6p^2+3p+2).  	
\end{align*}
Similarly to Section \ref{sec:pfthmchisqiii}, by the inversion property of the characteristic functions, and Propositions \ref{prop:infinitesumm} and \ref{prop:infinitesumm2}, 
we obtain \eqref{eq:chisqbartgoal1}. 
The conclusion then follows by Lemma \ref{lm:order2diff} and $\vartheta_2(n,p)=2\varsigma_2/\sqrt{f}$.

\subsubsection{Proof of Theorem \ref{thm:multisamchisq} (IV): Testing the Equality of Several Mean Vectors}\label{sec:multisamchisq}
Recall that in Section \ref{sec:pfmultimean}, we show that this testing problem can be viewed as testing the coefficient matrix in multivariate linear regression. 
Then by Eq. (3) in Section 10.5.3 in \cite{Muirhead2009}, we know that
in this problem, $f=(k-1)p$, and 
\begin{itemize}
\item $K_1=k-1,$\ $K_2=k-1$; 
\item $\xi_{1,j_1}=n/2$, \ $\upsilon_{1,j_1}=-(j_1+p)/2$, \quad $j_1=1,\ldots, k-1$; 
\item $\xi_{2,j_2}=n/2$, \ $\upsilon_{2,j_2}=-j_2/2$, \quad $j_2=1,\ldots, k-1$.
\end{itemize}

\medskip

\noindent \textit{(i) Chi-squared approximation.} \quad
It follows that
\begin{align*}
\varsigma_l=\frac{(-1)^{l+1}}{l(l+1)}\left( \frac{2}{n}\right)^l \left\{\sum_{j_1=1}^{k-1} {B}_{l+1}\left(-\frac{j_1+p}{2} \right)  - \sum_{j_2=1}^{k-1} {B}_{l+1}\left( -\frac{j_2}{2} \right)	\right\},
\end{align*} 
which is $O(p^{l+1}n^{-l})$ when $k$ is finite. 
In particular, we calculate
\begin{align*}
	\varsigma_1=\frac{p(k-1)(p+2+k)}{4n}.
\end{align*}
Applying similar analysis to that in Section \ref{sec:pfonesamchisq1},
the conclusion follows by $\vartheta_1(n,p)=\varsigma_1/\sqrt{f}$. 

\medskip

\noindent \textit{(ii) Chi-squared approximation with the Bartlett correction.} \quad
In this problem,
\begin{align*}
\rho=1-\frac{1}{2n}(p+k+2).	
\end{align*}
It follows that 
\begin{align*}
\varsigma_l=\frac{(-1)^{l+1}}{l(l+1)}\left( \frac{2}{\rho n}\right)^l\left[ \sum_{j_1=1}^{k-1}B_{l+1}\biggr\{ \frac{(1-\rho)n-(j_1+p)}{2} \biggr\} - \sum_{j_2=1}^{k-1}B_{l+1}\biggr\{  \frac{(1-\rho)n-j_2}{2} \biggr\}\right].	
\end{align*}
We calculate that
\begin{align*}
	\varsigma_2=\frac{(k-1)p(p^2+k^2-2k-4)}{48\rho^2 n^2}.
\end{align*}
Similarly to Section  \ref{sec:pfonesamchisq1}, 
the conclusion then follows by $\vartheta_2(n,p)=2\varsigma_2/\sqrt{f}$. 

\subsubsection{Proof of Theorem \ref{thm:multisamchisq} (V): Testing the Equality of Several Covariance Matrices} \label{sec:pfequalmultcov}

In this problem, by Section 8.2.4 in \cite{Muirhead2009}, we have $f=p(p+1)(k-1)/2$, and 
\begin{itemize}
\item $K_1=kp, \ K_2=p$;
	\item $\xi_{1,j_1}=(n_r-1)/2$, \quad $j_1=(r-1)p+1,\ldots, rp$, \ \ $(r=1,\ldots, k)$;
	\item $\upsilon_{1,j_1} = -(r-1)/2$, \quad $j_1=r,p+r,\ldots, (k-1)p+r$,\ \ $(r=1,\ldots, p)$; 
	\item $\xi_{2,j_2}=(n-k)/2$,\ $\upsilon_{2,j_2}=-(j_2-1)/2$, \quad $j_2=1,\ldots, p$. 
\end{itemize}

\medskip

\noindent \textit{(i) Chi-squared approximation.} \quad
Consider $\rho =1$ and $p^2/n\to 0$. Then 
\begin{align*}
	\varsigma_l=\frac{(-1)^{l+1}}{l(l+1)}\Biggr[ 	\sum_{r_1=1}^k\sum_{r_2=1}^{p} \biggr(\frac{2}{n_{r_1}-1}\biggr)^l{B}_{l+1}\biggr(-\frac{r_2-1}{2} \biggr) - \sum_{j=1}^p \biggr(\frac{2}{n-k}\biggr)^l {B}_{l+1}\biggr(-\frac{j-1}{2} \biggr) \Biggr],\notag
\end{align*}
which satisfies $\varsigma_l=O(p^{l+2}/n^l)$.
Particularly, 
\begin{align*}
\varsigma_1=\biggr(\sum_{i=1}^k \frac{1}{n_{i}-1}  -\frac{1}{n-k}\biggr) \frac{1}{24}p(2p^2+3p-1).	
\end{align*}
Following similar analysis to that in Section  \ref{sec:pfthmchisqiii}, 
the conclusion then follows by $\vartheta_1(n,p)=\varsigma_1/\sqrt{f}$. 

\medskip

\noindent \textit{(ii) Chi-squared approximation with the Bartlett correction.} \quad
In this problem, 
\begin{align*}
\rho =1-\frac{(2p^2+3p-1)}{6(p+1)(k-1)}\Biggr(\sum_{i=1}^k\frac{1}{n_i-1} - \frac{1}{n-k}  \Biggr),	
\end{align*}
and we consider $p^3/n^2\to 0$. 
In this problem, 
\begin{align*}
	\varsigma_l=&~\frac{(-1)^{l+1}}{l(l+1)}\Biggr[ \sum_{r_1=1}^{k}\sum_{r_2=1}^p \frac{{B}_{l+1}\{ (1-\rho)(n_{r_1}-1)/2 -(r_2-1)/2 \}}{\{\rho (n_{r_1}-1)/2 \}^l} \notag \\
	&~ \hspace{4.3em} - \sum_{j=1}^{p} \frac{ {B}_{l+1}\{(1-\rho)(n-k)/2 -(j-1)/2 \} }{\{\rho (n-k)/2\}^l}  \Biggr]. 
\end{align*}
Note that $(1-\rho)(n-k)$ and $(1-\rho)(n_{r_1}-1)$ are of the order of $\Theta(p)$, 
${B}_{l+1}(\cdot)$ is a polynomial of order $l+1$, and $k$ is finite. 
Then for $l\geq 2$, $\varsigma_l=O(p^{l+2}/n^l)$. 
In particular, we calculate
\begin{align*}
\varsigma_2=\frac{p(p+1)}{48\rho^2}\Biggr[ (p-1)(p+2)\Biggr\{\sum_{i_1=1}^{k}\frac{1}{(n_{i_1}-1)^2}-\frac{1}{(n-k)^2}\Biggr\} - 6(k-1)(1-\rho)^2 \Biggr].	
\end{align*}
Similarly to Section  \ref{sec:pfthmchisqiii}, 
the conclusion then follows by $\vartheta_2(n,p)=2\varsigma_2/\sqrt{f}$. 

\subsubsection{Proof of Theorem \ref{thm:multisamchisq} (VI): Joint Testing the Equality of Several Mean Vectors and  Covariance Matrices}

In this problem, by Section 10.8.2 in \cite{Muirhead2009}, we have $f=(k-1)p(p+3)/2$, and 
\begin{itemize}
	\item $K_1=kp,$\ $ K_2=p$;
	\item $\xi_{1,j_1}=n_r/2$, \quad $j_1=(r-1)p+1, ~ \ldots,~ rp$, \ \ $(r=1,\ldots, k)$; 
 \item $\upsilon_{1,j_1} = -r/2$, \quad $j_1=r, ~ p+r, \ldots, ~(k-1)p+r$,\ \  $(r=1,\ldots, p)$;
	\item $\xi_{2,j_2}=n/2$,~ $\upsilon_{2,j_2}=-j_2/2$, \quad $(j_2=1,\ldots, p)$.
\end{itemize}

\medskip

\noindent \textit{(i) Chi-squared approximation.} \quad
Consider $\rho =1$ and $p^2/n\to 0$. 
It follows that
\begin{align*}
	\varsigma_l=&~\frac{(-1)^{l+1}}{l(l+1)}\left\{ \sum_{r_1=1}^{k}\sum_{r_2=1}^p \frac{{B}_{l+1}( -r_2/2 )}{( n_{r_1}/2 )^l} - \sum_{j=1}^{p} \frac{ {B}_{l+1}(-j/2 ) }{(n/2)^l}  \right\}. 
\end{align*}
Particularly, we compute 
\begin{align*}
	\varsigma_1=\Biggr(\sum_{r=1}^k \frac{1}{n_{r}} - \frac{1}{n} \Biggr)  \frac{1}{24}p\left(2 p^{2}+9 p+11\right). 
\end{align*}
Following similar analysis to that in Section  \ref{sec:pfthmchisqiii}, 
the conclusion then follows by $\vartheta_1(n,p)=\varsigma_1/\sqrt{f}$. 


\medskip

\noindent \textit{(ii) Chi-squared approximation with the Bartlett correction.} \quad
In this problem,
\begin{align*}
\rho = 1-\Biggr(\sum_{r=1}^k \frac{1}{n_{r}} - \frac{1}{n} \Biggr)  \frac{\left(2 p^{2}+9 p+11\right)}{6(k-1)(p+3)}. 	
\end{align*}
It follows that $\varsigma_1=0$ and for $l\geq 2$,
\begin{align*}
	\varsigma_l=\frac{(-1)^{l+1}}{l(l+1)}\left\{ \sum_{r_1=1}^{k}\sum_{r_2=1}^p \frac{{B}_{l+1}\{ (1-\rho)n_{r_1}/2 -r_2/2 \}}{(\rho n_{r_1}/2 )^l} - \sum_{j=1}^{p} \frac{ {B}_{l+1}\{(1-\rho)n/2 -j/2 \} }{(\rho n/2)^l}  \right\}. 
\end{align*}
Particularly, we calculate
\begin{align*}
\varsigma_2=\frac{1}{\rho^2}	\Biggr\{\frac{p(p+1)(p+2)(p+3)}{48}\biggr( \sum_{i=1}^k \frac{1}{n_i^2} -\frac{1}{n^2} \biggr)-\frac{p(k-1)(p+3)}{8} (1-\rho)^2\Biggr\}. 	
\end{align*}
Applying similar analysis to that in Section  \ref{sec:pfthmchisqiii}, 
the conclusion then follows by $\vartheta_2(n,p)=2\varsigma_2/\sqrt{f}$.

\subsubsection{Proof of Theorem \ref{thm:chisqindp} (VII): Testing  Independence between Multiple Vectors} \label{sec:pfindpchisq}

In this problem, by Section 11.2.4 in \cite{Muirhead2009}, 
we have $f=(p^2-\sum_{j=1}^k p_j^2)/2$, and
\begin{itemize}
	\item $K_1=p,$\  \ $ K_2=p$;
	\item $\xi_{1,j_1}=n/2$, \ \ $\upsilon_{1,j_1} = -j_1/2$, \quad $j_1=1,\ldots, p$; 
	\item $\xi_{2,~p_1+\ldots+p_{r-1}+j_2}=n/2$,~ $\upsilon_{2,~p_1+\ldots+p_{r-1}+j_2}=-j_2/2$, \quad $r=1,\ldots, k$, \ \ $j_2=1,\ldots, p_r$.
\end{itemize}

\medskip

\noindent \textit{(i) Chi-squared approximation.} \quad
Consider $\rho=1$ and $p^2/n\to 0$. It follows that
\begin{align*}
	\varsigma_l=\frac{(-1)^{l+1}}{l(l+1)}\left\{ \sum_{j_1=1}^{p}\frac{B_{l+1}(-j_1/2)}{(n/2)^l} -\sum_{r=1}^k \sum_{j_2=1}^{p_r}   \frac{ B_{l+1}(-j_2/2)}{(n/2)^l }\right\}.
\end{align*}
Particularly,
\begin{align*}
\varsigma_1=	\frac{2(p^3-\sum_{j=1}^k p_j^3)+9(p^2-\sum_{j=1}^k p_j^2)}{24n}.
\end{align*}
Following similar analysis to that in Section  \ref{sec:pfthmchisqiii}, 
the conclusion then follows by $\vartheta_1(n,p)=\varsigma_1/\sqrt{f}$. 

\bigskip

\noindent \textit{(ii) Chi-squared approximation with the Bartlett correction.} \quad
In this problem, 
\begin{align*}
\rho =1 - \frac{2D_{p,3}+9D_{p,2}}{6nD_{p,2}}	
\end{align*}
where $D_{p,r}=p^r-\sum_{j=1}^k p_j^r$. 
Then 
\begin{align*}
\varsigma_l=\frac{(-1)^{l+1}}{l(l+1)}\left\{ \sum_{j_1=1}^{p}\frac{B_{l+1}\{ (1-\rho)n/2 -j_1/2 \}}{( \rho n/2)^l} - \sum_{r=1}^{k}\sum_{j_2=1}^{p_r}   \frac{ B_{l+1}\{ (1-\rho)n/2 -j_2/2\}}{(\rho n/2 )^l }\right\}.
\end{align*}
In particular, we calculate
\begin{align*}
	\varsigma_2=    \frac{1}{(\rho n)^2}\left(\frac{1}{48}D_{p,4}-\frac{5D_{p,2}}{96} -\frac{D_{p,3}^2}{72D_{p,2}} \right).
\end{align*}
Applying similar analysis to that in Section  \ref{sec:pfthmchisqiii}, 
the conclusion then follows by $\vartheta_2(n,p)=2\varsigma_2/\sqrt{f}$.


\subsection{Proofs of Theorems \ref{thm:onesamnormal},  \ref{thm:multsamnormal}, \& \ref{thm:indepnormal}}\label{sec:pfthmnormal36}

In this section, we prove other problems in Theorems \ref{thm:onesamnormal}, \ref{thm:multsamnormal}, \& \ref{thm:indepnormal} similarly as in Section \ref{sec:pfonsamnomral3}. 
Specifically, we still define $\psi_0(s)=\exp(-s^2/2)$,
and we let $\psi_1(s)$ be  the characteristic function of $(-2\log \Lambda_n + 2\mu_n)/(2n\sigma_n)$, 
where $\Lambda_n$ denotes the corresponding likelihood ratio test statistic, and $\mu_n$ and $\sigma_n$ take the corresponding values given in Theorems \ref{thm:onesamnormal},  \ref{thm:multsamnormal}, \& \ref{thm:indepnormal}. 
By the analysis in Section \ref{sec:pfonsamnomral3}, 
we know that it suffices to prove the results similar to Lemma \ref{lm:chardiffgoal} on Page \pageref{lm:chardiffgoal}. 
In particular, in the following subsections, we prove that under  $H_0$ of each  test, 
when  $s=o(\min \{ (n/p)^{1/2}, f^{1/6}\})$,
the characteristic functions satisfy
\begin{align}
\log \psi_1(s)-\log \psi_0(s)=O\left(\frac{p}{n}+\frac{1}{\sqrt{f}}\right)s +\left(\frac{1}{p}+\frac{p}{n} \right)O\left( s^2\right)+ O\left( \frac{s^3}{\sqrt{f}}\right). \label{eq:chardiffgoalgeneral}
\end{align}



\subsubsection{Proof of Theorem \ref{thm:onesamnormal} (I): Testing One-Sample Mean Vector}\label{sec:pfonesamnormal1}
Recall that in Section \ref{sec:pfonesam1}, we mention that testing one-sample mean vector can be viewed as testing coefficient vector of a multivariate linear regression model. 
By Section 10.5.3 in \cite{Muirhead2009}, we have 
\begin{align*}
	\log \psi_1(s)=\log \frac{\Gamma\big\{\frac{1}{2}n(1-ti)-\frac{1}{2}p \big\}}{\Gamma\big\{\frac{1}{2}(n-p)\big\}}-\log \frac{\Gamma\big\{\frac{1}{2}n (1-ti)\big\}}{\Gamma\big(\frac{1}{2}n\big)} + \frac{\mu_n si}{n\sigma_n},
\end{align*} where $t=s/(n\sigma_n)$. 
By  \eqref{eq:chisqconvgvar}, $t=s/(n\sigma_n)=O(s/\sqrt{f})$. 
By Lemma \ref{lm:ratiogammapprox} (on Page \pageref{lm:ratiogammapprox}),   
\begin{align*}
\log \frac{\Gamma\big\{\frac{1}{2}n(1-ti)-\frac{1}{2}p \big\}}{\Gamma\big\{\frac{1}{2}(n-p)\big\}}	=&~\left\{\frac{1}{2}(n-p)-\frac{1}{2}nti\right\}\log \left\{\frac{1}{2}(n-p)-\frac{1}{2}nti \right\}+\frac{1}{2}nti\notag \\
&~ - \frac{1}{2}(n-p)\log\left\{\frac{1}{2}(n-p)\right\} +\frac{nti}{2(n-p)}+O\left( \frac{t}{n}+t^2\right).
\end{align*}
Similarly, we have 
\begin{align*}
\log \frac{\Gamma\{\frac{1}{2}n(1-ti)\}}{\Gamma(\frac{1}{2}n)}=&~\left\{\frac{n(1-ti)}{2}\right\}\log \left\{\frac{n(1-ti)}{2} \right\}+\frac{1}{2}nti- \frac{n}{2}\log\left(\frac{n}{2}\right)+\frac{ti}{2}+O\left( \frac{t}{n}+t^2\right). 
\end{align*}
It follows that
\begin{align*}
\log \psi_1(s)=g_0\biggr(-\frac{nti}{2}\biggr) - g_0(0)	 + \frac{\mu_n si}{n\sigma_n} + O\left( \frac{pt}{n}+t^2\right),
\end{align*}
where we define in this subsection that $g_0(z)=\{(n-p)/2+z\}\log \{(n-p)/2+z\} - (n/2+z)\log (n/2+z)$. 
Following the proof of Lemma \ref{lm:gjsumapprox} (see Section \ref{sec:gjsumapprox} on Page \pageref{sec:gjsumapprox}), we similarly obtain
\begin{align*}
g_0\biggr(-\frac{nti}{2}\biggr) - g_0(0)= g_0^{(1)}(0) \times \biggr(-\frac{nti}{2}\biggr)-\frac{g_0^{(2)}(0)}{2}\frac{n^2t^2}{4}+ O(pt^3),  
\end{align*}
where 
\begin{align*}
g_0^{(1)}(0)= \log \left(1-\frac{p}{n}\right),\quad \quad \quad g_0^{(2)}(0)=\frac{2p}{n(n-p)}. 
\end{align*}
Recall that $2n\sigma_n/\sqrt{2f}\to 1$ by \eqref{eq:chisqconvgvar}. 
Then by Taylor's series and $f=p$, 
\begin{align*}
g_0^{(2)}(0)n^2=4n^2\sigma_n^2\left\{1+O\left(\frac{p}{n}\right) \right\} = 4n^2\sigma_n^2+ O\left( \frac{p^2}{n}\right).
\end{align*}
Moreover, by Taylor's series, we have $ng_0^{(1)}(0) - 2\mu_n = O\left({p}/{n}\right).$
In summary, by $t=s/(n\sigma_n)$ and $n\sigma_n=\Theta(\sqrt{p})$, we obtain
\begin{align*}
	\log \psi_1(s)=-\frac{\mu_n si}{n\sigma_n}-\frac{4n^2\sigma_n^2}{2}\frac{s^2}{4(n\sigma_n)^2}+\frac{\mu_n si}{n\sigma_n}+O\biggr(\frac{ps}{n}\biggr)+O\biggr(\frac{p}{n}+\frac{1}{p}\biggr)s^2+O\biggr(\frac{s^3}{\sqrt{p}}\biggr).
\end{align*}
Then \eqref{eq:chardiffgoalgeneral} is proved.

%
%
%

\subsubsection{Proof of Theorem \ref{thm:onesamnormal} (II): Testing One-Sample Covariance Matrix}
By Corollary 8.3.6 in \cite{Muirhead2009}, we have
\begin{align*}
\log \psi_1(s)=-\frac{p(n-1)ti}{2}\log p + \log \frac{\Gamma_p\{\frac{1}{2}(n-1)(1-ti)\}}{\Gamma_p\{\frac{1}{2}(n-1)\}}+\log \frac{\Gamma\{\frac{1}{2}p(n-1)\}}{\Gamma\{ \frac{1}{2}p(n-1)(1 -ti) \}}+\mu_n ti. 
\end{align*}
By \eqref{eq:chisqconvgvar} and $f=\Theta(p^2)$, $n\sigma_n=\Theta(p)$. 
Then as $t=s/(n\sigma_n)$, the conditions in  Lemma \ref{lm:gammapaproxexpan} (on Page \pageref{lm:gammapaproxexpan} ) are satisfied and we have
\begin{align*}
 \log \frac{\Gamma_p\{(n-1)(1-ti)/2\}}{\Gamma_p\{(n-1)/2\}}= &~-\frac{(n-1)\beta_{n,1}ti}{2}+\frac{(n-1)^2\beta_{n,2}t^2}{4}+\beta_{n,3}\biggr\{-\frac{(n-1)ti}{2}\biggr\}	 \notag \\
 &~+O\left(\frac{p^2t}{n}\right) +\left(\frac{1}{p}+\frac{p}{n} \right)O\big({p^2t^2}\big)+ O\big( p^2 t^3 \big), 
\end{align*}
where $\beta_{n,1}$, $\beta_{n,2}$, and $\beta_{n,3}(\cdot)$ are defined in Lemma  \ref{lm:gammapaproxexpan}. 
In addition,  we can apply Lemma \ref{lm:ratiogammapprox} and obtain
\begin{align*}
\log \frac{\Gamma\{p(n-1)/2\}}{\Gamma\{ p(n-1)(1-ti)/2\}}	=&~-p\biggr\{\frac{n-1}{2} (1-ti)\biggr\}\log\biggr[p\biggr\{ \frac{n-1}{2} (1-ti) \biggr\}\biggr] \notag \\
&~+\frac{p(n-1)}{2}\log \frac{p(n-1)}{2}-\frac{p(n-1)ti}{2}-ti+ O\left(\frac{t}{pn} + t^2 \right).
\end{align*}
By the definition of $\beta_{n,3}(\cdot)$ in Lemma  \ref{lm:gammapaproxexpan}, we have
\begin{align*}
\log \frac{\Gamma\{p(n-1)/2\}}{\Gamma\{ p(n-1)/2 - pnti/2\}}	= -\beta_{n,3}\biggr\{-\frac{(n-1)ti}{2}\biggr\}-\frac{p(n-1)ti(1-\log p)}{2}+ O\left(t + t^2 \right). 
\end{align*}
Since $\mu_n=(\beta_{n,1}+p)(n-1)/2$, 
 $2n^2\sigma^2=\beta_{n,2}(n-1)^2$, $t=s/(n\sigma_n)$, and $n\sigma_n=\Theta(p)$, 
\begin{align*}
	\log \psi_1(s)-\log \psi_0(s)= O\biggr(\frac{p}{n}+\frac{1}{p}\biggr)s+O\biggr(\frac{1}{p}+\frac{p}{n} \biggr)s^2+O\biggr(\frac{s^3}{p} \biggr).
\end{align*}

%
%
%

\subsubsection{Proof of Theorem \ref{thm:multsamnormal} (IV): Testing the Equality of Several Mean Vectors}
By \eqref{eq:logexpgeneralform} and the analysis in Section \ref{sec:multisamchisq}, we have 
\begin{align*}
\log \psi_1(s)=\sum_{j=1}^{k-1}\Biggr[\log\frac{\Gamma\big\{ \frac{1}{2}(n-j-p) - \frac{1}{2}nti \big\}}{\Gamma\big\{ \frac{1}{2}(n-j-p)\big\}} - \log\frac{ \Gamma\big\{ \frac{1}{2}(n-j)- \frac{1}{2}nti\big\}}{ \Gamma\big\{ \frac{1}{2}(n-j)\big\}} \Biggr] 	+\frac{\mu_n si}{n\sigma_n},
\end{align*}
where $t=s/(n\sigma_n)$. 
By Lemma \ref{lm:ratiogammapprox}, 
\begin{align*}
\log\frac{\Gamma\big\{ \frac{1}{2}(n-j-p) - \frac{1}{2}nti \big\}}{\Gamma\big\{ \frac{1}{2}(n-j-p)\big\}} =	&~\biggr\{ \frac{1}{2}(n-j-p) - \frac{1}{2}nti \biggr\}\log \biggr\{ \frac{1}{2}(n-j-p) - \frac{1}{2}nti \biggr\} \notag \\
&~ -\frac{n-j-p}{2}\log \frac{n-j-p}{2}+\frac{nti}{2}+O(t+t^2).
\end{align*}
Applying similar analysis, we obtain
\begin{align*}
\log\frac{\Gamma\big\{ \frac{1}{2}(n-j-nti)\big\}}{\Gamma\big\{ \frac{1}{2}(n-j)\big\}} =	&~\biggr( \frac{n-j-nti}{2} \biggr)\log \biggr(\frac{n-j-nti}{2}\biggr) -\frac{n-j}{2}\log \frac{n-j}{2}+\frac{nti}{2}+O(t+t^2).	
\end{align*}
It follows that
$
\log \psi_1(s)=\sum_{j=1}^{k-1}\{g_j(nti/2)-g_j(0)\}+\mu_n s i/(n\sigma_n)+O(t+t^2), 
$
where we define in this subsection that
\begin{align*}
	g_j(z)=\biggr( \frac{n-j-p}{2} - z \biggr)\log \biggr( \frac{n-j-p}{2} - z \biggr) - \biggr( \frac{n-j}{2} - z \biggr)\log \biggr( \frac{n-j}{2}- z \biggr). 
\end{align*}
Following similar proof to that of Lemma \ref{lm:gjsumapprox} (see Section \ref{sec:gjsumapprox}),   we obtain
\begin{align}
\sum_{j=1}^{k-1}\{g_j(nti)-g_j(0)\}=\sum_{j=1}^{k-1}g_j^{(1)}(0) \frac{nti}{2}-\frac{n^2t^2}{8}	\sum_{j=1}^{k-1}g_j^{(2)}(0)+O(pt^3), \label{eq:gjdef2}
\end{align}
where 
\begin{align*}
g_j^{(1)}(0)=\log\biggr( \frac{n-j}{2}\biggr) -\log\biggr( \frac{n-j-p}{2}\biggr), \quad \quad  
g_j^{(2)}(0)=\frac{2}{n-j-p}-\frac{2}{n-j}.
\end{align*}
Note that
\begin{align*}
\frac{1}{2}\sum_{j=1}^{k-1}g_j^{(2)}(0)=\sum_{j=1}^{k-1}\frac{p}{(n-j-p)(n-j)}=\frac{p(k-1)}{(n-p-1)n}\left\{1+O\left(\frac{k}{n}\right)\right\},
\end{align*}
and 
\begin{align*}
2\sigma_n^2	=\log\left\{ 1+\frac{p(k-1)}{(n-k)(n-p-1)}\right\}=\frac{p(k-1)}{(n-p-1)n}\left\{1+O\left(\frac{k}{n}\right)\right\}.
\end{align*}
Thus $\sum_{j=1}^{k-1}g_j^{(2)}(0)( 4\sigma_n^2)^{-1}=1+O(n^{-1})$. In addition, 
\begin{align*}
\sum_{j=1}^{k-1}g_j^{(1)}(0)=&~	\log \frac{\Gamma(n-1)}{\Gamma(n-k)}-\log \frac{\Gamma(n-p-1)}{\Gamma(n-p-k)}. 
\end{align*}
We then apply Lemma \ref{lm:loggammaexpan} to expand the $\log \Gamma(\cdot)$ function,  and calculate 
\begin{align*}
\sum_{j=1}^{k-1}g_j^{(1)}(0)=&~	-\left(n-p-k-\frac{1}{2}\right)\left\{\log\left( 1-\frac{p}{n-1}\right)-\log \left(1-\frac{p}{n-k}\right) \right\}\notag \\
&~ -p\log\left(1-\frac{k-1}{n-1}\right)-(k-1)\log \left(1-\frac{p}{n-1} \right)+ O(n^{-1}).
\end{align*}
Therefore  $\sum_{j=1}^{k-1}g_j^{(1)}(0)=-\mu_n/n+O(n^{-1})$. 
Then by \eqref{eq:gjdef2}, $t=s/(n\sigma_n)$, $n\sigma_n=\Theta(f^{1/2})$, and $f=\Theta(p)$, we have
\begin{align*}
	\log \psi_1(s)=&~\big\{-\mu_n/n+O(n^{-1})\big\} nti -\frac{n^2\sigma_n^2t^2}{2}\big\{1+O(n^{-1})\big\}+\mu_n ti+  O\left(t+t^2+pt^3\right)\notag \\
	=&~-\frac{s^2}{2}+O\left(\frac{1}{\sqrt{f}}\right)s+O\left(\frac{p}{n}+\frac{1}{f}\right)s^2+O\left(\frac{s^3}{\sqrt{f}}\right).
\end{align*}
By $\log \psi_0(s)=-s^2/2$, \eqref{eq:chardiffgoalgeneral} is proved.

\subsubsection{Proof of Theorem \ref{thm:multsamnormal} (V): Testing the Equality of Several Covariance Matrices}

By \eqref{eq:logexpgeneralform} and the analysis in Section \ref{sec:pfequalmultcov}, we have
\begin{align*}
\log \psi_1(s) = &~ \log \frac{\Gamma_p\big\{\frac{1}{2}(n-k)\big\} }{  \Gamma_p\big\{\frac{1}{2}(n-k)(1-ti)\big\}}+  \sum_{j=1}^k \log \frac{\Gamma_p \big\{ \frac{1}{2}(n_j-1)(1-ti) \big\} }{ \Gamma_p \big\{ \frac{1}{2}(n_j-1) \big\} }  \notag \\
&~-p\left\{  (n-k)\log(n-k) - \sum_{j=1}^k (n_j-1)\log (n_j-1) \right\}\frac{ti}{2}+ \frac{\mu_n s i}{n\sigma_n},	
\end{align*}
where $t=s/(n\sigma_n)$. 
By Lemma \ref{lm:gammapaproxexpan}, we can expand $\log \Gamma_p(\cdot)$ and obtain 
\begin{align}
\log \psi_1(s) = &~ -\mu_n ti-\frac{n^2\sigma_n^2t^2}{2}+\mu_n ti+R_n(t), \label{eq:logpsiequalcov}
\end{align}
where 
the calculations of $\mu_n$ and $\sigma_n$ are similar to that in Section A.5 of \cite{Jiang15}, and thus the details are skipped here. 
In \eqref{eq:logpsiequalcov}, $R_n(t)$ denotes the remainder term of the expansion. 
Since Lemma \ref{lm:gammapaproxexpan} is used, we know that the remainder term satisfies 
\begin{align*}
	R_n(t)=O\left(\frac{p}{n}\right)s + \left(\frac{1}{p}+\frac{p}{n}\right)s^2+O\left( \frac{s^3}{p}\right).  
\end{align*} 
By $t=s/(n\sigma_n)$ and \eqref{eq:logpsiequalcov}, 
\eqref{eq:chardiffgoalgeneral} is obtained. 



\subsubsection{Proof of Theorem \ref{thm:multsamnormal} (VI): Joint Testing the Equality of Several Mean Vectors and  Covariance Matrices}

By Corollary 10.8.3 in \cite{Muirhead2009}, 
\begin{align*}
\log \psi_1(s) =&~\log \frac{\Gamma_p \big\{ \frac{1}{2} (n-1) \big\} }{\Gamma_p\big\{ \frac{1}{2}(n-1) -\frac{1}{2}nti \big\} } + \sum_{j=1}^k \log \frac{\Gamma_p\big\{ \frac{1}{2}(n_j-1) -\frac{1}{2}n_j ti \big\}}{\Gamma_p\big\{\frac{1}{2}(n_j-1)\big\}} \notag \\
&~-p\biggr(n\log n - \sum_{j=1}^k n_j\log n_j \biggr)\frac{ti}{2}+\frac{\mu_n s i}{n\sigma_n}, 
\end{align*}
where $t=s/(n\sigma_n)$. 
By Lemma \ref{lm:gammapaproxexpan}, 
\begin{align}
 \log \frac{\Gamma_p\big\{ \frac{1}{2}(n_j-1) - \frac{1}{2}n_j ti \big\}}{\Gamma_p\big\{\frac{1}{2}(n_j-1)\big\}}=&~\left[2 p n_{j}+\left(n_{j}- p-\frac{3}{2}\right) n_{j} \log \left(1-\frac{p}{n_{j}-1}\right)\right]\frac{ti}{2}\label{eq:expanloggammproblem4} \\
 &~+\left\{\frac{p}{n_j-1}+\log \left(1-\frac{p}{n_j-1}\right)\right\}\frac{n_j^2t^2}{4} + \varrho_{n_j}(t)+ R_n(t),\notag 
\end{align}
where for an integer $l$, we define
\begin{align}
	\varrho_l(t)=p\left\{ \left(\frac{l-1}{2}+\frac{lt}{2} \right)\log \left(\frac{l-1}{2}+\frac{lt}{2} \right) -\frac{l-1}{2}\log \frac{l-1}{2} \right\}, \label{eq:varrholtdef} 
\end{align} and $R_n(t)$ denotes the remainder term and it is of the order of 
\begin{align}
	R_n(t)=O\left(\frac{pt}{n}\right)+O\left(\frac{1}{p}+\frac{p}{n}\right)p^2t^2+O\big(p^2t^3\big).\label{eq:rntdeforder}
\end{align}
In addition, to evaluate $\log \psi_1(s)$, we also use  Lemma \ref{lm:rhonksumm} below. 
\begin{lemma}\label{lm:rhonksumm}
Under the conditions of Theorem \ref{thm:multsamnormal}, 
as $p/n\to 0$ and $t=s/(n\sigma_n)=O(s/\sqrt{f})$,  
\begin{align}
n^2t^2\log\left(1-\frac{p}{n-1}\right)=&~n^2t^2\log\left(1-\frac{p}{n}\right)+O\left(\frac{p}{n}\right)t^2,\label{eq:rhonksumm1}\\
\left\{(n-p-3/2) n \log \left(1-\frac{p}{n-1}\right)\right\} t=&~\left\{(n-p-3/2) n \log \left(1-\frac{p}{n}\right)\right\} t- p t	+O\left(\frac{p}{n}\right)t. \notag 
\end{align}
Moreover, for $\varrho_l(t)$ defined in \eqref{eq:varrholtdef}, we have
\begin{align}
-\varrho_n(t)+\sum_{j=1}^k \varrho_{n_j}(t)	=\biggr( 1-k-n\log n+\sum_{j=1}^k n_j\log n_j\biggr)\frac{tp}{2}+O\left(\frac{pt}{n}+pt^2\right).\label{eq:rhonksumm2}
\end{align}
\end{lemma}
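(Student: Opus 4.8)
The plan is to treat all three estimates as elementary asymptotic expansions in the small quantities $p/n\to 0$ and $t=O(s/\sqrt{f})$, using first-order Taylor expansions of $\log(1-x)$ and of $x\log x$, and to exploit the single structural identity $\sum_{j=1}^k n_j=n$ to produce the cancellations that keep the error terms small. Identities (1) and (2) are purely about replacing $\log(1-p/(n-1))$ by $\log(1-p/n)$, while the third and main identity is a Taylor expansion of $\varrho_l(t)$ in $t$ followed by summation over the $k$ samples.

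For (1) and (2) I would first collapse the log-difference into a single logarithm,
\[
\log\Bigl(1-\tfrac{p}{n-1}\Bigr)-\log\Bigl(1-\tfrac{p}{n}\Bigr)=\log\frac{n(n-1-p)}{(n-1)(n-p)}=\log\Bigl(1-\tfrac{p}{(n-1)(n-p)}\Bigr),
\]
whose argument is $\Theta(p/n^2)$ and tends to $0$ under $p/n\to 0$. A one-term Taylor expansion gives $\log(1-\tfrac{p}{(n-1)(n-p)})=-\tfrac{p}{(n-1)(n-p)}+O(p^2/n^4)$. Multiplying this difference by the prefactor $n^2t^2$ yields (1), and multiplying by $(n-p-3/2)\,n\,t$ yields (2); in the latter case the rational factor simplifies as $\frac{(n-p-3/2)n}{(n-1)(n-p)}=1+O(1/n)$, so the product of the prefactor with $-\tfrac{p}{(n-1)(n-p)}$ equals $-p+O(p/n)$, which is precisely the source of the isolated $-pt$ term on the right-hand side of (2).

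The third identity is where the real work lies. Writing $\phi(x)=x\log x$, so that $\varrho_l(t)=p\{\phi(\tfrac{l-1}{2}+\tfrac{lt}{2})-\phi(\tfrac{l-1}{2})\}$ by \eqref{eq:varrholtdef}, I would Taylor-expand in $t$ using $\phi'(x)=\log x+1$ and $\phi''(x)=1/x$, obtaining a linear term $p\,\tfrac{lt}{2}(\log\tfrac{l-1}{2}+1)$ and a quadratic remainder $\tfrac{p}{2}\phi''(\xi_l)(\tfrac{lt}{2})^2$ with $\xi_l\asymp l\asymp n$ uniformly over the samples (here the boundedness assumption $\delta<n_i/n_j<\delta^{-1}$ guarantees $\xi_l=\Theta(n)$ for every $j$). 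Forming $-\varrho_n(t)+\sum_{j=1}^k\varrho_{n_j}(t)$, the linear-in-$t$ part splits into the constant contribution $\tfrac{pt}{2}(\sum_j n_j-n)$, which vanishes by $\sum_j n_j=n$, and the logarithmic contribution $\tfrac{pt}{2}\{\sum_j n_j\log\tfrac{n_j-1}{2}-n\log\tfrac{n-1}{2}\}$; expanding $\log(n_j-1)=\log n_j-1/n_j+O(1/n_j^2)$ (the $\log\tfrac12$ pieces again cancel through $\sum_j n_j=n$) converts this into $\tfrac{pt}{2}(1-k-n\log n+\sum_j n_j\log n_j)+O(pt/n)$, matching the stated main term. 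For the quadratic part, each $\varrho_l$ contributes $\tfrac{plt^2}{4}(1+o(1))$, so the leading $t^2$ terms again cancel via $\sum_j n_j-n=0$, leaving only $O(pt^2)$.

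The main obstacle is controlling the cancellations in (3): both the constant ``$+1$'' terms and the leading $t^2$ coefficients are individually of order $pt$ and $pnt^2$, far larger than the claimed error, and only collapse to $O(pt/n)$ and $O(pt^2)$ after invoking $\sum_j n_j=n$ together with the expansion of $\log(n_j-1)$ about $\log n_j$ and the uniform bound $\phi''(\xi_l)=\Theta(1/n)$. Keeping these two cancellations exact—rather than merely asymptotic—while bounding the third-order Taylor remainders uniformly in $j$ (so that they contribute only at order $O(pt^3)$, absorbed by $O(pt^2)$ for small $t$) is the delicate bookkeeping the proof must carry out.
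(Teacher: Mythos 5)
Your treatment of the second (unnumbered) display and of the linear-in-$t$ part of \eqref{eq:rhonksumm2} is sound, but there are two genuine problems.

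First, your own computation contradicts \eqref{eq:rhonksumm1} as you assert it. Multiplying $\log\{1-p/((n-1)(n-p))\}=-p/\{(n-1)(n-p)\}+O(p^2/n^4)$ by the prefactor $n^2t^2$ gives $-pt^2\cdot n^2/\{(n-1)(n-p)\}+O(p^2t^2/n^2)=-pt^2+O(p^2/n)t^2$: the same isolated term that you correctly display as $-pt$ in the second identity survives here as $-pt^2$, and $-pt^2$ is certainly not $O(p/n)t^2$. So the step ``multiplying this difference by the prefactor $n^2t^2$ yields (1)'' is false; what your expansion actually yields is \eqref{eq:rhonksumm1} with an additional $-pt^2$ on the right-hand side, equivalently an $O(p)t^2$ error. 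You cannot silently assert the display as written: you must either carry the $-pt^2$ term explicitly (it is harmless downstream, since the remainder $R_n(t)$ in \eqref{eq:rntdeforder} tolerates $O(pt^2)$) or point out the discrepancy with the stated error.

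Second, and more seriously, the second-order Taylor expansion of $\phi(x)=x\log x$ with a Lagrange remainder cannot reach the error $O(pt/n+pt^2)$ claimed in \eqref{eq:rhonksumm2}. The remainder $\frac{1}{2}\phi''(\xi_l)(lt/2)^2$ involves intermediate points $\xi_l$ that differ across samples, so the cancellation via $\sum_j n_j=n$ applies only to its leading part $\frac{lt^2}{4}$; the per-sample residual is $\frac{lt^2}{4}\{O(1/l)+O(t)\}$, and after summation and multiplication by $p$ you are left with $O(pt^2)+O(pnt^3)$, not $O(pt^2)$. Your claim that the higher-order remainders contribute ``only at order $O(pt^3)$'' drops a factor of $n$: the cubic term is of size $p\,|\phi'''(\xi_l)|\,(lt/2)^3\asymp p\,lt^3\asymp pnt^3$ per sample, and $pnt^3/(pt^2)=nt\asymp ns/p\to\infty$ under the lemma's hypotheses (even for fixed $s$, since $n\sigma_n\asymp p=o(n)$); for $p\ll n^{1/2}$ the parasitic term $pnt^3$ need not even tend to zero. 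What is needed is cancellation of the $l$-linear coefficients at \emph{every} order of $t$ simultaneously, and this is exactly what the paper's proof arranges: it rewrites $\varrho_l(t)/p=\frac{l(1+t)}{2}\log(1+t)+\frac{lt}{2}\log\frac{l}{2}-\frac{t}{2}+O(t/l+t^2)$, so that the entire $l$-linear dependence on $t$ is carried by the closed-form function $\frac{l(1+t)}{2}\log(1+t)$, which cancels exactly under $\sum_j n_j=n$, leaving residuals that are uniformly $O(t/l+t^2)$. A finite Taylor expansion with uncontrolled intermediate points cannot substitute for this step, so your proof of \eqref{eq:rhonksumm2} has a real gap.
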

\begin{proof}
Please see Section \ref{sec:rhonksumm}	on Page \pageref{sec:rhonksumm}.
\end{proof}

\noindent By Lemma \ref{lm:rhonksumm} and the expansions of gamma functions in \eqref{eq:expanloggammproblem4}, we calculate
\begin{align}
&~\log \psi_1(s)\label{eq:psi14}\\
=&~\biggr\{ p-\left(n- p-\frac{3}{2}\right) n \log \left(1-\frac{p}{n}\right)+\sum_{j=1}^{k}\left(n_{j}-p-\frac{3}{2}\right) n_{j} \log \left(1-\frac{p}{n_{j}-1}\right)\biggr\}\frac{ti}{2}\notag \\
&~-\left(n^2L_{n,p}-\sum_{j=1}^k n_j^2L_{n_j-1,p}\right)\frac{t^2}{4}-p\left\{(1-k)-n \log n+\sum_{j=1}^{k} n_{j} \log n_{j}\right\}\frac{ti}{2} \notag \\
&~-p\biggr(n\log n - \sum_{j=1}^k n_j\log n_j \biggr)\frac{ti}{2}+\frac{\mu_n s i}{n\sigma_n} +R_n(t), \notag
\end{align}
where $R_n(t)$ denotes the remainder term of \eqref{eq:psi14}, which is of the order  same as that in \eqref{eq:rntdeforder}, whereas we mention that the exact value of $R_n(t)$ can change.  
Then we obtain \eqref{eq:chardiffgoalgeneral} by $t=s/(n\sigma_n)$ and $n\sigma_n=\Theta(f^{1/2})$. 


\subsubsection{Proof of Theorem \ref{thm:indepnormal} (VII): Testing  Independence between Multiple Vectors} \label{sec:pfindpnormal}

By Theorem 11.2.3 in \cite{Muirhead2009}, we know
\begin{align*}
\log \psi_1(s)=\log\frac{\Gamma_p\{\frac{1}{2}(n-1) - \frac{1}{2}nti \}}{\Gamma_p\{\frac{1}{2}(n-1)\}}+\sum_{j=1}^k \frac{\Gamma_{p_j}\{\frac{1}{2}(n-1)\} }{\Gamma_{p_j}\{\frac{1}{2}(n-1) - \frac{1}{2}nti\}}	+ \frac{\mu_n si}{n\sigma_n}, 
\end{align*}
where $t=s/(n\sigma_n)$. 
By Lemma \ref{lm:gammapaproxexpan},  we can expand $\log \Gamma_p(\cdot)$ and obtain
\begin{align*}
\log \psi_1(s)=&~\left[2 p+\left(n-p-\frac{3}{2}\right) L_{n-1,p}-\sum_{j=1}^{k}\biggr\{ 2 p_{j}+\left(n-p_{j}-\frac{3}{2}\right) L_{n-1,p_j}\biggr\}\right]\frac{nti}{2}\notag \\
&~+\left\{\frac{p}{n-1} +L_{n-1,p}-\sum_{j=1}^k\biggr(\frac{p_j}{n-1} + L_{n-1,p_j}\biggr)  \right\}\frac{n^2t^2}{4}\notag \\
&~+\biggr(p-\sum_{j=1}^{k} p_{j}\biggr)\left\{\frac{n(1-ti)}{2} \log \frac{n(1-ti)}{2}-\frac{n}{2} \log \frac{n}{2}\right\}+\frac{\mu_n si}{n\sigma_n} +R_n(t),
\end{align*}
where $R_n(t)$ denotes the remainder term and its order satisfies 
\begin{align*}
	R_n(t)=O\left(\frac{pt}{n}\right)+O\left(\frac{1}{p}+\frac{p}{n}\right)p^2t^2+O\big(p^2t^3\big).
\end{align*}
Then we obtain \eqref{eq:chardiffgoalgeneral} by noticing $p-\sum_{j=1}^k p_j=0$ and $t=O(s/p)$.  


\bigskip

\section{Proofs of Assisted Lemmas}\label{sec:assitedlemmas}

\subsection{Results on Asymptotic Expansions of the Gamma Functions}\label{sec:gammafunc}

In this section, we provide some results on asymptotic expansions of the gamma functions, 
which are repeatedly used in the proofs.  
We first give the following Lemma \ref{lm:loggammaexpan} on the expansion of $\log \Gamma(z)$, which also provides the basis for 
other lemmas below. 
Lemma \ref{lm:loggammaexpan} and its proof can be found in 12.33 of  \cite{whittaker1996course}. 
\begin{lemma}\label{lm:loggammaexpan}
Suppose that a complex number $z$ satisfies $\text{Re}(z)\geq \epsilon_1 >0$ and $|\mathrm{arg}(z)|\leq \pi/2-\epsilon_2$ with $\epsilon_1>0$ and $0<\epsilon_2< \pi/4$ being given in advance.  
When $|z|\to \infty$, and an even integer $L$, we have	
\begin{align}\label{eq:gammaexpanapprox2}
\log \Gamma(z)=	\left(z-\frac{1}{2}\right)\log z - z + \log \sqrt{2\pi}+\sum_{l=1}^{L-1}\frac{(-1)^{l+1}B_{l+1}(0)}{l(l+1)z^{l}}+R_L(z),
\end{align}
where $B_{l+1}(\cdot)$ represents the Bernoulli polynomial of order $l+1$, and
\begin{align*}
	|R_L(z)|=O\left( \frac{ |B_{L+2}(0)|}{(L+1)(L+2)|z|^{L+1}}\right).
\end{align*}
Particularly, we know $B_{l}(0)=0$ when $l$ is odd and $l\geq 3$.
\end{lemma}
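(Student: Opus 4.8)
The plan is to derive \eqref{eq:gammaexpanapprox2} from Binet's integral representation of the log-gamma function, which is the route underlying 12.33 of \cite{whittaker1996course}. First I would write, for $\mathrm{Re}(z)>0$,
\begin{align*}
\log \Gamma(z)=\left(z-\tfrac{1}{2}\right)\log z - z + \tfrac{1}{2}\log(2\pi)+\mu(z),\qquad \mu(z)=\int_0^\infty\left(\frac{1}{2}-\frac{1}{t}+\frac{1}{e^t-1}\right)\frac{e^{-zt}}{t}\,dt,
\end{align*}
so that the entire content of the lemma reduces to an asymptotic expansion of the Binet function $\mu(z)$ together with a controlled remainder.

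The second step is to expand the bracketed integrand using the generating function of the Bernoulli numbers, $t/(e^t-1)=\sum_{m\geq 0}B_m(0)\,t^m/m!$. Since $B_0(0)=1$ and $B_1(0)=-\tfrac{1}{2}$, the first two terms cancel the $-1/t+1/2$, leaving $\tfrac{1}{2}-\tfrac{1}{t}+\tfrac{1}{e^t-1}=\sum_{m\geq 2}B_m(0)\,t^{m-1}/m!$. Dividing by $t$ and integrating term by term against $e^{-zt}$ by means of $\int_0^\infty t^{m-2}e^{-zt}\,dt=(m-2)!/z^{m-1}$ produces, after setting $l=m-1$, exactly the series $\sum_{l\geq 1}(-1)^{l+1}B_{l+1}(0)/\{l(l+1)z^l\}$ appearing in \eqref{eq:gammaexpanapprox2} (the sign factor being $+1$ on all nonzero terms). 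At this point I would record that $B_{l+1}(0)=0$ for every even $l\geq 2$, so only odd $l$ contribute; this is the closing assertion of the lemma and also explains why the truncation index $L$ is taken even, the last retained term $l=L-1$ being genuinely nonzero and the first omitted nonzero term sitting at $l=L+1$ with coefficient $B_{L+2}(0)$.

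For the remainder I cannot integrate the tail term by term, because $t/(e^t-1)$ has radius of convergence $2\pi$, so the expansion of $\mu(z)$ is only asymptotic rather than convergent. Instead I would use an exact integral representation of the remainder after the order-$(L-1)$ partial sum (Hermite's form), in which the next Bernoulli coefficient $B_{L+2}(0)$ appears as a prefactor and the surviving integral converges in the sector. Here the hypotheses $\mathrm{Re}(z)\geq\epsilon_1>0$ and $|\mathrm{arg}(z)|\leq \pi/2-\epsilon_2$ are exactly what is needed: the angular bound gives $\mathrm{Re}(z)\geq |z|\sin\epsilon_2$, so that $|e^{-zt}|=e^{-t\,\mathrm{Re}(z)}\leq e^{-t|z|\sin\epsilon_2}$ and the representation's denominators stay bounded away from zero, which turns the integral into an estimate of order $|z|^{-(L+1)}$ with an $\epsilon_2$-dependent constant. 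Collecting the factorial prefactors then yields $|R_L(z)|=O(|B_{L+2}(0)|/\{(L+1)(L+2)|z|^{L+1}\})$, uniformly over the prescribed sector.

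The main obstacle is precisely this uniform control of the remainder throughout the complex sector rather than merely on the positive real axis, where the clean heuristic that the error is bounded by the first omitted term is transparent. Off the real axis the restriction $|\mathrm{arg}(z)|<\pi/2$ must be invoked to convert $|e^{-zt}|$ into genuine exponential decay and to keep the Hermite integrand under control; the rest is bookkeeping with the Bernoulli generating function. If the integral estimates became unwieldy, an equivalent fallback would be to apply Euler--Maclaurin summation to $\sum_{j}\log(z+j)$, where the Bernoulli polynomials enter directly and the remainder already comes packaged with a sectorial integral bound.
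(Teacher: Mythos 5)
Your proposal is correct, but it is worth noting that the paper never actually proves this lemma: it is quoted, with its remainder bound, from 12.33 of \cite{whittaker1996course}, and the paper's ``proof'' consists of that citation. The classical derivation behind the citation runs through Binet's \emph{second} formula,
$\log \Gamma(z)=(z-\tfrac12)\log z - z + \tfrac12\log(2\pi)+2\int_0^\infty \arctan(t/z)\,(e^{2\pi t}-1)^{-1}\,dt$,
expanding $\arctan(t/z)$ into a finite sum plus an exact integral remainder and integrating against $(e^{2\pi t}-1)^{-1}$ to produce the Bernoulli numbers (this is also the representation the paper itself invokes later, in the proof of its Lemma on $\log\{\Gamma(x+bi)/\Gamma(x)\}$). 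Your route instead starts from Binet's \emph{first} (Laplace-transform) formula, expands $t/(e^t-1)$ by the Bernoulli generating function, and controls the tail by a Watson's-lemma/Hermite-type exact remainder; this is an equally standard and fully rigorous alternative. Both arguments hinge on exactly the two points you isolate: (i) the Bernoulli series has radius of convergence $2\pi$, so the Stirling series is only asymptotic and the remainder must come from a finite exact representation rather than termwise integration of the whole series; and (ii) the sector hypothesis $|\mathrm{arg}(z)|\leq \pi/2-\epsilon_2$ yields $\mathrm{Re}(z)\geq |z|\sin\epsilon_2$, which converts $|e^{-zt}|$ into genuine exponential decay and makes the remainder estimate uniform over the sector. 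Your bookkeeping is also right where it matters: since $L$ is even, the first omitted term $l=L$ carries $B_{L+1}(0)=0$, so the error is governed by $B_{L+2}(0)$ at order $|z|^{-(L+1)}$, exactly as stated; and the factor $(-1)^{l+1}$ equals $+1$ on every nonzero (odd-$l$) term, so your sign-free series agrees with the displayed one.
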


\medskip

In Lemma \ref{lm:loggammaexpan}, if we take $L=2$ and $z$ as a real number, by $B_2(0)=1/6$, we have
\begin{align}
	\log \Gamma(z)=	\left(z-\frac{1}{2}\right)\log z - z + \log \sqrt{2\pi}+\frac{1}{12z}+O(z^{-2}).\label{eq:logammaexpanl2} 
\end{align}

\noindent Given Lemma \ref{lm:loggammaexpan}, we next prove two additional lemmas on asymptotic expansions of the gamma functions. 

\begin{lemma}\label{lm:logzaexpan}
Suppose a complex number $z+a$ satisfies $\text{Re}(z+a)\geq \epsilon_1 >0$ and $|\mathrm{arg}(z+a)|\leq \pi/2-\epsilon_2$ with $\epsilon_1>0$ and $0<\epsilon_2\leq \pi/4$ being given in advance. 
Assume $|a|\to \infty$ as $|z|\to \infty$ and $|a|=o(|z|)$.  
For a finite even  $L$,  when $|a|^{L+1}/|z|^L \to 0$, 
\begin{align*}
	\log \Gamma(z+a) = \left(z+a-\frac{1}{2}\right)\log z-z + \log \sqrt{2\pi}+\sum_{l=1}^{L-1}\frac{(-1)^{l+1}B_{l+1}(a)}{l(l+1)z^l}+O\left(\frac{|a|^{L+1}}{|z|^{L}} \right).
\end{align*}	
\end{lemma}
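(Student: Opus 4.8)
The plan is to reduce everything to the classical Stirling-type expansion already recorded in Lemma \ref{lm:loggammaexpan}, applied at the \emph{shifted} argument $z+a$, and then to re-expand every term as a series in $a/z$. First I would verify the hypotheses of Lemma \ref{lm:loggammaexpan} at $z+a$: by assumption $\mathrm{Re}(z+a)\ge\epsilon_1>0$ and $|\mathrm{arg}(z+a)|\le\pi/2-\epsilon_2$, while $|a|=o(|z|)$ forces $|z+a|\to\infty$. The lemma then gives
\[
\log\Gamma(z+a)=\Big(z+a-\tfrac12\Big)\log(z+a)-(z+a)+\log\sqrt{2\pi}+\sum_{j=1}^{L-1}\frac{(-1)^{j+1}B_{j+1}(0)}{j(j+1)(z+a)^{j}}+R_L(z+a),
\]
with $|R_L(z+a)|=O(|z+a|^{-(L+1)})=O(|z|^{-(L+1)})$.

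Next I would expand each piece about $z$. Writing $\log(z+a)=\log z+\log(1+a/z)$ and using $\log(1+a/z)=\sum_{m\ge1}\frac{(-1)^{m+1}}{m}(a/z)^m$ (legitimate since $|a/z|=o(1)$), together with $(z+a)^{-j}=z^{-j}\sum_{m\ge0}\binom{-j}{m}(a/z)^m$, the ``large'' pieces recombine cleanly: the order-one contribution of $z\log(1+a/z)$ is exactly $a$, which cancels the $-a$ inside $-(z+a)$, so the leading block collapses to $(z+a-\tfrac12)\log z-z+\log\sqrt{2\pi}$, precisely as claimed. What remains is to show that the collected coefficients of the negative powers $z^{-l}$, for $1\le l\le L-1$, assemble into the Bernoulli \emph{polynomial} sum.

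This identification is the crux of the argument and the step I expect to be the main obstacle, as it is pure bookkeeping across three sources: the $z\log(1+a/z)$ term, the $(a-\tfrac12)\log(1+a/z)$ term, and the re-expanded Bernoulli sum. For each fixed $l$ I would gather these contributions and verify that their sum equals $\tfrac{(-1)^{l+1}B_{l+1}(a)}{l(l+1)}$. I would justify the general case through the generating-function characterization $\tfrac{t\,e^{at}}{e^{t}-1}=\sum_{n\ge0}B_n(a)\,t^n/n!$, equivalently via $B_{l+1}(a)=\sum_{k}\binom{l+1}{k}B_k(0)\,a^{\,l+1-k}$ combined with $B_n'(x)=nB_{n-1}(x)$; the reindexing of $\binom{-j}{m}$-type coefficients is where care is required. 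As a sanity check at $l=1$, the three contributions $-a^2/2$, $a^2-a/2$, and $B_2(0)/2=\tfrac1{12}$ sum to $\tfrac12(a^2-a+\tfrac16)=\tfrac12 B_2(a)$, matching the target.

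Finally I would control the remainder by tracking the omitted tails. The Stirling remainder satisfies $R_L(z+a)=O(|z|^{-(L+1)})=o(|a|^{L+1}/|z|^{L})$ once $|a|$ is large, so it is absorbed. The first truncated Taylor term of $z\log(1+a/z)$ is of exact order $\Theta(a^{L+1}/z^{L})$, which furnishes the stated error $O(|a|^{L+1}/|z|^{L})$; all other truncation tails (from the $(a-\tfrac12)$ piece and from the Bernoulli series, whose weights $\tfrac{1}{j(j+1)}$ are summable) are of the same or smaller order under the standing assumption $|a|^{L+1}/|z|^{L}\to0$. Collecting the leading block, the reorganized Bernoulli-polynomial sum, and this error bound yields the stated expansion.
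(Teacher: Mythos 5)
Your proposal is correct and takes essentially the same route as the paper's proof: apply Lemma \ref{lm:loggammaexpan} at the shifted argument $z+a$, re-expand $\log(1+a/z)$ and $(z+a)^{-l}$ in powers of $a/z$, collect the coefficients of each $z^{-l}$ into $B_{l+1}(a)$ via the identity $B_{l+1}(a)=\sum_{t}\binom{l+1}{t}B_t(0)a^{l+1-t}$, and absorb the Stirling remainder and truncation tails into $O(|a|^{L+1}/|z|^L)$. The bookkeeping step you flag as the crux is exactly the computation carried out in the paper (its Eq. \eqref{eq:logazpartexpan2} and \eqref{eq:blazterm}), and your $l=1$ sanity check matches it.
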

\begin{proof}
Please see Section \ref{sec:logzaexpan} on Page \pageref{sec:logzaexpan}. 
\end{proof}
\medskip

\begin{lemma}\label{lm:ratiogammapprox}
For a real number $x\to \infty$ and a real number $b=o(x)$, 
\begin{align*}
\log \frac{\Gamma(x+ bi)}{\Gamma(x)}= \left( x + bi \right)\log (x + bi ) - x\log x - bi - \frac{bi}{2x} + O\biggr( \frac{b + b^2}{x^2} \biggr),
\end{align*}
where $i$ denotes the imaginary unit. 
\end{lemma}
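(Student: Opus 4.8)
The plan is to apply the Stirling-type expansion of Lemma \ref{lm:loggammaexpan}, in the form \eqref{eq:logammaexpanl2}, to both $\log\Gamma(x+bi)$ and $\log\Gamma(x)$ and then subtract, keeping $(x+bi)\log(x+bi)$ and $x\log x$ as unexpanded leading terms. First I would verify the hypotheses of Lemma \ref{lm:loggammaexpan} for the complex argument $z=x+bi$: since $b=o(x)$, we have $\mathrm{Re}(x+bi)=x\to\infty$, $|x+bi|=\sqrt{x^2+b^2}\geq x\to\infty$, and $|\arg(x+bi)|=\arctan(b/x)\to 0$, so the expansion is valid for all large $x$ on the principal branch of the logarithm; the same holds trivially for the real argument $x$.

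Applying \eqref{eq:logammaexpanl2} to each and subtracting, the $\log\sqrt{2\pi}$ constants cancel and the linear terms $-(x+bi)-(-x)$ produce the $-bi$ summand. Collecting the leading logarithmic terms gives
\begin{align*}
\Bigl(x+bi-\tfrac12\Bigr)\log(x+bi)-\Bigl(x-\tfrac12\Bigr)\log x = (x+bi)\log(x+bi) - x\log x - \tfrac12\log\Bigl(1+\tfrac{bi}{x}\Bigr).
\end{align*}
A Taylor expansion of $\log(1+bi/x)$, legitimate since $b/x\to 0$, yields $-\tfrac12\log(1+bi/x)=-\tfrac{bi}{2x}+O(b^2/x^2)$, which supplies the stated $-bi/(2x)$ term, and the two $1/(12z)$ contributions combine as $\tfrac{1}{12(x+bi)}-\tfrac{1}{12x}=O(b/x^2)$.

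The step I expect to be the main obstacle is controlling the difference of the Stirling remainders $R_2(x+bi)-R_2(x)$. Each remainder is by itself only $O(x^{-2})$, which is \emph{not} $O((b+b^2)/x^2)$ when $b\to 0$ faster than a constant, so one must exploit cancellation between the two expansions rather than bound them separately. I would do this with a Cauchy estimate: $R_2$ is analytic on the admissible region and $|R_2(w)|=O(|w|^{-3})$, so on a disk of radius $x/2$ about $x$ one gets $|R_2'(z)|=O(x^{-4})$, whence $R_2(x+bi)-R_2(x)=O(b\,x^{-3})$, negligible relative to $O((b+b^2)/x^2)$.

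Equivalently, and perhaps more cleanly, I could bypass the subtraction entirely by writing $\log\frac{\Gamma(x+bi)}{\Gamma(x)}=i\int_0^b \psi(x+ui)\,du$ with the digamma asymptotic $\psi(z)=\log z-\tfrac{1}{2z}+O(z^{-2})$ (a standard consequence of Lemma \ref{lm:loggammaexpan} obtained by differentiating the series). Then the $\log$ term integrates exactly to $(x+bi)\log(x+bi)-x\log x-bi$, the $-1/(2z)$ term to $-\tfrac12\log(1+bi/x)=-\tfrac{bi}{2x}+O(b^2/x^2)$, and the $O(z^{-2})$ term integrated over a path of length $b$ contributes $O(b/x^2)$, so the total error emerges directly as $O((b+b^2)/x^2)$ with no delicate cancellation to track. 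Either route, after collecting the explicit terms, yields the claimed identity.
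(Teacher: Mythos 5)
Your proposal is correct, and it isolates exactly the right difficulty: the individual Stirling remainders are not $O\{(b+b^2)/x^2\}$ when $b\to 0$, so cancellation between the remainders at $x+bi$ and at $x$ must be exploited rather than bounding them separately. Your route to that cancellation, however, is genuinely different from the paper's. The paper does not argue from Lemma \ref{lm:loggammaexpan} plus analyticity; it invokes Binet's second formula, which represents the Stirling remainder as an explicit double integral, writes the difference of the two remainders as a single integral of the difference of integrands, and extracts a factor $b/x$ by direct algebraic manipulation of that integrand (after rescaling $b_x=b/x$, $u_x=u/x$), yielding $\tilde R_2=O(b/x^4)$. You instead treat the lemma as a black box for the uniform sector bound $|R_2(w)|=O(|w|^{-3})$ and recover the cancellation from analyticity via a Cauchy estimate along the segment from $x$ to $x+bi$, giving $|R_2'|=O(x^{-4})$ there and hence a difference $O(b/x^4)$ (your stated $O(bx^{-3})$ is weaker but still negligible against the target error); the one point you must check, and do, is that the disks used for the Cauchy estimate remain inside the sector where the lemma's bound holds uniformly. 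This buys independence from any exact integral representation of the remainder, at the price of an analyticity argument the paper avoids. Your digamma alternative, $\log\{\Gamma(x+bi)/\Gamma(x)\}=i\int_0^b \psi(x+ui)\,\mathrm{d}u$ with $\psi(z)=\log z-1/(2z)+O(z^{-2})$, is cleaner still: each term of the claimed formula arises by direct integration, the error term integrates to $O(b/x^2)$, and no cancellation needs to be tracked at all --- though it quietly relies on the digamma asymptotic (equivalently, term-by-term differentiation of Stirling, which again needs a Cauchy-type justification). All three routes terminate in the same total error $O\{(b+b^2)/x^2\}$, dominated by the $-\tfrac12\log(1+bi/x)$ expansion and the $1/(12z)$ difference, which you handle exactly as the paper does.
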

\begin{proof}
Please see Section \ref{sec:ratiogammapprox} on Page \pageref{sec:ratiogammapprox}. 
\end{proof}

\subsubsection{Proof of Lemma \ref{lm:logzaexpan} (on Page \pageref{lm:logzaexpan})}\label{sec:logzaexpan}

By \eqref{eq:gammaexpanapprox2}, for a finite even $L$, we have
\begin{align}
&~\log \Gamma(z+a)\label{eq:logammaza} \\
 =&~ \left(z+a-\frac{1}{2}\right)\log (z+a)-(z+a) +	\log \sqrt{2\pi}+\sum_{l=1}^{L-1}\frac{(-1)^{l+1}B_{l+1}(0)}{l(l+1)(z+a)^l}+O\big(|z+a|^{-L-1}\big)\notag \\
 =&~\left(z+a-\frac{1}{2}\right)\log z - z + \left(z+a-\frac{1}{2}\right)\log \left(1+\frac{a}{z}\right) -a + \log \sqrt{2\pi} \notag \\
 &~+\sum_{l=1}^{L-1}\frac{(-1)^{l+1}B_{l+1}(0)}{l(l+1)z^{l}} \left(1+ \frac{a}{z}\right)^{-l}+O\big(|z+a|^{-L-1}\big).\notag 
\end{align}
By Taylor's expansion, 
\begin{align}
\left(z+a-\frac{1}{2}\right)\log \left(1+\frac{a}{z}\right) -a 
=\sum_{k=1}^{L-1} \frac{(-1)^{k+1}}{z^k} \left\{ \frac{a^{k+1}}{k(k+1)} - \frac{1}{2k} a^k \right\}+O\left(\frac{|a|^{L+1}}{|z|^{L}} \right). \label{eq:logazpartexpan}
\end{align}
Note that $B_0(0)=1$ and $B_1(0)=-1/2$. 
Thus
\begin{align}
\eqref{eq:logazpartexpan}=\sum_{k=1}^{L-1} \frac{(-1)^{k+1}}{k(k+1)z^k}\left\{B_0(0)a^{k+1} + \binom{k+1}{1}B_1(0) a^k\right\}+O\left(\frac{|a|^{L+1}}{|z|^{L}} \right).\label{eq:logazpartexpan2}
\end{align}
In addition, by Taylor's expansion, when $L$ is finite,  
\begin{align}
 &~\sum_{l=1}^{L-1}\frac{(-1)^{l+1}B_{l+1}(0)}{l(l+1)z^{l}} \left(1+ \frac{a}{z}\right)^{-l} \label{eq:blazterm} \\
=&~	\sum_{l=1}^{L-1}\frac{(-1)^{l+1}B_{l+1}(0)}{l(l+1)z^{l}} \left\{\sum_{s=0}^{L-1-l} (-1)^{s} \binom{l+s-1}{s} \frac{a^s}{z^s} +O\left( \left|{a}/{z}\right|^{L-l}\right) \right\}\notag \\
=&~\sum_{k=1}^{L-1} \sum_{t=1}^k \frac{(-1)^{k+1}B_{t+1}(0)}{t(t+1)z^k} \frac{(k-1)!}{(t-1)!(k-t)!}a^{k-t}+ O\left( \left|{a}/{z}\right|^{L}\right)\notag \\
=&~\sum_{k=1}^{L-1} \sum_{t=2}^{k+1} \frac{(-1)^{k+1}B_{t}(0)}{k(k+1)z^k}\binom{k+1}{t}a^{k+1-t}+ O\left( \left|{a}/{z}\right|^{L}\right).\notag 
\end{align}
Combining \eqref{eq:logammaza}, \eqref{eq:logazpartexpan2}, and \eqref{eq:blazterm}, we obtain
\begin{align*}
	\log \Gamma(z+a)=&~\left(z+a-\frac{1}{2}\right)\log z - z + \log \sqrt{2\pi}  \notag \\
	&~+\sum_{k=1}^{L-1} \frac{(-1)^{k+1}}{k(k+1)z^k}\left\{\sum_{t=0}^{k+1} \binom{k+1}{t}B_t(0)a^{k+1-t}\right\}+O\left(\frac{|a|^{L+1}}{|z|^{L}} \right).
\end{align*}
By the property of the Bernoulli polynomials, $B_{k+1}(a)=\sum_{t=0}^{k+1} B_t(0)a^{k+1-t};$ see, e.g., 
Eq. (13) on Page 21 in \cite{luke1969special}.  
Therefore the lemma is proved. 



\subsubsection{Proof of Lemma \ref{lm:ratiogammapprox} (on Page \pageref{lm:ratiogammapprox})}\label{sec:ratiogammapprox}
By Binet's second formula of the gamma function, it can be obtained that for a complex number $z$ with positive real part, and any integer $L\geq 1$,
\begin{align*}
	\log \Gamma(z)=\left(z-\frac{1}{2}\right)\log z- z+\log\sqrt{2\pi}+\sum_{l=1}^{L}\frac{B_{2l}(0)}{(2l-1)(2l)z^{2l-1}}+\frac{2(-1)^{L}}{z^{2L-1}}\int_0^{\infty} \int_0^{t}\frac{u^{2L}\mathrm{d} u }{u^2+z^2}\frac{	\mathrm{d}t}{e^{2\pi t} -1};
\end{align*}
please see Page 252 in \cite{whittaker1996course} for details. 
Take $L=1$, and by $B_2(0)=1/6$, we have
\begin{align*}
\log \Gamma(x)=\left(x-\frac{1}{2}\right)\log x - x + \log \sqrt{2\pi}+\frac{1}{12x}	-\frac{2}{x} \int_0^{\infty}\left(\int_0^{t} \frac{u^2}{x^2+u^2}	\mathrm{d}u \right)\frac{	\mathrm{d}t}{e^{2\pi t} -1}.
\end{align*}
Similarly, we have
\begin{align*}
	\log \Gamma(x+bi)=&~\left(x+bi-\frac{1}{2}\right)\log (x+bi) - (x+bi) + \log \sqrt{2\pi}\notag \\
	&~+\frac{1}{12(x+bi)}	-\frac{2}{x+bi} \int_0^{\infty}\left(\int_0^{t} \frac{u^2}{(x+bi)^2+u^2}	\mathrm{d}u \right)\frac{	\mathrm{d}t}{e^{2\pi t} -1}.
\end{align*}
It follows that
\begin{align}
&~\log \frac{ \Gamma(x+bi)}{\Gamma(x)}\label{eq:logamabir2} \\
=&~ (x+ bi)\log (x+ bi) - x\log x -bi - \frac{1}{2}\log\left(1+\frac{bi}{x}\right)+\frac{1}{12}\left( \frac{1}{x+bi}-\frac{1}{x}\right)+\tilde{R}_2,\notag 
\end{align}
where
\begin{align*}
\tilde{R}_2=-2\int_0^{\infty}\int_0^{t} \left[\frac{u^2}{(x+bi)\{(x+bi)^2+u^2\}}-\frac{u^2}{x(x^2+u^2)}\right] \mathrm{d}u \frac{	\mathrm{d}t}{e^{2\pi t} -1}.
\end{align*}
To evaluate $\tilde{R}_2$, we note that 
\begin{align}
&~	\frac{u^2}{(x+bi)\{(x+bi)^2+u^2\}}-\frac{u^2}{x(x^2+u^2)} \notag \\[3pt]
=&~ -\frac{u^2}{x^3}\times \frac{2b_x i - b_x^2 + b_xi\{(1+b_xi)^2 + u_x^2 \}}{(1+b_xi) \{ (1+b_xi)^2+u_x^2 \}(1+u_x^2)} \notag \\[3pt] 
=&~ -\frac{u^2b_x}{x^3(1+b_xi)(1+u_x^2)}\times \left[\frac{ 2 i - b_x }{ (1+b_xi)^2+u_x^2  } +i \right],\notag 
\end{align}
where for easy presentation, we let $b_x=b/x$ and $u_x=u/x$. 
Since $b=o(x)$, $|(1+b_xi)^{-1}|$ is bounded. Moreover, we also know $(1+u_x^2)^{-1}$ and $|\{(1+b_x i)^2 + u_x^2\}^{-1}|$ are bounded. 
It follows that there exists a constant $C$ such that 
\begin{align*}
|\tilde{R}_2|\leq \frac{Cb_x }{x^{3}}\int_0^{\infty}\left(\int_0^t  {u^2}\mathrm{d}u	\right)\frac{	\mathrm{d}t}{e^{2\pi t} -1} =O\biggr(\frac{b}{x^4} \biggr),
\end{align*}
where we use $\int_0^{\infty} {t^3}(e^{2\pi t} -1)^{-1}\mathrm{d}t$ is a constant; see 7.2 in \cite{whittaker1996course}.  
Lemma \ref{lm:ratiogammapprox} is then obtained by \eqref{eq:logamabir2} and 
\begin{align*}
\log\biggr(1+\frac{bi}{x}\biggr)=	\frac{bi}{x} + O\biggr(\frac{b^2}{x^2}\biggr), \quad \quad \quad \frac{1}{x+bi}-\frac{1}{x}=O\biggr(\frac{b}{x^2}\biggr).   
\end{align*}

\smallskip
\subsection{Lemmas for Theorems \ref{thm:onesamchisq}, \ref{thm:multisamchisq} \& \ref{thm:chisqindp}} \label{sec:chisqlemma}

\subsubsection{Proof of Lemma \ref{lm:lrtiiicharacter} (on Page \pageref{lm:lrtiiicharacter})} \label{sec:lrtiiicharacter}

By \eqref{eq:mmtjoint}, we can write
\begin{align*}
\log \mathrm{E}\{\exp( -2 it \eta \log \Lambda_n  ) \}=G_1+G_2+G_3,	
\end{align*}
where in this subsection, we let
\begin{align*}
	&G_1= -i\eta n p t\log \left(\frac{2 e}{n}\right), \quad   
	G_2= -\frac{np}{2}(1-2 i \eta t)\log (1-2i\eta t), \notag \\
	&G_3=  \log \Gamma_p \left(\frac{n-1}{2}-\eta n it \right)  - \log\Gamma_p \Big(\frac{n-1}{2}\Big). \notag 
\end{align*}
By the property of multivariate gamma function; see, e.g.,  Theorem 2.1.12 in  \citet{Muirhead2009}, 
we obtain
\begin{align*}
G_3=&~	\sum_{j=1}^p \log  \Gamma \left\{\frac{n}{2}(1-2\eta  it) - \frac{j}{2} \right\}-\sum_{j=1}^p\log \Gamma\left(\frac{n}{2}- \frac{j}{2}  \right)\notag \\
=&~\sum_{j=1}^p \left[	\log  \Gamma \left\{\frac{\eta n}{2}(1-2it)+ \frac{n(1-\eta)-j}{2} \right\}- \log \Gamma \left\{\frac{\eta n}{2}+ \frac{n(1-\eta)-j}{2}  \right\}\right]. \notag 
\end{align*}

We first examine $G_3$. When $\eta=1$ or $\eta=\rho$, 
for $1\leq j\leq p$, 
 $n(1-\eta)-j=O(p)$ and  $\eta n= \Theta(n)$.
As $p=o(n)$,  
$|\{n(1-\eta)-j\}\{\eta n (1-2it)\}^{-1}|=O(p/n)=o(1)$. 
Then we can apply Lemma \ref{lm:logzaexpan} on Page \pageref{lm:logzaexpan}, and obtain 
\begin{align*}
&~	\log  \Gamma \left\{\frac{\eta n}{2}(1-2it)+ \frac{n(1-\eta)-j}{2} \right\}\notag \\
=&~ \left\{\frac{\eta n}{2}(1-2it)+ \frac{n(1-\eta)-j-1}{2} \right\}\log \left\{ \frac{\eta n}{2}(1-2it) \right\} -  \frac{\eta n}{2}(1-2it)+\log \sqrt{2\pi} \notag \\
&~ + \sum_{l=1}^{L-1} \frac{(-1)^{l+1}}{l(l+1)}B_{l+1}\left\{\frac{n(1-\eta)}{2}-\frac{j}{2}\right\}\left\{ \frac{\eta n}{2}(1-2it) \right\}^{-l}+O\biggr(\frac{p^{L+1}}{n^{L}}\biggr),
\end{align*}
and 
\begin{align*}
&~	\log  \Gamma \left\{\frac{\eta n}{2}+ \frac{n(1-\eta)-j}{2} \right\}\notag \\	
=&~\left\{\frac{\eta n}{2}+ \frac{n(1-\eta)-j-1}{2} \right\}\log \frac{\eta n}{2} - \frac{\eta n}{2}+\log \sqrt{2\pi} \notag \\
&~+ \sum_{l=1}^{L-1} \frac{(-1)^{l+1}}{l(l+1)}B_{l+1}\left\{\frac{n(1-\eta)}{2}-\frac{j}{2}\right\}\left(\frac{\eta n}{2}\right)^{-l}+O\biggr(\frac{p^{L+1}}{n^{L}}\biggr). 
\end{align*}
It follows that 
\begin{align*}
	G_3=&~-\eta pn ti \log \left(\frac{n}{2e} \right) - p\eta n it \log \eta +\frac{pn}{2}(1-2i\eta t)\log(1-2it)-\sum_{j=1}^p\frac{j+1}{2}\log(1-2it) \notag \\
&~+ \sum_{l=1}^{L-1} \frac{(-1)^{l+1}}{l(l+1)}\sum_{j=1}^pB_{l+1}\left\{\frac{n(1-\eta)}{2}-\frac{j}{2}\right\}\left(\frac{\eta n}{2}\right)^{-l}\left\{ (1-2it)^{-l}-1 \right\}+O\biggr(\frac{p^{L+2}}{n^{L}}\biggr).  
\end{align*}
We next examine $G_2$. 
By $1-2i\eta t = \eta (1-2it) + 1-\eta$, and Taylor's expansion,
\begin{align*}
&~	 (1-2i\eta t)\log (1-2i\eta t) \notag \\
=&~ \{\eta(1-2it)+1-\eta \}\log\{\eta(1-2it) \}\notag \\
&~+ 1-\eta + (1-\eta) \sum_{l=1}^{L-1}\frac{(-1)^{l+1}}{l(l+1)}\biggr( \frac{1-\eta}{\eta}\biggr)^l(1-2it)^{-l}+ O\big\{(1-\eta)^{L+1}\big\}. 
\end{align*}
As $\log (1)=(1-2i\eta \times 0)\log (1-2i\eta \times 0)=0$, by applying Taylor's expansion similarly as above, 
\begin{align*}
&~(1-2i\eta t)\log (1-2i\eta t) - \log(1) \notag \\ 
=&~	-2i\eta t\log \eta(1-2it)  +\log (1-2it) \notag \\
&~+(1-\eta) \sum_{l=1}^{L-1}\frac{(-1)^{l+1}}{l(l+1)}\biggr( \frac{1-\eta}{\eta}\biggr)^l \big\{(1-2it)^{-l}-1\big \}+O\{(1-\eta)^{L+1}\}.\notag 	
\end{align*}
As $(1-\eta)/\eta = \{(1-\eta) n/2 \}/(\eta n/2)$,
\begin{align*}
G_2=&~	- \sum_{l=1}^{L-1}\frac{(-1)^{l+1}}{l(l+1)} \sum_{j=1}^p \left\{\frac{(1-\eta)n}{2}\right\}^{l+1}\left(\frac{\eta n}{2}\right)^{-l} \big\{(1-2it)^{-l}-1\big \}\notag \\
&~ +i \eta npt \log \eta(1-2it) -\frac{np }{2}\log (1-2it)+O\big\{ (1-\eta)^{L+1}pn\big\}.
\end{align*}
In summary, as $1-\eta=O(p/n)$ when $\eta=1$ or $\rho$, we have 
\begin{align*}
	G_1+G_2+G_3=-\sum_{j=1}^p \frac{j+1}{2}\log(1-2it)+\sum_{l=1}^{L-1}\varsigma_l\big\{(1-2it)^{-l}-1\big\}+O\biggr(\frac{p^{L+2}}{n^{L}}\biggr),
\end{align*}
where
\begin{align*}
\varsigma_l= \frac{(-1)^{l+1}}{l(l+1)} \sum_{j=1}^p	\left[ B_{l+1}\left\{\frac{(1-\eta)n}{2}-\frac{j}{2}\right\}- \left\{\frac{(1-\eta)n}{2}\right\}^{l+1}\right]\left(\frac{\eta n}{2}\right)^{-l}. 
\end{align*}
Particularly, as $B_{l+1}(\cdot)$ is a polynomial of order $l+1$ and $(1-\eta)n=O(p)$, we have $\varsigma_l=O(p^{l+2}n^{-l})$.


\subsubsection{Notation of the finite difference and computation rules}\label{sec:notationfinite} 
In the following, we  prove Propositions \ref{prop:infinitesumm} and \ref{prop:infinitesumm2} and Lemma \ref{lm:order2diff} based on the calculus of the finite difference.  
To facilitate the proofs, we introduce some notation. 
Given $x$, define a function with respect to the degrees of freedom  $f$ as $F_x(f)=P(\chi_f^2\leq x)$.
Let $\Delta_{2h}$ represent a forward difference operator with step $2h$, that is, 
$\Delta_{2h}(F_{x},f)=F_x(f+2h)-F_x(f).$ 
For an integer $v\geq 1$, it follows that the $v$-th order forward difference is
\begin{align*}
	\Delta^v_{2h}(F_{x}, f)= \sum_{w=0}^v\binom{v}{w}(-1)^{v-w} F(f+2hw),
\end{align*}
where $\Delta^1_{2h}(F_{x}, f)=\Delta_{2h}(F_{x}, f)$. 
Particularly, when $h=1$, we have
\begin{align*}
	\Delta^v_2(F_{x}, f)= \sum_{w=0}^v \binom{v}{w}(-1)^{v-w}P(\chi^2_{f+2w}\leq x);	
\end{align*}
when $h=2$, 
\begin{align*}
	\Delta^v_4(F_{x}, f)= \sum_{w=0}^v \binom{v}{w}(-1)^{v-w}P(\chi^2_{f+4w}\leq x). 
\end{align*}
In the following proofs, we use several rules of the finite difference operator listed in Lemmas \ref{lm:Leibnizfinite}--\ref{lm:exchangerule} below, which can be found in  Section 3.7 of \cite{zwillinger2002crc}. 
\begin{lemma}[Leibniz rule] \label{lm:Leibnizfinite}
For two functions $F(f)$ and $G(f)$, and two positive integers $v$ and $h$, 
\begin{align*}
	\Delta_{h}^v(FG,f)=\sum_{w=0}^{v}\binom{v}{w} \Delta_h^{w}(F,f)\Delta_h^{v-w}(G, f+hw). 
\end{align*}	
\end{lemma}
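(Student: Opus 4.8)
The final statement to prove is the Leibniz rule for the finite difference operator,
\[
\Delta_{h}^v(FG,f)=\sum_{w=0}^{v}\binom{v}{w}\,\Delta_h^{w}(F,f)\,\Delta_h^{v-w}(G,\,f+hw),
\]
which is the discrete analogue of the Leibniz product rule for higher derivatives. The plan is to prove this by induction on $v$, mirroring the classical proof for derivatives but keeping careful track of the shift in the argument of $G$. The base case $v=0$ is trivial since $\Delta_h^0$ is the identity and the sum collapses to $F(f)G(f)$, while $v=1$ reduces to the elementary product identity $\Delta_h(FG,f)=F(f)\,\Delta_h(G,f)+\Delta_h(F,f)\,G(f+h)$, which follows directly from $F(f+h)G(f+h)-F(f)G(f)$ by adding and subtracting $F(f)G(f+h)$.

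For the inductive step I would assume the formula holds for $v$ and apply $\Delta_h$ once more, writing $\Delta_h^{v+1}(FG,f)=\Delta_h\big(\Delta_h^{v}(FG,\cdot),f\big)$. Substituting the inductive hypothesis and using linearity of $\Delta_h$, the main task is to compute $\Delta_h$ applied to each product $\Delta_h^{w}(F,f)\,\Delta_h^{v-w}(G,f+hw)$ using the $v=1$ product rule. The key technical point is that the second factor is a function of $f$ through the shifted argument $f+hw$, so differencing it in $f$ produces $\Delta_h^{v-w+1}(G,f+hw)$, and the product rule introduces an additional shift by $h$ in exactly one of the two resulting terms.

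Collecting the two groups of terms, I expect to obtain a sum that, after re-indexing $w$, reorganizes into the binomial coefficients $\binom{v+1}{w}$ via the Pascal identity $\binom{v}{w}+\binom{v}{w-1}=\binom{v+1}{w}$. The main obstacle—really the only delicate bookkeeping—will be aligning the shifts in the argument of $G$ so that after re-indexing both contributions carry the \emph{same} shift $f+hw$, which is what makes the Pascal combination legitimate. Concretely, the term coming from differencing $F$ should be written with $G$ evaluated at $f+h(w+1)$ and then shifted back under the index change, so that both pieces match the required form $\Delta_h^{v+1-w}(G,f+hw)$. Once the shifts are matched, the Pascal identity finishes the step and the induction closes. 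Since this lemma is quoted from Section 3.7 of \cite{zwillinger2002crc}, an alternative and fully rigorous route is simply to cite that reference; the inductive argument above is the self-contained derivation I would include if a proof is expected.
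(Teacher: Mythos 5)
Your proof is correct, but it differs from the paper in that the paper does not prove this lemma at all: it simply quotes it (together with Lemmas \ref{lm:linearityrulefinite} and \ref{lm:exchangerule}) as a standard rule of the finite-difference calculus from Section 3.7 of \cite{zwillinger2002crc}. Your induction on $v$ is sound and the one delicate point is handled correctly: applying the $v=1$ product rule to $A(f)=\Delta_h^{w}(F,f)$ and $B(f)=\Delta_h^{v-w}(G,f+hw)$ gives $\Delta_h(B,f)=\Delta_h^{v-w+1}(G,f+hw)$ and $B(f+h)=\Delta_h^{v-w}(G,f+h(w+1))$, so after re-indexing $w\mapsto w+1$ in the second group both contributions at a given index $w$ carry the same shift $f+hw$, and Pascal's identity $\binom{v}{w}+\binom{v}{w-1}=\binom{v+1}{w}$ closes the induction. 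What the two routes buy: the paper's citation keeps the supplement short for a textbook fact that is used only as a computational tool in the proofs of Propositions \ref{prop:infinitesumm} and \ref{prop:infinitesumm2}, whereas your argument makes the statement self-contained and, in particular, makes explicit why the shift $f+hw$ appears in the second factor, which is exactly the feature the paper relies on repeatedly (e.g., in \eqref{eq:prodqalebniz}, \eqref{eq:a2q2orderh4diff}, and \eqref{eq:v1v2updateform}).
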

\begin{lemma}[Linearity rule] \label{lm:linearityrulefinite}
For two constants $C_1$ and $C_2$, two functions $F(f)$ and $G(f)$,  and two positive integers $v$ and $h$, the linear combination $C_1F(f)+C_2G(f)$ satisfies
\begin{align*}
	\Delta_h^v (C_1F + C_2G, f)=C_1\Delta_h^v(F)+C_2\Delta_h^v(G). 
\end{align*}
\end{lemma}
\begin{lemma} \label{lm:exchangerule}
For a function $F(f)$ and positive integers $v_1$, $v_2$, $h_1$, and $h_2$, 
\begin{align*}
	\Delta_{h_2}^{v_2} \Delta_{h_1}^{v_1}(F,f)=\Delta_{h_1}^{v_1}\Delta_{h_2}^{v_2} (F,f) = \Delta_{h_2}^{v_2} \Delta_{h_1}^{v_1-1} \{ \Delta_{h_1}(F,f) \}=\Delta_{h_1}^{v_1}\Delta_{h_2}^{v_2-1}\{\Delta_{h_2}(F,f)\}.
\end{align*}
\end{lemma}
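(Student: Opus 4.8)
The plan is to reduce this commutativity statement to a formal identity about shift operators. First I would introduce the shift operator $E_h$ defined by $E_h F(f) = F(f+h)$ together with the identity operator $I$, so that the forward difference operator factors as $\Delta_h = E_h - I$. The key observation is that any two shifts commute, since
\[
E_{h_1}E_{h_2} F(f) = F(f+h_1+h_2) = E_{h_2}E_{h_1}F(f),
\]
and $I$ commutes with every operator; hence $\Delta_{h_1} = E_{h_1}-I$ and $\Delta_{h_2}=E_{h_2}-I$, being polynomials in the commuting operators $E_{h_1}, E_{h_2}, I$, commute as well. It follows immediately that all their powers commute, so that $\Delta_{h_2}^{v_2}\Delta_{h_1}^{v_1} = \Delta_{h_1}^{v_1}\Delta_{h_2}^{v_2}$ as operators, which is exactly the first claimed equality after applying both sides to $F$ at the point $f$.

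To keep the argument self-contained without formally invoking an operator algebra, I would alternatively verify the first equality by direct expansion using the binomial formula recorded in $\S$\,\ref{sec:notationfinite}, namely $\Delta_h^v(F,f)=\sum_{w=0}^v \binom{v}{w}(-1)^{v-w}F(f+hw)$. Expanding the left-hand side one operator at a time yields
\[
\Delta_{h_2}^{v_2}\Delta_{h_1}^{v_1}(F,f)=\sum_{w_1=0}^{v_1}\sum_{w_2=0}^{v_2}\binom{v_1}{w_1}\binom{v_2}{w_2}(-1)^{v_1-w_1+v_2-w_2}F(f+h_1w_1+h_2w_2),
\]
and the identical expansion of $\Delta_{h_1}^{v_1}\Delta_{h_2}^{v_2}(F,f)$ produces the same double sum. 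Since this sum is manifestly symmetric under interchanging the triples $(h_1,v_1,w_1)$ and $(h_2,v_2,w_2)$, the two iterated differences coincide.

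The remaining two equalities require no new idea and follow from the defining recursion $\Delta_h^v = \Delta_h^{v-1}\circ \Delta_h$ combined with the commutativity just established. Peeling one factor of $\Delta_{h_1}$ off $\Delta_{h_1}^{v_1}$ gives $\Delta_{h_2}^{v_2}\Delta_{h_1}^{v_1}(F,f)=\Delta_{h_2}^{v_2}\Delta_{h_1}^{v_1-1}\{\Delta_{h_1}(F,f)\}$, while peeling one factor of $\Delta_{h_2}$ off $\Delta_{h_2}^{v_2}$ gives $\Delta_{h_1}^{v_1}\Delta_{h_2}^{v_2}(F,f)=\Delta_{h_1}^{v_1}\Delta_{h_2}^{v_2-1}\{\Delta_{h_2}(F,f)\}$; the first equality is used to line these expressions up.

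I do not expect a genuine obstacle here, as the statement is a purely formal identity: $F$ is simply a function of its degree-of-freedom argument, every sum is finite, and no convergence or analytic considerations enter. The only point deserving a line of care is the bookkeeping of signs and binomial coefficients in the double-sum expansion, which is routine; if one preferred maximal rigor, a short induction on $v_1+v_2$ could replace the symmetry observation.
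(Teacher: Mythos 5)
Your proof is correct, but note that the paper does not actually prove Lemma \ref{lm:exchangerule} at all: it is quoted, together with Lemmas \ref{lm:Leibnizfinite} and \ref{lm:linearityrulefinite}, as a standard computation rule from Section 3.7 of \cite{zwillinger2002crc}, so there is no internal argument to compare against. Your self-contained verification is sound on both routes you offer: the shift-operator argument ($\Delta_h = E_h - I$ with all $E_h$ and $I$ commuting) and the explicit double-sum expansion
$\Delta_{h_2}^{v_2}\Delta_{h_1}^{v_1}(F,f)=\sum_{w_1=0}^{v_1}\sum_{w_2=0}^{v_2}\binom{v_1}{w_1}\binom{v_2}{w_2}(-1)^{v_1-w_1+v_2-w_2}F(f+h_1w_1+h_2w_2)$,
whose symmetry in $(h_1,v_1,w_1)\leftrightarrow(h_2,v_2,w_2)$ gives the first equality, are both valid and essentially equivalent. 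The only point worth a line of extra care: the paper defines $\Delta_h^v$ directly by the binomial sum in $\S$\,\ref{sec:notationfinite}, not by the recursion $\Delta_h^v=\Delta_h^{v-1}\circ\Delta_h$ that you invoke as "defining" for the last two equalities; the two definitions agree by a one-step Pascal's-rule computation, which you should state (or absorb into your double-sum expansion, which already implicitly contains it). With that bookkeeping made explicit, your argument is a complete proof of a fact the paper simply cites.
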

\medskip

\noindent Based on the notation and lemmas on the finite difference, we first prove Lemma \ref{lm:order2diff} in Section \ref{sec:order2diff}, and then use Lemma \ref{lm:order2diff}  to prove Propositions \ref{prop:infinitesumm} and \ref{prop:infinitesumm2} in Sections \ref{sec:pfpropiftydiff1} and  \ref{sec:pfpropiftydiff2}, respectively. 

\subsubsection{Proof of Lemma \ref{lm:order2diff} (on Page \pageref{lm:order2diff})}\label{sec:order2diff}

We prove \eqref{eq:probdiff1exact} in Lemma \ref{lm:order2diff} from the  cumulative distribution function of the chi-squared distribution.
In particular, by the probability density of $\chi_f^2$, we have 
\begin{align*}
	\Pr\left( \chi^2_f \leq x \right) = \frac{\gamma( f/2, x/2 ) }{\Gamma(f/2)}, 
\end{align*}
where $\gamma(m,x)$ is the lower incomplete gamma function defined as $\gamma(m,x)=\int_{0}^{x} t^{m-1}e^{-t}\mathrm{d} t$, and $\Gamma(m)$ is the gamma function defined as $\Gamma(m)=\int_{0}^{\infty} t^{m-1}e^{-t}\mathrm{d}t$; see, e.g., Section 6.2 in \cite{press1992numerical}. 
Thus for an integer $h$, 
\begin{align}
\Delta_{2h}^1(F_x,f)=\frac{ \Gamma\big(\frac{f}{2}\big)\gamma\big(\frac{f}{2}+h, \frac{x}{2}\big)- \Gamma\big(\frac{f}{2}+h\big)\gamma\big(\frac{f}{2}, \frac{x}{2}\big) }{\Gamma\big(\frac{f}{2}+h\big)\Gamma\big(\frac{f}{2}\big)},  \notag 
\end{align}
where $\Delta_{2h}^1(F_x,f)=\Pr( \chi^2_{f+2h} \leq x )-\Pr( \chi^2_f \leq x )$ following the notation in Section \ref{sec:notationfinite}. 
By integration by parts, we have 
\begin{align}
	\Gamma(m+1)=m\Gamma(m), \quad \text{ and then } \quad \Gamma(m+h)=\prod_{k=1}^h (m+h-k)\Gamma(m). \label{eq:recursgamma}
\end{align}
Similarly, we have $\gamma(m+1,x)=m \gamma(m,x) - x^me^{-x}$, and then 
\begin{align*}
	\gamma(m+h, x)=\prod_{k=1}^h (m+h-k)\gamma(m,x) - \sum_{k=1}^h \prod_{t=1}^{k-1}(m+h-t) x^{m+h-k}e^{-x};
\end{align*}
this recurrence formulas can also be found in Sections 6.3 and 6.5 in \cite{abramowitz1948handbook}. 
It follows that
\begin{align*}
\Delta_{2h}^1(F_x,f)=  -\frac{\sum_{k=1}^h\prod_{t=1}^{k-1}({f}/{2}+h-t)({x}/{2})^{\frac{f}{2}+h-k}e^{-x/2}}{ \prod_{t=1}^h ({f}/{2}+h-t) \times  \Gamma({f}/{2})}=- \sum_{k=1}^h \frac{\left({x}/{2}\right)^{\frac{f}{2}+h-k}e^{-x/2}}{\Gamma\left({f}/{2}+h-k+1\right)}. 
\end{align*}
Therefore \eqref{eq:probdiff1exact} is proved. 

We next prove  \eqref{eq:probdiff1approx} in Lemma \ref{lm:order2diff} based on  \eqref{eq:probdiff1exact} by discussing $h\in \{1,2,3,4\}$, respectively. \\ 
\textit{(1).} We first consider $h=1$.  
Under this case, 
\begin{align}
\Delta_{2}^1(F_x,f) = - \frac{ (x/2)^{f/2}e^{-x/2}}{\Gamma(f/2+1)}.	 \label{eq:probdiff1exacth2}
\end{align}
By \eqref{eq:logammaexpanl2}, as $f\to \infty$,  $\Gamma(f/2)=(f/2)^{f/2-1/2}e^{-f/2}\sqrt{2\pi}\{1+O(f^{-1})\}$.
Moreover, by $\Gamma(f/2+1)=\Gamma(f/2)f/2$, we have
\begin{align*}
	\frac{1}{\Gamma(f/2+1)}(x/2)^{f/2}e^{-x/2}=&~\frac{1}{\sqrt{f\pi }}\left(\frac{x}{f}\right)^{{f}/{2}}\exp\left\{\frac{f-x}{2}+O(f^{-1})\right\} \notag \\
=&~ \frac{1}{\sqrt{f\pi}}\exp\left\{ \frac{f-x}{2} +\frac{f}{2}\log \biggr(1+\frac{x-f}{f}\biggr)  + O(f^{-1})\right\}. \notag 
\end{align*}
When $x=\chi_f^2(\alpha)$, we have $x=f+\sqrt{2f}\{z_{\alpha}+O(f^{-1/2})\}$ by \eqref{eq:xchisqorder}.  
Then by Taylor's series, 
\begin{align*}
\Delta_{2}^1(F_x,f) =\frac{1}{\sqrt{f\pi}}\exp\left\{ -\frac{(x-f)^2}{4f} + O(f^{-1/2})\right\} =	\frac{1}{\sqrt{f\pi}}\exp \left(-\frac{z_{\alpha}^2}{2}\right) \{1 + O(f^{-1/2})\}. 
\end{align*}
\textit{(2).} When $h=2$, by \eqref{eq:probdiff1exact}, \eqref{eq:recursgamma}, and $x=f+\sqrt{2f}\{z_{\alpha}+O(f^{-1/2})\}$, we have
\begin{align*}
\Delta_{4}^1(F_x,f) =&~ -\frac{\frac{x}{2}}{\frac{f}{2}+1} \times \Delta_{2}^1(F_{x}, f) + \Delta_{2}^1(F_{x}, f)= -\frac{2}{\sqrt{f\pi}}\exp \left(-\frac{z_{\alpha}^2}{2}\right) \{1+O(f^{-1/2})\}. 
\end{align*}
\textit{(3).} When $h=3$, similarly by \eqref{eq:probdiff1exact}, \eqref{eq:recursgamma}, and $x=f+\sqrt{2f}\{z_{\alpha}+O(f^{-1/2})\}$, we have
\begin{align*}
\Delta_{6}^1(F_{x}, f)=- \frac{(\frac{x}{2})^2}{ (\frac{f}{2}+2)(\frac{f}{2}+1) }  \Delta_{2}^1(F_{x},f)+ \Delta_{4}^1(F_{x},f)=-\frac{3}{\sqrt{f\pi}}\exp \left(-\frac{z_{\alpha}^2}{2}\right) \{1+O(f^{-1/2})\}.\notag	
\end{align*}
\textit{(4).} When $h=4$, similarly by \eqref{eq:probdiff1exact}, \eqref{eq:recursgamma}, and $x=f+\sqrt{2f}\{z_{\alpha}+O(f^{-1/2})\}$, we have
\begin{align*}
\Delta_{8}^1(F_{x}, f)=&~- \frac{(\frac{x}{2})^3}{(\frac{f}{2}+3)(\frac{f}{2}+2)(\frac{f}{2}+1) }  \Delta_{2}^1(F_{x},f)+ \Delta_{6}^1(F_{x},f)\notag \\
=&~-\frac{4}{\sqrt{f\pi}}\exp \left(-\frac{z_{\alpha}^2}{2}\right) \{1+O(f^{-1/2})\}.\notag		
\end{align*}
In summary, \eqref{eq:probdiff1approx} is proved. 


\subsubsection{Proof of Proposition \ref{prop:infinitesumm} (on Page \pageref{prop:infinitesumm})} \label{sec:pfpropiftydiff1}
We prove Proposition \ref{prop:infinitesumm} based on the notation in Section \ref{sec:notationfinite} and Lemma \ref{lm:order2diff}, which is proved in Section \ref{sec:order2diff} above. 
Particularly, we write the left hand side of  \eqref{eq:probdifforder1h} as $\Delta_{2h}^v(F_x,f)$ below.  
By \eqref{eq:probdiff1approx}, we know \eqref{eq:probdifforder1h} holds for $v=1$ and  $h\in \{1,2,3,4\}$.  We next prove \eqref{eq:probdifforder1h} for $v\geq 2$ when $h\in \{1,2,3,4\}$, respectively. 

\medskip
\noindent \textit{(Part I) Proof for $h=1$}. \quad 
When $v=2$, by \eqref{eq:probdiff1exact}, we have
\begin{align*}
\Delta_{2}^2(F_{x}, f)= -\frac{1}{\Gamma(\frac{f}{2}+2)}\left(\frac{x}{2}\right)^{\frac{f}{2}+1}e^{-x/2} + \frac{1}{\Gamma(\frac{f}{2}+1)}\left(\frac{x}{2}\right)^{\frac{f}{2}}e^{-x/2}.	
\end{align*}
Then we can write  $\Delta_{2}^2(F_{x}, f)= A_1(f)Q_1(f)$, where we define 
\begin{align}
	Q_1(f)=\Delta_2^1(F_x,f), \quad \text{ and } \quad A_1(f)=x/(f+2)-1. \label{eq:a1q1def}
\end{align}
Note that $Q_1(f)=O(f^{-1/2})$ by \eqref{eq:probdiff1approx}, and $A_1(f)=O(f^{-1/2})$ by \eqref{eq:xchisqorder} when $x=\chi_f^2(\alpha)$. 
Therefore, \eqref{eq:probdifforder1h}  holds for $h=1$ and $v=2$. 

We next prove \eqref{eq:probdifforder1h} for $h=1$ and $v> 2$ by the mathematical induction. 
Assume that there exists some constant $C$ such that uniformly for integers $1\leq k\leq v-1$, 
\begin{align*}
\Delta^k_2(F_{x}, f)	 = O( k! C^k  f^{-k/2}), 
\end{align*}
that is, uniformly for integers $1 \leq k\leq v-1$, 
\begin{align}
	\Delta_2^{k-1}(Q_1, f)=O( k! C^k f^{-k/2}). \label{eq:qformkm1} 
\end{align}
We next prove $\Delta^v_2(F_{x},f) =  O(v!C^vf^{-v/2})$.
By the definition of $Q_1(f)$ and $A_1(f)$, we have
\begin{align*}
\Delta^v_2(F_{x},f) = \Delta^{v-1}_2(Q_1, f)=\Delta^{v-2}_2(A_1Q_1, f). 
\end{align*}
By Lemma \ref{lm:Leibnizfinite}, 
\begin{align}
\Delta^{v-2}_2(A_1Q_1, f)=\sum_{w=0}^{v-2}\binom{v-2}{w} \Delta_2^w (A_1,f)\Delta_2^{v-2-w}(Q_1, {f+2w}). \label{eq:prodqalebniz}
\end{align}
To evaluate \eqref{eq:prodqalebniz}, by \eqref{eq:qformkm1}, for $0\leq w\leq v-2$, we have
$$\Delta_2^{v-2-w}(Q_1, {f+2w})=O\big\{(v-w-1)!C^{v-w-1}f^{-(v-w-1)/2}\big\}.$$ 
In addition, to evaluate $\Delta_2^w (A_1,f)$ in \eqref{eq:prodqalebniz}, we use the following Lemma \ref{lm:a1order}. 
\begin{lemma} \label{lm:a1order}
When $x=\chi_f^2(\alpha)$ and $f\to \infty$, $A_1(f)=\sqrt{2}z_{\alpha}f^{-1/2}\{1+O(f^{-1})\},$
and for any integer $w\geq 1$,  
\begin{align}
\Delta_2^w (A_1, f)= 	x\times (-1)^{w}2^ww!\frac{1}{\prod_{k=1}^{w+1}(f+2k)}.  \label{eq:a1korderdif}
\end{align} 
Thus there exists a constant $C$ such that \eqref{eq:a1korderdif} is of the order of $O(w! C^w f^{-w})$ as $f\to \infty$ uniformly for $w\geq 1$. 	
\end{lemma}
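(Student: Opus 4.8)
The statement to prove is Lemma~\ref{lm:a1order}, which asserts two facts about $A_1(f)=x/(f+2)-1$ with $x=\chi_f^2(\alpha)$: first, a leading-order asymptotic $A_1(f)=\sqrt{2}z_\alpha f^{-1/2}\{1+O(f^{-1})\}$, and second, an exact closed-form expression for the $w$-th forward difference $\Delta_2^w(A_1,f)$ together with its order $O(w!\,C^w f^{-w})$.

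The plan is to handle the two claims separately. For the leading-order asymptotic of $A_1(f)$ itself, I would start from the Berry--Esseen expansion of the chi-squared quantile in \eqref{eq:xchisqorder}, which gives $\chi_f^2(\alpha)=f+\sqrt{2f}\,z_\alpha+O(1)$ as $f\to\infty$. Substituting $x=\chi_f^2(\alpha)$ into $A_1(f)=x/(f+2)-1=(x-f-2)/(f+2)$, the numerator becomes $\sqrt{2f}\,z_\alpha + O(1)$ and the denominator is $f+2$, so $A_1(f)=\sqrt{2f}\,z_\alpha/(f+2)+O(f^{-1})=\sqrt{2}\,z_\alpha f^{-1/2}\{1+O(f^{-1})\}$ after a routine Taylor expansion of $(f+2)^{-1}$. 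This is entirely elementary.

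For the exact difference formula \eqref{eq:a1korderdif}, the key observation is that $A_1(f)+1 = x/(f+2)$, where $x$ is treated as a fixed constant (the difference operator $\Delta_2$ acts only on the explicit $f$ argument, not on the quantile value). Since $\Delta_2$ annihilates the constant $1$ by the linearity rule (Lemma~\ref{lm:linearityrulefinite}), for $w\geq 1$ we have $\Delta_2^w(A_1,f)=x\,\Delta_2^w\{(f+2)^{-1},f\}$. The plan is then to prove by induction on $w$ that $\Delta_2^w\{(f+2)^{-1},f\}=(-1)^w 2^w w!\,\prod_{k=1}^{w+1}(f+2k)^{-1}$. The base case $w=1$ is a direct computation: $\Delta_2\{(f+2)^{-1}\}=(f+4)^{-1}-(f+2)^{-1}=-2/\{(f+2)(f+4)\}$, matching the formula. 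For the inductive step I would apply $\Delta_2$ once more to the product form and simplify the resulting telescoping difference, using the standard rational-function difference identity; this is the mechanism behind the factor $(-1)^w 2^w w!$ and the rising product of denominators, analogous to differentiating $(f+2)^{-1}$ repeatedly but in the discrete setting. The order statement $O(w!\,C^w f^{-w})$ then follows immediately because $\prod_{k=1}^{w+1}(f+2k)^{-1}=O(f^{-(w+1)})$, $x=\Theta(f)$, and $2^w=C^w$ with $C=2$, leaving $w!$ intact and the net power $f^{-w}$.

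The main obstacle, though it is modest, will be verifying the inductive step of the exact product formula cleanly: one must confirm that applying $\Delta_2$ to $\prod_{k=1}^{w+1}(f+2k)^{-1}$ produces exactly $(-(w+1)\cdot 2)\prod_{k=1}^{w+2}(f+2k)^{-1}$ up to the telescoping, so that the coefficient $(-1)^w 2^w w!$ advances correctly to $(-1)^{w+1}2^{w+1}(w+1)!$. This requires carefully tracking how the finite difference of a product of shifted reciprocals collapses, which is where sign and index bookkeeping can go wrong; once that identity is confirmed, the uniformity of the $O(w!\,C^wf^{-w})$ bound over $w\geq 1$ is transparent since the constant $C=2$ does not depend on $w$ and the $x=\Theta(f)$ estimate holds uniformly via \eqref{eq:xchisqorder}.
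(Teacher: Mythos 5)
Your proposal is correct and follows essentially the same route as the paper: the leading asymptotic for $A_1(f)$ comes from the quantile expansion \eqref{eq:xchisqorder}, the exact formula \eqref{eq:a1korderdif} is established by induction with the same telescoping computation (your factoring out the constant $-1$ via linearity before inducting on $\Delta_2^w\{(f+2)^{-1},f\}$ is only a cosmetic repackaging of the paper's direct induction on $\Delta_2^w(A_1,f)$), and the uniform bound $O(w!\,C^w f^{-w})$ follows from $x=O(f)$ and $f+2k\geq f$ exactly as you argue. No gaps relative to the paper's own proof.
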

\begin{proof}
Please see Section \ref{sec:a1order} on Page \pageref{sec:a1order}.	
\end{proof}
\medskip

\noindent By Lemma \ref{lm:a1order}, 
\eqref{eq:prodqalebniz} gives that as $f\to \infty$, 
\begin{align}
&~ \Delta^{v-2}_2(A_1Q_1, f)\label{eq:a1q1vdiff2}\\
 =&~ O(f^{-1/2})\times O\big\{(v-1)!C^{v-1}f^{-(v-1)/2}\big\}  \notag \\
 &~ +	\sum_{w=1}^{v-2}\binom{v-2}{w}O\big( w! C^wf^{-w}\big) \times O\big\{(v-w-1)!C^{v-w-1}f^{-(v-w-1)/2}\big\}\notag \\
  =&~(v-1)! C^{v-1}O(f^{-v/2}) + \sum_{w=1}^{v-2} (v-2)!(v-w-1)C^{v-1}O\{f^{-(w-1)/2}\times f^{-v/2}\}  \notag \\
  =&~ O(v!C^v f^{-v/2}),\notag
\end{align}
where in the last equation, we use $v-w-1\leq v-1$ and $O\{f^{-(w-1)/2}\}=O(1)$ when $w\geq 1$. 
We note that there exists a constant $C$ such that the last equation in \eqref{eq:a1q1vdiff2} holds uniformly for $v\geq 1$. 
In summary, we obtain \eqref{eq:probdifforder1h} for $h=1$.  

\bigskip

\noindent \textit{(Part II) Proof for $h=2$}. \quad 
By \eqref{eq:probdiff1exact}, \eqref{eq:probdiff1exacth2} and \eqref{eq:a1q1def}, 
\begin{align}
	\Delta_{4}^1(F_{x}, f) =&~ Q_2(f)+ Q_1(f), \label{eq:linearformvh24}
\end{align}
where we define 
\begin{align*}
Q_2(f)=- \frac{1}{\Gamma(\frac{f}{2}+2)}\left(\frac{x}{2}\right)^{\frac{f}{2}+1}e^{-x/2}.	
\end{align*}
Then by \eqref{eq:linearformvh24} and Lemma \ref{lm:linearityrulefinite}, we have
\begin{align*}
	\Delta_{4}^v(F_{x}, f) =&~\Delta_{4}^{v-1} (Q_2, f)+ \Delta_{4}^{v-1}(Q_1,f).
\end{align*}
Therefore, to prove 
 \eqref{eq:probdifforder1h} for $h=2$,  it suffices to prove 
\begin{align}
	\Delta_{4}^{v-1} (Q_1,f)= &~ O(v!C^vf^{-v/2}), \notag \\
	\Delta_{4}^{v-1} (Q_2,f)= &~ O(v!C^vf^{-v/2}).  \label{eq:q2difforder}
\end{align}

As $Q_1(f)=Q_2(f-2)$, it suffices to prove \eqref{eq:q2difforder}, and we next use the mathematical induction. 
Note that \eqref{eq:q2difforder} holds for $v=1$ since $\Delta_{4}^{0} (Q_2, f) =Q_2(f)=O(f^{-1/2})$ by the proof of \eqref{eq:probdiff1approx}. 
In addition, for $v=2$, we have
\begin{align}
	\Delta_{4}^{1} (Q_2, f) =&~ Q_2(f+4)- Q_2(f) = A_2(f)Q_2(f), \label{eq:a4diffq2prod}
\end{align} 
where
\begin{align*}
A_2(f) = \frac{(\frac{x}{2})^2}{(\frac{f}{2}+3)(\frac{f}{2}+2)} -1.
\end{align*}
Note that $Q_2(f)=O(f^{-1/2})$, and when $x=\chi^2_f(\alpha)$, we have 
$A_2(f)=O(f^{-1/2})$ by \eqref{eq:xchisqorder}. 
Therefore, $\Delta_{4}^{1} (Q_2, f)=O(f^{-1})$, i.e.,  \eqref{eq:q2difforder} holds for  $v=2$. 
For $v\geq 3$, we next use the mathematical induction, where we assume for integers $0\leq w \leq v-2$, 
\begin{align}
	\Delta_{4}^{w} (Q_2,f)= O\{(w+1)!C^{w+1} f^{-(w+1)/2}\},\label{eq:q2difforderh4}
\end{align}
and prove \eqref{eq:q2difforder}. 
By \eqref{eq:a4diffq2prod}, $\Delta_{4}^{v-1} (Q_2,f)= \Delta_{4}^{v-2}(A_2Q_2,f)$.
Then by Lemma \ref{lm:Leibnizfinite}, 
\begin{align}
 \Delta_{4}^{v-2} (A_2Q_2,f) =\sum_{w=0}^{v-2}\binom{v-2}{w} \Delta_4^w(A_2,f)	\Delta_4^{v-2-w}(Q_2,f+4w). \label{eq:a2q2orderh4diff}	
 \end{align}
We next prove \eqref{eq:q2difforder} by \eqref{eq:q2difforderh4}, \eqref{eq:a2q2orderh4diff}	 and the following Lemma \ref{lm:a2orderf}. 

\medskip 
\begin{lemma} \label{lm:a2orderf}
When $x=\chi_f^2(\alpha)$, 
$A_2(f)=2\sqrt{2}z_{\alpha}f^{-1/2} \{1+O(f^{-1/2})\}$. 
Moreover, there exists a constant $C$ such that uniformly for any integer $w\geq 1$,
\begin{align}
	\Delta_4^w(A_2,f)= O\biggr\{(w+1)! C^w \prod_{t=1}^w(f+2t)^{-1} \biggr\}. \label{eq:ordera2fh4diff}
\end{align} 
\end{lemma}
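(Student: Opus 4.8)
The plan is to prove the two assertions separately, both by reducing $A_2$ to an explicit rational function of the degrees-of-freedom argument and then invoking the finite-difference calculus already set up. For the leading-order value, I would substitute the quantile expansion directly into the closed form $A_2(f)=x^2\{(f+4)(f+6)\}^{-1}-1$, whose numerator equals $x^2-(f+4)(f+6)$. Using $x=\chi^2_f(\alpha)=f+\sqrt{2f}\{z_{\alpha}+O(f^{-1/2})\}$ from \eqref{eq:xchisqorder}, one has $x^2=f^2+2\sqrt{2}\,z_{\alpha}f^{3/2}+O(f)$, so the numerator is $2\sqrt{2}\,z_{\alpha}f^{3/2}+O(f)$ while the denominator is $f^2\{1+O(f^{-1})\}$; dividing gives $A_2(f)=2\sqrt{2}\,z_{\alpha}f^{-1/2}\{1+O(f^{-1/2})\}$, the claimed first statement.

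For the finite-difference estimate \eqref{eq:ordera2fh4diff}, I would first note that $x$ is held fixed (it is $\chi^2_f(\alpha)$ at the base point), so $A_2$ is a genuine rational function of its argument, and the additive constant $-1$ is annihilated by $\Delta_4$ for every $w\ge1$ by the linearity rule (Lemma \ref{lm:linearityrulefinite}). After the partial-fraction split $x^2\{(f+4)(f+6)\}^{-1}=\tfrac{x^2}{2}\{(f+4)^{-1}-(f+6)^{-1}\}$, I would apply the exact step-$4$ difference formula $\Delta_4^w\{(f+c)^{-1}\}=(-1)^w4^w w!\prod_{t=0}^w(f+c+4t)^{-1}$ — the step-$4$ analogue of the identity underlying Lemma \ref{lm:a1order} — to each pole. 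This yields the exact expression $\Delta_4^w(A_2,f)=\tfrac{x^2}{2}(-1)^w4^w w!\{P_w^{-1}-Q_w^{-1}\}$, where $P_w=\prod_{t=0}^w(f+4+4t)$ and $Q_w=\prod_{t=0}^w(f+6+4t)$.

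The remaining task, which I expect to be the main obstacle, is to bound this uniformly in $w$ rather than only for bounded $w$. I would control the difference of products by telescoping: since each of the $w+1$ factors of $Q_w$ exceeds the corresponding factor of $P_w$ by exactly $2$, one gets $Q_w-P_w\le 2Q_w\sum_{j=0}^w(f+6+4j)^{-1}\le 2Q_w(w+1)/f$, hence $|P_w^{-1}-Q_w^{-1}|=(Q_w-P_w)(P_wQ_w)^{-1}\le 2(w+1)f^{-1}P_w^{-1}$, so that $|\Delta_4^w(A_2,f)|\le x^2f^{-1}4^w(w+1)!\,P_w^{-1}$. Using $x^2=\Theta(f^2)$ to absorb one factor $(f+4)\ge f$ of $P_w$, followed by the elementary comparison $f+4+4t\ge f+2t$ for $t\ge1$, converts $x^2f^{-1}P_w^{-1}$ into $O\{\prod_{t=1}^w(f+2t)^{-1}\}$ with a constant independent of $w$ and $f$, which delivers the stated bound $O\{(w+1)!\,C^w\prod_{t=1}^w(f+2t)^{-1}\}$ with $C=4$. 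The delicate point throughout is that every majorization — the telescoping estimate, the cancellation of $x^2/f$ against $(f+4)$, and the factor-by-factor comparison — must carry constants that do not grow with $w$, which is precisely what these product-form bounds are arranged to guarantee.
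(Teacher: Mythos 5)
Your proof is correct, and it takes a genuinely different route from the paper. The paper decomposes $A_2$ multiplicatively, writing $A_2(f)=A_{2,1}(f)A_{2,2}(f)-1$ with $A_{2,1}(f)=x/(f+4)$, $A_{2,2}(f)=x/(f+6)$, expands $\Delta_4^w(A_{2,1}A_{2,2},f)$ via the Leibniz rule (Lemma \ref{lm:Leibnizfinite}) into a binomial sum of $w+1$ terms, computes each factor's step-$4$ differences exactly (its Eq. \eqref{eq:b1sdfifforder} is precisely your formula $\Delta_4^k\{(f+c)^{-1}\}=(-1)^k4^kk!\prod_{t=0}^k(f+c+4t)^{-1}$ with the $x$ factor attached), and then bounds the sum; the factor $w+1$ from the number of Leibniz terms combines with $w!$ to give $(w+1)!$. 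You instead decompose additively by partial fractions, $A_2(f)=\tfrac{x^2}{2}\{(f+4)^{-1}-(f+6)^{-1}\}-1$, so that only the linearity rule is needed and you obtain an \emph{exact} single-term closed form $\Delta_4^w(A_2,f)=\tfrac{x^2}{2}(-1)^w4^ww!\,(P_w^{-1}-Q_w^{-1})$; the $(w+1)$ factor then emerges from your telescoping bound $Q_w-P_w\le 2Q_w\sum_{j=0}^w(f+6+4j)^{-1}\le 2Q_w(w+1)/f$, which is valid (the standard product-difference identity plus $\prod_{t<j}(f+4+4t)\le\prod_{t<j}(f+6+4t)$), and your final absorptions — $x^2/\{f(f+4)\}=O(1)$ uniformly in $w$, and $f+4+4t\ge f+2t$ for $t\ge1$ — are all legitimate and $w$-independent, delivering the bound with $C=4$. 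Your leading-order computation $A_2(f)=2\sqrt2\,z_\alpha f^{-1/2}\{1+O(f^{-1/2})\}$ is also correct and matches the statement. What each approach buys: yours is more elementary and makes the uniformity in $w$ completely transparent from an explicit formula; the paper's product/Leibniz machinery is what scales naturally to the companion results where the same scheme is reused for three- and four-factor products ($A_3$, $A_4$ in Lemmas \ref{lm:a3orderf}--\ref{lm:a4orderf}) and for the mixed-step double differences (Lemmas \ref{lm:doublediffa1}--\ref{lm:doublediffa2}), where partial fractions would become progressively messier.
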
 
\begin{proof}
Please see Section \ref{sec:a2orderf} on Page  \pageref{sec:a2orderf}.
\end{proof}

\medskip


\noindent By Lemma \ref{lm:a2orderf} and \eqref{eq:q2difforderh4},  we have
 \begin{align}
 	\eqref{eq:a2q2orderh4diff}=& O(f^{-1/2})\times O\big\{(v-1)!C^{v-1}f^{-(v-1)/2}\big\}\notag \\
& + \sum_{w=1}^{v-2} \binom{v-2}{w} O\biggr\{ (w+1)!C^w\prod_{t=1}^w(f+2t)^{-1}(v-1-w)! C^{(v-1-w)} f^{-\frac{v-1-w}{2}}\biggr\} \notag \\
=& O\big\{(v-1)!C^{v-1}f^{-\frac{v}{2}}\big\}+\sum_{w=1}^{v-2}O\big\{(v-2)!(v-1-w)C^{v-1} f^{-\frac{v}{2}}\big\}\frac{(w+1)f^{\frac{w+1}{2}}}{\prod_{t=1}^{w}(f+2t)}. \label{eq:a2q2orderh4diff2} 
 \end{align}
To evaluate \eqref{eq:a2q2orderh4diff2}, 
we note that when $w=1$ and $2$, $(w+1)f^{(w+1)/2}\{\prod_{t=1}^{w}(f+2t)\}^{-1}=O(f^{(1-w)/2})$; 
when $w\geq 3$, as $f\to \infty$, 
\begin{align*}
	\frac{(w+1)f^{(w+1)/2}}{\prod_{t=1}^{w}(f+2t)}\leq \frac{w+1}{2w}\frac{f^{(w+1)/2}}{f^{w-1}} = O(1)
\end{align*} 
uniformly over $w\geq 3$.
Moreover, by $\sum_{w=1}^{v-2}(v-2)!(v-1-w)\leq v!$, 
we obtain $\eqref{eq:a2q2orderh4diff}=O(v!C^vf^{-v/2}).$

  
 \bigskip
 
\noindent \textit{(Part III) Proof for $h=3$}. \quad 
 By \eqref{eq:probdiff1exact}, 
\begin{align}
	\Delta_{6}^1(F_{x}, f) =&~ Q_3(f)+ Q_2(f)+Q_1(f), \label{eq:linearformvh26}
\end{align}
where we define
\begin{align*}
	Q_3(f)=- \frac{1}{\Gamma(\frac{f}{2}+3)}\left(\frac{x}{2}\right)^{\frac{f}{2}+2}e^{-x/2}.	
\end{align*}
Then by \eqref{eq:linearformvh26} and Lemma \ref{lm:linearityrulefinite}, 
\begin{align*}
\Delta_6^v(F_x,f)= \Delta_6^{v-1}(Q_3,f) + 	 \Delta_6^{v-1}(Q_2,f) +  \Delta_6^{v-1}(Q_1,f). 
\end{align*}
Since $Q_2(f)=Q_3(f-2)$ and $Q_1(f)=Q_3(f-4)$, it suffices to prove 
 \begin{align}
 	 \Delta_6^{v-1}(Q_3,f)=O(v!C^vf^{-v/2}). \label{eq:q3difforder}
 \end{align}

 
We next prove \eqref{eq:q3difforder} by the mathematical induction. Note that \eqref{eq:q3difforder} holds for $v=1$ since $ \Delta_6^{0}(Q_3,f)=Q_3(f)=O(f^{-1/2})$ by the proof of \eqref{eq:probdiff1approx}  in Section \ref{sec:order2diff}. In addition, for $v=2$, 
\begin{align}
\Delta_6^1(Q_3,f)=Q_3(f+6)-Q_3(f)=A_3(f)Q_3(f),	\label{eq:a3diffq3prod}
\end{align} 
where
\begin{align*}
	A_3(f)= \prod_{k=1}^3 A_{3,k}(f) -1, \quad \quad A_{3,k}(f)=\frac{x}{f+4+2k}.
\end{align*} 
Note that $A_3(f)=O(f^{-1/2})$ when $x=\chi^2_f(\alpha)$ by \eqref{eq:xchisqorder}. 
Moreover, as $Q_3(f)=O(f^{-1/2})$, $\Delta_6^1(Q_3,f)=O(f^{-1})$, i.e., \eqref{eq:q3difforder} holds for $v=2$. 
For $v\geq 3$, 
we next use the mathematical induction, where we assume for integers $0\leq w\leq v-2$, 
\begin{align}
	\Delta_{6}^{w} (Q_3,f)= O\{(w+1)!C^{w+1} f^{-(w+1)/2}\},\label{eq:q3difforderh6}
\end{align}
and prove \eqref{eq:q3difforder}. 
By \eqref{eq:a3diffq3prod}, $\Delta_6^{v-1}(Q_3,f)=\Delta_6^{v-2}(A_3Q_3,f).$ 
Then by Lemma \ref{lm:Leibnizfinite}, 
\begin{align}
 \Delta_{6}^{v-2} (A_3Q_3,f) =\sum_{w=0}^{v-2}\binom{v-2}{w} \Delta_6^w(A_3,f)	\Delta_6^{v-2-w}(Q_3,f+6w). \label{eq:a3q3orderh6diff}	
 \end{align}
We next prove \eqref{eq:a3q3orderh6diff}	 by \eqref{eq:q3difforderh6}  and the following Lemma \ref{lm:a3orderf}. 
 
\begin{lemma} \label{lm:a3orderf}
When $x=\chi_f^2(\alpha)$, $A_3(f)=3\sqrt{2}z_{\alpha}f^{-1/2} \{1+O(f^{-1/2})\}$. 
Moreover, there exists a constant $C$ such that uniformly for any integer $w\geq 1$,
\begin{align*}
\Delta_6^w(A_3,f)=&~O\biggr\{(w+2)!C^w \prod_{t=1}^{w}(f+2t)^{-1}\biggr\}. 
\end{align*}
\end{lemma}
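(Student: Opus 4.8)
The proof splits into the two asserted estimates. For the pointwise leading order of $A_3(f)$, I would first rewrite $A_3(f)+1=\prod_{k=1}^3 A_{3,k}(f)=x^3\{(f+6)(f+8)(f+10)\}^{-1}$ and then substitute the quantile asymptotics from \eqref{eq:xchisqorder}, namely $x=\chi^2_f(\alpha)=f+\sqrt{2f}\,z_\alpha+O(1)$. Writing $x=f\{1+\sqrt{2/f}\,z_\alpha+O(1/f)\}$ and expanding both numerator and denominator in powers of $f^{-1/2}$, the cube of the numerator contributes the leading deviation $3\sqrt{2/f}\,z_\alpha$, while the denominator $f^3\{1+24/f+O(f^{-2})\}$ contributes only an $O(1/f)$ correction, so that $A_3(f)=3\sqrt{2}\,z_\alpha f^{-1/2}\{1+O(f^{-1/2})\}$. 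This is a short Taylor-series computation entirely parallel to the leading-order parts of Lemmas \ref{lm:a1order} and \ref{lm:a2orderf}.

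For the finite-difference bound I would exploit the product structure together with the fact that, throughout the calculus of finite differences in Section \ref{sec:notationfinite}, $x$ is held fixed and only $f$ varies. By the linearity rule (Lemma \ref{lm:linearityrulefinite}), the constant $-1$ is annihilated by $\Delta_6^w$ for every $w\geq 1$, so it suffices to bound $\Delta_6^w(A_{3,1}A_{3,2}A_{3,3},f)$. Each factor is $x$ times a simple reciprocal $(f+4+2k)^{-1}$, whose $w$-th step-$6$ difference admits the closed form $\Delta_6^w\{(f+c)^{-1}\}=(-1)^w 6^w w!\prod_{k=0}^w(f+c+6k)^{-1}$, proved by the same induction used for Lemma \ref{lm:a1order}, and this is $O\{w!\,C^w f^{-(w+1)}\}$ uniformly in $w$. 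I would then apply the Leibniz rule (Lemma \ref{lm:Leibnizfinite}) twice to the triple product, producing a sum over $w_1+w_2+w_3=w$ of products of individual differences evaluated at arguments shifted by multiples of $6$. Since for each fixed $w$ the shifts remain negligible as $f\to\infty$, all shifted arguments stay $\Theta(f)$ and the order estimates of the individual factors are unaffected. (A direct recursion from $A_2$ is unavailable because the denominator offsets differ, so the triple product is treated directly.)

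The bookkeeping then runs as follows. A generic term carries a multinomial weight $w!/(w_1!w_2!w_3!)$ multiplied by $O\{w_1!w_2!w_3!\,C^w f^{-(w_1+1)-(w_2+1)-(w_3+1)}\}$, so the factorials cancel and each term is $O\{w!\,C^w f^{-(w+3)}\}$; restoring the prefactor $x^3=\Theta(f^3)$ gives $O\{w!\,C^w f^{-w}\}$ per term. Summing the $O(w^2)$ compositions of $w$ into three parts, the term count is absorbed into $(w+2)!/w!=(w+1)(w+2)$, yielding $O\{(w+2)!\,C^w f^{-w}\}$, which I then record in the sharper product form $O\{(w+2)!\,C^w\prod_{t=1}^w(f+2t)^{-1}\}$ that tracks the shifts in the denominators. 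The main obstacle will be precisely this last step: securing a bound that is genuinely uniform in $w$ with a single constant $C$, which demands the denominator be written as $\prod_{t=1}^w(f+2t)^{-1}$ rather than $f^{-w}$, and requires checking that the $O(w^2)$ Leibniz terms and the various index shifts are all dominated by the growth factor $(w+2)!$ without inflating $C$.
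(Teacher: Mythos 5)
Your approach is essentially the paper's: factor $A_3(f)+1=\prod_{l=1}^3 A_{3,l}(f)$, get the closed-form step-6 difference of each factor by the same induction as in Lemma \ref{lm:a1order}, expand $\Delta_6^w$ of the triple product by two applications of the Leibniz rule (Lemma \ref{lm:Leibnizfinite}), cancel the multinomial weights against the factorials to recover $w!$, absorb the $O(w^2)$ term count into $(w+2)!$, and use $x=\chi_f^2(\alpha)=O(f)$ to kill the $x^3$ prefactor. The leading-order computation $A_3(f)=3\sqrt{2}z_\alpha f^{-1/2}\{1+O(f^{-1/2})\}$ is also exactly the paper's.

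The one step that fails as written is the last one. Having collapsed each denominator $\prod_{k=0}^{w_i}(f+c_i+6k)$ to $f^{-(w_i+1)}$, you obtain $O\{(w+2)!\,C^w f^{-w}\}$ and then ``record'' this as $O\{(w+2)!\,C^w\prod_{t=1}^w(f+2t)^{-1}\}$. That is an upgrade, not a rewriting: $\prod_{t=1}^w(f+2t)^{-1}\le f^{-w}$, and the two bounds differ by the factor $\prod_{t=1}^w(1+2t/f)$, which is of order $e^{w^2/f}$ and hence unbounded in $w$ for any given $f$; so a bound uniform over all $w\ge 1$ in the $f^{-w}$ form does not imply the stated product form. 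The remedy is simply never to discard the product structure that your own closed-form formula $\Delta_6^{w_i}\{(f+c)^{-1}\}=(-1)^{w_i}6^{w_i}w_i!\prod_{k=0}^{w_i}(f+c+6k)^{-1}$ already supplies. After the double Leibniz expansion, each term's denominator is a product of exactly $w+3$ distinct factors $f+a_j$, where the offsets $a_j$ fall in three non-interleaving arithmetic ranges (in the paper's display they are $6t$, $6t+2$, $6t+4$ over consecutive ranges of $t$), so the $j$-th smallest offset is at least $2j$ and the denominator is at least $f^3\prod_{t=1}^w(f+2t)$. Cancelling $x^3=O(f^3)$ against the $f^3$ then gives $O\{(w+2)!\,C^w\prod_{t=1}^w(f+2t)^{-1}\}$ directly, with one constant $C$ uniform in $w$. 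This is precisely what the paper's proof does: it keeps the denominators $\prod_{t=1}^{k_2+1}(f+6t)\prod_{t=k_2+1}^{k_1+1}(f+6t+2)\prod_{t=k_1+1}^{w+1}(f+6t+4)$ intact and only at the end invokes $x=O(f)$.
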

\begin{proof}
Please see Section \ref{sec:a3orderf}	on Page \pageref{sec:a3orderf}. 
\end{proof} 
\medskip

\noindent Then by \eqref{eq:q3difforderh6} and  Lemma \ref{lm:a3orderf}, 
\begin{align*}
\eqref{eq:a3q3orderh6diff}=&~~ O(f^{-1/2})\times O\big\{(v-1)!C^{v-1}f^{-(v-1)/2}\big\}\notag \\
&~+\sum_{w=1}^{v-2}\binom{v-2}{w}O\biggr\{ (w+2)!C^w \prod_{t=1}^{w}(f+2t)^{-1} 	(v-1-w)!C^{v-1-w}f^{-(v-1-w)/2}\biggr\} \notag \\
=&~ O\big\{(v-1)!C^{v-1}f^{-v/2}\big\}+\sum_{w=1}^{v-2}O\big\{(v-1)!C^vf^{-v/2}\big\}\frac{(w+2)(w+1)f^{(w+1)/2}}{\prod_{t=1}^{w}(f+2t)}.
\end{align*}
Note that when $w\leq 4$, $(w+2)(w+1)f^{(w+1)/2} \prod_{t=1}^{w}(f+2t)^{-1}=O\{f^{(1-w)/2}\}$; 
when $w\geq 5$,
\begin{align*}
 \frac{(w+2)(w+1)f^{(w+1)/2}}{\prod_{t=1}^{w}(f+2t)} \leq \frac{(w+2)(w+1)}{w(w-1)}f^{(5-w)/2} = O(1)
\end{align*} as $f\to \infty$ uniformly over $w\geq 5.$
It follows that $\eqref{eq:a3q3orderh6diff}=O(v!C^vf^{-v})$ and thus \eqref{eq:q3difforder} is proved. 



\bigskip

\noindent \textit{(Part IV) Proof for $h=4$}. \quad 
 By \eqref{eq:probdiff1exact}, 
\begin{align}
	\Delta_{8}^1(F_{x}, f) =&~ Q_4(f)+Q_3(f)+Q_2(f)+Q_1(f), \label{eq:linearformvh28}
\end{align}
where we define
\begin{align*}
	Q_4(f)=-\frac{1}{\Gamma(\frac{f}{2}+4)}\left(\frac{x}{2}\right)^{\frac{f}{2}+3}e^{-x/2}. 
\end{align*}
Then by \eqref{eq:linearformvh28} and Lemma \ref{lm:Leibnizfinite}, 
\begin{align*}
	\Delta_8^v(F_x,f)=	\Delta_8^{v-1}(Q_4,f) + 	\Delta_8^{v-1}(Q_3,f) + 	\Delta_8^{v-1}(Q_2,f) + 	\Delta_8^{v-1}(Q_1,f). 
\end{align*}
Since $Q_3(f)=Q_4(f-2)$, $Q_2(f)=Q_4(f-4)$, and $Q_1(f)=Q_4(f-6)$, it suffices to prove 
\begin{align}
 	 \Delta_8^{v-1}(Q_4,f)=O(v!C^vf^{-v/2}). \label{eq:q4difforder}
 \end{align}


 We next prove \eqref{eq:q4difforder} by the mathematical induction. Note that \eqref{eq:q4difforder} holds for $v=1$ since $ \Delta_8^{0}(Q_4,f)=Q_4(f)=O(f^{-1/2})$ by the proof of \eqref{eq:probdiff1approx} in Section \ref{sec:order2diff}. In addition, for $v=2$, we have
\begin{align}
\Delta_8^1(Q_4,f)=Q_4(f+8)-Q_4(f)=A_4(f)Q_4(f),	\label{eq:a4diffq4prod}
\end{align} 
where
\begin{align*}
	A_4(f)= \prod_{k=1}^4 A_{4,k}(f) -1, \quad \quad A_{4,k}(f)=\frac{x}{f+6+2k}.
\end{align*} 
Note that $A_4(f)=O(f^{-1/2})$ as $x=f+\sqrt{2f}\{z_{\alpha} + O(f^{-1/2})\}$.
Moreover, as $Q_4(f)=O(f^{-1/2})$, $\Delta_8^1(Q_4,f)=O(f^{-1})$, i.e., \eqref{eq:q4difforder} holds for $v=2$. 
For $v\geq 3$, 
we next use the mathematical induction, where we assume for integers $0\leq w\leq v-2$, 
\begin{align}
	\Delta_{8}^{w} (Q_4,f)= O\{(w+1)!C^{w+1} f^{-(w+1)/2}\},\label{eq:q4difforderh8}
\end{align}
and prove \eqref{eq:q4difforder}. 
By \eqref{eq:a4diffq4prod}, $\Delta_8^{v-1}(Q_4,f)=\Delta_8^{v-2}(A_4Q_4,f).$ 
Then by Lemma \ref{lm:Leibnizfinite}, 
\begin{align}
 \Delta_{8}^{v-2} (A_4Q_4,f) =\sum_{w=0}^{v-2}\binom{v-2}{w} \Delta_8^w(A_4,f)	\Delta_8^{v-2-w}(Q_4,f+8w). \label{eq:a4q4orderh8diff}	
 \end{align}
We next prove \eqref{eq:a4q4orderh8diff}	 by \eqref{eq:q4difforderh8}, \eqref{eq:a4q4orderh8diff}	  and the following Lemma \ref{lm:a4orderf}. 
 
\begin{lemma} \label{lm:a4orderf}
When $x=\chi_f^2(\alpha)$, $A_4(f)=4\sqrt{2}z_{\alpha}f^{-1/2} \{1+O(f^{-1/2})\}$. 
Moreover, there exists a constant $C$ such that as $f\to \infty$,
\begin{align*}
\Delta_8^w(A_4,f)=&~O\biggr\{(w+3)!C^w \prod_{t=1}^{w}(f+2t)^{-1}\biggr\}  
\end{align*}
holds uniformly for any integer $w\geq 1$
\end{lemma}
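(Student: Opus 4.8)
The plan is to prove the two assertions of Lemma \ref{lm:a4orderf} by the same scheme used for Lemmas \ref{lm:a2orderf} and \ref{lm:a3orderf}. For the leading-order claim, I would first invoke \eqref{eq:xchisqorder} to write $x = \chi_f^2(\alpha) = f + \sqrt{2f}\,z_{\alpha} + O(1)$, and then expand each of the four factors $A_{4,k}(f) = x/(f+6+2k)$. Since $x/f = 1 + \sqrt{2}\,z_{\alpha} f^{-1/2} + O(f^{-1})$ and $(1 + (6+2k)/f)^{-1} = 1 + O(f^{-1})$, each factor equals $1 + \sqrt{2}\,z_{\alpha} f^{-1/2} + O(f^{-1})$ uniformly in $k \in \{1,2,3,4\}$. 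Multiplying the four near-unit factors and subtracting $1$ then gives $A_4(f) = 4\sqrt{2}\,z_{\alpha} f^{-1/2} + O(f^{-1}) = 4\sqrt{2}\,z_{\alpha} f^{-1/2}\{1 + O(f^{-1/2})\}$, which is the first assertion.

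For the finite-difference bound I would proceed as follows. Because $\Delta_8^w$ annihilates constants for $w \geq 1$, it suffices to bound $\Delta_8^w(R,f)$ for the rational function $R(f) = x^4\prod_{k=1}^4 (f+\delta_k)^{-1}$ with $\delta_k = 6+2k$. Expanding $R$ by partial fractions gives $R(f) = \sum_{k=1}^4 b_k (f+\delta_k)^{-1}$ with $b_k = -x^4\big/\prod_{l\neq k}(\delta_k-\delta_l)$, so each $b_k = O(x^4) = O(f^4)$, the denominators being fixed nonzero constants. Applying the step-$8$ analog of the reciprocal-difference identity of Lemma \ref{lm:a1order}, namely $\Delta_8^w\{(f+c)^{-1}\} = (-1)^w 8^w w!\prod_{j=0}^w (f+c+8j)^{-1}$ (which follows by the same one-step induction), I obtain $\Delta_8^w(A_4,f) = (-1)^w 8^w w!\sum_{k=1}^4 b_k\,\phi(\delta_k)$, where I set $\phi(\delta) = \prod_{j=0}^w (f+\delta+8j)^{-1}$.

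The decisive observation is that $\sum_{k=1}^4 b_k\,\phi(\delta_k) = -x^4\,[\delta_1,\delta_2,\delta_3,\delta_4]\phi$ is exactly $-x^4$ times the third divided difference of $\phi$ over the four nodes, since the weights $b_k/x^4$ are the divided-difference coefficients. By the mean-value theorem for divided differences this equals $-x^4\phi'''(\xi)/6$ for some $\xi \in (8,14)$. Writing $(\log\phi)'(\xi) = -S_1$, $(\log\phi)''(\xi) = S_2$, $(\log\phi)'''(\xi) = -2S_3$ with $S_m = \sum_{j=0}^w (f+\xi+8j)^{-m}$, I would use $\phi'''(\xi) = \phi(\xi)\{-S_1^3 - 3 S_1 S_2 - 2 S_3\}$; since each summand of $S_m$ is at most $f^{-m}$ and there are $w+1$ of them, $S_m \leq (w+1)f^{-m}$ uniformly in $w$, whence $|\phi'''(\xi)| \leq C (w+1)^3 f^{-3}\phi(\xi)$. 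Combining $x^4 = O(f^4)$ with the comparison $\phi(\xi) \leq (f+8)^{-1}\prod_{t=1}^w (f+2t)^{-1}$ (valid since $f+\xi \geq f+8$ and $f+\xi+8j \geq f+2j$ for $j\geq 1$) and the elementary inequality $w!(w+1)^3 \leq (w+3)!$ yields $|\Delta_8^w(A_4,f)| \leq 8^w (w+3)!\,C\prod_{t=1}^w (f+2t)^{-1}$, as required.

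The main obstacle is the cancellation step: a naive termwise estimate of $\sum_k b_k\phi(\delta_k)$ loses a factor of $f^3$, because each term is of order $f^4\cdot f^{-(w+1)} = f^{3-w}$ whereas the target is $f^{-w}$, so three missing powers of $f$ must be recovered. Recognizing the sum as a third divided difference — equivalently, exploiting the three vanishing moments $\sum_k b_k\delta_k^m = 0$ for $m = 0,1,2$ — supplies precisely this $f^{-3}$ gain through $\phi'''$, while the accompanying factor $(w+1)^3$ coming from the power sum $S_1^3$ is exactly what upgrades $w!$ to $(w+3)!$. Making all of this uniform in $w$ rests on the crude but $w$-uniform bounds $S_m \leq (w+1)f^{-m}$, which is the one place where care is genuinely needed.
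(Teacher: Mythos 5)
Your proposal is correct, but it takes a genuinely different route from the paper's proof. The paper keeps the multiplicative structure $A_4(f)=\prod_{l=1}^4 A_{4,l}(f)-1$ and applies the Leibniz rule (Lemma \ref{lm:Leibnizfinite}) to the fourfold product, using the exact identity $\Delta_8^w(A_{4,l},f)=(-8)^w w!\,x\prod_{t=0}^{w}(f+6+2l+8t)^{-1}$ for each factor; in that expansion every term of the resulting triple sum carries $x^4$ against $w+4$ denominators of size $f$, so each term is already $O(w!\,C^w f^{-w})$ with no cancellation needed, and the factor $(w+3)!$ arises from the identity $\binom{w}{k_1}\binom{k_1}{k_2}\binom{k_2}{k_3}k_3!(k_2-k_3)!(k_1-k_2)!(w-k_1)!=w!$ together with the count $\sum_{k_1}\sum_{k_2}\sum_{k_3}1\leq (w+1)^3$. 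You instead decompose additively by partial fractions, $R(f)=\sum_{k=1}^4 b_k(f+\delta_k)^{-1}$, which (as you correctly diagnose) makes each individual term too large by $f^3$; you then recover those three powers by recognizing $\sum_k b_k\phi(\delta_k)$ as $-x^4$ times a third divided difference and invoking the mean-value theorem, $[\delta_1,\ldots,\delta_4]\phi=\phi'''(\xi)/6$, with the logarithmic-derivative bound $|\phi'''(\xi)|\leq C(w+1)^3 f^{-3}\phi(\xi)$ supplying both the missing decay and the $(w+1)^3$ that upgrades $w!$ to $(w+3)!$. All the supporting steps check out: the step-$8$ reciprocal-difference identity follows by the same induction as Lemma \ref{lm:a1order}, the signs in $b_k$ are consistent with the divided-difference weights, $\phi'''=\phi(-S_1^3-3S_1S_2-2S_3)$ is the correct Fa\`a di Bruno computation, and the bounds $S_m\leq (w+1)f^{-m}$ and $\phi(\xi)\leq (f+8)^{-1}\prod_{t=1}^w(f+2t)^{-1}$ are valid and uniform in $w$. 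It is worth noting that the ``main obstacle'' you highlight is created by your own choice of decomposition: the paper's Leibniz route never faces it, because the product structure keeps all four denominators attached to $x^4$ in every term. What your route buys in exchange is a more conceptual account of where $(w+1)^3$ comes from (the cube of the power sum $S_1$), and a template that extends cleanly to $A_h$ for general $h$ via the $(h-1)$-st divided difference.
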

\begin{proof}
Please see Section \ref{sec:a4orderf}	on Page \pageref{sec:a4orderf}. 
\end{proof} 
\medskip

\noindent Then by \eqref{eq:q4difforderh8} and  Lemma \ref{lm:a4orderf}, 
\begin{align*}
\eqref{eq:a4q4orderh8diff}=&~~ O(f^{-1/2})\times O\big\{(v-1)!C^{v-1}f^{-(v-1)/2}\big\}\notag \\
&~+\sum_{w=1}^{v-2}\binom{v-2}{w}O\biggr\{ (w+3)!C^w \prod_{t=1}^{w}(f+2t)^{-1} 	(v-1-w)!C^{v-1-w}f^{-(v-1-w)/2}\biggr\} \notag \\
=&~ O\big\{(v-1)!C^{v-1}f^{-v/2}\big\}+\sum_{w=1}^{v-2}O\big\{(v-1)!C^vf^{-v/2}\big\}\frac{(w+3)(w+2)(w+1)f^{\frac{w+1}{2}}}{\prod_{t=1}^{w}(f+2t)}. 
\end{align*}
Note that when $w\leq 6$, $(w+3)(w+2)(w+1)f^{(w+1)/2} \prod_{t=1}^{w}(f+2t)^{-1}=O\{f^{(1-w)/2}\}$; 
when $w\geq 7$, as $f\to \infty$, 
\begin{align*}
 \frac{(w+3)(w+2)(w+1)f^{(w+1)/2}}{\prod_{t=1}^{w}(f+2t)} \leq \frac{(w+3)(w+2)(w+1)}{w(w-1)(w-2)}f^{(7-w)/2} = O(1)
\end{align*} 
holds uniformly over $w\geq 7.$
It follows that $\eqref{eq:a4q4orderh8diff}=O(v!C^vf^{-v})$ and thus \eqref{eq:q4difforder} is proved. 


\subsubsection{Proof of Proposition \ref{prop:infinitesumm2} (on Page \pageref{prop:infinitesumm2})} \label{sec:pfpropiftydiff2}
Similar to the proof of Proposition  \ref{prop:infinitesumm} in Section \ref{sec:pfpropiftydiff2}, 
we prove Proposition  \ref{prop:infinitesumm2} using the notation in Section \ref{sec:notationfinite} and Lemma \ref{lm:order2diff}. 
We next discuss $(h_1,h_2)=(1,2)$ and $(h_1,h_2)=(2,3)$ in \textit{(Part I)} and  \textit{(Part II)} below, respectively. 


\medskip

\noindent \textit{(Part I) Proof for $h_1=1$ and $h_2=2$}. \quad  
Based on the notation in Section \ref{sec:notationfinite}, it is equivalent to prove that there exists some constant $C$ such that when  $x=\chi_f^2(\alpha)$, as $f\to \infty$, 
\begin{align}
	\Delta^{v_2}_4\Delta^{v_1}_2(F_{x}, f) = O\big\{v_1!v_2! C^{v_1+v_2} f^{-(v_1+v_2)/2}\big\},  \label{eq:lfinitediffh2f}
\end{align}  uniformly for integers $v_1, v_2 \geq 1$.

When $v_1=0$ or $v_2=0$, \eqref{eq:lfinitediffh2f} holds by Proposition \ref{prop:infinitesumm}. 
 When $v_1=v_2=1$, by \eqref{eq:probdiff1exacth2}, we have
\begin{align*}
\Delta^1_4\Delta^1_2(F_x,f)= &~	 - \frac{1}{\Gamma(\frac{f}{2}+3)}\left(\frac{x}{2}\right)^{\frac{f}{2}+2}e^{-x/2}+ \frac{1}{\Gamma(\frac{f}{2}+1)}\left(\frac{x}{2}\right)^{\frac{f}{2}}e^{-x/2}=D_{2,4}(f)\Delta^1_2(F_x,f), 
\end{align*}
where
\begin{align}
	D_{2,4}(f) =  \frac{x^2}{(f+4)(f+2)} -1.\label{eq:a3def}
\end{align}
As $D_{2,4}(f)=O(f^{-1/2})$ and $\Delta_2^1(F_x,f)=O(f^{-1/2})$, \eqref{eq:lfinitediffh2f} holds for $v_1=v_2=1$.  
We next prove \eqref{eq:lfinitediffh2f} by the mathematical induction. 
Particularly, 
we assume for integers $s_1\leq v_1$ and $s_2\leq v_2$, 
\begin{align}
	\Delta^{s_2}_4\Delta^{s_1}_2(F_x,f)=O\big\{s_1!s_2!C^{s_1+s_2}f^{-(s_1+s_2)/2}\big\},\label{eq:mathinducs1s2cond}
\end{align}
and prove that \eqref{eq:mathinducs1s2cond} also holds for $(s_1,s_2)=(v_1+1,v_2)$ and $(s_1,s_2)=(v_1, v_2+1)$, i.e., $\Delta^{v_2}_4\Delta^{v_1+1}_2(F_x,f)$ and $\Delta^{v_2+1}_4\Delta^{v_1}_2(F_x,f)$, respectively. 

\bigskip

\noindent \textit{Step I.1. $\Delta^{v_2}_4\Delta^{v_1+1}_2(F_x,f)$.} \quad 
Recall that we define $Q_1(f)=\Delta^1_2(F_x,f)$. 
It follows that 
\eqref{eq:mathinducs1s2cond} gives that for integers $ s_1\leq v_1-1$ and $ s_2\leq v_2$
\begin{align}
\Delta^{s_2}_4\Delta^{s_1}_2(Q_1,f)=	O\big\{(s_1+1)! s_2!C^{s_1+s_2+1} f^{-(s_1+s_2+1)/2}\big\}. \label{eq:assumes1s2q}
\end{align}
It is then equivalent to prove that \eqref{eq:assumes1s2q} holds for $(s_1,s_2)=(v_1,v_2)$, i.e., $\Delta^{v_2}_4\Delta^{v_1}_2(Q_1,f)$.
By $\Delta^{1}_2(Q_1,f)=A_1(f)Q_1(f)$, 
(see the definitions in  \eqref{eq:a1q1def}), 
and Lemmas \ref{lm:Leibnizfinite} and \ref{lm:linearityrulefinite},  
\begin{align}
&~\Delta^{v_2}_4\Delta^{v_1}_2(Q_1,f)\notag \\
=&~	 \sum_{w_1=0}^{v_1-1}\binom{v_1-1}{w_1}\Delta^{v_2}_4\Big\{  \Delta_2^{w_1}(A_1,f) \Delta_2^{v_1-1-w_1}(Q_1, f+2w_1) \Big\} \notag \\
=&~		\sum_{w_1=0}^{v_1-1}\binom{v_1-1}{w_1} \sum_{w_2=0}^{v_2} \binom{v_2}{w_2} \Delta_4^{w_2}\Delta_2^{w_1}(A_1,f)  \Delta_4^{v_2-w_2} \Delta_2^{v_1-1-w_1}(Q_1, f+2w_1+4w_2). \label{eq:v1v2updateform}
\end{align}
To evaluate \eqref{eq:v1v2updateform}, we use the following Lemma \ref{lm:doublediffa1}. 

\begin{lemma}\label{lm:doublediffa1}
For two integers $w_1$ and $w_2$ satisfying $w_1+w_2 \geq 1$, there exists some constant $C$ such that as $f\to \infty$, 
\begin{align*}
\Delta_4^{w_2}\Delta_2^{w_1}(A_1,f)=(w_1+w_2)! O\Biggr(C^{w_1+w_2}\prod_{k=1}^{w_1+w_2}\frac{1}{f+2k}\Biggr)
\end{align*}	
uniformly over $w_1+w_2\geq 1$. 
\end{lemma}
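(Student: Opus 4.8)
$\newcommand{\T}{{\mathrm{\scriptscriptstyle T}}}$The statement to prove is Lemma \ref{lm:doublediffa1}, which bounds the mixed finite difference $\Delta_4^{w_2}\Delta_2^{w_1}(A_1,f)$ of the rational function $A_1(f)=x/(f+2)-1$, where $x=\chi_f^2(\alpha)$ is held fixed. Let me sketch my plan.

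\textbf{Overall approach.} The key observation is that once $x$ is frozen at the value $\chi_f^2(\alpha)$, the function $A_1(\cdot)$ is, up to the additive constant $-1$, simply $x/(\cdot+2)$, a single simple-pole rational function in its argument. The constant $-1$ is annihilated by any first (or higher) order difference, so for $w_1+w_2\geq 1$ we have $\Delta_4^{w_2}\Delta_2^{w_1}(A_1,f)=x\,\Delta_4^{w_2}\Delta_2^{w_1}(g,f)$ where $g(f)=(f+2)^{-1}$. The plan is therefore to compute the mixed finite difference of $g$ exactly and then extract its order. First I would establish a closed-form (or clean product) expression for differences of $g$. Since $\Delta_2$ and $\Delta_4$ commute (Lemma \ref{lm:exchangerule}), I can combine them: $\Delta_4^{w_2}\Delta_2^{w_1}$ acts on $(f+2)^{-1}$ by producing, through the standard identity $\Delta_h(f+c)^{-1}=-h\{(f+h+c)(f+c)\}^{-1}$, a telescoping product. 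Iterating, a difference of total ``order'' $w_1+w_2$ against a simple pole yields a quantity of the form $(\text{combinatorial factor})\times \prod_{k}(f+\text{shift}_k)^{-1}$, with $w_1+w_2+1$ factors in the denominator and a numerator factor of size proportional to the step sizes raised to the total order, producing the $(w_1+w_2)!$ and $C^{w_1+w_2}$ bookkeeping.

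\textbf{Key steps in order.} (1) Reduce to $g(f)=(f+2)^{-1}$ by linearity (Lemma \ref{lm:linearityrulefinite}) and the fact that constants die under first differences. (2) Prove the single-step formula for a simple pole and then, by induction on $w_1$ (using $\Delta_2^{w_1}$) followed by $w_2$ (using $\Delta_4^{w_2}$, invoking commutativity from Lemma \ref{lm:exchangerule}), obtain an explicit expression exhibiting exactly $w_1+w_2+1$ denominator factors of the form $f+2k$ or $f+2+4k$, all of which are $\Theta(f)$. (3) Bound the numerator: each difference step contributes a bounded multiplicative factor (the step sizes $2$ and $4$ are absolute constants), so the accumulated numerator is $O(C^{w_1+w_2})$ times a factorial $(w_1+w_2)!$ arising from the product of the ``$2^{w_1}(w_1)!$'' and ``$4^{w_2}(w_2)!$'' patterns, analogous to the single-operator computation in Lemma \ref{lm:a1order}, Eq. \eqref{eq:a1korderdif}. (4) Finally absorb the extra factor $x=\Theta(f)$ (since $x=\chi_f^2(\alpha)=f+O(\sqrt f)$ by \eqref{eq:xchisqorder}) into one denominator factor, converting the $w_1+w_2+1$ denominator factors into the stated $\prod_{k=1}^{w_1+w_2}(f+2k)^{-1}$, and verify the bound holds uniformly over all $w_1+w_2\geq 1$ as $f\to\infty$.

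\textbf{Main obstacle.} The chief difficulty is controlling the combinatorics uniformly so that the constant $C$ does not depend on $w_1,w_2$, paralleling the uniform-in-$w$ argument already carried out for the single-operator case in Lemma \ref{lm:a1order}. The denominator shifts coming from $\Delta_2$ (steps of $2$) and from $\Delta_4$ (steps of $4$) interleave, so the product $\prod(f+\text{shift}_k)$ is not literally $\prod_{k=1}^{w_1+w_2+1}(f+2k)$ but a comparable product; I must show it is bounded below by $\Theta(f^{w_1+w_2+1})$ uniformly, and that the numerator factorial-times-geometric growth $(w_1+w_2)!\,C^{w_1+w_2}$ dominates with a single absolute $C$. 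The cleanest route is to handle the two operators jointly via partial fractions: because all poles of $g$ and its differences are simple and lie at negative shifts of $f$, a combined closed form can be written as a single signed sum over the lattice of shifts, and standard estimates on such finite-difference-of-reciprocal sums (the same machinery behind \eqref{eq:a1korderdif}) give the factorial and geometric factors. I would present the induction on $w_2$ with $w_1$ fixed, reusing Lemma \ref{lm:a1order} as the base case $w_2=0$, and then a symmetric step for $w_1$, which keeps the constant tracking transparent and uniform.
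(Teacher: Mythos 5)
Your opening reduction is sound and matches the paper's starting point: with $x$ frozen, the additive constant in $A_1$ is killed by any first difference, so $\Delta_4^{w_2}\Delta_2^{w_1}(A_1,f)=x\,\Delta_4^{w_2}\Delta_2^{w_1}(g,f)$ with $g(f)=(f+2)^{-1}$, and Lemma \ref{lm:a1order} gives the exact product form of $\Delta_2^{w_1}$. The gap is in the mechanism you propose for the mixed difference itself. First, the ``explicit expression exhibiting exactly $w_1+w_2+1$ denominator factors'' claimed in your step (2) does not exist: already
\begin{align*}
\Delta_4^{1}\Delta_2^{1}(g,f)=\frac{16(f+5)}{(f+2)(f+4)(f+6)(f+8)},
\end{align*}
which has four simple poles and a nontrivial numerator zero, so it is not a constant times a product of three reciprocal linear factors; mixed differences with unequal steps do not telescope into products the way single-step differences do. Second, your ``cleanest route'' of partial fractions plus standard estimates fails quantitatively: writing $\Delta_2^{w_1}(g,f)=(-2)^{w_1}w_1!\sum_{k=0}^{w_1}a_k(f+2+2k)^{-1}$ with $a_k=2^{-w_1}(-1)^k/\{k!\,(w_1-k)!\}$ and applying the exact single-pole $\Delta_4^{w_2}$ formula termwise, the triangle inequality only yields $O\{C^{w_1+w_2}w_2!\,f^{-(w_2+1)}\}$, because $\sum_k|a_k|=1/w_1!$; the missing factor $f^{-w_1}$ comes entirely from cancellation among the alternating terms, which such estimates cannot see. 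Third, your announced induction ``on $w_2$ with $w_1$ fixed'' cannot run on order bounds alone: if the inductive hypothesis is only an $O(\cdot)$ statement, applying one more $\Delta_4$ (a difference of two bounded quantities) gives at best twice the same bound, with no gain of a power of $f^{-1}$.

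The paper's proof avoids all three problems by inducting on the number of reciprocal factors rather than on the order of the operator: after Lemma \ref{lm:a1order} reduces the task to bounding $\Delta_4^{w_2}\{\prod_{s=1}^{w_1+1}A_{1,s}(f)\}$, the Leibniz rule (Lemma \ref{lm:Leibnizfinite}) expands this \emph{exactly} as $\sum_{k_2}\binom{w_2}{k_2}\Delta_4^{k_2}\{\prod_{s=1}^{w_1}A_{1,s}\}\,\Delta_4^{w_2-k_2}(A_{1,w_1+1},f+4k_2)$, where the last factor has an exact closed form and the first is covered by an inductive hypothesis stated for \emph{all} $w_2$ simultaneously; every term then carries the full order $f^{-(w_1+w_2+1)}$, and the hockey-stick identity $\sum_{k_2}\binom{k_2+w_1-1}{k_2}=\binom{w_1+w_2}{w_2}$ keeps the constant uniform in $(w_1,w_2)$. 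If you want a genuinely different route, the clean fix is the integral representation of iterated differences,
\begin{align*}
\Delta_{h_1}\cdots\Delta_{h_m}g(f)=h_1\cdots h_m\int_{[0,1]^m}g^{(m)}\Bigl(f+\textstyle\sum_{j=1}^m t_jh_j\Bigr)\,\mathrm{d}t_1\cdots \mathrm{d}t_m,
\end{align*}
which with $g^{(m)}(y)=(-1)^m m!\,(y+2)^{-(m+1)}$ immediately gives $|\Delta_4^{w_2}\Delta_2^{w_1}(g,f)|\le 2^{w_1}4^{w_2}(w_1+w_2)!\,(f+2)^{-(w_1+w_2+1)}$, capturing the cancellation exactly; combining this with $x=O(f)$ from \eqref{eq:xchisqorder} and the elementary inequality $f^{-w}\le e^{w}\prod_{k=1}^{w}(f+2k)^{-1}$ (valid for $f\ge w+1$, hence for each fixed $w$ as $f\to\infty$) recovers the stated bound with $C=4e$.
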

\begin{proof}
Please see Section \ref{sec:doublediffa1} on Page \pageref{sec:doublediffa1}. 
\end{proof}

\medskip
\noindent By Lemma \ref{lm:doublediffa1} and the assumption \eqref{eq:assumes1s2q}, we have
\begin{align*}
\eqref{eq:v1v2updateform}=&~ \sum_{w_1=0}^{v_1-1}\binom{v_1-1}{w_1} \sum_{w_2=0}^{v_2} \binom{v_2}{w_2} (w_1+w_2)! O\Biggr(\prod_{k=1}^{w_1+w_2}\frac{1}{f+2k}\Biggr)C^{v_1+v_2+1} \notag \\
&~ \times (v_2-w_2)!(v_1-w_1)!O\big\{(f+2w_1+4w_2)^{-(v_1-w_1+v_2-w_2)/2}\big\}  \notag \\
=&~ \sum_{w_1=0}^{v_1-1}(v_1-1)!(v_1-w_1) \sum_{w_2=0}^{v_2} v_2! C^{v_1+v_2+1} O\{f^{-(v_1+v_2+1)/2}\} \notag \\
&~ \times \frac{(w_1+w_2)!	}{w_1!w_2!}\prod_{k=1}^{w_1+w_2}\frac{1}{f+2k} \times f^{(w_1+w_2+1)/2}.  \notag
\end{align*}
We next use the following Lemma \ref{lm:evalwqw2}. 
\begin{lemma} \label{lm:evalwqw2}
For integers $w_1$, $w_2$, and $f$, 
\begin{align*}
\frac{(w_1+w_2)!	}{w_1!w_2!}\prod_{k=1}^{w_1+w_2}\frac{1}{f+2k}\times f^{(w_1+w_2+1)/2}=	O\big\{ 2^{-(w_1+w_2-1)/2} \big\}.
\end{align*}	
\end{lemma}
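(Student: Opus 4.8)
The plan is to set $w=w_1+w_2$ and reduce the claim to a clean estimate on $\prod_{k=1}^{w}(f+2k)$ after bounding the multinomial factor by $\binom{w}{w_1}\le 2^{w}$. Since this quantity is only ever invoked for $w\ge 1$ (for $w=0$ the left-hand side equals $f^{1/2}$, and that term is controlled separately through the bound $A_1(f)=O(f^{-1/2})$ of Lemma \ref{lm:a1order} rather than through this lemma), I would establish the estimate uniformly over integers $w\ge 1$ and $f\ge 1$, namely
\begin{align*}
T:=\binom{w}{w_1}\,\frac{f^{(w+1)/2}}{\prod_{k=1}^{w}(f+2k)}\le C\,2^{-(w-1)/2}
\end{align*}
for a universal constant $C$. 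The two elementary lower bounds I would rely on are $\prod_{k=1}^{w}(f+2k)\ge f^{w}$ (each factor exceeds $f$) and $\prod_{k=1}^{w}(f+2k)\ge\prod_{k=1}^{w}2k=2^{w}w!$ (each factor exceeds $2k$); neither is adequate on its own across all regimes, which is exactly why a case split on the relative size of $f$ and $w$ is needed.

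First I would treat the regime $f\ge 2w$. Here $\prod_{k=1}^{w}(f+2k)\ge f^{w}$ gives $T\le 2^{w}f^{(1-w)/2}$, and since $f\ge 2w$ with $w-1\ge 0$ we have $f^{-(w-1)/2}\le(2w)^{-(w-1)/2}$, so that $T/2^{-(w-1)/2}\le 2^{w}w^{-(w-1)/2}$. It then remains only to observe that $w\mapsto 2^{w}w^{-(w-1)/2}$ is finite for each $w\ge 1$ and tends to $0$ as $w\to\infty$, hence is bounded by a universal constant.

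Second, in the regime $f<2w$ I would instead use $\prod_{k=1}^{w}(f+2k)\ge 2^{w}w!$, which cancels the factor $2^{w}$ and yields $T\le f^{(w+1)/2}/w!<(2w)^{(w+1)/2}/w!$ after inserting $f<2w$. Applying Stirling's bound $w!\ge(w/e)^{w}$ then gives $T/2^{-(w-1)/2}\le(2e)^{w}w^{-(w-1)/2}$, a quantity that, as in the first regime, is finite for every $w\ge 1$ and vanishes as $w\to\infty$, hence is uniformly bounded. Combining the two regimes produces the asserted $O\{2^{-(w_1+w_2-1)/2}\}$ bound with a constant independent of $w_1,w_2$ and $f$.

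I do not anticipate a genuine obstacle here; the only real care is the uniformity in $f$ and $w=w_1+w_2$ \emph{simultaneously}. The point worth flagging is that the decay in $f$ (visible from $\prod(f+2k)\approx f^{w}$ when $f$ is large) and the factorial decay in $w$ (visible from $\prod(f+2k)\ge 2^{w}w!$ when $w$ is large) live in complementary regimes, so a single multiplicative lower bound such as the AM--GM estimate $f+2k\ge 2\sqrt{2kf}$ is too lossy to give a uniform constant; the case split on $f$ versus $2w$ is precisely what repairs this.
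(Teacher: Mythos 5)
Your proof is correct, but it takes a genuinely different route from the paper's. Writing $w=w_1+w_2$, the paper splits on the \emph{parity} of $w$ and splits the product at its midpoint: the first $(w+1)/2$ factors are bounded below by $f$ (absorbing $f^{(w+1)/2}$) and the remaining $(w-1)/2$ factors below by $2k$ (producing the $2^{-(w-1)/2}$), which reduces the lemma to the purely combinatorial claim $\frac{1}{w_1!w_2!}\prod_{k=1}^{(w+1)/2}k\le C$; that claim is then proved with the factorial bound of Lemma \ref{lm:wfactorialbound}, assuming without loss of generality $w_2\ge w_1$ and sub-casing on $w_2-w_1\ge 3$ versus $w_2-w_1\le 2$. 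You instead discard the joint dependence on $(w_1,w_2)$ at the outset via $\binom{w}{w_1}\le 2^w$ and recover that loss by splitting on $f\ge 2w$ versus $f<2w$, using $\prod_{k=1}^w(f+2k)\ge f^w$ in the first regime and $\prod_{k=1}^w(f+2k)\ge 2^w w!$ together with $w!\ge (w/e)^w$ in the second. What your route buys: no parity split is needed (you never require $(w+1)/2$ to be an integer, whereas the paper's even case is only sketched as ``similar''), and the two-parameter Stirling analysis collapses to two one-variable suprema, $\sup_{w\ge1}2^w w^{-(w-1)/2}$ and $\sup_{w\ge1}(2e)^w w^{-(w-1)/2}$, both finite by inspection. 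What the paper's route buys: keeping $w_1!w_2!$ exact gives a much smaller implied constant, and its midpoint-splitting template is reused almost verbatim in Lemma \ref{lm:k1kwprodorder}, where an extra polynomial factor $(k_1+k_2+2)(k_1+k_2+1)$ must be carried along (your method would absorb such factors too, since they are negligible against $w^{-(w-1)/2}$). Your side remark is also correct and worth making explicit: the lemma implicitly requires $w_1+w_2\ge 1$, since at $w_1=w_2=0$ the left-hand side equals $f^{1/2}$, which is not $O(1)$ in $f$; in the application following \eqref{eq:v1v2updateform}, that term is exactly the one controlled separately through $A_1(f)=O(f^{-1/2})$ from Lemma \ref{lm:a1order} rather than through this lemma.
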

\begin{proof}
Please see Section \ref{sec:evalwqw2} on Page \pageref{sec:evalwqw2}. 
\end{proof}
\medskip

\noindent It follows that by Lemma \ref{lm:evalwqw2},
\begin{align}
\eqref{eq:v1v2updateform}=&~ \sum_{w_1=0}^{v_1-1}(v_1-1)!(v_1-w_1)  \sum_{w_2=0}^{v_2} v_2! \frac{1}{(\sqrt{2})^{w_1+w_2-1}}C^{v_1+v_2+1}O\{f^{-(v_1+v_2+1)/2}\} \notag \\
=&~ O\{v_1! v_2!C^{v_1+v_2+1} f^{-(v_1+v_2+1)/2}\}, \label{eq:h1h2inductorder1} 
\end{align}
which is $ O\{(v_1+1)! v_2! C^{v_1+v_2+1} f^{-(v_1+v_2+1)/2}\}$ as $v_1<v_1+1$. Therefore, we obtain $\Delta^{v_2}_4\Delta^{v_1}_2(Q_1,f)=O\{(v_1+1)! v_2! C^{v_1+v_2+1} f^{-(v_1+v_2+1)/2}\}$.

\bigskip
\noindent \textit{Step I.2. $\Delta^{v_2+1}_4\Delta^{v_1}_2(F_x,f)$.} \quad 
By \eqref{eq:linearformvh24},
\begin{align*}
\Delta^{v_2+1}_4\Delta^{v_1}_2(F_x,f)=&~	\Delta^{v_1}_2\Delta^{v_2+1}_4(F_x,f)= \Delta^{v_1}_2 \Delta^{v_2}_4(Q_2,f) +  \Delta^{v_2}_4\Delta^{v_1+1}_2(F_x,f).\notag 
\end{align*}
By \eqref{eq:h1h2inductorder1}, we have $\Delta^{v_2}_4\Delta^{v_1+1}_2(F_x,f)=O\{v_1!v_2!C^{v_1+v_2+1}f^{-(v_1+v_2+1)/2}\}$.  
Therefore, it remains to prove $\Delta^{v_1}_2 \Delta^{v_2}_4(Q_2,f)=O\{v_1!(v_2+1)!C^{v_1+v_2+1}f^{-(v_1+v_2+1)/2}\}$. 
By \eqref{eq:a4diffq2prod} and Lemma \ref{lm:Leibnizfinite}, 
\begin{align}
&~\Delta^{v_1}_2 \Delta^{v_2}_4(Q_2,f) \notag \\
=&~\Delta^{v_1}_2\Biggr\{\sum_{w_2=0}^{v_2-1}\binom{v_2-1}{w_2} \Delta_4^{w_2}(A_2,f)	\Delta_4^{v_2-1-w_2}(Q_2,f+4w_2)\Biggr\}\notag \\
=&~\sum_{w_2=0}^{v_2-1}\binom{v_2-1}{w_2}\sum_{w_1=0}^{v_1}\binom{v_1}{w_1} \Delta^{w_1}_2 \Delta_4^{w_2}(A_2,f)\Delta^{v_1-w_1}_2\Delta_4^{v_2-1-w_2}(Q_2,f+4w_2+2w_1). \label{eq:v1v2updateform2}
\end{align}
To evaluate \eqref{eq:v1v2updateform2} through the mathematical induction, by \eqref{eq:linearformvh24} and  
\eqref{eq:mathinducs1s2cond}, we can assume that or integers $ s_1\leq v_1$ and $ s_2\leq v_2-1$,
\begin{align}
\Delta^{s_1}_2\Delta^{s_2}_4(Q_2,f)=	O\big\{s_1! (s_2+1)!C^{s_1+s_2+1} f^{-(s_1+s_2+1)/2}\big\}. \label{eq:assumes1s2q2}
\end{align}
In addition, we use the following Lemma \ref{lm:doublediffa2}. 
\begin{lemma}\label{lm:doublediffa2}
For two integers $w_1$ and $w_2$ satisfying $w_1+w_2 \geq 1$,
\begin{align*}
\Delta_4^{w_2}\Delta_2^{w_1}(A_2,f)=(w_1+w_2+1)! O\Biggr(C^{w_1+w_2+1}\prod_{k=1}^{w_1+w_2}\frac{1}{f+2k}\Biggr).
\end{align*}	
\end{lemma}
\begin{proof}
Please see Section \ref{sec:doublediffa2}	on Page \pageref{sec:doublediffa2}. 
\end{proof}
\medskip

\noindent Combining \eqref{eq:assumes1s2q2} and Lemma \ref{lm:doublediffa2}, 
we obtain $\Delta^{v_1}_2 \Delta^{v_2}_4(Q_2,f)=O\{v_1!v_2!C^{v_1+v_2+1}f^{-(v_1+v_2+1)/2}\}$ similarly to \eqref{eq:h1h2inductorder1} in \textit{Step I.1}. As $v_2<v_2+1$, we have $\Delta^{v_1}_2 \Delta^{v_2}_4(Q_2,f)=O\{v_1!(v_2+1)! C^{v_1+v_2+1}\times f^{-(v_1+v_2+1)/2}\}$.

\bigskip

\noindent \textit{(Part II) Proof for $h_1=2$ and $h_2=3$}. \quad   
In this part, we prove  
\begin{align}
	\Delta^{v_2}_6\Delta^{v_1}_4(F_{x}, f) = O\big\{v_1!v_2! C^{v_1+v_2} f^{-(v_1+v_2)/2}\big\},  \label{eq:lfinitediffh46f}
\end{align} as $f\to \infty$ and uniformly for integers $v_1, v_2 \geq 1$.

When $v_1=0$ or $v_2=0$, \eqref{eq:lfinitediffh46f} holds by Proposition \ref{prop:infinitesumm}. 
When $v_1=v_2=1$, 
note that  $\Delta^1_4(F_x,f)=Q_1(f)+Q_2(f)$ by  \eqref{eq:linearformvh24}. 
Then we have
$
\Delta^1_6\Delta^1_4(F_x,f)= \Delta^1_6 (Q_1,f) + \Delta^1_6(Q_2,f).
$ 
Particularly, 
\begin{align}
&\Delta^1_6(Q_1,f)=D_{2,6}(f)Q_1(f), \quad \ D_{2,6}(f)=\frac{x^3}{(f+6)(f+4)(f+2)} -1; \label{eq:d26def} \\
&\Delta^1_6(Q_2,f)=D_{4,6}(f)Q_2(f), \quad \ D_{4,6}(f)=\frac{x^3}{(f+8)(f+6)(f+4)} -1.\notag
\end{align}
By the proof of \eqref{eq:probdiff1approx}, $Q_1(f)=O(f^{-1/2})$ and $Q_2(f)=O(f^{-1/2})$. 
In addition, 
for $x=\chi^2_f(\alpha)$, by \eqref{eq:xchisqorder}, 
$D_{2,6}(f)=O(f^{-1/2})$ and $D_{4,6}(f)=O(f^{-1/2})$. 
Therefore, \eqref{eq:lfinitediffh46f} holds for $v_1=1$ and $v_2=1$. 
When $v_1>1$ or $v_2>1$, by \eqref{eq:linearformvh24}, 
\begin{align*}
	\Delta^{v_2}_6\Delta^{v_1}_4(F_{x}, f) = \Delta^{v_2}_6\Delta^{v_1-1}_4(Q_1,f) +  \Delta^{v_2}_6\Delta^{v_1-1}_4(Q_2,f). 
\end{align*}
It suffices to prove  
\begin{align}
\Delta^{v_2}_6\Delta^{v_1-1}_4(Q_1,f) =&~ O\big\{v_1!v_2! C^{v_1+v_2} f^{-(v_1+v_2)/2}\big\}, \label{eq:prv14v26q1} \\
\Delta^{v_2}_6\Delta^{v_1-1}_4(Q_2,f) = &~ O\big\{v_1!v_2! C^{v_1+v_2} f^{-(v_1+v_2)/2}\big\}. \label{eq:prv14v26q2} 
\end{align}
We next prove \eqref{eq:prv14v26q1} and \eqref{eq:prv14v26q2}  by the mathematical induction, respectively. 

\medskip


First, to prove \eqref{eq:prv14v26q1}, 
we apply the mathematical induction considering increasing $v_1$ and $v_2$ in the following \textit{Step II.1} and \textit{Step II.2}, respectively.

\medskip

\noindent \textit{Step II.1.} \quad We assume for  $0\leq s_1\leq v_1-2$ and $0\leq s_2\leq v_2$, 
\begin{align}
\Delta^{s_2}_6\Delta^{s_1}_4(Q_1,f)=O\{(s_1+1)!s_2!C^{s_1+s_2+1}f^{-(s_1+s_2+1)/2}\}, \label{eq:prop2s1swconc1}	
\end{align}
and then prove  \eqref{eq:prv14v26q1}.
Note that $\Delta^{1}_4(Q_1,f)=D_{2,4}(f)Q_1(f)$, where  $D_{2,4}(f)$ is defined in \eqref{eq:a3def}. 
Then by the Leibniz rule in Lemma \ref{lm:Leibnizfinite}, 
\begin{align}
&~\Delta^{v_2}_6\Delta^{v_1-1}_4(Q_1,f)\notag \\
=&~\Delta^{v_2}_6\Delta^{v_1-2}_4(D_{2,4}Q_1,f)\notag \\
=&~\sum_{k_2=0}^{v_2} \sum_{k_1=0}^{v_1-2} \binom{v_1-2}{k_1}\binom{v_2}{k_2} \Delta_6^{k_2}\Delta_4^{k_1}	(D_{2,4},f)\times \Delta_6^{v_2-k_2}\Delta_4^{v_1-2-k_1}(Q_1,f+4k_1+6k_2).\label{eq:leib46v1v2}
\end{align}
To evaluate \eqref{eq:leib46v1v2}, we use the following Lemma \ref{lm:d24order}. 
\begin{lemma}\label{lm:d24order}
For integers $k_1+k_2\geq 1$, there exists some constant $C$ such that
\begin{align*}
\Delta_6^{k_2}\Delta_4^{k_1}	(D_{2,4},f)=	(k_1+k_2+1)!O\left(C^{k_1+k_2}\prod_{t=1}^{k_1+k_2}\frac{1}{f+2t}\right), 
\end{align*}	
as $f\to \infty$ and uniformly over $k_1+k_2\geq 1$. 
\end{lemma}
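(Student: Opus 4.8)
The plan is to reduce the mixed finite difference of $D_{2,4}$ to a single Beta integral, which automatically encodes the cancellation that a naive term-by-term bound would miss. Recall from \eqref{eq:a3def} that $D_{2,4}(f)=x^2/\{(f+2)(f+4)\}-1$, where $x$ is held fixed while the difference operators act on $f$, exactly as in Lemma \ref{lm:a1order}. Since $k_1+k_2\geq 1$, at least one of $\Delta_4,\Delta_6$ is applied, and by the linearity rule (Lemma \ref{lm:linearityrulefinite}) the constant $-1$ is annihilated; the operators also commute by Lemma \ref{lm:exchangerule}. Hence it suffices to control $x^2\,\Delta_6^{k_2}\Delta_4^{k_1}\{(f+2)^{-1}(f+4)^{-1}\}$.

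First I would set up an integral representation. Writing $(f+2)^{-1}(f+4)^{-1}=\tfrac12\int_0^1 t^{f+1}(1-t^2)\,dt$ and using that $\Delta_{2h}$ acts through the exponent via $\Delta_{2h}t^{f}=(t^{2h}-1)t^{f}$, I obtain
\begin{align*}
\Delta_6^{k_2}\Delta_4^{k_1}\frac{1}{(f+2)(f+4)}=\frac12\int_0^1 (t^6-1)^{k_2}(t^4-1)^{k_1}\,t^{f+1}(1-t^2)\,dt.
\end{align*}
On $[0,1]$ I then bound the integrand by the elementary inequalities $1-t^{2m}\leq 2m(1-t)$, giving $\lvert(t^6-1)^{k_2}(t^4-1)^{k_1}(1-t^2)\rvert\leq 2\cdot 6^{k_2}4^{k_1}(1-t)^{k_1+k_2+1}$. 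The remaining integral is a Beta function,
\begin{align*}
\int_0^1 (1-t)^{k_1+k_2+1}t^{f+1}\,dt=B(f+2,k_1+k_2+2)=\frac{(f+1)!\,(k_1+k_2+1)!}{(f+k_1+k_2+3)!},
\end{align*}
so that $\bigl\lvert\Delta_6^{k_2}\Delta_4^{k_1}\{(f+2)^{-1}(f+4)^{-1}\}\bigr\rvert\leq 6^{k_2}4^{k_1}(k_1+k_2+1)!\prod_{s=2}^{k_1+k_2+3}(f+s)^{-1}$, where the product has $k_1+k_2+2$ factors.

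Next I would multiply by $x^2$ and invoke \eqref{eq:xchisqorder} to write $x=\chi_f^2(\alpha)=O(f)$, hence $x^2=O(f^2)$ with an absolute constant; the factor $f^2$ is absorbed into the two smallest denominator factors via $f^2/\{(f+2)(f+3)\}\leq 1$, leaving the consecutive-integer product $\prod_{s=4}^{k_1+k_2+3}(f+s)^{-1}$, which has $k_1+k_2$ factors. The only step requiring a genuine (though routine) estimate is converting this into the stated even-spaced product uniformly in $k_1+k_2$: pairing the $t$-th factors and using $(f+2t)/(f+t+3)\leq 2$ for every $t\geq 1$ and all $f\geq 0$ yields $\prod_{t=1}^{k_1+k_2}(f+2t)\big/\prod_{s=4}^{k_1+k_2+3}(f+s)\leq 2^{k_1+k_2}$, so $\prod_{s=4}^{k_1+k_2+3}(f+s)^{-1}\leq 2^{k_1+k_2}\prod_{t=1}^{k_1+k_2}(f+2t)^{-1}$. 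Collecting $6^{k_2}4^{k_1}2^{k_1+k_2}\leq 12^{k_1+k_2}$ into $C^{k_1+k_2}$ gives the claim uniformly over $k_1+k_2\geq 1$.

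The main obstacle is conceptual rather than computational. If one instead expands $\Delta_6^{k_2}\Delta_4^{k_1}\{(f+2)^{-1}(f+4)^{-1}\}$ by partial fractions, the individual summands decay only like $f^{-(k_2+1)}$, so a term-by-term estimate loses the required order whenever $k_1>1$; the correct rate survives only through cancellation among the terms. The integral representation is precisely what converts this hidden cancellation into the explicit factor $(1-t)^{k_1+k_2+1}$, delivering the sharp rate $O(f^{-(k_1+k_2+2)})$ before multiplication by $x^2$ and the clean factorial weight $(k_1+k_2+1)!$; the remaining product comparison is then the only arithmetic one must carry out carefully to match the target $\prod_{t=1}^{k_1+k_2}(f+2t)^{-1}$.
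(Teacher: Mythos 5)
Your proof is correct, and it takes a genuinely different route from the paper's. The paper stays inside its finite-difference product framework: it writes $\Delta_6^{k_2}\Delta_4^{k_1}(D_{2,4},f)=x^2\,\Delta_6^{k_2}\Delta_4^{k_1}(A_{1,1}A_{1,2},f)$ with $A_{1,t}(f)=1/(f+2t)$, expands the mixed difference of the product by the Leibniz rule (Lemma \ref{lm:Leibnizfinite}), bounds each factor by the exact inductive formulas from the proof of Lemma \ref{lm:doublediffa1}, and resums the binomial sums via the Chu--Vandermonde identity exactly as in Lemma \ref{lm:doublediffa2}. You instead collapse the entire mixed difference into a single Beta integral through $(f+2)^{-1}(f+4)^{-1}=\tfrac12\int_0^1 t^{f+1}(1-t^2)\,dt$, on which the shift operators act multiplicatively as $(t^4-1)^{k_1}(t^6-1)^{k_2}$; the elementary bound $1-t^{2m}\le 2m(1-t)$ together with $B(f+2,k_1+k_2+2)$ then produces the factorial $(k_1+k_2+1)!$ and the $k_1+k_2+2$ denominator factors in one stroke, and your pairing $(f+2t)\le 2(f+t+3)$ correctly converts the consecutive-integer product to the even-spaced one uniformly in $k_1+k_2$, giving the explicit constant $C=12$. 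Your route buys a self-contained argument with explicit constants that generalizes immediately to any product of factors $1/(f+2a)$ and any step sizes; the paper's route buys brevity by recycling machinery already established for the neighboring lemmas. One minor slip in your closing commentary only: the term-by-term decay after a partial-fraction split is $f^{-(k_1+k_2+1)}$, one power short of the needed $f^{-(k_1+k_2+2)}$ for every $k_1+k_2\ge 1$ (not $f^{-(k_2+1)}$, and not only when $k_1>1$), and that cancellation is in fact retained by the paper's Leibniz-on-the-product expansion, which never splits the product additively. This does not affect your argument, which is sound.
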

\begin{proof}
Please see Section \ref{sec:d24order} on Page \pageref{sec:d24order}.
\end{proof}
\medskip

\noindent Then applying similar analysis to that of \eqref{eq:v1v2updateform} and \eqref{eq:h1h2inductorder1} in \textit{Part I} above,  
we obtain \eqref{eq:prv14v26q1} by the assumption \eqref{eq:prop2s1swconc1}	and Lemma \ref{lm:d24order}.   

\medskip

\noindent \textit{Step II.2.} \quad We assume for  $0\leq s_1\leq v_1-1$ and $0\leq s_2\leq v_2-1$, \eqref{eq:prop2s1swconc1} holds, 	
and then prove  \eqref{eq:prv14v26q1}.
By \eqref{eq:d26def} and the Leibniz rule in Lemma \ref{lm:Leibnizfinite}, 
\begin{align}
&~\Delta^{v_2}_6\Delta^{v_1-1}_4(Q_1,f)\label{eq:leib46v1v22} \\
=&~\Delta^{v_1-1}_4\Delta^{v_2-1}_6(D_{2,6}Q_1,f)\notag \\	
=&~ \sum_{k_1=0}^{v_1-1}\sum_{k_2=0}^{v_2-1}\binom{v_1-1}{k_1}\binom{v_2-1}{k_2}\Delta_6^{k_2}\Delta_4^{k_1}	(D_{2,6},f)\times \Delta_6^{v_2-1-k_2}\Delta_4^{v_1-1-k_1}(Q_1,f+4k_1+6k_2). \notag
\end{align} 
Similarly to the analysis of \eqref{eq:leib46v1v2}, 
we use the following Lemma \ref{lm:d26order} to evaluate \eqref{eq:leib46v1v22}. 

\begin{lemma}\label{lm:d26order}
For integers $k_1+k_2\geq 1$, there exists a constant $C$ such that
\begin{align*}
\Delta_6^{k_2}\Delta_4^{k_1}	(D_{2,6},f)=(k_1+k_2+2)! O\left(C^{k_1+k_2}\prod_{t=1}^{k_1+k_2}\frac{1}{f+2t}\right),
\end{align*}	
as $f\to \infty$ and uniformly over $k_1+k_2\geq 1$. 
\end{lemma}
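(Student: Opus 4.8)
The plan is to reduce the statement to a single Beta-type integral estimate, in the same spirit as the proof of Lemma \ref{lm:d24order}. First, since the total difference order $k_1+k_2\ge 1$ and finite differences annihilate constants (the linearity rule, Lemma \ref{lm:linearityrulefinite}), the constant $-1$ in the definition \eqref{eq:d26def} of $D_{2,6}(f)$ is killed, so
\[
\Delta_6^{k_2}\Delta_4^{k_1}(D_{2,6},f)=x^3\,\Delta_6^{k_2}\Delta_4^{k_1}(G,f),\qquad G(f)=\frac{1}{(f+2)(f+4)(f+6)},
\]
where $x=\chi_f^2(\alpha)$ as in Proposition \ref{prop:infinitesumm2}. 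The device I would use is the Beta representation $\prod_{i=1}^m (f+2i)^{-1}=\{2^{m-1}(m-1)!\}^{-1}\int_0^1 u^{f+1}(1-u^2)^{m-1}\,du$, which for $m=3$ gives $G(f)=\tfrac18\int_0^1 u^{f+1}(1-u^2)^2\,du$. Because $\Delta_4$ and $\Delta_6$ act on $f$ only through the factor $u^{f}$, they pass through the integral and merely multiply the integrand, giving
\[
\Delta_6^{k_2}\Delta_4^{k_1}(G,f)=\frac18\int_0^1 u^{f+1}(u^6-1)^{k_2}(u^4-1)^{k_1}(1-u^2)^2\,du.
\]

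Second, I would bound the integrand by the elementary inequalities $0\le 1-u^6\le 6(1-u)$, $0\le 1-u^4\le 4(1-u)$, and $(1-u^2)^2\le 4(1-u)^2$, valid on $[0,1]$. This yields $|\Delta_6^{k_2}\Delta_4^{k_1}(G,f)|\le C^{k_1+k_2}\int_0^1 u^{f+1}(1-u)^{k_1+k_2+2}\,du$, and the last integral equals $(k_1+k_2+2)!\,\Gamma(f+2)/\Gamma(f+k_1+k_2+5)=(k_1+k_2+2)!\prod_{j=2}^{k_1+k_2+4}(f+j)^{-1}$; here the factorial $(k_1+k_2+2)!$ is exactly the one claimed, and it originates from the three poles of $G$. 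Restoring the prefactor $x^3$ and invoking \eqref{eq:xchisqorder}, which gives $x\le 2f$ for all large $f$, I bound $x^3\le 8(f+2)(f+3)(f+4)$, cancelling the first three denominator factors and leaving $\prod_{j=5}^{k_1+k_2+4}(f+j)^{-1}$. A factorwise comparison $(f+2t)/(f+t+4)\le 2$ gives $\prod_{t=1}^{k_1+k_2}(f+2t)\le 2^{k_1+k_2}\prod_{j=5}^{k_1+k_2+4}(f+j)$, so the remaining product is $\le 2^{k_1+k_2}\prod_{t=1}^{k_1+k_2}(f+2t)^{-1}$. Absorbing $2^{k_1+k_2}$ into $C^{k_1+k_2}$ proves the stated bound, uniformly over $k_1+k_2\ge 1$.

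An alternative consistent with the surrounding proofs is to avoid the integral and instead induct on $k_1+k_2$ using the Leibniz, linearity, and exchange rules (Lemmas \ref{lm:Leibnizfinite}--\ref{lm:exchangerule}), writing $\Delta_6^1(D_{2,6},f)$ as (a rational factor)$\times D_{2,6}$ and tracking factorial growth exactly as for Lemma \ref{lm:d24order}; the only change from $D_{2,4}$ to $D_{2,6}$ is one extra pole, which bumps the factorial from $(k_1+k_2+1)!$ to $(k_1+k_2+2)!$. Either way, the main obstacle is uniformity in $k_1+k_2$: one must control the factorial growth of the Leibniz expansion (respectively the Beta integral) and, crucially, compare the product $\prod_j (f+j)^{-1}$ arising after cancelling $x^3$ with the target $\prod_t (f+2t)^{-1}$ through a uniform factorwise estimate, so that the implied constant $C$ is independent of $k_1,k_2$ even when $k_1+k_2$ is large relative to $f$.
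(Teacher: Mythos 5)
Your proposal is correct, and your main argument takes a genuinely different route from the paper's. The paper proves this lemma by writing $D_{2,6}(f)=x^3\prod_{k=1}^3(f+2k)^{-1}-1$, expanding the mixed difference of the triple product through two nested applications of the Leibniz rule (Lemma \ref{lm:Leibnizfinite}), bounding each elementary mixed difference with the machinery already developed for Lemma \ref{lm:doublediffa1}, invoking $x=\chi_f^2(\alpha)=O(f)$, and finally collapsing the fourfold binomial sum via the Chu--Vandermonde identity to produce the $(k_1+k_2+2)!$ factor --- exactly the scheme you sketch as your ``alternative,'' and the same scheme used for Lemma \ref{lm:d24order}. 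Your primary route instead replaces all of this combinatorics with the Beta representation $\prod_{i=1}^m(f+2i)^{-1}=\{2^{m-1}(m-1)!\}^{-1}\int_0^1 u^{f+1}(1-u^2)^{m-1}\,du$, under which $\Delta_4^{k_1}\Delta_6^{k_2}$ acts by inserting $(u^4-1)^{k_1}(u^6-1)^{k_2}$ in the integrand (this is just the binomial theorem applied to the shift sums, so passing the operators through the integral is legitimate); the elementary bounds $1-u^4\le 4(1-u)$, $1-u^6\le 6(1-u)$, $(1-u^2)^2\le 4(1-u)^2$ then reduce everything to $B(f+2,k_1+k_2+3)$, whose exact value delivers $(k_1+k_2+2)!$ directly, and your factorwise comparison $(f+2t)\le 2(f+t+4)$ correctly converts the resulting product into the stated $\prod_{t=1}^{k_1+k_2}(f+2t)^{-1}$ after cancelling $x^3\le 8(f+2)(f+3)(f+4)$. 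What each approach buys: the paper's proof recycles lemmas already established for the neighboring results, whereas yours is self-contained, produces explicit constants (one can take $C=12$), avoids the Leibniz/Chu--Vandermonde bookkeeping entirely, and makes the uniformity over $k_1+k_2\ge 1$ transparent, since every estimate is explicit and the only $f$-largeness requirement, $\chi_f^2(\alpha)\le 2f$, does not involve $k_1,k_2$.
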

\begin{proof}
Please see Section \ref{sec:d26order} on Page \pageref{sec:d26order}. 
\end{proof}
\medskip

\noindent 
Since we assume \eqref{eq:prop2s1swconc1} holds for  $0\leq s_1\leq v_1-1$ and $0\leq s_2\leq v_2-1$, then by Lemma \ref{lm:d26order}, 
\begin{align*}
\eqref{eq:leib46v1v22}=&~	 \sum_{k_1=0}^{v_1-1}\sum_{k_2=0}^{v_2-1}\binom{v_1-1}{k_1}\binom{v_2-1}{k_2} (k_1+k_2+2)!(v_2-1-k_2)!(v_1-k_1)! \notag \\
&~\times C^{v_1+v_2} f^{-(v_1+v_2)/2} O\left( f^{-(k_1+k_2+1)/2}\prod_{t=1}^{k_1+k_2}\frac{1}{f+2t}\right) \notag \\
=&~C^{v_1+v_2} f^{-(v_1+v_2)/2}(v_1-1)!(v_2-1)!\sum_{k_2=0}^{v_2} \sum_{k_1=0}^{v_1-1}  (v_1-k_1)\notag \\
&~ \times \frac{ (k_1+k_2+2)!}{k_1!k_2!}O\left( f^{-(k_1+k_2+1)/2}\prod_{t=1}^{k_1+k_2}\frac{1}{f+2t}\right). \notag 
\end{align*} 
We next use the following Lemma \ref{lm:k1kwprodorder} to evaluate \eqref{eq:leib46v1v22}. 
\begin{lemma}\label{lm:k1kwprodorder}
For integers $k_1+k_2\geq 1$, as $f\to \infty$, 
\begin{align*}
\frac{ (k_1+k_2+2)!}{k_1!k_2!}O\left\{ f^{-(k_1+k_2+1)/2}\prod_{t=1}^{k_1+k_2}\frac{1}{f+2t}\right\}	= O\{2^{-(k_1+k_2-1)/2} \}.
\end{align*} 		
\end{lemma}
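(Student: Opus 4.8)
The plan is to reduce Lemma \ref{lm:k1kwprodorder} to the already-established Lemma \ref{lm:evalwqw2} by a direct algebraic comparison, since the two expressions differ only by an explicit power of $f$ and a polynomial factor in $k_1+k_2$. Writing $m=k_1+k_2$, I would first record the quantity controlled by Lemma \ref{lm:evalwqw2}, namely
\begin{align*}
E(k_1,k_2,f)=\frac{m!}{k_1!\,k_2!}\,f^{(m+1)/2}\prod_{t=1}^{m}\frac{1}{f+2t}=O\bigl(2^{-(m-1)/2}\bigr),
\end{align*}
uniformly over $k_1,k_2$, and then isolate the explicit term sitting inside the $O\{\cdot\}$ on the left of Lemma \ref{lm:k1kwprodorder}, namely
\begin{align*}
K(k_1,k_2,f)=\frac{(m+2)!}{k_1!\,k_2!}\,f^{-(m+1)/2}\prod_{t=1}^{m}\frac{1}{f+2t}.
\end{align*}

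The key step is the elementary identity
\begin{align*}
K(k_1,k_2,f)=(m+1)(m+2)\,f^{-(m+1)}\,E(k_1,k_2,f),
\end{align*}
obtained by cancelling the common product $\prod_{t=1}^{m}(f+2t)^{-1}$ and the common denominator $k_1!\,k_2!$, tracking the factorial ratio $(m+2)!/m!=(m+1)(m+2)$, and combining the powers $f^{-(m+1)/2}/f^{(m+1)/2}=f^{-(m+1)}$. Substituting the bound of Lemma \ref{lm:evalwqw2} yields $K=(m+1)(m+2)\,f^{-(m+1)}\,O\bigl(2^{-(m-1)/2}\bigr)$. Since the left-hand side of Lemma \ref{lm:k1kwprodorder} is the factorial prefactor times a quantity that is $O$ of the explicit term in the definition of $K$, it is itself $O(K)$, so it suffices to bound $K$.

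It then remains to show that the prefactor $(m+1)(m+2)\,f^{-(m+1)}$ is bounded uniformly in $m\ge 1$ for all large $f$. Taking $f\ge 2$ and using $f^{-(m+1)}\le 2^{-(m+1)}$ gives $(m+1)(m+2)\,f^{-(m+1)}\le (m+1)(m+2)\,2^{-(m+1)}$, and the sequence $(m+1)(m+2)\,2^{-(m+1)}$ is bounded over all $m\ge 1$ (indeed maximised at small $m$); hence the prefactor is $O(1)$ uniformly in $m$, and in fact tends to $0$. This gives $K=O\bigl(2^{-(m-1)/2}\bigr)$ uniformly over $k_1,k_2$, which is the claim. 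The one point requiring care, and the main (though modest) obstacle, is handling the $f\to\infty$ limit and the uniformity over all pairs $(k_1,k_2)$ at the same time: I must confirm that the polynomial growth $(m+1)(m+2)$ coming from the factorial ratio is absorbed by the geometrically decaying factor $f^{-(m+1)}$ uniformly in $m$, rather than only for each fixed $m$, so that the implied constant in the final $O$-bound is independent of $k_1$ and $k_2$.
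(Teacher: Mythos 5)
Your argument is valid for the statement exactly as printed, and it takes a genuinely different route from the paper: you use Lemma \ref{lm:evalwqw2} as a black box, observe that the printed quantity equals $(m+1)(m+2)f^{-(m+1)}$ times the quantity controlled by that lemma (with $m=k_1+k_2$), and then note that $(m+1)(m+2)f^{-(m+1)}$ is uniformly bounded once $f\ge 2$. The paper instead reruns the odd/even split and the Stirling-type factorial estimates of Section \ref{sec:evalwqw2}. For the literal statement your proof is shorter, and you handle the joint uniformity in $(k_1,k_2)$ and $f$ correctly.

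The difficulty, which you could not have seen blind, is that the exponent $-(k_1+k_2+1)/2$ in the printed statement is a sign typo, and your reduction leans on it essentially. Trace the lemma's use in Step II.2 of the proof of Proposition \ref{prop:infinitesumm2}: the inductive bound \eqref{eq:prop2s1swconc1} contributes the factor $f^{-(v_1+v_2-1-k_1-k_2)/2}$, and factoring $f^{-(v_1+v_2)/2}$ out of \eqref{eq:leib46v1v22} leaves the per-term power $f^{+(k_1+k_2+1)/2}$, not $f^{-(k_1+k_2+1)/2}$; the parallel step in Part I (see \eqref{eq:v1v2updateform} and \eqref{eq:h1h2inductorder1}) displays exactly this positive power, which is why Lemma \ref{lm:evalwqw2} is stated with $f^{+(w_1+w_2+1)/2}$. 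Consistently, the paper's own proof of Lemma \ref{lm:k1kwprodorder} bounds the positive-power quantity: its first inequality \eqref{eq:k1k2bound0} is the nontrivial estimate for $f^{+(m+1)/2}\prod_{t=1}^{m}(f+2t)^{-1}$, and the remaining work \eqref{eq:k1k2bound} shows that $(m+1)(m+2)\prod_{t=1}^{(m+1)/2}t\,/(k_1!k_2!)$ is uniformly bounded. For that intended version your identity gives the ratio $(m+1)(m+2)$ with no compensating $f^{-(m+1)}$; since $(m+1)(m+2)$ is unbounded in $m$, quoting $E=O(2^{-(m-1)/2})$ does not yield the asserted $O(2^{-(m-1)/2})$, so the black-box reduction collapses. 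To prove the intended lemma one must open up the proof of Lemma \ref{lm:evalwqw2} and use that the factorial ratio $\prod_{t=1}^{(m+1)/2}t\,/(k_1!k_2!)$ decays super-exponentially in $m$, uniformly over $k_1+k_2=m$, so it absorbs the extra polynomial factor; this is precisely what the paper does. (Your reduction would still give the corrected quantity as $O\{(m+1)(m+2)2^{-(m-1)/2}\}$, which remains summable and would in fact suffice for the application in Proposition \ref{prop:infinitesumm2}, but it is weaker than the bound the lemma asserts.)
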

\begin{proof}
Please see Section \ref{sec:k1kwprodorder} on Page \pageref{sec:k1kwprodorder}. 	
\end{proof}
\medskip

\noindent Then by Lemma \ref{lm:k1kwprodorder}, we obtain $\Delta^{v_2}_6\Delta^{v_1-1}_4(Q_1,f)=O\{ v_1!v_2!C^{v_1+v_2} f^{-(v_1+v_2)/2}\}$ similarly to \eqref{eq:h1h2inductorder1}.  
In summary,  combining \textit{Step II.1} and \textit{Step II.2},  we finish the proof of \eqref{eq:prv14v26q1}.

%

\medskip

Second, to prove \eqref{eq:prv14v26q2}, we can use the mathematical induction  similarly to the proof of \eqref{eq:prv14v26q1}. 
The analysis would be very similar and the details are thus skipped.

\medskip 
 
\subsubsection{Proof of Lemma \ref{lm:a1order} (on Page \pageref{lm:a1order})}\label{sec:a1order}
When $x=\chi_f^2(\alpha)$, by \eqref{eq:xchisqorder}, we have $x=f+\sqrt{2f}\{z_{\alpha}+O(f^{-1/2})\}$,   
and then $A_1(f)=\sqrt{2}z_{\alpha}f^{-1/2}\{1+O(f^{-1})\}.$ 
We next prove \eqref{eq:a1korderdif} by the mathematical induction.
For $w=1$, we compute
\begin{align*}
\Delta_2^1 (A_1,f)= A_{1}(f+2)-A_1(f)= - x\times 2 \times \frac{1}{(f+2)(f+4)}.	
\end{align*} 
Therefore \eqref{eq:a1korderdif} holds when $w=1$. 
We next assume \eqref{eq:a1korderdif} holds, and prove the conclusion holds for $\Delta_2^{w+1}(A_1, f)$. 
Particularly, 
\begin{align*}
\Delta_2^{w+1}(A_1, f)=	&~ x\times(-1)^{w}2^ww!\left\{\frac{1}{\prod_{k=2}^{w+2}(f+2k) } - \frac{1}{\prod_{k=1}^{w+1}(f+2k)} \right\} \notag \\
=&~x\times (-1)^{w+1} 2^{w+1}(w+1)!\frac{1}{\prod_{k=1}^{w+2}(f+2k)}.
\end{align*} 	
In summary, Lemma \ref{lm:a1order} is proved. 

\subsubsection{Proof of Lemma \ref{lm:a2orderf} (on Page \pageref{lm:a2orderf})} \label{sec:a2orderf}
When $x=\chi_f^2(\alpha)$, by \eqref{eq:xchisqorder}, we have $x=f+\sqrt{2f}\{z_{\alpha}+O(f^{-1/2})\}$,   
and then  $A_2(f)=2\sqrt{2}z_{\alpha}f^{-1/2} \{1+O(f^{-1})\}$. 
We next prove \eqref{eq:ordera2fh4diff}. 
Note that we can write $A_2(f)=A_{2,1}(f)A_{2,2}(f)-1$, where we define
\begin{align*}
	A_{2,1}(f)=\frac{x}{f+4} \quad \text{ and }\quad A_{2,2}(f)=\frac{x}{f+6}.
\end{align*}
By Lemmas \ref{lm:Leibnizfinite} and \ref{lm:linearityrulefinite}, when $w\geq 1$, 
\begin{align}
\Delta_4^w(A_2,f)=\Delta_4^w(A_{2,1}A_{2,2},f)=\sum_{k=0}^w \binom{w}{k} \Delta_4^k(A_{2,1},f)\Delta^{w-k}_4(A_{2,2},f+4k). \label{eq:leibnizb1b2}
\end{align}
To prove $\eqref{eq:leibnizb1b2}=O(w!C^{w}f^{-w})$,
we next evaluate $\Delta_4^k(A_{2,1},f)$ and $\Delta_4^{w-k}(A_{2,2},f+4k)$. 


In particular, we prove that
\begin{align}
	\Delta_4^k(A_{2,1},f)=(-1)^{k}4^{k}k!x\times \frac{1}{\prod_{t=1}^{k+1}(f+4t)} \label{eq:b1sdfifforder}
\end{align}
by the mathematical induction.
When $k=1$, 
\begin{align*}
	\Delta_4^1(A_{2,1},f)=\frac{x}{f+8}-\frac{x}{f+4}=\frac{x\times (-4)}{(f+4)(f+8)}.
\end{align*}
Thus \eqref{eq:b1sdfifforder} holds for $k=1$. 
We next assume \eqref{eq:b1sdfifforder} holds and prove the conclusion for $\Delta_4^{k+1}(A_{2,1},f)$. 
Specifically, 
\begin{align*}
\Delta_4^{k+1}(A_{2,1},f)=&~(-1)^{k}4^{k}k!x\left\{\frac{1}{\prod_{t=2}^{k+2}(f+4t)}  - \frac{1}{\prod_{t=1}^{k+1}(f+4t)}\right\} \notag \\
=&~(-1)^{k+1}4^{k+1}(k+1)!x \frac{1}{\prod_{t=1}^{k+2}(f+4t)}.
\end{align*}
In summary, \eqref{eq:b1sdfifforder} is proved. 
Moreover, as $A_{2,2}(f)=A_{2,1}(f+2),$ we have
\begin{align*}
	\Delta_4^{k}(A_{2,2},f)=\Delta_4^{k}(A_{2,1},f+2)=(-1)^{k}4^{k}k!x \frac{1}{\prod_{t=1}^{k+1}(f+2+4t)}.
\end{align*}
It follows that $\Delta_4^{w-k}(A_{2,2},f+4k)=(-1)^{w-k}4^{w-k}(w-k)!x\{\prod_{t=k+1}^{w+1}(f+2+4t)\}^{-1}$. 
Then by \eqref{eq:leibnizb1b2},
there exists a constant $C$ such that 
\begin{align*}
\big|\Delta_4^w(A_{2,1}A_{2,2},f)\big|=&~\Biggr|\sum_{k=0}^w \binom{w}{k} \frac{(-4)^{w}k!(w-k)!x^2}{\prod_{t=1}^{k+1}(f+4t)\prod_{t=k+1}^{w+1}(f+2+4t)}\Biggr| \leq w!C^w \sum_{k=0}^w \frac{x^2}{\prod_{t=1}^{w+2}(f+2t)}.
\end{align*}
As $x=\chi^2_f(\alpha)=O(f)$, we obtain that \eqref{eq:ordera2fh4diff}  holds as $f\to \infty$ and uniformly for any integer $w\geq 1$. 

%


%
%

\subsubsection{Proof of Lemma \ref{lm:a3orderf} (on Page  \pageref{lm:a3orderf})} \label{sec:a3orderf}
When $x=\chi_f^2(\alpha)$, by \eqref{eq:xchisqorder}, we have $x=f+\sqrt{2f}\{z_{\alpha}+O(f^{-1/2})\}$,   
and then  $A_3(f)=3\sqrt{2}z_{\alpha}f^{-1/2} \{1+O(f^{-1/2})\}$. 
We next consider $\Delta_6^w(A_3,f)$ for $w\geq 1.$ 
As $A_3(f)=\prod_{l=1}^{3}A_{3,l}(f)-1$, 
\begin{align*}
\Delta_6^w(A_3,f)=\sum_{k_1=0}^{w} \sum_{k_2=0}^{k_1} \binom{k_1}{k_2}\binom{w}{k_1}\Delta_6^{k_2}(A_{3,1},f)	\Delta_6^{k_1-k_2}(A_{3,2},f+6k_2)	\Delta_6^{w-k_1}(A_{3,3}, f+6k_1). 
\end{align*}
Similarly to the proofs of Lemma \ref{lm:a1order} in Section \ref{sec:a1order},  
for $A_{3,l}(f)$, $l\in \{1,2,3\}$,  
we can obtain that for any integer $w\geq 1$ and $l\in \{1,2,3\}$ 
\begin{align*}
\Delta_6^w(A_{3,l},f)=(-6)^w w! x \times \frac{1}{\prod_{t=0}^{w}(f+4+2l+6t)}.
\end{align*}
It follows that
\begin{align*}
\Delta_6^w(A_3,f)=&~\sum_{k_1=0}^{w} \sum_{k_2=0}^{k_1} \binom{k_1}{k_2}\binom{w}{k_1}(-6)^w k_2!(k_1-k_2)!(w-k_1)! \notag \\
&~	\times x^3\Biggr\{ \prod_{t=1}^{k_2+1} (f+6t) \prod_{t=k_2+1}^{k_1+1}(f+6t+2) \prod_{t=k_1+1}^{w+1} (f+6t+4) \Biggr\}^{-1}.
\end{align*}
As $\binom{k_1}{k_2}\binom{w}{k_1} k_2!(k_1-k_2)!(w-k_1)!=w!$, $\sum_{k_1=0}^{w} \sum_{k_2=0}^{k_1} 1 \leq (w+1)^2$, 
  and $x=\chi^2_f(\alpha)=O(f)$, there exists a constant $C$ such that as $f\to \infty$ and uniformly over $w\geq 1$, 
\begin{align*}
\Delta_6^w(A_3,f)=&~O\biggr\{(w+2)!C^w \prod_{t=1}^{w}(f+2t)^{-1}\biggr\}. 
\end{align*}

 
\subsubsection{Proof of Lemma \ref{lm:a4orderf} (on Page \pageref{lm:a4orderf})} \label{sec:a4orderf}
When $x=\chi_f^2(\alpha)$, by \eqref{eq:xchisqorder}, we have $x=f+\sqrt{2f}\{z_{\alpha}+O(f^{-1/2})\}$,   
and then $A_4(f)=4\sqrt{2}z_{\alpha}f^{-1/2} \{1+O(f^{-1/2})\}$.
We next prove  the conclusion for $w\geq 1.$ 
As $A_4(f)=\prod_{l=1}^{4}A_{4,l}(f)-1$, 
\begin{align*}
\Delta_8^w(A_4,f)=&~\sum_{k_1=0}^{w}\sum_{k_2=0}^{k_1}\sum_{k_3=0}^{k_2} \binom{w}{k_1} \binom{k_1}{k_2}\binom{k_2}{k_3} \Delta_8^{k_3}(A_{4,1}, f)\times \Delta_8^{k_2-k_3}(A_{4,2}, f+8k_3)\notag \\
&~ \times \Delta_8^{k_1-k_2}(A_{4,3}, f+8k_2)\times \Delta_8^{w-k_1}(A_{4,4}, f+8k_1).
\end{align*}
Similarly to the proof of Lemma \ref{lm:a1order} in Section \ref{sec:a1order},  
for $A_{4,l}(f)$, $l\in \{1,2,3, 4\}$,  
we can obtain that for any integer $w\geq 1$,  
\begin{align*}
\Delta_8^w(A_{4,l},f)=(-8)^w w! x \times \frac{1}{\prod_{t=0}^{w}(f+6+2l+8t)}.
\end{align*}
It follows that
\begin{align*}
\Delta_8^w(A_4,f)=&~\sum_{k_1=0}^{w}\sum_{k_2=0}^{k_1}\sum_{k_3=0}^{k_2} \binom{w}{k_1} \binom{k_1}{k_2}\binom{k_2}{k_3} (-8)^w k_3!(k_2-k_3)!(k_1-k_2)!(w-k_1)! \notag \\
&~ \times x^4 \biggr\{ \prod_{t=1}^{k_3+1}(f+8t) \prod_{t=k_3+1}^{k_2+1}(f+8t+2) \prod_{t=k_2+1}^{k_1+1}(f+8t+4)\prod_{k_1+1}^{w+1}(f+8t+6)\biggr\}^{-1}. 
\end{align*}
As $ \binom{w}{k_1} \binom{k_1}{k_2}\binom{k_2}{k_3} k_3!(k_2-k_3)!(k_1-k_2)!(w-k_1)!=w!$, $\sum_{k_1=0}^{w}\sum_{k_2=0}^{k_1}\sum_{k_3=0}^{k_2} 1 \leq (w+1)^3$,  and $x=O(f)$, there exists a constant $C$ such that 
\begin{align*}
\Delta_8^w(A_4,f)=&~O\biggr\{(w+3)!C^w \prod_{t=1}^{w}(f+2t)^{-1}\biggr\}. 
\end{align*}


 
\subsubsection{Proof of Lemma \ref{lm:doublediffa1} (on Page \pageref{lm:doublediffa1})} \label{sec:doublediffa1}
By the proof of  Lemma \ref{lm:a1order}, we have
\begin{align*}
\Delta_2^{w_1} (A_1, f)=&~ (-1)^{w_1}2^{w_1}w_1! x\prod_{s=1}^{w_1+1}A_{1,s}(f),	
\end{align*}
where $A_{1,s}(f)=1/(f+2s)$.
It follows that
\begin{align}
	\Delta_4^{w_2}\big\{\Delta_2^{w_1} (A_1, f)\big\}=x (-2)^{w_1}w_1! 	\Delta_4^{w_2}\Biggr\{\prod_{s=1}^{w_1+1}A_{1,s}(f)\Biggr\}. \label{eq:a1diffw1w2}
\end{align} 
To prove Lemma \ref{lm:doublediffa1}, by $x=\chi^2_f(\alpha)=O(f)$ and \eqref{eq:a1diffw1w2}, 
it suffices to prove 
\begin{align}
\Delta_4^{w_2}\Biggr\{\prod_{s=1}^{w_1+1}A_{1,s}(f)\Biggr\}=\frac{(w_1+w_2)!}{w_1!}	O\left\{ C^{w_1+w_2}\prod_{s=1}^{w_1+w_2+1} (f+2s)^{-1} \right\}.\label{eq:a1sprodval}
\end{align}

We next prove \eqref{eq:a1sprodval} by the mathematical induction.
Consider $w_1=0$ first. 
Similarly to the proof of  Lemma \ref{lm:a1order}, for each integer $1\leq s\leq w_1+1$, we have
\begin{align}
	\Delta_4^{w_2}(A_{1,s},f)=w_2!(-4)^{w_2}\prod_{k=0}^{w_2}(f+2s+4k).\label{eq:valdel3w2a1s}
\end{align}
Thus \eqref{eq:a1sprodval} holds for $w_1=0$. 
We then assume for integers $1\leq l\leq w_1$, 
\begin{align}
\Delta_4^{w_2}\Biggr\{ \prod_{s=1}^l A_{1,s}(f)\Biggr\}=\frac{(w_2+l-1)!}{(l-1)!}O\left\{ \prod_{k=1}^{w_2+l}(f+2k)^{-1}\right\}, \label{eq:a1sprodvalassume}
\end{align}
and prove \eqref{eq:a1sprodval}. 
By the Leibniz rule in Lemma \ref{lm:Leibnizfinite}, 
\begin{align}\label{eq:a1diffw1w222}
\Delta_4^{w_2}\Biggr\{\prod_{s=1}^{w_1+1}A_{1,s}(f)\Biggr\}=\sum_{k_2=0}^{w_2}\binom{w_2}{k_2}\Delta_4^{k_2}\left\{ \prod_{s_1=1}^{w_1}A_{1,s_1}(f) \right\}\Delta_4^{w_2-k_2}(A_{1,w_1+1},f+4k_2). 	
\end{align}
Then by \eqref{eq:valdel3w2a1s} and \eqref{eq:a1sprodvalassume}, we obtain
\begin{align*}
\eqref{eq:a1diffw1w222}=&~ \sum_{k_2=0}^{w_2}\binom{w_2}{k_2} \frac{(k_2+w_1-1)!}{(w_1-1)!}O \Biggr( C^{w_1+k_2} \prod_{s_1=1}^{w_1+k_2}\frac{1}{f+2s_1}\Biggr) \notag \\ 
&~\quad \times O\Biggr\{ (w_2-k_2)!C^{w_2-k_2}\prod_{s_2=0}^{w_2-k_2}\frac{1}{f+4k_2+2(w_1+1) + 4s_2} \Biggr\}
 \notag \\	
=&~ C^{w_1+w_2}\sum_{k_2=0}^{w_2} w_2!\binom{k_2+w_1-1}{k_2} O\Biggr(\prod_{s=1}^{w_1+w_2+1}\frac{1}{f+2s}\Biggr). \notag
 \end{align*}
By the hockey-stick identity, $ \sum_{k_2=0}^{w_2} \binom{k_2+w_1-1}{k_2}=\binom{w_1+w_2}{w_2}$. 
Therefore, \eqref{eq:a1sprodval} is proved and then \eqref{eq:a1diffw1w2} follows.

  
\smallskip 
\subsubsection{Proof of Lemma \ref{lm:evalwqw2} (on Page  \pageref{lm:evalwqw2})} \label{sec:evalwqw2}
We next prove Lemma \ref{lm:evalwqw2} by discussing the cases when $w_1+w_2$ is odd and even, respectively. \\
(1) When $w_1+w_2$ is odd, $(w_1+w_2+1)/2$ is an integer, and then  
\begin{align*}
(w_1+w_2)!\prod_{k=1}^{w_1+w_2}\frac{1}{f+2k}\times f^{(w_1+w_2+1)/2} \leq &~ (w_1+w_2)!\prod_{k=(w_1+w_2+1)/2 + 1}^{w_1+w_2}	\frac{1}{2k} \notag \\
\leq &~2^{-(w_1+w_2-1)/2} \prod_{k=1}^{(w_1+w_2+1)/2}k. 
\end{align*} 
To prove Lemma \ref{lm:evalwqw2}, it now suffices to prove that there exists a constant $C$ such that 
\begin{align}\label{eq:w1w2boundc}
 \frac{1}{w_1!w_2!}\prod_{k=1}^{(w_1+w_2+1)/2}k	 \leq C.
\end{align}
To prove \eqref{eq:w1w2boundc}, we  use the following Lemma \ref{lm:wfactorialbound}. 
\begin{lemma}[Factorial bound]\label{lm:wfactorialbound}
For any integer $w\geq 1$, 
\begin{align*}
\left(\frac{w}{e}\right)^{w} e \leq w ! \leq\left(\frac{w+1}{e}\right)^{w+1} e. 	
\end{align*}	
\end{lemma}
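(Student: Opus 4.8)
The plan is to prove both inequalities by induction on $w$, in each case reducing the inductive step to one of the two classical facts $(1+1/w)^w < e$ and $(1+1/w)^{w+1} > e$.

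For the lower bound $e(w/e)^w \le w!$, I would first check the base case $w=1$, where both sides equal $1$. For the inductive step, assuming $w! \ge e(w/e)^w$ and using $(w+1)! = (w+1)\, w!$, a short rearrangement shows that the desired inequality $(w+1)! \ge e\,((w+1)/e)^{w+1}$ is equivalent, after dividing through by $e$ and by $(w+1)$ and multiplying by $e^{w+1}$, to $e \ge (1+1/w)^w$. For the upper bound $w! \le e\,((w+1)/e)^{w+1}$, I would again verify $w=1$ (where the right-hand side is $4/e > 1$) and then, assuming the bound for $w$, compute $(w+1)! = (w+1)\,w! \le (w+1)^{w+2} e^{-w}$ and reduce the claim for $w+1$ to the inequality $e \le (1+1/(w+1))^{w+2}$, that is, to $(1+1/m)^{m+1} > e$ with the index shift $m = w+1$.

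It then remains to establish the two elementary inequalities, which I would obtain from the logarithm bounds $\frac{x}{1+x} < \log(1+x) < x$, valid for all $x>0$. Taking $x=1/w$ in the upper bound gives $w\log(1+1/w) < 1$, hence $(1+1/w)^w < e$, which closes the lower-bound induction; taking $x=1/m$ in the lower bound gives $(m+1)\log(1+1/m) > \tfrac{m+1}{m+1} = 1$, hence $(1+1/m)^{m+1} > e$, which closes the upper-bound induction.

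Since the whole argument is elementary, there is no substantive obstacle; the only points requiring care are bookkeeping the powers of $e$ correctly in each inductive step and matching the index shift $m=w+1$ so that the upper-bound reduction lands exactly on $(1+1/m)^{m+1}>e$ rather than on $(1+1/m)^{m}$.
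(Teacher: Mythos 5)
Your induction proof is correct: both base cases hold, the two inductive steps reduce exactly as you claim to $(1+1/w)^w<e$ and $(1+1/m)^{m+1}>e$ respectively, and those two inequalities do follow from the logarithm bounds $x/(1+x)<\log(1+x)<x$ for $x>0$. However, it is a genuinely different route from the paper's. The paper proves the lemma in one line by an integral comparison: since $\log$ is increasing, $\int_{1}^{w} \ln x \, \mathrm{d} x \leq \sum_{x=1}^{w} \ln x \leq \int_{0}^{w} \ln (x+1)\, \mathrm{d} x$, and evaluating the two integrals gives $w\ln w - w + 1 \le \ln(w!) \le (w+1)\ln(w+1)-w$, which exponentiates directly to both bounds simultaneously. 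That argument is shorter and handles the two sides in a single stroke, at the cost of invoking integration. Your argument is purely discrete and self-contained, but requires two separate inductions plus the two auxiliary inequalities about $e$; in effect, your per-step inequalities are the telescoped, discrete counterpart of the paper's area comparison. Either proof is adequate for the role the lemma plays (bounding factorial ratios in the proofs of Lemmas \ref{lm:evalwqw2} and \ref{lm:k1kwprodorder}).
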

\begin{proof}
This is a known bound on factorial in literature, and is obtained by  
$\int_{1}^{w} \ln x \mathrm{d} x \leq \sum_{x=1}^{w} \ln x \leq \int_{0}^{w} \ln (x+1) \mathrm{d} x$	. 
\end{proof}
\medskip

\noindent Assume without loss of generality that $w_2\geq w_1$, and then  by Lemma \ref{lm:wfactorialbound}, 
\begin{align}
  &~ \frac{1}{w_1!w_2!}\prod_{k=1}^{(w_1+w_2+1)/2}k\notag \\
  \leq &~ \frac{1}{e} \left(\frac{e}{w_1}\right)^{w_1}\left(\frac{e}{w_2}\right)^{w_2} \left( \frac{w_1+w_2+3}{2e} \right)^{(w_1+w_2+3)/2} \notag \\
 =&~\frac{1}{e} \left(  \frac{e^2}{w_1w_2} \frac{w_1+w_2+3}{2e} \right)^{w_1} \left(\frac{e^2}{w_2^2} \frac{w_1+w_2+3}{2e} \right)^{(w_2-w_1)/2}\left( \frac{w_1+w_2+3}{2e} \right)^{3/2}. \label{eq:upperbound1prod}
 \end{align}
As $w_1+w_2+3\leq 4w_2$, there exists a constant $C$ such that
\begin{align*}
\eqref{eq:upperbound1prod}\leq C\left(\frac{2e}{w_1} \right)^{w_1}\left(\frac{2e}{w_2} \right)^{(w_2-w_1)/2}(w_1+w_2+3)^{3/2}. 	
\end{align*} 
When $w_2-w_1\geq 3$, 
\begin{align*}
\eqref{eq:upperbound1prod}\leq C\left(\frac{2e}{w_1} \right)^{w_1}\left(\frac{2e}{w_2} \right)^{(w_2-w_1-3)/2} \left\{ \frac{2e(w_1+w_2+3)}{w_2} \right\}^{3/2},	
\end{align*} 
which is bounded. When  $0\leq w_2-w_1\leq 2$, 
\begin{align*}
\eqref{eq:upperbound1prod}\leq C \left(\frac{2e}{w_1} \right)^{w_1}(2w_1+5)^{3/2},
\end{align*} 
which is also bounded. In summary, \eqref{eq:upperbound1prod} is bounded. 
 

\medskip

\noindent (2) When $w_1+w_2$ is even, similarly, we have
\begin{align*}
(w_1+w_2)!\prod_{k=1}^{w_1+w_2}\frac{1}{f+2k}\times f^{(w_1+w_2+1)/2} \leq 	2^{-(w_1+w_2)/2-1}	\prod_{k=1}^{(w_1+w_2)/2+1}k.
\end{align*}
To prove Lemma \ref{lm:evalwqw2}, it now suffices to prove that there exists a constant $C$ such that 
\begin{align*}
 \frac{1}{w_1!w_2!}\prod_{k=1}^{(w_1+w_2)/2+1}k \leq C. 
\end{align*}
Similar analysis can be applied and the conclusions follow.


\subsubsection{Proof of Lemma \ref{lm:doublediffa2} (on Page \pageref{lm:doublediffa2})}\label{sec:doublediffa2}
When $w_1=0$, we know  Lemma  \ref{lm:doublediffa2} holds  by Lemma \ref{lm:a2orderf}. 
Recall that we write $A_2(f)=A_{2,1}(f)A_{2,2}(f)-1$ in Section \ref{sec:a2orderf}. 
Thus when $w_1+w_2\geq 1$,
\begin{align*}
\Delta_2^{w_1}\Delta_4^{w_2}(A_2,f)=\Delta_2^{w_1}\Delta_4^{w_2}(A_{2,1}A_{2,2},f).
\end{align*}
By the Leibniz rule in Lemma \ref{lm:Leibnizfinite}, 
\begin{align}
&~\Delta_2^{w_1}\Delta_4^{w_2}(A_{2,1}A_{2,2},f)\notag \\
=&~\sum_{k_1=0}^{w_1} \sum_{k_2=0}^{w_2} \binom{w_1}{k_1}\binom{w_2}{k_2}\Delta_2^{k_1}\Delta_4^{k_2}(A_{2,1},f)\Delta_2^{w_1-k_1}\Delta^{w_2-k_2}_4(A_{2,2},f+2k_1+4k_2).\label{eq:k1k2a21a22leib}
\end{align}
Following the proof of Lemma \ref{lm:doublediffa1}, 
we have when $k_1+k_2\geq 1$, 
\begin{align*}
\Delta_2^{k_1}\Delta_4^{k_2}(A_{2,1},f) 	= (k_1+k_2)! O\left(C^{k_1+k_2}\prod_{s=1}^{k_1+k_2}\frac{1}{f+2s}\right),
\end{align*}
and when $w_1+w_2-k_1-k_2\geq 1$, 
\begin{align*}
\Delta_2^{w_1-k_1}\Delta^{w_2-k_2}_4(A_{2,2},f+2k_1+4k_2)=(w_1+w_2-k_1-k_2)! O\left(C^{w_1+w_2-k_1-k_2} \prod_{s=k_1+k_2+1}^{w_1+w_2} \frac{1}{f+2s} \right).	
\end{align*}
Therefore,
\begin{align*}
\eqref{eq:k1k2a21a22leib}=w_1!w_2!\sum_{k_1=0}^{w_1} \sum_{k_2=0}^{w_2} \binom{k_1+k_2}{k_1}\binom{w_1+w_2-k_1-k_2}{w_1-k_1} 	O\left(C^{w_1+w_2}\prod_{s=1}^{w_1+w_2}\frac{1}{f+2s}\right).
\end{align*}
By the Chu–Vandermonde identity, 
\begin{align*}
\sum_{k_1=0}^{w_1} \sum_{k_2=0}^{w_2} \binom{k_1+k_2}{k_1}\binom{w_1+w_2-k_1-k_2}{w_1-k_1} =&~ \sum_{m=0}^{w_1+w_2}\sum_{s_1=0}^{w_1}  \binom{m}{s_1}\binom{w_1+w_2-m}{w_1-s_1}\notag \\
=&~(w_1+w_2+1)\binom{w_1+w_2}{w_1}. 
\end{align*}
Then $\Delta_2^{w_1}\Delta_4^{w_2}(A_2,f)=(w_1+w_2+1)!O\{C^{w_1+w_2}\prod_{s=1}^{w_1+w_2}(f+2s)^{-1}\}.$ 


\subsubsection{Proof of Lemma \ref{lm:d24order} (on Page \pageref{lm:d24order})} \label{sec:d24order}
By the definition of $D_{2,4}(f)$, when $k_1+k_2\geq 1$, 
\begin{align*}
\Delta_6^{k_2}\Delta_4^{k_1}	(D_{2,4},f)=	x^2\Delta_6^{k_2}\Delta_4^{k_1}(A_{1,1}A_{1,2},f),
\end{align*}
where recall that we define $A_{1,t}=1/(f+2t)$ for integers $t$. 
By the Leibniz rule in Lemma \ref{lm:Leibnizfinite}, 
\begin{align*}
\Delta_6^{k_2}\Delta_4^{k_1}(A_{1,1}A_{1,2},f)=\sum_{s_2=0}^{k_2}\sum_{s_1=0}^{k_1}\binom{k_1}{s_1}\binom{k_2}{s_2} 	\Delta_6^{s_2}\Delta_4^{s_1}(A_{1,1},f)	\Delta_6^{k_2-s_2}\Delta_4^{k_1-s_1}(A_{1,2},f+4s_1+6k_2)
\end{align*}
Following the proof of Lemma \ref{lm:doublediffa1} in Section \ref{sec:doublediffa1}, we similarly have
\begin{align*}
\Delta_6^{s_2}\Delta_4^{s_1}(A_{1,1},f)=(s_1+s_2)!O\Biggr(C^{s_1+s_2} \prod_{k=1}^{s_1+s_2+1}\frac{1}{f+2k}\Biggr). 
\end{align*}
Then following the proof of Lemma \ref{lm:doublediffa2} in Section  \ref{sec:doublediffa2}, 
we obtain Lemma \ref{lm:d24order}. 
The analysis will be very similar and thus the details are skipped. 


\subsubsection{Proof of Lemma \ref{lm:d26order} (on Page \pageref{lm:d26order})}\label{sec:d26order}
Note that we can write $D_{2,6}(f)=x^3 \prod_{k=1}^3 A_{1,k}(f)-1$. 
By the Leibniz rule in Lemma \ref{lm:Leibnizfinite}, 
\begin{align*}
\Delta_4^{k_1}\Delta_6^{k_2}(D_{2,6},f)=&~ \sum_{s_1=0}^{k_2}\sum_{s_2=0}^{s_1} \binom{k_1}{s_1}\binom{s_1}{s_2} \sum_{t_1=0}^{k_1} \sum_{t_2=0}^{t_1}\binom{k_2}{t_1}\binom{t_1}{t_2}x^3\times \Delta_4^{t_2} \Delta_6^{s_2}(A_{3,1},f) \notag \\
&~ \times \Delta_4^{t_1-t_2}\Delta_6^{s_1-s_2}(A_{3,2},f+6s_2+4t_2) \Delta_4^{k_1-t_1}\Delta_6^{k_2-s_1}(A_{3,3},f+6s_1+4t_1).
\end{align*}
Following the proof of Lemma \ref{lm:doublediffa1} in Section \ref{sec:doublediffa1}, we similarly have that 
for integers $t+s\geq 1$, and $l\in \{1,2,3\}$, 
\begin{align*}
	\Delta_4^t\Delta_6^s(A_{3,l})=(t+s)!O\left(C^{t+s}\prod_{m=1}^{t+s+1}\frac{1}{f+2m}\right).
\end{align*}
By $x=\chi^2_f(\alpha)=O(f)$, 
\begin{align*}
\Delta_4^{k_1}\Delta_6^{k_2}(D_{2,6},f)=&~ 	 \sum_{s_1=0}^{k_2}\sum_{s_2=0}^{s_1} \sum_{t_1=0}^{k_1} \sum_{t_2=0}^{t_1} \binom{k_1}{s_1}\binom{s_1}{s_2}  \binom{k_2}{t_1}\binom{t_1}{t_2} (t_2+s_2)!(t_1+s_1-t_2-s_2)! \notag \\
&~\times  (k_1+k_2-t_1-s_1) \times O\left( \prod_{m=1}^{k_1+k_2}\frac{1}{f+2m}\right). 
\end{align*}
Similarly to the proof of Lemma \ref{lm:doublediffa2} in Section \ref{sec:doublediffa2}, 
by the Chu–Vandermonde identity,  we obtain
\begin{align*}
\Delta_4^{k_1}\Delta_6^{k_2}(D_{2,6},f)=&~	\sum_{s_1=0}^{k_2} \sum_{t_1=0}^{k_1} k_1!k_2!\binom{k_1+k_2-s_1-t_1}{k_1-s_1}\binom{s_1+t_1}{s_1}(s_1+t_1+1)\notag \\
=&~(k_1+k_2+2)!\times O\left( \prod_{m=1}^{k_1+k_2}\frac{1}{f+2m}\right), 
\end{align*}
where we use $s_1+s_2+1\leq k_1+k_2+1$ in the second equation. 


\subsubsection{Proof of Lemma \ref{lm:k1kwprodorder} (on Page \pageref{lm:k1kwprodorder})}\label{sec:k1kwprodorder}
We prove  Lemma \ref{lm:k1kwprodorder} similarly to the proof of Lemma \ref{lm:evalwqw2} in Section \ref{sec:evalwqw2} by discussing $k_1+k_2$ is odd and even, respectively. 

\smallskip

\noindent (1) When $k_1+k_2$ is odd,  similarly to the analysis of  \eqref{eq:upperbound1prod}, we assume without loss of generality that $k_2\geq k_1$, and obtain
\begin{align}
&~\frac{(k_1+k_2+2)!}{k_1!k_2!}O\left( f^{-(k_1+k_2+1)/2}\prod_{t=1}^{k_1+k_2}\frac{1}{f+2t}\right)\notag  \\
\leq &~\frac{2^{-(k_1+k_2-1)/2} }{k_1!k_2!} (k_1+k_2+2)(k_1+k_2+1) \prod_{t=1}^{(k_1+k_2+1)/2}t.\label{eq:k1k2bound0}
\end{align}
Note that
\begin{align}
&~\frac{(k_1+k_2+2)(k_1+k_2+1) }{k_1!k_2!}	 \prod_{t=1}^{(k_1+k_2+1)/2}t \notag \\
\leq &~C \left(  \frac{e^2}{k_1k_2} \frac{k_1+k_2+3}{2e} \right)^{k_1} \left(\frac{e^2}{k_2^2} \frac{k_1+k_2+3}{2e} \right)^{(k_2-k_1)/2}(k_1+k_2+3)^{5/2}\notag \\
\leq &~C \left(\frac{2e}{k_1} \right)^{k_1}	\left(\frac{2e}{k_2} \right)^{(k_2-k_1-5)/2}\left\{\frac{2e(k_1+k_2+3)}{k_2}\right\}^{5/2}. \label{eq:k1k2bound}
\end{align}
When $k_2-k_1\geq 5$, we can see that \eqref{eq:k1k2bound} is bounded. 
When $k_2-k_1\leq 4$, we have
\begin{align*}
\eqref{eq:k1k2bound}	\leq C\left(\frac{k_2}{k_1}\right)^{(5-k_2+k_1)/2}\left(\frac{2e}{k_1} \right)^{(k_1+k_2-5)/2}\left(\frac{k_1+k_2+3}{k_2}\right)^{5/2},
\end{align*}
which suggests that \eqref{eq:k1k2bound} is bounded. 
In summary, we know  \eqref{eq:k1k2bound} is bounded, and therefore $\eqref{eq:k1k2bound0}=O\{2^{-(k_1+k_2-1)/2} \}$. 	
\medskip

\noindent (2) When  $k_1+k_2$ is even, similar analysis can be applied, and then Lemma \ref{lm:k1kwprodorder} is proved. 

\medskip

\subsubsection{Proof of Lemma \ref{lm:generalcharexpan} (on Page \pageref{lm:generalcharexpan})}\label{sec:generalcharexpan}
We prove  Lemma \ref{lm:generalcharexpan}  based on  \eqref{eq:logexpgeneralform}. 
In each testing problem, we have $|\tau_{1,k}+ \upsilon_{1,k}|/|\eta \xi_{1,k}|=o(1)$; see Sections \ref{sec:pfonesamchisq1}--\ref{sec:pfindpchisq}.
Then under the conditions of Lemma \ref{lm:generalcharexpan}, we can apply Lemma \ref{lm:logzaexpan} and obtain for $1\leq k\leq K_1$, 
\begin{align*}
&~	\log \Gamma\big\{ \eta \xi_{1,k}(1-2it) +\tau_{1,k}+ \upsilon_{1,k}  \big\} \notag \\
=&~	\biggr\{ \eta \xi_{1,k}(1-2it) +\tau_{1,k}+ \upsilon_{1,k} -\frac{1}{2}  \biggr\}\log \big\{\eta \xi_{1,k}(1-2it) \big\}- \eta \xi_{1,k}(1-2it) +\log \sqrt{2\pi}\notag \\
&~+\sum_{l=1}^{L-1}\frac{(-1)^{l+1}B_{l+1}(\tau_{1,k} + \upsilon_{1,k})}{l(l+1)}\Big\{ \eta \xi_{1,k}(1-2it)\Big\}^{-l}+O\Big( |\tau_{1,k}+\upsilon_{1,k}|^{L+1}/|\eta \xi_{1,k}|^{L}\Big).
\end{align*}
Applying similar expansion to $\log \Gamma( \eta\xi_{1,k} + \tau_{1,k}+\upsilon_{1,k})$, we obtain
\begin{align*}
&~ \log \Gamma\big\{ \eta \xi_{1,k}(1-2it) +\tau_{1,k}+ \upsilon_{1,k}  \big\} -\log \Gamma\big(\eta \xi_{1,k}+\tau_{1,k}+ \upsilon_{1,k}  \big) \notag \\
=&~\left(\eta \xi_{1,k} +\tau_{1,k}+\upsilon_{1,k}-\frac{1}{2}\right)\log(1-2it)-2it\eta \xi_{1,k}\log \big\{ \eta \xi_{1,k} (1-2it ) \big\}+2it\eta \xi_{1,k} \notag \\
&~ +\sum_{l=1}^{L-1}\frac{(-1)^{l+1}B_{l+1}(\tau_{1,k} + \upsilon_{1,k})}{l(l+1)( \eta \xi_{1,k})^{l}}\Big\{(1-2it)^{-l}-1\Big\}+O\Big( |\tau_{1,k}+\upsilon_{1,k}|^{L+1}/|\eta \xi_{1,k}|^{L}\Big).
\end{align*}
Similarly, for $1\leq j\leq K_2$, we have
\begin{align*}
&~\log \Gamma\big\{ \eta \xi_{2,j}(1-2it) + \tau_{2,j}+\upsilon_{2,j}\big\}-\log \Gamma\big( \eta \xi_{2,j} + \tau_{2,j}+\upsilon_{2,j}\big)\notag \\
=&~\biggr( \eta \xi_{2,j} + \tau_{2,j}+\upsilon_{2,j}-\frac{1}{2}\biggr)\log(1-2it)-2it\eta \xi_{2,j}\log\big\{\eta \xi_{2,j}(1-2it) \big\}+2it\eta \xi_{2,j}\notag \\
&~+\sum_{l=1}^{L-1}\frac{(-1)^{l+1}B_{l+1}(\tau_{2,j} + \upsilon_{2,j})}{l(l+1)( \eta \xi_{2,j})^{l}}\Big\{(1-2it)^{-l}-1\Big\}+O\Big( |\tau_{2,j}+\upsilon_{2,j}|^{L+1}/|\eta \xi_{2,j}|^{L}\Big).
\end{align*}
Then by the form of $\varphi(t)$ in \eqref{eq:logexpgeneralform}, we  calculate
\begin{align*}
\eqref{eq:logexpgeneralform}=&~2it\eta \left( \sum_{k=1}^{K_1}\xi_{1,k}\log \xi_{1,k} - \sum_{j=1}^{K_2} \xi_{2,j} \log \xi_{2,j}\right) \notag \\
&~+\biggr\{\sum_{k=1}^{K_1}(\xi_{1,k} +\tau_{1,k}+\upsilon_{1,k}-1/2) - \sum_{j=1}^{K_2}(\xi_{2,j}+\tau_{2,j}+\upsilon_{2,j}-1/2)\biggr\}\log(1-2it) \notag \\
&~ -2it\eta \left(\sum_{k=1}^{K_1}\xi_{1,k}\log \xi_{1,k} - \sum_{j=1}^{K_2}\xi_{2,j}\log \xi_{2,j} \right) -2it\eta  (\log \eta - 1)\left(\sum_{k=1}^{K_1}\xi_{1,k}-\sum_{j=1}^{K_2}\xi_{2,j}\right)\notag \\
&~ +\sum_{l=1}^{L-1}\varsigma_l\Big\{(1-2it)^{-l}-1\Big\} +O\Biggr( \sum_{k=1}^{K_1}\frac{|\tau_{1,k}+\upsilon_{1,k}|^{L+1}}{|\eta \xi_{1,k}|^{L}}+ \sum_{j=1}^{K_2} \frac{|\tau_{2,j}+\upsilon_{2,j}|^{L+1}}{|\eta \xi_{2,j}|^{L}}\Biggr). 
\end{align*}
By the facts that $\tau_{1,k}=\eta \xi_{1,k}$, $\tau_{2,j}=\eta \xi_{2,j}$, and $\sum_{k=1}^{K_1}\xi_{1,k}=\sum_{j=1}^{K_2}\xi_{2,k}$, Lemma  \ref{lm:generalcharexpan}  is proved.

\bigskip
\subsection{Lemmas for Theorems \ref{thm:onesamnormal},  \ref{thm:multsamnormal}, \& \ref{thm:indepnormal}} \label{sec:normallemmas}

\subsubsection{Proof of Lemma \ref{lm:chardiffgoal} (on Page \pageref{lm:chardiffgoal})}\label{sec:chardiffgoal}

By \eqref{eq:fsform} on Page \pageref{eq:fsform}, 
\begin{align*}
\log \psi_1(s)=-\frac{pnti}{2}	\log \frac{2e}{n}-\frac{pn(1-ti)}{2}\log (1-ti) + \log \frac{\Gamma_{p}\{(n-1)/2-nti/2\}}{\Gamma_{p}\{(n-1) / 2\}} + \mu_n ti,
\end{align*}
where $t=s/(n\sigma_n)$.
We next examine $\log \psi_1(s)$ by the following Lemma \ref{lm:gammapaproxexpan}. 

\begin{lemma} \label{lm:gammapaproxexpan}
Let $\left\{p=p_{n}; n \geq 1\right\},\left\{m=m_{n}; n \geq 1\right\}$, $\{t_n; n\geq 1\}$, and $\{s_n; n\geq 1\}$ satisfy that (i) $p_n\to \infty$ and $p_n=o(n)$; (ii) there exists $\epsilon \in (0,1)$ such that $\epsilon \leq m_{n} / n \leq \epsilon^{-1}$;  (iii)  $t=t_n=O(ns/p)$;  (iv) $s=s_n=o( \min\{ (n/p)^{1/2}, f^{1/6}\} )$. Then as $n\to \infty$, 
\begin{align}
&~\log \frac{\Gamma_{p}\left(\frac{m-1}{2}+ti\right)}{\Gamma_{p}\left(\frac{m-1}{2}\right)}\notag \\
=&~\beta_{m,1} ti-\beta_{m,2} t^{2}+\beta_{m,3}(ti)+O\left(\frac{p^2t}{m^2}\right) +\left(\frac{1}{p}+\frac{p}{m} \right)O\left(\frac{p^2t^2}{m^2}\right)+ O\Big( \frac{p^2 t^3}{m^3} \Big), \notag 
\end{align}
where 
\begin{align*}
&\beta_{m,1}=-\left\{2 p+\left(m-p-\frac{3}{2}\right) \log \left(1-\frac{p}{m-1}\right)\right\}; \quad \beta_{m,2}=-\left\{\frac{p}{m-1}+\log \left(1-\frac{p}{m-1}\right)\right\}; \\
&\beta_{m,3}(ti)=p\left\{\left(\frac{m-1}{2}+ti\right) \log \left(\frac{m-1}{2}+ti\right)-\frac{m-1}{2} \log \frac{m-1}{2}\right\}. 
\end{align*}
\end{lemma}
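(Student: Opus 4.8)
The plan is to establish the asymptotic expansion of $\log\{\Gamma_p((m-1)/2+ti)/\Gamma_p((m-1)/2)\}$ by first reducing the multivariate gamma function to a sum of univariate log-gamma ratios and then applying the sharp expansion of $\log\Gamma$ from Lemma \ref{lm:ratiogammapprox} term by term. Recall the product formula $\Gamma_p(z)=\pi^{p(p-1)/4}\prod_{j=1}^p \Gamma(z-(j-1)/2)$, so that
\begin{align*}
\log\frac{\Gamma_p\left(\frac{m-1}{2}+ti\right)}{\Gamma_p\left(\frac{m-1}{2}\right)}
=\sum_{j=1}^p \log\frac{\Gamma\left(\frac{m-j}{2}+ti\right)}{\Gamma\left(\frac{m-j}{2}\right)}.
\end{align*}
First I would set $x_j=(m-j)/2$ and $b=t$, noting that under hypotheses (i)--(ii) we have $x_j=\Theta(m)$ uniformly for $1\le j\le p$ (since $p=o(m)$ forces $m-j\ge m-p\sim m$), and that $b=t=O(ns/p)=o(x_j)$ by assumption (iv) together with $n\sigma_n=\Theta(f^{1/2})=\Theta(p)$. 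This legitimizes applying Lemma \ref{lm:ratiogammapprox} to each summand, yielding
\begin{align*}
\log\frac{\Gamma(x_j+ti)}{\Gamma(x_j)}=(x_j+ti)\log(x_j+ti)-x_j\log x_j-ti-\frac{ti}{2x_j}+O\!\left(\frac{t+t^2}{x_j^2}\right).
\end{align*}

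Next I would expand $(x_j+ti)\log(x_j+ti)$ via the Taylor series of $z\log z$ around $z=x_j$, namely $(x_j+ti)\log(x_j+ti)-x_j\log x_j = ti\log x_j + ti - \tfrac{t^2}{2x_j} + O(t^3/x_j^2)$, so that each summand collapses to $ti\log x_j - \tfrac{t^2}{2x_j} - \tfrac{ti}{2x_j} + O(t^3/x_j^2 + (t+t^2)/x_j^2)$. Summing over $j$ then requires evaluating $\sum_{j=1}^p \log x_j = \sum_{j=1}^p\log\tfrac{m-j}{2}$, $\sum_{j=1}^p x_j^{-1}=\sum_{j=1}^p \tfrac{2}{m-j}$, and the error sum $\sum_{j=1}^p O(t^3/x_j^2)=O(p^2 t^3/m^3)$ since there are $p$ terms each of size $O(t^3/m^2)$ and $p=o(m)$. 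The first sum I would handle through $\sum_{j=1}^p\log(m-j)=\log\{\Gamma(m)/\Gamma(m-p)\}$, then apply Lemma \ref{lm:loggammaexpan} (the Stirling expansion) to produce closed-form logarithmic terms; matching these against the stated $\beta_{m,1}$ is where the algebra condenses $\sum_j\log\tfrac{m-j}{2}$ into $(m-p-3/2)\log(1-p/(m-1))+2p$ up to the stated error. The second sum telescopes via the harmonic-type approximation $\sum_{j=1}^p\tfrac{1}{m-j}=-\log(1-p/(m-1))\{1+o(1)\}$, which, combined with the $-p/(m-1)$ from the linear term, reproduces $\beta_{m,2}$; the factor $t^2$ multiplying it, under $t=O(ns/p)$ and $f=\Theta(p^2)$, gives precisely the $(1/p+p/m)O(p^2t^2/m^2)$ remainder after separating the leading quadratic coefficient.

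The residual term $\beta_{m,3}(ti)=p\{(\tfrac{m-1}{2}+ti)\log(\tfrac{m-1}{2}+ti)-\tfrac{m-1}{2}\log\tfrac{m-1}{2}\}$ is what remains after isolating the bulk logarithmic contribution; I would identify it as the piece of $\sum_j ti\log x_j$ that is not absorbed into $\beta_{m,1}ti$, essentially the ``boundary'' term from summation-by-parts or from the discrepancy between $\sum_j\log\tfrac{m-j}{2}$ and its smooth approximation. The main obstacle, I expect, will be bookkeeping the error terms uniformly in $j$ while summing $p$ of them: each individual remainder in Lemma \ref{lm:ratiogammapprox} is $O((t+t^2)/x_j^2)$, but I must verify that the worst case $j=p$ (where $x_p=(m-p)/2$ is smallest but still $\Theta(m)$) does not inflate the aggregate beyond $O(p^2t/m^2)$ and $O(p^2t^3/m^3)$, and that the cubic Taylor remainder in $z\log z$ is genuinely $O(t^3/x_j^2)$ rather than $O(t^3/x_j)$. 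The delicate point is tracking which powers of $p/m$ and $t$ combine, since $t$ scales like $ns/p\sim s\sqrt{f}/p = \Theta(s)$ but appears with coefficients of various orders; I would carefully check that after substituting $t=s/(n\sigma_n)$ the stated orders $O(p/n)s$, $(1/p+p/m)O(s^2)$, and $O(s^3/\sqrt f)$ emerge, which ultimately reduces to confirming $n\sigma_n=\Theta(p)$ and $f=\Theta(p^2)$ from the definitions of $\sigma_n$ in each problem.
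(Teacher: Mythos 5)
Your reduction to univariate gamma ratios and the application of Lemma \ref{lm:ratiogammapprox} coincide with the paper's first two steps, but your next move --- Taylor-expanding each $(x_j+ti)\log(x_j+ti)$ around its own center $x_j=(m-j)/2$ --- is where the argument breaks. The cubic remainder of $z\log z$ about $x_j$ is indeed $O(t^3/x_j^2)=O(t^3/m^2)$ per term, as you say, but summing $p$ such terms leaves an aggregate error of order $pt^3/m^2$, whereas the lemma claims $O(p^2t^3/m^3)$, which is smaller by the factor $p/m\to 0$. This is not a bookkeeping issue about the worst index $j=p$: it is the sum of $p$ comparable errors that is too large. Worse, in the permitted regime the discrepancy is fatal: with $t=O(ns/p)$, $m=\Theta(n)$, your error is $O(ns^3/p^2)$, which diverges for instance when $p=n^{1/2}$ and $s=n^{1/10}$ (allowed by hypotheses (iii)--(iv), since $f^{1/6}=\Theta(p^{1/3})=n^{1/6}$), while the paper's claimed remainder $p^2t^3/m^3=O(s^3/p)$ tends to zero there.

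This is exactly why the lemma keeps $\beta_{m,3}(ti)$ as an \emph{unexpanded}, genuinely nonlinear function of $t$: its own cubic Taylor term is of order $pt^3/m^2$ and cannot be absorbed into the remainder. Consequently your suggestion that $\beta_{m,3}(ti)$ re-emerges as an unabsorbed ``boundary'' piece of the linear sum $\sum_j ti\log x_j$ cannot work --- after your full expansion every term is a polynomial of degree at most two in $t$, and a non-polynomial function of $t$ cannot be reassembled from such pieces. The paper's device is different: write each summand as the $j=1$ summand plus the difference $g_j(ti)-g_j(0)$, where $g_j(z)=(\frac{m-j}{2}+z)\log(\frac{m-j}{2}+z)-(\frac{m-1}{2}+z)\log(\frac{m-1}{2}+z)$. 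The $p$ copies of the $j=1$ summand are kept intact --- that is precisely $\beta_{m,3}(ti)$ --- and only the differences $g_j$ are Taylor-expanded (Lemma \ref{lm:gjsumapprox}). Since $g_j^{(l)}$ is a difference of two nearby reciprocal powers, it carries an extra factor $(j-1)/m=O(p/m)$ relative to your per-term derivatives, so each cubic term is $O(pt^3/m^3)$ and the sum over $j$ is $O(p^2t^3/m^3)$, matching the claim. Without this differencing idea, which suppresses the third-order contributions, your proof cannot reach the stated error rates.
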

\begin{proof}
Please see Section \ref{sec:gammapaproxexpan} on Page \pageref{sec:gammapaproxexpan}. 
\end{proof}

\medskip

\noindent 
By \eqref{eq:chisqconvgvar} and $f=\Theta(p^2)$, we know $t=s/(n\sigma_n)=O(s/p)$. 
Thus we can apply Lemma \ref{lm:gammapaproxexpan} and expand 
\begin{align*}
 \log \frac{\Gamma_{p}\{(n-1)/2-nti/2\}}{\Gamma_{p}\{(n-1) / 2\}}=&~-\frac{n\beta_{n,1}ti}{2}-\frac{\beta_{n,2}n^2t^2}{4}+\beta_{n,3}\left(-\frac{nti}{2}\right)\notag \\
 &~+O\left(\frac{p^2t}{n}\right) +\left(\frac{1}{p}+\frac{p}{n} \right)O\left(p^2t^2\right)+ O\big({p^2 t^3} \big).
\end{align*}
\smallskip

\noindent We next use the following Lemma \ref{lm:betan3order} to evaluate $\beta_{n,3}(-nti/2)$. 

\begin{lemma}\label{lm:betan3order}
When $p=p_n\to \infty$, $p=o(n)$, and $t=t_n=O(s/p)$ with $s=s_n=o( \min\{ (n/p)^{1/2}, f^{1/6}\} )$, 
\begin{align*}
\beta_{n,3}\biggr(-\frac{nti}{2}\biggr)=-\frac{pnti}{2}\log\frac{n}{2}+\frac{pn(1-ti)}{2}\log(1-ti)+\frac{pti}{2}+O\biggr( pt^2+\frac{pt}{n}\biggr).
\end{align*}
\end{lemma}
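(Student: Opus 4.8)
The plan is to evaluate $\beta_{n,3}(-nti/2)$ directly from its definition by substituting the argument $-nti/2$ and tracking the expansion to the stated order. Writing $N = n/2$ so that $(n-1)/2 = N - 1/2$, the quantity is $\beta_{n,3}(-nti/2) = p\{(A+z)\log(A+z) - A\log A\}$ with $A = N - 1/2$ and $z = -Nti$, so that $A + z = N(1-ti) - 1/2$. The first step is to factor $N$ out of both logarithms: writing $\log(A+z) = \log N + \log(1-ti) + \log\{1 - 1/(2N(1-ti))\}$ and $\log A = \log N + \log(1 - 1/(2N))$, the coefficient of $\log N = \log(n/2)$ collapses to $(A+z) - A = z = -Nti$, producing the leading term $-\tfrac{pnti}{2}\log(n/2)$.

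Next I would isolate the second target term. Expanding $(A+z)\log(1-ti) = [N(1-ti) - 1/2]\log(1-ti)$ splits into $\tfrac{n}{2}(1-ti)\log(1-ti)$, which is exactly the term $\tfrac{pn(1-ti)}{2}\log(1-ti)$ after multiplying by $p$, and $-\tfrac12\log(1-ti)$. Since $t = O(s/p)$ with $s = o(f^{1/6})$ and $f = \Theta(p^2)$ force $t \to 0$, the Taylor expansion $\log(1-ti) = -ti + O(t^2)$ turns $-\tfrac12\log(1-ti)$ into $\tfrac{ti}{2} + O(t^2)$, which supplies the term $\tfrac{pti}{2}$ together with an $O(pt^2)$ error.

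The crux is the remaining correction $R := [N(1-ti)-1/2]\log\{1 - 1/(2N(1-ti))\} - [N-1/2]\log\{1 - 1/(2N)\}$, and the main obstacle is that a naive term-by-term expansion of the two logarithms produces constant-in-$t$ contributions of size $O(1/n)$ that appear to violate the claimed remainder $O(pt^2 + pt/n)$. The clean device is to recognize $R$ as $h(1-ti) - h(1)$ for the single function $h(u) = (Nu - 1/2)\log\{1 - 1/(2Nu)\}$, analytic in a complex neighborhood of $u = 1$ since $1/(2Nu) = O(1/n)$ stays away from the branch cut. A short computation gives $h'(u) = N\log\{1-1/(2Nu)\} + (Nu-1/2)/\{u(2Nu-1)\}$, and at $u=1$ the second summand equals $(N-1/2)/(2N-1) = 1/2$ exactly, canceling the $-1/2$ leading part of $N\log(1-1/(2N))$; hence $h'(1) = -1/(4n) + O(1/n^2) = O(1/n)$. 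Bounding $h''$ uniformly by $O(1)$ on the neighborhood then yields $R = -h'(1)\,ti + O(t^2\sup|h''|) = O(t/n) + O(t^2)$ through the Taylor remainder.

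Finally I would multiply through by $p$ and collect: the three explicit terms reproduce the right-hand side, and the accumulated errors $O(pt^2)$ (from $\log(1-ti)$) and $pR = O(pt/n + pt^2)$ combine to the stated $O(pt^2 + pt/n)$. The only genuinely delicate point throughout is the exact cancellation $h'(1) = O(1/n)$, which is what prevents a spurious $O(p/n)$ term and pins down the correct remainder; everything else is bookkeeping of convergent Taylor series justified by $t\to 0$ and $N = \Theta(n)$.
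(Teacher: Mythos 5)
Your proof is correct and is essentially the same argument as the paper's: both factor $n/2$ out of the two logarithms, extract $-\tfrac{pnti}{2}\log\tfrac{n}{2}$, $\tfrac{pn(1-ti)}{2}\log(1-ti)$, and $\tfrac{pti}{2}+O(pt^2)$ from $-\tfrac12\log(1-ti)$, and then show the residual $1/n$-scale correction contributes only $O(pt/n+pt^2)$. The exact cancellation you locate in $h'(1)=N\log\{1-1/(2N)\}+\tfrac12=O(1/n)$ is precisely the cancellation the paper exhibits by expanding $-\tfrac{nti}{2}\log(1-ti-\tfrac1n)$ and $\tfrac{n-1}{2}\log(1-ti-\tfrac{ti}{n-1})$ term by term, where the two $\pm\tfrac{ti}{2(1-ti)}$ contributions cancel; your packaging of it as a derivative of a single analytic function is just a cleaner way of bookkeeping the same computation.
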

\begin{proof}
Please see Section \ref{sec:betan3order} on Page \pageref{sec:betan3order}. 
\end{proof}

\medskip

\noindent It follows that 
\begin{align*}
\log \psi_1(s)=&~-\frac{\{p(n-1)+n\beta_{n,1}\}ti}{2}-\frac{\beta_{n,2}n^2t^2}{4} +\mu_n ti \notag \\
&~+O\biggr(\frac{p^2t}{n}\biggr) +\left(\frac{1}{p}+\frac{p}{n} \right)O\left(p^2t^2\right)+ O\big({p^2 t^3} \big).
\end{align*}
Since $\sigma_n^2=\beta_{n,2}/2$, $\mu_n=\{p(n-1)+n\beta_{n,1}\}/2$,  and $t=s/(n\sigma_n)$, 
\begin{align*}
	\log \psi_1(s)=-\frac{s^2}{2}+O\biggr(\frac{ps}{n}\biggr)+\left(\frac{1}{p}+\frac{p}{n} \right)O(s^2)+O\biggr(\frac{s^3}{p}\biggr),
\end{align*}
where we  use $t=O(s/p)$. 
As $\log \psi_0(s)=-s^2/2$, \eqref{eq:chardiffgoal} is proved. 

\subsubsection{Proof of Lemma \ref{lm:gammapaproxexpan} (on Page \pageref{lm:gammapaproxexpan})}\label{sec:gammapaproxexpan}

By the property of the multivariate gamma function; see, e.g.,  Theorem 2.1.12 in  \citet{Muirhead2009}, 
\begin{align}
\log \frac{\Gamma_{p}\left(\frac{m-1}{2}+ ti\right)}{\Gamma_{p}\left(\frac{m-1}{2}\right)} 
=	\sum_{j=1}^{p} \log \frac{\Gamma\left(\frac{m-j}{2}+ti\right)}{\Gamma\left(\frac{m-j}{2}\right)}. \label{eq:multivgammasumexpan} 	
\end{align}
Then by Lemma \ref{lm:ratiogammapprox} on Page \pageref{lm:ratiogammapprox}, 
\begin{align}
	\log \frac{\Gamma\left(\frac{m-j}{2}+ti\right)}{\Gamma\left(\frac{m-j}{2}\right)}=&~	\sum_{j=1}^p\Biggr[\left(\frac{m-j}{2}+ti\right)\log\left(\frac{m-j}{2}+ti\right) -  \left(\frac{m-j}{2}\right)\log\left(\frac{m-j}{2}\right)\label{eq:onegammajexpan} \\
&~ \quad \quad  -ti - \frac{ti}{m-j} + O\biggr\{ \frac{t + t^2}{(m-j)^2} \biggr\}\Biggr], \notag
\end{align}
as $m\to \infty$ uniformly for all $1\leq j\leq p.$
Note that $t/(m-j)=t/m+ (t/m)\times \{j/(m-j)\}$, and then 
\begin{align}
	\sum_{j=1}^p \frac{ti}{m-j} = \frac{pti}{m} + O\left(\frac{p^2}{m^2} \right) ti. \label{eq:sumjtiapprox}
\end{align}
By \eqref{eq:onegammajexpan} and \eqref{eq:sumjtiapprox}, we obtain as $m\to \infty$, 
\begin{align}
\eqref{eq:multivgammasumexpan} =&~	\sum_{j=1}^p\Biggr[\left(\frac{m-j}{2}+ti\right)\log\left(\frac{m-j}{2}+ti\right) -  \left(\frac{m-j}{2}\right)\log\left(\frac{m-j}{2}\right)\Biggr] \label{eq:summ1g} \\
&~ \quad \quad -\frac{(m+1)pti}{m}+ O\left(\frac{p^2}{m^2}t+\frac{p}{m^2}t^2\right). \notag
\end{align}
For $1\leq j\leq p$, define 
\begin{align*}
g_j(z)=\left(\frac{m-j}{2}+z\right) \log \left(\frac{m-j}{2}+z\right)-\left(\frac{m-1}{2}+z\right) \log \left(\frac{m-1}{2}+z\right),	
\end{align*} where the real part of $z > - (m-p)/2$. 
It follows that the ``$\sum_{j=1}^p$'' term in the first row of \eqref{eq:summ1g} is equal to 
\begin{align}
&~p \left\{\left(\frac{m-1}{2}+ti\right) \log \left(\frac{m-1}{2}+ti\right)-\frac{m-1}{2} \log \frac{m-1}{2}\right\}+\sum_{j=1}^p \{g_j(ti)-g_j(0) \}. \label{eq:summ1g2}
\end{align}
To evaluate \eqref{eq:summ1g2}, we use the following Lemma \ref{lm:gjsumapprox}. 
\begin{lemma}\label{lm:gjsumapprox}
Let $p=p_m$ such that $1\leq p < m$, $p\to \infty$ and $p/m \to 0$ as $m\to \infty$. 
When $t=t_m=O(ms/p)$ with $s=s_m=o( \min\{ (m/p)^{1/2}, p^{1/3}\})$,  we have that, as $m\to \infty$,
\begin{align*}
\sum_{j=1}^p\{g_j(ti)-g_j(0)\}=\nu_{1,m} t i-\frac{\nu_{2,m}^2}{2}t^2 + O\left(\frac{p^2t}{m^2}\right) +\left(\frac{1}{p}+\frac{p}{m} \right)O\left(\frac{p^2t^2}{m^2}\right)+ O\Big( \frac{p^2 t^3}{m^3} \Big),
\end{align*} 
where
\begin{align}
	\nu_{1,m}=&~\left( p-m+\frac{3}{2}\right)\log \left(1-\frac{p}{m-1}\right)-\frac{m-1}{m}p,\label{eq:defsigmammum}  \\ 
	\nu_{2,m}^2=&~-2\left\{\frac{p}{m-1}+\log \left(1-\frac{p}{m-1}\right) \right\}.\notag
\end{align} 	
\end{lemma}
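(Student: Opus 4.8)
The statement to prove is Lemma~\ref{lm:gjsumapprox}, which asserts a quadratic-in-$ti$ expansion of $\sum_{j=1}^p \{g_j(ti)-g_j(0)\}$ where $g_j(z) = (\frac{m-j}{2}+z)\log(\frac{m-j}{2}+z) - (\frac{m-1}{2}+z)\log(\frac{m-1}{2}+z)$.

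\textbf{Overall strategy.} The plan is to Taylor-expand each $g_j(ti)-g_j(0)$ in powers of $ti$ up to third order, then sum over $j$. Since $g_j(z)$ is a difference of two $x\log x$-type terms, its derivatives are explicit: writing $\phi(z)=(a+z)\log(a+z)$ gives $\phi'(z)=\log(a+z)+1$, $\phi''(z)=(a+z)^{-1}$, and $\phi'''(z)=-(a+z)^{-2}$. Hence with $a_j=(m-j)/2$ and $a_1'=(m-1)/2$,
\begin{align*}
g_j^{(1)}(0) &= \log a_j - \log a_1', \\
g_j^{(2)}(0) &= a_j^{-1} - (a_1')^{-1}, \\
g_j^{(3)}(0) &= -a_j^{-2} + (a_1')^{-2}.
\end{align*}
First I would establish, via Taylor's theorem with remainder, that $g_j(ti)-g_j(0) = g_j^{(1)}(0)(ti) + \tfrac12 g_j^{(2)}(0)(ti)^2 + R_j$, where $R_j$ is a third-order remainder controlled by $\sup|g_j'''|$ on the segment from $0$ to $ti$; I would bound this sup uniformly in $j$ using $|a_j+z|\geq c(m-p)$ for the relevant range of $z$ (this is where the constraint $\mathrm{Re}(z)>-(m-p)/2$ and $p/m\to 0$ enter). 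Summing, the remainder contributes $\sum_j R_j = O(p\cdot \sup_j|g_j'''|\cdot|t|^3) = O(p^2 t^3/m^3)$, matching the claimed error.

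\textbf{Key computational steps.} The heart of the proof is evaluating the two sums $\nu_{1,m}:=\sum_{j=1}^p g_j^{(1)}(0)$ and $\nu_{2,m}^2 := -\sum_{j=1}^p g_j^{(2)}(0)$ and showing they equal the stated closed forms. For the first-order sum, $\sum_{j=1}^p(\log a_j - \log a_1')$ is a log-gamma-type sum: $\sum_{j=1}^p \log\frac{m-j}{2} = \log\frac{\Gamma((m-1)/2+1)}{\Gamma((m-p-1)/2+1)} + \text{const}$ up to the $-p\log a_1'$ correction. I would apply Lemma~\ref{lm:loggammaexpan} (the Stirling expansion of $\log\Gamma$) to each gamma ratio, keeping terms through the relevant order, and collect them into the form $(p-m+3/2)\log(1-\tfrac{p}{m-1}) - \tfrac{m-1}{m}p$; the $O(p^2t/m^2)$ error term in the lemma statement absorbs the tail of the Stirling remainders after multiplication by $ti$. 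For the second-order sum, $\sum_{j=1}^p(a_j^{-1}-(a_1')^{-1}) = 2\sum_{j=1}^p\frac{1}{m-j} - \frac{2p}{m-1}$, and the harmonic-type sum telescopes to $2\{\log\frac{\Gamma((m-1)/2 ...)}{...}\}$ or more directly $\sum_{j=1}^p \frac{2}{m-j} = -2\log(1-\frac{p}{m-1}) + O(p^2/m^2 \text{ corrections})$; matching against $\nu_{2,m}^2 = -2\{\frac{p}{m-1}+\log(1-\frac{p}{m-1})\}$ requires care with the $\frac{p}{m-1}$ term, which comes from the $-2p/(m-1)$ piece.

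\textbf{Main obstacle and error bookkeeping.} The genuinely delicate part is \emph{not} the algebra of the leading terms but tracking that every discarded piece really falls into the stated error budget $O(p^2t/m^2) + (\frac1p + \frac pm)O(p^2t^2/m^2) + O(p^2t^3/m^3)$. The subtlety is that $t$ may grow with $m$ (it scales like $s/(n\sigma_n) = O(ms/p)$ after the rescaling, or $O(s/p)$ in the application), so the three error classes scale differently in $t$ and must be separated by the order of $ti$ they multiply: Stirling remainders in the first-order sum multiply $ti$, those in the second-order sum multiply $t^2$, and the Taylor remainder $R_j$ is intrinsically $O(t^3)$. I expect the hardest verification to be the second-order coefficient, because the $\frac{1}{p}+\frac{p}{m}$ prefactor on the $t^2$ error is sharper than a naive bound would give; obtaining it requires using the explicit quadratic structure of $g_j^{(2)}(0)$ and the near-cancellation between $\frac{p}{m-1}$ and the logarithm (both are $\Theta(p/m)$ but their sum is $\Theta(p^2/m^2)$, exactly the $\sigma_n^2$ smallness that drives the whole phase-transition analysis). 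Throughout I would invoke the condition $s = o(\min\{(m/p)^{1/2}, p^{1/3}\})$ precisely to guarantee that the $t^3$ remainder is negligible relative to the retained $t^2$ term and that the Stirling expansions remain valid, and the constraint $|a_j + z| \gtrsim (m-p)$ to make the uniform-in-$j$ third-derivative bound legitimate.
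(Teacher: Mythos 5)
Your overall skeleton is the same as the paper's: expand $g_j(ti)-g_j(0)$ in powers of $ti$, identify $\sum_j g_j^{(1)}(0)$ and $\sum_j g_j^{(2)}(0)$ with $\nu_{1,m}$ and $-\nu_{2,m}^2$ up to the stated relative errors, and push everything else into the $t^3$ remainder. The one structural difference is that the paper does not re-derive the two coefficient sums: it quotes Lemma A.2 of \cite{Jiang15}, which gives exactly $\sum_{j=1}^p g_j^{(1)}(0)=\nu_{1,m}+O(\nu_{2,m}^2)$ and $\sum_{j=1}^p g_j^{(2)}(0)=\nu_{2,m}^2\{1+O(1/p+p/m)\}$, and then concludes via $\nu_{2,m}^2=O(p^2/m^2)$. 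Your plan to obtain these from scratch --- writing $\sum_j \log(m-j)$ as a log-gamma ratio and applying Lemma \ref{lm:loggammaexpan}, and treating $\sum_j 2/(m-j)$ as a harmonic sum --- is a legitimate, more self-contained substitute, and your bookkeeping of which error multiplies $t$, $t^2$ and $t^3$ is the right one; in particular you correctly identify that the $(1/p+p/m)$ relative error on the $t^2$ coefficient hinges on the near-cancellation $\frac{p}{m-1}+\log\left(1-\frac{p}{m-1}\right)=\Theta(p^2/m^2)$. (The paper also expands the full Taylor series rather than stopping at a Lagrange remainder, but that distinction is immaterial.)

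There is, however, a genuine gap in your remainder bound, and it is not cosmetic. You propose to bound $\sup_j\sup_z|g_j'''(z)|$ ``using $|a_j+z|\geq c(m-p)$''; that alone yields only $|g_j'''(z)|\leq |a_j+z|^{-2}+|a_1'+z|^{-2}=O(1/m^2)$, hence $\sum_j R_j=O(pt^3/m^2)$, which exceeds the claimed $O(p^2t^3/m^3)$ by a factor $m/p$. This weaker bound does not prove the lemma under its stated hypotheses: in the intended application $t=\Theta(ms/p)$, so $p^2t^3/m^3=\Theta(s^3/p)=o(1)$ precisely because $s=o(p^{1/3})$, whereas $pt^3/m^2=\Theta(ms^3/p^2)$, which vanishes only if $s=o(p^{2/3}/m^{1/3})$ --- a strictly stronger requirement since $p=o(m)$. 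What you must use, and what you wrote down but did not exploit, is that $g_j'''$ is a difference of two nearly equal terms (with your notation $a_j=(m-j)/2$, $a_1'=(m-1)/2$):
\begin{align*}
|g_j'''(z)|=\left|\frac{1}{(a_1'+z)^{2}}-\frac{1}{(a_j+z)^{2}}\right|
=\frac{|a_1'-a_j|\,|a_1'+a_j+2z|}{|a_j+z|^{2}\,|a_1'+z|^{2}}
=O\left(\frac{j}{m^{3}}\right)
\end{align*}
uniformly on the segment from $0$ to $ti$ (using that $z$ is purely imaginary, so $|a_j+z|\geq a_j\gtrsim m$, and $t=o(m)$), whence $\sum_j R_j=O\big(t^3\sum_{j\leq p} j/m^3\big)=O(p^2t^3/m^3)$. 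This extra factor of order $j/m$ is exactly what the paper extracts by rewriting $g_j^{(l)}(z)$ with the factor $(j-1)/(m-j+2z)=O(p/m)$ before bounding the Taylor tail. With this one correction your argument goes through.
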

\begin{proof}
Please see Section \ref{sec:gjsumapprox}	 on Page \pageref{sec:gjsumapprox}. 
\end{proof}
\bigskip

\noindent Then by Lemma \ref{lm:gjsumapprox}, 
\begin{align*}
\eqref{eq:summ1g2}=&~p \left\{\left(\frac{m-1}{2}+ti\right) \log \left(\frac{m-1}{2}+ti\right)-\frac{m-1}{2} \log \frac{m-1}{2}\right\} \notag \\
&~ + \nu_{1,m} t i-\frac{\nu_{2,m}^2}{2}t^2 + O\left(\frac{p^2t}{m^2}\right) +\left(\frac{1}{p}+\frac{p}{m} \right)O\left(\frac{p^2t^2}{m^2}\right)+ O\Big( \frac{p^2 t^3}{m^3} \Big).	
\end{align*}
In summary, 
Lemma \ref{lm:gammapaproxexpan} can be proved by noticing 
\begin{align*}
&\beta_{m,1}=\nu_{1,m}-\frac{(m+1)p}{m}, \quad \quad \beta_{m,2}=\nu_{2,m}^2/2 \notag \\
&\beta_{m,3}(ti)=	p \left\{\left(\frac{m-1}{2}+ti\right) \log \left(\frac{m-1}{2}+ti\right)-\frac{m-1}{2} \log \frac{m-1}{2}\right\}.
\end{align*}


\medskip

\subsubsection{Proof of Lemma \ref{lm:betan3order} (on Page \pageref{lm:betan3order})} \label{sec:betan3order}
By Taylor's series,
\begin{align*}
p^{-1}\beta_{n,3}(-nti/2)=&~	 -\frac{nti}{2}\log \frac{n}{2}	-\frac{nti}{2} \log \left(1-ti-\frac{1}{n}\right) + \frac{n-1}{2}\log\left( 1-ti-\frac{ti}{n-1} \right) \notag \\
=&~	-\frac{nti}{2}\log \frac{n}{2}-\frac{nti}{2}\log(1-ti)+\frac{nti}{2n(1-ti)}+O\biggr( \frac{nt}{n^2} \biggr)\notag \\
&~+\frac{n-1}{2}\log(1-ti)-\frac{n-1}{2}\frac{ti}{(n-1)(1-ti)}+\frac{n-1}{2}O\biggr(\frac{t^2}{n^2}\biggr)\notag \\
=&~-\frac{nti}{2}\log \frac{n}{2}+\frac{n(1-ti)-1}{2}\log(1-ti)+O\biggr(\frac{t+t^2}{n}\biggr).
\end{align*}
It follows that
\begin{align*}
	\beta_{n,3}(-nti/2)=-\frac{pnti}{2}\log \frac{n}{2}+\frac{pn(1-ti)}{2}\log(1-ti) + \frac{pti}{2}+O\biggr( pt^2+\frac{pt}{n}\biggr).
\end{align*}

\medskip

\subsubsection{Proof of Lemma \ref{lm:gjsumapprox} (on Page \pageref{lm:gjsumapprox})} \label{sec:gjsumapprox}	

The first-order derivatives of $g_j(z)$ is
\begin{align*}
	g_j^{(1)}(z)=&~ \log\left( \frac{m-j}{2} +z \right)-\log\left( \frac{m-1}{2} +z \right), \notag 
\end{align*}
and for $l\geq 2$, the $l$-th order derivatives of $g_j(z)$ is
\begin{align*}
g_j^{(l)}(z)=&~(-1)^{l-2}(l-2)!\left\{\left( \frac{m-j}{2}+z\right)^{-(l-1)} -\left( \frac{m-1}{2}+z\right)^{-(l-1)} \right\}\notag \\
=&~ (-1)^{l-2}(l-2)!\left( \frac{m-1}{2}+z\right)^{-(l-1)} \sum_{v=1}^{l-1}\binom{l-1}{v}\left( \frac{j-1}{m-j+2z} \right)^v. 
\end{align*}
By Taylor's expansion, 
$g_j(ti)-g_j(0)=\sum_{l=1}^{\infty}g_j^{(l)}(0) z^l/l!$. 
In particular, 
\begin{align*}
g_j^{(1)}(0)=\log (m -j)	 - \log (m-1), \quad \quad  g_j^{(2)}(0)=\frac{2}{m-j}-\frac{2}{m-1}.
\end{align*}
When $z=ti$, $t=t_m=O(ms/p)$, and $l\geq 3$, as $j-1/(m-j+2z)=O(p/m)=o(1)$, 
\begin{align*}
g_j^{(l)}(0)z^l/l!=O\biggr(\frac{1}{m^{l-1}}\frac{p}{m} t^l \biggr)=O \biggr( \frac{p}{m^l} \biggr) t^l.
\end{align*} 
As $t/m=O(s/p)=o(1)$, 
\begin{align*}
\sum_{j=1}^p \{ g_j(ti)-g_j(0)\}=\sum_{j=1}^p g_j^{(1)}(0) ti -\frac{1}{2}\sum_{j=1}^p g_j^{(2)}(0)t^2+ O\Big( \frac{p^2 t^3}{m^3} \Big). 
\end{align*}
By Lemma A.2 in \cite{Jiang15}, 
\begin{align*}
\sum_{j=1}^p g_j^{(1)}(0)=\nu_{1,m} + O(\nu_{2,m}^2), \quad \quad \sum_{j=1}^p g_j^{(2)}(0)=\nu_{2,m}^2\biggr\{1+O\left(\frac{1}{p}+\frac{p}{m} \right) \biggr\}, 
\end{align*}
where $\nu_{1,m}$ and $\nu_{2,m}^2$ are defined in \eqref{eq:defsigmammum}.
In summary,
\begin{align*}
\sum_{j=1}^p\{g_j(ti)-g_j(0)\}=\nu_{1,m} t i-\frac{\nu_{2,m}^2}{2}t^2 + O(\nu_{2,m}^2)t +\nu_{2,m}^2O\left(\frac{1}{p}+\frac{p}{n} \right)t^2+ O\Big( \frac{p^2 t^3}{m^3} \Big). 
\end{align*}
Then Lemma \ref{lm:gjsumapprox} follows by $\nu_{2,m}^2=O(p^2/m^2).$

\subsubsection{Proof of Lemma \ref{lm:rhonksumm} (on Page \pageref{lm:rhonksumm})}\label{sec:rhonksumm}
By Taylor's series, we have \eqref{eq:rhonksumm1}. 
In addition, for \eqref{eq:rhonksumm2}, 
note that we can write
\begin{align*}
p^{-1}\varrho_l(t)=\frac{l-1}{2}\log\left(1+\frac{lt}{l-1}\right) +\frac{lt}{2}\log \left(\frac{l-1}{2}+\frac{lt}{2} \right). \notag	
\end{align*}
By Taylor's series
$
\log x = \log a + \sum_{l=1}^{L-1} {(-1)^{l-1}}l^{-1}(x/a-1)^{l}	+O\{( x/a-1)^L\}
$, we obtain 
\begin{align*}
\frac{\varrho_l(t)}{p}=&~\frac{l}{2}\log \left(1 +t+\frac{t}{l-1}\right)-\frac{1}{2}\log\left(1+\frac{lt}{l-1}\right)+\frac{lt}{2}\log \left\{\frac{l(1+t)}{2}-\frac{1}{2} \right\}\notag \\
=&~\frac{l}{2}\log(1+t)+ \frac{lt}{2(l-1)(1+t)} -\frac{lt}{2(l-1)}+ \frac{lt}{2}\log \frac{l(1+t)}{2}-\frac{t}{2(1+t)}+O\left(\frac{t}{l}+t^2\right)\notag \\
=&~\frac{l(1+t)}{2}\log(1+t)+\frac{lt}{2}\log\frac{l}{2}-\frac{t}{2}+O\left(\frac{t}{l}+t^2\right).
\end{align*}
Then by $n=\sum_{j=1}^k n_j$, we have
\begin{align*}
-\varrho_n(t)+\sum_{j=1}^k \varrho_{n_j}(t)	=\biggr( 1-k-n\log n+\sum_{j=1}^k n_j\log n_j\biggr)\frac{tp}{2}+O\left(\frac{pt}{n}+pt^2\right).
\end{align*}

\vspace{2em}

\bibliographystyle{chicago}
\bibliography{testrefer}

\end{document}